\apptocmd{\thebibliography}{\raggedright}{}{}
\definecolor{colorblind_blue}{RGB}{0,114,178}
\definecolor{colorblind_orange}{RGB}{213,94,0}
\definecolor{colorblind_green}{RGB}{0,158,115}
\definecolor{colorblind_purple}{RGB}{204,121,167}
\newcommand{\orange}[1]{\ensuremath{{\color{colorblind_orange} #1}}}
\newcommand{\blue}[1]{\ensuremath{{\color{colorblind_blue} #1}}}
\newcommand{\green}[1]{\ensuremath{{\color{colorblind_green} #1}}}
\newcommand{\purple}[1]{\ensuremath{{\color{colorblind_purple} #1}}}
\newcommand{\QGen}[1]{\ensuremath{[#1]}}
\newcommand{\PQGen}[1]{\ensuremath{[#1]'}}
\newcommand{\HGen}[1]{\ensuremath{[#1]_{\Sp}}}
\newcommand{\AGen}[1]{\ensuremath{[#1]_0}}
\newcommand{\BAGen}[1]{\blue{\ensuremath{[#1]_0}}}
\newcommand{\GAGen}[1]{\green{\ensuremath{[#1]_0}}}
\newcommand{\UGAGen}[1]{\green{\underline{\ensuremath{[#1]_0}}}}
\newcommand{\PAGen}[1]{\ensuremath{[#1]'_0}}
\newcommand{\BPAGen}[1]{\blue{\ensuremath{[#1]'_0}}}
\newcommand{\GPAGen}[1]{\green{\ensuremath{[#1]'_0}}}
\newcommand{\ZGen}[1]{\ensuremath{\llparenthesis #1 \rrparenthesis}}
\newcommand{\ZGenS}[1]{\ensuremath{\llparenthesis #1 \rrparenthesis}_s}
\newcommand{\ZGenA}[1]{\ensuremath{\llparenthesis #1 \rrparenthesis}_a}
\newcommand{\Pres}[1]{\ensuremath{\llbracket #1 \rrbracket}}
\newcommand{\OPres}[1]{\ensuremath{\orange{\llbracket #1 \rrbracket}}}
\newcommand{\LPres}[1]{\ensuremath{\orange{\llbracket #1 \rrbracket_L}}}
\newcommand{\RPres}[1]{\ensuremath{\orange{\llbracket #1 \rrbracket_R}}}
\newcommand{\PresS}[1]{\ensuremath{\llbracket #1 \rrbracket}_s}
\newcommand{\ThetaS}[1]{\ensuremath{\Theta[#1]_s}}
\newcommand{\LambdaS}[1]{\ensuremath{\Lambda[#1]_s}}
\newcommand{\LambdaIS}[1]{\ensuremath{\Lambda_1[#1]_s}}
\newcommand{\LambdaIIS}[1]{\ensuremath{\Lambda_2[#1]_s}}
\newcommand{\OmegaS}[1]{\ensuremath{\Omega[#1]_s}}
\newcommand{\OmegaIS}[1]{\ensuremath{\Omega_1[#1]_s}}
\newcommand{\OmegaIIS}[1]{\ensuremath{\Omega_2[#1]_s}}
\newcommand{\OmegaIIIS}[1]{\ensuremath{\Omega_3[#1]_s}}
\newcommand{\OmegaIVS}[1]{\ensuremath{\Omega_4[#1]_s}}
\newcommand{\OmegaiS}[1]{\ensuremath{\Omega_i[#1]_s}}
\newcommand{\SPresS}[1]{\ensuremath{\blue{\llbracket #1 \rrbracket}_s}}
\newcommand{\SThetaS}[1]{\ensuremath{\purple{\Theta[#1]_s}}}
\newcommand{\SLambdaS}[1]{\ensuremath{\orange{\Lambda[#1]_s}}}
\newcommand{\SOmegaS}[1]{\ensuremath{\green{\Omega[#1]_s}}}
\newcommand{\PresA}[1]{\ensuremath{\llbracket #1 \rrbracket}_a}
\newcommand{\BPresA}[1]{\ensuremath{\blue{\llbracket #1 \rrbracket_a}}}
\newcommand{\PPresA}[1]{\ensuremath{\purple{\llbracket #1 \rrbracket_a}}}
\newcommand{\OPresA}[1]{\ensuremath{\orange{\llbracket #1 \rrbracket_a}}}
\newcommand{\Fix}[1]{\ensuremath{\fF[#1]}}
\newcommand{\FixIm}[1]{\ensuremath{\cF[#1]}}
\newcommand{\FixBigIm}[1]{\ensuremath{\widehat{\cF}[#1]}}
\newcommand{\pFix}[1]{\ensuremath{\purple{\Fix{#1}}}}
\newcommand{\oFix}[1]{\ensuremath{\orange{\Fix{#1}}}}
\newcommand{\SymSp}{\ensuremath{\operatorname{SymSp}}}
\patchcmd{\@maketitle}{\global\topskip42\p@\relax}
  {\global\topskip42\p@\relax \vspace*{-38pt}}
  {}{}
\renewcommand*{\backref}[1]{}
\renewcommand*{\backrefalt}[4]{%
    \ifcase #1 (Not cited.)%
    \or        (Cited on page~#2.)%
    \else      (Cited on pages~#2.)%
    \fi}
\newcommand*{\Cdot}[1][1.25]{%
  \mathpalette{\CdotAux{#1}}\cdot%
}
\newdimen\CdotAxis
\newcommand*{\CdotAux}[3]{%
  {%
    \settoheight\CdotAxis{$#2\vcenter{}$}%
    \sbox0{%
      \raisebox\CdotAxis{%
        \scalebox{#1}{%
          \raisebox{-\CdotAxis}{%
            $\mathsurround=0pt #2#3$%
          }%
        }%
      }%
    }%
    \dp0=0pt %
    \sbox2{$#2\bullet$}%
    \ifdim\ht2<\ht0 %
      \ht0=\ht2 %
    \fi
    \sbox2{$\mathsurround=0pt #2#3$}%
    \hbox to \wd2{\hss\usebox{0}\hss}%
  }%
}
\numberwithin{equation}{section}
\theoremstyle{plain}
\newtheorem{theorem}{Theorem}[section]
\newtheorem{maintheorem}{Theorem}
\newtheorem{maintheoremprime}{Theorem}
\newtheorem{proposition}[theorem]{Proposition}
\newtheorem{lemma}[theorem]{Lemma}
\newtheorem{corollary}[theorem]{Corollary}
\newtheorem{question}[theorem]{Question}
\newtheorem*{unnumberedclaim}{Claim}
\newenvironment{step}[1]
 {\stepx}
 {\endstepx}
\newenvironment{case}[1]
 {\casex}
 {\endcasex}
\newenvironment{claim}[1]
 {\claimx}
 {\endclaimx}
\theoremstyle{definition}
\newtheorem{asm}[theorem]{Assumption}
\newenvironment{assumption}[1][]{\begin{asm}[#1]\pushQED{\qed}}{\popQED \end{asm}}
\newtheorem{defn}[theorem]{Definition}
\newenvironment{definition}[1][]{\begin{defn}[#1]\pushQED{\qed}}{\popQED \end{defn}}
\newtheorem{notn}[theorem]{Notation}
\newtheorem{warn}[theorem]{Warning}
\newenvironment{warning}[1][]{\begin{warn}[#1]\pushQED{\qed}}{\popQED \end{warn}}
\theoremstyle{remark}
\newtheorem{rmk}[theorem]{Remark}
\newenvironment{remark}[1][]{\begin{rmk}[#1] \pushQED{\qed}}{\popQED \end{rmk}}
\newtheorem{eg}[theorem]{Example}
\newtheorem{cvn}[theorem]{Convention}
\newenvironment{convention}[1][]{\begin{cvn}[#1] \pushQED{\qed}}{\popQED \end{cvn}}
\theoremstyle{plain}
\DeclareMathOperator{\Hom}{Hom}
\DeclareMathOperator{\Image}{Im}
\DeclareMathOperator{\GL}{GL}
\DeclareMathOperator{\SL}{SL}
\DeclareMathOperator{\Sp}{Sp}
\DeclareMathOperator{\fsl}{\mathfrak{sl}}
\newcommand\Z{\ensuremath{\mathbb{Z}}}
\newcommand\Q{\ensuremath{\mathbb{Q}}}
\DeclareMathOperator{\HH}{H}
\DeclareMathOperator{\Aut}{Aut}
\DeclareMathOperator{\Sym}{Sym}
\newcommand\notsubset{\ensuremath{\not\subset}}
\newcommand\Span[1]{\ensuremath{\langle #1 \rangle}}
\newcommand\Set[2]{\ensuremath{\left\{\text{#1 $|$ #2}\right\}}}
\newcommand\SetLong[4]{\ensuremath{\{\text{#1 $|$ \parbox[t]{\widthof{#4\quad}}{#2\}#3}}}}
\newcommand\cB{\ensuremath{\mathcal{B}}}
\newcommand\cF{\ensuremath{\mathcal{F}}}
\newcommand\cK{\ensuremath{\mathcal{K}}}
\newcommand\cO{\ensuremath{\mathcal{O}}}
\newcommand\cT{\ensuremath{\mathcal{T}}}
\newcommand\cU{\ensuremath{\mathcal{U}}}
\newcommand\cV{\ensuremath{\mathcal{V}}}
\newcommand\cW{\ensuremath{\mathcal{W}}}
\newcommand\cZ{\ensuremath{\mathcal{Z}}}
\newcommand\fA{\ensuremath{\mathfrak{A}}}
\newcommand\fF{\ensuremath{\mathfrak{F}}}
\newcommand\fH{\ensuremath{\mathfrak{H}}}
\newcommand\fK{\ensuremath{\mathfrak{K}}}
\newcommand\fQ{\ensuremath{\mathfrak{Q}}}
\newcommand\fS{\ensuremath{\mathfrak{S}}}
\newcommand\fT{\ensuremath{\mathfrak{T}}}
\newcommand\fV{\ensuremath{\mathfrak{V}}}
\newcommand\fZ{\ensuremath{\mathfrak{Z}}}
\newcommand\fc{\ensuremath{\mathfrak{c}}}
\newcommand\tkappa{\ensuremath{\widetilde{\kappa}}}
\newcommand\tPhi{\ensuremath{\widetilde{\Phi}}}
\newcommand\oI{\ensuremath{\overline{I}}}
\newcommand\oU{\ensuremath{\overline{U}}}
\newcommand\okappa{\ensuremath{\overline{\kappa}}}
\newcommand\oPhi{\ensuremath{\overline{\Phi}}}
\newcommand\ofc{\ensuremath{\overline{\fc}}}
\newcommand\oeta{\ensuremath{\overline{\eta}}}
\newcommand\oomega{\ensuremath{\overline{\omega}}}
\newcommand\hfc{\ensuremath{\widehat{\fc}}}
\newcommand\trace{\ensuremath{\operatorname{tr}}}
\newcommand\Mod{\ensuremath{\operatorname{Mod}}}
\newcommand\prim{\ensuremath{\operatorname{prim}}}
\title{Presentations of representations}
\author{Daniel Minahan}
\address{Dept of Mathematics; University of Chicago; Chicago, IL 60637}
\email{dminahan@uchicago.edu}
\author{Andrew Putman}
\address{Dept of Mathematics; University of Notre Dame; 255 Hurley Hall; Notre Dame, IN 46556}
\email{andyp@nd.edu}
\thanks{AP was supported by NSF grant DMS-2305183.  DM was supported by NSF grant DMS-2402060.}
\begin{document}

\newpage

\begin{abstract}
We give a new technique for constructing presentations by generators and relations for representations of groups like $\SL_n(\Z)$ and
$\Sp_{2g}(\Z)$.  Our results play 
an important role in recent work of the authors calculating $\HH_2$ 
of the Torelli group.
\end{abstract}

\maketitle
\thispagestyle{empty}

\section{Introduction}
\label{section:introduction}

In this paper, we give a new approach to constructing presentations by generators and relations
for representations\footnote{In this paper, representations are always defined over the field $\Q$, so
a representation of a group $G$ is a $\Q$-vector space $V$ equipped with a linear action of $G$.} of groups like $\SL_n(\Z)$ and $\Sp_{2g}(\Z)$.  The representations we have
in mind are finite-dimensional.  However, their presentations have infinitely many generators and relations,
so this finite-dimensionality is not obvious.  Our main goal is to identify
a representation we constructed in our work on the second homology group
of the Torelli group in \cite{MinahanPutmanAbelian, MinahanPutmanH2}.  These papers make essential
use of Theorems \ref{maintheorem:presentationalt} -- \ref{maintheorem:presentationsym} below.

\subsection{Special linear group, standard representation}

We start with an easy example.
A set $\{v_1,\ldots,v_k\}$ of vectors in $\Z^n$ is a {\em partial basis} if
it can be extended to a basis $\{v_1,\ldots,x_n\}$.  For $v \in \Z^n$, the set
$\{v\}$ is partial basis precisely when $v$ is a {\em primitive vector}, i.e.,
is not divisible by any integer $d \geq 2$.  

\begin{definition}
\label{definition:slstd}
Define $\fQ_n$ to be the $\Q$-vector space with the following presentation:
\begin{itemize}
\item {\bf Generators}. A generator $\QGen{v}$ for all primitive vectors $v \in \Z^n$.  Here $\QGen{v}$ should be interpreted
as a formal symbol associated to $v$.
\item {\bf Relations}.  For a partial basis $\{v_1,v_2\}$ of $\Z^n$, the relation
$\QGen{v_1}+\QGen{v_2} = \QGen{v_1+v_2}$.\qedhere
\end{itemize}
\end{definition}

The group $\SL_n(\Z)$ acts on the set of primitive vectors in $\Z^n$.  This induces
an action of $\SL_n(\Z)$ on $\fQ_n$, so $\fQ_n$ is a representation of $\SL_n(\Z)$.
Since $\fQ_n$ has infinitely many generators and relations, it
is not a priori clear if it is finite-dimensional.

Define $\Phi\colon \fQ_n \rightarrow \Q^n$ via the formula
$\Phi(\QGen{v}) = v$.  This takes relations to relations, and thus gives a well-defined
map that we call the {\em linearization map}.  Similar maps we will define in other
contexts will also be called linearization maps.  We will prove:

\begin{maintheorem}
\label{maintheorem:slstd}
For $n \geq 2$, the linearization map $\Phi\colon \fQ_n \rightarrow \Q^n$ is an isomorphism.
\end{maintheorem}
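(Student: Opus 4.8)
The plan is to verify that $\Phi$ is surjective, which is immediate, and that $\dim_{\Q}\fQ_n \le n$; together these force $\Phi$ to be an isomorphism, since a surjection from a space of dimension at most $n$ onto the $n$-dimensional space $\Q^n$ is bijective. Surjectivity holds because $\Phi(\QGen{e_i}) = e_i$ and the $e_i$ span $\Q^n$. For the dimension bound, write $W \subseteq \fQ_n$ for the span of the $n$ elements $\QGen{e_1},\dots,\QGen{e_n}$; since the $\QGen{v}$ span $\fQ_n$, it suffices to show $\QGen{v} \in W$ for every primitive $v \in \Z^n$.

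The workhorse is an elementary move lemma: if $\{a,b\}$ is a partial basis of $\Z^n$, then $\QGen{b+ma} = \QGen{b} + m\QGen{a}$ for all $m \in \Z$. One proves this by induction on $m \ge 0$: the case $m = 0$ is trivial, and for $m \ge 1$ the pair $\{b+(m-1)a,\,a\}$ is again a partial basis (because $\Z a \oplus \Z b = \Z a \oplus \Z(b+(m-1)a)$ as a direct summand of $\Z^n$), so the defining relation of $\fQ_n$ gives $\QGen{b+(m-1)a} + \QGen{a} = \QGen{b+ma}$, and one concludes by induction. The case $m < 0$ follows by applying the $m>0$ case with $b+ma$ in place of $b$. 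Note $n \ge 2$ is used here only to guarantee that partial bases of size $2$ exist.

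Now I invoke the $\SL_n(\Z)$-action on $\fQ_n$ described above, $g\cdot\QGen{v} = \QGen{gv}$, together with the classical fact that for $n \ge 2$ the group $\SL_n(\Z)$ is generated by the elementary matrices $E_{ij}$ (the identity with a $1$ in position $(i,j)$, $i \ne j$). I claim $W$ is $\SL_n(\Z)$-stable. Since these generators and their inverses all have the form $E_{ij}(m)$ (the identity with an $m$ in position $(i,j)$), it is enough to check $E_{ij}(m)\cdot W \subseteq W$ for all $i \ne j$ and $m \in \Z$; indeed $E_{ij}(m)\cdot\QGen{e_k} = \QGen{e_k} \in W$ for $k \ne j$, while $E_{ij}(m)\cdot\QGen{e_j} = \QGen{e_j + m e_i} = \QGen{e_j} + m\QGen{e_i} \in W$ by the move lemma applied to the partial basis $\{e_i,e_j\}$. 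Finally, every primitive $v$ is part of a basis of $\Z^n$, hence equals $Me_1$ for some $M \in \SL_n(\Z)$ (after possibly negating one of the other basis vectors to make the determinant $+1$), so $\QGen{v} = M\cdot\QGen{e_1} \in M\cdot W = W$. Thus $W = \fQ_n$, so $\dim_\Q\fQ_n \le n$, and $\Phi$ is an isomorphism.

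The one genuinely technical ingredient is the move lemma, where the only subtlety is the equivalence ``$\{a,b\}$ a partial basis $\iff \{a,b+ma\}$ a partial basis'', valid because the two pairs span the same rank-$2$ direct summand of $\Z^n$; the rest is bookkeeping (in particular, checking at each step that the relation of Definition~\ref{definition:slstd} is only invoked for genuine partial bases). I expect the main conceptual point to be the decision to route the argument through the group action rather than attempt a direct combinatorial reduction: peeling off standard basis vectors from an arbitrary primitive $v$ runs into number-theoretic obstructions (a vector such as $(6,10,15)$ admits no such peeling), whereas the group-theoretic argument sidesteps these entirely, and the precise value $\QGen{v} = \sum_i v_i\QGen{e_i}$ can then be read off simply by applying $\Phi$.
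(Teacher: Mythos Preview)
Your proof is correct and follows essentially the same approach as the paper: both show that the span $W$ of the $\QGen{e_i}$ is $\SL_n(\Z)$-stable by checking elementary generators, and then use transitivity of $\SL_n(\Z)$ on primitive vectors to conclude $W = \fQ_n$. Your move lemma packages the $\epsilon = \pm 1$ cases slightly more uniformly than the paper does, and you phrase the endgame as a dimension count rather than ``$\Phi|_{\Span{S}}$ is an isomorphism,'' but these are cosmetic differences.
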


For the proof, let $\cB = \{e_1,\ldots,e_n\}$ be the standard basis
for $\Z^n$ and let $S = \{\QGen{e_1},\ldots,\QGen{e_n}\}$.  The map
$\Phi$ takes $S$ bijectively to the basis $\cB$ for $\Q^n$, so the restriction
of $\Phi$ to $\Span{S}$ is an isomorphism.  To prove Theorem \ref{maintheorem:slstd}, we must
prove that $\Span{S} = \fQ_n$.  For this, let 
$v \in \Z^n$ be a primitive vector.  Write
$v = \lambda_1 e_1 + \cdots + \lambda_n e_n$ with $\lambda_1,\ldots,\lambda_n \in \Z$.
We must prove that
\[\QGen{v} = \lambda_1 \QGen{e_1} + \cdots + \lambda_n \QGen{e_n}.\]
We will prove this by studying the action of $\SL_n(\Z)$ on $\fQ_n$.

\begin{remark}
That $\Span{S} = \fQ_n$ 
can be also be proved directly, and to help the reader appreciate the efficiency of our proof
we encourage them to work this out.  Our approach is the only one
we are aware of that can be adapted to prove the other results in this paper.
\end{remark}

\begin{remark}
Theorem \ref{maintheorem:slstd} implies not only that $\fQ_n$ is finite-dimensional, but also that
the $\SL_n(\Z)$-action on it extends to $\GL_n(\Q)$.  This is not obvious from the definition of $\fQ_n$.
\end{remark}

\begin{remark}
Theorem \ref{maintheorem:slstd} is false for $n=1$.  Indeed, $\Z^1$ has two
primitive vectors $\pm 1$, so $\fQ_1$ has two generators
$[1]$ and $[-1]$ and no relations.  It follows that $\fQ_1 \cong \Q^2$.
\end{remark}

\begin{remark}
The technique we use to prove Theorem \ref{maintheorem:slstd} is very flexible, and for instance
can also prove appropriate versions of Theorem \ref{maintheorem:slstd} with $\Z$ replaced by
a field.\footnote{Though for general fields the relations would need to be expanded slightly.}  Similar remarks apply to our other theorems.  Since this paper is already long
and technical, we chose to not attempt to state our results in maximal generality.
\end{remark}

\subsection{Adjoint representation}

The proof technique we use for Theorem \ref{maintheorem:slstd} can also be used
to construct presentations of things like tensor powers, symmetric powers, and exterior powers
of $\Q^n$.  There are numerous possibilities for the exact form of the relations, so
rather than try to prove a general theorem we will give one interesting variant.
Recall that the adjoint representation of $\SL_n(\Q)$ is the kernel $\fsl_n(\Q)$
of the trace map
\[\trace\colon (\Q^n)^{\ast} \otimes \Q^n \longrightarrow \Q\]
defined by $\trace(f,v) = f(v)$.  The dual space
$(\Q^n)^{\ast} = \Hom(\Q^n,\Q)$ contains the lattice
$(\Z^n)^{\ast} = \Hom(\Z^n,\Z)$.
Define the following:

\begin{definition}
\label{definition:sladjoint}
Define $\fA_n$ to be the $\Q$-vector space with the following presentation:
\begin{itemize}
\item {\bf Generators}.  A generator $\AGen{f,v}$ for all primitive vectors
$f \in (\Z^n)^{\ast}$ and $v \in \Z^n$ such that $f(v) = 0$.
\item {\bf Relations}.  The following two families of relations:
\begin{itemize}
\item For all primitive vectors $f \in (\Z^n)^{\ast}$ and
all partial bases $\{v_1,v_2\}$ of $\ker(f)$, the relation
$\AGen{f,v_1+v_2} = \AGen{f,v_1} + \AGen{f,v_2}$.
\item For all primitive vectors $v \in \Z^n$ and all partial bases
$\{f_1,f_2\}$ of 
\[\ker(v) = \Set{$f \in (\Z^n)^{\ast}$}{$f(v) = 0$},\]
the relation $\AGen{f_1+f_2,v} = \AGen{f_1,v}+\AGen{f_2,v}$.\qedhere
\end{itemize}
\end{itemize}
\end{definition}

Define $\Phi\colon \fA_n \rightarrow (\Q^n)^{\ast} \otimes \Q^n$ via the formula
$\Phi(\AGen{f,v}) = f \otimes v$.  This takes relations to relations, and thus
gives a well-defined linearization map with $\Image(\Phi) \subset \fsl_n(\Q)$. 
We will prove:

\begin{maintheorem}
\label{maintheorem:sladjoint}
For $n \geq 3$, the linearization map $\Phi\colon \fA_n \rightarrow \fsl_n(\Q)$ is an isomorphism.
\end{maintheorem}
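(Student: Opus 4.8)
The plan is to mimic the proof of Theorem~\ref{maintheorem:slstd}. Let $e_1,\dots,e_n$ be the standard basis of $\Z^n$, let $e_1^{\ast},\dots,e_n^{\ast}$ be the dual basis of $(\Z^n)^{\ast}$, and set
\[
S = \Set{$\AGen{e_i^{\ast},e_j}$}{$1\le i,j\le n$ and $i\ne j$} \cup \Set{$\AGen{e_i^{\ast}-e_{i+1}^{\ast},\,e_i+e_{i+1}}$}{$1\le i\le n-1$} \subseteq \fA_n.
\]
Each listed symbol is a legitimate generator (the relevant vectors are primitive and the pairings vanish), and $|S| = n(n-1)+(n-1) = n^2-1 = \dim\fsl_n(\Q)$. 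One computes $\Phi(\AGen{e_i^{\ast},e_j}) = e_i^{\ast}\otimes e_j$ and $\Phi(\AGen{e_i^{\ast}-e_{i+1}^{\ast},\,e_i+e_{i+1}}) = (e_i^{\ast}\otimes e_i - e_{i+1}^{\ast}\otimes e_{i+1}) + e_i^{\ast}\otimes e_{i+1} - e_{i+1}^{\ast}\otimes e_i$, so $\Phi(\Span{S})$ contains all off-diagonal matrix units together with all $e_i^{\ast}\otimes e_i - e_{i+1}^{\ast}\otimes e_{i+1}$, i.e.\ a basis of $\fsl_n(\Q)$. Hence $\Phi$ is surjective, and once we know $\Span{S} = \fA_n$ we automatically get $\dim\fA_n \le |S| = n^2-1 = \dim\fsl_n(\Q)$, so the surjection $\Phi$ is forced to be an isomorphism. (In particular this is what would make $\fA_n$ finite-dimensional, which is not clear a priori.)

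So the entire content is the claim $\Span{S} = \fA_n$, which I would establish exactly as in Theorem~\ref{maintheorem:slstd}. First I would check that $\SL_n(\Z)$ acts transitively on the generating symbols of $\fA_n$: given primitive $f \in (\Z^n)^{\ast}$ and primitive $v \in \Z^n$ with $f(v) = 0$, the subspace $\ker f$ is a rank-$(n-1)$ direct summand of $\Z^n$ in which $v$ is still primitive, so $v$ extends to a basis of $\ker f$; adjoining a vector sent to $1$ by $f$ produces a basis of $\Z^n$ realizing some $g \in \GL_n(\Z)$ with $g \cdot \AGen{e_n^{\ast},e_1} = \AGen{f,v}$, and when $n \ge 3$ one may negate another basis vector of $\ker f$ to arrange $\det g = 1$. (This is the first place the hypothesis $n \ge 3$ rather than $n \ge 2$ is needed.) It then suffices to show that $W := \Span{S}$ is $\SL_n(\Z)$-invariant, since then $W$ contains the $\SL_n(\Z)$-orbit of $\AGen{e_n^{\ast},e_1} \in S$, hence all generating symbols, hence $W = \fA_n$.

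To prove $W$ is $\SL_n(\Z)$-invariant, I would check $t_{ij}\cdot x \in W$ for each elementary matrix $t_{ij} = I + E_{ij}$ (these generate $\SL_n(\Z)$) and each $x \in S$, using the explicit formula $t_{ij}\cdot\AGen{f,v} = \AGen{f\circ t_{ij}^{-1},\,t_{ij}v}$: such a $t_{ij}$ either fixes $x$ or replaces it by a generator of one of a handful of explicit shapes. The main tool for getting those back into $W$ is \emph{slice-linearity}: for fixed primitive $f$, the symbols $\AGen{f,v}$ with $v \in \ker f$ primitive satisfy all the relations coming from partial bases of $\ker f$, so their span is a quotient of a copy of $\fQ_{n-1}$; by Theorem~\ref{maintheorem:slstd} applied to $\ker f$ (valid since $n-1 \ge 2$) we get $\AGen{f,\sum a_i w_i} = \sum a_i \AGen{f,w_i}$ for any basis $\{w_i\}$ of $\ker f$, and symmetrically for the slices obtained by fixing $v$ and varying $f \in \ker v$. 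This already yields the sign identities $\AGen{-f,v} = -\AGen{f,v}$ and $\AGen{f,-v} = -\AGen{f,v}$, and it is what allows one to reduce the more complicated symbols produced by the $t_{ij}$ — such as $\AGen{e_i^{\ast}-e_j^{\ast},\,e_i+e_j}$ with $|i-j| > 1$, or $\AGen{e_i^{\ast}-2e_{i+1}^{\ast},\,2e_i+e_{i+1}}$ — to combinations of elements of $S$, by alternately splitting in the two families of slices in a terminating, Euclidean-algorithm-style procedure. Granting $W = \fA_n$, the theorem then follows from the dimension count in the first paragraph.

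The step I expect to be the main obstacle is this last one — the case analysis for the $t_{ij}$ acting on the off-diagonal and Cartan generators together with the systematic reduction of the symbols that appear. The genuinely new difficulty compared with Theorem~\ref{maintheorem:slstd} is that $\fA_n$ carries two families of relations and an elementary matrix can increase the ``complexity'' of the covector part and of the vector part of a generator simultaneously, so unlike in Theorem~\ref{maintheorem:slstd} — where $t_{ij}\cdot\QGen{e_k}$ is either $\QGen{e_k}$ or $\QGen{e_i+e_j} = \QGen{e_i}+\QGen{e_j}$, handled by a single relation — the reduction here is not a one-line argument but a bounded piece of bookkeeping. The transitivity input and the dimension count, by contrast, are routine.
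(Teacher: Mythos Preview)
Your proposal is correct and follows essentially the same approach as the paper: the same choice of $S$ (up to an immaterial sign convention --- the paper uses $\AGen{e_i^{\ast}+e_{i+1}^{\ast},\,e_i-e_{i+1}}$ rather than your $\AGen{e_i^{\ast}-e_{i+1}^{\ast},\,e_i+e_{i+1}}$), the same transitivity argument, the same slice-linearity deduced from Theorem~\ref{maintheorem:slstd}, and the same reduction to checking that elementary matrices preserve $\Span{S}$. Your dimension-count packaging of Step~1 is a minor cosmetic variant of the paper's explicit verification that $\Phi|_{\Span{S}}$ is injective; both arguments are equivalent once $\Span{S}=\fA_n$ is known.

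You have correctly located the real work. The paper carries out your anticipated ``bounded piece of bookkeeping'' by first assembling an explicit auxiliary set $E$ of elements (the $\AGen{e_i^{\ast}\pm e_j^{\ast},\,e_i\mp e_j}$ for all $i\ne j$, and the $\AGen{e_i^{\ast}+2e_j^{\ast},\,2e_i-e_j}$) and proving each lies in $\Span{S}$; the key trick is to introduce a third index $k$ and pass through $\AGen{e_i^{\ast}+e_j^{\ast}+e_k^{\ast},\,\cdot}$, alternately splitting in the covector and vector slots --- which is exactly the alternating-slice idea you describe, though in practice it is organized as a short list of explicit identities rather than an open-ended Euclidean procedure. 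Once $E\subset\Span{S}$, the elementary-matrix check is a direct case analysis.
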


\begin{remark}
Theorem \ref{maintheorem:sladjoint} is trivial for $n=1$ since $\fA_1 = \fsl_1(\Q) = 0$.  It
is false for $n=2$ since for primitive $f \in (\Z^2)^{\ast}$ and $v \in \Z^2$ we have $\ker(f),\ker(v) \cong \Z^1$.
This implies that $\fA_2$ has no relations, and thus is an infinite-dimensional vector
space with basis the set of all its generators $\AGen{f,v}$.
\end{remark}

\subsection{Symplectic group, standard representation}

We next turn to the symplectic group $\Sp_{2g}(\Z)$.  Set $H = \Q^{2g}$
and $H_{\Z} = \Z^{2g}$.  Let
$\omega\colon H \times H \longrightarrow \Q$
be the standard symplectic form, so $\Sp_{2g}(\Z)$ consists of all $M \in \GL_{2g}(\Z)$ such that
$\omega(M \Cdot v, M \Cdot w) = \omega(v,w)$ for all $v,w \in H$.
The following is an $\Sp_{2g}(\Z)$-analogue of $\fQ_n$:

\begin{definition}
\label{definition:spstd}
Define $\fH_g$ to be the $\Q$-vector space with the following presentation:
\begin{itemize}
\item {\bf Generators}. A generator $\HGen{v}$ for all primitive vectors $v \in H_{\Z}$.  
\item {\bf Relations}.  For a partial basis $\{v_1,v_2\}$ of $H_{\Z}$ with $\omega(v_1,v_2)=0$, the relation
$\HGen{v_1}+\HGen{v_2} = \HGen{v_1+v_2}$.\qedhere
\end{itemize}
\end{definition}

The action of $\Sp_{2g}(\Z)$ on $H_{\Z}$ induces an action of $\Sp_{2g}(\Z)$ on $\fH_g$.
Given Theorem \ref{maintheorem:slstd}, it is natural to expect that $\fH_g \cong H$.  However,
identifying $H$ with $\Q^{2g}$ this would imply that $\fH_g \cong \fQ_{2g}$.  The vector
spaces $\fH_g$ and $\fQ_{2g}$ have the same generators, but $\fH_g$ has fewer relations.
It seems hard to directly write each relation in $\fQ_{2g}$ in terms of the relations
in $\fH_g$.
Nevertheless, define $\Phi\colon \fH_g \rightarrow H$ via the formula
$\Phi(\HGen{v}) = v$.  This takes relations to relations, and thus gives a well-defined
linearization map.  We will prove:

\begin{maintheorem}
\label{maintheorem:spstd}
For $g \geq 2$, the linearization map $\Phi\colon \fH_g \rightarrow H$ is an isomorphism.
\end{maintheorem}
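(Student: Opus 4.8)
The plan is to adapt the proof of Theorem~\ref{maintheorem:slstd}. Fix a symplectic basis $a_1,b_1,\dots,a_g,b_g$ of $H_{\Z}$, so that $\omega(a_i,b_i)=1$ and all other pairings of basis vectors vanish, and set $S=\{\HGen{a_1},\HGen{b_1},\dots,\HGen{a_g},\HGen{b_g}\}$ and $W=\Span{S}\subseteq\fH_g$. Since $\Phi$ carries $S$ bijectively to a basis of $H$, the restriction $\Phi|_W\colon W\to H$ is an isomorphism, so it is enough to prove $W=\fH_g$. Exactly as in Theorem~\ref{maintheorem:slstd}, I would deduce this from the assertion that $W$ is an $\Sp_{2g}(\Z)$-subrepresentation: granting that, for an arbitrary primitive $v\in H_{\Z}$ pick $M\in\Sp_{2g}(\Z)$ with $M\Cdot a_1=v$ (possible since $\Sp_{2g}(\Z)$ acts transitively on primitive vectors of $H_{\Z}$), whence $\HGen{v}=M\Cdot\HGen{a_1}\in W$, so that $W$ contains every generator.

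To prove that $W$ is $\Sp_{2g}(\Z)$-invariant I would verify invariance under a generating set of $\Sp_{2g}(\Z)$, together with the inverses of its members. First, a short manipulation of the defining relations gives $\HGen{-v}=-\HGen{v}$ for every primitive $v\in H_{\Z}$ (one uses $g\geq 2$ to place $v$ in an $\omega$-orthogonal partial basis). Granting this, the ``off-diagonal'' elementary symplectic matrices, which alter the $i$th hyperbolic plane using the $j$th one for some $j\neq i$ (for instance $a_i\mapsto a_i+a_j$ together with $b_j\mapsto b_j-b_i$, or $a_i\mapsto a_i+b_j$ together with $a_j\mapsto a_j+b_i$), carry $S$ into $W$: each such matrix sends a basis vector either to itself or to a sum of two $\omega$-orthogonal vectors spanning the same rank-two summand, so a defining relation of $\fH_g$ (or its rearrangement) applies on the nose. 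The same holds for their inverses.

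What remains are the ``diagonal'' transvections $T_{a_i}\colon b_i\mapsto b_i+a_i$ and $T_{b_i}\colon a_i\mapsto a_i+b_i$; these, together with the off-diagonal generators above, generate $\Sp_{2g}(\Z)$. For them the needed instance $\HGen{a_i+b_i}=\HGen{a_i}+\HGen{b_i}$ is \emph{not} a defining relation, because $\omega(a_i,b_i)=1\neq 0$, and establishing it is the heart of the matter. I would prove it using the auxiliary plane $\langle a_2,b_2\rangle$ (this is where $g\geq 2$ is indispensable): the pairs $\{a_1+b_2,\ b_1+a_2\}$ and $\{a_1-b_2,\ b_1-a_2\}$ are each $\omega$-orthogonal partial bases, with sums $a_1+b_1+(a_2+b_2)$ and $a_1+b_1-(a_2+b_2)$. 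Expanding both using the defining relations and $\HGen{-v}=-\HGen{v}$, and splitting the two sums along the orthogonal decomposition $H=\langle a_1,b_1\rangle\oplus\langle a_2,b_2\rangle$, produces two identities whose sum is $2\HGen{a_1+b_1}=2\bigl(\HGen{a_1}+\HGen{b_1}\bigr)$. Dividing by $2$, which is legitimate since $\fH_g$ is a $\Q$-vector space, gives $\HGen{a_1+b_1}=\HGen{a_1}+\HGen{b_1}$, and an entirely analogous computation gives $\HGen{a_1-b_1}=\HGen{a_1}-\HGen{b_1}$; conjugating by the off-diagonal generators spreads these identities to every hyperbolic plane, so $T_{a_i}^{\pm1}$ and $T_{b_i}^{\pm1}$ preserve $W$ as well.

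Combining the last two paragraphs, $W$ is invariant under a generating set of $\Sp_{2g}(\Z)$ and under all inverses, hence is an $\Sp_{2g}(\Z)$-subrepresentation; by the first paragraph $W=\fH_g$, and $\Phi$ is an isomorphism. The main obstacle is the identity $\HGen{a_1+b_1}=\HGen{a_1}+\HGen{b_1}$: it is precisely the statement that the non-orthogonal relations present in $\fQ_{2g}$ but absent from $\fH_g$ are in fact consequences of the orthogonal ones, it is the step that breaks for $g=1$ (where the theorem is false), and checking that the auxiliary vectors above genuinely form $\omega$-orthogonal partial bases, and that the orthogonal splittings are valid, is the one bookkeeping point that needs care.
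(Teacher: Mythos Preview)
Your proposal is correct and follows essentially the same approach as the paper: both identify $S=\{\HGen{a_i},\HGen{b_i}\}$, reduce to showing $\Sp_{2g}(\Z)$ preserves $\Span{S}$, and the heart of both is the identical computation proving $\HGen{a_1\pm b_1}=\HGen{a_1}\pm\HGen{b_1}$ by expanding $\HGen{(a_1+b_1)\pm(a_2+b_2)}$ two ways. The only cosmetic difference is the generating set: the paper uses the Hua--Reiner generators $\SymSp_g\cup\{X_1,X_1^{-1},Y_{12}\}$ (so only $X_1^{\pm1}$ needs the non-orthogonal identity), while you use the full set of elementary symplectic matrices; your ``conjugating by the off-diagonal generators spreads these identities'' is a bit loose---simpler to just note the computation works verbatim for any $i$ using any auxiliary plane---but this is minor.
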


The proof is similar to that of Theorem \ref{maintheorem:slstd}, though the details
are harder since the group theory of $\Sp_{2g}(\Z)$ is less 
uniform than $\SL_n(\Z)$.

\begin{remark}
Theorem \ref{maintheorem:spstd} implies that the action of $\Sp_{2g}(\Z)$ on $\fH_g$ extends to an action of $\Sp_{2g}(\Q)$.  In
fact, it even extends to an action of $\GL_{2g}(\Q)$.  This seems hard to see directly
from the presentation.
\end{remark}

\begin{remark}
Theorem \ref{maintheorem:spstd} is false for $g =1$.  Indeed, $\fH_1$ has infinitely
many generators but no relations, so $\fH_1$ is infinite-dimensional.
\end{remark}

\subsection{Symplectic kernel}

We now discuss another representation of $\Sp_{2g}(\Z)$ that is similar to the adjoint representation
$\fsl_n(\Q)$.  The symplectic form $\omega$ induces a map $\wedge^2 H \rightarrow \Q$.  Let
$\cZ_g^a$ be its kernel.\footnote{The ``a'' in $\cZ_g^a$ stands for ``alternating''.}  
Say that $v_1,v_2 \in H$ are {\em orthogonal} if $\omega(v_1,v_2) = 0$.  For $v \in H$,
let $v^{\perp}$ be the set of all elements of $H$ that are orthogonal to $v$.  For $v \in H_{\Z}$,
let $v^{\perp}_{\Z}$ be the set of all element of $H_{\Z}$ that are orthogonal to $v$.

\begin{definition}
\label{definition:spkernelalt}
Define $\fZ_g^a$ to be the $\Q$-vector space with the following presentation:
\begin{itemize}
\item {\bf Generators}.  A generator $\ZGenA{v_1,v_2}$ for all orthogonal primitive vectors $v_1,v_2 \in H_{\Z}$.
\item {\bf Relations}.  The following two families of relations:
\begin{itemize}
\item For all generators $\ZGenA{v_1,v_2}$, the relation $\ZGenA{v_2,v_1} = -\ZGenA{v_1,v_2}$.
\item For all primitive vectors $v \in H_{\Z}$ and all partial bases $\{w_1,w_2\}$ of $v^{\perp}_{\Z}$,
the relation $\ZGenA{v,w_1 + w_2} = \ZGenA{v,w_1} + \ZGenA{v,w_2}$.\qedhere
\end{itemize}
\end{itemize}
\end{definition}

The group $\Sp_{2g}(\Z)$ acts on $\fZ_g^a$ via its action on $H_{\Z}$.
Define $\Phi\colon \fZ_g^a \rightarrow \wedge^2 H$ via the formula 
$\Phi(\ZGenA{w_1,w_2}) = w_1 \wedge w_2$.  This takes relations to relations, and thus gives a well-defined linearization map
with $\Image(\Phi) \subset \cZ_g^a$.  We will prove:

\begin{maintheorem}
\label{maintheorem:spkernelalt}
For $g \geq 1$, the linearization map $\Phi\colon \fZ_g^a \rightarrow \cZ_g^a$ is an isomorphism.
\end{maintheorem}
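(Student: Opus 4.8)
The plan is to mimic the strategy used for Theorems~\ref{maintheorem:slstd} and~\ref{maintheorem:spstd}: fix a symplectic basis and show that the corresponding generators span all of $\fZ_g^a$, then check that $\Phi$ restricts to an isomorphism on that span. Concretely, let $\{a_1,b_1,\ldots,a_g,b_g\}$ be a standard symplectic basis for $H_{\Z}$ (so $\omega(a_i,b_i)=1$ and all other pairings vanish), and let $S$ be the set of generators $\ZGenA{v_1,v_2}$ where $v_1,v_2$ range over those basis vectors that are orthogonal (i.e., all pairs except $\{a_i,b_i\}$), taken say with $v_1$ before $v_2$ in the fixed ordering. The image $\Phi(\Span{S})$ is spanned by the wedges $v_1 \wedge v_2$ of distinct orthogonal basis vectors, and these are visibly linearly independent in $\wedge^2 H$ and lie in $\cZ_g^a$; moreover they span $\cZ_g^a$, since $\cZ_g^a$ has basis $\{a_i \wedge a_j, a_i \wedge b_j, b_i \wedge b_j \mid \text{appropriate } i,j\} \cup \{a_i \wedge b_i - a_j \wedge b_j\}$ and the latter differences already lie in $\Span{S}$ via $\ZGenA{a_i,b_j}$-type... no, more carefully: $a_i \wedge b_i$ itself is \emph{not} in the image of any single generator, so I must be slightly careful and instead observe that the antisymmetrized generators $\ZGenA{v_1,v_2}+\ZGenA{v_2,v_1}=0$, together with the fact that $\cZ_g^a$ is spanned by wedges of \emph{orthogonal} primitive vectors (any decomposable $v\wedge w$ with $\omega(v,w)=0$), lets one check $\Phi|_{\Span S}$ is injective with image all of $\cZ_g^a$. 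So, as in the earlier proofs, everything reduces to showing $\Span{S}=\fZ_g^a$.

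To prove $\Span{S}=\fZ_g^a$, the key is to exploit the $\Sp_{2g}(\Z)$-action. First I would show that $\Span{S}$ is $\Sp_{2g}(\Z)$-invariant: the stabilizer of the flag/configuration defining $S$ acts on $S$ in a controlled way, and more importantly a general group element can be reduced to the identity by a sequence of elementary symplectic transvections, each of which is shown to preserve $\Span{S}$ by a direct relation-chase. Concretely, for a generator $\ZGenA{v_1,v_2}$ with $v_1,v_2$ orthogonal primitive vectors, I want to write $\ZGenA{v_1,v_2}$ as an explicit element of $\Span{S}$. The additivity relation in the second slot lets me, for fixed primitive $v$, treat $w \mapsto \ZGenA{v,w}$ as a ``partial homomorphism'' on $v^{\perp}_{\Z}$, additive along partial bases of $v^{\perp}_{\Z}$; and by Theorem~\ref{maintheorem:slstd} applied to the lattice $v^{\perp}_{\Z}\cong\Z^{2g-1}$ (or rather its primitive-vector quotient), this additivity already pins down $\ZGenA{v,w}$ in terms of $\ZGenA{v,e}$ for $e$ running over a basis of $v^{\perp}_{\Z}$. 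Iterating in the first slot via the antisymmetry relation, one reduces an arbitrary generator to a $\Z$-linear combination of elements $\ZGenA{e_i,e_j}$ with $e_i,e_j$ in a fixed symplectic basis, i.e.\ to $\Span{S}$, modulo handling the ``diagonal'' pairs $\{a_i,b_i\}$ which are not allowed as generators.

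I expect the main obstacle to be exactly that last point: unlike in the $\fQ_n$ case, the orthogonality constraint means that not every pair of basis vectors gives a generator, so the reduction argument cannot be carried out one coordinate at a time in a naive way — one must move a vector into the span of a hyperbolic \emph{plane} rather than a coordinate line, and ensure the intermediate vectors stay primitive and stay orthogonal to whatever is being held fixed. This is where the less uniform group theory of $\Sp_{2g}(\Z)$ (as flagged after Theorem~\ref{maintheorem:spstd}) bites: one needs enough transvections supported on $v^{\perp}$ to act transitively on partial bases of $v^{\perp}_{\Z}$, which requires $g\geq 1$ but is delicate for small $g$. I would handle $g=1$ separately — there $\fZ_1^a$ has one generator $\ZGenA{a_1,b_1}$... wait, $\omega(a_1,b_1)=1\neq 0$, so that pair gives \emph{no} generator, hence the only orthogonal primitive pairs are $\ZGenA{v,v}$ which by antisymmetry vanish and $\ZGenA{v,w}$ with $w$ a rational multiple of $v$; so $\fZ_1^a=0=\cZ_1^a$, matching the claim — and then push the inductive spanning argument for $g\geq 2$ by peeling off one hyperbolic summand at a time, reducing to the $g-1$ case on the orthogonal complement of a hyperbolic plane.
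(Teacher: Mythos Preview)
Your proposal has a genuine gap: the set $S$ you choose is too small. The image $\Phi(\Span{S})$ is the span of the wedges $x\wedge y$ with $x,y$ orthogonal elements of the symplectic basis, and this is a proper subspace of $\cZ_g^a$ of codimension $g-1$ for $g\ge 2$: it misses the classes $a_i\wedge b_i - a_j\wedge b_j$. You notice this yourself (``the latter differences\ldots no, more carefully\ldots'') but never resolve it. Since $\cZ_g^a$ is an irreducible $\Sp_{2g}(\Z)$-representation, $\Phi(\Span{S})$ cannot be $\Sp_{2g}(\Z)$-invariant, and therefore your Step~3 must fail. Concretely, the Hua--Reiner generator $Y_{12}$ (which sends $a_1\mapsto a_1+b_2$ and $a_2\mapsto a_2+b_1$) takes $\ZGenA{a_1,a_2}\in S$ to $\ZGenA{a_1+b_2,\,a_2+b_1}$, and you cannot expand the second slot along the basis because neither $a_2$ nor $b_1$ lies in $(a_1+b_2)^{\perp}$. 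Fixing this would require enlarging $S$ by adjoining elements such as $\ZGenA{a_i-b_j,\,b_i-a_j}$ that hit the diagonal part --- exactly the kind of auxiliary $S_3$ the paper is forced to introduce later for $\fK_g^a$ --- and then carrying out a much longer verification.

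The paper avoids all of this. It defines a non-symmetrized version $\fZ_g$ (dropping the antisymmetry relation), observes that the symplectic form identifies $H_{\Z}$ with its dual and hence gives a bijection between the generators and relations of $\fZ_g$ and those of the adjoint presentation $\fA_{2g}$, so $\fZ_g\cong\fA_{2g}$ tautologically. Theorem~\ref{maintheorem:sladjoint} then gives $\fZ_g\cong\cZ_g$ for $g\ge 2$, and splitting into $\pm 1$-eigenspaces of the flip involution yields Theorems~\ref{maintheorem:spkernelalt} and~\ref{maintheorem:spsym} simultaneously. The $g=1$ case is handled separately as a triviality. (Incidentally, your $g=1$ argument glosses over why $\ZGenA{v,-v}=0$; antisymmetry alone only gives $\ZGenA{v,-v}=-\ZGenA{-v,v}$.)
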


\subsection{Symmetric square}
\label{section:symmetricsquare}

It is also interesting to replace the anti-symmetric relation in $\fZ_g^a$ with
the corresponding symmetric relation:\footnote{The ``s'' in $\fZ_g^s$ stands for ``symmetric''.}

\begin{definition}
\label{definition:spsym}
Define $\fZ_g^s$ to be the $\Q$-vector space with the following presentation:
\begin{itemize}
\item {\bf Generators}.  A generator $\ZGenS{v_1,v_2}$ for all orthogonal primitive vectors $v_1,v_2 \in H_{\Z}$.
\item {\bf Relations}.  The following two families of relations:
\begin{itemize}
\item For all generators $\ZGenS{v_1,v_2}$, the relation $\ZGenS{v_2,v_1} = \ZGenS{v_1,v_2}$.
\item For all primitive vectors $v \in H_{\Z}$ and all partial bases $\{w_1,w_2\}$ of $v^{\perp}_{\Z}$,
the relation $\ZGenS{v,w_1 + w_2} = \ZGenS{v,w_1} + \ZGenS{v,w_2}$.\qedhere
\end{itemize}
\end{itemize}
\end{definition}

Again, $\Sp_{2g}(\Z)$ acts on $\fZ_g^s$.  Define $\Phi\colon \fZ_g^s \rightarrow \Sym^2(H)$ via the formula
$\Phi(\ZGenS{w_1,w_2}) = w_1 \Cdot w_2$. This takes relations to relations, and thus gives a well-defined linearization map.
Since $\Sym^2(H)$ is an irreducible representation of $\Sp_{2g}(\Z)$, it is surjective.  We will prove:

\begin{maintheorem}
\label{maintheorem:spsym}
For $g \geq 2$, the linearization map $\Phi\colon \fZ_g^s \rightarrow \Sym^2(H)$ is an isomorphism.
\end{maintheorem}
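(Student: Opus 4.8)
The plan is to run the same strategy as for Theorem \ref{maintheorem:slstd} (and parallel to Theorems \ref{maintheorem:spstd} and \ref{maintheorem:spkernelalt}): exhibit a finite set $S$ of generators on which $\Phi$ is manifestly an isomorphism onto $\Sym^2(H)$, and then prove $\Span{S} = \fZ_g^s$ using the $\Sp_{2g}(\Z)$-action. Fix a symplectic basis $a_1,b_1,\ldots,a_g,b_g$ of $H_{\Z}$ and let $S$ consist of the generators $\ZGenS{a_i,a_j}$ and $\ZGenS{b_i,b_j}$ for $i \le j$, the generators $\ZGenS{a_i,b_j}$ for $i \ne j$, and the generators $\ZGenS{a_i+b_i,\,a_i+b_i}$ for $1 \le i \le g$; every pair listed here is orthogonal, so these are legitimate generators. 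One checks $|S| = g(2g+1) = \dim \Sym^2(H)$, and $\Phi(S)$ spans $\Sym^2(H)$: the only basis monomials not directly in $\Phi(S)$ are the $a_i \Cdot b_i$, and these are recovered from $\Phi(\ZGenS{a_i+b_i,a_i+b_i}) = a_i \Cdot a_i + 2\,a_i\Cdot b_i + b_i\Cdot b_i$. A surjective linear map out of a space spanned by $|S| = \dim\Sym^2(H)$ vectors is an isomorphism, so $\Phi$ restricts to an isomorphism $\Span{S} \xrightarrow{\sim} \Sym^2(H)$; it remains to prove $\Span{S} = \fZ_g^s$.

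The next step is to analyze, for each primitive $v \in H_{\Z}$, the ``slice'' $\fL_v := \Span{\ZGenS{v,w} \mid w \in v^{\perp}_{\Z}\text{ primitive}}$. Since $v$ is primitive, $\omega(v,-)\colon H_{\Z} \to \Z$ is surjective, so $v^{\perp}_{\Z}$ is a rank-$(2g-1)$ direct summand of $H_{\Z}$; in particular a partial basis of $v^{\perp}_{\Z}$ is a partial basis of $H_{\Z}$, and $v$ itself is primitive in $v^{\perp}_{\Z}$. For fixed $v$, the elements $\ZGenS{v,w}$ satisfy exactly the defining relations of Definition \ref{definition:slstd} for the lattice $v^{\perp}_{\Z} \cong \Z^{2g-1}$ (this is the content of the second family of relations of Definition \ref{definition:spsym}), so $\QGen{w} \mapsto \ZGenS{v,w}$ descends to a surjection $\fQ_{2g-1} \twoheadrightarrow \fL_v$; as $2g-1 \ge 2$, Theorem \ref{maintheorem:slstd} gives $\fQ_{2g-1} \cong \Q^{2g-1}$, so $\fL_v$ is spanned by $\ZGenS{v,u_1},\ldots,\ZGenS{v,u_{2g-1}}$ for any $\Z$-basis $u_1,\ldots,u_{2g-1}$ of $v^{\perp}_{\Z}$, and moreover $\ZGenS{v,-w} = -\ZGenS{v,w}$ for every primitive $w \in v^{\perp}_{\Z}$. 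Choosing the basis with $u_1 = v$, we conclude that $\fZ_g^s$ is spanned by the generators $\ZGenS{v,v}$ (for $v$ primitive) together with the generators $\ZGenS{v,w}$ for which $\{v,w\}$ is an orthogonal partial basis of $H_{\Z}$. For $g \ge 2$, $\Sp_{2g}(\Z)$ acts transitively on primitive vectors and on orthogonal partial bases $\{v,w\}$ of $H_{\Z}$ (the latter by the Witt-type extension property of the unimodular symplectic lattice $H_{\Z}$, which makes sense since a rank-two isotropic summand exists only when $g \ge 2$), so $\fZ_g^s$ is spanned by the two $\Sp_{2g}(\Z)$-orbits of $\ZGenS{a_1,a_1}$ and $\ZGenS{a_1,a_2}$ --- both of which belong to $S$.

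It therefore suffices to prove that $\Span{S}$ is $\Sp_{2g}(\Z)$-invariant, since then it contains those two orbits and hence all of $\fZ_g^s$, which combined with the first paragraph completes the proof. As in Theorem \ref{maintheorem:slstd}, it is enough to verify that a generating set of symplectic transvections maps $S$ into $\Span{S}$. Given a transvection $T$ and $\ZGenS{x,y} \in S$, one has $T \Cdot \ZGenS{x,y} = \ZGenS{Tx,Ty} \in \fL_{Tx}$, and the slice computation of the previous paragraph rewrites this as a $\Z$-combination of generators $\ZGenS{Tx,u}$ with $u$ running over a basis of $(Tx)^{\perp}_{\Z}$ adapted to the symplectic basis; each such term is then brought back inside $\Span{S}$ using the symmetry relation, additivity along partial bases, and $\ZGenS{v,-w} = -\ZGenS{v,w}$. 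For most pairs $(T, \ZGenS{x,y})$ this is immediate because $T$ fixes the basis vectors involved; the genuinely substantive cases are the diagonal generators $\ZGenS{v,v}$ that $T$ moves off the symplectic basis, which must be expanded through a chain of additivity relations into the generators $\ZGenS{a_i,a_i}$, $\ZGenS{b_i,b_i}$ and $\ZGenS{a_i+b_i,a_i+b_i}$, following the pattern forced by $(u+w)\Cdot(u+w) = u\Cdot u + 2\,u\Cdot w + w\Cdot w$ in $\Sym^2(H)$. This invariance verification --- and within it the bookkeeping for the diagonal generators, where the requisite identities among the $\ZGenS{v,v}$ must be produced from the relations in Definition \ref{definition:spsym} rather than simply read off in $\Sym^2(H)$ --- is the main obstacle.
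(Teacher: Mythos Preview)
Your approach differs from the paper's. You run the three-step technique of \S\ref{section:prooftechnique} directly on $\fZ_g^s$, whereas the paper (\S\ref{section:spkernel}) introduces the non-symmetrized space $\fZ_g$ of Definition~\ref{definition:spkernel}, shows $\fZ_g = \fZ_g^s \oplus \fZ_g^a$ as the $\pm 1$-eigenspaces of the flip involution (Lemma~\ref{lemma:decomposespkernel}), and then proves $\fZ_g \cong \cZ_g$ by identifying $\fZ_g$ with $\fA_{2g}$ via the isomorphism $H_{\Z} \cong H_{\Z}^\ast$ coming from $\omega$ (Theorem~\ref{theorem:spkernel}). Theorem~\ref{maintheorem:sladjoint} does the work, and matching eigenspaces yields Theorems~\ref{maintheorem:spsym} and~\ref{maintheorem:spkernelalt} simultaneously. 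The paper explicitly remarks that your direct route would also succeed, and your Steps~1 and~2 are correct as written.

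The gap is that your Step~3 is only a sketch, and the case you flag as the ``main obstacle'' is genuinely the hard part. For instance, applying the transvection $a_1 \mapsto a_1+b_1$ to $\ZGenS{a_1+b_1,a_1+b_1} \in S$ yields $\ZGenS{a_1+2b_1,a_1+2b_1}$; the relations of Definition~\ref{definition:spsym} only let you expand the second slot along a partial basis of $(a_1+2b_1)^{\perp}_{\Z} = \Span{a_1+2b_1,a_2,b_2,\ldots,a_g,b_g}$, which does not decompose $a_1+2b_1$ itself. Pushing this through requires an auxiliary-index manoeuvre of the kind used in Step~3.A of the proofs of Theorems~\ref{maintheorem:sladjoint} and~\ref{maintheorem:spstd}, and once written out it is comparable in effort to the proof of Theorem~\ref{maintheorem:sladjoint}. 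So your plan is sound, but the work you defer is precisely what the paper's reduction to Theorem~\ref{maintheorem:sladjoint} sidesteps.
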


\subsection{Quotient representation}

Our final theorems\footnote{These are the theorems that are needed for our work on the Torelli group
in \cite{MinahanPutmanAbelian, MinahanPutmanH2}, and thus in some sense are the main point of this paper.} 
are about a subrepresentation of the $\Sp_{2g}(\Q)$-representation
$(\cZ_g^a)^{\otimes 2}$.  Above we defined $\cZ_g^a$ as 
the kernel of the map $\wedge^2 H \rightarrow \Q$ given by $\omega$.  For our final
theorems, it is more natural to view it as a quotient of $\wedge^2 H$.
The symplectic form $\omega$ on $H$ identifies $H$ with its dual.  Using this, we can
identify alternating forms on $H$ with elements of $\wedge^2 H$.  If
$\{a_1,b_1,\ldots,a_g,b_g\}$ is a symplectic basis for $H$, then
\[\omega = a_1 \wedge b_1 + \cdots a_g \wedge b_g.\]
The span of $\omega$ in $\wedge^2 H$ is a copy of $\Q$.  The
quotient $(\wedge^2 H)/\Q$ is isomorphic to $\cZ_g^a$.  Since $\omega$ lies
in $\wedge^2 H_{\Z}$, the quotient $(\wedge^2 H)/\Q$ has a lattice
$(\wedge^2 H_{\Z})/\Z$.

\begin{remark}
Except for a few places where clarity will demand we be more careful, our
notation will not distinguish elements of $\wedge^2 H$ from their
images in $(\wedge^2 H)/\Q$.  For instance, for $x,y \in H$
we will often write $x \wedge y$ for the corresponding element of
$(\wedge^2 H)/\Q$.
\end{remark}

\subsection{Symmetric contraction}
\label{section:symmetriccontractionintro}

The {\em symmetric contraction} is the alternating bilinear map
\begin{equation}
\label{eqn:definesymmetric}
\fc\colon ((\wedge^2 H)/\Q) \times ((\wedge^2 H)/\Q) \longrightarrow \Sym^2(H)
\end{equation}
defined as follows.  Start by letting
\[\hfc\colon (\wedge^2 H) \times (\wedge^2 H) \rightarrow \Sym^2(H)\]
be the alternating bilinear map defined by the formula
\[\text{$\hfc(x \wedge y,z \wedge w) = \omega(x,z) y \Cdot w - \omega(x,w) y \Cdot z - \omega(y,z) x \Cdot w + \omega(y,w) x \Cdot z$ for $x,y,z,w \in H$}.\]
This makes sense since the right hand side is alternating in $x$ and $y$ and also 
alternating in $z$ and $w$.  Regarding $\omega$ as an element of $\wedge^2 H$,
we have $\hfc(\omega,-)=0$ and $\hfc(-,\omega)=0$.  Indeed:
\begin{itemize}
\item Both $\hfc(\omega,-)$ and $\hfc(-,\omega)$ are maps
$\wedge^2 H \rightarrow \Sym^2(H)$.  The representation $\Sym^2(H)$ of $\Sp_{2g}(\Q)$
is irreducible and is not isomorphic to either of the two irreducible factors
$\Q$ and $(\wedge^2 H)/\Q$ of $\wedge^2 H$.  Thus the only
map $\wedge^2 H \rightarrow \Sym^2(H)$ is the zero map.
\item Alternatively, this 
can be seen directly 
using the fact that for a symplectic basis $\{a_1,b_1,\ldots,a_g,b_g\}$ of $H$ we have
$\omega = a_1 \wedge b_1 + \cdots + a_g \wedge b_g$.
\end{itemize}
Either way, this implies that $\hfc$ induces a map $\fc$ as in \eqref{eqn:definesymmetric}.

\subsection{Symmetric kernel}
\label{section:symmetrickernel}

The {\em symmetric kernel}, denoted $\cK_g^a$, is the kernel of the map
\[\wedge^2 ((\wedge^2 H)/\Q) \longrightarrow \Sym^2(H)\]
associated to $\fc$.  
Say that $\kappa_1,\kappa_2 \in (\wedge^2 H)/\Q$ are {\em sym-orthogonal}
if $\fc(\kappa_1,\kappa_2) = 0$, in which case $\kappa_1 \wedge \kappa_2 \in \cK_g^a$.
For $\kappa \in (\wedge^2 H)/\Q$, the {\em symmetric orthogonal complement}
of $\kappa$, denoted $\kappa^{\perp}$, consists of all $\kappa' \in (\wedge^2 H)/\Q$
that are sym-orthogonal to $\kappa$.

\subsection{Symplectic pairs}

A {\em symplectic pair} is an element of $(\wedge^2 H_{\Z})/\Z$ of the form
$a \wedge b$, where $a,b \in H_{\Z}$ are such that $\omega(a,b) = 1$.
Equivalently, there exists a symplectic basis $\{a_1,b_1,\ldots,a_g,b_g\}$
for $H_{\Z}$ with $a_1 = a$ and $b_1 = b$.  For $X \subset \wedge^2 H$, let $\overline{X}$
be its image in $(\wedge^2 H)/\Q$.  Also, for $V \subset H_{\Z}$ let $V_{\Q} = V \otimes \Q \subset H$.
We will later prove that for a symplectic pair $a \wedge b$ we have 
$(a \wedge b)^{\perp} = \overline{\wedge^2 \Span{a,b}_{\Q}^{\perp}}$.  
See Lemma \ref{lemma:symplecticorthogonal}.

\begin{remark}
A symplectic pair is an element of $(\wedge^2 H_{\Z})/\Z$, and can be expressed
in many ways as $a \wedge b$ with $a,b \in H_{\Z}$ satisfying $\omega(a,b) = 1$.  For instance,
if $a \wedge b$ is a symplectic pair, then
$a \wedge b = (2a+b) \wedge (a+b)$.
\end{remark}  

\subsection{Symmetric kernel presentation}
We now make the following definition:

\begin{definition}
\label{definition:kgalt}
Define $\fK_g^a$ to be the $\Q$-vector space with the following presentation:
\begin{itemize}
\item {\bf Generators}. 
A generator $\PresA{\kappa_1,\kappa_2}$ for all sym-orthogonal
$\kappa_1,\kappa_2 \in (\wedge^2 H)/\Q$ such that either
$\kappa_1$ or $\kappa_2$ (or both) is a symplectic pair in $(\wedge^2 H_{\Z})/\Z$.
\item {\bf Relations}.  The following two families of relations:
\begin{itemize}
\item For all generators $\PresA{\kappa_1,\kappa_2}$, the relation $\PresA{\kappa_2,\kappa_1}=-\PresA{\kappa_1,\kappa_2}$.
\item For all symplectic pairs $a \wedge b \in (\wedge^2 H_{\Z})/\Z$ and all
$\kappa_1,\kappa_2 \in (\wedge^2 H)/\Q$ that are sym-orthogonal to $a \wedge b$ 
and all $\lambda_1,\lambda_2 \in \Q$, the relation
\[\PresA{a \wedge b,\lambda_1 \kappa_1 + \lambda_2 \kappa_2} = \lambda_1 \PresA{a \wedge b,\kappa_1} + \lambda_2 \PresA{a \wedge b,\kappa_2}.\qedhere\]
\end{itemize}
\end{itemize}
\end{definition}

The group $\Sp_{2g}(\Z)$ acts on $\fK_g^a$ via its action on $H_{\Z}$.
Define $\Phi\colon \fK_g^a \rightarrow \wedge^2 (\wedge^2 H)/\Q$ via the formula 
$\Phi(\PresA{\kappa_1,\kappa_2}) = \kappa_1 \wedge \kappa_2$.
This takes relations to relations, and thus gives a well-defined linearization map.  Since
the $\kappa_i$ are sym-orthogonal, the image of $\Phi$ lies
in the symmetric kernel $\cK_g^a$.
We will prove:

\begin{maintheorem}
\label{maintheorem:presentationalt}
For $g \geq 4$, the linearization map $\Phi\colon \fK_g^a \rightarrow \cK_g^a$ is an isomorphism.
\end{maintheorem}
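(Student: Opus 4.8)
The plan is to follow the three-move template behind Theorems~\ref{maintheorem:slstd}--\ref{maintheorem:spsym}: reduce the generators of $\fK_g^a$ to a single $\Sp_{2g}(\Z)$-orbit, identify the target $\cK_g^a$ as a representation, and then do the real work of bounding $\dim\fK_g^a$ from above. For the reduction, fix a symplectic basis $\{a_1,b_1,\ldots,a_g,b_g\}$ of $H_\Z$ and put $W=\Span{a_2,b_2,\ldots,a_g,b_g}$. After applying the antisymmetry relation we may assume the first slot of a generator $\PresA{\kappa_1,\kappa_2}$ is a symplectic pair, and since $\Sp_{2g}(\Z)$ acts transitively on symplectic pairs, every generator lies in the orbit of one of the form $\PresA{a_1\wedge b_1,\kappa}$. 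By Lemma~\ref{lemma:symplecticorthogonal} the admissible $\kappa$ fill out $(a_1\wedge b_1)^{\perp}=\overline{\wedge^2 W_\Q}$, a subspace of $(\wedge^2 H)/\Q$ of dimension $\binom{2g-2}{2}$ on which, by the bilinearity relation, $\kappa\mapsto\PresA{a_1\wedge b_1,\kappa}$ is linear, and which is invariant under the stabilizer $\Gamma\le\Sp_{2g}(\Z)$ of $a_1\wedge b_1$. Thus $\fK_g^a$ is a quotient of the induced module $\Q[\Sp_{2g}(\Z)]\otimes_{\Q[\Gamma]}(a_1\wedge b_1)^{\perp}$, and $\Phi$ is determined by its restriction there.

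For the target, decompose $\wedge^2\big((\wedge^2 H)/\Q\big)$ into $\Sp_{2g}(\Q)$-irreducibles by a plethysm computation; then $\cK_g^a$ is the complement of the $\Sym^2 H$-summand carved out by the symmetric contraction $\fc$. Since $\Sp_{2g}(\Z)$ is Zariski dense in $\Sp_{2g}$, the image of $\Phi$ is an $\Sp_{2g}(\Q)$-subrepresentation of $\cK_g^a$, so surjectivity of $\Phi$ comes down to exhibiting, for each irreducible constituent of $\cK_g^a$, a highest weight vector realized by a short combination of sym-orthogonal decomposables $\kappa_1\wedge\kappa_2$ with $\kappa_1$ a symplectic pair. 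Granting surjectivity, $\Phi$ becomes an isomorphism as soon as $\dim\fK_g^a\le\dim\cK_g^a$.

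That inequality is the crux, and it is where the hypothesis $g\ge4$ is spent. I expect to prove it with the machinery for extracting presentations of arithmetic-group representations from group actions: build a simplicial complex (or poset) $\cX_g$ whose vertices are symplectic pairs and whose simplices are families of symplectic pairs that span a symplectic direct summand --- a symplectic analogue of the complex of partial bases --- show that $\cX_g$ is highly connected, with the connectivity range growing in $g$, and run the resulting spectral-sequence (or colimit) argument. This expresses $\fK_g^a$ in terms of the representations attached to the links of simplices of $\cX_g$, which are $\wedge^2$ and $\Sym^2$ of the symplectic complements $\Span{a,b}_\Q^{\perp}$ of ranks $2(g-1), 2(g-2),\dots$; here Theorems~\ref{maintheorem:spkernelalt} and \ref{maintheorem:spsym} enter to identify those local pieces, which is one source of the lower bound on $g$, the other being the connectivity threshold for $\cX_g$. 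The main obstacle is precisely establishing the connectivity of $\cX_g$ and tracking which identities among the bivectors $\kappa_1\wedge\kappa_2$ in $\cK_g^a$ are forced by the two relation families of Definition~\ref{definition:kgalt}; by comparison, the plethysm decomposition, the surjectivity check, and the final bookkeeping should be routine.
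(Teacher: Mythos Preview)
Your proposal is a plan rather than a proof, and the route it sketches diverges sharply from the paper's. The paper uses no simplicial complex, connectivity theorem, or spectral sequence. It follows the template of \S\ref{section:prooftechnique} directly but in a far more elaborate form: it builds an explicit spanning set $S=S_{12}\cup S_3\subset\fK_g^a$ out of subspaces $\Fix{a,a'}$ (each identified with a piece of $\cK_g^a$ via the variant adjoint presentation Theorem~\ref{maintheorem:sladjointvar}), and then verifies by hand, across eight technical lemmas, that the Hua--Reiner generators $X_1,X_1^{-1},Y_{12}$ preserve $\Span S$. Having shown $\Span S=\fK_g^a$, it proves $\Phi|_{\Span{S_{12}}}$ is an isomorphism onto $\cK_g^a\cap\Span{T_1,T_2}$ for an explicit generating set $T_1\cup T_2\cup T_3$ of $\wedge^2((\wedge^2 H)/\Q)$; the remaining quotient $\fT_g=\fK_g^a/\Span{S_{12}}$ is shown to be spanned by ``basic elements'' $\Delta^i_{jk}$, three explicit relations among them are derived by calculation, and these yield an inductive bound $\dim\fT_g\le g(g-2)=\dim\cT_g$.

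Beyond the acknowledged missing connectivity proof, your plan has more basic gaps. The observation that $\fK_g^a$ is a quotient of $\Q[\Sp_{2g}(\Z)]\otimes_{\Q[\Gamma]}(a_1\wedge b_1)^{\perp}$ is correct but useless for a dimension bound, since that induced module is infinite-dimensional. Even granting connectivity of $\cX_g$, you have not specified what coefficient system on $\cX_g$ has $\fK_g^a$ as its $H_0$, nor why the relations of Definition~\ref{definition:kgalt} coincide with the edge-relations of that system; the standard output of such arguments is a presentation, and matching it to the given one is exactly the hard part. Finally, the assertion that the local pieces are identified by Theorems~\ref{maintheorem:spkernelalt} and \ref{maintheorem:spsym} is unsubstantiated: in the paper those theorems feed in only indirectly, and the actual identification of the stabilizer-representations (the $\Fix{a,a'}$) goes through the special-linear presentations $\fQ'_n,\fA'_n$ of \S\ref{section:slstdvar}--\ref{section:sladjointvar}, which you have not invoked.
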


Theorem \ref{maintheorem:presentationalt} plays a key role in our work on $\HH_2$
of the Torelli group in \cite{MinahanPutmanAbelian, MinahanPutmanH2}.

\begin{remark}
Just like in our previous theorems, it is not obvious from the definitions that $\fK_g^a$ is finite-dimensional
or that the $\Sp_{2g}(\Z)$-action on it extends to $\Sp_{2g}(\Q)$.  We will only see these two facts
at the very end of our proof.  It is unclear if either fact holds for $g \leq 3$.
\end{remark}

\subsection{Symmetric square, II}

Just like we did for $\fZ_g$ in \S \ref{section:symmetricsquare}, it is also interesting to replace the anti-symmetric relation in $\fK_g^a$ with
the corresponding symmetric relation: 

\begin{definition}
\label{definition:kgsym}
Define $\fK_g^s$ to be the $\Q$-vector space with the following presentation:
\begin{itemize}
\item {\bf Generators}.  
A generator $\PresS{\kappa_1,\kappa_2}$ for all sym-orthogonal
$\kappa_1,\kappa_2 \in (\wedge^2 H)/\Q$ such that either
$\kappa_1$ or $\kappa_2$ (or both) is a symplectic pair in $(\wedge^2 H_{\Z})/\Z$.
\item {\bf Relations}.  The following two families of relations:
\begin{itemize}
\item For all generators $\PresS{\kappa_1,\kappa_2}$, the relation $\PresS{\kappa_2,\kappa_1}=\PresS{\kappa_1,\kappa_2}$.
\item For all symplectic pairs $a \wedge b \in (\wedge^2 H_{\Z})/\Z$ and all
$\kappa_1,\kappa_2 \in (\wedge^2 H)/\Q$ that are sym-orthogonal to $a \wedge b$  
and all $\lambda_1,\lambda_2 \in \Q$, the relation
\[\PresS{a \wedge b,\lambda_1 \kappa_1 + \lambda_2 \kappa_2} = \lambda_1 \PresS{a \wedge b,\kappa_1} + \lambda_2 \PresS{a \wedge b,\kappa_2}.\qedhere\]
\end{itemize}
\end{itemize}
\end{definition}

Define a linearization map $\Phi\colon \fK_g^s \rightarrow \Sym^2((\wedge^2 H)/\Q)$ via the formula
$\Phi(\PresS{\kappa_1,\kappa_2}) = \kappa_1 \Cdot \kappa_2$.  Unlike $\Sym^2(H)$, the representation $\Sym^2((\wedge^2 H)/\Q)$ of
$\Sp_{2g}(\Z)$ is not irreducible.  However, it turns out that $\Phi$ is surjective.  In fact:

\begin{maintheorem}
\label{maintheorem:presentationsym}
For $g \geq 4$, the linearization map $\Phi\colon \fK_g^s \rightarrow \Sym^2((\wedge^2 H)/\Q)$ is an isomorphism.
\end{maintheorem}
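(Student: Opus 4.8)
The plan is to follow the template used throughout the paper and, concretely, to adapt the proof of Theorem~\ref{maintheorem:presentationalt} by replacing the antisymmetry relations with the symmetry relations everywhere. Since $\fK_g^s$ and $\fK_g^a$ have the same generators and the same linearity relations and differ only in the sign attached to swapping the two slots, the combinatorial core of that argument --- rewriting an arbitrary generator in terms of a fixed family of ``standard'' ones using the two given relations together with the $\Sp_{2g}(\Z)$-action --- should transfer with only symmetric-versus-antisymmetric sign bookkeeping. Fix a symplectic basis $\{a_1,b_1,\ldots,a_g,b_g\}$ for $H_{\Z}$. By Lemma~\ref{lemma:symplecticorthogonal}, $(a_1\wedge b_1)^{\perp}=\overline{\wedge^2\Span{a_1,b_1}_{\Q}^{\perp}}=\overline{\wedge^2\Span{a_2,b_2,\ldots,a_g,b_g}_{\Q}}$, so the generators $\PresS{a_1\wedge b_1,\kappa}$ with $\kappa$ a wedge of two of $a_2,b_2,\ldots,a_g,b_g$ make sense; let $S$ be this set of generators together with one slot-swap of each. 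The proof reduces to two assertions: $\Span{S}=\fK_g^s$, and $\Phi$ restricted to $\Span{S}$ is an isomorphism onto $\Sym^2((\wedge^2 H)/\Q)$. Granting both, $\Phi$ is an isomorphism, and one reads off (as in the previous theorems) that $\fK_g^s$ is finite-dimensional and that its $\Sp_{2g}(\Z)$-action extends to $\Sp_{2g}(\Q)$.

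To prove $\Span{S}=\fK_g^s$, start with an arbitrary generator $\PresS{\kappa_1,\kappa_2}$. Using the symmetry relation, move a symplectic pair into the first slot; using transitivity of $\Sp_{2g}(\Z)$ on symplectic pairs, reduce to first slot $a_1\wedge b_1$. By Lemma~\ref{lemma:symplecticorthogonal} the second slot then lies in the $\Q$-span of the basis wedges of $a_2,b_2,\ldots,a_g,b_g$, so the linearity relation (legitimate because $a_1\wedge b_1$ is a symplectic pair) rewrites $\PresS{a_1\wedge b_1,\kappa_2}$ as a $\Q$-linear combination of generators $\PresS{a_1\wedge b_1,\kappa_2'}$ with $\kappa_2'$ a basis wedge. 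Each such generator is then pushed into $\Span{S}$ using the stabilizer of $a_1\wedge b_1$ in $\Sp_{2g}(\Z)$ --- which contains a copy of $\Sp_{2(g-1)}(\Z)$ acting on $\Span{a_2,b_2,\ldots,a_g,b_g}_{\Z}$ together with transvections in the $a_1,b_1$ directions --- precisely as in the proof of Theorem~\ref{maintheorem:presentationalt}. The hypothesis $g\geq 4$ enters here exactly as it does there: one needs enough rank for the supporting facts about the rank-$(g-1)$ symplectic group and its representations, and enough basis vectors to realize the various symplectic pairs used as pivots.

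It remains to show $\Phi|_{\Span{S}}$ is an isomorphism onto $\Sym^2((\wedge^2 H)/\Q)$. In contrast with $\Sym^2(H)$ in Theorem~\ref{maintheorem:spsym}, the target here is reducible, so Schur's lemma is unavailable and one must instead check directly that $\Phi$ carries $S$ onto a spanning set and that $\dim\fK_g^s=\dim\Sym^2((\wedge^2 H)/\Q)$. For spanning, the point is that although a product $\mu\Cdot\nu$ of basis wedges of $(\wedge^2 H)/\Q$ need not itself be $\Phi$ of a single generator --- that would require one of $\mu,\nu$ to be a symplectic pair and the two to be sym-orthogonal --- each basis wedge is an integral combination of symplectic pairs, and combining this with the freedom in choosing which symplectic pair occupies a slot (using $g\geq 4$ to route around sym-orthogonality obstructions) one checks that $\Phi(S)$ spans $\Sym^2((\wedge^2 H)/\Q)$. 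A bookkeeping count then matches the number of elements of $S$ modulo the relations among them with $\dim\Sym^2((\wedge^2 H)/\Q)$, which together with surjectivity forces $\Phi|_{\Span{S}}$ to be an isomorphism.

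I expect the main obstacle to be the reduction carried out in the second paragraph: it is not formal that a generator $\PresS{\kappa_1,\kappa_2}$ with neither $\kappa_i$ ``standard'' can be rewritten in terms of $S$ using only the relations of Definition~\ref{definition:kgsym}, because the linearity relation is available only with a symplectic pair in the first slot, which severely restricts the legal moves; making this go through is the content of the delicate group-theoretic argument imported from Theorem~\ref{maintheorem:presentationalt}. The genuinely new point, and the second likely obstacle, is the endgame count $\dim\fK_g^s=\dim\Sym^2((\wedge^2 H)/\Q)$: one must confirm that no relations beyond those of Definition~\ref{definition:kgsym} are forced, and I would expect this --- rather than surjectivity, which is comparatively soft --- to be where the real work of this theorem beyond Theorem~\ref{maintheorem:presentationalt} lies.
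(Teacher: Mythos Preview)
Your proposal has a genuine gap that cannot be repaired by sign bookkeeping: the symmetric case is \emph{not} parallel to the alternating one, because the targets are different in a way that matters. In Theorem~\ref{maintheorem:presentationalt} the target is $\cK_g^a = \ker\fc$, so every basis element you must hit is already of the form $\kappa_1 \wedge \kappa_2$ with $\fc(\kappa_1,\kappa_2)=0$. Here the target is all of $\Sym^2((\wedge^2 H)/\Q)$, which contains elements such as $(a_1\wedge b_1)\Cdot(a_1\wedge a_2)$ and $(a_1\wedge a_2)\Cdot(b_1\wedge b_2)$ where the two factors are \emph{not} sym-orthogonal. There is no generator $\PresS{\kappa_1,\kappa_2}$ of $\fK_g^s$ mapping to such a product, so your set $S$---whose image lies entirely in $(a_1\wedge b_1)\Cdot(a_1\wedge b_1)^\perp$---can never have $\Phi(\Span{S})=\Sym^2((\wedge^2 H)/\Q)$, regardless of how you enlarge it by $\Sp_{2g}(\Z)$-translates of generators. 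Your spanning argument in the third paragraph (``each basis wedge is an integral combination of symplectic pairs'') does not address this: writing $\mu$ and $\nu$ separately as sums of symplectic pairs does nothing to make the cross-terms sym-orthogonal.

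The paper's proof confronts this head-on by constructing, for each ``bad'' basis element of $\Sym^2((\wedge^2 H)/\Q)$, an explicit preimage in $\fK_g^s$ built as a specific $\Q$-linear combination of honest generators. These are the $\Theta$-, $\Lambda$-, and $\Omega$-elements of \S\ref{section:presentationsym2}--\S\ref{section:presentationsym4}; for instance $\ThetaS{a\wedge b,x\wedge b}$ is defined so that $\Phi(\ThetaS{a\wedge b,x\wedge b})=(a\wedge b)\Cdot(x\wedge b)$ even though $\fc(a\wedge b,x\wedge b)\neq 0$. Establishing that these constructions are well-defined, linear in the expected variables, and compatible with the $\Sp_{2g}(\Z)$-action is the bulk of Part~\ref{part:sym} and has no analogue in Part~\ref{part:alt}. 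Separately, your second paragraph conflates ``the $\Sp_{2g}(\Z)$-orbit of $S$ spans $\fK_g^s$'' (easy) with ``$\Span{S}$ is $\Sp_{2g}(\Z)$-invariant'' (the hard step, carried out in \S\ref{section:presentationsym6}--\S\ref{section:presentationsym9}); applying an element of $\Sp_{2g}(\Z)$ to reduce the first slot to $a_1\wedge b_1$ moves you \emph{out} of $\Span{S}$ unless you have already proved invariance.
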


Theorem \ref{maintheorem:presentationsym} is also important for our work on the Torelli group.

\subsection{Final remarks}

Innumerable variants and generalizations of Theorems \ref{maintheorem:slstd} -- \ref{maintheorem:presentationsym} 
can be proved using our techniques.  While these theorems are proved via a common core proof technique, applying
this technique requires calculations that seem special to each theorem.  Theorems \ref{maintheorem:presentationalt} and \ref{maintheorem:presentationsym}
in particular require very elaborate calculations.  There should be a common generalization of all these
results:

\begin{question}
Does there exist a general abstract theorem that specializes to Theorems \ref{maintheorem:slstd}
-- \ref{maintheorem:presentationsym}, as well as their natural generalizations?
\end{question}

\subsection{Notation and conventions}
\label{section:notation}

Throughout this paper, we will let $H = \Q^{2g}$ and $H_{\Z} = \Z^{2g}$, and also
let $\omega\colon H \times H \rightarrow \Q$ be the standard symplectic form on
$H$.

\subsection{Outline}
We prove Theorems \ref{maintheorem:slstd} -- \ref{maintheorem:spsym} in Part \ref{part:1}.
Next, in Part \ref{part:2} we expand the presentations for $\fK_g^a$ and $\fK_g^s$ to ones with a larger
set of generators.  We remark that this expansion uses the same proof technique
as our other theorems.  We use this expanded presentation to prove Theorems \ref{maintheorem:presentationalt} -- \ref{maintheorem:presentationsym}
in Parts \ref{part:alt} and \ref{part:sym}.  We close with Appendix \ref{appendix:presentation}, which
adjusts the presentations given in Theorem \ref{maintheorem:presentationalt} -- \ref{maintheorem:presentationsym}
to make them match up better with our companion papers \cite{MinahanPutmanAbelian, MinahanPutmanH2} on
the Torelli group.

\part{Five easy examples}
\label{part:1}

In this part of the paper, we prove Theorems \ref{maintheorem:slstd} -- \ref{maintheorem:spsym}.
The proof of Theorem \ref{maintheorem:slstd} is in \S \ref{section:slstd}.  We then extract from that proof
an outline of our proof method in \S \ref{section:prooftechnique}.  We then use this proof technique
to prove Theorem \ref{maintheorem:sladjoint} in \S \ref{section:sladjoint}.  This is followed by 
\S \ref{section:slstdvar} and \S \ref{section:sladjointvar}, which prove variants of Theorems \ref{maintheorem:slstd} and \ref{maintheorem:sladjoint}
that will be needed for our work on symmetric kernel.
After the preliminary
\S \ref{section:spgen} on generators for $\Sp_{2g}(\Z)$, we then prove
Theorems \ref{maintheorem:spstd} -- \ref{maintheorem:spsym} in \S \ref{section:spstd} -- \S \ref{section:spkernel}.

\section{Special linear group I: standard representation}
\label{section:slstd}

Recall that $\fQ_n$ is the $\Q$-vector space with the following presentation:
\begin{itemize}
\item {\bf Generators}. A generator $\QGen{v}$ for all primitive vectors $v \in \Z^n$.
\item {\bf Relations}.  For a partial basis $\{v_1,v_2\}$, the relation
$\QGen{v_1}+\QGen{v_2} = \QGen{v_1+v_2}$.
\end{itemize}
Define a linearization map $\Phi\colon \fQ_n \rightarrow \Q^n$ via the formula
$\Phi(\QGen{v}) = v$.  This takes relations to relations, and thus gives a well-defined
map.  Our goal is to prove:

\newtheorem*{maintheorem:slstd}{Theorem \ref{maintheorem:slstd}}
\begin{maintheorem:slstd}
For $n \geq 2$, the linearization map $\Phi\colon \fQ_n \rightarrow \Q^n$ is an isomorphism.
\end{maintheorem:slstd}
\begin{proof}
Let $\cB = \{e_1,\ldots,e_n\}$ be the standard basis for $\Z^n$.
Set $S = \{\QGen{e_1},\ldots,\QGen{e_n}\}$.  The map $\Phi$ takes $S$ bijectively
to $\cB$.  This implies that the restriction of $\Phi$ to $\Span{S}$ is an isomorphism.
To prove that $\Phi$ is an isomorphism, we must prove that $\Span{S} = \fQ_n$.

The group $\SL_n(\Z)$ acts on $\fQ_n$.  Since $\SL_n(\Z)$ acts transitively on primitive vectors,\footnote{This
is where we use the fact that $n \geq 2$.}
it acts transitively on the generators for $\fQ_n$.  It follows that the $\SL_n(\Z)$-orbit
of $S$ spans $\fQ_n$.  To prove that $\Span{S} = \fQ_n$, it is therefore enough to prove
that $\SL_n(\Z)$ takes $\Span{S}$ to itself.

For distinct $1 \leq i,j \leq n$, let $E_{ij} \in \SL_n(\Z)$ be the elementary matrix
obtained from the identity by placing a $1$ at position $(i,j)$.  These 
generate
$\SL_n(\Z)$.  Fixing some distinct $1 \leq i,j \leq n$ and some $\epsilon = \pm 1$, 
it is enough to prove that $E_{ij}^{\epsilon}$ takes $\Span{S}$ to itself.
Consider some $\QGen{e_k} \in S$.  We must prove that $\QGen{E_{ij}^{\epsilon}(e_k)}$
can be written as a linear combination of elements of $S$.  If $k \neq j$, then
$E_{ij}^{\epsilon}(e_k) = e_k$ and there is nothing to prove.  If $k=j$, there are two cases:
\begin{itemize}
\item $\epsilon = 1$.  In this case, $\QGen{E_{ij}(e_j)} = \QGen{e_j + e_i} = \QGen{e_j} + \QGen{e_i} \in \Span{S}$.
\item $\epsilon = -1$.  In this case, $\QGen{E_{ij}^{-1}(e_j)} = \QGen{e_j-e_i}$.  We would like to prove
that this equals $\QGen{e_j} - \QGen{e_i} \in \Span{S}$.  
For this, since $\{e_j-e_i,e_i\}$ is a partial basis we have
\[\QGen{e_j-e_i} + \QGen{e_i}= \QGen{(e_j-e_i) + e_i} = \QGen{e_j}.\qedhere\]
\end{itemize}
\end{proof}

\begin{remark}
\label{remark:negative}
Let $n \geq 2$ and let $v \in \Z^n$ be a primitive vector.  The above implies that $\QGen{-v} = - \QGen{v}$.  Here is how
to prove directly that this holds.  Pick $w \in \Z^n$ such that $\{v,w\}$ is a partial
basis for $\Z^n$.  For $a,b,c,d \in \Z$, the pair
$\{av+bw,cv+dw\}$ forms a partial basis for $\Z^n$ precisely when
\[\det\left(\begin{matrix} a & b \\ c & d \end{matrix}\right) = \pm 1.\]
Using this, we prove that $\QGen{v}+\QGen{-v} = 0$ as follows:
\begin{align*}
\QGen{v} + \QGen{-v} &= \QGen{v} + \left(\QGen{v+w} - \QGen{v+w}\right) + \QGen{-v} + \left(\QGen{-w}-\QGen{-w}\right) \\
                     &= \left(\QGen{v}+\QGen{v+w}\right) + \left(\QGen{-v}+\QGen{-w}\right) - \left(\QGen{v+w}+\QGen{-w}\right) \\
                     &= \QGen{2v+w} + \QGen{-v-w} - \QGen{v} \\
                     &= \QGen{v} - \QGen{v} = 0.\qedhere
\end{align*}
\end{remark}

\section{Outline of proof technique}
\label{section:prooftechnique}

We now abstract a general proof technique from the proof of Theorem \ref{maintheorem:slstd}.
Let $G$ be a group and let $\cV$ be a representation of $G$ that we understand well.  Let
$\fV$ be a representation of $G$ given by generators and relations that we suspect
is isomorphic to $\cV$ and let $\Phi\colon \fV \rightarrow \cV$ be a $G$-equivariant map.  The
following steps will prove that $\Phi$ is an isomorphism:

\begin{step}{1}
Construct a subset $S$ of $\cV$ such that the restriction of $\Phi$ to $\Span{S}$ is
an isomorphism.
\end{step}

One way for this to hold is for $\Phi$ to take $S$ bijectively to a basis for $\cV$.
However, sometimes it is more natural to use a larger $S$ whose image is a generating
set satisfying some relations.

\begin{step}{2}
Prove that the $G$-orbit of $S$ spans $\fV$.  
\end{step}

Since $\fV$ is given by generators and relations,
this is done by making sure that this $G$-orbit contains all the generators.

\begin{step}{3}
Prove that $G$ takes $\Span{S}$ to itself.  By Step 2, this will imply that $\Span{S} = \fV$,
and thus by Step 1 that $\Phi$ is an isomorphism.  
\end{step}

We do this as follows.  Let $\Lambda$ be a generating set for $G$.  Then
it is enough to check that for $f \in \Lambda$ and $s \in S$ the elements
$f(x) \in \fV$ and $f^{-1}(x) \in \fV$ can be written as linear combinations of elements of $S$.
When we proved Theorem \ref{maintheorem:slstd} this step only required the easy identities
\begin{align*}
\QGen{e_1+e_2} &= \QGen{e_1} + \QGen{e_2},\\
\QGen{e_1-e_2} &= \QGen{e_1} - \QGen{e_2}.
\end{align*}
However, for our other theorems this will be the most calculation heavy step, and
the key will be verifying that a large number of explicit elements of $\fV$ lie in $\Span{S}$.

\section{Special linear group II: adjoint representation}
\label{section:sladjoint}

Recall that the adjoint representation of $\SL_n(\Q)$ is the kernel $\fsl_n(\Q)$
of the trace map
\[\trace\colon (\Q^n)^{\ast} \otimes \Q^n \longrightarrow \Q\]
defined by $\trace(f,v) = f(v)$.  Also, recall that $\fA_n$ is the $\Q$-vector
space with the following presentation:
\begin{itemize}
\item {\bf Generators}.  A generator $\AGen{f,v}$ for all primitive vectors
$f \in (\Z^n)^{\ast}$ and $v \in \Z^n$ such that $f(v) = 0$.
\item {\bf Relations}.  The following two families of relations:
\begin{itemize}
\item For all primitive vectors $f \in (\Z^n)^{\ast}$ and
all partial bases $\{v_1,v_2\}$ of $\ker(f)$, the relation
$\AGen{f,v_1+v_2} = \AGen{f,v_1} + \AGen{f,v_2}$.
\item For all primitive vectors $v \in \Z^n$ and all partial bases
$\{f_1,f_2\}$ of
\[\ker(v) = \Set{$f \in (\Z^n)^{\ast}$}{$f(v) = 0$},\]
the relation $\AGen{f_1+f_2,v} = \AGen{f_1,v}+\AGen{f_2,v}$.
\end{itemize}
\end{itemize}
Define $\Phi\colon \fA_n \rightarrow (\Q^n)^{\ast} \otimes \Q^n$ via the formula
$\Phi(\AGen{f,v}) = f \otimes v$.  This takes relations to relations, and thus
gives a well-defined linearization map with $\Image(\Phi) \subset \fsl_n(\Q)$.  Our goal is to prove: 

\newtheorem*{maintheorem:sladjoint}{Theorem \ref{maintheorem:sladjoint}}
\begin{maintheorem:sladjoint}
For $n \geq 3$, the linearization map $\Phi\colon \fA_n \rightarrow \fsl_n(\Q)$ is an isomorphism.
\end{maintheorem:sladjoint}
\begin{proof}
We start with the following, which we will use freely throughout the proof:

\begin{unnumberedclaim}
In $\fA_n$, the following relations hold for all $m \geq 1$:
\begin{itemize}
\item[(i)] For all primitive vectors $f \in (\Z^n)^{\ast}$ and
all primitive vectors $v_1,\ldots,v_m \in \ker(f)$ and all $\lambda_1,\ldots,\lambda_m \in \Z$
such that $\sum_{i=1}^m \lambda_i v_i$ is primitive, we have
\[\AGen{f,\sum_{i=1}^m \lambda_i v_i} = \sum_{i=1}^m \lambda_i \AGen{f,v_i}.\]
\item[(ii)] For all primitive vectors $v \in \Z^n$ and
all primitive vectors $f_1,\ldots,f_m \in \ker(v)$ and all $\lambda_1,\ldots,\lambda_m \in \Z$
such that $\sum_{i=1}^m \lambda_i f_i$ is primitive, we have 
\[\AGen{\sum_{i=1}^m \lambda_i f_i,v} = \sum_{i=1}^m \lambda_i \AGen{f_i,v}.\]
\end{itemize}
\end{unnumberedclaim}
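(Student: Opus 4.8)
The plan is to reduce both parts of the Claim to Theorem \ref{maintheorem:slstd}, exploiting the observation that \emph{fixing} the covector $f$ turns the first family of relations of $\fA_n$ into precisely the defining presentation of $\fQ_{n-1}$ on the lattice $\ker(f)$, and symmetrically that fixing the vector $v$ turns the second family into the presentation of $\fQ_{n-1}$ on $\ker(v)$.

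First I would record the elementary lattice facts that, for a primitive $f \in (\Z^n)^\ast$, the sublattice $\ker(f) \subseteq \Z^n$ is a direct summand of rank $n-1$, and that a vector lying in $\ker(f)$ is primitive in $\ker(f)$ if and only if it is primitive in $\Z^n$; dually, for primitive $v \in \Z^n$ the sublattice $\ker(v) \subseteq (\Z^n)^\ast$ is a direct summand of rank $n-1$ with the analogous primitivity compatibility. Then, for part (i), I would fix a primitive $f$, choose an identification $\ker(f) \cong \Z^{n-1}$, and define a linear map $\psi_f \colon \fQ_{n-1} \to \fA_n$ on generators by $\psi_f(\QGen{v}) = \AGen{f,v}$. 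This is well-defined because each defining relation of $\fQ_{n-1}$ — coming from a partial basis $\{v_1,v_2\}$ of $\ker(f) \cong \Z^{n-1}$ — is carried to a relation in the first family of $\fA_n$. Since $n \geq 3$, we have $n-1 \geq 2$, so Theorem \ref{maintheorem:slstd} applies and tells us $\Phi\colon \fQ_{n-1} \to \Q^{n-1}$ is an isomorphism with $\Phi(\QGen{w}) = w$. Hence for primitive $v_1,\dots,v_m \in \ker(f)$ and integers $\lambda_1,\dots,\lambda_m$ with $\sum_i \lambda_i v_i$ primitive, both $\QGen{\sum_i \lambda_i v_i}$ and $\sum_i \lambda_i \QGen{v_i}$ have image $\sum_i \lambda_i v_i$ under the injective map $\Phi$, so they are equal in $\fQ_{n-1}$; applying $\psi_f$ gives the identity in $\fA_n$. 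Part (ii) is identical after interchanging the roles of vectors and covectors, using $\ker(v) \cong \Z^{n-1}$ and the map $\QGen{f} \mapsto \AGen{f,v}$.

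I do not expect a genuine obstacle here. The two points requiring a little care are (a) matching the well-definedness check for $\psi_f$ against the exact wording of the first (resp. second) relation family of Definition \ref{definition:sladjoint}, and (b) transferring the primitivity hypotheses correctly between $\ker(f)$ and $\Z^n$ (and dually between $\ker(v)$ and $(\Z^n)^\ast$); the latter is also where the hypothesis $n \geq 3$ is genuinely used, since Theorem \ref{maintheorem:slstd} requires the auxiliary rank $n-1$ to be at least $2$.
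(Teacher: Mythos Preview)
Your proposal is correct and follows essentially the same approach as the paper: fix $f$, define $\psi_f\colon \fQ_{n-1}\to\fA_n$ by $\QGen{v}\mapsto\AGen{f,v}$ using an identification $\ker(f)\cong\Z^{n-1}$, check that relations map to relations, and then invoke Theorem~\ref{maintheorem:slstd} (which requires $n-1\ge 2$) to obtain the desired linearity. The paper's writeup is slightly more terse, but your extra remarks about $\ker(f)$ being a rank-$(n-1)$ summand and the compatibility of primitivity are exactly the implicit justifications needed.
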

\begin{proof}
Both are proved the same way, so we will give the details for (i).  Let $f \in (\Z^n)^{\ast}$
be primitive and let $v_1,\ldots,v_m \in \ker(f)$ and $\lambda_1,\ldots,\lambda_m \in \Z$ be as
in the claim.  Choose an isomorphism $\mu\colon \Z^{n-1} \rightarrow \ker(f)$, and let
$w_i = \mu^{-1}(v_i)$.  Recall that we defined $\fQ_{n-1}$ in Definition \ref{definition:slstd}.
Define a map $\psi\colon \fQ_{n-1} \rightarrow \fA_n$ via the formula
\[\psi(\QGen{x}) = \AGen{f,\mu(x)} \quad \text{for a primitive $x \in \Z^{n-1}$}.\]
This takes relations to relations, and thus gives a well-defined map.  Recall that
Theorem \ref{maintheorem:slstd} says that $\fQ_{n-1} \cong \Q^{n-1}$.  It follows from this
theorem that
\[\QGen{\sum_{i=1}^m \lambda_i w_i} = \sum_{i=1}^m \lambda_i \QGen{w_i}.\]
Plugging this into $\psi$, we see that
\[\AGen{f,\sum_{i=1}^m \lambda_i v_i} = \psi\left(\QGen{\sum_{i=1}^m \lambda_i w_i}\right) = \sum_{i=1}^m \lambda_i \psi\left(\QGen{w_i}\right) = \sum_{i=1}^m \lambda_i \AGen{f,v_i}.\qedhere\]
\end{proof}

Let $\cB = \{e_1,\ldots,e_n\}$ be the standard basis for $\Z^n$ and let
$\cB^{\ast} = \{e_1^{\ast},\ldots,e_n^{\ast}\}$ be the corresponding dual basis for $(\Z^n)^{\ast}$.
We follow the outline from \S \ref{section:prooftechnique}, though for readability
we divide Step 3 into Steps 3.A and 3.B.    

\begin{step}{1}
\label{step:sladjoint1}
Let $S = S_1 \cup S_2$, where the $S_i$ are:
\begin{align*}
S_1 &= \Set{$\BAGen{e_i^{\ast},e_j}$}{$1 \leq i,j \leq n$ distinct}, \\
S_2 &= \Set{$\BAGen{e_i^{\ast}+e_{i+1}^{\ast},e_i-e_{i+1}}$}{$1 \leq i < n$}.
\end{align*}
Like we did here, we will write elements of $S$ in blue.
Then the restriction of $\Phi$ to $\Span{S}$ is an isomorphism.
\end{step}

Let $T = T_1 \cup T_2$, where the $T_i$ are:
\begin{align*}
T_1 &= \Set{$e_i^{\ast} \otimes e_j$}{$1 \leq i,j \leq n$ distinct}, \\
T_2 &= \Set{$e_{i}^{\ast} \otimes e_{i} - e_{i+1}^{\ast} \otimes e_{i+1}$}{$1 \leq i < n$}.
\end{align*}
The set $T$ is a basis for the codimension-$1$ subspace $\fsl_n(\Q)$ of $(\Q^n)^{\ast} \otimes \Q^n$.
The map $\Phi$ takes $S_1$ bijectively to $T_1$.  As for $T_2$, observe that for $1 \leq i < n$ we have
\begin{align*}
\Phi(\BAGen{e_i^{\ast}+e_{i+1}^{\ast},e_i-e_{i+1}}) &= (e_i^{\ast}+e_{i+1}^{\ast}) \otimes (e_i - e_{i+1}) \\
                                      &= e_i^{\ast} \otimes e_i - e_{i+1}^{\ast} \otimes e_{i+1} - \orange{e_i^{\ast} \otimes e_{i+1}} + \orange{e_{i+1}^{\ast} \otimes e_i}.
\end{align*}
Here we have written elements of $T_1$ in orange.  This calculation implies that modulo $T_1$, the
map $\Phi$ takes $S_2$ bijectively to $T_2$.  Since $T$ is a basis for $\fsl_n(\Q)$, we deduce that
$\Phi$ takes $S$ bijectively to a basis for $\fsl_n(\Q)$.  This implies that the restriction
of $\Phi$ to $\Span{S}$ is an isomorphism.

\begin{step}{2}
\label{step:sladjoint2}
We prove that the $\SL_n(\Z)$-orbit of $S$ spans $\fA_n$.
\end{step}

Immediate from the fact that $\SL_n(\Z)$ acts transitively on the basis elements of $\fA_n$.

\begin{step}{3.A}
\label{step:sladjoint3A}
In preparation for proving that $\SL_n(\Z)$ takes $\Span{S}$ to $\Span{S}$, we prove
that all elements of\hspace{2pt}\footnote{The letter $E$ stands for ``extra elements''.}
\begin{align*}
E = &\Set{$\GAGen{e_i^{\ast}+e_j^{\ast},e_i-e_j}$}{$1 \leq i,j \leq n$ distinct} \\
    &\cup \Set{$\GAGen{e_i^{\ast}-e_j^{\ast},e_i+e_j}$}{$1 \leq i,j \leq n$ distinct} \\
    &\cup \Set{$\GAGen{e_i^{\ast}+2e_j^{\ast},2e_i-e_j}$}{$1 \leq i,j \leq n$ distinct}.
\end{align*}
lie in $\Span{S}$.  Like we did here, we will write elements of $E$ in green.
\end{step}

Above we wrote elements of $S_1$ in blue.  We extend this to certain elements that
``obviously'' lie in $\Span{S_1}$ as follows:
\begin{itemize}
\item Consider a generator $\AGen{f,v}$.  Assume there exist
$\cB^{\ast}_1 \subset \cB^{\ast}$ and $\cB_2 \subset \cB$ such that
$f \in \Span{\cB^{\ast}_1}$ and $v \in \Span{\cB_2}$ and such that
$g(w) = 0$ for all $g \in \cB^{\ast}_1$ and $w \in \cB_2$.  It
is then immediate that $\AGen{f,v} \in \Span{S_1}$.  This is most easily
seen by example:
\begin{align*}
\AGen{7e_1^{\ast}+3e_3^{\ast},2e_2-5e_4} &= 2\AGen{7e_1^{\ast}+3e_3^{\ast},e_2}-5\AGen{7e_1^{\ast}+3e_3^{\ast},e_4} \\
                                         &= 14\BAGen{e_1^{\ast},e_2} + 21\BAGen{e_3^{\ast},e_1} -35\BAGen{e_1^{\ast},e_4}-15\BAGen{e_3^{\ast},e_4}.
\end{align*}
Previously we were only writing elements of $S_1$ in blue, but now we will write these
elements in blue as well.  For instance, we will write
$\BAGen{7e_1^{\ast}+3e_3^{\ast},2e_2-5e_4}$.
\end{itemize}
Let $\equiv$ denote equality modulo $\Span{S}$.  During the proof, we will underline
elements of $E$ that we have not yet proven lie in $\Span{S}$.  We divide the proof into three claims.

\begin{claim}{3.A.1}
\label{claim:sladjoint3a1}
For distinct $1 \leq i,j \leq n$, we have $\UGAGen{e_i^{\ast}+e_j^{\ast},e_i-e_j} \equiv \UGAGen{e_i^{\ast}-e_j^{\ast},e_i+e_j}$.
\end{claim}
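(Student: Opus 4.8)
The plan is to deduce the claim from two congruences modulo $\Span{S}$ that share a common right-hand side. Since $n \geq 3$ we may fix an index $m \notin \{i,j\}$, and we write $\equiv$ for equality modulo $\Span{S}$ as in Step~\ref{step:sladjoint3A}. I will show
\[
\AGen{e_i^{\ast}+e_j^{\ast},e_i-e_j} \;\equiv\; \AGen{e_i^{\ast}+e_m^{\ast},e_i-e_m} - \AGen{e_j^{\ast}+e_m^{\ast},e_j-e_m}
\]
and
\[
\AGen{e_i^{\ast}-e_j^{\ast},e_i+e_j} \;\equiv\; \AGen{e_i^{\ast}+e_m^{\ast},e_i-e_m} - \AGen{e_j^{\ast}+e_m^{\ast},e_j-e_m};
\]
subtracting these two identities yields the claim. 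Note that this uses only the three indices $i,j,m$, consistent with the hypothesis $n \geq 3$.

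For the first congruence I would enlarge the covector: since $e_m^{\ast}$ and $e_i^{\ast}+e_j^{\ast}$ both annihilate $e_i-e_j$, part (ii) of the Claim gives $\AGen{e_i^{\ast}+e_j^{\ast},e_i-e_j} \equiv \AGen{e_i^{\ast}+e_j^{\ast}+e_m^{\ast},e_i-e_j}$, the difference being the blue term $\AGen{e_m^{\ast},e_i-e_j}$. Now $e_i-e_j=(e_i-e_m)-(e_j-e_m)$ with both summands in $\ker(e_i^{\ast}+e_j^{\ast}+e_m^{\ast})$, so part (i) of the Claim splits the right side as $\AGen{e_i^{\ast}+e_j^{\ast}+e_m^{\ast},e_i-e_m}-\AGen{e_i^{\ast}+e_j^{\ast}+e_m^{\ast},e_j-e_m}$. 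Splitting the covector once more — as $(e_i^{\ast}+e_m^{\ast})+e_j^{\ast}$ against $e_i-e_m$, and as $(e_j^{\ast}+e_m^{\ast})+e_i^{\ast}$ against $e_j-e_m$ — leaves precisely $\AGen{e_i^{\ast}+e_m^{\ast},e_i-e_m}-\AGen{e_j^{\ast}+e_m^{\ast},e_j-e_m}$ modulo the blue terms $\AGen{e_j^{\ast},e_i-e_m}$ and $\AGen{e_i^{\ast},e_j-e_m}$.

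For the second congruence I would run the dual maneuver: starting from $\AGen{e_i^{\ast}+e_m^{\ast},e_i-e_m}$, enlarge the vector by $e_j$ (legitimate by part (i), since $e_j\in\ker(e_i^{\ast}+e_m^{\ast})$), then split the covector as $(e_m^{\ast}+e_j^{\ast})+(e_i^{\ast}-e_j^{\ast})$ — both pieces annihilate $e_i-e_m+e_j$ — and finally split the vector in each resulting term, via $e_i-e_m+e_j=e_i+(e_j-e_m)$ in the first and $e_i-e_m+e_j=(e_i+e_j)-e_m$ in the second. The surviving non-blue terms are $\AGen{e_j^{\ast}+e_m^{\ast},e_j-e_m}$ and $\AGen{e_i^{\ast}-e_j^{\ast},e_i+e_j}$, and rearranging gives exactly $\AGen{e_i^{\ast}-e_j^{\ast},e_i+e_j}\equiv\AGen{e_i^{\ast}+e_m^{\ast},e_i-e_m}-\AGen{e_j^{\ast}+e_m^{\ast},e_j-e_m}$.

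Everything else is routine: at each step one checks that the covectors and vectors involved are primitive, that the split pieces lie in the appropriate kernel so that the Claim applies, and that every discarded term has disjoint covector- and vector-index sets, hence lies in $\Span{S_1}\subseteq\Span{S}$ (in particular $S_2$ is never needed here). The one genuine choice — and the step I expect to be fiddly — is selecting the re-splittings so that all error terms come out blue; the argument is not symmetric between the two sides, since the ``$+$'' generator reaches the common right-hand side by enlarging its covector while the ``$-$'' generator reaches it by enlarging its vector.
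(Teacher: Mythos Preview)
Your argument is correct. Both congruences check out: all primitivity and kernel conditions are satisfied, and every discarded term is indeed in $\Span{S_1}$.

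Your route differs from the paper's in a pleasant way. The paper fixes the auxiliary element $x_\epsilon = \AGen{e_i^{\ast}+\epsilon e_k^{\ast},e_i-\epsilon e_k}$ and computes it in two ways for each choice of a sign $c=\pm 1$, always enlarging the covector first; equating the two expressions for $x_1$ gives one relation, and the two for $x_{-1}$ give another, and these must then be added to cancel the residual unknown terms $\AGen{e_j^{\ast}\pm e_k^{\ast},e_j\mp e_k}$. You instead show each side of the claim is congruent to the \emph{same} expression $\AGen{e_i^{\ast}+e_m^{\ast},e_i-e_m}-\AGen{e_j^{\ast}+e_m^{\ast},e_j-e_m}$ directly. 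Your first congruence is essentially the paper's $(\epsilon,c)=(1,1)$ computation rearranged, but your second congruence is new: by enlarging the \emph{vector} rather than the covector and then splitting the covector as $(e_m^{\ast}+e_j^{\ast})+(e_i^{\ast}-e_j^{\ast})$, you land on the same target with no leftover unknowns. The upshot is two manipulations instead of four, and no need for the second sign $\epsilon=-1$; the price is the asymmetry you flagged, which is harmless.
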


\noindent
Since $n \geq 3$, we can pick some $1 \leq k \leq n$ that is distinct from $i$ and $j$.
For $\epsilon \in \{\pm 1\}$, let $x_{\epsilon} = \UGAGen{e_i^{\ast}+\epsilon e_k^{\ast},e_i-\epsilon e_k}$.  For $c \in \{\pm 1\}$, we have
\begin{align*}
x_{\epsilon} \equiv &\UGAGen{e_i^{\ast}+\epsilon e_k^{\ast},e_i-\epsilon e_k} +\BAGen{c e_j^{\ast},e_i-\epsilon e_k} = \AGen{e_i^{\ast}+c e_j^{\ast}+\epsilon e_k^{\ast},e_i-\epsilon e_k} \\
    =          &\AGen{e_i^{\ast}+c e_j^{\ast}+\epsilon e_k^{\ast},(e_i-c e_j)+(c e_j-\epsilon e_k)} \\
    =      &\AGen{e_i^{\ast}+c e_j^{\ast}+\epsilon e_k^{\ast},e_i-c e_j} + \AGen{e_i^{\ast}+c e_j^{\ast}+\epsilon e_k^{\ast},c e_j-\epsilon e_k} \\
    =      &\UGAGen{e_i^{\ast}+c e_j^{\ast},e_i-c e_j} + \BAGen{\epsilon e_k^{\ast},e_i-c e_j}
            +\BAGen{e_i^{\ast},c e_j-\epsilon e_k} +c \AGen{e_j^{\ast}+\epsilon c e_k^{\ast},c e_j-\epsilon e_k} \\
    \equiv &\UGAGen{e_i^{\ast}+c e_j^{\ast},e_i-c e_j} + \UGAGen{e_j^{\ast}+\epsilon c e_k^{\ast},e_j-\epsilon c e_k}.
\end{align*}
We can equate these expressions for $x_1$ for $c = 1$ and $c = -1$ to get
\begin{equation}
\label{eqn:sladjoint3a1.1}
\UGAGen{e_i^{\ast}+e_j^{\ast},e_i-e_j} + \UGAGen{e_j^{\ast}+e_k^{\ast},e_j-e_k} \equiv \UGAGen{e_i^{\ast}-e_j^{\ast},e_i+e_j} + \UGAGen{e_j^{\ast}-e_k^{\ast},e_j+e_k}.
\end{equation}
Similarly, equating these expressions for $x_{-1}$ for $c=1$ and $c=-1$ we get
\begin{equation}
\label{eqn:sladjoint3a1.2}
\UGAGen{e_i^{\ast}+e_j^{\ast},e_i-e_j} + \UGAGen{e_j^{\ast}-e_k^{\ast},e_j+e_k} \equiv \UGAGen{e_i^{\ast}-e_j^{\ast},e_i+e_j} + \UGAGen{e_j^{\ast}+e_k^{\ast},e_j-e_k}.
\end{equation}
Combining \eqref{eqn:sladjoint3a1.1} and \eqref{eqn:sladjoint3a1.2}, we conclude that $\GAGen{e_i^{\ast}+e_j^{\ast},e_i-e_j} \equiv \GAGen{e_i^{\ast}-e_j^{\ast},e_i+e_j}$.

\begin{claim}{3.A.2}
\label{claim:sladjoint3a2}
For distinct $1 \leq i,j \leq n$, we have $\UGAGen{e_i^{\ast}+e_j^{\ast},e_i-e_j} \in \Span{S}$.  In light of
Claim \ref{claim:sladjoint3a1}, this will imply that $\UGAGen{e_i^{\ast}-e_j^{\ast},e_i+e_j} \in \Span{S}$ as well.  
\end{claim}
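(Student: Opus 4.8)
The plan is to deduce the statement from Claim~\ref{claim:sladjoint3a1} by a short telescoping argument that reduces an arbitrary index pair to consecutive ones, which are literally generators in $S_2$. Abbreviate $y_{ab}=\AGen{e_a^*+e_b^*,\,e_a-e_b}$ for distinct $a,b$. Since $\AGen{e_b^*+e_a^*,\,e_b-e_a}=-\AGen{e_a^*+e_b^*,\,e_a-e_b}$ (negate the second slot using part~(i) of the Claim at the start of this section, with $m=1$ and $\lambda_1=-1$), it suffices to treat $y_{ij}$ with $i<j$. Note also that $y_{m,m+1}\in S_2\subset S$ for $1\le m<n$, so $y_{m,m+1}\equiv 0$.

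The key ingredient is the additive relation
\[
y_{ij} \equiv y_{ik}+y_{kj}\qquad\text{for all distinct }i,j,k,
\]
which is precisely what the computation in the proof of Claim~\ref{claim:sladjoint3a1} yields upon setting $\epsilon=c=1$ (after relabelling the three indices); alternatively one re-derives it directly by adding $\BAGen{e_k^*,\,e_i-e_j}$ (which lies in $\Span{S_1}$) to $y_{ij}$, contracting the first slot to reach $\AGen{e_i^*+e_j^*+e_k^*,\,e_i-e_j}$, splitting the second slot as $e_i-e_j=(e_i-e_k)+(e_k-e_j)$, and simplifying the two resulting generators modulo $\Span{S_1}$ via blue expansions. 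Granting this relation, I prove $y_{ij}\equiv 0$ for $i<j$ by induction on $j-i$: the case $j-i=1$ is the observation above, and for $j-i\ge 2$ the relation with auxiliary index $k=i+1$ gives $y_{ij}\equiv y_{i,i+1}+y_{i+1,j}\equiv y_{i+1,j}$, which is $\equiv 0$ by induction since $0<j-(i+1)<j-i$. Hence $\GAGen{e_i^*+e_j^*,e_i-e_j}\in\Span{S}$, and by Claim~\ref{claim:sladjoint3a1} also $\GAGen{e_i^*-e_j^*,e_i+e_j}\equiv y_{ij}\equiv 0$, so the second family in $E$ is disposed of simultaneously.

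Most of the real content is already contained in Claim~\ref{claim:sladjoint3a1}; the one new input here is that $S_2$ supplies the consecutive generators $y_{m,m+1}$ outright, which is exactly what lets the induction terminate. There is no serious obstacle, only bookkeeping with index order: because $y_{ab}$ and $y_{ba}$ differ by a sign one must commit to $i<j$ before telescoping, and the auxiliary index $k=i+1$ is distinct from both $i$ and $j$ precisely because $j-i\ge 2$.
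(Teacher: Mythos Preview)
Your proof is correct and follows essentially the same approach as the paper: induction on $j-i$ using the additive relation $y_{ij}\equiv y_{ik}+y_{kj}$, with base case given by $S_2$. The paper derives that additive relation directly in Claim~3.A.2 (your ``alternatively'' sketch is exactly their computation), whereas you also observe it can be read off from the calculation in Claim~3.A.1 with $\epsilon=c=1$; either way the argument is the same.
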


\noindent
Swapping $i$ and $j$ multiplies $\UGAGen{e_i^{\ast}+e_j^{\ast},e_i-e_j}$ by $-1$, so we can assume without loss
of generality that $i < j$.  The proof will be by induction on $j-i$.  The base case is when $j-i=1$, in which
case $\UGAGen{e_i^{\ast}+e_j^{\ast},e_i-e_j}$ lies in $S$ and there is nothing to prove.  Assume, therefore, that
$j-i>1$ and that the claim is true whenever $j-i$ is smaller.  Pick $k$ with $i < k < j$.  The element
$\AGen{e_i^{\ast}+e_{k}^{\ast}+e_j^{\ast},e_i-e_j}$ equals
\[\UGAGen{e_i^{\ast}+e_j^{\ast},e_i-e_j} + \BAGen{e_{k}^{\ast},e_i-e_j} \equiv \UGAGen{e_i^{\ast}+e_j^{\ast},e_i-e_j}.\]
On the other hand, it also equals
\begin{align*}
&\AGen{e_i^{\ast}+e_{k}^{\ast}+e_j^{\ast},(e_i-e_{k}) + (e_{k}-e_j)} \\
&\quad = \AGen{e_i^{\ast}+e_{k}^{\ast}+e_j^{\ast},e_i-e_{k}} + \AGen{e_i^{\ast}+e_{k}^{\ast}+e_j^{\ast},e_{k}-e_j} \\
&\quad = \GAGen{e_i^{\ast}+e_{k}^{\ast},e_i-e_{k}} + \BAGen{e_j^{\ast},e_i-e_{k}} 
+ \GAGen{e_{k}^{\ast}+e_j^{\ast},e_{k}-e_j} + \BAGen{e_i^{\ast},e_{k}-e_j} \equiv 0.
\end{align*}
Here the non-underlined green terms lie in $\Span{S}$ by our inductive hypothesis.
Combining these, we conclude that $\UGAGen{e_i^{\ast}+e_j^{\ast},e_i-e_j} \equiv 0$.

\begin{claim}{3.A.3}
\label{claim:sladjoint3a3}
For distinct $1 \leq i,j \leq n$, we have $\UGAGen{e_i^{\ast}+2e_j^{\ast},2e_i-e_j} \in \Span{S}$.  
\end{claim}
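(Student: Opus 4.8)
The plan is to follow the template of Claims \ref{claim:sladjoint3a1} and \ref{claim:sladjoint3a2}: bring in an auxiliary index and use the two additivity relations from the unnumbered claim (one in each coordinate) to rewrite $\UGAGen{e_i^{\ast}+2e_j^{\ast},2e_i-e_j}$ as a combination of generators already known to lie in $\Span{S}$. Since $n \geq 3$, fix $k$ distinct from $i$ and $j$, and let $\equiv$ denote equality modulo $\Span{S}$.

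The first move is a preliminary perturbation of the second coordinate. Writing $2e_i - e_j = (2e_i-e_j-e_k) + e_k$, with both summands in $\ker(e_i^{\ast}+2e_j^{\ast})$ and $\BAGen{e_i^{\ast}+2e_j^{\ast},e_k} \in \Span{S}$, additivity in the second coordinate gives $\UGAGen{e_i^{\ast}+2e_j^{\ast},2e_i-e_j} \equiv \AGen{e_i^{\ast}+2e_j^{\ast},2e_i-e_j-e_k}$. This is the step that takes some thought, and I expect it to be the main obstacle: as long as the second coordinate is $2e_i-e_j$, every functional killing it lies in $\Span{e_i^{\ast}+2e_j^{\ast},\,e_k^{\ast}}$, so the first coordinate admits no useful decomposition and the naive manipulations only relate the element to other members of its own family. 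After the perturbation, however, both $e_i^{\ast}+e_j^{\ast}+e_k^{\ast}$ and $e_j^{\ast}-e_k^{\ast}$ annihilate $2e_i-e_j-e_k$, so additivity in the first coordinate applies to
\[e_i^{\ast}+2e_j^{\ast} = (e_i^{\ast}+e_j^{\ast}+e_k^{\ast}) + (e_j^{\ast}-e_k^{\ast}),\]
yielding $\AGen{e_i^{\ast}+2e_j^{\ast},2e_i-e_j-e_k} = \AGen{e_i^{\ast}+e_j^{\ast}+e_k^{\ast},2e_i-e_j-e_k} + \AGen{e_j^{\ast}-e_k^{\ast},2e_i-e_j-e_k}$.

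It then remains to show both terms on the right lie in $\Span{S}$. For the first, I would decompose $2e_i-e_j-e_k = (e_i-e_j) + (e_i-e_k)$ inside $\ker(e_i^{\ast}+e_j^{\ast}+e_k^{\ast})$ and then peel $e_k^{\ast}$, respectively $e_j^{\ast}$, off the first coordinate, reducing it to $\GAGen{e_i^{\ast}+e_j^{\ast},e_i-e_j} + \GAGen{e_i^{\ast}+e_k^{\ast},e_i-e_k}$, which lies in $\Span{S}$ by Claim \ref{claim:sladjoint3a2}. For the second, I would write $2e_i-e_j-e_k = 2e_i - (e_j+e_k)$, with $e_i$ and $e_j+e_k$ primitive vectors in $\ker(e_j^{\ast}-e_k^{\ast})$; additivity in the second coordinate together with $\BAGen{e_j^{\ast}-e_k^{\ast},e_i} \in \Span{S}$ reduces it to $-\GAGen{e_j^{\ast}-e_k^{\ast},e_j+e_k}$, which lies in $\Span{S}$ by Claims \ref{claim:sladjoint3a1} and \ref{claim:sladjoint3a2}. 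Chaining these congruences gives $\UGAGen{e_i^{\ast}+2e_j^{\ast},2e_i-e_j} \equiv 0$, as desired. Everything after the initial perturbation is routine bookkeeping with the two additivity relations together with the membership facts $S_1 \subset \Span{S}$ and Claims \ref{claim:sladjoint3a1}--\ref{claim:sladjoint3a2}.
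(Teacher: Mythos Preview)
Your proof is correct and follows essentially the same template as the paper's. The only difference is a dual choice at the outset: the paper perturbs the \emph{first} coordinate by adding $e_k^{\ast}$ and then splits the second coordinate as $(e_i-e_k)+(e_i+e_k-e_j)$, whereas you perturb the \emph{second} coordinate by subtracting $e_k$ and then split the first coordinate as $(e_i^{\ast}+e_j^{\ast}+e_k^{\ast})+(e_j^{\ast}-e_k^{\ast})$. Both routes reduce to the same pool of green elements from Claims~\ref{claim:sladjoint3a1}--\ref{claim:sladjoint3a2} (your final residual $\GAGen{e_j^{\ast}-e_k^{\ast},e_j+e_k}$ versus the paper's $\GAGen{e_j^{\ast}+e_k^{\ast},e_k-e_j}$), and the bookkeeping is of comparable length.
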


\noindent
Since $n \geq 3$, we can pick some $1 \leq k \leq n$ that is distinct from $i$ and $j$.
The element $\AGen{e_i^{\ast}+2e_j^{\ast}+e_k^{\ast},2e_i-e_j}$ equals
\[\UGAGen{e_i^{\ast}+2e_j^{\ast},2e_i-e_j} + \BAGen{e_k^{\ast},2e_i-e_j} \equiv \UGAGen{e_i^{\ast}+2e_j^{\ast},2e_i-e_j}.\]
On the other hand, it also equals
\begin{align*}
&\AGen{e_i^{\ast}+2e_j^{\ast}+e_k^{\ast},(e_i-e_k)+(e_i+e_k-e_j)} \\
&\quad = \AGen{e_i^{\ast}+2e_j^{\ast}+e_k^{\ast},e_i-e_k} + \AGen{e_i^{\ast}+2e_j^{\ast}+e_k^{\ast},e_i+e_k-e_j} \\
&\quad = \GAGen{e_i^{\ast}+e_k^{\ast},e_i-e_k} + 2 \BAGen{e_j^{\ast},e_i-e_k} 
+ \AGen{e_i^{\ast}+e_j^{\ast},e_i+e_k-e_j} + \AGen{e_j^{\ast}+e_k^{\ast},e_i+e_k-e_j} \\
&\quad \equiv \GAGen{e_i^{\ast}+e_j^{\ast},e_i-e_j} + \BAGen{e_i^{\ast}+e_j^{\ast},e_k} 
+ \GAGen{e_j^{\ast}+e_k^{\ast},e_k-e_j} + \BAGen{e_j^{\ast}+e_k^{\ast},e_i} \equiv 0.
\end{align*}
Combining these, we conclude that $\UGAGen{e_i^{\ast}+2e_j^{\ast},2e_i-e_j} \equiv 0$.

\begin{step}{3.B}
\label{step:sladjoint3B}
We prove that $\SL_n(\Z)$ takes $\Span{S}$ to itself.
By Step \ref{step:sladjoint2} this will imply that $\Span{S} = \fA_n$,
and thus by Step \ref{step:sladjoint1} that $\Phi$ is an isomorphism.
\end{step}

For distinct $1 \leq i,j \leq n$, let $E_{ij} \in \SL_n(\Z)$ be the elementary matrix
obtained from the identity by placing a $1$ at position $(i,j)$.  These
generate
$\SL_n(\Z)$.  Fixing some distinct $1 \leq i,j \leq n$ and some $\epsilon = \pm 1$,
it is enough to prove that $E_{ij}^{\epsilon}$ takes $\Span{S}$ to itself.  To do this,
we must prove that for all $s \in S$ the image $E_{ij}^{\epsilon}(s)$ can be written
as a linear combination of elements of $S$.
The matrix $E_{ij}$ satisfies
\begin{alignat*}{6}
&&E_{ij}(e_i^{\ast})      &= &e_i^{\ast} - e_j^{\ast} \quad & E_{ij}(e_j)        &= &e_j + e_i, \\
&&E_{ij}^{-1}(e_i^{\ast}) &= &e_i^{\ast} + e_j^{\ast} \quad & E_{ij}^{\ast}(e_j) &= &e_j - e_i.
\end{alignat*}
It fixes all other elements of $\cB$ and $\cB^{\ast}$.  Consider $s \in S$.  If
$E_{ij}^{\epsilon}(s) = s$, there is nothing to prove.  We can therefore assume
that $E_{ij}{\epsilon}(s) \neq s$.  There are two cases.

\begin{case}{3.B.1}
$s \in S_1 = \Set{$\BAGen{e_i^{\ast},e_j}$}{$1 \leq i,j \leq n$ distinct}$.
\end{case}

\noindent
The matrix $E_{ij}^{\epsilon}$
fixes all elements of $S_1$
except for the following:
\begin{itemize}
\item $\BAGen{e_i^{\ast},e_k}$ with $k \neq i,j$.  For these, we have
\begin{align*}
E_{ij}(\BAGen{e_i^{\ast},e_k})      &= \AGen{e_i^{\ast}-e_j^{\ast},e_k} = \BAGen{e_i^{\ast},e_k} - \BAGen{e_j^{\ast},e_k} \in \Span{S_1}, \\
E_{ij}^{-1}(\BAGen{e_i^{\ast},e_k}) &= \AGen{e_i^{\ast}+e_j^{\ast},e_k} = \BAGen{e_i^{\ast},e_k} + \BAGen{e_j^{\ast},e_k} \in \Span{S_1}.
\end{align*}
\item $\BAGen{e_k^{\ast},e_j}$ with $k \neq i,j$.  For these, we have
\begin{align*}
E_{ij}(\BAGen{e_k^{\ast},e_j})      &= \AGen{e_k^{\ast},e_j+e_i} = \BAGen{e_k^{\ast},e_j} + \BAGen{e_k^{\ast},e_i} \in \Span{S_1}, \\
E_{ij}^{-1}(\BAGen{e_k^{\ast},e_j}) &= \AGen{e_k^{\ast},e_j-e_i} = \BAGen{e_k^{\ast},e_j} - \BAGen{e_k^{\ast},e_i} \in \Span{S_1}.
\end{align*}
\item $\BAGen{e_j^{\ast},e_i}$.  For this, we have
\begin{align*}
E_{ij}(\BAGen{e_j^{\ast},e_i})      &= \GAGen{e_j^{\ast}-e_i^{\ast},e_i+e_j} \in E \subset \Span{S}, \\
E_{ij}^{-1}(\BAGen{e_j^{\ast},e_i}) &= \GAGen{e_j^{\ast}+e_i^{\ast},e_i-e_j} \in E \subset \Span{S}.
\end{align*}
\end{itemize}

\begin{case}{3.B.2}
$s \in S_2 = \Set{$\BAGen{e_i^{\ast}+e_{i+1}^{\ast},e_i-e_{i+1}}$}{$1 \leq i < n$}$.
\end{case}

\noindent
To decrease the number of special cases, we will deal more generally with elements of the form
$\GAGen{e_a^{\ast}+e_b^{\ast},e_a-e_b}$ for distinct $1 \leq a,b \leq n$.  The matrix $E_{ij}^{\epsilon}$
fixes these except when:
\begin{itemize}
\item $(a,b) = (i,k)$ or $(a,b) = (k,i)$ for some $1 \leq k \leq n$ with $k \neq i,j$.  Swapping $a$ and $b$ multiplies
$\GAGen{e_a^{\ast}+e_b^{\ast},e_a-e_b}$ by a sign, so it is enough to deal with $(a,b) = (i,k)$:
\begin{align*}
E_{ij}(\GAGen{e_i^{\ast}+e_k^{\ast},e_i-e_k})      &= \AGen{e_i^{\ast}-e_j^{\ast}+e_k^{\ast},e_i-e_k} = \GAGen{e_i^{\ast}+e_k^{\ast},e_i-e_k} - \BAGen{e_j^{\ast},e_i-e_k} \in \Span{S},\\
E_{ij}^{-1}(\GAGen{e_i^{\ast}+e_k^{\ast},e_i-e_k}) &= \AGen{e_i^{\ast}+e_j^{\ast}+e_k^{\ast},e_i-e_k} = \GAGen{e_i^{\ast}+e_k^{\ast},e_i-e_k} + \BAGen{e_j^{\ast},e_i-e_k} \in \Span{S}.
\end{align*}
\item $(a,b) = (j,k)$ or $(a,b) = (k,j)$ for some $1 \leq k \leq n$ with $k \neq i,j$.  Again, it is
enough to deal with $(a,b) = (j,k)$:
\begin{align*}
E_{ij}(\GAGen{e_j^{\ast}+e_k^{\ast},e_j-e_k})      &= \GAGen{e_j^{\ast}+e_k^{\ast},e_j+e_i-e_k} = \GAGen{e_j^{\ast}+e_k^{\ast},e_j-e_k} + \GAGen{e_j^{\ast}+e_k^{\ast},e_i} \in \Span{S},\\
E_{ij}^{-1}(\GAGen{e_j^{\ast}+e_k^{\ast},e_j-e_k}) &= \GAGen{e_j^{\ast}+e_k^{\ast},e_j-e_i-e_k} = \GAGen{e_j^{\ast}+e_k^{\ast},e_j-e_k} - \GAGen{e_j^{\ast}+e_k^{\ast},e_i} \in \Span{S}.
\end{align*}
\item $(a,b) = (i,j)$ or $(a,b) = (j,i)$.  Again it is enough to deal with $(a,b) = (i,j)$:
\begin{align*}
E_{ij}(\GAGen{e_i^{\ast}+e_j^{\ast},e_i-e_j})      &= \GAGen{e_i^{\ast}+2e_j^{\ast},2e_i-e_j} \in E,\\
E_{ij}^{-1}(\GAGen{e_i^{\ast}+e_j^{\ast},e_i-e_j}) &= \BAGen{e_i^{\ast},-e_j} = \in \Span{S_1}.\qedhere
\end{align*}
\end{itemize}
\end{proof}

\section{Special linear group I\texorpdfstring{$'$}{'}: variant presentation of standard representation}
\label{section:slstdvar}

Before proceeding with proofs of our main results, this section and the next give alternate presentations of the standard and adjoint representations
of $\SL_n(\Q)$ that are needed in Part \ref{part:alt} for our work on the symmetric kernel.  We
start with the standard representation.  Let $\cB = \{e_1,\ldots,e_n\}$ be the standard basis for
$\Z^n$.  Say that $v \in \Z^n$ is {\em $e_i$-standard} (resp.\ {\em $e_i$-vanishing}) if the $e_i$-coordinate of $v$ lies in $\{-1,0,1\}$ (resp.\ is $0$).
If $v_1 \in \Z^n$ is $e_i$-standard and $v_2 \in \Z^n$ is $e_i$-vanishing, then $v_1+v_2$ is $e_i$-standard.
Define the following:

\begin{definition}
\label{definition:slstdvar}
Define $\fQ'_n$ to be the $\Q$-vector space with the following presentation:
\begin{itemize}
\item {\bf Generators}. A generator $\PQGen{v}$ for all primitive vectors $v \in \Z^n$ that are $e_1$-standard.  
\item {\bf Relations}.  For a partial basis $\{v_1,v_2\}$ of $\Z^n$ such that $v_1$ is $e_1$-standard and $e_2$ is
$e_1$-vanishing, the relation $\PQGen{v_1}+\PQGen{v_2} = \PQGen{v_1+v_2}$.\qedhere
\end{itemize}
\end{definition}

Define a linearization map $\Phi\colon \fQ'_n \rightarrow \Q^n$ via the formula
$\Phi(\PQGen{v}) = v$.  This takes relations to relations, and thus gives a well-defined
map.  Our goal is to prove: 

\setcounter{maintheoremprime}{0}
\begin{maintheoremprime}
\label{maintheorem:slstdvar}
For $n \geq 3$, the linearization map $\Phi\colon \fQ'_n \rightarrow \Q^n$ is an isomorphism.
\end{maintheoremprime}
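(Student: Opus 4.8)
The plan is to follow the three-step template of Section~\ref{section:prooftechnique}, using the already-proven Theorem~\ref{maintheorem:slstd} as the model representation $\cV = \Q^n$ and the map $\Phi\colon \fQ'_n \to \Q^n$. For Step~1, the natural choice is $S = \{\PQGen{e_1},\ldots,\PQGen{e_n}\}$, which $\Phi$ carries bijectively to the standard basis $\cB$; hence $\Phi|_{\Span{S}}$ is an isomorphism. Step~2 is where a genuine difference from Theorem~\ref{maintheorem:slstd} appears: the group $\SL_n(\Z)$ does \emph{not} act on the generating set of $\fQ'_n$, since applying a general element of $\SL_n(\Z)$ destroys the $e_1$-standard condition. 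So instead I would work with the subgroup $\Gamma \leq \SL_n(\Z)$ that stabilizes the line $\Z e_1^\ast$ up to sign in $(\Z^n)^\ast$ — concretely, block-upper-triangular matrices $\begin{pmatrix} \pm 1 & * \\ 0 & M\end{pmatrix}$ with $M \in \GL_{n-1}(\Z)$ of the appropriate determinant (together with the reflection $e_1 \mapsto -e_1$). This $\Gamma$ \emph{does} permute the $e_1$-standard primitive vectors, hence acts on $\fQ'_n$, and because it acts transitively on $e_1$-standard primitive vectors with a \emph{fixed} $e_1$-coordinate $c \in \{-1,0,1\}$, the $\Gamma$-orbit of $S$ together with a single extra vector per value of $c$ will generate. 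In fact, using the sub-presentation structure: the $e_1$-vanishing primitive vectors with their relations form a copy of $\fQ'_{n-1}$ sitting inside, which by Theorem~\ref{maintheorem:slstd} (applied to $\Z^{n-1}$, legitimate since $n \geq 3$) is spanned by $\PQGen{e_2},\ldots,\PQGen{e_n}$; and any $e_1$-standard vector $v$ with $e_1$-coordinate $\pm 1$ can be written, using a defining relation with $v_1 = \pm e_1$ and $v_2$ the $e_1$-vanishing part of $v$ (which is a partial basis when $v$ is primitive), as $\PQGen{\pm e_1} + \PQGen{v_2}$, reducing to the $\fQ'_{n-1}$ computation plus $\PQGen{\pm e_1}$.

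So in fact Step~2 and much of Step~3 collapse once I isolate the two facts: (a) $\Span{\PQGen{e_2},\ldots,\PQGen{e_n}} = \Span{\PQGen{v} : v \text{ is } e_1\text{-vanishing}}$, which is Theorem~\ref{maintheorem:slstd} in dimension $n-1$ transported along $\mu\colon \Z^{n-1} \xrightarrow{\sim} \ker(e_1^\ast)$ exactly as in the unnumbered Claim in the proof of Theorem~\ref{maintheorem:sladjoint}; and (b) $\PQGen{-e_1} = -\PQGen{e_1}$, which needs the analogue of Remark~\ref{remark:negative} but carried out entirely with $e_1$-standard vectors — e.g.\ using $w$ an $e_1$-vanishing partial-basis partner of $e_1$ and running the identity $\PQGen{v} + \PQGen{-v} = 0$ from that remark, checking at each step that the vectors appearing ($e_1 + w$, $2e_1 + w$, $-e_1 - w$, $-w$, etc.) are all $e_1$-standard, which they are since $w$ is $e_1$-vanishing. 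Given (a) and (b), every $e_1$-standard primitive generator $\PQGen{v}$ with $e_1$-coordinate $\varepsilon \in \{-1,0,1\}$ satisfies $\PQGen{v} = \PQGen{\varepsilon e_1} + \PQGen{v - \varepsilon e_1} \in \Span{S}$ (the relation is available because $\{\varepsilon e_1, v - \varepsilon e_1\}$ — or just $v$ itself when $\varepsilon = 0$ — is a partial basis when $v$ is primitive), so $\Span{S} = \fQ'_n$ and we are done.

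The main obstacle I anticipate is not conceptual but bookkeeping: verifying that the partial-basis hypotheses in the defining relations of $\fQ'_n$ are actually met at each step, since those relations are more restrictive than in $\fQ_n$ (one vector must be $e_1$-standard, the other $e_1$-vanishing). In particular, when I split $\PQGen{v} = \PQGen{\varepsilon e_1} + \PQGen{v - \varepsilon e_1}$, I must check $\{\varepsilon e_1, v - \varepsilon e_1\}$ is a partial basis of $\Z^n$; this holds because $v$ primitive and $e_1$-standard forces $v - \varepsilon e_1$ to be primitive in $\ker(e_1^\ast) \cong \Z^{n-1}$ and then $e_1$ extends any basis of that kernel. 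Likewise the negative-element identity from Remark~\ref{remark:negative}, reproduced verbatim with $v = e_1$, only invokes partial bases of the form $\{a e_1 + b w, c e_1 + d w\}$ with $w$ fixed and $e_1$-vanishing, and every vector appearing is automatically $e_1$-standard since its $e_1$-coordinate is $a, c \in \{0, \pm 1, \pm 2\}$ — wait, $2e_1 + w$ is not $e_1$-standard, so I would instead adapt the computation to use only coefficients in $\{-1,0,1\}$ on $e_1$, or alternatively prove (b) directly from the relation $\PQGen{e_1} + \PQGen{-e_1 + w} = \PQGen{w}$ together with $\PQGen{-e_1 + w} = -\PQGen{e_1 - w}$ (relating the two via the reflection in $\Gamma$ and another relation), which keeps all $e_1$-coordinates in $\{-1,0,1\}$. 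Sorting out this one identity cleanly is the crux; everything else is a direct transcription of the Theorem~\ref{maintheorem:slstd} and Theorem~\ref{maintheorem:sladjoint} arguments.
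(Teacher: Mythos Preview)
Your overall architecture is sound and your ingredient (a) is correct, but the central reduction step has a genuine error. You claim that for $v$ primitive with $e_1$-coordinate $\varepsilon = \pm 1$, the pair $\{\varepsilon e_1,\, v - \varepsilon e_1\}$ is a partial basis because ``$v$ primitive and $e_1$-standard forces $v - \varepsilon e_1$ to be primitive in $\ker(e_1^\ast)$.'' This is false: take $v = e_1 + 2e_2$, which is primitive and $e_1$-standard, yet $v - e_1 = 2e_2$ is not primitive, so $\{e_1, 2e_2\}$ is not a partial basis and $\PQGen{2e_2}$ is not even a generator of $\fQ'_n$. The single-relation decomposition $\PQGen{v} = \PQGen{\varepsilon e_1} + \PQGen{v - \varepsilon e_1}$ is therefore unavailable in general. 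You correctly flagged this bookkeeping as the crux, but then resolved it with an incorrect claim.

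The gap is repairable within your framework: write $v - \varepsilon e_1 = d w$ with $w$ primitive in $\ker(e_1^\ast)$ and $d \geq 1$, and peel off one copy of $w$ at a time using the valid relation $\PQGen{\varepsilon e_1 + kw} = \PQGen{\varepsilon e_1 + (k-1)w} + \PQGen{w}$ (here $\{\varepsilon e_1 + (k-1)w,\, w\}$ \emph{is} a partial basis), inducting down to $\PQGen{\varepsilon e_1}$. The paper instead avoids this issue entirely by sticking with the group-action route you sketched first: it uses $\Gamma = \SL_n(\Z, e_1^\ast)$, checks that the $\Gamma$-orbit of $S$ meets each of the three orbits $\cO_{-1}, \cO_0, \cO_1$ on generators (getting $\cO_{-1}$ via one relation expressing $\PQGen{-e_1}$ in terms of elements already in the orbit span), and then verifies Step~3 with the elementary generators $E_{ij}$, $i \neq 1$. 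Your shortcut replaces Step~3 with a direct computation, which is pleasant once patched, but the group-action argument sidesteps the primitivity subtlety altogether.
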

\begin{proof}
Recall that $\cB = \{e_1,\ldots,e_n\}$ is the standard basis for $\Z^n$.  Let
$\cB^{\ast} = \{e_1^{\ast},\ldots,e_n^{\ast}\}$ be the corresponding dual basis.  
Let $\SL_n(\Z,e_1^{\ast})$ be the stabilizer of $e_1^{\ast}$ in $\SL_n(\Z)$.  The action of $\SL_n(\Z,e_1^{\ast})$ on $\Z^n$ takes
$e_1$-standard vectors to $e_1$-standard vectors.  It follows that $\SL_n(\Z,e_1^{\ast})$ acts
on $\fQ'_n$, and we will use this action to prove our theorem.
We follow the outline from \S \ref{section:prooftechnique}.

\begin{step}{1}
\label{step:slstdvar1}
Let $S = \{\PQGen{e_1},\ldots,\PQGen{e_n}\}$.  Like we did here,
we will write elements of $S$ in blue.  
Then the restriction of $\Phi$ to $\Span{S}$ is an isomorphism.
\end{step}

Immediate from the fact that $\Phi$ takes $S$ to a basis for $\Q^n$.

\begin{step}{2}
\label{step:slstdvar2}
We prove that the $\SL_n(\Z,e_1^{\ast})$-orbit of $S$ spans $\fQ'_n$.
\end{step}

Let $W$ be the span of the $\SL_n(\Z,e_1^{\ast})$-orbit of $S$.
The action of $\SL_n(\Z,e_1^{\ast})$ on the set of generators for $\fQ'_n$
has three orbits:\footnote{For instance, to see that $\SL_n(\Z,e_1^{\ast})$ acts transitively on $\cO_1$, consider
some primitive $v \in \Z^n$ with $e_1$-coordinate $1$.  We must find $M \in \SL_n(\Z,e_1^{\ast})$ with
$M(\PQGen{e_1}) = \PQGen{v}$.  Since $v$ is primitive, we can find a basis
$\{v_1,\ldots,v_n\}$ for $\Z^n$ with $v_1 = v$.  Adding multiples of $v_1$ to the other $v_i$, we can
assume that the $e_1$-coordinate of $v_i$ is $0$ for $2 \leq i \leq n$.  Let $M \in \GL_n(\Z)$ be the matrix
whose columns are $\{v_1,\ldots,v_n\}$.  
Multiplying $v_2$ by $-1$ if necessary, we can assume that $\det(M) = 1$.  Since the $e_1$-coordinate
of $v_1$ is $1$ and the $e_1$-coordinate of $v_i$ is $0$ for $2 \leq i \leq n$, we have $M \in \SL_n(\Z,e_1^{\ast})$.
By construction, $M(\PQGen{e_1}) = \PQGen{v_1} = \PQGen{v}$, as desired.}
\begin{align*}
\cO_{-1} &= \Set{$\PQGen{v}$}{$v \in \Z^n$ is primitive and the $e_1$-coordinate of $v$ is $-1$},\\
\cO_{0}  &= \Set{$\PQGen{v}$}{$v \in \Z^n$ is primitive and the $e_1$-coordinate of $v$ is $0$},\\
\cO_{1}  &= \Set{$\PQGen{v}$}{$v \in \Z^n$ is primitive and the $e_1$-coordinate of $v$ is $1$}.
\end{align*}
It is enough to prove that $W$ contains elements from all three of these orbits.  We have
$\PQGen{e_1} \in \cO_1$ and $\PQGen{e_2} \in \cO_0$, so the only nontrivial case is $\cO_{-1}$.
Since $W$ contains $\PQGen{e_1} \in \cO_1$ and $\PQGen{e_2} \in \cO_0$, it follows that $W$ contains
all elements of $\cO_0$ and $\cO_1$.  In $\fQ'_n$, we have the relation
\[\PQGen{e_1 + e_2} + \PQGen{-e_1} = \PQGen{e_2}.\]
Since $W$ contains $\PQGen{e_1+e_2} \in \cO_1$ and $\PQGen{e_2} \in \cO_0$, it also contains
$\PQGen{-e_1} \in \cO_{-1}$.  The step follows.

\begin{step}{3}
\label{step:slstdvar3}
We prove that $\SL_n(\Z,e_1^{\ast})$ takes $\Span{S}$ to itself.
By Step \ref{step:slstdvar2} this will imply that $\Span{S} = \fQ'_n$,
and thus by Step \ref{step:slstdvar1} that $\Phi$ is an isomorphism.
\end{step}

For distinct $1 \leq i,j \leq n$, let $E_{ij} \in \SL_n(\Z)$ be the elementary matrix
obtained from the identity by placing a $1$ at position $(i,j)$.  The matrix $E_{ij}$ lies
in $\SL_n(\Z,e_1^{\ast})$ if $i \neq 1$, and the set
\[\Lambda = \Set{$E_{ij}$}{$1 \leq i,j \leq n$ distinct, $i \neq 1$}\]
generates $\SL_n(\Z,e_1^{\ast})$.  Fixing some $E_{ij} \in \Lambda$ and some $\epsilon = \pm 1$,
it is enough to prove that $E_{ij}^{\epsilon}$ takes $\Span{S}$ to itself.
Consider some $\PQGen{e_k} \in S$.  We must prove that $\PQGen{E_{ij}^{\epsilon}(e_k)}$
can be written as a linear combination of elements of $S$.  If $k \neq j$, then
$E_{ij}^{\epsilon}(e_k) = e_k$ and there is nothing to prove.  If $k=j$, there are two cases:
\begin{itemize}
\item $\epsilon = 1$.  In this case, since $i \neq 1$ it follows that $e_i$ is $e_1$-vanishing, so
$\PQGen{E_{ij}(e_j)} = \PQGen{e_j + e_i} = \PQGen{e_j} + \PQGen{e_i} \in \Span{S}$.
\item $\epsilon = -1$.  In this case, $\PQGen{E_{ij}^{-1}(e_j)} = \PQGen{e_j-e_i}$.  We would like to prove
that this equals $\QGen{e_j} - \QGen{e_i} \in \Span{S}$.  For this, since $i \neq 1$ the set $\{e_j-e_i,e_i\}$ is a partial basis 
such that $e_j - e_i$ is $e_1$-standard and $e_i$ is $e_1$-vanishing.  We have 
\[\QGen{e_j-e_i} + \QGen{e_i}= \QGen{(e_j-e_i) + e_i} = \QGen{e_j}.\qedhere\]
\end{itemize}
\end{proof}

\section{Special linear group II\texorpdfstring{$'$}{'}: variant presentation of adjoint representation}
\label{section:sladjointvar}

We now give a variant of our presentation for the adjoint representation.  
Let $\cB = \{e_1,\ldots,e_n\}$ be the standard basis for $\Z^n$ and let
$\cB^{\ast} = \{e_1^{\ast},\ldots,e_n^{\ast}\}$ be the corresponding dual basis.
As in \S \ref{section:slstdvar}, an element $f \in (\Z^n)^{\ast}$ is {\em $e_i^{\ast}$-standard} (resp.\ $e_i^{\ast}$-vanishing)
if the $e_i^{\ast}$-coordinate of $f$ lies in $\{-1,0,1\}$ (resp.\ is $0$).
We define $v \in \Z^n$ being $e_i$-standard and $e_i$-vanishing similarly.  Define:

\begin{definition}
\label{definition:sladjointvar}
Define $\fA'_n$ to be the $\Q$-vector space with the following presentation:
\begin{itemize}
\item {\bf Generators}.  A generator $\PAGen{f,v}$ for all primitive vectors
$f \in (\Z^n)^{\ast}$ and $v \in \Z^n$ such that $f(v) = 0$ and such that
$f$ is $e_1^{\ast}$-standard and $v$ is $e_2$-standard.
\item {\bf Relations}.  The following two families of relations:
\begin{itemize}
\item For all $e_1^{\ast}$-primitive vectors $f \in (\Z^n)^{\ast}$ and 
all partial bases $\{v_1,v_2\}$ of $\ker(f)$ such that $v_1$ is $e_2$-standard and $v_2$ is $e_2$-vanishing, the relation
$\PAGen{f,v_1+v_2} = \PAGen{f,v_1} + \PAGen{f,v_2}$.
\item For all $e_2$-standard primitive vectors $v \in \Z^n$ and all partial bases $\{f_1,f_2\}$ of
\[\ker(v) = \Set{$f \in (\Z^n)^{\ast}$}{$f(v) = 0$},\]
such that $f_1$ is $e_1^{\ast}$-standard and $f_2$ is $e_1^{\ast}$-vanishing,
the relation $\AGen{f_1+f_2,v} = \AGen{f_1,v}+\AGen{f_2,v}$.\qedhere
\end{itemize}
\end{itemize}
\end{definition}

Define $\Phi\colon \fA'_n \rightarrow (\Q^n)^{\ast} \otimes \Q^n$ via the formula
$\Phi(\PAGen{f,v}) = f \otimes v$.  This takes relations to relations, and thus
gives a well-defined linearization map with $\Image(\Phi) \subset \fsl_n(\Q)$.  Our goal is to prove: 

\setcounter{maintheoremprime}{1}
\begin{maintheoremprime}
\label{maintheorem:sladjointvar}
For $n \geq 4$, the linearization map $\Phi\colon \fA'_n \rightarrow \fsl_n(\Q)$ is an isomorphism.
\end{maintheoremprime}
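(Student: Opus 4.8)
The plan is to run the three-step method of \S\ref{section:prooftechnique}, closely following the proof of Theorem~\ref{maintheorem:sladjoint}, but with $\SL_n(\Z)$ replaced by the subgroup $G \leq \SL_n(\Z)$ that simultaneously stabilizes the vector $e_1 \in \Z^n$ and the covector $e_2^{\ast} \in (\Z^n)^{\ast}$. Since an element fixing $e_1$ preserves the quantity $f(e_1)$ for every covector $f$, and an element fixing $e_2^{\ast}$ preserves the $e_2$-coordinate $e_2^{\ast}(v)$ for every vector $v$, the group $G$ carries $e_1^{\ast}$-standard (resp.\ $e_1^{\ast}$-vanishing) covectors and $e_2$-standard (resp.\ $e_2$-vanishing) vectors to ones of the same type, and hence acts on $\fA'_n$. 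One checks, just as for $\SL_n(\Z,e_1^{\ast})$ in the proof of Theorem~\ref{maintheorem:slstdvar}, that $G$ is generated by the elementary matrices $E_{ij}$ with $i \neq 2$ and $j \neq 1$.

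Before the three steps I would record the $\fA'_n$-analogue of the Claim in the proof of Theorem~\ref{maintheorem:sladjoint}: for a fixed $e_1^{\ast}$-standard covector $f$, the symbol $\PAGen{f,-}$ is $\Q$-linear over those $\Q$-combinations of primitive vectors of $\ker(f)$ that remain $e_2$-standard, and symmetrically $\PAGen{-,v}$ is $\Q$-linear in the allowed covector directions for a fixed $e_2$-standard $v$. This is where $n \geq 4$ enters. Fixing $f$, one picks a basis of $\ker(f)$ adapted to the $e_2$-coordinate, so that the $e_2$-standard vectors of $\ker(f)$ are exactly those whose first coordinate in that basis lies in $\{-1,0,1\}$; when $e_2^{\ast}$ vanishes on $\ker(f)$ this is vacuous and one invokes Theorem~\ref{maintheorem:slstd} ($\fQ_{n-1} \cong \Q^{n-1}$, needing $n-1 \geq 2$), and otherwise one invokes Theorem~\ref{maintheorem:slstdvar} ($\fQ'_{n-1} \cong \Q^{n-1}$, needing $n-1 \geq 3$); either way one pushes the identity into $\fA'_n$ along the evident map $\fQ_{n-1} \to \fA'_n$ or $\fQ'_{n-1} \to \fA'_n$ sending the standard generator to symbols of the form $\PAGen{f,e_2}$. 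The covector slot is handled symmetrically, with $e_1^{\ast}$ now playing the distinguished role.

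For Step~1 I would take $S = S_1 \cup S_2$ with $S_1 = \Set{$\PAGen{e_i^{\ast},e_j}$}{$1 \leq i,j \leq n$ distinct}$ and $S_2 = \Set{$\PAGen{e_i^{\ast}+e_{i+1}^{\ast},e_i-e_{i+1}}$}{$1 \leq i < n$}$; each of these is a legitimate generator of $\fA'_n$ (its covector is $e_1^{\ast}$-standard and its vector is $e_2$-standard), and exactly as in the proof of Theorem~\ref{maintheorem:sladjoint} the map $\Phi$ carries $S$ bijectively onto a basis of $\fsl_n(\Q)$. Step~2 is where the restricted presentation genuinely differs from Theorem~\ref{maintheorem:sladjoint}: since $G$ is a proper subgroup, the generators of $\fA'_n$ no longer form a single $G$-orbit — the quantities $f(e_1) \in \{-1,0,1\}$ and $e_2^{\ast}(v) \in \{-1,0,1\}$ (among others) are $G$-invariant. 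I would enumerate the finitely many orbits, check that $G$ acts transitively within each (using that $n-1 \geq 3$ leaves room to manipulate the coordinates indexed by $\{3,\dots,n\}$, in the spirit of the footnote in the proof of Theorem~\ref{maintheorem:slstdvar}), and then show orbit by orbit that $\Span{G \cdot S}$ contains a representative: the orbits with $f(e_1), e_2^{\ast}(v) \in \{0,1\}$ already contain elements of $S$, and the remaining ones are reached using the preliminary additivity identities together with the sign-flip relations coming from Theorem~\ref{maintheorem:slstdvar} — for instance $\PAGen{e_3^{\ast},-e_2} = -\PAGen{e_3^{\ast},e_2}$ and $\PAGen{-e_1^{\ast},e_j} = -\PAGen{e_1^{\ast},e_j}$ — just as the single relation $\PQGen{e_1+e_2} + \PQGen{-e_1} = \PQGen{e_2}$ was used to reach the orbit $\cO_{-1}$ in the proof of Theorem~\ref{maintheorem:slstdvar}.

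Step~3 — proving that $G$ takes $\Span{S}$ to itself — is the calculation-heavy part, and I expect it to be the main obstacle, just as the paper warns for all its other theorems. As in the proof of Theorem~\ref{maintheorem:sladjoint} I would first introduce a set $E$ of auxiliary symbols (the analogues of $\PAGen{e_i^{\ast}\pm e_j^{\ast}, e_i\mp e_j}$, together with whatever else the available elementary matrices produce when applied to $S_1 \cup S_2$), prove $E \subseteq \Span{S}$ by an induction on the gap between indices mimicking the analogues of Claims 3.A.1 through 3.A.3, and then run through the action of each available $E_{ij}^{\pm 1}$ on each element of $S_1$ and $S_2$ case by case. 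The extra bookkeeping relative to Theorem~\ref{maintheorem:sladjoint} is twofold: one must check at every stage that the intermediate symbols are still legitimate generators of $\fA'_n$ (so no covector acquires an $e_1^{\ast}$-coordinate outside $\{-1,0,1\}$ and no vector an $e_2$-coordinate outside $\{-1,0,1\}$ — which does hold precisely because $G$ is generated by the $E_{ij}$ with $i \neq 2$, $j \neq 1$), and one has fewer elementary matrices available, so some reductions that were immediate in the proof of Theorem~\ref{maintheorem:sladjoint} must be rerouted through the additivity identities and the set $E$. Granting Steps~1--3, Step~2 gives $\Span{S} = \fA'_n$ and then Step~1 gives that $\Phi$ is an isomorphism.
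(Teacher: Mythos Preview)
Your approach is essentially the paper's: same acting group $G = \SL_n(\Z,e_1,e_2^{\ast})$, same $S = S_1 \cup S_2$, same auxiliary set $E$, and the same observation that the Step~3 calculations from Theorem~\ref{maintheorem:sladjoint} go through verbatim because every intermediate symbol stays $e_1^{\ast}$-standard in the covector slot and $e_2$-standard in the vector slot.

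One point needs care. In your preliminary linearity claim the dichotomy is not ``$e_2^{\ast}$ vanishes on $\ker(f)$'' versus ``otherwise'', but rather ``$e_2^{\ast}|_{\ker(f)}$ is surjective'' versus ``not surjective''. For instance $f = 3e_2^{\ast} + 2e_3^{\ast}$ is primitive and $e_1^{\ast}$-standard, yet $e_2^{\ast}|_{\ker(f)}$ has image $2\Z$: it neither vanishes nor surjects, so you cannot choose a basis of $\ker(f)$ making $e_2^{\ast}|_{\ker(f)}$ a dual basis element and invoke Theorem~\ref{maintheorem:slstdvar}. The fix is that in the non-surjective case every $e_2$-standard vector of $\ker(f)$ is in fact $e_2$-vanishing, so one works in the summand $U = \ker(e_2^{\ast}) \cap \ker(f)$ of rank $\geq n-2 \geq 2$ and applies Theorem~\ref{maintheorem:slstd} there; this is a second place where $n \geq 4$ is genuinely used. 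Also, your Step~2 via orbit enumeration is more work than needed: rather than classifying $G$-orbits, the paper takes an arbitrary generator $\PAGen{f,v}$, splits into three cases according to whether the $e_1^{\ast}$-coefficient of $f$ is $\pm 1$, the $e_2$-coefficient of $v$ is $\pm 1$, or both vanish, and in each case applies one element of $G$ followed by the linearity claim to land directly in $\Span{S}$.
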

\begin{proof}
We start with the following, which we will use freely throughout the proof:

\begin{unnumberedclaim}
In $\fA'_n$, the following relations hold for all $m \geq 1$:
\begin{itemize}
\item[(i)] For all $e_1^{\ast}$-standard primitive vectors $f \in (\Z^n)^{\ast}$ 
and all $e_2$-standard primitive vectors $v_1,\ldots,v_m \in \ker(f)$ and all $\lambda_1,\ldots,\lambda_m \in \Z$
such that $\sum_{i=1}^m \lambda_i v_i$ is primitive and $e_2$-standard, we have
\[\PAGen{f,\sum_{i=1}^m \lambda_i v_i} = \sum_{i=1}^m \lambda_i \PAGen{f,v_i}.\]
\item[(ii)] For all $e_2$-standard primitive vectors $v \in \Z^n$
and all $e_1^{\ast}$-standard primitive vectors $f_1,\ldots,f_m \in \ker(v)$ and all $\lambda_1,\ldots,\lambda_m \in \Z$
such that $\sum_{i=1}^m \lambda_i f_i$ is primitive and $e_1^{\ast}$-standard, we have 
\[\PAGen{\sum_{i=1}^m \lambda_i f_i,v} = \sum_{i=1}^m \lambda_i \PAGen{f_i,v}.\]
\end{itemize}
\end{unnumberedclaim}
\begin{proof}
Both are proved the same way, so we will give the details for (i).  Let $f \in (\Z^n)^{\ast}$
and $v_1,\ldots,v_m \in \ker(f)$ and $\lambda_1,\ldots,\lambda_m \in \Z$ be as
in the claim.  There are now two cases.  

The first is that the restriction of $e_2^{\ast}$ to $\ker(f)$ is surjective.
This implies that $\ker(f)$ contains $e_2$-standard vectors that are not $e_2$-vanishing.  We can then choose
an isomorphism $\mu\colon \Z^{n-1} \rightarrow \ker(f)$ with the following property:
\begin{itemize}
\item Let $\{x_1,\ldots,x_{n-1}\}$ be the standard basis for $\Z^{n-1}$ and let $\{x_1^{\ast},\ldots,x_{n-1}^{\ast}\}$
be the corresponding dual basis.  Then the induced map $\mu^{\ast}\colon \ker(f)^{\ast} \rightarrow (\Z^{n-1})^{\ast}$
takes the restriction of $e_2^{\ast}$ to $x_1^{\ast}$.  This condition ensures that $\mu$ gives a bijection between
$x_1$-standard vectors in $\Z^{n-1}$ and $e_2$-standard vectors in $\ker(f)$.
\end{itemize}
Let $u_i = \mu^{-1}(v_i)$.  Recall that we defined $\fQ'_{n-1}$ in Definition \ref{definition:slstdvar}.
Define a map $\psi\colon \fQ'_{n-1} \rightarrow \fA'_n$ via the formula
\[\psi(\PQGen{w}) = \PAGen{f,\mu(w)} \quad \text{for an $x_1$-standard primitive $w \in \Z^{n-1}$}.\]
This takes relations to relations, and thus gives a well-defined map.  Since $n \geq 3$, 
Theorem \ref{maintheorem:slstdvar} says that $\fQ'_{n-1} \cong \Q^{n-1}$.  It follows from this theorem that
\[\PQGen{\sum_{i=1}^m \lambda_i u_i} = \sum_{i=1}^m \lambda_i \PQGen{u_i}.\]
Plugging this into $\psi$, we see that
\[\PAGen{f,\sum_{i=1}^m \lambda_i v_i} = \psi\left(\PQGen{\sum_{i=1}^m \lambda_i u_i}\right) = \sum_{i=1}^m \lambda_i \psi\left(\PQGen{u_i}\right) = \sum_{i=1}^m \lambda_i \PAGen{f,v_i},\]
as desired.

The second case is that the restriction of $e_2^{\ast}$ to $\ker(f)$ is not surjective,
so all $e_2$-standard vectors in $\ker(f)$ are $e_2$-vanishing.  In particular, all the
$v_i$ are $e_2$-vanishing.  Letting $U = \ker(e_2^{\ast}) \cap \ker(f)$, this implies that
all the $v_i$ lie in $U$.  Since $U$ is the intersection of two direct summands of $\Z^n$, it follows
that $U$ is also a direct summand of $\Z^n$.  Let $r$ be the rank of $U$.  Since $n \geq 4$, we have $r \geq 2$.  Choose
an isomorphism $\mu\colon \Z^r \rightarrow U$.  Let $u_i = \mu^{-1}(v_i)$.
Recall that we defined $\fQ_{r}$ in Definition \ref{definition:slstd}.
Define a map $\psi\colon \fQ_{r} \rightarrow \fA'_n$ via the formula
\[\psi(\QGen{w}) = \PAGen{f,\mu(w)} \quad \text{for a primitive $w \in \Z^{n-1}$}.\]
This makes sense since each $\mu(w)$ is $e_2$-vanishing, and hence also $e_2$-standard.
The map $\psi$ takes relations to relations, and thus gives a well-defined map.  Since $r \geq 2$,
Theorem \ref{maintheorem:slstd} says that $\fQ_{n-1} \cong \Q^{r}$.  It follows from this theorem that
\[\QGen{\sum_{i=1}^m \lambda_i u_i} = \sum_{i=1}^m \lambda_i \QGen{u_i}.\]
Plugging this into $\psi$, we see that
\[\PAGen{f,\sum_{i=1}^m \lambda_i v_i} = \psi\left(\QGen{\sum_{i=1}^m \lambda_i u_i}\right) = \sum_{i=1}^m \lambda_i \psi\left(\QGen{u_i}\right) = \sum_{i=1}^m \lambda_i \PAGen{f,v_i},\]
as desired.
\end{proof}

Recall that $\cB = \{e_1,\ldots,e_n\}$ is the standard basis for $\Z^n$ and 
$\cB^{\ast} = \{e_1^{\ast},\ldots,e_n^{\ast}\}$ is the corresponding dual basis for $(\Z^n)^{\ast}$.
Let $\Gamma = \SL_n(\Z,e_1,e_2^{\ast})$ be the stabilizer of $e_1$ and $e_2^{\ast}$ in $\SL_n(\Z)$.  The action of $\Gamma$ on
$(\Z^n)^{\ast}$ fixes the $e_1^{\ast}$-coordinate, and the action of $\Gamma$ on $\Z^n$ fixes the $e_2$-coordinate.
It follows that $\Gamma$ acts on
on $\fA'_n$, and we will use this action to prove our theorem.
We follow the outline from \S \ref{section:prooftechnique}, though for readability
we divide Step 3 into Steps 3.A and 3.B.

\begin{step}{1}
\label{step:sladjointvar1}
Let $S = S_1 \cup S_2$, where the $S_i$ are:
\begin{align*}
S_1 &= \Set{$\BPAGen{e_i^{\ast},e_j}$}{$1 \leq i,j \leq n$ distinct}, \\
S_2 &= \Set{$\BPAGen{e_i^{\ast}+e_{i+1}^{\ast},e_i-e_{i+1}}$}{$1 \leq i < n$}.
\end{align*}
Note that all of these are generators of $\fA'_n$.  Like we did here, we will write elements of $S$ in blue.
Then the restriction of $\Phi$ to $\Span{S}$ is an isomorphism.
\end{step}

The proof is identical to that of Step \ref{step:sladjoint1} in the proof of
Theorem \ref{maintheorem:sladjoint}, so we omit the details.

\begin{step}{2}
\label{step:sladjointvar2}
Recall that $\Gamma = \SL_n(\Z,e_1,e_2^{\ast})$.
We prove that the $\Gamma$-orbit of $S$ spans $\fA'_n$.
\end{step}

Consider a generator $\PAGen{f,v}$ of $\fA'_n$, so:
\begin{itemize}
\item $f \in (\Z^n)^{\ast}$ is primitive and $e_1^{\ast}$-standard; and
\item $v \in \Z^n$ is primitive and $e_2$-standard; and
\item $f(v) = 0$.
\end{itemize}
We want to prove that $\PAGen{f,v}$ is in the span of the
$\Gamma$-orbit of $S$.  
There are three (nonexclusive) cases:

\begin{case}{2.1}
\label{case:sladjointvar2.1}
The $e_1^{\ast}$-coordinate of $f$ is $\pm 1$.
\end{case}

\noindent
Using the claim from the beginning of the proof, we have $\PAGen{f,v} = -\PAGen{-f,v}$, so
replacing $\PAGen{f,v}$ with $\PAGen{-f,v}$ if necessary we can assume that
the $e_1^{\ast}$-coordinate of $f$ is $1$.  Let $M$ be the matrix whose
rows are $\{f,e_2^{\ast},\ldots,e_n^{\ast}\}$.  Since the $e_1^{\ast}$-coordinate
of $f$ is $1$, we have $\det(M) = 1$.  By construction we have have $M \in \Gamma = \SL_n(\Z,e_1,e_2^{\ast})$
and $M(e_1^{\ast}) = f$.  Replacing $\PAGen{f,v}$ by $M^{-1}(\PAGen{f,v})$, we can
assume that $f = e_1^{\ast}$.
Write
\[v = \lambda_1 e_1 + \cdots + \lambda_n e_n \quad \text{with $\lambda_1,\ldots,\lambda_n \in \Z$}.\]
Since $v$ is in the kernel of $f = e_1^{\ast}$, we have $\lambda_1 = 0$.  Using the claim
from the beginning of the proof, we have
\[\PAGen{f,v} = \PAGen{e_1^{\ast},\lambda_2 e_2 + \cdots + \lambda_n e_n} = \lambda_2 \PAGen{e_1^{\ast},e_2} + \cdots + \lambda_n \PAGen{e_1^{\ast},e_n} \in \Span{S},\]
as desired.

\begin{case}{2.2}
\label{case:sladjointvar2.2}
The $e_2$-coordinate of $v$ is $\pm 1$.
\end{case}

\noindent
Similar to Case \ref{case:sladjointvar2.1} except that after possibly replacing $\PAGen{f,v}$ with
$\PAGen{f,-v}$ we apply an element of
$\Gamma = \SL_n(\Z,e_1,e_2^{\ast})$ to change $v$ to $e_2$.

\begin{case}{2.3}
The $e_1^{\ast}$-coordinate of $f$ is $0$ and the $e_2$-coordinate of $v$ is $0$.
\end{case}

\noindent
Write $f = p e_2^{\ast} + q g$ with $p,q \in \Z$ and $g \in \Span{e_3^{\ast},\ldots,e_n^{\ast}}$ primitive.
Since $f$ is primitive, we have $\gcd(p,q) = 1$.  Since $n \geq 4$, it follows that the action of
$\SL(\Span{e_3^{\ast},\ldots,e_n^{\ast}})$ on $\Span{e_3^{\ast},\ldots,e_n^{\ast}}$ is transitive on
primitive vectors.  Using this, we can find $M \in \SL_n(\Z)$ such that $M(g) = e_3^{\ast}$ and such that
$M$ acts as the identity on $\Span{e_1^{\ast},e_2^{\ast}}$ and $\Span{e_1,e_2}$.  It follows that
$M \in \Gamma = \SL_n(\Z,e_1,e_2^{\ast})$.  Replacing 
$\PAGen{f,v} = \PAGen{p e_2^{\ast} + q g,v}$ with $M(\PAGen{f,v})$, we can assume that $f = p e_2^{\ast} + q e_3^{\ast}$.
Write
\[v = \lambda_1 e_1 + \cdots + \lambda_n e_n \quad \text{with $\lambda_1,\ldots,\lambda_n \in \Z$}.\]
By assumption, we have $\lambda_2 = 0$.  Since $f(v) = 0$ and $f = p e_2^{\ast} + q e_3^{\ast}$, we have
\[p \lambda_2 + q \lambda_3 = q \lambda_3 = 0.\]
It follows that
either $q$ or $\lambda_3$ must vanish.  If $q = 0$, then since $f = p e_2^{\ast} + q e_3^{\ast}$ is primitive we
must have $p \in \{\pm 1\}$ and we can use the claim from the beginning of the
proof to see that
\begin{align*}
\PAGen{f,v} &= \PAGen{p e_2^{\ast},\lambda_1 e_1 + \lambda_3 e_3 + \cdots + \lambda_n e_n} \\
            &= p \lambda_1 \PAGen{e_2^{\ast},e_1} + p \lambda_3 \PAGen{e_2^{\ast},e_3} + \cdots + p \lambda_n \PAGen{e_2^{\ast},e_n} \in \Span{S},
\end{align*}
as desired.  If instead $\lambda_3 = 0$, then we can again use the claim from beginning of the proof
to see that
\begin{align*}
\PAGen{f,v} &= \PAGen{p e_2^{\ast} + q e_3^{\ast},\lambda_1 e_1 + \lambda_4 e_4 + \cdots + \lambda_n e_n} \\
            &= \sum_{\substack{1 \leq i \leq n \\ i \neq 2,3}} \lambda_i \PAGen{p e_2^{\ast} + q e_3^{\ast},e_i}
             = \sum_{\substack{1 \leq i \leq n \\ i \neq 2,3}} \left(p\lambda_i \PAGen{e_2^{\ast},e_i} + q \lambda_i \PAGen{e_3^{\ast},e_i}\right) \in \Span{S},
\end{align*}
again as desired.

\begin{step}{3.A}
\label{step:sladjointvar3A}
In preparation for proving that $\Gamma = \SL_n(\Z,e_1,e_2^{\ast})$ takes $\Span{S}$ to $\Span{S}$, we prove
that all elements of\hspace{2pt}\footnote{Note that all of these lie in $\fA'_n$; indeed, the only $\GPAGen{f,v} \in E$ where there is any possibility
that either $f$ is not $e_1^{\ast}$-standard or $v$ is not $e_2$-standard are those of the form $\GPAGen{e_i^{\ast}+2e_j^{\ast},2e_i-e_j}$, and 
our assumptions that $j \neq 1$ and $i \neq 2$ ensure that indeed $\GPAGen{f,v} \in \fA'_n$.}
\begin{align*}
E = &\Set{$\GPAGen{e_i^{\ast}+e_j^{\ast},e_i-e_j}$}{$1 \leq i,j \leq n$ distinct} \\
    &\cup \Set{$\GPAGen{e_i^{\ast}-e_j^{\ast},e_i+e_j}$}{$1 \leq i,j \leq n$ distinct} \\
    &\cup \Set{$\GPAGen{e_i^{\ast}+2e_j^{\ast},2e_i-e_j}$}{$1 \leq i,j \leq n$ distinct, $j \neq 1$, $i \neq 2$}.
\end{align*}
lie in $\Span{S}$.  Like we did here, we will write elements of $E$ in green.
\end{step}

The proof is identical to that of Step \ref{step:sladjoint3A} of the proof of Theorem \ref{maintheorem:sladjoint}; indeed,
a careful examination of that proof shows that every term $\PAGen{f,v}$ that appears when handling the elements of $E$
listed above has the property that $f$ is $e_1^{\ast}$-standard and $v$ is $e_2$-standard.
We therefore omit the details.

\begin{step}{3.B}
\label{step:sladjointvar3B}
We prove that $\Gamma = \SL_n(\Z,e_1,e_2^{\ast})$ takes $\Span{S}$ to itself.
By Step \ref{step:sladjointvar2} this will imply that $\Span{S} = \fA'_n$,
and thus by Step \ref{step:sladjointvar1} that $\Phi$ is an isomorphism.
\end{step}

For distinct $1 \leq i,j \leq n$, let $E_{ij} \in \SL_n(\Z)$ be the elementary matrix
obtained from the identity by placing a $1$ at position $(i,j)$.  The matrix $E_{ij}$ lies
in $\Gamma$ if $j \neq 1$ and $i \neq 2$, and the set
\[\Lambda = \Set{$E_{ij}$}{$1 \leq i,j \leq n$ distinct, $j \neq 1$, $i \neq 2$}\]
generates $\Gamma$.  Fixing some $E_{ij} \in \Lambda$ and some $\epsilon = \pm 1$,
it is enough to prove that $E_{ij}^{\epsilon}$ takes $\Span{S}$ to itself.
The proof of this is identical to that of Step \ref{step:sladjoint3B} of the proof of Theorem \ref{maintheorem:sladjoint}.
We therefore omit the details.
\end{proof}

\section{Generating the symplectic group}
\label{section:spgen}

To prove our theorems about $\Sp_{2g}(\Z)$, we need generators
for $\Sp_{2g}(\Z)$.  The most convenient generating set was constructed by
Hua--Reiner \cite{HuaReiner}, which we now describe.  Recall from
\S \ref{section:notation} that $H = \Q^{2g}$ and $H_{\Z} = \Z^{2g}$, which are
equipped with the standard symplectic form $\omega$.  Let $\cB = \{a_1,b_1,\ldots,a_g,b_g\}$
be a symplectic basis for $H_{\Z}$.

Define $\SymSp_g$ to be the subgroup of $\Sp_{2g}(\Z)$ consisting of all $f \in \Sp_{2g}(\Z)$
such that for all $x \in \cB$, we have either $f(x) \in \cB$ or $-f(x) \in \cB$.  This is a finite group.
Associated to each $f \in \Sp_{2g}(\Z)$ is a permutation $p$ of $\{1,\ldots,g\}$ such that
for all $1 \leq i \leq g$ the pair $(f(a_i),f(b_i))$ is one of the following:
\[(a_{p(i)},b_{p(i)}),   \quad \text{or} \quad
  (-a_{p(i)},-b_{p(i)}), \quad \text{or} \quad
  (b_{p(i)},-a_{p(i)}),  \quad \text{or} \quad
  (-b_{p(i)},a_{p(i)}).\]
Next, for $1 \leq i \leq g$ let $X_i \in \Sp_{2g}(\Z)$ be the element defined by
\[X_i(a_i) = a_i+b_i \quad \text{and} \quad \text{$X_i(x) = x$ for $x \in \cB \setminus \{a_i\}$}.\]
Finally, for distinct $1 \leq i,j \leq g$ let $Y_{ij} \in \Sp_{2g}(\Z)$ be the element defined by
\[Y_{ij}(a_i) = a_i + b_j \quad \text{and} \quad Y_{ij}(a_j) = a_j + b_i \quad \text{and} \quad
\text{$Y_{ij}(x) = x$ for $x \in \cB \setminus \{a_i,a_j\}$}.\]
It follows from the calculations in \cite{HuaReiner} that:\footnote{This is not identical to
the generating set from \cite{HuaReiner}, but it is easily seen to be equivalent to it and
fits better into our calculations.}

\begin{theorem}[{Hua--Reiner, \cite{HuaReiner}}]
\label{theorem:huareiner}
For all $g \geq 1$, the group $\Sp_{2g}(\Z)$ is generated by $\SymSp_g$ and $X_1$ and $Y_{12}$.
\end{theorem}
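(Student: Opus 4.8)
The plan is to write $G = \langle \SymSp_g,\ X_1,\ Y_{12}\rangle \le \Sp_{2g}(\Z)$ (with $Y_{12}$ omitted when $g = 1$) and to prove $G = \Sp_{2g}(\Z)$. The one external ingredient I would invoke is the classical fact that $\Sp_{2g}(\Z)$ is generated by its elementary symplectic matrices, i.e.\ by the root subgroups $U_\alpha(\Z) \cong (\Z,+)$ as $\alpha$ ranges over the roots of the type-$C_g$ system; this is the symplectic analogue of $\SL_n(\Z) = \EL_n(\Z)$ and holds because $\Z$ is Euclidean. Relative to the symplectic basis $\cB = \{a_1, b_1, \ldots, a_g, b_g\}$, these subgroups are: for the long roots $\pm 2 e_i$, the transvection families $x \mapsto x + t\,\omega(x, a_i)\, a_i$ and $x \mapsto x + t\,\omega(x, b_i)\, b_i$; for the short roots $\pm(e_i + e_j)$ with $i \ne j$, the $\Sp$-analogues of $Y_{ij}$ living inside the isotropic planes $\Span{a_i, a_j}$ and $\Span{b_i, b_j}$; and for the short roots $\pm(e_i - e_j)$, the block-diagonal $\GL_g(\Z)$-Levi unipotents $\operatorname{diag}(I + tE_{ij},\ (I + tE_{ij})^{-\mathsf{T}})$. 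So it suffices to prove that $G$ contains every $U_\alpha(\Z)$.

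The engine of the argument is that $\SymSp_g$ is large enough to conjugate root subgroups onto one another. Indeed, $\SymSp_g$ contains the pair permutations $a_i \mapsto a_{p(i)},\ b_i \mapsto b_{p(i)}$ (for any permutation $p$ of $\{1,\dots,g\}$) and the flips $J_i$ given by $a_i \mapsto b_i$, $b_i \mapsto -a_i$, and the identity on the other pairs; on the maximal torus these induce, respectively, the permutation $p$ of the characters $e_1, \dots, e_g$ and the reflection $e_i \mapsto -e_i$, and together they realize the full signed-permutation group $W(C_g)$. A direct check from the definitions shows that $X_1$ is a nontrivial element of a long-root subgroup and that $Y_{12}$ is a nontrivial element of a $\pm(e_i+e_j)$-type short-root subgroup (concretely $Y_{12} = T_{b_1+b_2}\,T_{b_1}^{-1}\,T_{b_2}^{-1}$, which is the identity plus a single symplectic root vector). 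Since conjugation carries $U_\alpha$ isomorphically to $U_{w\alpha}$ for the induced $w \in W(C_g)$, and since $W(C_g)$ is transitive on long roots and transitive on short roots, conjugating $X_1$ by elements of $\SymSp_g$ yields every long-root subgroup, while conjugating $Y_{12}$ yields every short-root subgroup --- in particular the $\pm(e_i - e_j)$ Levi subgroups, reached by conjugating $Y_{12}$ by a single flip together with pair permutations. Hence $G$ contains all root subgroups and $G = \Sp_{2g}(\Z)$. For $g = 1$ there are no short roots and the statement reduces to the classical fact that $\SL_2(\Z)$ is generated by $X_1$ and the order-four element $J_1 \in \SymSp_1$.

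I expect the real work to be bookkeeping rather than ideas: (a) one must identify $X_1$ and $Y_{12}$ precisely as exponentials of root vectors, keeping track of the signs built into the four cases of the definition of $\SymSp_g$ --- this is exactly the point the footnote after the theorem alludes to when it says our generating set is only ``equivalent'' to that of Hua--Reiner; (b) one must confirm that $\SymSp_g \to W(C_g)$ is onto, so that $2e_1$ and $e_1 + e_2$ have the full Weyl orbits claimed; and (c) the degenerate low-rank cases $g = 1, 2$ should be checked by hand. An alternative that avoids root-system language --- and is essentially what the Hua--Reiner computation does --- is to start from some fixed, explicit generating set of $\Sp_{2g}(\Z)$ by elementary symplectic matrices and rewrite each generator as a word in $\SymSp_g \cup \{X_1, Y_{12}\}$ using the same conjugation relations; this replaces the transitivity statement about $W(C_g)$ by a longer but entirely concrete list of matrix identities.
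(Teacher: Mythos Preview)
The paper does not give its own proof of this theorem: it is stated as a citation to Hua--Reiner, with only a footnote remarking that the generating set here is equivalent to but not identical with theirs. So there is no in-paper argument to compare against.

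Your sketch is correct and is the standard modern way to organize such a result. One small point of phrasing: you write that $X_1$ is ``a nontrivial element'' of a long-root subgroup and similarly for $Y_{12}$, but what you actually need (and what is true) is that each is a \emph{generator} of its root subgroup $U_\alpha(\Z) \cong \Z$; a nontrivial element like $X_1^2$ would not suffice. Since $X_1$ and $Y_{12}$ visibly correspond to parameter $t = 1$ in their respective one-parameter subgroups, this is fine, but it is worth saying explicitly. Beyond that, the Weyl-transitivity step is exactly right: $W(C_g)$ is the full hyperoctahedral group, realized inside $\SymSp_g$ by the pair permutations and the flips $J_i$, and it acts transitively on long roots and on short roots, so conjugation by $\SymSp_g$ sweeps $\langle X_1 \rangle$ and $\langle Y_{12} \rangle$ over all root subgroups. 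The appeal to $\Sp_{2g}(\Z)$ being generated by its root subgroups over a Euclidean ring is the one genuine external input, and it is standard.
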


\begin{remark}
The other $X_i$ are not needed since they are all conjugate to $X_1$ by elements of $\SymSp_g$.
Similarly, the other $Y_{ij}$ are not needed.
\end{remark}

The most complicated generator is $Y_{12}$.  The following observation will simplify our calculations
by allowing us to not worry about $Y_{12}^{-1}$:

\begin{corollary}
\label{corollary:gensp}
For all $g \geq 1$, the group $\Sp_{2g}(\Z)$ is generated as a monoid by $\SymSp_g$ and $\{X_1,X_1^{-1},Y_{12}\}$.
\end{corollary}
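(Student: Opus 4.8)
The plan is to deduce this from Theorem~\ref{theorem:huareiner} by observing that the only generator-inverse missing from the proposed monoid generating set is $Y_{12}^{-1}$, and then rewriting $Y_{12}^{-1}$ using $\SymSp_g$ and $Y_{12}$ alone. By Theorem~\ref{theorem:huareiner}, $\Sp_{2g}(\Z)$ is generated as a group by $\SymSp_g \cup \{X_1,Y_{12}\}$, hence as a monoid by $\SymSp_g \cup \{X_1,X_1^{-1},Y_{12},Y_{12}^{-1}\}$ together with the inverses of elements of $\SymSp_g$. But $\SymSp_g$ is a finite group, so it is closed under inverses. Thus it suffices to express $Y_{12}^{-1}$ as a monoid word in $\SymSp_g \cup \{Y_{12}\}$; in particular $X_1^{-1}$ will not even be needed, and is included in the statement only for later convenience.

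For this, I would introduce the element $\tau \in \Sp_{2g}(\Z)$ determined by $\tau(a_1) = -a_1$, $\tau(b_1) = -b_1$, and $\tau(x) = x$ for all $x \in \cB \setminus \{a_1,b_1\}$ — that is, $\tau$ acts as $-\id$ on $\Span{a_1,b_1}$ and as the identity on the complementary symplectic summand. Then $\tau$ is symplectic, it is an involution, and it visibly lies in $\SymSp_g$ (for each $x \in \cB$ either $\tau(x) \in \cB$ or $-\tau(x) \in \cB$). The one remaining step is the direct verification that $\tau Y_{12}\tau^{-1} = Y_{12}^{-1}$: evaluating on the basis $\cB$ and using that $Y_{12}$ fixes every element of $\cB \setminus \{a_1,a_2\}$ (in particular $b_1$ and $b_2$), one checks that $\tau Y_{12}\tau^{-1}$ sends $a_1 \mapsto a_1 - b_2$ and $a_2 \mapsto a_2 - b_1$ and fixes all other elements of $\cB$, which is exactly the action of $Y_{12}^{-1}$. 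Since $\tau^{-1} = \tau \in \SymSp_g$, this exhibits $Y_{12}^{-1}$ as a monoid word in $\SymSp_g \cup \{Y_{12}\}$, and the corollary follows.

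There is no real obstacle here: the computation in the last step is entirely routine, and the only thing requiring any thought is spotting that conjugating $Y_{12}$ by the sign-change $\tau$ on the first hyperbolic plane inverts it. I would simply record the one-line evaluation of $\tau Y_{12}\tau^{-1}$ on $a_1$ and $a_2$ and leave the rest to the reader.
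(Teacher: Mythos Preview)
Your proof is correct and matches the paper's approach exactly: the paper also introduces the involution $\sigma \in \SymSp_g$ that multiplies $a_1$ and $b_1$ by $-1$ while fixing the rest of $\cB$, and notes $Y_{12}^{-1} = \sigma Y_{12}\sigma$.
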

\begin{proof}
Let $\sigma \in \SymSp_{2g}$ be the element that multiplies $a_1$ and $b_1$ by $-1$ while fixing all other
elements of $\cB$.  We then have $Y_{12}^{-1} = \sigma Y_{12} \sigma$.  The corollary follows.
\end{proof}

\section{Symplectic group I: standard representation}
\label{section:spstd}

Recall that $\fH_n$ is the $\Q$-vector space with the following presentation:
\begin{itemize}
\item {\bf Generators}. A generator $\HGen{v}$ for all primitive vectors $v \in H_{\Z}$.
\item {\bf Relations}.  For a partial basis $\{v_1,v_2\}$ of $H_{\Z}$ with $\omega(v_1,v_2)=0$, the relation
$\HGen{v_1}+\HGen{v_2} = \HGen{v_1+v_2}$.
\end{itemize}
Define a linearization map $\Phi\colon \fH_g \rightarrow H$ via the formula
$\Phi(\HGen{v}) = v$.  This takes relations to relations, and thus gives a well-defined
map.  Our goal is to prove:

\newtheorem*{maintheorem:spstd}{Theorem \ref{maintheorem:spstd}}
\begin{maintheorem:spstd}
For $g \geq 2$, the linearization map $\Phi\colon \fH_g \rightarrow H$ is an isomorphism.
\end{maintheorem:spstd}
\begin{proof}
Let $\cB = \{a_1,b_1,\ldots,a_g,b_g\}$
be a symplectic basis for $H_{\Z}$.
We follow the outline from \S \ref{section:prooftechnique}, though for readability
we divide Step 3 into Steps 3.A and 3.B.

\begin{step}{1}
\label{step:spstd1}
Let $S = \Set{$\HGen{x}$}{$x \in \cB$}$.
Then the restriction of $\Phi$ to $\Span{S}$ is an isomorphism.
\end{step}

Immediate.

\begin{step}{2} 
\label{step:spstd2}
We prove that the $\Sp_{2g}(\Z)$-orbit of $S$ spans $\fH_g$.
\end{step}

Since $\Sp_{2g}(\Z)$ acts transitively on primitive vectors,
it acts transitively on the generators for $\fH_g$.  It follows that the $\Sp_{2g}(\Z)$-orbit
of $S$ spans $\fH_g$.

\begin{step}{3.A}
\label{step:spstd3A}
In preparation for proving that $\Sp_{2g}(\Z)$ takes $\Span{S}$ to $\Span{S}$, we prove
that all elements of
$E = \Set{$\HGen{-x}$}{$x \in \cB$} \cup \{\HGen{a_1+b_1},\HGen{a_1-b_1}\}$
lie in $\Span{S}$.
\end{step}

We divide this into three claims:

\begin{claim}{3.A.1}
For all primitive $v \in H_{\Z}$, we have $\HGen{-v} = -\HGen{v}$.  In particular, all elements of $\Set{$\HGen{-x}$}{$x \in \cB$}$ lie in $S$.
\end{claim}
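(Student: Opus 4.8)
The plan is to reuse the explicit computation in Remark~\ref{remark:negative}, which established $\QGen{v}+\QGen{-v}=0$ in $\fQ_n$, but keeping track of the symplectic form so that every partial basis to which a relation of $\fH_g$ is applied is symplectically orthogonal. Recall that in Remark~\ref{remark:negative} one fixes an auxiliary vector $w$ with $\{v,w\}$ a partial basis and then applies the defining relation to the four partial bases $\{v,v+w\}$, $\{-v,-w\}$, $\{v+w,-w\}$, and $\{2v+w,-v-w\}$. A direct check shows that the symplectic form of each of these pairs equals $\omega(v,w)$, $\omega(v,w)$, $-\omega(v,w)$, and $-\omega(v,w)$ respectively, so the entire computation goes through in $\fH_g$ as soon as $w$ is chosen with the extra property that $\omega(v,w)=0$.

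The first step is to produce such a $w$. Since $v$ is primitive it is the first member of some symplectic basis $\{a_1,b_1,\ldots,a_g,b_g\}$ of $H_{\Z}$, and since $g\geq 2$ we may take $w=a_2$; then $\{v,w\}$ is a partial basis with $\omega(v,w)=0$. The vectors $v+w$, $2v+w$, $-v-w$, and $-w$ occurring in the computation are then automatically primitive, since each sits in a partial basis obtained from $\{v,w\}$ by an integral change of coordinates on $\Span{v,w}$, so the symbols $\HGen{v+w}$, $\HGen{2v+w}$, etc., are well defined.

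The second step is simply to run the computation of Remark~\ref{remark:negative} verbatim in $\fH_g$: with $\omega(v,w)=0$ each of the four partial bases above is symplectically orthogonal, so each relation used there is a relation of $\fH_g$, and the same cancellation yields $\HGen{v}+\HGen{-v}=0$. Applying this with $v=x$ for each $x\in\cB$, and using that $\HGen{x}\in S$, gives $\HGen{-x}=-\HGen{x}\in\Span{S}$.

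There is no real obstacle in this claim; the only points needing attention are the choice of $w$ orthogonal to $v$ (which is exactly where the hypothesis $g\geq 2$ is used) and the elementary verification that the four relevant pairs are symplectically orthogonal. The genuinely delicate part of Step~3.A will instead be the remaining claims, which must handle $\HGen{a_1+b_1}$ and $\HGen{a_1-b_1}$, where the two summands are \emph{not} orthogonal and so the naive argument above is unavailable.
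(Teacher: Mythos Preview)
Your proof is correct and takes essentially the same approach as the paper: both pick an auxiliary $w\in\Span{v}^{\perp}$ with $\{v,w\}$ a partial basis (using $g\ge 2$) and then exploit that $\omega$ vanishes on $\Span{v,w}$, so the $\fH_g$-relations there coincide with the $\fQ_2$-relations. The only difference is packaging: the paper defines a map $\psi\colon\fQ_2\to\fH_g$ via $\QGen{x}\mapsto\HGen{\mu(x)}$ and then quotes Theorem~\ref{maintheorem:slstd} to get $\QGen{-e_1}=-\QGen{e_1}$, whereas you rerun the explicit computation of Remark~\ref{remark:negative} directly inside $\fH_g$; since that remark is exactly the proof of the relevant consequence of Theorem~\ref{maintheorem:slstd}, the two arguments are really the same.
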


\noindent
Since $g \geq 2$, we can find $w \in H_{\Z}$ with $\omega(v,w) = 0$ such that $\Span{v,w}$ is a
rank-$2$ direct summand of $\Z^{2g}$.  Let $\mu\colon \Z^2 \rightarrow \Span{v,w}$ be the isomorphism
taking the standard basis $\{e_1,e_2\}$ of $\Z^2$ to $\{v,w\}$.  Recall that we defined
$\fQ_n$ in Definition \ref{definition:slstd}.  Define a map $\psi\colon \fQ_2 \rightarrow \fH_g$
via the formula
\[\psi(\QGen{x}) = \HGen{\mu(x)} \quad \text{for a primitive $x \in \Z^2$}.\]
Since $\omega(-,-)$ vanishes identically on $\Span{v,w}$, this formula
takes generators to generators.  Theorem \ref{maintheorem:slstd} says
that $\fQ_2 \cong \Q^2$.  This implies that $\QGen{-x} = -\QGen{x}$ for all
primitive $x \in \Z^2$.  In particular,
\[\HGen{-v} = \HGen{-\mu(e_1)} = \psi(\QGen{-e_1}) = \psi(-\QGen{e_1}) = -\HGen{v}.\]

\begin{claim}{3.A.2}
We have $\HGen{a_1+b_1} \in \Span{S}$.
\end{claim}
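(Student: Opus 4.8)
The plan is to establish, inside $\fH_g$, the identity
\[\HGen{a_1+b_1}=\HGen{a_1}+\HGen{b_1}+\HGen{a_2+b_2}-\HGen{a_2}-\HGen{b_2},\]
and then to kill the last three terms by feeding this identity through two well-chosen elements of $\SymSp_g$ and averaging. Note that this is exactly the place where $g\geq 2$ enters: the argument uses the extra symplectic pair $\{a_2,b_2\}$, which does not exist when $g=1$.

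To prove the displayed identity, I would use the following chain of the defining relations of $\fH_g$, together with the previous claim that $\HGen{-v}=-\HGen{v}$ for every primitive $v$. Since $\omega(a_1+b_1,a_2)=0$ and $\{a_1+b_1,a_2\}$ is a partial basis, we get $\HGen{a_1+b_1}=\HGen{a_1+b_1+a_2}-\HGen{a_2}$. Next I would use the decomposition $a_1+b_1+a_2=(a_1+a_2+b_2)+(b_1-b_2)$: a direct computation gives $\omega(a_1+a_2+b_2,\,b_1-b_2)=0$, and one checks $\{a_1+a_2+b_2,\,b_1-b_2\}$ is a partial basis, so $\HGen{a_1+b_1+a_2}=\HGen{a_1+a_2+b_2}+\HGen{b_1-b_2}$. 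Finally, $\{a_1,a_2+b_2\}$ and $\{b_1,-b_2\}$ are isotropic partial bases, so $\HGen{a_1+a_2+b_2}=\HGen{a_1}+\HGen{a_2+b_2}$ and $\HGen{b_1-b_2}=\HGen{b_1}-\HGen{b_2}$. Assembling these gives the identity above.

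Now let $\sigma,\sigma'\in\SymSp_g$ be the symplectic automorphisms fixing every element of $\cB$ except that $\sigma(a_2)=b_2$, $\sigma(b_2)=-a_2$, while $\sigma'(a_2)=-b_2$, $\sigma'(b_2)=a_2$; both fix $a_1+b_1$. Applying $\sigma$ and $\sigma'$ to the identity above and using $\HGen{-v}=-\HGen{v}$ and $\HGen{b_2-a_2}=-\HGen{a_2-b_2}$ to normalize signs yields
\[\HGen{a_1+b_1}=\HGen{a_1}+\HGen{b_1}-\HGen{a_2-b_2}+\HGen{a_2}-\HGen{b_2}\]
and
\[\HGen{a_1+b_1}=\HGen{a_1}+\HGen{b_1}+\HGen{a_2-b_2}-\HGen{a_2}+\HGen{b_2}.\]
Averaging these two identities, the terms involving index $2$ cancel, leaving $\HGen{a_1+b_1}=\HGen{a_1}+\HGen{b_1}\in\Span{S}$, as desired.

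The main obstacle is the middle step: one must split $a_1+b_1+a_2$ into an $\omega$-orthogonal pair, and since any such pair must be orthogonal to $a_1+b_1+a_2$ itself one cannot choose both summands to lie manifestly in $\Span{S}$; hence a single chain of relations is forced to leave behind an ``index-$2$ error term'', and the real content is realizing that this error term is odd under $\sigma$ versus $\sigma'$ so that averaging eliminates it. By contrast, checking that each pair used along the chain is primitive and a partial basis of $\Z^{2g}$ is routine.
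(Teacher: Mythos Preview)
Your proof is correct, but it takes a different route from the paper. The paper computes $\HGen{(a_1+b_1)+(a_2+b_2)}$ in two ways (once as $\HGen{a_1+b_1}+\HGen{a_2+b_2}$, once via the orthogonal decomposition $(a_1+b_2)+(a_2+b_1)$) and does the same for $\HGen{(a_1+b_1)-(a_2+b_2)}$, then adds the two resulting identities and divides by $2$. Your approach instead derives a single identity $\HGen{a_1+b_1}=\HGen{a_1}+\HGen{b_1}+\HGen{a_2+b_2}-\HGen{a_2}-\HGen{b_2}$ via a longer chain of relations, and then pushes it through two elements of $\SymSp_g$ to produce a pair of identities whose index-$2$ terms are negatives of each other. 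Both arguments ultimately rest on the same averaging idea, but the paper's version is more symmetric and self-contained (it never invokes the group action), while yours makes the role of $\SymSp_g$ explicit and highlights why the error term disappears. Your intermediate decomposition $a_1+b_1+a_2=(a_1+a_2+b_2)+(b_1-b_2)$ is a nice alternative to the paper's $(a_1+b_2)+(a_2+b_1)$.
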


\noindent
The element $\HGen{(a_1+b_1)+(a_2+b_2)}$ equals
$\HGen{a_1+b_1} + \HGen{a_2+b_2}$.
It also equals
\begin{align*}
\HGen{(a_1+b_2) + (a_2+b_1)} &= \HGen{a_1+b_2} + \HGen{a_2+b_1} 
                             = \HGen{a_1} + \HGen{b_2} + \HGen{a_2} + \HGen{b_1}.
\end{align*}
We deduce that
\begin{equation}
\label{eqn:spstd3a2.1}
\HGen{a_1+b_1} + \HGen{a_2+b_2} = \HGen{a_1} + \HGen{b_2} + \HGen{a_2} + \HGen{b_1}.
\end{equation}
Similarly, the element $\HGen{(a_1+b_1)-(a_2+b_2)}$ equals
$\HGen{a_1+b_1} - \HGen{a_2+b_2}$.
It also equals
\begin{align*}
\HGen{(a_1-b_2) + (-a_2+b_1)} &= \HGen{a_1-b_2} + \HGen{-a_2+b_1} 
                             = \HGen{a_1} - \HGen{b_2} - \HGen{a_2} + \HGen{b_1}.
\end{align*}
We deduce that
\begin{equation}
\label{eqn:spstd3a2.2}
\HGen{a_1+b_1} - \HGen{a_2+b_2} = \HGen{a_1} - \HGen{b_2} - \HGen{a_2} + \HGen{b_1}.
\end{equation}
Adding \eqref{eqn:spstd3a2.1} and \eqref{eqn:spstd3a2.2} and dividing by $2$ yield the claim.

\begin{claim}{3.A.3}
We have $\HGen{a_1-b_1} \in \Span{S}$.
\end{claim}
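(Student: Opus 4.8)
The plan is to imitate the proof of Claim 3.A.2: choose a convenient primitive vector $u \in H_{\Z}$, expand $\HGen{u}$ in two different ways by writing $u$ as a sum of two orthogonal primitive vectors forming a partial basis, and then compare. Since $g \geq 2$ the vectors $a_2,b_2$ are available, and the natural choice is $u = (a_1-b_1)+(a_2+b_2)$.

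First I would record the two decompositions of $u$. The pair $\{a_1-b_1,\,a_2+b_2\}$ is orthogonal (immediate from the definition of $\omega$) and is a partial basis of $H_{\Z}$, since adjoining $a_1$ and $a_2$ to it recovers $\Span{a_1,b_1}\oplus\Span{a_2,b_2}$; hence the defining relation of $\fH_g$ gives $\HGen{u}=\HGen{a_1-b_1}+\HGen{a_2+b_2}$. On the other hand $u=(a_1+a_2)+(b_2-b_1)$, and $\{a_1+a_2,\,b_2-b_1\}$ is likewise an orthogonal partial basis, so $\HGen{u}=\HGen{a_1+a_2}+\HGen{b_2-b_1}$. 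Now $\{a_1,a_2\}$ and $\{b_2,-b_1\}$ are orthogonal partial bases, so $\HGen{a_1+a_2}=\HGen{a_1}+\HGen{a_2}$ and, using Claim 3.A.1, $\HGen{b_2-b_1}=\HGen{b_2}+\HGen{-b_1}=\HGen{b_2}-\HGen{b_1}$. Comparing the two expressions for $\HGen{u}$ yields
\[\HGen{a_1-b_1}+\HGen{a_2+b_2}=\HGen{a_1}+\HGen{a_2}+\HGen{b_2}-\HGen{b_1}.\]
Finally, subtracting (rather than adding) the two displayed equations in the proof of Claim 3.A.2 shows $\HGen{a_2+b_2}=\HGen{a_2}+\HGen{b_2}\in\Span{S}$ — equivalently, apply the element of $\SymSp_g$ interchanging the indices $1$ and $2$, which preserves $\Span{S}$ once Claim 3.A.1 is known — and cancelling gives $\HGen{a_1-b_1}=\HGen{a_1}-\HGen{b_1}\in\Span{S}$.

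There is essentially no obstacle here beyond the bookkeeping: granted Claims 3.A.1 and 3.A.2, this is a two-line manipulation, and the only point to check carefully is that the auxiliary pairs $\{a_1-b_1,a_2+b_2\}$ and $\{a_1+a_2,b_2-b_1\}$ are genuinely orthogonal partial bases. Conceptually this is the symplectic analogue of passing from $\QGen{e_1+e_2}=\QGen{e_1}+\QGen{e_2}$ to $\QGen{e_1-e_2}=\QGen{e_1}-\QGen{e_2}$ in the proof of Theorem~\ref{maintheorem:slstd}, but with the extra coordinates $a_2,b_2$ needed to make room for an orthogonal splitting inside the symplectic plane $\Span{a_1,b_1}$.
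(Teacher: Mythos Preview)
Your proof is correct. The paper takes a slightly different but parallel route: instead of working with the single vector $u=(a_1-b_1)+(a_2+b_2)$ and then invoking Claim~3.A.2 to handle the leftover $\HGen{a_2+b_2}$, it repeats the two-vector structure of Claim~3.A.2 verbatim, decomposing both $(a_1-b_1)+(a_2-b_2)$ and $(a_1-b_1)-(a_2-b_2)$ in two ways each, then adding the resulting equations and dividing by $2$ to cancel $\HGen{a_2-b_2}$. Your argument is marginally shorter because it recycles the computation already carried out in Claim~3.A.2; the paper's version is self-contained (modulo Claim~3.A.1) and keeps Claims~3.A.2 and 3.A.3 formally parallel. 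One small note: your alternative justification via $\SymSp_g$ is fine, but since Step~3.B has not yet been established at this point, the direct observation that subtracting the two displayed equations of Claim~3.A.2 yields $\HGen{a_2+b_2}=\HGen{a_2}+\HGen{b_2}$ is the cleaner way to close the argument.
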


\noindent
The element $\HGen{(a_1-b_1)+(a_2-b_2)}$ equals
$\HGen{a_1-b_1} + \HGen{a_2-b_2}$.
It also equals
\begin{align*}
\HGen{(a_1-b_2) + (a_2-b_1)} &= \HGen{a_1-b_2} + \HGen{a_2-b_1} \\
                             &= \HGen{a_1} - \HGen{b_2} + \HGen{a_2} - \HGen{b_1}.
\end{align*}
We deduce that
\begin{equation}
\label{eqn:spstd3a3.1}
\HGen{a_1-b_1} + \HGen{a_2-b_2} = \HGen{a_1} - \HGen{b_2} + \HGen{a_2} - \HGen{b_1}.
\end{equation}
Similarly, the element $\HGen{(a_1-b_1)-(a_2-b_2)}$ equals
$\HGen{a_1-b_1} - \HGen{a_2-b_2}$.
It also equals
\begin{align*}
\HGen{(a_1+b_2) + (-a_2-b_1)} &= \HGen{a_1+b_2} + \HGen{-a_2-b_1} \\
                             &= \HGen{a_1} + \HGen{b_2} - \HGen{a_2} - \HGen{b_1}.
\end{align*}
We deduce that
\begin{equation}
\label{eqn:spstd3a3.2}
\HGen{a_1-b_1} - \HGen{a_2-b_2} = \HGen{a_1} + \HGen{b_2} - \HGen{a_2} - \HGen{b_1}.
\end{equation}
Adding \eqref{eqn:spstd3a3.1} and \eqref{eqn:spstd3a3.2} and dividing by $2$ yields the claim.

\begin{step}{3.B}
\label{step:spstd3B}
We prove that $\Sp_{2g}(\Z)$ takes $\Span{S}$ to itself.
By Step \ref{step:spstd2} this will imply that $\Span{S} = \fH_g$,
and thus by Step \ref{step:spstd1} that $\Phi$ is an isomorphism.
\end{step}

Corollary \ref{corollary:gensp} says that $\Sp_{2g}(\Z)$ is generated as a monoid by
$\SymSp_g \cup \{X_1,X_1^{-1},Y_{12}\}$.
Let $f \in \SymSp_g \cup \{X_1,X_1^{-1},Y_{12}\}$ and let $s \in S$.
Using Step \ref{step:spstd3A}, it is enough to check that $f(s)$ can both be written
as a linear combination of elements of $S$ and $E$.

The first case is $f \in \SymSp_g$.  This case is easy: we have $s = \HGen{x}$ for some $x \in \cB$, and for some $y \in \cB$ the
element $f(s) = \HGen{f(x)}$ is either
$\HGen{y} \in S$ or $\HGen{-y} \in E$.
The second case is $f = X_1$ or $f = X_1^{-1}$.  Recall that $X_1$ takes $a_1$ to $a_1 + b_1$ and fixes all other
elements of $\cB$.  Both $X_1$ and $X_1^{-1}$ fix all elements of $S$ except for $\HGen{a_1}$, and
for this we have
\begin{align*}
X_1(\HGen{a_1}) &= \HGen{a_1+b_1} \in E,\\
X_1^{-1}(\HGen{a_1}) &= \HGen{a_1-b_1} \in E.
\end{align*}
The final case is $f = Y_{12}$.  Recall that this takes $a_1$ to $a_1 + b_2$ and $a_2$ to $b_1 + a_2$ and fixes
all other elements of $\cB$.  The element $Y_{12}$ fixes all elements of $S$ except for $\HGen{a_1}$ and $\HGen{a_2}$,
for these we have
\begin{align*}
Y_{12}(\HGen{a_1}) &= \HGen{a_1+b_2} = \HGen{a_1} + \HGen{b_2} \in \Span{S}, \\
Y_{12}(\HGen{a_2}) &= \HGen{a_2+b_1} = \HGen{a_2} + \HGen{b_1} \in \Span{S}.\qedhere
\end{align*}
\end{proof}

\section{Symplectic group II: kernel and symmetric representations}
\label{section:spkernel}

We could prove Theorems \ref{maintheorem:spkernelalt} and \ref{maintheorem:spsym} using our
now-standard proof technique, but to illustrate another useful tool we show how to deduce
them from Theorem \ref{maintheorem:sladjoint}.

\subsection{Non-symmetric presentation}
\label{section:nonsymmetricfz}

We defined $\fZ_g^s$ and $\fZ_g^a$ in Definitions \ref{definition:spsym} and \ref{definition:spkernelalt}.  We now define
the following, which is similar to these but does not include their symmetric/anti-symmetric relations:

\begin{definition}
\label{definition:spkernel}
Define $\fZ_g$ to be the $\Q$-vector space with the following presentation:
\begin{itemize}
\item {\bf Generators}.  A generator $\ZGen{v_1,v_2}$ for all orthogonal primitive vectors $v_1,v_2 \in H_{\Z}$.
\item {\bf Relations}.  For all primitive vectors $v \in H_{\Z}$ and all partial bases $\{w_1,w_2\}$ of $v^{\perp}_{\Z}$,
the relations 
\begin{align*}
\ZGen{v,w_1 + w_2} &= \ZGen{v,w_1} + \ZGen{v,w_2}, \quad \text{and} \\
\ZGen{w_1+w_2,v}   &= \ZGen{w_1,v} + \ZGen{w_2,v}.\qedhere
\end{align*}
\end{itemize}
\end{definition}

This combines $\fZ_g^s$ and $\fZ_g^a$ in the following way.
There is an involution $I\colon \fZ_g \rightarrow \fZ_g$ defined by $I(\ZGen{v_1,v_2}) = \ZGen{v_2,v_1}$ that
we will call the {\em canonical involution}.  We then have:

\begin{lemma}
\label{lemma:decomposespkernel}
We have $\fZ_g = \fZ_g^s \oplus \fZ_g^a$, where $\fZ_g^s$ and $\fZ_g^a$ are identified with
the $+1$ and $-1$ eigenspaces of the canonical involution.
\end{lemma}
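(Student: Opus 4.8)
The plan is to produce mutually inverse isomorphisms between $\fZ_g$ and $\fZ_g^s\oplus\fZ_g^a$ under which the canonical involution $I$ corresponds to $(\id,-\id)$; this simultaneously yields the direct sum decomposition and identifies the two summands with the $+1$- and $-1$-eigenspaces.

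First I would record that $I$ really is well-defined: the two families of relations in Definition \ref{definition:spkernel} are interchanged by the slot-swap $\ZGen{v_1,v_2}\mapsto\ZGen{v_2,v_1}$, so this swap descends to an involution $I$ of $\fZ_g$ with $I^2=\id$. Next I would construct a map $\pi_s\colon\fZ_g\to\fZ_g^s$ sending $\ZGen{v_1,v_2}\mapsto\ZGenS{v_1,v_2}$. The first family of relations maps directly to the bilinearity relation of $\fZ_g^s$; for the second family, $\ZGen{w_1+w_2,v}=\ZGen{w_1,v}+\ZGen{w_2,v}$ maps to $\ZGenS{w_1+w_2,v}=\ZGenS{w_1,v}+\ZGenS{w_2,v}$, which reduces to the bilinearity relation after applying the symmetry relation $\ZGenS{x,v}=\ZGenS{v,x}$. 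Entirely analogously I would construct $\pi_a\colon\fZ_g\to\fZ_g^a$ with $\ZGen{v_1,v_2}\mapsto\ZGenA{v_1,v_2}$, this time using the antisymmetry relation (whose sign is exactly what makes the check go through). By construction $\pi_s\circ I=\pi_s$ and $\pi_a\circ I=-\pi_a$.

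Then I would build maps the other way: $\iota_s\colon\fZ_g^s\to\fZ_g$ sending $\ZGenS{v_1,v_2}\mapsto\tfrac12\bigl(\ZGen{v_1,v_2}+\ZGen{v_2,v_1}\bigr)$ and $\iota_a\colon\fZ_g^a\to\fZ_g$ sending $\ZGenA{v_1,v_2}\mapsto\tfrac12\bigl(\ZGen{v_1,v_2}-\ZGen{v_2,v_1}\bigr)$. These respect relations because both families of relations of $\fZ_g$ hold, so their average (resp.\ difference) does too, while the (anti)symmetry relation of the source maps to a triviality. A short computation on generators then shows that $(\pi_s,\pi_a)\colon\fZ_g\to\fZ_g^s\oplus\fZ_g^a$ and $\iota_s+\iota_a$ are mutually inverse: one composite sends $\ZGen{v_1,v_2}$ to $\tfrac12(\ZGen{v_1,v_2}+\ZGen{v_2,v_1})+\tfrac12(\ZGen{v_1,v_2}-\ZGen{v_2,v_1})=\ZGen{v_1,v_2}$, and for the other one checks $\pi_s\iota_s=\id$, $\pi_a\iota_a=\id$, and $\pi_s\iota_a=0=\pi_a\iota_s$, the last two using the (anti)symmetry relations. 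Since $I$ acts as $+1$ on the $\fZ_g^s$-summand and $-1$ on the $\fZ_g^a$-summand (from $\pi_sI=\pi_s$ and $\pi_aI=-\pi_a$), these summands are precisely the $+1$- and $-1$-eigenspaces of $I$, which is the assertion.

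There is no genuine obstacle here; the one point requiring care is the bookkeeping in the well-definedness of $\pi_s$ and $\pi_a$, where the "swapped-slot" bilinearity relation of $\fZ_g$ must be matched against the single-slot bilinearity relation of $\fZ_g^s$ and $\fZ_g^a$ via the relevant (anti)symmetry relation. Everything else is a routine computation on generators using that $2$ is invertible in $\Q$.
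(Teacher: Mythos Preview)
Your proposal is correct and follows essentially the same approach as the paper: both construct the map $\fZ_g\to\fZ_g^s\oplus\fZ_g^a$ sending $\ZGen{v_1,v_2}\mapsto(\ZGenS{v_1,v_2},\ZGenA{v_1,v_2})$ and the inverse $\iota_s+\iota_a$ with exactly your formulas, then observe these are mutually inverse and that the eigenspace identification follows from the formulas. Your write-up simply fills in a bit more detail on the relation checks than the paper does.
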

\begin{proof}
Define a map $\pi\colon \fZ_g \rightarrow \fZ_g^s \oplus \fZ_g^a$ via the formula
\[\pi(\ZGen{v_1,v_2}) = (\ZGenS{v_1,v_2},\ZGenA{v_1,v_2}).\]
This take relations to relations, and thus gives a well-defined map.  Next, define
$\iota\colon \fZ_g^s \oplus \fZ_g^a$ as $\iota = \iota_s + \iota_a$, where $\iota_s\colon \fZ_g^s \rightarrow \fZ_g$
and $\iota_a\colon \fZ_g^a \rightarrow \fZ_g$ are the maps defined via the formulas
\begin{align*}
\iota_s(\ZGenS{v_1,v_2}) = \frac{1}{2}\left(\ZGen{v_1,v_2} + \ZGen{v_2,v_1}\right), \\
\iota_a(\ZGenA{v_1,v_2}) = \frac{1}{2}\left(\ZGen{v_1,v_2} - \ZGen{v_2,v_1}\right).
\end{align*}
Again, these take relations to relations and thus give well-defined maps.  The maps
$\pi$ and $\iota$ are inverses, so both are isomorphisms.  This gives the decomposition
$\fZ_g = \fZ_g^s \oplus \fZ_g^a$, and the fact that $\fZ_g^s$ and $\fZ_g^a$ are identified with
the $+1$ and $-1$ eigenspaces of the canonical involution follows from the above formulas.
\end{proof}

\subsection{Identifying the non-symmetric presentation}
\label{section:nonsymmetriczero}

Define a linearization map $\Phi\colon \fZ_g \rightarrow H^{\otimes 2}$ via the formula
$\Phi(\ZGen{v_1,v_2}) = v_1 \otimes v_2$.  This takes relations to relations, and thus
gives a well-defined map.  Its image lies in the kernel $\cZ_g$ of the map
\[\begin{tikzcd}
H^{\otimes 2} \arrow{r} & \wedge^2 H \arrow{r}{\omega} & \Q,
\end{tikzcd}\]
where the second map is the one induced by the symplectic form $\omega$.  We will prove:

\begin{theorem}
\label{theorem:spkernel}
For $g \geq 2$, the linearization map $\Phi\colon \fZ_g \rightarrow \cZ_g$ is an isomorphism.
\end{theorem}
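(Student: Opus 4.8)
The plan is to avoid the now-standard three-step technique entirely and instead identify the presentation $\fZ_g$ with the presentation $\fA_{2g}$ of the adjoint representation from \S\ref{section:sladjoint}, then quote Theorem \ref{maintheorem:sladjoint}. The bridge is the symplectic form itself: since $\omega$ is unimodular on $H_{\Z}$, the assignment $v \mapsto \omega(v,-)$ defines an isomorphism $H_{\Z} \xrightarrow{\sim} (\Z^{2g})^{\ast}$ (and likewise $H \xrightarrow{\sim} (\Q^{2g})^{\ast}$), under which primitive vectors correspond to primitive covectors, the orthogonal complement $v^{\perp}_{\Z}$ corresponds to $\ker(\omega(v,-)) = \{f : f(v)=0\}$, and partial bases of $H_{\Z}$ (and of the direct summands $v^{\perp}_{\Z}$) correspond to partial bases.

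First I would set $n = 2g$ and write $\widehat{v} = \omega(v,-) \in (\Z^n)^{\ast}$ for $v \in H_{\Z}$. Then $\ZGen{v_1,v_2} \mapsto \AGen{\widehat{v_1},v_2}$ carries the generators of $\fZ_g$ (orthogonal primitive pairs $v_1,v_2$, i.e.\ $\widehat{v_1}(v_2) = \omega(v_1,v_2) = 0$) bijectively onto the generators of $\fA_n$, and a direct check shows the two relation families of $\fZ_g$ go exactly onto the two relation families of $\fA_n$: the relation $\ZGen{v,w_1+w_2} = \ZGen{v,w_1} + \ZGen{v,w_2}$ becomes the first $\fA_n$-relation for the primitive covector $\widehat{v}$ and the partial basis $\{w_1,w_2\}$ of $\ker(\widehat{v}) = v^{\perp}_{\Z}$, while $\ZGen{w_1+w_2,v} = \ZGen{w_1,v} + \ZGen{w_2,v}$ becomes the second $\fA_n$-relation for the primitive vector $v$ and the partial basis $\{\widehat{w_1},\widehat{w_2}\}$ of $\ker(v)$. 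This yields an isomorphism $\Theta\colon \fZ_g \xrightarrow{\sim} \fA_{2g}$ of $\Q$-vector spaces, and since $\Sp_{2g}(\Z) \subset \SL_{2g}(\Z)$ preserves $\omega$, the map $\Theta$ is $\Sp_{2g}(\Z)$-equivariant.

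Next I would trace the linearization maps and their targets through the same duality. The symplectic identification on the first tensor factor gives $H^{\otimes 2} \cong (\Q^{2g})^{\ast} \otimes \Q^{2g}$, and under it the composite $H^{\otimes 2} \to \wedge^2 H \xrightarrow{\omega} \Q$, which sends $v_1 \otimes v_2 \mapsto \omega(v_1,v_2)$, becomes $f \otimes v \mapsto f(v)$, i.e.\ the trace map; hence $\cZ_g$ corresponds to $\ker(\trace) = \fsl_{2g}(\Q)$. Moreover $\Phi(\ZGen{v_1,v_2}) = v_1 \otimes v_2$ corresponds to $\widehat{v_1} \otimes v_2$, which is the value of the linearization map $\fA_{2g} \to \fsl_{2g}(\Q)$ on $\AGen{\widehat{v_1},v_2}$. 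Thus, after applying $\Theta$ and the identification of targets, the map $\Phi\colon \fZ_g \to \cZ_g$ is precisely the linearization map $\Phi\colon \fA_{2g} \to \fsl_{2g}(\Q)$. Since $2g \geq 3$ exactly when $g \geq 2$, Theorem \ref{maintheorem:sladjoint} shows this is an isomorphism, which completes the proof.

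The only real work is the bookkeeping in the first step: checking that unimodularity of $\omega$ makes all four conditions — primitivity, orthogonality versus annihilation, direct-summand orthogonal complements versus kernels, and partial bases of those summands — transfer cleanly under $v \mapsto \omega(v,-)$, so that the two presentations are matched on the nose (not merely abstractly isomorphic) and the linearization maps can genuinely be compared. None of this is deep, but it is the place where care is needed.
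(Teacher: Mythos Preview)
Your proposal is correct and is essentially identical to the paper's own proof: the paper also identifies $\fZ_g$ with $\fA_{2g}$ via the symplectic duality $v \mapsto \omega(v,-)$ (written there as a pair of isomorphisms $\mu_1,\mu_2$ with $\omega(v_1,v_2)=\mu_1(v_1)(\mu_2(v_2))$), checks that this carries generators and relations bijectively, matches $\cZ_g$ with $\fsl_{2g}(\Q)$ under $\mu_1\otimes\mu_2$, and then invokes Theorem~\ref{maintheorem:sladjoint}. Your last paragraph correctly flags the one place where care is needed, and the paper handles it the same way.
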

\begin{proof}
Pick an isomorphism $\mu_2 \colon H_{\Z} \rightarrow \Z^{2g}$.  Since $\omega$ identifies
$H_{\Z}$ with its dual $H_{\Z}^{\ast} = \Hom(H,\Z)$, we can find a corresponding isomorphism
$\mu_1\colon H_{\Z} \rightarrow (\Z^{2g})^{\ast}$ such that
\begin{equation}
\label{eqn:dualiso}
\omega(v_1,v_2) = \mu_1(v_1)(\mu_2(v_2)) \quad \text{for all $v_1,v_2 \in H_{\Z}$}.
\end{equation}
The map $\mu_1 \otimes \mu_2 \colon H_{\Z}^{\otimes 2} \rightarrow (\Z^{2g})^{\ast} \otimes \Z^{2g}$
is then an $\Sp_{2g}(\Z)$-equivariant isomorphism.  

Recall that we defined the vector space $\fA_{2g}$ in Definition \ref{definition:sladjoint}.  Using
$\mu_1$ and $\mu_2$, we can define a map $M\colon \fZ_g \rightarrow \fA_{2g}$ via the formula
$M(\ZGen{v_1,v_2}) = \AGen{\mu_1(v_1),\mu_2(v_2)}$.  The identity \eqref{eqn:dualiso} implies that 
$\AGen{\mu_1(v_1),\mu_2(v_2)}$ is a generator for $\fA_{2g}$.  The map $M$ takes relations to relations, and thus gives
a well-defined map.  In fact, even more is true: $M$ is an isomorphism with inverse
the map taking $\AGen{f,v}$ to $\ZGen{\mu_1^{-1}(f),\mu_2^{-1}(v)}$.

The vector space $\fA_{2g}$ is equipped with a linearization map $\Phi\colon \fA_{2g} \rightarrow (\Q^{2g})^{\ast} \otimes \Q^{2g}$
defined via the formula $\Phi(\AGen{f,v}) = f \otimes v$.  The image of this map is
the kernel $\fsl_{2g}(\Q)$ of the trace map
\[\trace\colon (\Q^n)^{\ast} \otimes \Q^n \longrightarrow \Q\]
defined by $\trace(f,v) = f(v)$.  Theorem \ref{maintheorem:sladjoint} says that
$\Phi\colon \fA_{2g} \rightarrow \fsl_{2g}(\Q)$ is an isomorphism.  Abusing
notation slightly, we will identify the isomorphism
$\mu_1 \otimes \mu_2\colon H_{\Z}^{\otimes 2} \rightarrow (\Z^n)^{\ast} \otimes \Z^n$
with the corresponding isomorphism $\mu_1 \otimes \mu_2\colon H^{\otimes 2} \rightarrow (\Q^n)^{\ast} \otimes \Q^n$.
By \eqref{eqn:dualiso}, this takes $\cZ_g$ to $\fsl_{2g}(\Q)$.  
This all fits into a commutative diagram
\[\begin{tikzcd}
\fZ_g \arrow{r}{M}[swap]{\cong} \arrow{d}{\Phi} & \fA_{2g} \arrow{d}{\Phi}[swap]{\cong} \\
\cZ_g   \arrow{r}{\mu_1 \otimes \mu_2}[swap]{\cong} & \fsl_{2g}(\Q).
\end{tikzcd}\]
From this, we conclude that $\Phi\colon \fZ_g \rightarrow \cZ_g$ is an isomorphism.
\end{proof}

\subsection{Consequences}

Let $\iota\colon H^{\otimes 2} \rightarrow H^{\otimes 2}$ be the involution $\iota(v_1 \otimes v_2) = v_2 \otimes v_1$.
This involution induces a decomposition of $H^{\otimes 2}$ into into its $+1$ and $-1$ eigenspaces.  This
gives the familiar decomposition
\[H^{\otimes 2} = \Sym^2(H) \oplus \wedge^2 H.\]
Recall that $\cZ_g^a$ is the kernel of the map $\wedge^2 H \rightarrow \Q$ given by the symplectic form.  The
vector spaces $\cZ_g$ and $\cZ_g^a$ fit into the above decomposition as
\[\cZ_g = \Sym^2(H) \oplus \cZ_g^a.\]
The involution $\iota$ lifts to the canonical involution $I\colon \fZ_g \rightarrow \fZ_g$
in the sense that diagram
\[\begin{tikzcd}
\fZ_g \arrow{r}{I} \arrow{d}{\Phi}[swap]{\cong} & \fZ_g \arrow{d}{\Phi}[swap]{\cong} \\
\cZ_g \arrow{r}{\iota}             & \cZ_g
\end{tikzcd}\]
commute.  This implies that the isomorphism $\Phi\colon \fZ_g \rightarrow \cZ_g$ from
Theorem \ref{theorem:spkernel} matches up the $+1$ and $-1$ eigenspaces of $I$ and $\iota$.
For $\fZ_g$, these eigenspaces were identified by Lemma \ref{lemma:decomposespkernel},
and the following theorems from the introduction follow:

\newtheorem*{maintheorem:spsym}{Theorem \ref{maintheorem:spsym}}
\begin{maintheorem:spsym}
For $g \geq 2$, the linearization map $\Phi\colon \fZ_g^s \rightarrow \Sym^2(H)$ is an isomorphism.
\end{maintheorem:spsym}

\newtheorem*{maintheorem:spkernelalt}{Theorem \ref{maintheorem:spkernelalt}}
\begin{maintheorem:spkernelalt}
For $g \geq 1$, the linearization map $\Phi\colon \fZ_g^a \rightarrow \cZ_g^a$ is an isomorphism.
\end{maintheorem:spkernelalt}

\begin{remark}
One tiny issue with the above argument is that it only works for $g \geq 2$, while Theorem \ref{maintheorem:spkernelalt}
is also supposed to hold for $g=1$.  However, for $g=1$ this theorem is trivial since $\fZ_1^a = 0$ and $\cZ_1^a = 0$.
\end{remark}

\part{Improving the presentation for the symmetric kernel}
\label{part:2}

We now turn to our theorems on the symmetric kernel.  In this part of the paper,
we enlarge its purported presentation by adding some additional generators.
The key result needed to add these generators (Proposition \ref{proposition:isotropicpairiso} below)
uses the proof technique from \S \ref{section:prooftechnique}
that we have already used to prove Theorems \ref{maintheorem:slstd} -- \ref{maintheorem:spsym}.
See the introductory \S \ref{section:part2intro} for a more detailed discussion of what we will
do.  Our main theorems will be proved in Parts \ref{part:alt} and \ref{part:sym}, again using the proof technique
from \S \ref{section:prooftechnique}.

To avoid having to constantly
impose genus hypotheses, we make the following blanket assumption:

\begin{assumption}
\label{assumption:genuspart2}
Throughout Part \ref{part:2}, we assume that $g \geq 4$.
\end{assumption}

\section{Introduction to Part \ref{part:2}}
\label{section:part2intro}

Recall from \S \ref{section:notation} that $H = \Q^{2g}$ and $H_{\Z} = \Z^{2g}$ and
$\omega\colon H \times H \rightarrow \Q$ is the standard symplectic form on $H$.
We start by recalling some definitions and notation from the introduction and proving some preliminary results,
and then we will outline the rest of this part.

\subsection{Quotient representation}

The symplectic form $\omega$ on $H$ identifies $H$ with its dual.  Using this, we can
identify alternating forms on $H$ with elements of $\wedge^2 H$.  If
$\{a_1,b_1,\ldots,a_g,b_g\}$ is a symplectic basis for $H$, then
\[\omega = a_1 \wedge b_1 + \cdots a_g \wedge b_g.\]
The $\Q$-span of $\omega$ in $\wedge^2 H$ is a copy of $\Q$. 
The quotient $(\wedge^2 H)/\Q$ will always mean the quotient by the $\Q$-span of $\omega$.
Similarly, $(\wedge^2 H_{\Z})/\Z$ will always mean the quotient of $\wedge^2 H_{\Z}$ by the $\Z$-span
of $\omega$.

\subsection{Symmetric contraction}
\label{section:symmetriccontraction}

As we discussed in the introduction, the symmetric contraction is the bilinear map
\[\fc\colon ((\wedge^2 H)/\Q) \times ((\wedge^2 H)/\Q) \longrightarrow \Sym^2(H)\]
defined by the formula
\[\text{$\fc(x \wedge y,z \wedge w) = \omega(x,z) y \Cdot w - \omega(x,w) y \Cdot z - \omega(y,z) x \Cdot w + \omega(y,w) x \Cdot z$ for $x,y,z,w \in H$}.\]
The bilinear form $\fc$ is alternating:
\[\text{$\fc(\kappa_2,\kappa_1) = -\fc(\kappa_1,\kappa_2)$ for all $\kappa_1,\kappa_2 \in (\wedge^2 H)/\Q$}.\]
It induces a map
\[((\wedge^2 H)/\Q)^{\otimes 2} \longrightarrow \Sym^2(H)\]
whose kernel $\cK_g$ is the {\em symmetric kernel}.
We say that $\kappa_1,\kappa_2 \in (\wedge^2 H)/\Q$ are {\em sym-orthogonal} if 
\[\fc(\kappa_1,\kappa_2)=-\fc(\kappa_2,\kappa_1) = 0,\]
or equivalently if $\kappa_1 \otimes \kappa_2$ and $\kappa_2 \otimes \kappa_1$ lie in $\cK_g$.
For $\kappa \in (\wedge^2 H)/\Q$, the {\em symmetric orthogonal complement} of $\kappa$, denoted
$\kappa^{\perp}$, is the subspace of all $\kappa' \in (\wedge^2 H)/\Q$ that are sym-orthogonal to $\kappa$.

\subsection{Symplectic pairs}
\label{section:semisymplectic}

A {\em symplectic pair} is an element of $(\wedge^2 H_{\Z})/\Z$ of the form
$a \wedge b$, where $a,b \in H_{\Z}$ are such that $\omega(a,b) = 1$.
Equivalently, there exists a symplectic basis $\{a_1,b_1,\ldots,a_g,b_g\}$
for $H_{\Z}$ with $a_1 = a$ and $b_1 = b$.  
For $X \subset \wedge^2 H$, let $\overline{X}$
be its image in $(\wedge^2 H)/\Q$.  Also, for $V \subset H_{\Z}$ let $V_{\Q} = V \otimes \Q \subset H$.
We have:

\begin{lemma}
\label{lemma:symplecticorthogonal}
Let $a \wedge b$ be a symplectic pair and let $V = \Span{a,b}$.  Then
$(a \wedge b)^{\perp} = \overline{\wedge^2 V_{\Q}^{\perp}}$.
\end{lemma}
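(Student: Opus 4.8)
The plan is to reduce the statement to a direct computation with the defining formula for $\hfc$ in a well-chosen basis. First I would fix a symplectic basis $\{a_1,b_1,\ldots,a_g,b_g\}$ of $H_{\Z}$ with $a_1 = a$ and $b_1 = b$, which exists because $a\wedge b$ is a symplectic pair. Writing $W = V_{\Q}^{\perp} = \Span{a_2,b_2,\ldots,a_g,b_g}\otimes\Q$, we get an orthogonal splitting $H = V_{\Q}\oplus W$ into symplectic subspaces, hence a decomposition
\[
\wedge^2 H = \wedge^2 V_{\Q} \;\oplus\; (V_{\Q}\wedge W) \;\oplus\; \wedge^2 W,
\]
where $\wedge^2 V_{\Q} = \Q\cdot(a\wedge b)$ is one-dimensional and $V_{\Q}\wedge W$ is spanned by the elements $a\wedge w$ and $b\wedge w$ with $w\in W$.

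Next I would evaluate $\hfc(a\wedge b,-)\colon \wedge^2 H \rightarrow \Sym^2(H)$ on each summand, using that $\hfc$ is alternating and that $\omega(a,w) = \omega(b,w) = 0$ for all $w\in W$. The outcome is: $\hfc(a\wedge b,-)$ vanishes on $\wedge^2 V_{\Q}$ (alternating) and on $\wedge^2 W$ (all four $\omega$-terms vanish), while $\hfc(a\wedge b,a\wedge w) = a\Cdot w$ and $\hfc(a\wedge b,b\wedge w) = b\Cdot w$ for $w\in W$. Since the products $a\Cdot a_i,\ a\Cdot b_i,\ b\Cdot a_i,\ b\Cdot b_i$ for $2\leq i\leq g$ are pairwise distinct degree-two monomials in the chosen basis, they are linearly independent in $\Sym^2(H)$; hence $\hfc(a\wedge b,-)$ is injective on $V_{\Q}\wedge W$. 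Decomposing an arbitrary $\kappa\in\wedge^2 H$ along the three summands, $\hfc(a\wedge b,\kappa)$ is the image of its $(V_{\Q}\wedge W)$-component, and therefore
\[
\ker\!\bigl(\hfc(a\wedge b,-)\bigr) = \wedge^2 V_{\Q} \oplus \wedge^2 W = \Q\,(a\wedge b)\ \oplus\ \wedge^2 V_{\Q}^{\perp}.
\]

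Finally I would pass to the quotient $(\wedge^2 H)/\Q$. Because $\hfc$ kills $\omega$ in each variable (established in \S\ref{section:symmetriccontractionintro}) and $\fc(\overline{a\wedge b},\overline{\kappa}) = \hfc(a\wedge b,\kappa)$ for all lifts, the preimage in $\wedge^2 H$ of $(a\wedge b)^{\perp}$ is exactly $\ker(\hfc(a\wedge b,-))$, so $(a\wedge b)^{\perp}$ is its image in $(\wedge^2 H)/\Q$. Now $\omega = a\wedge b + (a_2\wedge b_2 + \cdots + a_g\wedge b_g)$ with the second summand in $\wedge^2 V_{\Q}^{\perp}$, so modulo the span of $\omega$ we have $a\wedge b \equiv -(a_2\wedge b_2 + \cdots + a_g\wedge b_g)$, which lies in $\overline{\wedge^2 V_{\Q}^{\perp}}$; hence $\overline{\Q(a\wedge b)\oplus\wedge^2 V_{\Q}^{\perp}} = \overline{\wedge^2 V_{\Q}^{\perp}}$, which is the assertion. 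The computation is routine throughout; the only point needing a little care is this last one — the extra line $\Q(a\wedge b)$ in the kernel is absorbed into $\overline{\wedge^2 V_{\Q}^{\perp}}$ after quotienting by $\omega$, which is precisely why the clean identity holds — together with keeping straight that $(a\wedge b)^{\perp}$, a subspace of the quotient, is the image of the honest kernel of $\hfc(a\wedge b,-)$ on $\wedge^2 H$. I do not anticipate any genuine obstacle.
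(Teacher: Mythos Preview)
Your proof is correct and follows essentially the same approach as the paper: fix a symplectic basis with $a_1=a$, $b_1=b$, decompose $\wedge^2 H$ along $V_{\Q}\oplus V_{\Q}^{\perp}$, compute $\fc(a\wedge b,-)$ on each summand to see it is injective on $V_{\Q}\wedge V_{\Q}^{\perp}$ and zero elsewhere, and then use $a\wedge b \equiv -(a_2\wedge b_2+\cdots+a_g\wedge b_g)$ in the quotient to absorb the extra line. The only organizational difference is that you compute the kernel of $\hfc(a\wedge b,-)$ upstairs in $\wedge^2 H$ and then descend, whereas the paper phrases it as two inclusions and works partly in the quotient; the underlying computation is identical.
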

\begin{proof}
Let $\{a_1,b_1,\ldots,a_g,b_g\}$ be a symplectic basis for $H$ with $a_1 = a$
and $b_1 = b$, so $V_{\Q}^{\perp} = \Span{a_{2},b_{2},\ldots,a_g,b_g}$.
It is immediate from the formula for $\fc$ in \S \ref{section:symmetriccontraction} that
$\fc(a_1 \wedge b_1,\kappa) = 0$ for $\kappa \in \overline{\wedge^2 V_{\Q}^{\perp}}$, so
$\overline{\wedge^2 V_{\Q}^{\perp}} \subset (a_1 \wedge b_1)^{\perp}$.
We must show the other inclusion.  Via the decomposition
\[\wedge^2 H = \wedge^2 (V_{\Q} \oplus V_{\Q}^{\perp}) = \left(\wedge^2 V_{\Q}\right) \oplus \left(\wedge^2 V_{\Q}^{\perp}\right) \oplus \left(V_{\Q} \wedge V_{\Q}^{\perp}\right),\]
this is equivalent to showing that the intersection of $(a_1 \wedge b_1)^{\perp}$
and
\begin{equation}
\label{eqn:symplecticexclude}
\overline{\left(\wedge^2 V_{\Q}\right) \oplus \left(V_{\Q} \wedge V_{\Q}^{\perp}\right)} = \overline{\Span{a_1 \wedge b_1} \oplus \left(V_{\Q} \wedge V_{\Q}^{\perp}\right)}
\end{equation}
is contained in $\overline{\wedge^2 V_{\Q}^{\perp}}$.

Note that \eqref{eqn:symplecticexclude} is spanned by $a_1 \wedge b_1$ and $a_1 \wedge z$ and $b_1 \wedge z$
as $z$ ranges over $\{a_{2},b_{2},\ldots,a_g,b_g\}$.  For such $z$, we have
\begin{align*}
\fc(a_1 \wedge b_1,a_1 \wedge z)   &= -a_1 \Cdot z,\\
\fc(a_1 \wedge b_1,b_1 \wedge z)   &= b_1 \Cdot z,\\
\fc(a_1 \wedge b_1,a_1 \wedge b_1) &= -b_1 \Cdot a_1 + a_1 \Cdot b_1 = 0.
\end{align*}
Other than $0$, the elements of $\Sym^2(H)$ appearing on the right hand side of this equation as
$z$ ranges over $\{a_{2},b_{2},\ldots,a_g,b_g\}$ are linearly independent.  It follows
that the intersection of $(a_1 \wedge b_1)^{\perp}$ with \eqref{eqn:symplecticexclude} is spanned
by
\[a_1 \wedge b_1 = -(a_2 \wedge b_2 + \cdots + a_g \wedge b_g) \in \overline{\wedge^2 V_{\Q}^{\perp}},\]
as desired.  Note that here we are using the fact that we are working in $(\wedge^2 H)/\Q$, so $\omega \in \wedge^2 H$
equals $0$.
\end{proof}

\subsection{Isotropic pairs}
\label{section:isotropicpairs}

An {\em isotropic pair}
is an element of $(\wedge^2 H_{\Z})/\Z$ of the form
$a \wedge a'$, where $a,a' \in H_{\Z}$ are linearly independent elements such that
$\omega(a,a') = 0$.  
The following is the analogue for isotropic pairs of Lemma \ref{lemma:symplecticorthogonal}:

\begin{lemma}
\label{lemma:isotropicorthogonal}
Let $a \wedge a'$ be an isotropic pair and let $I = \Span{a,a'}$.  Then
$(a \wedge a')^{\perp} = \overline{\wedge^2 I_{\Q}^{\perp}}$.
\end{lemma}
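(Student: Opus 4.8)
The plan is to imitate the proof of Lemma~\ref{lemma:symplecticorthogonal}, the only new feature being that since $I_{\Q}$ is isotropic we have $I_{\Q} \subseteq I_{\Q}^{\perp}$, so $I_{\Q}$ and $I_{\Q}^{\perp}$ do not span $H$ and we must bring in an auxiliary complement. First I would choose a symplectic basis $\{a_1,b_1,\ldots,a_g,b_g\}$ of $H$ with $a_1 = a$ and $a_2 = a'$; this is possible since $\{a,a'\}$ spans a $2$-dimensional isotropic subspace of $H$, which extends to a symplectic basis. Then $I_{\Q} = \Span{a_1,a_2}$ and $I_{\Q}^{\perp} = \Span{a_1,a_2,a_3,b_3,\ldots,a_g,b_g}$, and this last space contains $I_{\Q}$. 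The inclusion $\overline{\wedge^2 I_{\Q}^{\perp}} \subseteq (a \wedge a')^{\perp}$ is then immediate from the formula for $\fc$ in \S\ref{section:symmetriccontraction}: the vectors $a_1$ and $a_2$ lie in $I_{\Q} \subseteq I_{\Q}^{\perp}$ and hence are $\omega$-orthogonal to everything in $I_{\Q}^{\perp}$, so $\fc(a_1 \wedge a_2, z \wedge w) = 0$ for all $z,w \in I_{\Q}^{\perp}$.

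For the reverse inclusion, set $W = \Span{b_1,b_2}$, so that $H = I_{\Q}^{\perp} \oplus W$ and correspondingly
\[\wedge^2 H = \left(\wedge^2 I_{\Q}^{\perp}\right) \oplus \left(I_{\Q}^{\perp} \wedge W\right) \oplus \left(\wedge^2 W\right), \qquad \wedge^2 W = \Span{b_1 \wedge b_2}.\]
Here $\omega = \omega_1 + \omega_2$ with $\omega_1 = a_3 \wedge b_3 + \cdots + a_g \wedge b_g \in \wedge^2 I_{\Q}^{\perp}$ and $\omega_2 = a_1 \wedge b_1 + a_2 \wedge b_2 \in I_{\Q}^{\perp} \wedge W$. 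Given $\kappa \in (a \wedge a')^{\perp}$, I would pick a representative in $\wedge^2 H$ and write it as $x + y + z$ along this decomposition. Since $\fc(a_1 \wedge a_2, x) = 0$ by the previous paragraph, we get $\fc(a_1 \wedge a_2, y + z) = 0$. The heart of the argument is to compute the linear map $\fc(a_1 \wedge a_2, -)$ on $\left(I_{\Q}^{\perp} \wedge W\right) \oplus \left(\wedge^2 W\right)$: on basis vectors one gets $\fc(a_1 \wedge a_2, u \wedge b_1) = -a_2 \Cdot u$ and $\fc(a_1 \wedge a_2, u \wedge b_2) = a_1 \Cdot u$ as $u$ runs over the basis $\{a_1,a_2,a_3,b_3,\ldots,a_g,b_g\}$ of $I_{\Q}^{\perp}$, together with $\fc(a_1 \wedge a_2, b_1 \wedge b_2) = a_1 \Cdot b_1 + a_2 \Cdot b_2$. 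A bookkeeping check shows these images span a subspace of $\Sym^2(H)$ of dimension $4g-4$, namely the span of the $4g-5$ independent monomials $a_1 \Cdot a_1$, $a_2 \Cdot a_2$, $a_1 \Cdot a_2$, and $a_1 \Cdot a_i$, $a_1 \Cdot b_i$, $a_2 \Cdot a_i$, $a_2 \Cdot b_i$ for $3 \leq i \leq g$, together with the extra element $a_1 \Cdot b_1 + a_2 \Cdot b_2$. Since the domain has dimension $(2g-2)\cdot 2 + 1 = 4g-3$, the kernel of $\fc(a_1 \wedge a_2,-)$ is $1$-dimensional; as $\omega_2 = \omega - \omega_1$ lies in this kernel (because $\fc$ kills $\omega$ and $\fc(a_1 \wedge a_2, \omega_1) = 0$ by the first inclusion), the kernel is exactly $\Span{\omega_2}$.

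Finally, this forces $y + z = t\,\omega_2$ for some $t \in \Q$, so the chosen representative of $\kappa$ equals $x + t\,\omega_2 = (x - t\,\omega_1) + t\,\omega$ with $x - t\,\omega_1 \in \wedge^2 I_{\Q}^{\perp}$; passing to $(\wedge^2 H)/\Q$ gives $\kappa \in \overline{\wedge^2 I_{\Q}^{\perp}}$, which completes the reverse inclusion and the proof. I expect the main obstacle to be the explicit computation of $\fc(a_1 \wedge a_2,-)$ and especially the verification that its image has dimension exactly $4g-4$: this is the analogue of the corresponding step in Lemma~\ref{lemma:symplecticorthogonal}, but slightly more delicate because of the extra summand $\wedge^2 W$ and because the two families $\{a_2 \Cdot u\}$ and $\{a_1 \Cdot u\}$ overlap in the single monomial $a_1 \Cdot a_2$.
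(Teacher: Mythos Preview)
Your proposal is correct and follows essentially the same route as the paper's proof: the same choice of symplectic basis, the same decomposition $\wedge^2 H = (\wedge^2 I_{\Q}^{\perp}) \oplus (I_{\Q}^{\perp} \wedge W) \oplus \wedge^2 W$ with $W = \Span{b_1,b_2}$, and the same explicit computation of $\fc(a_1 \wedge a_2,-)$ on the complementary piece. The only cosmetic difference is that you package the linear-independence check as a dimension count (domain $4g-3$, image $4g-4$, hence $1$-dimensional kernel spanned by $\omega_2$), whereas the paper instead exhibits the single linear dependence $\fc(a_1 \wedge a_2, b_1 \wedge a_1) + \fc(a_1 \wedge a_2, b_2 \wedge a_2) = 0$ directly and then observes that $b_1 \wedge a_1 + b_2 \wedge a_2$ maps into $\overline{\wedge^2 I_{\Q}^{\perp}}$.
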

\begin{proof}
We can find a symplectic basis $\{a_1,b_1,\ldots,a_g,b_g\}$ for $H$ with $a_1 = a$
and $a_2 = a'$, so $I_{\Q}^{\perp} = \Span{a_1,a_2,a_3,b_3,\ldots,a_g,b_g}$.
Note that we might not be able to find such a basis for $H_{\Z}$ since $\{a,a'\}$ might
not span a direct summand of $H_{\Z}$ (see \S \ref{section:strongisotropicpairs} below).
It is immediate from the formula for $\fc$ in \S \ref{section:symmetriccontraction} that
$\fc(a_1 \wedge a_2,\kappa) = 0$ for $\kappa \in \overline{\wedge^2 I_{\Q}^{\perp}}$, so
$\overline{\wedge^2 I_{\Q}^{\perp}} \subset (a_1 \wedge a_2)^{\perp}$.
We must show the other inclusion.  Via the decomposition
\[\wedge^2 H = \wedge^2 (I_{\Q}^{\perp} \oplus \Span{b_1,b_2}) = \left(\wedge^2 I_{\Q}^{\perp}\right) \oplus \left(b_1 \wedge I_{\Q}^{\perp}\right) \oplus \left(b_2 \wedge I_{\Q}^{\perp}\right) \oplus \Span{b_1 \wedge b_2},\]
this is equivalent to showing that the intersection of $(a_1 \wedge a_2)^{\perp}$
and
\begin{equation}
\label{eqn:isotropicexclude}
\overline{\left(b_1 \wedge I_{\Q}^{\perp}\right) \oplus \left(b_2 \wedge I_{\Q}^{\perp}\right) \oplus \Span{b_1 \wedge b_2}}.
\end{equation}
is contained in $\overline{\wedge^2 I_{\Q}^{\perp}}$.
For $z \in \{a_1,a_{2},a_3,b_{3},\ldots,a_g,b_g\}$, we have
\begin{align*}
\fc(a_1 \wedge a_2,b_1 \wedge z)   &= a_2 \Cdot z,\\
\fc(a_1 \wedge a_2,b_2 \wedge z)   &= -a_1 \Cdot z,\\
\fc(a_1 \wedge a_2,b_1 \wedge b_2) &= a_2 \Cdot b_2 + a_1 \Cdot b_1.
\end{align*}
The only linear dependence among the elements of $\Sym^2(H)$ appearing on the right hand side of this equation as
$z$ ranges over $\{a_1,a_{2},a_3,b_{3},\ldots,a_g,b_g\}$ is
\[\fc(a_1 \wedge a_2,b_1 \wedge a_1) + \fc(a_1 \wedge a_2,b_2 \wedge a_2) = a_2 \Cdot a_1 - a_1 \Cdot a_2 = 0.\]
It follows that the intersection of $(a_1 \wedge a_2)^{\perp}$ with \eqref{eqn:isotropicexclude} is spanned
by
\[b_1 \wedge a_1 + b_2 \wedge a_2 = -(a_1 \wedge b_1 + a_2 \wedge b_2) = a_3 \wedge b_3 + \cdots + a_g \wedge b_g \in \overline{\wedge^2 I_{\Q}^{\perp}},\]
as desired.
\end{proof}

\subsection{Strong isotropic pairs}
\label{section:strongisotropicpairs}

A {\em strong isotropic pair} is an isotropic pair $a \wedge a'$ such that $\{a,a'\}$ forms a basis
for a rank-$2$ direct summand of $H_{\Z}$.
Equivalently, there exists a symplectic basis $\{a_1,b_1,\ldots,a_g,b_g\}$
for $H_{\Z}$ with $a_1 = a$ and $a_2 = a'$.  We will prove that every isotropic
pair is a multiple of a strong isotropic pair.  This requires the following lemma:

\begin{lemma}
\label{lemma:summand}
Let $V$ be a subspace of $\Q^n$.  Then $V_{\Z} = V \cap \Z^n$
is a direct summand of $\Z^n$.
\end{lemma}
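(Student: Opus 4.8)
The plan is to reduce the statement to the single fact that the quotient group $\Z^n/V_{\Z}$ is torsion-free; once that is established, the standard structure theory of finitely generated abelian groups finishes everything. First I would record the easy structural observations: $V_{\Z} = V \cap \Z^n$ is a subgroup of $\Z^n$, hence a finitely generated free abelian group, and we have a short exact sequence
\[
0 \longrightarrow V_{\Z} \longrightarrow \Z^n \longrightarrow \Z^n/V_{\Z} \longrightarrow 0.
\]

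The heart of the argument is to check torsion-freeness of the cokernel, and this is where the hypothesis that $V$ is a $\Q$-\emph{subspace} (rather than merely a subgroup) is used. Suppose $x \in \Z^n$ and $m \in \Z$ with $m \geq 1$ satisfy $mx \in V_{\Z}$. Then $mx \in V$, and since $V$ is closed under multiplication by the scalar $1/m \in \Q$, we get $x = \tfrac{1}{m}(mx) \in V$. As $x$ also lies in $\Z^n$, we conclude $x \in V \cap \Z^n = V_{\Z}$, so the class of $x$ in $\Z^n/V_{\Z}$ is already zero. Hence $\Z^n/V_{\Z}$ has no nonzero torsion.

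To finish: a finitely generated torsion-free abelian group is free, so $\Z^n/V_{\Z} \cong \Z^k$ for some $k \geq 0$. Since $\Z^k$ is free, the quotient map $\Z^n \to \Z^n/V_{\Z}$ admits a section $s$, and then $\Z^n = V_{\Z} \oplus s(\Z^n/V_{\Z})$, exhibiting $V_{\Z}$ as a direct summand of $\Z^n$, as desired.

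I do not expect a genuine obstacle here: the only nontrivial ingredient is the classification result that a finitely generated torsion-free abelian group (equivalently, a finitely generated torsion-free module over the PID $\Z$) is free, which is entirely standard. If one preferred to keep the argument self-contained, one could instead phrase matters in terms of $V_{\Z}$ being a \emph{pure} (saturated) subgroup: pick a basis of $V_{\Z}$, extend it to a basis of a finite-index subgroup of $\Z^n$ via Smith normal form, and use purity to clear denominators and upgrade this to a basis of all of $\Z^n$. Either route works; the torsion-free-quotient version above is the shortest to write.
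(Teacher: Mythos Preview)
Your proof is correct and follows essentially the same approach as the paper: both show that the quotient $\Z^n/V_{\Z}$ is finitely generated and torsion-free, hence free, so the short exact sequence splits. The only cosmetic difference is that the paper verifies torsion-freeness by embedding $\Z^n/V_{\Z}$ into the $\Q$-vector space $\Q^n/V$, whereas you verify it directly via the divisibility argument $mx \in V \Rightarrow x \in V$; these are two phrasings of the same observation.
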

\begin{proof}
The short exact sequence
\[\begin{tikzcd}
0 \arrow{r} & V \arrow{r} & \Q^n \arrow{r}{\pi} & \Q^n/V \arrow{r} & 0
\end{tikzcd}\]
restricts to a short exact sequence
\begin{equation}
\label{eqn:bijection}
\begin{tikzcd}
0 \arrow{r} & V_{\Z} \arrow{r} & \Z^n \arrow{r}{\pi} & \pi(\Z^n) \arrow{r} & 0.
\end{tikzcd}
\end{equation}
The subgroup $\pi(\Z^n)$ of $\Q^n/V \cong \Q^{n-\dim(V)}$ is finitely generated and torsion-free, and hence
free abelian.  The short exact sequence \eqref{eqn:bijection} thus splits, so $V_{\Z}$ is a direct summand of $\Z^n$.
\end{proof}

For a subspace $V$ of $H_{\Z}$, the {\em saturation} of $V$ in $H_{\Z}$ is
$V_{\Q} \cap H_{\Z}$.  By Lemma \ref{lemma:summand}, this is a direct summand
of $H_{\Z}$.  We have:

\begin{lemma}
\label{lemma:strongisotropic}
Let $a \wedge a'$ be an isotropic pair.  Then there exists a strong isotropic
pair $a_0 \wedge a'_0$ and $n \in \Z$ such that $a \wedge a' = n a_0 \wedge a'_0$.
Moreover, $\Span{a_0,a'_0}$ is the saturation in $H_{\Z}$ of $\Span{a,a'}$.
\end{lemma}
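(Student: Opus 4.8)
The plan is to pass to the saturation of $\Span{a,a'}$. I would set $V=\Span{a,a'}\subseteq H_{\Z}$, which is a free abelian group of rank $2$ since $a$ and $a'$ are linearly independent, and let $W=V_{\Q}\cap H_{\Z}$ be its saturation in $H_{\Z}$, where $V_{\Q}=V\otimes\Q\subseteq H$ is the $2$-dimensional span of $a$ and $a'$. By Lemma \ref{lemma:summand}, $W$ is a direct summand of $H_{\Z}$, and since $W\supseteq V$ it has rank exactly $2$. I would then fix a basis $\{a_0,a'_0\}$ of $W$ and claim that $a_0\wedge a'_0$ is the required strong isotropic pair, with $n$ obtained from the change-of-basis matrix.

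To see that $a_0\wedge a'_0$ is a strong isotropic pair, note that by construction $\{a_0,a'_0\}$ is a basis for the rank-$2$ direct summand $W$, so it only remains to verify $\omega(a_0,a'_0)=0$. Since $\omega$ is bilinear, $\omega(a,a)=\omega(a',a')=0$ automatically, and $\omega(a,a')=0$ by hypothesis, the form $\omega$ vanishes on all of $V_{\Q}\times V_{\Q}$; as $a_0,a'_0\in W\subseteq V_{\Q}$, this gives $\omega(a_0,a'_0)=0$, and $a_0,a'_0$ are linearly independent because they form a basis of $W$.

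Finally, since $a,a'\in W$ I would write $a=\alpha a_0+\beta a'_0$ and $a'=\gamma a_0+\delta a'_0$ with $\alpha,\beta,\gamma,\delta\in\Z$; then $a\wedge a'=(\alpha\delta-\beta\gamma)\,a_0\wedge a'_0$ holds already in $\wedge^2 H_{\Z}$, hence also in $(\wedge^2 H_{\Z})/\Z$, so one takes $n=\alpha\delta-\beta\gamma$. The moreover-clause is then immediate: $\Span{a_0,a'_0}=W=V_{\Q}\cap H_{\Z}$ is by definition the saturation of $\Span{a,a'}$ in $H_{\Z}$. The whole argument is routine given Lemma \ref{lemma:summand}; the one point worth care is the assertion that $W$ is a genuine direct summand rather than merely a rank-$2$ sublattice, which is exactly what Lemma \ref{lemma:summand} provides, together with the observation that the symplectic orthogonality of $a$ and $a'$ automatically passes to the new basis $\{a_0,a'_0\}$ by bilinearity of $\omega$.
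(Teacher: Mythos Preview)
Your proof is correct and follows essentially the same approach as the paper: pass to the saturation $W$ of $\Span{a,a'}$, pick a basis $\{a_0,a'_0\}$ for it, and observe that $a\wedge a'$ is an integer multiple of $a_0\wedge a'_0$. Your version is in fact a bit more explicit than the paper's, since you verify $\omega(a_0,a'_0)=0$ directly and compute $n$ as the determinant of the change-of-basis matrix, whereas the paper argues more briefly that $a_0\wedge a'_0$ is primitive in $\wedge^2 H_{\Z}$.
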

\begin{proof}
Set $I = \Span{a,a'}$ and let $\oI$ be the saturation of $I$ in $H_{\Z}$.  Let
$\{a_0,a'_0\}$ be a basis for $\oI$.  Regarding $a \wedge a'$ and $a_0 \wedge a'_0$
as elements of $\wedge^2 H$, they correspond to the same $2$-dimensional subspace
of $H$, namely $I_{\Q} = \oI_{\Q}$. It follows that there exists some $n \in \Q$ such that
$a \wedge a' = n a_0 \wedge a'_0$.  Since $a \wedge a' \in \wedge^2 H_{\Z}$
and $a_0 \wedge a'_0$ is a primitive element of $\wedge^2 H_{\Z}$, we must have
$n \in \Z$, as desired.
\end{proof}

\subsection{Symmetric kernel presentation}

We defined $\fZ_g^s$ and $\fZ_g^a$ in Definitions \ref{definition:kgsym} and \ref{definition:kgalt}.  Just like
we did for $\fZ_g^s$ and $\fZ_g^a$ in \S \ref{section:nonsymmetricfz}, we now define a version of them
that does not include their symmetric/anti-symmetric relations: 

\begin{definition}
\label{definition:kg}
Define $\fK_g$ to be the $\Q$-vector space with the following presentation:
\begin{itemize}
\item {\bf Generators}.
A generator $\Pres{\kappa_1,\kappa_2}$ for all sym-orthogonal
$\kappa_1,\kappa_2 \in (\wedge^2 H)/\Q$ such that either
$\kappa_1$ or $\kappa_2$ (or both) is a symplectic pair in $(\wedge^2 H_{\Z})/\Z$.
\item {\bf Relations}.  For all symplectic pairs $a \wedge b \in (\wedge^2 H_{\Z})/\Z$ and all
$\kappa_1,\kappa_2 \in (\wedge^2 H)/\Q$ that are sym-orthogonal to $a \wedge b$
and all $\lambda_1,\lambda_2 \in \Q$, the relations
\begin{align*}
\Pres{a \wedge b,\lambda_1 \kappa_1 + \lambda_2 \kappa_2} &= \lambda_1 \Pres{a \wedge b,\kappa_1} + \lambda_2 \Pres{a \wedge b,\kappa_2} \quad \text{and} \\
\Pres{\lambda_1 \kappa_1 + \lambda_2 \kappa_2,a \wedge b} &= \lambda_1 \Pres{\kappa_1,a \wedge b} + \lambda_2 \Pres{\kappa_2,a \wedge b}.\qedhere
\end{align*}
\end{itemize}
\end{definition}

There is an involution $I\colon \fK_g \rightarrow \fK_g$ defined by $I(\Pres{\kappa_1,\kappa_2}) = \Pres{\kappa_2,\kappa_1}$ that
we will call the {\em canonical involution}.  We have:

\begin{lemma}
\label{lemma:decomposekg}
We have $\fK_g = \fK_g^s \oplus \fK_g^a$, where $\fK_g^s$ and $\fK_g^a$ are identified with
the $+1$ and $-1$ eigenspaces of the canonical involution.
\end{lemma}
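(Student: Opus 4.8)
\noindent
The plan is to follow the proof of Lemma~\ref{lemma:decomposespkernel} essentially verbatim, with $\fZ_g$ replaced by $\fK_g$. First I would define a map $\pi\colon \fK_g \to \fK_g^s \oplus \fK_g^a$ on generators by
\[\pi(\Pres{\kappa_1,\kappa_2}) = (\PresS{\kappa_1,\kappa_2},\PresA{\kappa_1,\kappa_2}),\]
together with a map $\iota = \iota_s + \iota_a\colon \fK_g^s \oplus \fK_g^a \to \fK_g$, where
\[\iota_s(\PresS{\kappa_1,\kappa_2}) = \tfrac{1}{2}\bigl(\Pres{\kappa_1,\kappa_2} + \Pres{\kappa_2,\kappa_1}\bigr), \qquad \iota_a(\PresA{\kappa_1,\kappa_2}) = \tfrac{1}{2}\bigl(\Pres{\kappa_1,\kappa_2} - \Pres{\kappa_2,\kappa_1}\bigr).\]
These formulas make sense because the condition cutting out the generators of $\fK_g$, $\fK_g^s$ and $\fK_g^a$ (that one of $\kappa_1,\kappa_2$ be a symplectic pair) is symmetric in $\kappa_1$ and $\kappa_2$.

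Next I would check that $\pi$, $\iota_s$ and $\iota_a$ each carry relations to relations. For $\pi$ this amounts to observing that the two bilinearity relations of $\fK_g$ (one in each slot, with a symplectic pair fixed in the other) are sent to the corresponding bilinearity relations of $\fK_g^s$ and $\fK_g^a$; the only mildly delicate point is that $\fK_g^s$ and $\fK_g^a$ impose bilinearity only in the second slot, so to reproduce the first-slot relation of $\fK_g$ one uses the symmetry (resp.\ anti-symmetry) relation of $\fK_g^s$ (resp.\ $\fK_g^a$) to move the symplectic pair into the second slot, apply second-slot bilinearity there, and move it back. For $\iota_s$ and $\iota_a$ the symmetry/anti-symmetry relations become tautologies, and the second-slot bilinearity relations follow by applying the two bilinearity relations of $\fK_g$ to the two summands of the symmetrization.

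Finally I would verify that $\pi$ and $\iota$ are mutually inverse: $\iota \circ \pi(\Pres{\kappa_1,\kappa_2}) = \Pres{\kappa_1,\kappa_2}$ is immediate, while $\pi\circ\iota_s(\PresS{\kappa_1,\kappa_2}) = (\PresS{\kappa_1,\kappa_2},0)$ and $\pi\circ\iota_a(\PresA{\kappa_1,\kappa_2}) = (0,\PresA{\kappa_1,\kappa_2})$ follow from the symmetry and anti-symmetry relations respectively. Hence $\pi$ is an isomorphism with inverse $\iota$, giving the splitting $\fK_g = \fK_g^s \oplus \fK_g^a$. For the eigenspace statement, observe that $\pi(I(\Pres{\kappa_1,\kappa_2})) = (\PresS{\kappa_2,\kappa_1},\PresA{\kappa_2,\kappa_1}) = (\PresS{\kappa_1,\kappa_2},-\PresA{\kappa_1,\kappa_2})$, so $\pi$ intertwines $I$ with the map $(\id,-\id)$ on $\fK_g^s\oplus\fK_g^a$; thus $\fK_g^s$ and $\fK_g^a$ are exactly the $+1$ and $-1$ eigenspaces of $I$. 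I do not expect any genuine obstacle here: the only step needing care is the well-definedness of $\pi$ (and of $\iota_s,\iota_a$) described above, which is a purely formal manipulation of the defining relations.
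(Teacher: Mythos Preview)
Your proposal is correct and matches the paper's approach exactly: the paper's proof consists of the single sentence ``Identical to the proof of Lemma~\ref{lemma:decomposespkernel},'' and you have carried out precisely that transcription. Your remark about the ``mildly delicate point'' (that $\fK_g^s$ and $\fK_g^a$ only impose second-slot bilinearity, so one must use (anti-)symmetry to handle the first-slot relation of $\fK_g$) is a correct and useful observation that the paper leaves implicit.
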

\begin{proof}
Identical to the proof of Lemma \ref{lemma:decomposespkernel}.
\end{proof}

There is a linearization map $\Phi\colon \fK_g \rightarrow ((\wedge^2 H)/\Q)^{\otimes 2}$ defined
by $\Phi(\Pres{\kappa_1,\kappa_2}) = \kappa_1 \otimes \kappa_2$.  This takes relations to relations,
and thus gives a well-defined map.  Since in the generator $\Pres{\kappa_1,\kappa_2}$ the elements
$\kappa_1$ and $\kappa_2$ are sym-orthogonal, the image of $\Phi$ lies in $\cK_g$.
In light of Lemma \ref{lemma:decomposekg}, Theorems \ref{maintheorem:presentationalt} and \ref{maintheorem:presentationsym}
are equivalent to:

\begin{theorem}
\label{theorem:presentation}
For $g \geq 4$, the linearization map $\Phi\colon \fK_g \rightarrow \cK_g$ is an isomorphism.
\end{theorem}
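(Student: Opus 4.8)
The plan is to run the three-step method of \S\ref{section:prooftechnique} with $G=\Sp_{2g}(\Z)$, target representation $\cK_g$, and source $\fK_g$, but working throughout with the \emph{enlarged} presentation of $\fK_g$ produced in Part \ref{part:2}, whose equivalence with Definition \ref{definition:kg} is the content of Proposition \ref{proposition:isotropicpairiso}. The reason to enlarge the presentation first is that a generator $\Pres{\kappa_1,\kappa_2}$ of Definition \ref{definition:kg} requires one slot to be a symplectic pair, whereas the orbit and closure arguments below unavoidably produce elements $\Pres{\kappa_1,\kappa_2}$ in which a slot is only an \emph{isotropic} pair (such as $a_i\wedge a_j$ or $a_i\wedge b_j$ for a symplectic basis $\{a_1,b_1,\dots,a_g,b_g\}$); having these generators and bilinearity in the other slot available from the outset is what makes the combinatorics tractable. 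By Lemma \ref{lemma:strongisotropic} every isotropic pair is a rational multiple of a strong one, so $\Sp_{2g}(\Z)$ acts on the generator-eligible decomposable pairs with finitely many orbits (symplectic pairs; strong isotropic pairs; their scalings).

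\emph{Step 1.} Fix a symplectic basis $\{a_1,b_1,\dots,a_g,b_g\}$ for $H_{\Z}$ and the resulting basis of $(\wedge^2 H)/\Q$ given by the $a_i\wedge a_j$, the $b_i\wedge b_j$, the $a_i\wedge b_j$ ($i\neq j$) and $a_i\wedge b_i$ for $i\geq 2$ (using $a_1\wedge b_1=-\sum_{i\geq 2}a_i\wedge b_i$). Using the splitting $\cK_g=\Sym^2((\wedge^2 H)/\Q)\oplus\cK_g^a$ and the sym-orthogonality computations of Lemmas \ref{lemma:symplecticorthogonal} and \ref{lemma:isotropicorthogonal} (in the spirit of \S\ref{section:symmetrickernel}), one writes down an explicit basis of $\cK_g$ each of whose vectors is $\Phi$ of an explicit $\Q$-combination of generators of the enlarged $\fK_g$, arranged so that all the sym-orthogonality and symplectic/isotropic-pair constraints are met. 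Let $S$ be this set of combinations; by construction $\Phi$ maps $\Span{S}$ isomorphically onto $\cK_g$ (this is also where surjectivity of $\Phi$ gets checked by hand).

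\emph{Step 2.} One shows the $\Sp_{2g}(\Z)$-orbit of $S$ spans the enlarged $\fK_g$. Since $\Sp_{2g}(\Z)$ acts on the generator-eligible decomposable pairs with finitely many orbits, it suffices to reduce a single generator $\Pres{\kappa_1,\kappa_2}$, say with $\kappa_1$ a symplectic pair, to a combination of orbit elements: after moving $\kappa_1$ to $a_1\wedge b_1$ one has $\kappa_2\in(a_1\wedge b_1)^{\perp}=\overline{\wedge^2 V_{\Q}^{\perp}}$ with $V=\Span{a_1,b_1}$ by Lemma \ref{lemma:symplecticorthogonal}, and one expands $\kappa_2$ in the chosen basis using the bilinearity relations. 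The statement that bilinearity alone suffices to expand an arbitrary element of the relevant $\perp$-complement (a standard- or adjoint-type module for an $\SL$ or $\Sp$ group of rank roughly $g$) is exactly what Theorems \ref{maintheorem:slstdvar} and \ref{maintheorem:sladjointvar} and the symplectic results of Part \ref{part:1} provide, and this is where the hypothesis $g\geq 4$ enters. The case of an isotropic-pair slot is handled the same way via Lemma \ref{lemma:isotropicorthogonal}.

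\emph{Step 3.} Finally one shows $\Sp_{2g}(\Z)$ maps $\Span{S}$ into itself; this is the crux and by far the most laborious step. By Corollary \ref{corollary:gensp} it is enough to check this for the finite group $\SymSp_g$ and for $X_1$, $X_1^{-1}$, $Y_{12}$. Elements of $\SymSp_g$ merely permute and re-sign the symplectic basis, hence permute $S$ up to sign; the real work is $X_1$ (which sends $a_1\mapsto a_1+b_1$) and $Y_{12}$ (which sends $a_1\mapsto a_1+b_2$, $a_2\mapsto a_2+b_1$), each of which turns a basis $2$-vector into a sum of several decomposable terms and so turns an element of $S$ into a longer expression. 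As in Claims 3.A.1--3.A.3 of the proof of Theorem \ref{maintheorem:sladjoint}, one first proves that a large, explicitly listed family $E$ of ``obviously reachable'' generators $\Pres{\kappa,\kappa'}$ built from the symplectic basis with small integer coefficients lies in $\Span{S}$, by repeatedly splitting one slot via bilinearity and cross-relating the pieces; then one checks case by case that $X_1^{\pm 1}(s)$ and $Y_{12}(s)$ lie in $\Span{S\cup E}$ for every $s\in S$. I expect organizing $S$ and $E$ so that these splittings close up, while propagating the sym-orthogonality and integrality conditions through every step, to be the main obstacle; everything else is formal or a direct appeal to Part \ref{part:1}. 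Once Step 3 is complete, Step 2 gives $\Span{S}=\fK_g$ and Step 1 gives that $\Phi\colon\fK_g\rightarrow\cK_g$ is an isomorphism (yielding, only now, that $\fK_g$ is finite-dimensional and that the action extends to $\Sp_{2g}(\Q)$), and by Lemma \ref{lemma:decomposekg} this also gives Theorems \ref{maintheorem:presentationalt} and \ref{maintheorem:presentationsym}.
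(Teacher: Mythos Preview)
Your overall framework is right (three-step method, enlarged presentation from Part~\ref{part:2}), but the paper proceeds quite differently and your Step~1 hides the real difficulty. The paper does \emph{not} attack $\fK_g$ directly: it uses Lemma~\ref{lemma:decomposekg} to split $\fK_g=\fK_g^s\oplus\fK_g^a$ and proves Theorems~\ref{maintheorem:presentationsym} and~\ref{maintheorem:presentationalt} separately, with very different arguments for each half. In the symmetric half the target is all of $\Sym^2((\wedge^2 H)/\Q)$, and the heart of the proof is \emph{constructing} lifts of basis elements such as $(a_i\wedge b_i)\Cdot(a_i\wedge y)$, $(a_i\wedge y)\Cdot(x\wedge b_i)$, and $(a_i\wedge a_j)\Cdot(b_i\wedge b_j)$. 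These are not images of any generator $\PresS{\kappa_1,\kappa_2}$ because the two entries are \emph{not} sym-orthogonal; the paper builds them as the $\Theta$-, $\Lambda$-, and $\Omega$-elements (\S\ref{section:presentationsym2}--\S\ref{section:presentationsym4}), and proving the well-definedness and bilinearity of these constructions is most of the work. Your sentence ``one writes down an explicit basis of $\cK_g$ each of whose vectors is $\Phi$ of an explicit $\Q$-combination of generators \dots\ arranged so that all the sym-orthogonality \dots\ constraints are met'' is exactly where the content lies, and it is not a routine bookkeeping exercise.

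For the alternating half the paper does not lift a basis at all. Instead it defines $S=S_{12}\cup S_3$ as a union of \emph{subspaces} $\Fix{a,a'}$ (\S\ref{section:presentationalt1}), proves $\Span{S}=\fK_g^a$ first (Steps~2--3), shows $\Phi$ is injective on $\Span{S_{12}}$ via Theorems~\ref{maintheorem:sladjoint} and~\ref{maintheorem:sladjointvar}, and then handles the quotient $\fT_g=\fK_g^a/\Span{S_{12}}$ by an inductive dimension count on $g$ using the ``basic elements'' $\Delta^i_{jk}$ and relations among them (\S\ref{section:presentationalt7}--\S\ref{section:presentationalt9}). So while your identification of Step~3 as laborious is accurate (and the paper's Step~3 for the symmetric case in \S\ref{section:presentationsym6}--\S\ref{section:presentationsym9} confirms this), you have underestimated Step~1 and missed that the alternating case requires a genuinely different, non-lift-based argument.
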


\subsection{Goal of Part \ref{part:2}}

Our goal in the rest of this paper is to prove Theorem \ref{theorem:presentation}.  Actually,
it will turn out that it is more convenient to prove Theorems \ref{maintheorem:presentationalt} 
and \ref{maintheorem:presentationsym} separately.  We introduced the representation
$\fK_g$ from Theorem \ref{theorem:presentation} for the sake of the calculations
in this part of the paper, which later will give results about $\fK_g^a$ and $\fK_g^s$
that will be needed for the proofs of Theorems \ref{maintheorem:presentationalt}
and \ref{maintheorem:presentationsym}.

Our goal in this part is to enhance $\fK_g$ by showing that in the above
presentation we can add generators $\Pres{\kappa_1,\kappa_2}$ such that
$\kappa_1,\kappa_2 \in (\wedge^2 H)/\Q$ are sym-orthogonal elements with
either $\kappa_1$ or $\kappa_2$ (or both) a symplectic pair or an isotropic pair.\footnote{We will
actually prove something slightly more general.}  Lemma \ref{lemma:decomposekg}
will then imply a corresponding result about $\fK_g^s$ and $\fK_g^a$.  We accomplish this
in \S \ref{section:isotropicpairs5}.  This is preceded by a series of preliminary results
in \S \ref{section:isotropicpairs1} -- \S \ref{section:isotropicpairs4}.

\section{Isotropic pairs I: setup}
\label{section:isotropicpairs1}

This section contains the basic framework for constructing our new generators.

\subsection{Generation by symplectic pairs}
\label{section:generationomega}

We start with a technical lemma.
Let $X$ be a direct summand of $H_{\Z}$.  Define $\ker(X)$ to be the subspace of all $x_0 \in X$ such that
$\omega(x_0,x) = 0$ for all $x \in X$.  The rank of $\ker(X)$ is the {\em kernel rank} of $X$.

The restriction of $\omega$ to $X$ induces an alternating bilinear 
form $\iota$ on $X/\ker(X)$, and we say that $X$ is a {\em near symplectic summand} of $H_{\Z}$
if $\iota$ is a symplectic form.  This is equivalent to requiring that there be a symplectic basis $\{a_1,b_1,\ldots,a_g,b_g\}$
for $H_{\Z}$ such that $X = \Span{a_1,b_1,\ldots,a_h,b_h,a_{h+1},\ldots,a_{h+k}}$ for some $h \leq g$ and $k \leq g-h$.
The integer $k$ is the kernel rank of $X$, and we call $h$ the {\em genus} of $X$.  Here is an example of this:

\begin{lemma}
\label{lemma:isotropicnear}
Let $I$ be a rank-$k$ subgroup of $H_{\Z}$ on which $\omega$ vanishes identically.  Then
$I^{\perp}$ is a near symplectic summand of genus $g-k$ and kernel rank $k$.
\end{lemma}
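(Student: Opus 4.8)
The plan is to produce an explicit symplectic basis of $H_{\Z}$ adapted to $I$, from which both the genus and the kernel rank of $I^{\perp}$ can be read off directly.

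First I would reduce to the case that $I$ is a direct summand of $H_{\Z}$. Replacing $I$ by its saturation $\oI = I_{\Q} \cap H_{\Z}$ changes neither $I^{\perp}$ nor the hypothesis that $\omega$ vanishes identically on $I$: the quotient $\oI/I$ is finite, so for $x,y \in \oI$ there is $m \geq 1$ with $mx, my \in I$, whence $m^2\omega(x,y) = \omega(mx,my) = 0$ and so $\omega(x,y)=0$; and the same $\Z$-scaling argument shows $x \in H_{\Z}$ is orthogonal to all of $I$ if and only if it is orthogonal to all of $\oI$. By Lemma \ref{lemma:summand}, $\oI$ is a direct summand of rank $k$. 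So assume from now on that $I$ itself is an isotropic direct summand of rank $k$; note $k \leq g$ since $\omega$ is nondegenerate on $H$.

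The second step is the standard fact that an isotropic direct summand extends to a symplectic basis: there is a symplectic basis $\{a_1,b_1,\ldots,a_g,b_g\}$ of $H_{\Z}$ with $I = \Span{a_1,\ldots,a_k}$. I would prove this by induction on $g$. Pick a basis vector $a_1$ of $I$; since $a_1$ is primitive and $H_{\Z}$ is unimodular, the functional $\omega(a_1,-)\colon H_{\Z} \to \Z$ is surjective, so there is $b_1 \in H_{\Z}$ with $\omega(a_1,b_1)=1$. Splitting off the symplectic plane $P = \Span{a_1,b_1}$ gives $H_{\Z} = P \oplus P^{\perp}$, and subtracting suitable multiples of $a_1$ from a basis of $I$ one arranges $I = \Span{a_1} \oplus I'$ with $I' \subset P^{\perp}$; one checks $I'$ is an isotropic direct summand of $P^{\perp}$ of rank $k-1$ (using that $H_{\Z}/I \cong (P/\Span{a_1}) \oplus (P^{\perp}/I')$ is torsion-free). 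Now apply the inductive hypothesis to $P^{\perp}$, which has genus $g-1$.

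Finally, with such a basis in hand I would compute directly: writing $x = \sum_i (\alpha_i a_i + \beta_i b_i)$, the conditions $\omega(x,a_i)=0$ for $1 \leq i \leq k$ say exactly that $\beta_1 = \cdots = \beta_k = 0$, so
\[ I^{\perp} = \Span{a_1,\ldots,a_k} \oplus \Span{a_{k+1},b_{k+1},\ldots,a_g,b_g}. \]
After permuting the symplectic pairs (which preserves the property of being a symplectic basis), this is visibly of the shape appearing in the definition of a near symplectic summand, with the $g-k$ symplectic pairs $(a_{k+1},b_{k+1}),\ldots,(a_g,b_g)$ and the $k$ extra vectors $a_1,\ldots,a_k$; and $\ker(I^{\perp}) = \Span{a_1,\ldots,a_k}$, since $\Span{a_{k+1},b_{k+1},\ldots,a_g,b_g}$ is a symplectic summand (so contributes nothing to the kernel) while $\Span{a_1,\ldots,a_k}$ pairs trivially with all of $I^{\perp}$. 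Hence $I^{\perp}$ has genus $g-k$ and kernel rank $k$, as claimed. The only genuine content is the second step (extension to a symplectic basis); the first step is a routine saturation argument and the last is bookkeeping.
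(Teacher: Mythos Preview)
Your proof is correct and follows essentially the same approach as the paper: pass to the saturation $\oI$ (so that $I^{\perp} = \oI^{\perp}$ and $\oI$ is an isotropic direct summand), extend $\oI$ to a symplectic basis with $\oI = \Span{a_1,\ldots,a_k}$, and read off $I^{\perp}$ explicitly. The paper simply asserts the extension-to-a-symplectic-basis step as known, whereas you supply an inductive argument for it, but otherwise the arguments are the same.
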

\begin{proof}
Let $\oI$ be the saturation of $I$ in $H_{\Z}$.  Since $I_{\Q} = \oI_{\Q}$ we have that
$I^{\perp} = \oI^{\perp}$.  Lemma \ref{lemma:summand} implies that $\oI$ is a direct summand
of $H_{\Z}$.  We can therefore find a symplectic basis $\{a_1,b_1,\ldots,a_g,b_g\}$ for
$H_{\Z}$ such that $\oI = \Span{a_1,a_2,\ldots,a_k}$.  It follows that
\[I^{\perp} = \oI^{\perp} = \Span{a_1,a_2,\ldots,a_k,a_{k+1},b_{k+1},\ldots,a_g,b_g}.\]
The lemma follows.
\end{proof}

Our main result about near symplectic summands is:

\begin{lemma}
\label{lemma:generationomega}
Let $X$ be a near symplectic summand of $H_{\Z}$ of genus $h \geq 1$.  Then
$\overline{\wedge^2 X_{\Q}}$ is spanned by symplectic pairs $\sigma$
with $\sigma \in \overline{\wedge^2 X_{\Q}}$.
\end{lemma}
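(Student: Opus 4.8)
The plan is a direct computation. First I would choose a symplectic basis $\{a_1,b_1,\ldots,a_g,b_g\}$ for $H_\Z$ adapted to $X$, meaning $X = \Span{a_1,b_1,\ldots,a_h,b_h,a_{h+1},\ldots,a_{h+k}}$ where $k$ is the kernel rank of $X$; such a basis exists by the definition of near symplectic summand, and then $\ker(X) = \Span{a_{h+1},\ldots,a_{h+k}}$. Since $\wedge^2 X_\Q$ is spanned by the decomposable elements $v \wedge w$ with $v,w$ distinct members of this basis, it suffices to express the image $\overline{v \wedge w}$ of each such wedge in $(\wedge^2 H)/\Q$ as a $\Q$-linear combination of symplectic pairs that lie in $\overline{\wedge^2 X_\Q}$. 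I will use repeatedly the trivial observation that whenever $u,u' \in X$ satisfy $\omega(u,u') = 1$, the element $\overline{u \wedge u'}$ is a symplectic pair lying in $\overline{\wedge^2 X_\Q}$, since $\Span{u,u'} \subseteq X_\Q$.

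Next I would run through the cases according to which blocks $v$ and $w$ occupy. If $v \wedge w = a_i \wedge b_i$ with $i \le h$, it is already a symplectic pair. If $v,w$ are two distinct basis vectors of the symplectic part $\Span{a_1,b_1,\ldots,a_h,b_h}$ spanning an isotropic plane, one writes $v \wedge w$ as a difference of two symplectic pairs via identities like $a_i \wedge a_j = (a_i - b_j)\wedge a_j - a_j \wedge b_j$ (valid since $\omega(a_i - b_j, a_j) = 1$), together with $b_i \wedge b_j = (b_i + a_j)\wedge b_j - a_j \wedge b_j$ and $a_i \wedge b_j = (a_i + a_j)\wedge b_j - a_j \wedge b_j$ for $i \ne j$. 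If $v$ lies in the symplectic part and $w = a_l \in \ker(X)$, one uses $a_i \wedge a_l = a_i \wedge(b_i + a_l) - a_i \wedge b_i$ and $b_i \wedge a_l = (a_i - a_l)\wedge b_i - a_i \wedge b_i$. Finally, if $v = a_l$ and $w = a_m$ both lie in $\ker(X)$ with $l \ne m$, expand the symplectic pair $(a_1 + a_l)\wedge(b_1 + a_m)$, which lies in $\overline{\wedge^2 X_\Q}$ since $\omega(a_1 + a_l, b_1 + a_m) = 1$; this expresses $a_l \wedge a_m$ as a combination of that pair together with $a_1 \wedge b_1$, $a_1 \wedge a_m$, and $a_l \wedge b_1$, all of which were handled in the previous cases. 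Every identity here is immediate from bilinearity of $\omega$ and of $\wedge$, so no real calculation is involved.

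The only point of any subtlety is this last case: the wedge of two vectors in $\ker(X)$ is not visibly a combination of symplectic pairs and must be reduced to the earlier cases, and this is the single place where the hypothesis $h \ge 1$ is used — it supplies the pair $a_1,b_1 \in X$ with $\omega(a_1,b_1) = 1$. Accordingly I would present the cases in the order above, so that each one invokes only symplectic pairs or wedges already shown to lie in the desired span, and then conclude that $\overline{\wedge^2 X_\Q}$, being spanned by the basis wedges, is spanned by symplectic pairs contained in it.
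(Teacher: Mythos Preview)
Your proposal is correct and follows essentially the same approach as the paper: choose an adapted symplectic basis, then case-split on the basis wedges $v\wedge w$ and exhibit each as a short combination of symplectic pairs lying in $\overline{\wedge^2 X_\Q}$, with the kernel--kernel case handled via $(a_1+a_l)\wedge(b_1+a_m)$ and the hypothesis $h\ge 1$. The only cosmetic differences are that the paper merges your ``both in the symplectic part'' and ``one symplectic, one kernel'' cases into a single case, and in the kernel--kernel case it writes the answer directly as four symplectic pairs rather than reducing two of the terms to the earlier cases.
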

\begin{proof}
Let $k$ be the kernel rank of $X$ and let $\{a_1,b_1,\ldots,a_g,b_g\}$ be a symplectic basis for $H_{\Z}$ such that
$X = \Span{a_1,b_1,\ldots,a_h,b_h,a_{h+1},\ldots,a_{h+k}}$.  The
vector space $\overline{\wedge^2 X_{\Q}}$ is spanned by elements of the form
$x \wedge y$ with $x,y \in \{{a_1,b_1,\ldots,a_h,b_h,a_{h+1},\ldots,a_{h+k}}\}$ distinct.  We must write each of these
as a linear combination of symplectic pairs $\sigma$ with $\sigma \in \overline{\wedge^2 X_{\Q}}$.

Up to flipping $x$ and $y$, there are several cases.  In each of them, we will use blue to denote
symplectic pairs $\sigma$ with $\sigma \in \overline{\wedge^2 X_{\Q}}$.
\begin{itemize}
\item If $\omega(x,y) = 1$, then we have $x = a_i$ and $y = b_i$ for some $1 \leq i \leq h$ and $\blue{x \wedge y}$ is already
of the desired form.
\item If $\omega(x,y) = 0$ and $x \in \{a_1,b_1,\ldots,a_h,b_h\}$ and $y \in \{a_1,b_1,\ldots,a_h,b_h,a_{h+1},\ldots,a_{h+k}\}$,
then for $1 \leq i \leq h$ we have either:
\begin{align*}
x \wedge y &= a_i \wedge y = \blue{a_i \wedge (b_i + y)} - \blue{a_i \wedge b_i},\text{ or}\\
x \wedge y &= b_i \wedge y = -\blue{(a_i + y) \wedge b_i} + \blue{a_i \wedge b_i}.
\end{align*}
\item If $x,y \in \{a_{h+1},\ldots,a_{h+k}\}$, then we have
\[x \wedge y = \blue{(a_1 + x) \wedge (b_1 + y)} - \blue{a_1 \wedge (b_1 + y)} - \blue{(a_1+x) \wedge b_1} + \blue{a_1 \wedge b_1}.\qedhere\]
\end{itemize}
\end{proof}

\subsection{Right compatible subspaces}
\label{section:compatiblekg}

Let $a \wedge a'$ 
be an isotropic pair.  Define $\fK_g[-,a \wedge a']$ to be the subspace of $\fK_g$
spanned by elements of the form $\Pres{\sigma,a \wedge a'}$ with $\sigma$ a symplectic pair
that is sym-orthogonal to $a \wedge a'$.
Let $\Phi\colon \fK_g \rightarrow ((\wedge^2 H)/\Q)^{\otimes 2}$ be the linearization map.
Our main technical result will be that
$\Phi$ takes $\fK_g[-,a \wedge a']$ isomorphically onto $(a \wedge a')^{\perp} \otimes (a \wedge a')$.
The proof of this is spread over \S \ref{section:isotropicpairs1} -- \S \ref{section:isotropicpairs3}, with the result
being Proposition \ref{proposition:isotropicpairiso}.  
In \S \ref{section:isotropicpairs4}, we use this to construct our new generators.

\begin{remark}
We could also define $\fK_g[a \wedge a',-]$ similarly, and all of our results would
have analogues for $\fK_g[a \wedge a',-]$.  To avoid repetition, we will focus on
$\fK_g[-,a \wedge a']$ and then at the very end formally derive these
analogues; see \S \ref{section:leftcompatible}.
\end{remark}

\subsection{Calculating the image}

We start by proving:

\begin{lemma}
\label{lemma:isotropiccompatibleimage}
Let $a \wedge a'$ be an isotropic pair and let
$\Phi\colon \fK_g \rightarrow ((\wedge^2 H)/\Q)^{\otimes 2}$ be the linearization map.  Then
$\Phi$ takes $\fK_g[-,a \wedge a']$ onto $(a \wedge a')^{\perp} \otimes (a \wedge a')$.
\end{lemma}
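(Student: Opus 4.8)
The plan is to prove the two inclusions. For ``$\subseteq$'', recall that $\fK_g[-,a\wedge a']$ is by definition spanned by the generators $\Pres{\sigma,a\wedge a'}$ of $\fK_g$ with $\sigma$ a symplectic pair that is sym-orthogonal to $a\wedge a'$. Since $\Phi(\Pres{\sigma,a\wedge a'}) = \sigma\otimes(a\wedge a')$, and $\sigma$ being sym-orthogonal to $a\wedge a'$ means precisely $\sigma\in(a\wedge a')^\perp$, it follows at once that $\Phi(\fK_g[-,a\wedge a'])\subseteq(a\wedge a')^\perp\otimes(a\wedge a')$.

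For ``$\supseteq$'', the key point is to show that $(a\wedge a')^\perp$ is spanned by the symplectic pairs it contains; granting this, if $\{\sigma\}$ is such a spanning set, then each $\sigma$ is sym-orthogonal to $a\wedge a'$, so $\Pres{\sigma,a\wedge a'}\in\fK_g[-,a\wedge a']$, and the elements $\sigma\otimes(a\wedge a') = \Phi(\Pres{\sigma,a\wedge a'})$ span $(a\wedge a')^\perp\otimes(a\wedge a')$, which therefore lies in the image of $\Phi$. To produce this spanning set, set $I = \Span{a,a'}$, a rank-$2$ subgroup of $H_\Z$ on which $\omega$ vanishes identically. By Lemma \ref{lemma:isotropicorthogonal} we have $(a\wedge a')^\perp = \overline{\wedge^2 I_\Q^\perp}$, and by Lemma \ref{lemma:isotropicnear} the orthogonal complement $I^\perp\subset H_\Z$ is a near symplectic summand of genus $g-2$; since $g\geq 4$ (Assumption \ref{assumption:genuspart2}) this genus is at least $1$. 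Because $(I^\perp)_\Q = I_\Q^\perp$, we get $\overline{\wedge^2 (I^\perp)_\Q} = \overline{\wedge^2 I_\Q^\perp} = (a\wedge a')^\perp$, so Lemma \ref{lemma:generationomega} applied with $X = I^\perp$ shows that $(a\wedge a')^\perp$ is spanned by symplectic pairs $\sigma$ with $\sigma\in(a\wedge a')^\perp$. This is exactly what was needed, and completes the proof.

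I do not expect a genuine obstacle in this lemma: it is essentially a bookkeeping consequence of the structural results of \S \ref{section:isotropicpairs} and \S \ref{section:generationomega}, the only points requiring a little care being the identification $(I^\perp)_\Q = I_\Q^\perp$ between the integral orthogonal summand and its rational span, and the genus count $g-2\geq 1$ needed to invoke Lemma \ref{lemma:generationomega}. The genuinely difficult half of the forthcoming Proposition \ref{proposition:isotropicpairiso} — namely that $\Phi$ restricted to $\fK_g[-,a\wedge a']$ is moreover \emph{injective} — is not treated here, and will be handled later via the proof technique of \S \ref{section:prooftechnique}.
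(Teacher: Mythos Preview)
Your proof is correct and follows essentially the same approach as the paper's: both prove the easy inclusion directly from the definition of $\fK_g[-,a\wedge a']$, and for the reverse inclusion both set $I=\Span{a,a'}$, invoke Lemma~\ref{lemma:isotropicorthogonal} to identify $(a\wedge a')^\perp$ with $\overline{\wedge^2 I_\Q^\perp}$, use Lemma~\ref{lemma:isotropicnear} to see that $I^\perp$ is a near symplectic summand of genus $g-2\geq 1$, and then apply Lemma~\ref{lemma:generationomega} to span by symplectic pairs.
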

\begin{proof}
By definition, $\fK_g[-,a \wedge a']$ is spanned by elements of the form
$\Pres{\sigma,a \wedge a'}$ with $\sigma$ a symplectic pair such that
$\sigma \in (a \wedge a')^{\perp}$.  Since $\Phi(\Pres{\sigma,a \wedge a'}) = \sigma \otimes (a \wedge a')$, this implies that
\[\Phi(\fK_g[-,a \wedge a']) \subset (a \wedge a')^{\perp} \otimes (a \wedge a').\]
To see that this is an equality, let $I = \Span{a,a'}$.  Lemma \ref{lemma:isotropicorthogonal} says that
$(a \wedge a')^{\perp} = \overline{\wedge^2 I_{\Q}^{\perp}}$.
Lemma \ref{lemma:isotropicnear} says that $I^{\perp}$ is a near symplectic summand of 
genus\footnote{Here we are using our standing assumption that $g \geq 4$; see
Assumption \ref{assumption:genuspart2}.} $g-2 \geq 1$.
Lemma \ref{lemma:generationomega} therefore implies that $\overline{\wedge^2 I_{\Q}^{\perp}}$
is spanned by symplectic pairs $\sigma$ such that $\sigma \in \overline{\wedge^2 I_{\Q}^{\perp}}$.  The desired
equality follows.
\end{proof}

\section{Isotropic pairs II: lifting orthogonal elements}
\label{section:isotropicpairs2}

Let $a \wedge a'$ be an isotropic pair and
let $\kappa \in (a \wedge a')^{\perp}$.  In
this section, for certain $\kappa$ we show how to find specific elements of $\fK_g[-,a \wedge a']$
projecting to $\kappa \otimes (a \wedge a')$.

\subsection{Separating classes}

A subgroup $X$ of $H_{\Z}$ is said to {\em separate} $\kappa$ from $a \wedge a'$ if:
\begin{itemize}
\item $X \subset \Span{a,a'}^{\perp}$; and
\item $\kappa \in \overline{\wedge^2 X_{\Q}}$; and
\item $X$ is a near symplectic summand of $H_{\Z}$ of positive genus.  This implies in particular
that $X$ is a direct summand of $H_{\Z}$.
\end{itemize}
Let $X$ be a direct summand of $H_{\Z}$ separating $\kappa$ from $a \wedge a'$.  Use Lemma \ref{lemma:generationomega} to write
\begin{equation}
\label{eqn:expresskappax}
\kappa = \sum_{i=1}^n \lambda_i \sigma_i \quad \text{with $\lambda_i \in \Q$ and $\sigma_i$ a symplectic pair with $\sigma_i \in \overline{\wedge^2 X_{\Q}}$}.
\end{equation}

\subsection{Constructing the lift}
\label{section:constructlift}

We would like to define
\[\OPres{(\kappa;X),a \wedge a'} = \sum_{i=1}^n \lambda_i \Pres{\sigma_i,a \wedge a'} \in \fK_g.\]
This is in orange to emphasize that it is not one of our generators.  It appears to depend on
the expression \eqref{eqn:expresskappax}, but below we will prove that under favorable
circumstances it does not depend on this expression.  

To state our result, recall that a {\em Lagrangian} in $H_{\Z}$ is a direct
summand $L$ with $L^{\perp} = L$.  Equivalently, we can find a symplectic basis
$\{a_1,b_1,\ldots,a_g,b_g\}$ for $H_{\Z}$ with $L = \Span{a_1,\ldots,a_g}$.  We
say that $X$ is {\em Lagrangian-free} if $X$ does not contain a Lagrangian of $H_{\Z}$.  Then:

\begin{lemma}
\label{lemma:orangeindependent}
Let the notation be as above, and assume that $X$ is Lagrangian-free.  Then $\OPres{(\kappa;X),a \wedge a'}$ does
not depend on \eqref{eqn:expresskappax}.
\end{lemma}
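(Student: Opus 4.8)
The plan is to exhibit, once and for all, a linear map $L\colon \overline{\wedge^2 X_{\Q}} \rightarrow \fK_g$ with the property that $L(\sigma) = \Pres{\sigma,a \wedge a'}$ for every symplectic pair $\sigma \in \overline{\wedge^2 X_{\Q}}$. Since such symplectic pairs span $\overline{\wedge^2 X_{\Q}}$ by Lemma \ref{lemma:generationomega} (applicable because $X$ has positive genus), at most one such $L$ can exist; and granting its existence we get
\[\OPres{(\kappa;X),a \wedge a'} = \sum_{i=1}^n \lambda_i \Pres{\sigma_i,a \wedge a'} = \sum_{i=1}^n \lambda_i L(\sigma_i) = L(\kappa),\]
which manifestly does not depend on the expression \eqref{eqn:expresskappax}.

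To construct $L$, I would pass to the orthogonal complement $X^{\perp}$ of $X$ in $H_{\Z}$. Choosing a symplectic basis $\{a_1,b_1,\ldots,a_g,b_g\}$ of $H_{\Z}$ adapted to $X$, so that $X = \Span{a_1,b_1,\ldots,a_h,b_h,a_{h+1},\ldots,a_{h+k}}$ for the genus $h$ and kernel rank $k$ of $X$, one sees that $X^{\perp}$ is again a near symplectic summand, of genus $g-h-k$ and kernel rank $k$. The Lagrangian-free hypothesis is exactly what forces this genus to be positive: the maximal isotropic subspace of $X$ has rank $h+k$, so $X$ contains a Lagrangian of $H_{\Z}$ iff $h+k=g$, and Lagrangian-freeness says $h+k \leq g-1$. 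Now $X \subseteq \Span{a,a'}^{\perp}$ gives $a,a' \in X^{\perp}$, so $a \wedge a' \in \overline{\wedge^2 X^{\perp}_{\Q}}$; applying Lemma \ref{lemma:generationomega} to $X^{\perp}$, I fix an expression $a \wedge a' = \sum_{j=1}^m \nu_j \mu_j$ with each $\mu_j$ a symplectic pair in $\overline{\wedge^2 X^{\perp}_{\Q}}$ and each $\nu_j \in \Q$. This expression is chosen independently of everything else.

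A glance at the formula for $\fc$ shows that any element of $\overline{\wedge^2 X_{\Q}}$ is sym-orthogonal to any element of $\overline{\wedge^2 X^{\perp}_{\Q}}$, since all four symplectic pairings appearing in the formula vanish. Consequently, for each $j$ and each $\kappa \in \overline{\wedge^2 X_{\Q}}$, the symbol $\Pres{\kappa,\mu_j}$ is a genuine generator of $\fK_g$ (its second slot $\mu_j$ is a symplectic pair), and the first-slot linearity relation of $\fK_g$ makes $\kappa \mapsto \Pres{\kappa,\mu_j}$ a linear map $\overline{\wedge^2 X_{\Q}} \rightarrow \fK_g$. I then define $L(\kappa) = \sum_{j=1}^m \nu_j \Pres{\kappa,\mu_j}$, which is linear. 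To verify $L(\sigma) = \Pres{\sigma,a \wedge a'}$ for a symplectic pair $\sigma \in \overline{\wedge^2 X_{\Q}}$, note that here $\sigma$ is a symplectic pair sym-orthogonal to each $\mu_j$, so the second-slot linearity relation gives
\[L(\sigma) = \sum_{j=1}^m \nu_j \Pres{\sigma,\mu_j} = \Pres{\sigma, \textstyle\sum_{j=1}^m \nu_j \mu_j} = \Pres{\sigma,a \wedge a'}.\]
Both linearity relations of $\fK_g$ are stated for two summands, so the displayed collapses are carried out by an obvious induction on the number of terms, using at each stage that partial sums stay sym-orthogonal to the relevant symplectic pair.

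The only genuinely substantive point is recognizing that $X^{\perp}$ is the right auxiliary object: a near symplectic summand containing both $a$ and $a'$ while being sym-orthogonal to all of $\overline{\wedge^2 X_{\Q}}$, so that $a \wedge a'$ can be broken into symplectic pairs over which $\fK_g$ has full bilinearity. Once one also observes that Lagrangian-freeness of $X$ is precisely positivity of the genus of $X^{\perp}$ (which is what licenses invoking Lemma \ref{lemma:generationomega} for $X^{\perp}$), everything else is bookkeeping with the two defining relations of $\fK_g$.
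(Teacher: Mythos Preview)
Your proof is correct and takes essentially the same approach as the paper: both expand $a \wedge a'$ as a combination of symplectic pairs $\mu_j$ lying in $\overline{\wedge^2 X^{\perp}_{\Q}}$ (using that Lagrangian-freeness of $X$ makes $X^{\perp}$ a near symplectic summand of positive genus), then use the two bilinearity relations of $\fK_g$ to show that any expression \eqref{eqn:expresskappax} collapses to $\sum_j \nu_j \Pres{\kappa,\mu_j}$. Your packaging via the linear map $L$ is a clean way to phrase it, but the underlying computation is identical to the paper's.
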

\begin{proof}
Let
\[\kappa = \sum_{j=1}^m \lambda'_j \sigma'_j \quad \text{with $\lambda'_j \in \Q$ and $\sigma'_j$ a symplectic pair with $\sigma'_j \in \overline{\wedge^2 X_{\Q}}$}\]
be another expression.  We must prove that
\begin{equation}
\label{eqn:orangeindependent}
\sum_{i=1}^n \lambda_i \Pres{\sigma_i,a \wedge a'} = \sum_{j=1}^m \lambda'_j \Pres{\sigma'_j,a \wedge a'}.
\end{equation}
Let $h \geq 1$ be the genus of $X$ and let $k$ be the kernel rank of $X$.  Pick a symplectic
basis $\{a_1,b_1,\ldots,a_g,b_g\}$ for $H_{\Z}$ with $X = \Span{a_1,b_1,\ldots,a_h,b_h,a_{h+1},\ldots,a_{h+k}}$.
We then have
\[X^{\perp} = \Span{a_{h+1},\ldots,a_{h+k},a_{h+k+1},b_{h+k+1},\ldots,a_g,b_g}.\]
Since $X$ is Lagrangian-free, we have $h+k < g$.  It follows that $X^{\perp}$ is a near symplectic
summand of $H_{\Z}$ of positive genus.  By assumption, $a,a' \in X^{\perp}$.  Using
Lemma \ref{lemma:generationomega}, we can write
\[a \wedge a' = \sum_{\ell=1}^p c_{\ell} s_{\ell} \quad \text{with $c_{\ell} \in \Q$ and $s_{\ell}$ a symplectic pair with $s_{\ell} \in \overline{\wedge^2 X_{\Q}^{\perp}}$}.\]
We have the following relation in $\fK_g$:
\[\Pres{\sigma_i,a \wedge a'} = \sum_{\ell=1}^p c_{\ell} \Pres{\sigma_i,s_{\ell}}.\]
Using the relations in $\fK_h$ again, it follows that
\[\sum_{i=1}^n \lambda_i \Pres{\sigma_i,a \wedge a'} = \sum_{\ell=1}^p \left(c_{\ell} \sum_{i=1}^n \lambda_i \Pres{\sigma_i,s_{\ell}}\right)
                                                     = \sum_{\ell=1}^p \left(c_{\ell} \Pres{\sum_{i=1}^n \lambda_i \sigma_i,s_{\ell}}\right) 
                                                     = \sum_{\ell=1}^p c_{\ell} \Pres{\kappa,s_{\ell}}.\]
Similarly, we have
\[\sum_{j=1}^m \lambda'_j \Pres{\sigma'_j,a \wedge a'} = \sum_{\ell=1}^p c_{\ell} \Pres{\kappa,s_{\ell}}.\]
The equality \eqref{eqn:orangeindependent} follows.
\end{proof}

\subsection{Properties of the lift}
\label{section:liftproperties}

We now give three properties of our lifts.  The first is linearity:

\begin{lemma}[Linearity of the lifts]
\label{lemma:liftadd}
Let $a \wedge a'$ be an isotropic pair, let
$\kappa_1,\kappa_2 \in (a \wedge a')^{\perp}$, and 
let $\lambda_1,\lambda_2 \in \Q$.  Let $X$ be a direct summand of $H_{\Z}$ that is Lagrangian-free and
separates both $\kappa_1$ and $\kappa_2$ from $a \wedge a'$.  Then
\[\OPres{(\lambda_1 \kappa_1+\lambda_2 \kappa_2;X),a \wedge a'} = \lambda_1 \OPres{(\kappa_1;X),a \wedge a'} + \lambda_2 \OPres{(\kappa_2;X),a \wedge a'}.\]
\end{lemma}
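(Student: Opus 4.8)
The plan is to deduce this immediately from the well-definedness result Lemma~\ref{lemma:orangeindependent}, after first checking that the left-hand side of the asserted identity even makes sense. Since $(a \wedge a')^{\perp}$ is a $\Q$-subspace of $(\wedge^2 H)/\Q$ and $\overline{\wedge^2 X_{\Q}}$ is a $\Q$-subspace of $(\wedge^2 H)/\Q$, the element $\lambda_1 \kappa_1 + \lambda_2 \kappa_2$ again lies in $(a \wedge a')^{\perp}$ and in $\overline{\wedge^2 X_{\Q}}$. The two remaining conditions in the definition of ``$X$ separates $-$ from $a \wedge a'$'', namely $X \subset \Span{a,a'}^{\perp}$ and $X$ being a near symplectic summand of positive genus, are intrinsic to $X$ and do not involve $\kappa$. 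Hence $X$ separates $\lambda_1 \kappa_1 + \lambda_2 \kappa_2$ from $a \wedge a'$, so $\OPres{(\lambda_1 \kappa_1 + \lambda_2 \kappa_2;X),a \wedge a'}$ is defined; moreover $X$ is Lagrangian-free by hypothesis, so Lemma~\ref{lemma:orangeindependent} applies to all three lifts in the statement.

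Next I would use Lemma~\ref{lemma:generationomega} to fix expressions
\[
\kappa_1 = \sum_{i=1}^n \mu_i \sigma_i, \qquad \kappa_2 = \sum_{j=1}^m \nu_j \tau_j,
\]
with $\mu_i, \nu_j \in \Q$ and each $\sigma_i, \tau_j$ a symplectic pair lying in $\overline{\wedge^2 X_{\Q}}$. These are the expressions used to compute $\OPres{(\kappa_1;X),a \wedge a'}$ and $\OPres{(\kappa_2;X),a \wedge a'}$ via the definition in \S\ref{section:constructlift}. Then
\[
\lambda_1 \kappa_1 + \lambda_2 \kappa_2 = \sum_{i=1}^n (\lambda_1 \mu_i)\sigma_i + \sum_{j=1}^m (\lambda_2 \nu_j)\tau_j
\]
is an expression of the form \eqref{eqn:expresskappax} for $\lambda_1 \kappa_1 + \lambda_2 \kappa_2$ in terms of symplectic pairs in $\overline{\wedge^2 X_{\Q}}$. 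By the defining formula in \S\ref{section:constructlift}, the lift computed from this particular expression equals
\[
\sum_{i=1}^n (\lambda_1 \mu_i)\Pres{\sigma_i,a \wedge a'} + \sum_{j=1}^m (\lambda_2 \nu_j)\Pres{\tau_j,a \wedge a'} = \lambda_1 \OPres{(\kappa_1;X),a \wedge a'} + \lambda_2 \OPres{(\kappa_2;X),a \wedge a'}.
\]
Finally, Lemma~\ref{lemma:orangeindependent} tells us that the lift does not depend on the chosen expression \eqref{eqn:expresskappax}, so the left-hand side above also equals $\OPres{(\lambda_1 \kappa_1 + \lambda_2 \kappa_2;X),a \wedge a'}$. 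Combining these two equalities gives the lemma.

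I do not expect any real obstacle here: once Lemma~\ref{lemma:orangeindependent} is available, the argument is pure bookkeeping, and the only point that requires a moment's care is verifying that every ``separates'' hypothesis transfers to the linear combination (done in the first paragraph) and that the \emph{same} summand $X$ is used for all three lifts (which is exactly what the statement stipulates). If one preferred not to invoke Lemma~\ref{lemma:orangeindependent}, one could instead reproduce its proof directly — expressing $a \wedge a'$ as a $\Q$-combination of symplectic pairs in $\overline{\wedge^2 X_{\Q}^{\perp}}$ and applying the bilinearity relations of $\fK_g$ — but routing the argument through the independence lemma is shorter and cleaner.
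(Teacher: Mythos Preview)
Your proof is correct and follows essentially the same approach as the paper: take the expressions used to define $\OPres{(\kappa_1;X),a \wedge a'}$ and $\OPres{(\kappa_2;X),a \wedge a'}$, form their $\lambda_1,\lambda_2$-weighted combination, and observe (via Lemma~\ref{lemma:orangeindependent}) that this is a valid expression for $\OPres{(\lambda_1\kappa_1+\lambda_2\kappa_2;X),a \wedge a'}$. The paper's proof is a one-sentence version of exactly this argument.
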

\begin{proof}
By taking the corresponding linear combination of the expressions in $\fK_g[-,a \wedge a']$ that we used to
write $\OPres{(\kappa_1;X);a \wedge a'}$ and $\OPres{(\kappa_2;X),a \wedge a'}$, we obtain an expression
that can be used to write $\OPres{(\lambda_1 \kappa_1+\lambda_2 \kappa_2;X),a \wedge a'}$.  The lemma follows.
\end{proof}

The second is equivariance.  For a strong isotropic pair $a \wedge a'$ and $f \in \Sp_{2g}(\Z)$, note that
$f(a) \wedge f(a')$ is another strong isotropic pair.  The group $\Sp_{2g}(\Z)$ also acts on
$\fK_g$, and $f$ takes $\fK_g[-,a \wedge a']$ to $\fK_g[-,f(a) \wedge f(a')]$.  We have:

\begin{lemma}[Equivariance of the lifts]
\label{lemma:liftequivariance}
Let $a \wedge a'$ be an isotropic pair and let $\kappa \in (a \wedge a')^{\perp}$.
Let $X$ be a direct summand of $H_{\Z}$ that is Lagrangian-free and separates $\kappa$ from $a \wedge a'$.  Then
for all $f \in \Sp_{2g}(\Z)$ we have
$f(\OPres{(\kappa;X),a \wedge a'}) = \OPres{(f(\kappa);f(X)),f(a) \wedge f(a')}$.
\end{lemma}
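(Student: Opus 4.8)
The plan is to unwind the definition of the orange lift and simply push the chosen expression forward along $f$. Recall that $\OPres{(\kappa;X),a \wedge a'}$ is defined by first writing $\kappa = \sum_{i=1}^n \lambda_i \sigma_i$ as in \eqref{eqn:expresskappax}, with $\lambda_i \in \Q$ and each $\sigma_i$ a symplectic pair lying in $\overline{\wedge^2 X_{\Q}}$, and then setting $\OPres{(\kappa;X),a\wedge a'} = \sum_{i=1}^n \lambda_i \Pres{\sigma_i,a\wedge a'}$. Since $\Sp_{2g}(\Z)$ acts on $\fK_g$ via its action on $H_{\Z}$, we have $f(\Pres{\sigma_i,a\wedge a'}) = \Pres{f(\sigma_i),f(a)\wedge f(a')}$, whence
\[
f\!\left(\OPres{(\kappa;X),a\wedge a'}\right) = \sum_{i=1}^n \lambda_i \Pres{f(\sigma_i),f(a)\wedge f(a')}.
\]

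Next I would verify that $f(\kappa) = \sum_{i=1}^n \lambda_i f(\sigma_i)$ is a legitimate expression of the form \eqref{eqn:expresskappax} for $f(\kappa)$ relative to $f(X)$, and that $f(X)$ satisfies the hypotheses of Lemma \ref{lemma:orangeindependent}. These are all immediate consequences of the fact that $f$ is a symplectic automorphism of $H_{\Z}$: if $\sigma_i = c \wedge d$ with $\omega(c,d)=1$ then $f(\sigma_i)=f(c)\wedge f(d)$ with $\omega(f(c),f(d))=\omega(c,d)=1$, so each $f(\sigma_i)$ is again a symplectic pair, and it lies in $f\!\left(\overline{\wedge^2 X_{\Q}}\right) = \overline{\wedge^2 f(X)_{\Q}}$; the subspace $f(X)$ is a direct summand of $H_{\Z}$ (the image of one under an automorphism), it is a near symplectic summand of the same positive genus as $X$ since $f$ preserves $\omega$, it is Lagrangian-free because $f$ carries Lagrangians to Lagrangians, and it separates $f(\kappa)$ from $f(a)\wedge f(a')$ since $f(X)\subset f\!\left(\Span{a,a'}^{\perp}\right)=\Span{f(a),f(a')}^{\perp}$ and $f(\kappa)\in\overline{\wedge^2 f(X)_{\Q}}$.

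Since $f(X)$ is Lagrangian-free, Lemma \ref{lemma:orangeindependent} tells us that $\OPres{(f(\kappa);f(X)),f(a)\wedge f(a')}$ is independent of the chosen expression, so we may compute it using the pushed-forward expression $f(\kappa)=\sum_i \lambda_i f(\sigma_i)$, giving
\[
\OPres{(f(\kappa);f(X)),f(a)\wedge f(a')} = \sum_{i=1}^n \lambda_i \Pres{f(\sigma_i),f(a)\wedge f(a')},
\]
which agrees with the formula for $f(\OPres{(\kappa;X),a\wedge a'})$ above. This finishes the proof. There is no serious obstacle here — the argument is pure naturality of the construction — and the only points requiring care are that the $\Sp_{2g}(\Z)$-action on $\fK_g$ genuinely sends $\Pres{\kappa_1,\kappa_2}\mapsto\Pres{f(\kappa_1),f(\kappa_2)}$ (which is exactly how that action was defined, using the already-verified fact that it respects the relations) and that well-definedness of the orange symbol is what licenses evaluating $\OPres{(f(\kappa);f(X)),\ldots}$ on the convenient expression rather than an a priori arbitrary one.
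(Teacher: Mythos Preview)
Your proof is correct and follows exactly the same approach as the paper: the paper's proof is the one-sentence observation that $f$ carries the chosen expression for $\OPres{(\kappa;X),a\wedge a'}$ to a valid expression for $\OPres{(f(\kappa);f(X)),f(a)\wedge f(a')}$, and you have simply spelled out the verifications (symplectic pairs, near symplectic summand, Lagrangian-freeness, separation) that make this work.
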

\begin{proof}
The map $f$ takes the expression in $\fK_g[-,a \wedge a']$ we used to write $\OPres{(\kappa;X),a \wedge a'}$ to one that can
be used to write $\OPres{(f(\kappa);f(X)),f(a) \wedge f(a')}$.  The lemma follows.
\end{proof}

Our final lemma lets us change $X$:

\begin{lemma}[Changing the separator in the lifts]
\label{lemma:liftchange}
Let $a \wedge a'$ be an isotropic pair and let $\kappa \in (a \wedge a')^{\perp}$.
Let $X$ and $X'$ be direct summands of $H_{\Z}$ that are Lagrangian-free and separate $\kappa$ from $a \wedge a'$.  Assume
that $X \subset X'$.  Then $\OPres{(\kappa;X),a \wedge a'} = \OPres{(\kappa;X'),a \wedge a'}$.
\end{lemma}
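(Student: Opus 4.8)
The plan is to produce a single expression for $\kappa$ as a linear combination of symplectic pairs that simultaneously computes both $\OPres{(\kappa;X),a \wedge a'}$ and $\OPres{(\kappa;X'),a \wedge a'}$, and then to quote Lemma \ref{lemma:orangeindependent} to say that each of these orange symbols does not depend on the chosen expression.

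First I would apply Lemma \ref{lemma:generationomega} to $X$ — legitimate since $X$ is a near symplectic summand of $H_\Z$ of positive genus — to write
\[\kappa = \sum_{i=1}^n \lambda_i \sigma_i \quad \text{with $\lambda_i \in \Q$ and $\sigma_i$ a symplectic pair with $\sigma_i \in \overline{\wedge^2 X_{\Q}}$.}\]
By the very definition in \S\ref{section:constructlift}, this gives $\OPres{(\kappa;X),a \wedge a'} = \sum_{i=1}^n \lambda_i \Pres{\sigma_i,a \wedge a'}$.

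Next, since $X \subset X'$ we have $X_{\Q} \subset X'_{\Q}$, hence $\wedge^2 X_{\Q} \subset \wedge^2 X'_{\Q}$ and therefore $\overline{\wedge^2 X_{\Q}} \subset \overline{\wedge^2 X'_{\Q}}$. Thus each $\sigma_i$ is also a symplectic pair lying in $\overline{\wedge^2 X'_{\Q}}$, so the same equality $\kappa = \sum_{i=1}^n \lambda_i \sigma_i$ is an allowed expression of the form \eqref{eqn:expresskappax} relative to $X'$. Using the hypothesis that $X'$ separates $\kappa$ from $a \wedge a'$ (so that the orange symbol $\OPres{(\kappa;X'),a \wedge a'}$ is defined), computing with this expression gives $\OPres{(\kappa;X'),a \wedge a'} = \sum_{i=1}^n \lambda_i \Pres{\sigma_i,a \wedge a'}$. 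Since both $X$ and $X'$ are Lagrangian-free, Lemma \ref{lemma:orangeindependent} guarantees that the two orange symbols are well-defined independent of the expression used, so comparing the two displayed formulas yields $\OPres{(\kappa;X),a \wedge a'} = \OPres{(\kappa;X'),a \wedge a'}$.

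The only thing to be careful about is the bookkeeping in the middle step: checking that an $X$-expression for $\kappa$ really is an $X'$-expression (i.e. that enlarging the separator does not destroy the condition ``symplectic pair lying in $\overline{\wedge^2(\cdot)_{\Q}}$''), and that Lemma \ref{lemma:orangeindependent} genuinely applies to $X'$, which requires $X'$ Lagrangian-free (a hypothesis) and a near symplectic summand of positive genus (built into ``separates $\kappa$ from $a \wedge a'$''). There is no real obstacle beyond this; the content has already been done in Lemma \ref{lemma:orangeindependent}.
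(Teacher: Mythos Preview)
Your proof is correct and follows exactly the same approach as the paper: an expression for $\kappa$ via symplectic pairs in $\overline{\wedge^2 X_{\Q}}$ is automatically an expression in $\overline{\wedge^2 X'_{\Q}}$ since $X \subset X'$, so by Lemma \ref{lemma:orangeindependent} both lifts agree. The paper's proof is just a one-sentence compression of your argument.
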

\begin{proof}
Since $X \subset X'$, the expression in $\fK_g[-,a \wedge a']$ we used to write $\OPres{(\kappa;X),a \wedge a'}$ can
also be used to write $\OPres{(\kappa;X'),a \wedge a'}$.  The lemma follows.
\end{proof}

\subsection{Symplectic automorphism group}
\label{section:symplecticautomorphism}

We pause now to prove a lemma about the symplectic group.
Let $I$ be a rank-$k$ direct summand of $H_{\Z}$ on which $\omega$ vanishes identically.  Let $\Sp_{2g}(\Z,I)$ be the subgroup
of all $f \in \Sp_{2g}(\Z)$ such that $f$ fixes $I$ pointwise.  The group $\Sp_{2g}(\Z,I)$ acts on
$I^{\perp}$.  Let $\Sp_{2g}(\Z,I)|_{I^{\perp}}$ be the image of $\Sp_{2g}(\Z,I)$ in $\Aut(I^{\perp})$.  Lemma \ref{lemma:isotropicnear}
says that $I^{\perp}$ is a near symplectic summand of $H_{\Z}$ of genus $g-k$, so we can find a symplectic summand $X$ of
$H_{\Z}$ of genus $g-k$ such that $I^{\perp} = X \oplus I$.  We have:

\begin{lemma}
\label{lemma:symplecticstabilizer}
Let $X$ and $I$ be as above.  We then have a semidirect product decomposition
\[\Sp_{2g}(\Z,I)|_{I^{\perp}} = \Hom(X,I) \ltimes \Sp(X),\]
where for $\lambda \in \Hom(X,I)$ the associated element $f \in \Sp_{2g}(\Z,I)|_{I^{\perp}}$ satisfies
$f(x) = x + \lambda(x)$ for all $x \in X$.
\end{lemma}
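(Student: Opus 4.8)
The plan is to realise the group $\Sp_{2g}(\Z,I)|_{I^{\perp}}$ concretely using a Witt-type decomposition of $H_{\Z}$ adapted to $I$ and $X$. Fix a complementary direct summand $I^{\vee}$ of $I^{\perp}$ in $H_{\Z}$ so that $H_{\Z} = X \oplus I \oplus I^{\vee}$, with $\omega$ restricting to the (unimodular) symplectic form on $X$, vanishing on $I$ and on $I^{\vee}$, pairing $I$ with $I^{\vee}$ perfectly, and with $\omega(X, I) = \omega(X, I^{\vee}) = 0$; such an $I^{\vee}$ exists because $I$ is an isotropic direct summand, $I^{\perp}=I\oplus X$, and $\omega$ is unimodular. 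Every $f\in\Sp_{2g}(\Z,I)$ fixes $I$, hence preserves $I^{\perp}=f(I)^{\perp}$, so restriction to $I^{\perp}$ makes sense and $\Sp_{2g}(\Z,I)|_{I^{\perp}}$ is its image in $\Aut(I^{\perp})$. Inside $\Aut(I^{\perp})$ write $\hat\beta$ for the automorphism equal to $\beta$ on $X$ and to the identity on $I$ (for $\beta\in\Sp(X)$), and $f_{\lambda}$ for the automorphism equal to the identity on $I$ and to $x\mapsto x+\lambda(x)$ on $X$ (for $\lambda\in\Hom(X,I)$). A direct check shows $\beta\mapsto\hat\beta$ and $\lambda\mapsto f_{\lambda}$ embed $\Sp(X)$ and $\Hom(X,I)$ as subgroups, that $\hat\beta\,f_{\lambda}\,\hat\beta^{-1}=f_{\lambda\circ\beta^{-1}}$, and that $\{f_{\lambda}\hat\beta\}$ is the internal semidirect product $\Hom(X,I)\rtimes\Sp(X)$. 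So it suffices to prove that restriction has image exactly $\{f_{\lambda}\hat\beta : \lambda\in\Hom(X,I),\ \beta\in\Sp(X)\}$.

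For the inclusion ``$\subseteq$'', given $f\in\Sp_{2g}(\Z,I)$ write $f(x)=\alpha(x)+\beta(x)$ for $x\in X$ with $\alpha(x)\in I$ and $\beta(x)\in X$. Since $I$ is isotropic and $\omega(I,I^{\perp})=0$, the expansion of $\omega(f(x),f(x'))$ collapses to $\omega(\beta(x),\beta(x'))$, which equals $\omega(x,x')$; moreover $\beta$ is the automorphism of $I^{\perp}/I\cong X$ induced by $f|_{I^{\perp}}$, hence invertible, so $\beta\in\Sp(X)$. Then $f|_{I^{\perp}}=f_{\lambda}\circ\hat\beta$ with $\lambda=\alpha\circ\beta^{-1}\in\Hom(X,I)$.

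For the inclusion ``$\supseteq$'' I would realise the two factors separately by honest elements of $\Sp_{2g}(\Z,I)$. A given $\beta\in\Sp(X)$ extends, by the identity on $I\oplus I^{\vee}$, to an integral symplectic automorphism of $H_{\Z}$ fixing $I$ pointwise, since $\omega$ is the orthogonal sum of $\omega|_X$ and the hyperbolic pairing on $I\oplus I^{\vee}$; its restriction to $I^{\perp}$ is $\hat\beta$. For the shears $f_{\lambda}$: because $\omega|_X$ is unimodular, $w\mapsto\omega(\cdot,w)$ identifies $X$ with $X^{\ast}$, so $\Hom(X,I)=X^{\ast}\otimes I$ is generated as a $\Z$-module by the rank-one maps $\lambda_{w,a}\colon x\mapsto\omega(x,w)\,a$ (with $w\in X$, $a\in I$); as the realisable elements of $\Sp_{2g}(\Z,I)|_{I^{\perp}}$ form a subgroup, it is enough to realise each $f_{\lambda_{w,a}}$. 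For this I would use symplectic transvections $\tau_{v,c}\colon u\mapsto u+c\,\omega(u,v)\,v$, which are integral and symplectic: a short computation shows that $\tau_{a+w,\,1}\circ\tau_{w,\,-1}$ fixes $I$ pointwise (using $\omega(I,I)=\omega(I,X)=0$) and sends $x\in X$ to $x+\omega(x,w)\,a$, hence restricts on $I^{\perp}$ to $f_{\lambda_{w,a}}$.

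The main obstacle is this last point: given a prescribed shear on $I^{\perp}$, producing an \emph{integral} symplectic automorphism of the whole of $H_{\Z}$ that fixes $I$ pointwise and restricts to it. The transvection product settles this for the generating shears $\lambda_{w,a}$, and unimodularity of $\omega|_X$ is exactly what guarantees that these generate all of $\Hom(X,I)$; the remaining verifications — the block-form computation for ``$\subseteq$'' and the semidirect-product identities — are routine.
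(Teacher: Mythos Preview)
Your argument is correct and follows the same overall skeleton as the paper's proof: both pass to the quotient $I^{\perp}/I\cong X$ to get a split surjection onto $\Sp(X)$, and both identify the kernel with $\Hom(X,I)$ via $f\mapsto(x\mapsto f(x)-x)$ (your $\alpha\circ\beta^{-1}$ is exactly this). The only genuine difference is in the surjectivity step for the $\Hom(X,I)$ factor. The paper lifts an arbitrary $\lambda\in\Hom(X,I)$ in one stroke: choosing a Lagrangian complement $J$ to $I$ inside $X^{\perp}$, it uses unimodularity of $\omega|_X$ to find the unique $\delta\colon J\to X$ with $\omega(x,\delta(z))=-\omega(\lambda(x),z)$, and then $f$ defined by $x\mapsto x+\lambda(x)$, $a_i\mapsto a_i$, $b_i\mapsto b_i+\delta(b_i)$ is visibly symplectic. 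You instead reduce to rank-one shears $\lambda_{w,a}$ and realise each as the product of transvections $\tau_{a+w,1}\circ\tau_{w,-1}$. Both work; your approach has the virtue of using off-the-shelf symplectic elements, while the paper's gives a closed-form lift of any $\lambda$ at once and makes the adjoint relationship between the $X$- and $J$-perturbations explicit.
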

\begin{proof}
Set $\Gamma = \Sp_{2g}(\Z,I)|_{I^{\perp}}$.  
The action of $\Gamma$ on $I^{\perp}$ descends to an action
on $I^{\perp}/I$.  The symplectic form on $H_{\Z}$ induces a symplectic form on
$I^{\perp}/I$.  We thus get a homomorphism
\[\rho\colon \Gamma \longrightarrow \Sp(I^{\perp}/I).\]
The map $I^{\perp} \rightarrow I^{\perp}/I$ restricts to an isomorphism $X \cong I^{\perp}/I$.  Identifying
$\Sp(X)$ with the subgroup of $\Gamma$ consisting of automorphisms
that act trivially on $X^{\perp}$, the homomorphism $\rho$ splits via the map
\[\begin{tikzcd}
\Sp(I^{\perp}/I) \cong \Sp(X) \arrow[hook]{r} & \Gamma.
\end{tikzcd}\]
We therefore get a semidirect product decomposition
\[\Gamma = \ker(\rho) \ltimes \Sp(X).\]
To identify $\ker(\rho)$ with $\Hom(X,I)$, consider $f \in \ker(\rho)$.  By definition, for $x \in X$ we have
$f(x)-x \in I$.  We can therefore define a homomorphism
$\lambda_f\colon X \rightarrow I$ via the formula
$\lambda_f(x) = f(x) - x$.  
If $\lambda_f = 0$, then $f$ fixes both $X$ and $I$, so it fixes $I^{\perp} = X \oplus I$ and is the identity.

The map $f \mapsto \lambda_f$ is thus an injective homomorphism from $\ker(\rho)$ to $\Hom(X,I)$.  We
remark that the fact that is a homomorphism uses the fact that $f(y) = y$ for all $y \in I$.  To see
that it is a surjection and thus an isomorphism, consider $\lambda \in \Hom(X,I)$.  
Let $\{a_1,b_1,\ldots,a_k,b_k\}$ be a symplectic basis for $X^{\perp}$ such that
$I = \Span{a_1,a_2,\ldots,a_k}$.
Set $J = \Span{b_1,b_2,\ldots,b_k}$.
Since $\omega$ restricts to a symplectic form
on $X$, it identifies $X$ with $\Hom(X,\Z)$.  There is thus a unique homomorphism
$\delta\colon J \rightarrow X$ such that
\[\omega(x,\delta(z)) = - \omega(\lambda(x),z) \quad \text{for all $x \in X$ and $z \in J$}.\]
Since $X$ and $J$ are orthogonal to each other, for $x \in X$ and $z \in J$ we have
\[\omega(x+\lambda(x),z+\delta(z)) = \omega(\lambda(x),z) + \omega(x,\delta(z)) = 0.\]
In other words, the map $f\colon H_{\Z} \rightarrow H_{\Z}$ defined by
$f(x) = x + \lambda(x)$ for $x \in X$ and
\[f(a_i) = a_i \quad \text{and} \quad f(b_i) = b_i + \delta(b_i)\]
for $1 \leq i \leq k$
is an element of $\Sp_{2g}(\Z,I)$ whose restriction to $I^{\perp}$ satisfies $\lambda = \lambda_f$.  The lemma follows.
\end{proof}

\subsection{Fixed lift}

Let $a \wedge a'$ be a strong isotropic pair, so $I = \Span{a,a'}$ is a direct summand of $H_{\Z}$.  There is a special element
$(a \wedge a') \otimes (a \wedge a')$ in $(a \wedge a')^{\perp} \otimes (a \wedge a')$
that is fixed by $\Sp_{2g}(\Z,I)$.  We close this section by showing
that we can lift this to an element of $\fK_g[-,a \wedge a']$ that is fixed
by $\Sp_{2g}(\Z,I)$.
To state our result, let $\{a_1,b_1,\ldots,a_g,b_g\}$ be a symplectic basis for $H_{\Z}$ with\footnote{Indexing it like this rather
than $a_1 = a$ and $a_2 = a'$ will simplify our notation later.} $a_{g-1} = a$ and $a_g = a'$.  For
$1 \leq i \leq g-2$, let $W_i = \Span{a_i,b_i,a_{g-1},a_g}$.  We then have:

\begin{lemma}
\label{lemma:liftspecial}
Let $a_{g-1}$ and $a_g$ and $W_i$ be as above.  The following then hold:
\begin{itemize}
\item For $1 \leq i,j \leq g-2$ we have 
\[\OPres{(a_{g-1} \wedge a_g;W_i),a_{g-1} \wedge a_g} = \OPres{(a_{g-1} \wedge a_g;W_j),a_{g-1} \wedge a_g}.\]
\item For $1 \leq i \leq g-2$, the group $\Sp_{2g}(\Z,I)$ fixes $\OPres{(a_{g-1} \wedge a_g;W_i),a_{g-1} \wedge a_g}$.
\end{itemize}
\end{lemma}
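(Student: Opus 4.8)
The plan is to prove the slightly stronger statement that $\OPres{(a_{g-1} \wedge a_g; X), a_{g-1} \wedge a_g}$ takes the \emph{same} value $L$ for every Lagrangian-free near symplectic summand $X$ of positive genus with $\Span{a_{g-1},a_g} \subseteq X \subseteq \Span{a_{g-1},a_g}^{\perp}$; call such an $X$ \emph{admissible}. Both bullets then follow at once. The first bullet is the special case $X = W_i$, once one checks that $W_i$ is admissible: $W_i = \Span{a_i,b_i} \oplus \Span{a_{g-1},a_g}$ is a near symplectic summand of genus $1$ and kernel rank $2$, it lies in $\Span{a_{g-1},a_g}^{\perp}$ because $i \leq g-2$, it contains $a_{g-1} \wedge a_g$ in $\overline{\wedge^2(W_i)_{\Q}}$, and (since $\rk W_i = 4$ and $W_i$ contains the symplectic pair $\Span{a_i,b_i}$, so is not isotropic) it contains no Lagrangian. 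The second bullet follows from equivariance of the lifts (Lemma \ref{lemma:liftequivariance}): every $f \in \Sp_{2g}(\Z,I)$ fixes $a_{g-1}$ and $a_g$, hence fixes $a_{g-1} \wedge a_g$ and preserves $\Span{a_{g-1},a_g}^{\perp}$, so it carries an admissible $X$ to another admissible $f(X)$, whence $f(L) = \OPres{(a_{g-1} \wedge a_g; f(X)), a_{g-1} \wedge a_g} = L$.

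The main tool for proving that $L$ is well defined is Lemma \ref{lemma:liftchange}, which gives $\OPres{(a_{g-1} \wedge a_g; X), a_{g-1} \wedge a_g} = \OPres{(a_{g-1} \wedge a_g; X'), a_{g-1} \wedge a_g}$ whenever $X \subseteq X'$ are both admissible; so it suffices to join any two admissible separators by a zig-zag of nested admissible separators. Write $L_i = \OPres{(a_{g-1} \wedge a_g; W_i), a_{g-1} \wedge a_g}$. When $g \geq 5$ this is routine: for distinct $i,j$ the sum $W_i + W_j = \Span{a_i,b_i} \oplus \Span{a_j,b_j} \oplus \Span{a_{g-1},a_g}$ is a near symplectic summand of genus $2$ and kernel rank $2$, whose maximal isotropic subgroups have rank $2+2 = 4 < g$, hence is admissible, and $W_i \subseteq W_i + W_j \supseteq W_j$ gives $L_i = L_j$; the same device (adjoining one further symplectic pair from the genus-$(g-2)$ symplectic part of $\Span{a_{g-1},a_g}^{\perp}$, which has room since $g-2 \geq 3$) connects $W_i$ to an arbitrary admissible $X$.

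The case $g = 4$ is the real obstacle, because then every admissible separator has rank exactly $4$: any subgroup of $\Span{a_{g-1},a_g}^{\perp} = \Span{a_1,b_1,a_2,b_2,a_{g-1},a_g}$ of rank $\geq 5$ contains a Lagrangian (for instance $\Span{a_1,a_2,a_{g-1},a_g}$), so no two admissible separators are nested and Lemma \ref{lemma:liftchange} does not apply directly. Here I would argue with the relations of $\fK_g$ in the spirit of the proof of Lemma \ref{lemma:orangeindependent}. For $g = 4$ one has $W_i^{\perp} = W_j$ (with $\{i,j\} = \{1,2\}$), which is admissible of positive genus, so $a_{g-1} \wedge a_g$ can be written as a $\Q$-combination of symplectic pairs in $\overline{\wedge^2 W_j}$; pushing this expression through the linearity relations rewrites $L_i$ as the corresponding combination of brackets $\Pres{a_{g-1} \wedge a_g, \sigma}$ with $\sigma$ ranging over those symplectic pairs, whereas $L_j$ is the combination of the brackets $\Pres{\sigma, a_{g-1} \wedge a_g}$ with the same $\sigma$'s and coefficients. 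Thus $L_i$ and $L_j$ are interchanged by the canonical involution of $\fK_g$, and the claim $L_i = L_j$ reduces to showing $L_i$ is fixed by that involution; this last, calculation-heavy step is the hard part, and I expect it to require exploiting that $a_{g-1} \wedge a_g$ is isotropic and lies in both $\overline{\wedge^2 W_i}$ and $\overline{\wedge^2 W_i^{\perp}}$, unwinding several applications of the $\fK_g$-relations.
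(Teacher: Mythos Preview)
Your strategy---show that the lift is independent of \emph{every} admissible separator $X$ and deduce both bullets---is clean in principle, but there are two genuine gaps.

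First, even for $g \geq 5$ you only prove the first bullet, not the stronger statement. The zig-zag $W_i \subset W_i + W_j \supset W_j$ is fine, but your one-line claim that ``the same device \ldots\ connects $W_i$ to an arbitrary admissible $X$'' is not justified. For a general $f \in \Sp_{2g}(\Z,I)$, the summand $f(W_i) + W_j$ need not be a near symplectic summand at all (e.g.\ if $\Span{f(a_i),f(b_i)}$ is not orthogonal to $\Span{a_j,b_j}$), so you cannot simply adjoin a symplectic pair. Without the stronger statement, your reduction of the second bullet to equivariance collapses.

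Second, for $g=4$ you yourself flag the argument as incomplete. Your observation that $L_1$ and $L_2$ are exchanged by the canonical involution is correct, but reducing to ``$L_1$ is fixed by the involution'' is not progress: that is essentially the statement you are trying to prove, and no mechanism for establishing it is given.

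The paper's approach sidesteps both issues by a different idea: rather than trying to nest separators containing all of $I = \Span{a_{g-1},a_g}$, it uses linearity (Lemma~\ref{lemma:liftadd}) to write
\[
\OPres{a_{g-1} \wedge a_g; W_i} = \OPres{(a_i+a_{g-1}) \wedge a_g; W_i} - \OPres{a_i \wedge a_g; W_i},
\]
and then applies Lemma~\ref{lemma:liftchange} to each piece separately, passing through separators like $\Span{a_i+a_{g-1},b_i,a_g}$ and $\Span{a_i+a_{g-1},b_i,a_j,b_j,a_g}$ that contain only $a_g$, not $a_{g-1}$, and are therefore Lagrangian-free for all $g \geq 4$. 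The same decomposition trick handles the second bullet: for generators of $\Sp_{2(g-2)}(\Z)$ stabilizing some $a_j$, one rewrites $L_j$ using the splitting above and finds separators preserved by the generator; for the $\Hom(\Z^{2(g-2)},I)$ factor one simply picks an index $i$ untouched by the given generator (possible since $g-2 \geq 2$). The key point you are missing is that Lemma~\ref{lemma:liftadd} lets you trade the awkward element $a_{g-1} \wedge a_g$ for elements whose natural separators are strictly smaller.
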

\begin{proof}
For $\kappa \in (a \wedge a')^{\perp}$ and a summand $X$ of $H_{\Z}$ that is Lagrangian-free and separates
$\kappa$ from $a_{g-1} \wedge a_g$, we will drop the $a_{g-1} \wedge a_g$ from our notation
and write $\OPres{\kappa;X}$ instead of $\OPres{(\kappa;X),a_{g-1} \wedge a_g}$.  
We encourage the reader to verify that all
the summands appearing in our calculations are Lagrangian-free and separate the appropriate elements
of $(a \wedge a')^{\perp}$ from $a_{g-1} \wedge a_g$.  In particular, they all have
positive genus.  We have:

\begin{unnumberedclaim}
For $1 \leq i,j \leq g-2$, we have $\OPres{a_{g-1} \wedge a_g;W_i} = \OPres{a_{g-1} \wedge a_g;W_j}$.
\end{unnumberedclaim}

Using Lemma \ref{lemma:liftadd} (linearity of the lifts), we have
\begin{align*}
\OPres{a_{g-1} \wedge a_g;W_i} &= \OPres{(a_i+a_{g-1}) \wedge a_g;W_i} - \OPres{a_i \wedge a_g;W_i},\\
\OPres{a_{g-1} \wedge a_g;W_j} &= \OPres{(a_j+a_{g-1}) \wedge a_g;W_j} - \OPres{a_j \wedge a_g;W_j}.
\end{align*}
To prove that these are equal, we must show that
\begin{equation}
\label{eqn:liftspecial1}
\OPres{(a_i+a_{g-1}) \wedge a_g;W_i} + \OPres{a_j \wedge a_g;W_j} = \OPres{(a_j+a_{g-1}) \wedge a_g;W_j} + \OPres{a_i \wedge a_g;W_i}.
\end{equation}
Using Lemma \ref{lemma:liftchange} (changing the separator in the lifts), we have
\begin{align*}
\OPres{(a_i+a_{g-1}) \wedge a_g;W_i} &= \OPres{(a_i+a_{g-1}) \wedge a_g;\Span{a_i+a_{g-1},b_i,a_g}} \\
                                     &= \OPres{(a_i+a_{g-1}) \wedge a_g;\Span{a_i+a_{g-1},b_i,a_j,b_j,a_g}},\\
\OPres{a_j \wedge a_g;W_j}           &= \OPres{a_j \wedge a_g;\Span{a_j,b_j,a_g}} \\
                                     &= \OPres{a_j \wedge a_g;\Span{a_i+a_{g-1},b_i,a_j,b_j,a_g}}.
\end{align*}
Adding these and using Lemmas \ref{lemma:liftadd} (linearity of the lifts) and \ref{lemma:liftchange} (changing the separator in the lifts),
we see that\footnote{The term
$\Span{a_i+a_j+a_{g-1},b_i,b_j,a_g}$ appearing here is a near symplectic summand even though the given basis does not
reflect this.}
\begin{align*}
\OPres{(a_i+a_{g-1}) \wedge a_g;W_i} + \OPres{a_j \wedge a_g;W_j} &= \OPres{(a_i+a_j+a_{g-1}) \wedge a_g;\Span{a_i+a_{g-1},b_i,a_j,b_j,a_g}} \\
                                                                  &= \OPres{(a_i+a_j+a_{g-1}) \wedge a_g;\Span{a_i+a_j+a_{g-1},b_i,b_j,a_g}}.
\end{align*}
Similarly, we have
\[\OPres{(a_j+a_{g-1}) \wedge a_g;W_j} + \OPres{a_i \wedge a_g;W_i} = \OPres{(a_i+a_j+a_{g-1}) \wedge a_g;\Span{a_i+a_j+a_{g-1},b_i,b_j,a_g}}.\]
The identity \eqref{eqn:liftspecial1} follows.

\begin{unnumberedclaim}
For $1 \leq i \leq g-2$, the group $\Sp_{2g}(\Z,I)$ fixes $\OPres{a_{g-1} \wedge a_g;W_i}$.
\end{unnumberedclaim}

As we noted in \S \ref{section:symplecticautomorphism}, the action of $\Sp_{2g}(\Z,I)$ on our lifts
factors through $\Gamma = \Sp_{2g}(\Z,I)|_{I^{\perp}}$.
Lemma \ref{lemma:symplecticstabilizer} says that
\[\Gamma = \Hom(\Z^{2(g-2)},I) \ltimes \Sp_{2(g-2)}(\Z).\]
We must prove that the subgroups $\Hom(\Z^{2(g-2)},I)$ and $\Sp_{2(g-2)}(\Z)$ both fix
$\OPres{a_{g-1} \wedge a_g;W_i}$.

We start with $\Sp_{2(g-2)}(\Z)$.  It is classical that $\Sp_{2(g-2)}(\Z)$ is generated by
the stabilizer of $a_1$ and the stabilizer of $a_2$.  For instance,\footnote{This could also be deduced from the generating set of Hua--Reiner \cite{HuaReiner} discussed
in \S \ref{section:spgen}, but be warned that their generating set does not consist of elements that fix either $a_1$ or $a_2$.}
the mapping class group $\Mod_{g-2}$ surjects onto $\Sp_{2(g-2)}(\Z)$, and choosing a basis for
$\HH_1(\Sigma_{g-2})$ appropriately the usual Dehn twist generators for $\Mod_{g-2}$ from \cite[Theorem 4.13]{FarbMargalitPrimer}
each fix either $a_1$ or $a_2$.  It is thus enough to prove that both of these stabilizers fix
$\OPres{a_{g-1} \wedge a_g;W_i}$.  

The proofs for both are similar, so we will give the details for the stabilizer of $a_2$ and leave the other
case to the reader.  Consider $f \in \Sp_{2(g-2)}(\Z)$ with $f(a_2) = a_2$.  By the previous claim, it is enough to prove that
$f$ fixes $\OPres{a_{g-1} \wedge a_g;W_2}$.  Using Lemma \ref{lemma:liftadd} (linearity of the lifts), we have
\[\OPres{a_{g-1} \wedge a_g;W_2} = \OPres{(a_2 + a_{g-1}) \wedge a_g;W_2} - \OPres{a_2 \wedge a_g;W_2}.\]
We will prove that $f$ fixes $\OPres{(a_2 + a_{g-1}) \wedge a_g;W_2}$ and $\OPres{a_2 \wedge a_g;W_2}$.

For the first, Lemma \ref{lemma:liftchange} (changing the separator in the lifts) says that
\begin{align*}
\OPres{(a_2 + a_{g-1}) \wedge a_g;W_2} &= \OPres{(a_2 + a_{g-1}) \wedge a_g;\Span{a_2+a_{g-1},b_2,a_g}} \\
                                       &= \OPres{(a_2 + a_{g-1}) \wedge a_g;\Span{a_1,b_1,a_2+a_{g-1},b_2,a_g}}.
\end{align*}
Lemma \ref{lemma:liftequivariance} (equivariance of the lifts) says that $f$ takes this to
\begin{align*}
&\OPres{(f(a_2) + f(a_{g-1})) \wedge f(a_g);f(\Span{a_1,b_1,a_2+a_{g-1},b_2,a_g})} \\
&\quad\quad\quad\quad\quad\quad\quad\quad= \OPres{(a_2 + a_{g-1}) \wedge a_g;\Span{a_1,b_1,a_2+a_{g-1},b_2,a_g}},
\end{align*}
as desired.

For the second, Lemma \ref{lemma:liftchange} (changing the separator in the lifts) says that
\begin{align*}
\OPres{a_2 \wedge a_g;W_2} &= \OPres{a_2 \wedge a_g;\Span{a_2,b_2,a_g}} \\
                           &= \OPres{a_2 \wedge a_g;\Span{a_1,b_1,a_2,b_2,a_g}}.
\end{align*}
Lemma \ref{lemma:liftequivariance} (equivariance of the lifts) says that $f$ takes this to
\[\OPres{f(a_2) \wedge f(a_g);f(\Span{a_1,b_1,a_2,b_2,a_g})} 
                                                     = \OPres{a_2 \wedge a_g;\Span{a_1,b_1,a_2,b_2,a_g}},\]
as desired.

It remains to prove that the subgroup $\Hom(\Z^{2(g-2)},I)$ of $\Gamma$ fixes our lift.  Observe 
that $\Hom(\Z^{2(g-2)},I)$ is generated by elements that fix all but one element of the basis
$\{a_1,b_1,\ldots,a_{g-2},b_{g-2}\}$.  It is enough to prove that such elements fix our lift.
Consider $f \in \Hom(\Z^{2(g-2)},I)$ that fixes all elements of $\{a_1,b_1,\ldots,a_{g-2},b_{g-2}\}$
except for $x \in \{a_j,b_j\}$.  Letting $1 \leq i \leq g-2$ be such that $i \neq j$, 
it is enough to prove that $f$ fixes $\OPres{a_{g-1} \wedge a_g;W_i}$.  But this is immediate from the
fact that $f$ fixes $a_{g-1}$ and $a_g$ and $W_i = \Span{a_i,b_i,a_{g-1},a_g}$.
\end{proof}

\section{Isotropic pairs III: isomorphism theorem}
\label{section:isotropicpairs3}

We now prove the following theorem using the proof outline from \S \ref{section:prooftechnique}.

\begin{proposition}
\label{proposition:isotropicpairiso}
Let $a \wedge a'$ be an isotropic pair and let $\Phi\colon \fK_g \rightarrow ((\wedge^2 H)/\Q)^{\otimes 2}$ be the
linearization map.  Then $\Phi$ takes $\fK_g[-,a \wedge a']$ isomorphically to $(a \wedge a')^{\perp} \otimes (a \wedge a')$.
\end{proposition}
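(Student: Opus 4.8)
The plan is to follow the three-step proof technique from \S\ref{section:prooftechnique}, applied to the group $G = \Sp_{2g}(\Z,I)$ where $I = \Span{a,a'}$, the well-understood representation $\cV = (a \wedge a')^{\perp} \otimes (a \wedge a')$, and the subspace $\fV = \fK_g[-,a \wedge a']$ together with the linearization map $\Phi$. By Lemma \ref{lemma:strongisotropic} we may assume $a \wedge a'$ is a strong isotropic pair, since scaling $a'$ only rescales everything by a nonzero constant and Lemma \ref{lemma:isotropiccompatibleimage} already gives surjectivity. So fix a symplectic basis $\{a_1,b_1,\ldots,a_g,b_g\}$ with $a_{g-1} = a$ and $a_g = a'$, as in Lemma \ref{lemma:liftspecial}.

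For \textbf{Step 1}, I would take $S$ to be a set of lifts $\OPres{(\kappa;X),a \wedge a'}$ chosen so that $\Phi$ carries $S$ to a basis of $(a \wedge a')^{\perp} \otimes (a \wedge a')$. Concretely, $(a \wedge a')^{\perp} = \overline{\wedge^2 I_{\Q}^{\perp}}$ by Lemma \ref{lemma:isotropicorthogonal}, with $I_{\Q}^{\perp} = \Span{a_1,b_1,\ldots,a_{g-2},b_{g-2},a_{g-1},a_g}$; a basis of $\overline{\wedge^2 I_{\Q}^{\perp}}$ is given by the wedges of pairs of basis vectors modulo the single relation $\omega = 0$. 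For each such basis element $\kappa$, I pick a Lagrangian-free near-symplectic summand $X = X(\kappa)$ of positive genus separating $\kappa$ from $a \wedge a'$ (there is plenty of room since $g \geq 4$), and set the corresponding element of $S$ to be $\OPres{(\kappa;X(\kappa)),a \wedge a'}$; by Lemma \ref{lemma:orangeindependent} this is well-defined. Since $\Phi(\OPres{(\kappa;X),a \wedge a'}) = \kappa \otimes (a \wedge a')$, the map $\Phi$ sends $S$ bijectively onto a basis, so $\Phi|_{\Span{S}}$ is an isomorphism.

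For \textbf{Step 2}, I must show the $\Sp_{2g}(\Z,I)$-orbit of $S$ spans $\fK_g[-,a\wedge a']$. The space $\fK_g[-,a \wedge a']$ is spanned by elements $\Pres{\sigma,a \wedge a'}$ with $\sigma$ a symplectic pair in $(a\wedge a')^\perp$; such a $\sigma$ lies in some $\overline{\wedge^2 X_\Q}$ for a Lagrangian-free near-symplectic summand $X$ separating $\sigma$ from $a\wedge a'$, and then $\Pres{\sigma,a \wedge a'} = \OPres{(\sigma;X),a\wedge a'}$. So it suffices to show every such lift $\OPres{(\kappa;X),a\wedge a'}$ lies in $\Span{\Sp_{2g}(\Z,I) \cdot S}$. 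Using linearity of the lifts (Lemma \ref{lemma:liftadd}) and the changing-separator lemma (Lemma \ref{lemma:liftchange}), I can reduce $\OPres{(\kappa;X),a\wedge a'}$ to a combination of lifts $\OPres{(\sigma_i;X_i),a\wedge a'}$ where each $\sigma_i$ is a symplectic pair; then by equivariance (Lemma \ref{lemma:liftequivariance}) some element of $\Sp_{2g}(\Z,I)$ carries each such lift into $S$ — this uses that $\Sp_{2g}(\Z,I)$ acts transitively enough on the relevant configurations of symplectic pairs inside $I_\Q^\perp$, which is where Lemma \ref{lemma:liftspecial} and the structure result Lemma \ref{lemma:symplecticstabilizer} come in.

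For \textbf{Step 3}, I take a generating set $\Lambda$ for $\Sp_{2g}(\Z,I)$ — via Lemma \ref{lemma:symplecticstabilizer}, $\Sp_{2g}(\Z,I)$ acting on $I^\perp$ is $\Hom(X,I) \ltimes \Sp(X)$, together with the kernel of the action on $I^\perp$ (which acts trivially on our lifts, as these live in $\fK_g[-,a\wedge a']$ and depend only on $I^\perp$-data) — and check that each generator $f \in \Lambda$ sends each $s \in S$ back into $\Span{S}$. Writing $s = \OPres{(\kappa;X),a\wedge a'}$, equivariance gives $f(s) = \OPres{(f\kappa; fX), a\wedge a'}$; now $f\kappa \in (a\wedge a')^\perp$ so it expands in the chosen basis, and using Lemmas \ref{lemma:liftadd}, \ref{lemma:liftchange} I rewrite $f(s)$ as a linear combination of the chosen basis lifts in $S$ plus lifts that are manifestly in $\Span{S}$ because they are supported on a common separator. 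The \textbf{main obstacle} I anticipate is precisely this bookkeeping in Step 3: verifying that after applying a generator the resulting lift, which is initially written with separator $fX$, can be rewritten in terms of the fixed family of separators used to define $S$ — this is exactly the kind of elaborate but elementary calculation the introduction warns about, requiring careful tracking of which intermediate summands remain Lagrangian-free, of positive genus, and separate the relevant classes. The payoff is that Steps 2 and 3 together give $\Span{S} = \fK_g[-,a\wedge a']$, and then Step 1 gives that $\Phi$ restricted to it is an isomorphism onto $(a \wedge a')^\perp \otimes (a \wedge a')$.
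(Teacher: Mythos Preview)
Your proposal is correct and follows essentially the same three-step approach as the paper, including the reduction to strong isotropic pairs and the use of Lemma~\ref{lemma:symplecticstabilizer} to organize Step~3. Two small corrections: the projection $\wedge^2 I_\Q^\perp \to (\wedge^2 H)/\Q$ is injective (since $\omega \notin \wedge^2 I_\Q^\perp$), so there is no relation to quotient by in choosing your basis $T$; and your Step~2 is harder than necessary---the paper just observes that $S$ contains $\Pres{a_1 \wedge b_1, a \wedge a'}$ and that $\Sp_{2g}(\Z,I)$ acts transitively on symplectic pairs in $(a \wedge a')^\perp$, which immediately gives that the orbit hits every generator of $\fK_g[-,a\wedge a']$.
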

\begin{proof}
By Lemma \ref{lemma:strongisotropic}, there exists a strong isotropic pair $a_0 \wedge a'_0$ and $n \in \Z$ such that
$a \wedge a' = n a_0 \wedge a'_0$.  Moreover, $\Span{a_0,a'_0}_{\Q} = \Span{a,a'}_{\Q}$, so
by Lemma \ref{lemma:isotropicorthogonal} we have $(a \wedge a')^{\perp} = (a_0 \wedge a'_0)^{\perp}$.  Using the linearity relations in $\fK_g$, 
multiplication by $n$ gives an isomorphism
$\fK_g[-,a_0 \wedge a_0'] \cong \fK_g[-,a \wedge a']$ taking a generator $\Pres{\sigma,a_0 \wedge a_0'}$ with
$\sigma$ a symplectic pair in $(a_0 \wedge a'_0)^{\perp}$ to a
generator $\Pres{\sigma,a \wedge a'}$.  It is thus enough to prove the proposition for
$a_0 \wedge a'_0$.  Replacing $a \wedge a'$ with $a_0 \wedge a'_0$, we can therefore assume that
$a \wedge a'$ is a strong isotropic pair.

To simplify our notation, we will drop $a \wedge a'$ from our notation in two places:
\begin{itemize}
\item For $\kappa \in (a \wedge a')^{\perp}$
and $X$ a Lagrangian-free direct summand of $H_{\Z}$ that separates $\kappa$ from $a \wedge a'$, we will drop the $a \wedge a'$ from our notation
and write $\OPres{\kappa;X}$ instead of $\OPres{(\kappa;X),a \wedge a'}$.
\item We will also drop the $a \wedge a'$ from our notation for the codomain of the restriction of $\Phi$ to $\fK_g[-,a \wedge a']$.  Thus for 
$\OPres{\kappa;X}$ as in the previous bullet point, we will write $\Phi(\OPres{\kappa;X}) = \kappa$ rather than $\Phi(\OPres{\kappa;X}) = \kappa \otimes (a \wedge a')$.
\end{itemize}
The proof has three steps.

\begin{step}{1}
\label{step:isotropicpairsiso1}
We construct a set $S \subset \fK_g[-,a \wedge a']$ such that the restriction of $\Phi$ to $\Span{S}$ is an isomorphism
to $(a \wedge a')^{\perp} \otimes (a \wedge a')$.
\end{step}

Let $\cB = \{a_1,b_1,\ldots,a_g,b_g\}$ be a symplectic basis for $H_{\Z}$ with
$a_{g-1} = a$ and $a_g = a'$.  Letting $I = \Span{a,a'}$, we have
\[I_{\Q}^{\perp} = \Span{a_1,b_1,\ldots,a_{g-2},b_{g-2},a_{g-1},a_g}.\]
Set 
\[\cB' = \{a_1,b_1,\ldots,a_{g-2},b_{g-2}\},\] 
with the total order $\prec$ as indicated in this list.  Lemma \ref{lemma:isotropicorthogonal} says that
\[(a \wedge a')^{\perp} = \overline{\wedge^2 I_{\Q}^{\perp}}.\]
This vector space has the basis\footnote{This clearly forms
a basis for $\wedge^2 I_{\Q}^{\perp}$, and since the restriction of the map $\wedge^2 H \rightarrow (\wedge^2 H)/\Q$ 
to $\wedge^2 I_{\Q}^{\perp}$ is an injection the image of $T$ in $(\wedge^2 H)/\Q$ also forms a basis
for $\overline{\wedge^2 I_{\Q}^{\perp}}$.}
\[T = \Set{$x \wedge y$}{$x,y \in \cB'$, $x \prec y$} \cup \Set{$x \wedge a_{g-1}$, $x \wedge a_g$}{$x \in \cB'$} \cup \{a_{g-1} \wedge a_g\}.\]
Define
\[X = \Span{\cB'} \quad \text{and} \quad Y = \Span{\cB',a_{g-1}} \quad \text{and} \quad Z = \Span{\cB',a_g}\]
and
\[W_i = \Span{a_i,b_i,a_{g-1},a_g} \quad \text{for $1 \leq i \leq g-2$}.\]
These are all Lagrangian-free near symplectic summands of $H_{\Z}$ of positive genus. 
Finally, define
\begin{align*}
S = &\Set{$\OPres{x \wedge y;X}$}{$x,y \in \cB'$, $x \prec y$} \\
    &\quad\quad\cup \Set{$\OPres{x \wedge a_{g-1};Y}$, $\OPres{x \wedge a_g;Z}$}{$x \in \cB'$} \cup \{\OPres{a_{g-1} \wedge a_g;W_1}\}.
\end{align*}
By construction, $\Phi$ takes $S$ bijectively to $T$.  Since $T$ is a basis for $(a \wedge a')^{\perp}$, it follows that
the restriction of $\Phi$ to $\Span{S}$ is an isomorphism.

\begin{step}{2}
\label{step:isotropicpairsiso2}
We prove that the $\Sp_{2g}(\Z)$-orbit of $S$ spans $\fK_g[-,a_{g-1} \wedge a_g]$.
\end{step}

By definition, $\fK_g[-,a_{g-1} \wedge a_g]$ is spanned by elements of the form
$\Pres{\sigma,a_{g-1} \wedge a_g}$, where $\sigma$ is a symplectic pair
with $\sigma \in (a \wedge a')^{\perp}$.  The image of $\Phi$ contains some elements of this form; for instance,
it contains
\[\OPres{a_1 \wedge b_1;X} = \Pres{a_1 \wedge b_1,a_{g-1} \wedge a_g}.\]
Since $\Sp_{2g}(\Z,I)$ acts transitively on symplectic pairs lying in $(a \wedge a')^{\perp} = \overline{\wedge^2 I_{\Q}^{\perp}}$,
it follows that the $\Sp_{2g}(\Z)$-orbit of $S$ spans $\fK_g[-,a_{g-1} \wedge a_g]$..

\begin{step}{3}
\label{step:isotropicpairsiso3}
We prove that $\Sp_{2g}(\Z)$ takes $\Span{S}$ to itself. 
By Step \ref{step:isotropicpairsiso2} this will imply that $\Span{S} = \fK_g[-,a_{g-1} \wedge a_g]$,
and thus by Step \ref{step:isotropicpairsiso1} that $\Phi$ is an isomorphism.
\end{step}

The action of $\Sp_{2g}(\Z,I)$ on $\fK_g[-,a_{g-1} \wedge a_g]$ factors through
$\Gamma = \Sp_{2g}(\Z,I)|_{I^{\perp}}$, and by
Lemma \ref{lemma:symplecticstabilizer} we have
\[\Gamma = \Hom(X,I) \ltimes \Sp_{2(g-2)}(\Z).\]
We must prove that $\Hom(X,I)$ and $\Sp_{2(g-2)}(\Z)$ both take $\Span{S}$ to itself.  We
divide this into two claims:

\begin{claim}{3.1}
The action of $\Sp_{2(g-2)}(\Z)$ on $\fK_g[-,a_{g-1} \wedge a_g]$ takes $\Span{S}$ to itself.
\end{claim}

\noindent
Consider $f \in \Sp_{2(g-2)}(\Z)$ and $s \in S$.  We must prove that $f(s)$ is a linear
combination of elements of $S$.  This is trivial for $s = \OPres{a_{g-1} \wedge a_g;W_1}$ since
Lemma \ref{lemma:liftspecial} implies that $f$ fixes $s$.
The other $s$ fall into three cases.

The first is $s = \OPres{x \wedge y; X}$ with
\[\text{$x,y \in \cB' = \{a_1,b_1,\ldots,a_{g-2},b_{g-2}\}$ such that $x \prec y$}.\]
Since $f(X) = X$, Lemma \ref{lemma:liftequivariance} (equivariance of the lifts) implies that
\[f(\OPres{x \wedge y; X}) = \OPres{f(x) \wedge f(y); f(X)} = \OPres{f(x) \wedge f(y); X}.\]
The element $f(x) \wedge f(y) \in \overline{\wedge^2 X_{\Q}}$ is a linear combination of terms of
the form $x' \wedge y'$ with $x',y' \in \cB'$ such that $x' \prec y'$, and by Lemma \ref{lemma:liftadd} (linearity of the lifts) the
element $\OPres{f(x) \wedge f(y);X}$ equals the corresponding linear combination of
elements of the form $\OPres{x' \wedge y';X} \in S$, as desired.

The second is $s = \OPres{x \wedge a_{g-1};Y}$ with $x \in \cB'$.  
Since $f(a_{g-1}) = a_{g-1}$ and $f(Y) = Y$, Lemma \ref{lemma:liftequivariance} (equivariance of the lifts) implies that
\[f(\OPres{x \wedge a_{g-1};Y}) = \OPres{f(x) \wedge f(a_{g-1});f(Y)} = \OPres{f(x) \wedge a_{g-1};Y}.\]
The element $f(x) \wedge a_{g-1} \in \overline{\wedge^2 Y_{\Q}}$ is a linear combination of terms
of the form $x' \wedge a_{g-1}$ with $x' \in \cB'$, and by Lemma \ref{lemma:liftadd} (linearity of the lifts) the
element $\OPres{f(x) \wedge a_{g-1};Y}$ equals the corresponding linear combination of
elements of the form $\OPres{x' \wedge a_{g-1};Y} \in S$, as desired.

The third is $s = \OPres{x \wedge a_g;Z}$ with $x \in \cB'$.  This is handled in the same way
as the previous case, so we omit the details.

\begin{claim}{3.2}
The action of $\Hom(X,I)$ on $\fK_g[-,a_{g-1} \wedge a_g]$ takes $\Span{S}$ to itself.
\end{claim}

\noindent
Recall that $X = \Span{a_1,b_1,\ldots,a_{g-2},b_{g-2}}$ and $I = \{a_{g-1},a_g\}$.  The group
$\Hom(X,I)$ is generated by $\Hom(X,\Span{a_{g-1}})$ and $\Hom(X,\Span{a_g})$.   
It is enough to check that all $\lambda$ lying in one of these two subgroups take $\Span{S}$ to itself.
For concreteness, we will explain how to do this for $\lambda \in \Hom(X,\Span{a_{g-1}})$.  The
other case is similar.  The corresponding $f \in \Sp_{2g}(\Z,I)|_{I^{\perp}}$ satisfies
\[f(a_{g-1}) = a_{g-1} \quad \text{and} \quad f(a_g) = a_g \quad \text{and} \quad \text{$f(x) = x + \lambda(x)$ for all $x \in X$}.\]
Consider $s \in S$.  We must prove that $f(s)$ is a linear
combination of elements of $S$.  This is trivial for $s = \OPres{a_{g-1} \wedge a_g;W_1}$ since
in this case Lemma \ref{lemma:liftspecial} implies that $f$ fixes $s$.
The other $s$ fall into three cases.

The first is $s = \OPres{x \wedge y; X}$ with $x,y \in \cB'$ such that $x \prec y$.
By Lemma \ref{lemma:liftchange} (changing the separator in the lifts), this
equals $\OPres{x \wedge y; Y}$.  The reason for doing this is that $f(Y) = Y$.
Write $f(x) = x + c a_{g-1}$ and $f(y) = y + d a_{g-1}$ with $c,d \in \Z$.  Using all three
properties of our lifts from \S \ref{section:liftproperties}, we have
\begin{align*}
f(\OPres{x \wedge y;Y}) &= \OPres{(x+c a_{g-1}) \wedge (y+d a_{g-1});Y} \\
                        &= \OPres{x \wedge y;Y} + d \OPres{x \wedge a_{g-1};Y} - c \OPres{y \wedge a_{g-1};Y} \\
                        &= \OPres{x \wedge y;X} + d \OPres{x \wedge a_{g-1};Y} - c \OPres{y \wedge a_{g-1};Y}.
\end{align*}
This is a linear combination of elements of $S$, as desired.

The second is $s = \OPres{x \wedge a_{g-1};Y}$ with $x \in \cB'$.
Write $f(x) = x + c a_{g-1}$ with $c \in \Z$.  We then
have 
\[f(x \wedge a_{g-1}) = (x+c a_{g-1}) \wedge a_{g-1} = x \wedge a_{g-1}.\]
Since $f$ fixes $x \wedge a_{g-1}$ and $Y$, by Lemma \ref{lemma:liftequivariance} (equivariance of the lifts)
the map $f$ also fixes $\OPres{x \wedge a_{g-1}; Y}$ and there is nothing to prove.  

The third is $s = \OPres{x \wedge a_g;Z}$ with $x \in \cB'$.
Write $f(x) = x + c a_{g-1}$ with $c \in \Z$.  Pick $i$ such that $x \in \{a_i,b_i\}$.  By Lemma \ref{lemma:liftchange} (changing the separator in the lifts),
we have
\[\OPres{x \wedge a_g;Z} = \OPres{x \wedge a_g;\Span{a_i,b_i,a_g}} = \OPres{x \wedge a_g;W_i}.\]
The reason for doing is that $f(W_i) = W_i$.
Using all three properties of our lifts from \S \ref{section:liftproperties} along with Lemma \ref{lemma:liftspecial}, we have
\begin{align*}
f(\OPres{x \wedge a_g;W_i}) &= \OPres{(x+c a_{g-1}) \wedge a_g;W_i}
                            = \OPres{x \wedge a_g;W_i} + c \OPres{a_{g-1} \wedge a_g;W_i}\\
                            &= \OPres{x \wedge a_g;Z} + c \OPres{a_{g-1} \wedge a_g;W_1}.
\end{align*}
This is a linear combination of elements of $S$, as desired.
\end{proof}

\section{Isotropic pairs IV: refining the presentation I}
\label{section:isotropicpairs4}

We now bring all our work together to add new generators to $\fK_g$ involving isotropic pairs.
Let $a \wedge a'$ be an isotropic pair and 
let $\Phi\colon \fK_g \rightarrow ((\wedge^2 H)/\Q)^{\otimes 2}$ be the linearization map.

\subsection{Right elements}
Proposition \ref{proposition:isotropicpairiso} says that for all $\kappa \in (a \wedge a')^{\perp}$, 
there is a unique element $\RPres{\kappa,a \wedge a'} \in \fK_g[-,a \wedge a']$ satisfying
\[\Phi(\RPres{\kappa,a \wedge a'}) = \kappa \otimes (a \wedge a').\]
For $\lambda_1,\lambda_2 \in \Q$ and $\kappa_1,\kappa_2 \in (a \wedge a')^{\perp}$,
Proposition \ref{proposition:isotropicpairiso} implies that
\[\RPres{\lambda_1 \kappa_1 + \lambda_2 \kappa_2,a \wedge a'} = \lambda_1 \RPres{\kappa_1,a \wedge a'} + \lambda_2 \RPres{\kappa_2,a \wedge a'}.\]

\subsection{Left elements}
\label{section:leftcompatible}

Define $\fK_g[a \wedge a',-]$ to be the subspace of $\fK_g$
spanned by elements of the form $\Pres{a \wedge a',\sigma}$ with $\sigma$ a symplectic pair
such that $\sigma \in (a \wedge a')^{\perp}$.  There is an involution
$\iota\colon \fK_g \rightarrow \fK_g$ taking a generator $\Pres{\kappa_1,\kappa_2}$ to $\Pres{\kappa_2,\kappa_1}$, and
$\iota$ takes $\fK_g[a \wedge a',-]$ isomorphically to $\fK_g[-,a \wedge a']$.  For
$\kappa \in (a \wedge a')^{\perp}$, define
\[\LPres{a \wedge a', \kappa} = \iota(\RPres{\kappa, a \wedge a'}).\]
The element $\LPres{a \wedge a',\kappa}$ is then the unique element of $\fK_g[a \wedge a',-]$ 
satisfying
\[\Phi(\LPres{a \wedge a',\kappa}) = (a \wedge a') \otimes \kappa.\]
For $\lambda_1,\lambda_2 \in \Q$ and $\kappa_1,\kappa_2 \in (a \wedge a')^{\perp}$,
we have
\[\LPres{a \wedge a',\lambda_1 \kappa_1 + \lambda_2 \kappa_2} = \lambda_1 \LPres{a \wedge a',\kappa_1} + \lambda_2 \LPres{a \wedge a',\kappa_2}.\]

\subsection{Ambiguity}

We would like to drop the $L$ and $R$ from $\LPres{a \wedge a',\kappa}$ and $\RPres{\kappa,a \wedge a'}$.  To
do this, we must first show that this does not introduce ambiguity into our notation.  The issue
is that there exist isotropic pairs $a_1 \wedge a'_1$ and $a_2 \wedge a'_2$ that are 
sym-orthogonal to each other.  In this case, we have 
elements $\LPres{a_1 \wedge a'_1,a_2 \wedge a'_2}$ and $\RPres{a_1 \wedge a'_1,a_2 \wedge a'_2}$, and
we need to prove that they are equal:

\begin{lemma}
\label{lemma:isotropicambiguityresolve}
Let $a_1 \wedge a'_1$ and $a_2 \wedge a'_2$ be isotropic pairs that are sym-orthogonal.  Then
$\LPres{a_1 \wedge a'_1,a_2 \wedge a'_2} = \RPres{a_1 \wedge a'_1,a_2 \wedge a'_2}$.
\end{lemma}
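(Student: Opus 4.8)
The plan is to exhibit a single element of $\fK_g$ that simultaneously realizes both descriptions. Recall from \S\ref{section:isotropicpairs4} that $\RPres{a_1 \wedge a'_1,a_2 \wedge a'_2}$ is the unique element of $\fK_g[-,a_2 \wedge a'_2]$ with $\Phi$-image $(a_1 \wedge a'_1) \otimes (a_2 \wedge a'_2)$, while $\LPres{a_1 \wedge a'_1,a_2 \wedge a'_2} = \iota(\RPres{a_2 \wedge a'_2,a_1 \wedge a'_1})$ is the unique element of $\fK_g[a_1 \wedge a'_1,-]$ with the same $\Phi$-image. Thus it is enough to find expressions $a_1 \wedge a'_1 = \sum_i \lambda_i \sigma_i$ and $a_2 \wedge a'_2 = \sum_j \mu_j \tau_j$ as $\Q$-linear combinations of symplectic pairs satisfying: (a) each $\sigma_i$ is sym-orthogonal to $a_2 \wedge a'_2$; (b) each $\tau_j$ is sym-orthogonal to $a_1 \wedge a'_1$; (c) each $\sigma_i$ is sym-orthogonal to each $\tau_j$. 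Granting this, every $\Pres{\sigma_i,\tau_j}$ is a generator of $\fK_g$, and repeated application of the bilinearity relations of Definition \ref{definition:kg} (legitimate by (a), (b), (c), since a sym-orthogonal complement is a subspace and the $\sigma_i,\tau_j$ are symplectic pairs) gives
\[\sum_i \lambda_i \Pres{\sigma_i,a_2 \wedge a'_2} = \sum_{i,j} \lambda_i \mu_j \Pres{\sigma_i,\tau_j} = \sum_j \mu_j \Pres{a_1 \wedge a'_1,\tau_j}.\]
The left-hand side lies in $\fK_g[-,a_2 \wedge a'_2]$ by (a) and has $\Phi$-image $(a_1 \wedge a'_1) \otimes (a_2 \wedge a'_2)$, so it equals $\RPres{a_1 \wedge a'_1,a_2 \wedge a'_2}$; the right-hand side lies in $\fK_g[a_1 \wedge a'_1,-]$ by (b) and equals $\LPres{a_1 \wedge a'_1,a_2 \wedge a'_2}$; hence the lemma.

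To produce these expressions, first observe that sym-orthogonality of $a_1 \wedge a'_1$ and $a_2 \wedge a'_2$ forces the isotropic planes $(V_1)_\Q$ and $(V_2)_\Q$ to be mutually orthogonal, where $V_i = \Span{a_i,a'_i}$: indeed $a_1 \wedge a'_1 \in (a_2 \wedge a'_2)^{\perp} = \overline{\wedge^2 (V_2)_\Q^{\perp}}$ by Lemma \ref{lemma:isotropicorthogonal}, and a decomposable bivector lying in $\wedge^2$ of a subspace has its plane inside that subspace. I would then construct near symplectic summands $X$ and $Y$ of $H_\Z$, each of positive genus, with $(V_1)_\Q \subseteq X_\Q$, $(V_2)_\Q \subseteq Y_\Q$, and $X_\Q$ orthogonal to $Y_\Q$. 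Applying Lemma \ref{lemma:generationomega} inside $X$ and inside $Y$ gives $a_1 \wedge a'_1 = \sum_i \lambda_i \sigma_i$ with $\sigma_i$ symplectic pairs in $\overline{\wedge^2 X_\Q}$ and $a_2 \wedge a'_2 = \sum_j \mu_j \tau_j$ with $\tau_j$ symplectic pairs in $\overline{\wedge^2 Y_\Q}$. Then (a) holds because $X_\Q \subseteq (V_2)_\Q^{\perp}$ (a consequence of $X_\Q \perp Y_\Q \supseteq (V_2)_\Q$) together with Lemma \ref{lemma:isotropicorthogonal}, (b) holds by the symmetric argument, and (c) holds because orthogonality of $X_\Q$ and $Y_\Q$ makes $\fc$ vanish on $\overline{\wedge^2 X_\Q} \times \overline{\wedge^2 Y_\Q}$, as is immediate from the defining formula for $\fc$ in \S\ref{section:symmetriccontraction}.

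The main obstacle is the construction of the mutually orthogonal summands $X$ and $Y$, and this is exactly where the standing hypothesis $g \geq 4$ enters. One chooses a symplectic basis of $H_\Z$ adapted to the configuration and argues by cases on $\dim((V_1)_\Q \cap (V_2)_\Q) \in \{0,1,2\}$: when the two isotropic planes share directions, that shared part is placed into the kernels of both $X$ and $Y$ (for which there is room since the relevant combined rank is at most $4\leq g$), and when they are disjoint one uses two orthogonal symplectic blocks of genus $2$, which is precisely what $g\geq 4$ permits. I note that, unlike the earlier lifts in \S\ref{section:isotropicpairs2}, no Lagrangian-free hypothesis on $X$ or $Y$ is required here, since Lemma \ref{lemma:generationomega} needs only positive genus and we never invoke Lemma \ref{lemma:orangeindependent}. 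The only remaining subtlety is to ensure that $X$ and $Y$ are genuine near symplectic summands and not merely direct summands (Lemma \ref{lemma:summand}); this is arranged by building the symplectic pairs generating the genus parts of $X$ and $Y$ out of primitive vectors, using the transitivity of $\Sp_{2g}(\Z)$ on the relevant configurations.
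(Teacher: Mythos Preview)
Your reduction to the double-sum identity is exactly the paper's argument, and your observation that Lagrangian-freeness is not needed (only that both summands have positive genus) is correct. The one place where your write-up diverges is the construction of the orthogonal summands $X$ and $Y$: the paper does not argue by cases on $\dim((V_1)_\Q\cap(V_2)_\Q)$ but instead builds a single $X$ with $I_1\subset X\subset I_2^{\perp}$ (by first finding a Lagrangian $L\supset I_1\cup I_2$, then picking $y_1\in I_2^{\perp}$ that pairs to $1$ with some $x_1\in L$), and simply takes $Y=X^{\perp}$. This is worth noting because your sketched case analysis is a bit loose in the disjoint case: the phrase ``two orthogonal symplectic blocks of genus $2$'' suggests choosing $X$ freely and then finding $Y$, but $X$ must already be orthogonal to $(V_2)_\Q$ for any such $Y$ to exist, and there are configurations (e.g.\ $(V_1)_\Q=\Span{a_1+a_3,a_2+a_4}$, $(V_2)_\Q=\Span{a_1-a_3,a_2-a_4}$ in genus $4$) where this constraint is not automatic from a naive choice of basis. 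The paper's construction with $Y=X^{\perp}$ handles all cases uniformly and avoids this pitfall; once you build that in, your argument is the paper's.
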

\begin{proof}
The proof uses the same idea as the proof of Lemma \ref{lemma:orangeindependent}.
Set $I_1 = \Span{a_1,a'_1}$ and $I_2 = \Span{a_2,a'_2}$.  By Lemma \ref{lemma:isotropicorthogonal}, we have
$a_1 \wedge a'_1 \in \overline{\wedge^2 (I_2)_{\Q}^{\perp}}$
and $a_2 \wedge a'_2 \in \overline{\wedge^2 (I_1)_{\Q}^{\perp}}$.  This implies that $I_1 \subset I_2^{\perp}$
and $I_2 \subset I_1^{\perp}$.  Recall that from \S \ref{section:constructlift} that a Lagrangian in
$H_{\Z}$ is a direct summand $L$ of $H_{\Z}$ with $L^{\perp} = L$.  We start with:

\begin{unnumberedclaim}
There exists a Lagrangian $L$ in $H_{\Z}$ such that $I_1,I_2 \subset L$.
\end{unnumberedclaim}
\begin{proof}[Proof of claim]
Recall that a subspace $J$ of $H$ is isotropic if $J \subset J^{\perp}$ and is a Lagrangian if
$J = J^{\perp}$.  It is standard that $J$ being a Lagrangian is equivalent to $J$
being isotropic and $g$-dimensional, and also that every isotropic subspace is contained in a Lagrangian.
The subspace $\Span{(I_1)_{\Q},(I_2)_{\Q}}$ of $H$ is isotropic, so it is contained in a Lagrangian $L_{\Q}$.
Define $L = L_{\Q} \cap H_{\Z}$.  Lemma \ref{lemma:summand} implies that $L$ 
is a direct summand of $H_{\Z}$, and by construction it is a Lagrangian
containing $I_1$ and $I_2$.
\end{proof}

Using this, we will prove:

\begin{unnumberedclaim}
There exists a Lagrangian-free near symplectic summand $X$ of $H_{\Z}$ of genus $1$ such that
$I_1 \subset X \subset I_2^{\perp}$.
\end{unnumberedclaim}
\begin{proof}[Proof of claim]
By the previous claim, we can find a Lagrangian $L$ in $H_{\Z}$ with $I_1,I_2 \subset L$.  Since
$I_2 \cong \Z^2$ is a subspace of $L \cong \Z^g$ and $g \geq 4$ (see Assumption \ref{assumption:genuspart2}),
the quotient $L/I_2$ cannot consist entirely of torsion.  It follows that there exists
a surjection $\pi\colon L \rightarrow \Z$ with $I_2 \subset \ker(\pi)$.  Since the symplectic
form $\omega$ identifies $H_{\Z}$ with its dual, we can find $y_1 \in H_{\Z}$
with
\[\text{$\omega(z,y_1) = \pi(z)$ for all $z \in L$}.\]
In particular, $y_1 \in (I_2)^{\perp}$.  Pick $x_1 \in L$ with $\omega(x_1,y_1) = 1$.  
Define $J = \Span{(I_1)_{\Q},\Span{x_1}_{\Q}} \cap L$, so by Lemma \ref{lemma:summand} the subgroup $J$ is a direct summand of
$L$ with $I_1 \subset J$ and $x_1 \in J$.  Let $r$ be the rank of $J$.  Since the rank
of $I_1$ is $2$, we have $2 \leq r \leq 3$.  We can now extend $x_1$ to a basis $\{x_1,\ldots,x_r\}$ for $J$ such that $\omega(x_i,y_1)=0$ for $2 \leq i \leq r$.  Set
$X = \Span{x_1,y_1,x_2,\ldots,x_r}$.  By construction, $X$ is a near-symplectic summand of $H_{\Z}$ of genus $1$ such that $I_1 \subset X \subset I_2^{\perp}$.  Since
$r \leq 3$, our standing assumption that $g \geq 4$ (see Assumption \ref{assumption:genuspart2}) implies that $X$ is Lagrangian-free.
\end{proof}

Let $X$ be as in the previous claim.  We have:

\begin{unnumberedclaim}
The subspace $X^{\perp}$ of $H_{\Z}$ is a near symplectic summand of positive genus.
\end{unnumberedclaim}
\begin{proof}[Proof of claim]
Since $X$ is a near-symplectic summand of genus $1$, we can find a symplectic basis 
$\{x_1,y_1,\ldots,x_g,y_g\}$ for $H_{\Z}$ such that $X = \Span{x_1,y_1,x_2,\ldots,x_r}$.  Since
$X$ is Lagrangian-free, we have $r < g$.  We have $X^{\perp} = \Span{x_2,\ldots,x_r,x_{r+1},y_{r+1},\ldots,x_g,y_g}$.
This is a near symplectic summand, and since $r<g$ its genus is positive.
\end{proof}

Since $I_1 \subset X$ and $I_2 \subset X^{\perp}$, we can use Lemma \ref{lemma:generationomega} to write
\[a_1 \wedge a'_1 = \sum_{i=1}^n \lambda_i \sigma_i \quad \text{and} \quad a_2 \wedge a'_2 = \sum_{j=1}^m c_j s_j\]
with $\lambda_i,c_j \in \Q$ and with each $\sigma_i$ and $s_j$ a symplectic pair with $\sigma_i \in \overline{\wedge^2 X_{\Q}}$ and $s_j \in \overline{\wedge^2 X^{\perp}_{\Q}}$, respectively.
The element $\RPres{a_1 \wedge a'_1,a_2 \wedge a'_2}$ then equals
\begin{align*}
\sum_{i=1}^n \lambda_i \Pres{\sigma_i,a_2 \wedge a'_2} &= \sum_{i=1}^n \lambda_i \left(\sum_{j=1}^m c_j \Pres{\sigma_i,s_j}\right) 
                                                        = \sum_{j=1}^m c_j \left(\sum_{i=1}^n \lambda_i \Pres{\sigma_i,s_j}\right) \\
                                                       &= \sum_{j=1}^m c_j \Pres{a_1 \wedge a'_1,s_j} = \LPres{a_1 \wedge a'_1,a_2 \wedge a'_2}.\qedhere
\end{align*}
\end{proof}

\subsection{New generators}

Let $a \wedge a'$ be an isotropic pair and let $\kappa \in (a \wedge a')^{\perp}$.
We then have elements $\RPres{\kappa,a \wedge a'}$ and $\LPres{a \wedge a',\kappa}$ of $\fK_g$.  
By Lemma \ref{lemma:isotropicambiguityresolve}, we can unambiguously drop the ``L'' and ``R'' from our notation.  Since everything is
now canonical, we will also stop writing our elements in orange and define
\[\Pres{\kappa,a \wedge a'} = \RPres{\kappa,a \wedge a'} \quad \text{and} \quad \Pres{a \wedge a',\kappa} = \LPres{a \wedge a',\kappa}.\]

\subsection{Summary}

The following summarizes what we have accomplished in
Proposition \ref{proposition:isotropicpairiso} and Lemma \ref{lemma:isotropicambiguityresolve}:

\begin{theorem}
\label{theorem:summarypresentationweak}
The vector space $\fK_g$ has the following presentation:
\begin{itemize}
\item {\bf Generators}. 
A generator $\Pres{\kappa_1,\kappa_2}$ for all sym-orthogonal
$\kappa_1,\kappa_2 \in (\wedge^2 H)/\Q$ such that either
$\kappa_1$ or $\kappa_2$ (or both) is a symplectic pair or an isotropic pair.
\item {\bf Relations}.  For all $\zeta \in (\wedge^2 H)/\Q$ that are symplectic pairs or strong isotropic pairs
and all $\kappa_1,\kappa_2 \in (\wedge^2 H)/\Q$ that are sym-orthogonal to $\zeta$
and all $\lambda_1,\lambda_2 \in \Q$, the relations
\begin{align*}
\Pres{\zeta,\lambda_1 \kappa_1 + \lambda_2 \kappa_2} &= \lambda_1 \Pres{\zeta,\kappa_1} + \lambda_2 \Pres{\zeta,\kappa_2} \quad \text{and} \\
\Pres{\lambda_1 \kappa_1 + \lambda_2 \kappa_2,\zeta} &= \lambda_1 \Pres{\kappa_1,\zeta} + \lambda_2 \Pres{\kappa_2,\zeta}.\qedhere
\end{align*}
\end{itemize}
\end{theorem}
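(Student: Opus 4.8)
The plan is to show that the new presentation in the statement defines the same vector space as the original presentation of $\fK_g$ in Definition \ref{definition:kg}. Write $\fL_g$ for the $\Q$-vector space presented by the generators and relations in the statement. I will produce mutually inverse linear maps $\Psi\colon\fK_g\to\fL_g$ and $\Theta\colon\fL_g\to\fK_g$.

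The map $\Psi$ is immediate: each generator $\Pres{\kappa_1,\kappa_2}$ of $\fK_g$ is also a generator of $\fL_g$ (a symplectic pair being in particular an allowed $\kappa_i$), and every defining relation of $\fK_g$ occurs among the defining relations of $\fL_g$, since in the latter $\zeta$ ranges over all symplectic pairs. Hence sending each generator of $\fK_g$ to the like-named generator of $\fL_g$ descends to $\Psi$. For $\Theta$, recall that \S\ref{section:isotropicpairs4} attaches to every isotropic pair $a\wedge a'$ and every $\kappa\in(a\wedge a')^{\perp}$ canonical elements $\Pres{\kappa,a\wedge a'}$ and $\Pres{a\wedge a',\kappa}$ of $\fK_g$; together with the generators of $\fK_g$ already indexed by a symplectic pair in one of the slots, this assigns an element of $\fK_g$ to each generator of $\fL_g$. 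For generators in which both slots are pairs the two a priori different assignments coincide: this is the uniqueness clause of Proposition \ref{proposition:isotropicpairiso} when one slot is a symplectic pair, and Lemma \ref{lemma:isotropicambiguityresolve} when both are isotropic pairs. Finally, this assignment kills the defining relations of $\fL_g$: when $\zeta$ is a symplectic pair the relation is literally a defining relation of $\fK_g$, and when $\zeta$ is a strong isotropic pair it is the bilinearity of $\kappa\mapsto\Pres{\zeta,\kappa}$ and $\kappa\mapsto\Pres{\kappa,\zeta}$ recorded in \S\ref{section:isotropicpairs4} as a consequence of Proposition \ref{proposition:isotropicpairiso}. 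So $\Theta$ is well-defined.

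It then remains to verify $\Theta\Psi=\id_{\fK_g}$ and $\Psi\Theta=\id_{\fL_g}$, which can be checked on generators. The identity $\Theta\Psi=\id$ is immediate. For $\Psi\Theta$, the only nontrivial case is a generator $\Pres{\kappa_1,a\wedge a'}$ of $\fL_g$ with $a\wedge a'$ an isotropic pair and $\kappa_1$ not a symplectic pair (the reversed case being symmetric). Here $\Theta$ sends it to the element of $\fK_g[-,a\wedge a']$ which, by the construction underlying Proposition \ref{proposition:isotropicpairiso} (via Lemmas \ref{lemma:isotropiccompatibleimage} and \ref{lemma:generationomega}), is a combination $\sum_i\mu_i\Pres{\sigma_i,a\wedge a'}$ with the $\sigma_i$ symplectic pairs sym-orthogonal to $a\wedge a'$ and $\sum_i\mu_i\sigma_i=\kappa_1$ in $(\wedge^2 H)/\Q$; applying $\Psi$ and then the first-slot bilinearity relation of $\fL_g$ recovers the generator $\Pres{\kappa_1,a\wedge a'}$. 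When $a\wedge a'$ is not a strong isotropic pair, this last step first requires Lemma \ref{lemma:strongisotropic} to write $a\wedge a'$ as a scalar multiple of a strong isotropic pair and then to absorb that scalar through the bilinearity relations attached to the symplectic pairs $\sigma_i$.

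The main obstacle is precisely this last bookkeeping: ensuring that every generator of $\fL_g$ — especially those in which both slots are pairs, and those involving non-primitive (non-strong) isotropic pairs — is matched consistently with its image under $\Phi$, and that the stated relations, with $\zeta$ restricted to symplectic and strong isotropic pairs, genuinely suffice to carry out the rewriting above. All the non-formal input has already been isolated in Proposition \ref{proposition:isotropicpairiso}, Lemma \ref{lemma:isotropicambiguityresolve}, and Lemma \ref{lemma:strongisotropic}; the remaining work is assembling it carefully.
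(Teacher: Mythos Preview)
Your approach is exactly what the paper intends: the paper presents this theorem as a summary of Proposition~\ref{proposition:isotropicpairiso} and Lemma~\ref{lemma:isotropicambiguityresolve} with no further proof, and your construction of mutually inverse maps $\Psi$ and $\Theta$ is the natural way to unpack that summary.

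There is, however, a genuine gap in your verification of $\Psi\Theta=\id_{\fL_g}$ on a generator $\Pres{\kappa_1,a\wedge a'}$ when $a\wedge a'$ is a \emph{non-strong} isotropic pair and $\kappa_1$ is not itself a symplectic or strong isotropic pair. Such a generator appears in none of the stated relations of $\fL_g$: every relation has the form $\Pres{\zeta,\cdot}$ or $\Pres{\cdot,\zeta}$ with $\zeta$ symplectic or strong isotropic, and here neither slot qualifies. Your proposed workaround---writing $a\wedge a'=n\,a_0\wedge a_0'$ with $a_0\wedge a_0'$ strong and absorbing $n$ via the symplectic $\sigma_i$---reaches $n\,\Pres{\kappa_1,a_0\wedge a_0'}$ but cannot return to $\Pres{\kappa_1,a\wedge a'}$, since no relation links these two symbols. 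Concretely, for $g=4$ the element $\Pres{a_1\wedge b_1+a_2\wedge b_2,\,2(a_3\wedge a_4)}$ is a free generator of $\fL_g$.

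This is really an imprecision in the theorem as stated rather than in your argument. Proposition~\ref{proposition:isotropicpairiso} gives the bilinearity $\Pres{\lambda_1\kappa_1+\lambda_2\kappa_2,a\wedge a'}=\lambda_1\Pres{\kappa_1,a\wedge a'}+\lambda_2\Pres{\kappa_2,a\wedge a'}$ for \emph{every} isotropic pair $a\wedge a'$, and the proof of Theorem~\ref{theorem:summarypresentation} invokes exactly that (``these are special cases of the linearity relations from Theorem~\ref{theorem:summarypresentationweak} if $\zeta$ is either a symplectic pair or an isotropic pair''). If the relations in the statement are taken for all isotropic $\zeta$ rather than only strong ones, your proof goes through without change.
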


\section{Isotropic pairs V: refining the presentation II}
\label{section:isotropicpairs5}

In Theorem \ref{theorem:summarypresentationweak}, we added generators involving isotropic pairs.
In this section, we add a few more generators and verify some additional relations.  A {\em special pair} is an element
$x \wedge y$ of $(\wedge^2 H_{\Z})/\Z$ such that $\omega(x,y) \in \{-1,0,1\}$.  These fall
into four classes:
\begin{itemize}
\item $\omega(x,y) = 1$, so $x \wedge y$ is a symplectic pair; and
\item $\omega(x,y) = 0$ with $x$ and $y$ linearly independent, so $x \wedge y$ is an isotropic pair;\footnote{They
are strong isotropic pairs if $x$ and $y$ also span a direct summand of $H_{\Z}$; see \S \ref{section:isotropicpairs}.} and
\item $\omega(x,y) = 0$ with $x$ and $y$ linearly dependent, so $x \wedge y=0$; and
\item $\omega(x,y) = -1$, so $y \wedge x = -x \wedge y$ is a symplectic pair.
\end{itemize}
Our main result is:

\begin{theorem}
\label{theorem:summarypresentation}
For\footnote{This is our standing assumption in this part of the paper; see Assumption \ref{assumption:genuspart2}.} 
$g \geq 4$, the vector space $\fK_g$ has the following presentation:
\begin{itemize}
\item {\bf Generators}. 
A generator $\Pres{\kappa_1,\kappa_2}$ for all sym-orthogonal
$\kappa_1,\kappa_2 \in (\wedge^2 H)/\Q$ such that either
$\kappa_1$ or $\kappa_2$ (or both) is a special pair.
\item {\bf Relations}.  The following two families of relations:
\begin{itemize}
\item For all special pairs $\zeta \in (\wedge^2 H)/\Q$ 
and all $\kappa_1,\kappa_2 \in (\wedge^2 H)/\Q$ that are sym-orthogonal to $\zeta$
and all $\lambda_1,\lambda_2 \in \Q$, the linearity relations
\begin{align*}
\Pres{\zeta,\lambda_1 \kappa_1 + \lambda_2 \kappa_2} &= \lambda_1 \Pres{\zeta,\kappa_1} + \lambda_2 \Pres{\zeta,\kappa_2} \quad \text{and} \\
\Pres{\lambda_1 \kappa_1 + \lambda_2 \kappa_2,\zeta} &= \lambda_1 \Pres{\kappa_1,\zeta} + \lambda_2 \Pres{\kappa_2,\zeta}.
\end{align*}
\item For all special pairs $\zeta \in (\wedge^2 H)/\Q$ and all $\kappa \in (\wedge^2 H)/\Q$ that are sym-orthogonal to $\zeta$
and all $n \in \Z$ such that $n \zeta$ is a special pair,\footnote{If $\zeta = x \wedge y$ with $\omega(x,y) = 0$, then any
$n$ works.  However, if $\zeta = x \wedge y$ with $\omega(x,y) \in \{-1,1\}$ then we must take $n \in \{-1,0,1\}$.}
the relations
\begin{align*}
\Pres{n \zeta,\kappa} &= n \Pres{\zeta,\kappa} \quad \text{and} \\
\Pres{\kappa,n \zeta} &= n \Pres{\kappa,\zeta}.\qedhere
\end{align*}
\end{itemize}
\end{itemize}
\end{theorem}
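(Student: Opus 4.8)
The plan is to deduce Theorem~\ref{theorem:summarypresentation} by comparing it with the presentation of $\fK_g$ already obtained in Theorem~\ref{theorem:summarypresentationweak}. Write $B$ for the $\Q$-vector space with the presentation asserted in Theorem~\ref{theorem:summarypresentation}. Since every symplectic pair and every nonzero isotropic pair is a special pair, the generating set of the presentation in Theorem~\ref{theorem:summarypresentationweak} is a subset of that of $B$, and the linearity relations over symplectic pairs and strong isotropic pairs are instances of the linearity relations over special pairs in $B$. Hence there is a well-defined linear map $\alpha\colon\fK_g\to B$ carrying each generator of the former presentation to the like-named generator of $B$. The goal is to prove $\alpha$ is an isomorphism; I would do this by showing it is surjective and has a left inverse.

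Surjectivity is the easy half. The only generators $\Pres{\kappa_1,\kappa_2}$ of $B$ that do not visibly lie in the image of $\alpha$ are those in which neither slot is a symplectic or isotropic pair; then some slot is either $0$ or of the form $-\sigma$ for a symplectic pair $\sigma$. The multiple relation with $\zeta=0$ (legitimate, since $0$ is a special pair and $0\cdot 0=0$) gives $\Pres{0,\kappa}=0$ and $\Pres{\kappa,0}=0$ in $B$, and the multiple relation with $\zeta=\sigma$ and $n=-1$ gives $\Pres{-\sigma,\kappa}=-\Pres{\sigma,\kappa}$ and $\Pres{\kappa,-\sigma}=-\Pres{\kappa,\sigma}$ in $B$; since $\sigma$ is a symplectic pair, $\Pres{\sigma,\kappa}$ and $\Pres{\kappa,\sigma}$ are in the image of $\alpha$. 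So $\alpha$ is onto.

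For injectivity I would build $\beta\colon B\to\fK_g$ with $\beta\circ\alpha=\id_{\fK_g}$. Every special pair $\zeta$ can be written uniquely as $\zeta=\epsilon\,\tau$ with $\epsilon\in\{-1,0,1\}$ and, when $\epsilon\neq 0$, $\tau$ a symplectic or isotropic pair ($\epsilon=1,\tau=\zeta$ if $\omega(\zeta)\in\{0,1\}$; $\epsilon=-1,\tau=-\zeta$ if $\omega(\zeta)=-1$; $\epsilon=0$ if $\zeta=0$); put $\epsilon=1,\tau=\kappa$ when $\kappa$ is not a special pair. On the free vector space on the generators of $B$, define a linear map sending $\Pres{\kappa_1,\kappa_2}$ to $\epsilon_1\epsilon_2$ times the value in $\fK_g$ of the symbol $\Pres{\tau_1,\tau_2}$ — which is a generator of the presentation in Theorem~\ref{theorem:summarypresentationweak}, since at least one of $\kappa_1,\kappa_2$, hence one of $\tau_1,\tau_2$, is a symplectic or isotropic pair (and the whole expression is $0$ if some $\epsilon_i=0$). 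Using the linearity relations in $\fK_g$ — over symplectic pairs from Definition~\ref{definition:kg}, over isotropic pairs from \S\ref{section:isotropicpairs4} — one checks this map restricts to the identity on $\fK_g$ (embedded via Theorem~\ref{theorem:summarypresentationweak}), so that $\beta\circ\alpha=\id_{\fK_g}$ will follow once it descends to $B$. The substance is then to check it annihilates every relation of Theorem~\ref{theorem:summarypresentation}. The linearity relation over a special pair $\zeta$ maps to $\epsilon(\zeta)$ times the linearity relation over $\tau(\zeta)$, which holds in $\fK_g$ (over $0$ all terms vanish). The multiple relations with $\zeta=0$ or with $\zeta$ a symplectic pair (so $n\in\{-1,0,1\}$) hold by construction together with the same linearity in $\fK_g$. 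The one nontrivial case is the multiple relation $\Pres{n\zeta,\kappa}=n\Pres{\zeta,\kappa}$ (and its mirror) for an isotropic pair $\zeta$ and $n\in\Z$: for $n\neq 0$ both $n\zeta$ and $\zeta$ are isotropic pairs, and recalling from \S\ref{section:isotropicpairs4} that $\Pres{\zeta,\kappa}$ is the unique element of $\fK_g[\zeta,-]$ with linearization $\zeta\otimes\kappa$, one uses that $\fK_g[n\zeta,-]=\fK_g[\zeta,-]$ (a symplectic pair in the second slot already forces $\Pres{n\zeta,\sigma}=n\Pres{\zeta,\sigma}$ by Definition~\ref{definition:kg}) and that the linearization map is injective on $\fK_g[\zeta,-]$ by Proposition~\ref{proposition:isotropicpairiso} (via the involution $\iota$); since $\Pres{n\zeta,\kappa}$ and $n\Pres{\zeta,\kappa}$ then lie in the same subspace with the same linearization $n(\zeta\otimes\kappa)$, they coincide. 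Thus $\beta$ is well defined, $\beta\circ\alpha=\id$, $\alpha$ is injective, and hence an isomorphism.

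The main obstacle is the careful organization of $\beta$ and, inside it, the isotropic multiple relation just described, where Proposition~\ref{proposition:isotropicpairiso} must genuinely be used. I would also make sure it is recorded — if it has not been already — that the ``type'' of a special pair, namely the value $\omega(x,y)\in\{-1,0,1\}$ on a representative $x\wedge y$, is well defined; this is where the standing hypothesis $g\geq 4$ (Assumption~\ref{assumption:genuspart2}) enters the argument, since changing the representative alters $\omega(x,y)$ by a multiple of $g$.
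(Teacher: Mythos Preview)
Your proof is correct and follows essentially the same route as the paper: both define the extra generators of the larger presentation as elements of $\fK_g$ (via Theorem~\ref{theorem:summarypresentationweak}) and then verify the two families of relations. The paper does this more informally in three steps (define the new generators, check linearity, check the multiple relation), while you package the same content as an explicit pair of maps $\alpha,\beta$ between the two presentations; your verification of the isotropic multiple relation via Proposition~\ref{proposition:isotropicpairiso} and injectivity of the linearization on $\fK_g[\zeta,-]$ is equivalent to the paper's direct expansion of $\kappa$ over symplectic pairs. One small correction: the well-definedness of the ``type'' of a special pair only needs $g\ge 3$ (since $\omega(x,y)$ is determined modulo $g$), so this is not really where $g\ge 4$ enters---that hypothesis is already baked into Theorem~\ref{theorem:summarypresentationweak} and Proposition~\ref{proposition:isotropicpairiso}.
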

\begin{proof}
We divide the proof into three steps: we first define our new generators in terms of the generators
given by Theorem \ref{theorem:summarypresentationweak}, and then we check the two families of relations.

\begin{step}{1}
We define our new generators.
\end{step}

Consider a special pair $x \wedge y \in (\wedge^2 H)/\Q$ and $\kappa \in (\wedge^2 H)/\Q$ that is sym-orthogonal
to $x \wedge y$.  We will express $\Pres{x \wedge y,\kappa}$ and $\Pres{\kappa,x \wedge y}$ in terms of
the generators given by Theorem \ref{theorem:summarypresentationweak}.  By definition,
we have $\omega(x,y) \in \{-1,0,1\}$.  There are four cases:
\begin{itemize}
\item If $\omega(x,y) = 1$, then $x \wedge y$ is a symplectic pair and $\Pres{x \wedge y,\kappa}$ and $\Pres{\kappa,x \wedge y}$
are already defined.
\item If $\omega(x,y) = 0$ and $x$ and $y$ are linearly independent, then $x \wedge y$ is an isotropic
pair and $\Pres{x \wedge y,\kappa}$ and $\Pres{\kappa,x \wedge y}$
are already defined.
\item If $\omega(x,y) = 0$ and $x$ and $y$ are linearly dependent, then $x \wedge y = 0$.  We define
$\Pres{0,\kappa} = 0$ and $\Pres{\kappa,0} = 0$.
\item If $\omega(x,y) = -1$, then $x \wedge y = -y \wedge x$ and $y \wedge x$ is a symplectic pair.  We define
$\Pres{x \wedge y,\kappa} = -\Pres{y \wedge x,\kappa}$ and $\Pres{\kappa,x \wedge y} = -\Pres{\kappa,y \wedge x}$.
\end{itemize}
The only issue with this definition is that if $\kappa = z \wedge w$ is also a special pair, then in
a few cases we have two different definitions of $\Pres{x \wedge y,z \wedge w}$.  We must check that they give the same
element of $\fK_g$.  If both $x \wedge y$ and $z \wedge w$ are symplectic or isotropic pairs, then
there is no ambiguity.  Also, if one of them is $0$, then both definitions give $0$.  The only
potential issue is therefore when either $\omega(x,y)$ or $\omega(z,w)$ (or both) is $-1$. 

There are several cases.  All are handled the same way, so we will give the details
for when $\omega(x,y) = \omega(z,w) = -1$, which is slightly harder.  Our two definitions are
\[\Pres{x \wedge y,z \wedge w} = -\Pres{y \wedge x,z \wedge w} \quad \text{and} \quad \Pres{x \wedge y,z \wedge w} = -\Pres{x \wedge y,w \wedge z}.\]
We appeal to the linearity relations from Theorem \ref{theorem:summarypresentationweak} to see these are equal:
\[-\Pres{y \wedge x,z \wedge w} = -\Pres{y \wedge x,-w \wedge z} = \Pres{y \wedge x,w \wedge z} = \Pres{-x \wedge y,w \wedge z} = -\Pres{x \wedge y,w \wedge z}.\]

\begin{step}{2}
Let $\zeta \in (\wedge^2 H)/\Q$ be a special pair, let 
$\kappa_1,\kappa_2 \in (\wedge^2 H)/\Q$ be sym-orthogonal to $\zeta$, and let
$\lambda_1,\lambda_2 \in \Q$.  Then:
\begin{align*}
\Pres{\zeta,\lambda_1 \kappa_1 + \lambda_2 \kappa_2} &= \lambda_1 \Pres{\zeta,\kappa_1} + \lambda_2 \Pres{\zeta,\kappa_2} \quad \text{and} \\
\Pres{\lambda_1 \kappa_1 + \lambda_2 \kappa_2,\zeta} &= \lambda_1 \Pres{\kappa_1,\zeta} + \lambda_2 \Pres{\kappa_2,\zeta}.
\end{align*}
\end{step}

These are trivial if $\zeta = 0$, so we can assume that $\zeta \neq 0$.  Also, these are special cases
of the linearity relations from Theorem \ref{theorem:summarypresentationweak} if $\zeta$ is either a symplectic
pair or an isotropic pair.  The remaining case is where $\zeta = x \wedge y$ with $\omega(x,y) = -1$.  In that case,
using the linearity relations from Theorem \ref{theorem:summarypresentationweak} we have that
$\Pres{x \wedge y,\lambda_1 \kappa_1+\lambda_2 \kappa_2}$ equals
\[-\Pres{y \wedge x,\lambda_1 \kappa_1+\lambda_2 \kappa_2}
                                                          = -\lambda_1 \Pres{y \wedge x,\kappa_1}  - \lambda_2 \Pres{y \wedge x,\kappa_2}
                                                          = \lambda_1 \Pres{x \wedge y,\kappa_1} + \lambda_2 \Pres{x \wedge y,\kappa_2},\]
and similarly for $\Pres{\lambda_1 \kappa_1+\lambda_2 \kappa_2,x \wedge y}$.

\begin{step}{3}
Let $\zeta \in (\wedge^2 H)/\Q$ be a special pair, let $\kappa \in (\wedge^2 H)/\Q$ be sym-orthogonal to $\zeta$,
and let $m \in \Z$ be such that such that $m \zeta$ is a special pair.  Then:
\begin{align*}
\Pres{m \zeta,\kappa} &= m \Pres{\zeta,\kappa} \quad \text{and} \\
\Pres{\kappa,m \zeta} &= m \Pres{\kappa,\zeta}.
\end{align*}
\end{step}

This is trivial if $\zeta = 0$ or if $m = 0$, so assume that both are nonzero.  This is immediate
from the definitions if $\zeta = x \wedge y$ with $\omega(x,y) \in \{\pm 1\}$, in which case we necessarily
also have $m \in \{\pm 1\}$.  The remaining case is when $\zeta = x \wedge y$ is an isotropic pair.  Write
$\kappa = \sum_{i=1}^n \lambda_i \zeta_i$ with $\zeta_i$ a symplectic pair in $(x \wedge y)^{\perp}$.  We also
have $\zeta_i \in (m x \wedge y)^{\perp}$ for $1 \leq i \leq n$.  By definition, we therefore have
\[\Pres{x \wedge y,\kappa} = \sum_{i=1}^n \lambda_i \Pres{x \wedge y,\zeta_i} \quad \text{and} \quad
\Pres{m x \wedge y,\kappa} = \sum_{i=1}^n \lambda_i \Pres{m x \wedge y,\zeta_i}.\]
The linearity relations from Theorem \ref{theorem:summarypresentationweak} imply
that for $1 \leq i \leq n$ we have $\Pres{m x \wedge y,\zeta_i} = m \Pres{x \wedge y,\zeta_i}$.  Plugging
this into the above formulas, we therefore have
\[\Pres{m x \wedge y,\kappa} = \sum_{i=1}^n \lambda_i m \Pres{x \wedge y,\zeta_i} = m \sum_{i=1}^n \lambda_i \Pres{x \wedge y,\zeta_i} = m \Pres{x \wedge y,\kappa}.\]
A similar argument shows that $\Pres{\kappa,m \zeta} = m \Pres{\kappa,\zeta}$.
\end{proof}

\part{Verifying the presentation for the symmetric kernel, alternating version}
\label{part:alt}

Our goal in the rest of the paper is to prove Theorems \ref{maintheorem:presentationalt} and
\ref{maintheorem:presentationsym}.  This part of the paper proves Theorem \ref{maintheorem:presentationalt}, while
Part \ref{part:sym} proves Theorem \ref{maintheorem:presentationsym}.  See
the introductory \S \ref{section:presentationalt0} for an outline of what we do in this part.
Throughout, we make the following genus assumption:

\begin{assumption}
\label{assumption:genusalt}
Throughout Part \ref{part:alt}, unless otherwise specified we assume that $g \geq 4$. 
\end{assumption}

\section{Symmetric kernel, alternating version: introduction}
\label{section:presentationalt0}

We start by recalling some results and definitions from earlier in the paper, and
then outline what we prove in this part.

\subsection{Symmetric kernel and contraction}
Recall that $\omega$ is the symplectic form on $H$.
The {\em symmetric contraction} is the alternating $\Sym^2(H)$-valued alternating
form $\fc$ on $(\wedge^2 H)/\Q$ defined via the formula
\[\text{$\fc(x \wedge y,z \wedge w) = \omega(x,z) y \Cdot w - \omega(x,w) y \Cdot z - \omega(y,z) x \Cdot w + \omega(y,w) x \Cdot z$ for $x,y,z,w \in H$}.\]
It induces a map $\wedge^2 ((\wedge^2 H)/\Q) \rightarrow \Sym^2(H)$ whose kernel $\cK_g^a$ is
the {\em symmetric kernel}.  Elements $\kappa_1,\kappa_2 \in (\wedge^2 H)/\Q$ are
{\em sym-orthogonal} if $\fc(\kappa_1,\kappa_2) = 0$, or equivalently if $\kappa_1 \wedge \kappa_2 \in \cK_g^a$.  The
{\em sym-orthogonal complement} of $\kappa \in (\wedge^2 H)/\Q$ is the subspace $\kappa^{\perp}$
consisting of all elements that are sym-orthogonal to $\kappa$.

\subsection{Special pairs}
A {\em special pair} in $(\wedge^2 H_{\Z})/\Z$ is an element of the form $x \wedge y$ with
$\omega(x,y) \in \{-1,0,1\}$.  Examples include symplectic pairs and isotropic pairs.
Lemmas \ref{lemma:symplecticorthogonal} and \ref{lemma:isotropicorthogonal} say that
the sym-orthogonal complements in $(\wedge^2 H)/\Q$ of these are:
\begin{itemize}
\item for a symplectic pair $a \wedge b$,  we have $(a \wedge b)^{\perp} = \overline{\wedge^2 \Span{a,b}_{\Q}^{\perp}}$; and
\item for an isotropic pair $a \wedge a'$, we have $(a \wedge a')^{\perp} = \overline{\wedge^2 \Span{a,a'}_{\Q}^{\perp}}$.
\end{itemize}

\subsection{Non-symmetric presentation}
We will use the generators and relations for $\fK_g$ from Theorem \ref{theorem:summarypresentation}, whose statement
we recall:

\newtheorem*{theorem:summarypresentation1}{Theorem \ref{theorem:summarypresentation}}
\begin{theorem:summarypresentation1}
For $g \geq 4$, the vector space $\fK_g$ has the following presentation:
\begin{itemize}
\item {\bf Generators}.
A generator $\Pres{\kappa_1,\kappa_2}$ for all sym-orthogonal
$\kappa_1,\kappa_2 \in (\wedge^2 H)/\Q$ such that either
$\kappa_1$ or $\kappa_2$ (or both) is a special pair.
\item {\bf Relations}.  The following two families of relations:
\begin{itemize}
\item For special pairs $\zeta \in (\wedge^2 H)/\Q$
and all $\kappa_1,\kappa_2 \in (\wedge^2 H)/\Q$ that are sym-orthogonal to $\zeta$
and all $\lambda_1,\lambda_2 \in \Q$, the linearity relations
\begin{align*}
\Pres{\zeta,\lambda_1 \kappa_1 + \lambda_2 \kappa_2} &= \lambda_1 \Pres{\zeta,\kappa_1} + \lambda_2 \Pres{\zeta,\kappa_2} \quad \text{and} \\
\Pres{\lambda_1 \kappa_1 + \lambda_2 \kappa_2,\zeta} &= \lambda_1 \Pres{\kappa_1,\zeta} + \lambda_2 \Pres{\kappa_2,\zeta}.
\end{align*}
\item For all special pairs $\zeta \in (\wedge^2 H)/\Q$ and all $\kappa \in (\wedge^2 H)/\Q$ that are sym-orthogonal to $\zeta$
and all $n \in \Z$ such that $n \zeta$ is a special pair, the relations
\begin{align*}
\Pres{n \zeta,\kappa} &= n \Pres{\zeta,\kappa} \quad \text{and} \\
\Pres{\kappa,n \zeta} &= n \Pres{\kappa,\zeta}.\qedhere
\end{align*}
\end{itemize}
\end{itemize}
\end{theorem:summarypresentation1}

\begin{remark}
\label{remark:genusalt}
Our standing assumption is that $g \geq 4$ (Assumption \ref{assumption:genusalt}).  However, in a few places
we will need to work with $g=3$ for inductive proofs.  In those cases, Theorem \ref{theorem:summarypresentation}
does not apply.  To fix this, in this part of the paper we will redefine $\fK_3$ to be the vector space given by the
presentation from Theorem \ref{theorem:summarypresentation}.  Note that we will {\em not} extend Theorem
\ref{maintheorem:presentationalt} to $g = 3$, and we do not know if this $\fK_3$
is isomorphic to $\cK_3$.
\end{remark}

\subsection{Anti-symmetrizing}

Recall from Lemma \ref{lemma:decomposekg} that $\fK_g^a$ is the $-1$-eigenspace of the involution of
$\fK_g$ that takes a generator $\Pres{\kappa_1,\kappa_2}$ to $\Pres{\kappa_2,\kappa_1}$.  We anti-symmetrize
a generator $\Pres{\kappa_1,\kappa_2}$ of $\fK_g$ to
\[\PresA{\kappa_1,\kappa_2} = \frac{1}{2} \left(\Pres{\kappa_1,\kappa_2} - \Pres{\kappa_2,\kappa_1}\right) \in \fK_g^a.\]
The anti-symmetrized generators generate $\fK_g^a$.  They satisfy the same relations as the generators
of $\fK_g$, and also the anti-symmetry relation
$\PresA{\kappa_2,\kappa_1} = -\PresA{\kappa_1,\kappa_2}$.

\subsection{Goal and outline}

We have a linearization map $\Phi\colon \fK_g^a \rightarrow \wedge^2((\wedge^2 H)/\Q)$.  On generators,
it satisfies
\[\Phi(\PresA{\kappa_1,\kappa_2}) = \kappa_1 \wedge \kappa_2 \in \wedge^2((\wedge^2 H)/\Q).\]
Its image is contained in the symmetric kernel $\cK_g^a$.
Our goal in this part of the paper is to prove Theorem \ref{maintheorem:presentationalt}, which says that $\Phi$ is an isomorphism
from $\fK_g^a$ to $\cK_g^a$.  
The proof uses the proof technique described in \S \ref{section:prooftechnique}, and is modeled
on the proofs of Theorems \ref{maintheorem:slstd}--\ref{maintheorem:spsym}.  However, since
the calculations are lengthy we spread them out over nine sections:
\begin{itemize}
\item In the preliminary \S \ref{section:presentationalt1}, we identify some important subspaces
of $\fK_g^a$.
\item In \S \ref{section:presentationalt2} -- \S \ref{section:presentationalt4}, we construct
a subset $S$ of $\fK_g^a$ and prove that $\Span{S} = \fK_g^a$.  The proof of this
uses the action of $\Sp_{2g}(\Z)$ on $\fK_g^a$: we first prove that the $\Sp_{2g}(\Z)$-orbit
of $S$ spans $\fK_g^a$, and then we prove that $\Sp_{2g}(\Z)$ takes $\Span{S}$ to itself.
This corresponds to Steps 2 and 3 of the proof outline from \S \ref{section:prooftechnique}.
We do these steps first because they are easier than Step 1.\footnote{Though $S$ will be infinite,
it will follow from our results in these sections that $\Span{S}$ is finite-dimensional.  At
the end of \S \ref{section:presentationalt4}, we will therefore already know that $\fK_g^a$ is a finite-dimensional
representation of $\Sp_{2g}(\Z)$.}
\item The set $S$ is the union of sets $S_{12}$ and $S_3$.
In \S \ref{section:presentationalt5} -- \S \ref{section:presentationalt8}, we prove
that $\Phi$ is an isomorphism by first proving that its restriction to $S_{12}$ is an isomorphism onto
its image (\S \ref{section:presentationalt5} -- \S \ref{section:presentationalt6}) and then
extending this to $S_3$ and hence all of $\Span{S} = \fK_g^a$ 
(\S \ref{section:presentationalt7} -- \S \ref{section:presentationalt9}).  This roughly speaking
corresponds to Step 1.
\end{itemize}
Throughout the following nine sections, $\Phi$ will always mean the linearization map
$\Phi\colon \fK_g^a \rightarrow \wedge^2((\wedge^2 H)/\Q)$.  Also, $\fc$ will always mean
the symmetric contraction.

\section{Symmetric kernel, alternating version I: fixing the 1st coordinates of generators}
\label{section:presentationalt1}

In this preliminary section, we identify some important subspaces of $\fK_g^a$ and
their images in $\wedge^2((\wedge^2 H)/\Q)$ under the linearization map.  Throughout this
section, we relax our standing assumption that $g \geq 4$ (Assumption \ref{assumption:genusalt}),
so our results include explicit genus ranges when they are necessary.  See Remark \ref{remark:genusalt}.

\begin{warning}
\label{warning:alternating}
The vector space $\wedge^2((\wedge^2 H)/\Q)$ is {\em not} a quotient of $\wedge^4 H$.  Because of this, for $x,y,z,w \in H$
care must be taken when working with elements like $(x \wedge y) \wedge (z \wedge w) \in \wedge^2((\wedge^2 H)/\Q)$.
The wedges $\wedge$ cannot be rearranged like in $\wedge^4 H$; for instance,
$(x \wedge y) \wedge (z \wedge w)$ is not equal to $-(x \wedge z) \wedge (y \wedge w)$.
\end{warning}

\subsection{Setup}
\label{section:presentationalt1setup}

Let $a,a' \in H_{\Z}$ satisfy $\omega(a,a') = 0$.  Define $\Fix{a,a'}$ to
be the subspace of $\fK_g^a$ spanned by elements $\PresA{a \wedge x,a' \wedge y}$ where
$x,y \in H_{\Z}$ satisfy the following two conditions:
\begin{itemize}
\item[(i)] We have $\omega(a,y)=\omega(a',x)=\omega(x,y) = 0$.  This ensures that
$\fc(a \wedge x,a' \wedge y) = 0$.
\item[(ii)] Both $a \wedge x$ and $a' \wedge y$ are special pairs, so in particular
$\PresA{a \wedge x,a' \wedge y}$ is defined.  Equivalently, 
$\omega(a,x) \in \{-1,0,1\}$ and $\omega(a',y) \in \{-1,0,1\}$.
\end{itemize}
We will call these $\PresA{a \wedge x,a' \wedge y}$ the {\em generators} of
$\Fix{a,a'}$.  Here are some easy properties
of these subspaces:

\begin{lemma}
\label{lemma:isotropicbilineareasy}
Let $g \geq 3$ and let $a,a' \in H_{\Z}$ satisfy $\omega(a,a') = 0$.  Then:
\begin{itemize}
\item[(a)] $\Fix{a',a} = \Fix{a,a'}$.
\item[(b)] $\Fix{-a,a'} = \Fix{a,-a'} = \Fix{-a,-a'} = \Fix{a,a'}$.
\end{itemize}
\end{lemma}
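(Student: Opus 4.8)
The statement to prove is Lemma \ref{lemma:isotropicbilineareasy}: for $g \geq 3$ and $a, a' \in H_{\Z}$ with $\omega(a,a') = 0$, we have $\Fix{a',a} = \Fix{a,a'}$ and $\Fix{-a,a'} = \Fix{a,-a'} = \Fix{-a,-a'} = \Fix{a,a'}$. The plan is to argue both parts purely from the definition of $\Fix{a,a'}$ together with the anti-symmetry and sign (linearity with $n = -1$) relations that hold in $\fK_g^a$.

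For part (a), I would show that $\Fix{a',a}$ and $\Fix{a,a'}$ have the same generating set up to sign. A generator of $\Fix{a,a'}$ is $\PresA{a \wedge x, a' \wedge y}$ with $\omega(a,y) = \omega(a',x) = \omega(x,y) = 0$ and $\omega(a,x), \omega(a',y) \in \{-1,0,1\}$. Applying the anti-symmetry relation $\PresA{\kappa_2,\kappa_1} = -\PresA{\kappa_1,\kappa_2}$ gives $\PresA{a \wedge x, a' \wedge y} = -\PresA{a' \wedge y, a \wedge x}$, and the latter is (minus) a generator of $\Fix{a',a}$ — the orthogonality conditions are symmetric in the two pairs, so they are exactly the conditions required for membership, and $a' \wedge y$, $a \wedge x$ are still special pairs. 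Hence every generator of $\Fix{a,a'}$ lies in $\Fix{a',a}$ and vice versa, so the spans agree.

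For part (b), the three claimed equalities all follow the same way, so I would treat $\Fix{-a,a'} = \Fix{a,a'}$ and note the others are identical. Here the key observation is that $(-a) \wedge x = -(a \wedge x)$ in $(\wedge^2 H)/\Q$, and the sign relation $\Pres{n\zeta,\kappa} = n\Pres{\zeta,\kappa}$ (with $n = -1$) — which passes to the anti-symmetrized generators — gives $\PresA{(-a) \wedge x, a' \wedge y} = \PresA{-(a \wedge x), a' \wedge y} = -\PresA{a \wedge x, a' \wedge y}$ whenever $a \wedge x$ is a special pair, i.e. $-(a\wedge x)$ is too; note that $\omega(-a, x) = -\omega(a,x) \in \{-1,0,1\}$ iff $\omega(a,x) \in \{-1,0,1\}$, and the orthogonality conditions $\omega(-a,y) = \omega(a',x) = \omega(x,y) = 0$ are equivalent to those for $\Fix{a,a'}$. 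So again the two subspaces have the same spanning set up to sign. One should be slightly careful that when $\omega(a,x) = \pm 1$ the sign relation is only asserted for $n \in \{-1,0,1\}$, but $n = -1$ is allowed in exactly that range, so there is no issue.

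The main (and really only) obstacle is bookkeeping: one must check that the defining conditions (i) and (ii) of $\Fix{a,a'}$ are genuinely invariant under swapping the two special pairs and under negating $a$ or $a'$, and that the relations used ($\PresA{\kappa_2,\kappa_1} = -\PresA{\kappa_1,\kappa_2}$ and the $n=-1$ case of $\Pres{n\zeta,\kappa} = n\Pres{\zeta,\kappa}$, anti-symmetrized) are indeed among those available from Theorem \ref{theorem:summarypresentation} and the construction of $\fK_g^a$. Since these relations are built into the presentation and both conditions are manifestly symmetric / sign-invariant, the proof is short; I expect it to be a few lines for each part with no real calculation.
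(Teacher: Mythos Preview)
Your proposal is correct and follows essentially the same approach as the paper: both parts are proved by observing that the generators of one subspace are, up to sign, exactly the generators of the other, using the anti-symmetry relation for (a) and the $n=-1$ sign relation for (b). The paper's proof is slightly terser---for (b) it explicitly invokes (a) to reduce to the single case $\Fix{-a,a'} = \Fix{a,a'}$ rather than noting that all three cases are parallel---but the content is the same.
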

\begin{proof}
Conclusion (a) follows from the fact that for all generators
$\PresA{a \wedge x,a' \wedge y}$ of $\Fix{a,a'}$, the 
element $\PresA{a' \wedge y,a \wedge x}$ is a generator of $\Fix{a',a}$ satisfying
\[\PresA{a' \wedge y,a \wedge x} = -\PresA{a \wedge x,a' \wedge y}.\]
In light of (a), to prove (b) it is enough to prove that 
$\Fix{-a,a'} = \Fix{a,a'}$.
For this, note that if $\PresA{a \wedge x,a' \wedge y}$ is a generator
of $\Fix{a,a'}$
then $\PresA{-a \wedge x,a' \wedge y}$
is a generator of $\Fix{-a,a'}$ and we have
\[\PresA{-a \wedge x,a' \wedge y} = -\PresA{a \wedge x,a' \wedge y}.\qedhere\]
\end{proof}

\subsection{Image}
\label{section:presentationalt1image}

Define $\FixIm{a,a'}$ to be the subspace of $\wedge^2((\wedge^2 H)/\Q)$
spanned by elements of the form $(a \wedge x) \wedge (a' \wedge y)$ such that
$\PresA{a \wedge x,a' \wedge y}$ is a generator of $\Fix{a,a'}$.  We
thus have $\FixIm{a,a'} \subset \cK_g^a$ and
\[\Phi(\Fix{a,a'}) \subset \FixIm{a,a'}.\]
Our goal in the rest of this section is to prove that in two important cases
the map $\Phi$ takes $\Fix{a,a'}$ isomorphically
to $\FixIm{a,a'}$.

Before we do this, we introduce one further piece of notation.  Define $\FixBigIm{a,a'}$
to be the subspace of $\wedge^2((\wedge^2 H)/\Q)$ spanned by elements
of the form $(a \wedge x) \wedge (a' \wedge y)$ such that:
\begin{itemize}
\item We have $\omega(a,y)=\omega(a',x)= 0$.  Note that we are not requiring that $\omega(x,y) = 0$.
\end{itemize}
For such an element, we have
\[\fc(a \wedge x,a' \wedge y) = \omega(a,a') x \Cdot y - \omega(a,y)x \Cdot a' - \omega(x,a') a \Cdot y + \omega(x,y) a \Cdot a' = \omega(x,y) a \Cdot a'.\]
It follows that $\fc$ takes $\FixBigIm{a,a'}$ to the $1$-dimensional subspace of
$\Sym^2(H)$ spanned by $a \Cdot a'$.  It is easy to see that we can find elements $(a \wedge x) \wedge (a' \wedge y)$
as above with $\omega(x,y) \neq 0$, so in fact the kernel $\cK_g^a \cap \FixBigIm{a,a'}$ of $\fc$ restricted to 
$\FixBigIm{a,a'}$ has codimension $1$.  We have $\FixIm{a,a'} \subset \cK_g^a \cap \FixBigIm{a,a'}$,
and later in this section we will prove that in two cases we have
$\FixIm{a,a'} = \cK_g^a \cap \FixBigIm{a,a'}$.

\subsection{Identification I}
\label{section:identificationalt1}

We now commence with identifying our subspaces.
For the first, let $a \in H_{\Z}$ be primitive.  We will identify $\Fix{a,a}$.
In a generator $\PresA{a \wedge x,a \wedge y}$ of $\Fix{a,a}$, both
$x$ and $y$ are orthogonal to $a$ and also are only well-defined
up to multiples of $a$.  This suggests defining $\cU(a) = \Span{a}_{\Q}^{\perp}/\Span{a}$.
We can embed $\cU(a)$ into $(\wedge^2 H)/\Q$ by taking $x \in \cU(a)$ to
$a \wedge x \in (\wedge^2 H)/\Q$.  Using this, we have
\[\FixBigIm{a,a} = \wedge^2 \cU(a).\]
The symplectic form $\omega$ induces a symplectic form $\oomega$ on $\cU(a)$.  Let $\cK(a)$ be the kernel
of the map $\wedge^2 \cU(a) \rightarrow \Q$ induced by $\oomega$.  
We then have:

\begin{lemma}
\label{lemma:mapbilinear1}
Let $g \geq 3$ and let $a \in H_{\Z}$ be primitive.  Then:
\begin{itemize}
\item[(a)] $\FixBigIm{a,a} = \wedge^2 \cU(a)$; and
\item[(b)] $\FixIm{a,a} = \cK_g^a \cap \FixBigIm{a,a}$; and
\item[(c)] the linearization map $\Phi\colon \fK_g^a \rightarrow \wedge^2((\wedge^2 H)/\Q)$
takes $\Fix{a,a}$ isomorphically onto $\cK(a)$.   
\end{itemize}
\end{lemma}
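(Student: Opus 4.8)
The statement has three parts, but (a) is essentially unwinding definitions, (b) and (c) are the content. For (a): a generator of $\Fix{a,a}$ is $\PresA{a \wedge x, a \wedge y}$ with $x,y$ orthogonal to $a$ (and to each other, and with the special-pair condition, which here just says $\omega(a,x),\omega(a,y)\in\{0\}$ automatically since $\omega(a,a)=0$ — actually the special-pair condition on $a\wedge x$ reads $\omega(a,x)\in\{-1,0,1\}$, no constraint since we also need $\omega(a,x)=0$ from condition (i) when $a'=a$... wait, condition (i) with $a'=a$ gives $\omega(a,y)=\omega(a,x)=\omega(x,y)=0$). So after passing to $\cU(a) = \Span{a}_\Q^\perp/\Span{a}$, the generators of $\FixBigIm{a,a}$ correspond to all $\ox \wedge \oy$ with $\ox,\oy \in \cU(a)$, i.e.\ $\FixBigIm{a,a} = \wedge^2\cU(a)$. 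The only subtlety is that $x$ is well-defined in $H_\Z$ only up to $\Z a$, but $a \wedge x \in (\wedge^2 H)/\Q$ depends only on $\ox \in \cU(a)$, which needs the remark that $a \wedge a = 0$; I would note this explicitly.

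\textbf{Reduction to the adjoint representation.} The heart of the proof is (c), and the natural strategy is to recognize $\Fix{a,a}$ as (isomorphic to) the variant adjoint representation $\fQ$-style object built from $\cU(a)$ — more precisely, to relate it to $\fZ_{g-1}^a$ or directly to $\fA_{2(g-1)}$ via Theorem~\ref{theorem:spkernel}/Theorem~\ref{maintheorem:sladjoint}. Here is the mechanism I would use. The space $\cU(a)$ with its induced symplectic form $\oomega$ is a $2(g-1)$-dimensional symplectic space over $\Q$ with lattice $\cU(a)_\Z := \Span{a}_\Z^\perp / \Z a$ (using that $\Span{a}_\Z^\perp$ is a direct summand since $a$ is primitive, and the quotient is again free and carries a unimodular form). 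I would define a map $\psi\colon \fZ_{g-1}^a \to \Fix{a,a}$ sending a generator $\ZGenA{v_1,v_2}$ (for orthogonal primitive $v_1,v_2 \in \cU(a)_\Z$) to $\PresA{a \wedge \tilde v_1, a \wedge \tilde v_2}$ where $\tilde v_i \in \Span{a}_\Z^\perp$ lifts $v_i$; one checks this is independent of the lift (since $a \wedge (\tilde v_i + ka) = a \wedge \tilde v_i$ in $(\wedge^2 H)/\Q$) and that the defining relations of $\fZ_{g-1}^a$ (anti-symmetry, and linearity in the second slot over partial bases of $v^\perp$ in $\cU(a)_\Z$) map to valid relations in $\Fix{a,a}$ — the linearity relation is exactly a linearity relation in Theorem~\ref{theorem:summarypresentationweak}/\ref{theorem:summarypresentation} once one observes $a \wedge v$ is an isotropic pair (or zero) for $v \perp a$, hence a special pair. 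Conversely I would build a map $\Fix{a,a} \to \fZ_{g-1}^a$ sending $\PresA{a\wedge x, a \wedge y}$ to $\ZGenA{\ox,\oy}$ (well-defined after checking the $\Fix{a,a}$ relations — anti-symmetry and linearity — hold in $\fZ_{g-1}^a$; the only non-obvious case is a linearity relation coming from a special pair $\zeta$ that is \emph{not} of the form $a \wedge(\cdot)$, but any such $\zeta$ orthogonal to enough $a \wedge x$'s still forces the corresponding combination, and one reduces to partial bases inside $\cU(a)_\Z$). These maps are mutually inverse, so $\Fix{a,a} \cong \fZ_{g-1}^a$, and under this identification the linearization map $\Phi$ becomes the linearization map $\fZ_{g-1}^a \to \cZ_{g-1}^a$, which is an isomorphism onto $\cZ_{g-1}^a = \cK(a)$ by Theorem~\ref{maintheorem:spkernelalt} (valid since $g-1 \geq 2$, using $g \geq 3$). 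This simultaneously gives (c) and, combined with (a), gives (b) because $\cK_g^a \cap \FixBigIm{a,a}$ is by definition the kernel of $\fc$ restricted to $\wedge^2\cU(a)$, and on $\wedge^2\cU(a)$ the contraction $\fc$ restricts to (a nonzero multiple of) the form $\oomega$-contraction $\wedge^2\cU(a) \to \Q \cdot (a \Cdot a')$... — here I must be slightly careful: with $a'=a$, $a \Cdot a \neq 0$ in $\Sym^2 H$, and the computation in \S\ref{section:presentationalt1image} shows $\fc(a\wedge x, a\wedge y) = \oomega(x,y)\, a\Cdot a$ up to sign, so its kernel is exactly $\cK(a)$.

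\textbf{Main obstacle.} The calculational risk is entirely in checking well-definedness of the two maps between $\Fix{a,a}$ and $\fZ_{g-1}^a$, i.e.\ that relations go to relations in \emph{both} directions. The forward direction ($\fZ_{g-1}^a \to \Fix{a,a}$) is easy. The backward direction is the delicate one: a linearity relation in $\Fix{a,a}$ is indexed by a special pair $\zeta \in (a\wedge x)^\perp$ type data, and I need every such relation to be a consequence of the presentation of $\fZ_{g-1}^a$; the worry is relations where $\zeta$ itself is a symplectic pair not ``seen'' inside $\cU(a)$. I expect this to come down to the observation that in the relevant generators $\PresA{a \wedge x, a\wedge y}$, \emph{both} entries are isotropic pairs of the form $a \wedge(\cdot)$, so the only linearity relations among them that appear in the presentation of $\fK_g$ are the ones linear in the $\cU(a)_\Z$-slot, which are precisely the $\fZ_{g-1}^a$ relations after reducing mod $\Z a$ — together with the remark (already in the paper, Lemma~\ref{lemma:strongisotropic}) that isotropic pairs are multiples of strong ones, handling primitivity. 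If even this is fragile, the fallback is to not build the iso by hand but instead quote Theorem~\ref{theorem:summarypresentation}'s presentation of $\fK_g$ directly and extract the sub-presentation of $\Fix{a,a}$ by inspection, matching it term-by-term with Definition~\ref{definition:spkernelalt} of $\fZ_{g-1}^a$; this is the route I would actually take in writing, since it makes the relation-matching mechanical. Parts (a) and (b) then follow as indicated with no further work.
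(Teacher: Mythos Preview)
Your forward map $\psi\colon \fZ_{g-1}^a \to \Fix{a,a}$ is exactly what the paper uses, and your treatment of (a) and (b) matches the paper's. The difference is in how you finish (c).

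You propose building an inverse $\Fix{a,a} \to \fZ_{g-1}^a$ by sending $\PresA{a\wedge x,a\wedge y} \mapsto \ZGenA{\ox,\oy}$ and ``checking relations.'' But $\Fix{a,a}$ is a \emph{subspace} of $\fK_g^a$, not a vector space given by its own presentation. To define a linear map out of it by specifying images of a spanning set, you would have to know \emph{all} linear dependencies among the $\PresA{a\wedge x,a\wedge y}$ inside $\fK_g^a$---and those dependencies are not just the ``obvious'' ones coming from linearity in the $\cU(a)$-slot; they are whatever the global presentation of $\fK_g$ forces. Determining them is essentially equivalent to proving (c). Your ``main obstacle'' paragraph correctly senses this circularity, but the proposed fallback (extracting a sub-presentation by inspection) has the same problem.

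The paper sidesteps this entirely with a short diagram-chase. After building $\psi$, it checks only that $\psi$ is \emph{surjective} (every generator $\PresA{a\wedge x,a\wedge y}$ of $\Fix{a,a}$ is $\psi(\lambda_1\lambda_2\,\ZGenA{v_1,v_2})$ for suitable primitive $v_i$ and integers $\lambda_i$). Then it observes that the composite $\Phi\circ\psi\colon \fZ_{g-1}^a \to \cK(a)$ is exactly the linearization map for $\fZ_{g-1}^a$, which is an isomorphism by Theorem~\ref{maintheorem:spkernelalt} (valid since $g-1\geq 1$; you do not need $g-1\geq 2$). From $\psi$ surjective and $\Phi\circ\psi$ bijective, it follows immediately that $\Phi|_{\Fix{a,a}}$ is an isomorphism onto $\cK(a)$. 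No inverse map, no relation-matching in the backward direction. This is the missing idea; with it, the proof becomes a few lines.
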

\begin{proof}
We noted that (a) held right before the lemma.  We will prove (c) and then (b).

\begin{step}{1}
Conclusion (c) holds: the linearization map $\Phi\colon \fK_g^a \rightarrow \wedge^2((\wedge^2 H)/\Q)$
takes $\Fix{a,a}$ isomorphically onto $\cK(a)$.
\end{step}

Endow $\Z^{2g-2}$ with the standard symplectic form.  Let 
$\mu\colon \Z^{2g-2} \rightarrow \Span{a}^{\perp}/\Span{a}$ be an isomorphism
of abelian groups equipped with symplectic forms.  If $v_1,v_2 \in \Z^{2g-2}$ are
orthogonal vectors, then $a \wedge \mu(v_1)$ and $a \wedge \mu(v_2)$
are both either isotropic pairs or $0$, so we have a generator $\PresA{a \wedge \mu(v_1),a \wedge \mu(v_2)}$
of $\Fix{a,a}$.

Recall that we defined the vector
space $\fZ_{g-1}^a$ in Definition \ref{definition:spkernelalt}.  It is generated
by elements $\ZGenA{v_1,v_2}$ with $v_1,v_2 \in \Z^{2g-2}$ orthogonal primitive vectors.
Define a map $\psi\colon \fZ_{g-1}^a \rightarrow \Fix{a,a}$ via the formula
\[\psi(\ZGenA{v_1,v_2}) = \PresA{a \wedge \mu(v_1),a \wedge \mu(v_2)} \quad \text{for orthogonal primitive 
vectors $v_1,v_2 \in \Z^{2g-2}$}.\]
This takes relations to relations, and thus gives a well-defined map.

We claim that $\psi$ is surjective.  To see this, consider a generator $\PresA{a \wedge x,a \wedge y}$
of $\Fix{a,a}$.  We must check that $\PresA{a \wedge x,a \wedge y}$ is in
the image of $\psi$.  Let $w_1,w_2 \in \Z^{2g-2}$ be such that $\mu(w_1) = x$ and $\mu(w_2) = y$.
Write $w_1 = \lambda_1 v_1$ and $w_2 = \lambda_2 v_2$ with $\lambda_1,\lambda_2 \in \Z$ and $v_1,v_2 \in \Z^{2g-2}$
primitive.  We then have
\begin{align*}
\psi(\lambda_1 \lambda_2 \ZGenA{v_1,v_2}) &= \lambda_1 \lambda_2 \PresA{a \wedge \mu(v_1),a \wedge \mu(v_2)} = \PresA{a \wedge \left(\lambda_1 \mu(v_1)\right),a \wedge \left(\lambda_2 \mu(v_2)\right)} \\
&= \PresA{a \wedge \mu(w_1),a \wedge \mu(w_2)} = \PresA{a \wedge x,a \wedge y},
\end{align*}
as desired.

We can identify
the composition
\[\Phi \circ \psi\colon \fZ_{g-1}^a \longrightarrow \cK(a)\]
with the linearization map for $\fZ_{g-1}^a$.  Our assumption $g \geq 3$ implies that $g-1 \geq 1$, 
so Theorem \ref{maintheorem:spkernelalt} says
that $\Phi \circ \psi$ is an isomorphism.  Since $\psi$ is a surjection, this implies that
$\Phi$ takes $\Fix{a,a}$ isomorphically to $\cK(a)$, as desired.

\begin{step}{2}
Conclusion (b) holds: $\FixIm{a,a} = \cK_g^a \cap \FixIm{a,a} = \cK(a)$.
\end{step}

That $\FixIm{a,a} = \cK(a)$ is immediate from (c).  Conclusion (a) says
that $\FixBigIm{a,a} = \wedge^2 \cU(a)$.  Since $\cK_g^a \cap \FixBigIm{a,a}$ is
a codimension-$1$ subspace of $\FixBigIm{a,a} = \wedge^2 \cU(a)$ containing
$\FixIm{a,a}$ and $\cK(a)$ is also a codimension-$1$ subspace of $\wedge^2 \cU(a)$,
it follows that $\FixIm{a,a} = \cK(a)$ equals $\cK_g^a \cap \FixBigIm{a,a}$, as desired.
\end{proof}

\subsection{Identification II}
\label{section:identificationalt2}

Let $(a,a')$ be a pair of elements of $H_{\Z}$ such that
$\omega(a,a') = 0$ and such that
$\{a,a'\}$ is a basis for a rank-$2$ direct summand of $H_{\Z}$.
This latter condition implies that $a \wedge a'$ is a strong isotropic pair.
We will call such a pair $(a,a')$ an {\em isotropic basis}.
Our next goal is to identify $\Fix{a,a'}$.

In a generator $\PresA{a \wedge x,a' \wedge y}$ of $\Fix{a,a'}$,
we have that $x$ is orthogonal to $a'$ and
$y$ is orthogonal to $a$.  Moreover, $x$ is only well-defined up
to multiples of $a$ and $y$ is only well-defined up to multiples of $a'$.
This suggests defining
\[\cV(a,a') = \Span{a'}_{\Q}^{\perp}/\Span{a} \quad \text{and} \quad \cW(a,a') = \Span{a}_{\Q}^{\perp}/\Span{a'}.\]
We can embed $\cV(a,a')$ and $\cW(a,a')$ into $(\wedge^2 H)/\Q$ by taking $x \in \cV(a,a')$ to
$a \wedge x$ and $y \in \cW(a,a')$ to $a' \wedge y$.  Identifying $\cV(a,a')$ and $\cW(a,a')$
with the corresponding subspaces of $(\wedge^2 H)/\Q$, the intersection
$\cV(a,a') \cap \cW(a,a')$ is spanned by $a \wedge a'$.  Here
$a \wedge a'$ corresponds to $a' \in \cV(a,a')$ and $-a \in \cW(a,a')$.
It follows that as subspaces of $\wedge^2((\wedge^2 H)/\Q)$ we have
\[\FixBigIm{a,a'} = \cV(a,a') \wedge \cW(a,a') \cong \frac{\cV(a,a') \otimes \cW(a,a')}{\Span{a' \otimes a}}.\]
The symplectic form $\omega$ induces a bilinear pairing
\[\oomega\colon \cV(a,a') \times \cW(a,a') \longrightarrow \Q.\]
Let $\cK(a,a')$ be the kernel of the map
\[\cV(a,a') \otimes \cW(a,a') \longrightarrow \Q\]
induced by $\oomega$.  We have $a' \otimes a \in \cK(a,a')$, and by the previous paragraph
$\cK(a,a') / \Span{a' \otimes a}$ is a subspace of $\wedge^2((\wedge^2 H)/\Q)$.
We then have:

\begin{lemma}
\label{lemma:mapbilinear2}
Let $g \geq 3$ and let $(a,a')$ be an isotropic basis.  Then:
\begin{itemize}
\item[(a)] $\FixBigIm{a,a'} = \cV(a,a') \wedge \cW(a,a') \cong \left(\cV(a,a') \otimes \cW(a,a')\right)/\Span{a' \otimes a}$; and
\item[(b)] $\FixIm{a,a'} = \cK_g^a \cap \FixBigIm{a,a'} = \cK(a,a')/\Span{a' \otimes a}$; and
\item[(c)] the linearization map $\Phi\colon \fK_g^a \rightarrow \wedge^2((\wedge^2 H)/\Q)$
takes $\Fix{a,a'}$ isomorphically onto $\cK(a,a')/\Span{a' \otimes a}$.   
\end{itemize}
\end{lemma}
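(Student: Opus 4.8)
The plan is to mimic the proof of Lemma \ref{lemma:mapbilinear1}, but with the \emph{variant} adjoint presentation $\fA'_{2g-2}$ from \S\ref{section:sladjointvar} playing the role that $\fZ_{g-1}^a$ played there. The reason the variant is needed is that in a generator $\PresA{a\wedge x,a'\wedge y}$ of $\Fix{a,a'}$ the quantities $\omega(a,x)$ and $\omega(a',y)$ are no longer forced to vanish (as they were in the diagonal case $a=a'$), only to lie in $\{-1,0,1\}$ --- which is exactly the ``standardness'' constraint the variant presentations were designed to accommodate. Conclusion (a) is the computation recorded just before the lemma, so the real work is (c), with (b) following formally afterward.

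For (c), first I would fix a symplectic basis $\{a_1,b_1,\ldots,a_g,b_g\}$ for $H_\Z$ with $a_{g-1}=a$ and $a_g=a'$, which exists because $(a,a')$ is an isotropic basis. Relative to this basis one checks that $\cV(a,a')$ and $\cW(a,a')$ are free of rank $2g-2$, that $\oomega$ is a perfect pairing between them, that $\omega(a,x)$ is read off from the $b_{g-1}$-coordinate of $x\in\cV(a,a')$, and that $\omega(a',y)$ is read off from the $b_g$-coordinate of $y\in\cW(a,a')$. Promoting those two coordinates to the roles of $e_1^\ast$ and $e_2$, I would choose abelian-group isomorphisms $\mu_1\colon\cV(a,a')\to(\Z^{2g-2})^\ast$ and $\mu_2\colon\cW(a,a')\to\Z^{2g-2}$ with $\oomega(x,y)=\mu_1(x)(\mu_2(y))$ for all $x,y$, under which the conditions defining a generator of $\Fix{a,a'}$ translate exactly into the conditions defining a generator $\PAGen{f,v}$ of $\fA'_{2g-2}$. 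This produces a well-defined map $\psi\colon\fA'_{2g-2}\to\Fix{a,a'}$ with $\psi(\PAGen{f,v})=\PresA{a\wedge\mu_1^{-1}(f),a'\wedge\mu_2^{-1}(v)}$, which is surjective by the same device used in Lemma \ref{lemma:mapbilinear1} of rescaling an arbitrary generator to a primitive one.

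Transporting through $\mu_1\otimes\mu_2$, the linearization map of $\fA'_{2g-2}$ becomes an isomorphism onto $\cK(a,a')$ by Theorem \ref{maintheorem:sladjointvar} (applicable since $2g-2\ge4$), and $\Phi\circ\psi$ is this isomorphism followed by the wedge map $\cV(a,a')\otimes\cW(a,a')\to\cV(a,a')\wedge\cW(a,a')\subset\wedge^2((\wedge^2 H)/\Q)$, whose kernel is $\Span{a'\otimes a}\subset\cK(a,a')$ by part (a). Thus $\Phi\circ\psi$ is surjective onto $\cK(a,a')/\Span{a'\otimes a}$ with one-dimensional kernel, and that kernel is spanned by the generator of $\fA'_{2g-2}$ that $\psi$ sends to $\PresA{a\wedge a',a'\wedge a}$; the latter is $0$ in $\fK_g^a$ by the anti-symmetry and scaling relations, so this generator already lies in $\ker\psi$. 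Hence $\ker\psi=\ker(\Phi\circ\psi)$, so $\psi$ descends to an isomorphism $\fA'_{2g-2}/\ker\psi\xrightarrow{\sim}\Fix{a,a'}$, and composing its inverse with the descended $\Phi\circ\psi$ exhibits $\Phi$ as an isomorphism from $\Fix{a,a'}$ onto $\cK(a,a')/\Span{a'\otimes a}$; this is (c). For (b), by \S\ref{section:presentationalt1image} we have $\cK(a,a')/\Span{a'\otimes a}=\Phi(\Fix{a,a'})\subseteq\FixIm{a,a'}\subseteq\cK_g^a\cap\FixBigIm{a,a'}$, and the two outer spaces both have dimension $(2g-2)^2-2$ (using (a) and the codimension-one computation from \S\ref{section:presentationalt1image}), so all three coincide.

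The step I expect to demand the most care is constructing $\mu_1$ and $\mu_2$: one must verify that a single symplectic basis can be leveraged into a pair of integral isomorphisms that simultaneously intertwine $\oomega$ with the standard evaluation pairing, carry the two distinguished coordinates to $e_1^\ast$ and $e_2$, and respect primitivity, so that $\psi$ is genuinely well-defined and onto. Once that dictionary is in place the remainder is a diagram chase together with the dimension count, exactly parallel to Lemma \ref{lemma:mapbilinear1}, the only new wrinkle being the one-dimensional redundancy $\PresA{a\wedge a',a'\wedge a}=0$.
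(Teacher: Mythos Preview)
Your proposal is correct and follows essentially the same approach as the paper: both use the variant adjoint presentation $\fA'_{2g-2}$ and Theorem \ref{maintheorem:sladjointvar}, construct compatible isomorphisms $\mu_1,\mu_2$ intertwining $\oomega$ with the evaluation pairing and carrying the special-pair constraints to the $e_1^\ast$- and $e_2$-standardness conditions, check surjectivity of the resulting $\psi$, and then kill the one-dimensional kernel $\Span{a'\otimes a}$ by observing that its preimage maps to $\PresA{a\wedge a',a'\wedge a}=0$. The only cosmetic differences are your indexing ($a_{g-1},a_g$ versus the paper's $a_1,a_2$) and the direction of the $\mu_i$; the paper writes out explicit formulas for $\mu_1,\mu_2$ (its equation \eqref{eqn:mathfirst}), confirming your expectation that this is the step requiring the most care.
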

\begin{proof}
We noted that (a) held right before the lemma, and (b) follows from (c) just like in the
proof of Lemma \ref{lemma:mapbilinear1}.  We must prove (c).
Since $(a,a')$ is an isotropic basis, we can find a symplectic basis $\{a_1,b_1,\ldots,a_g,b_g\}$
for $H_{\Z}$ such that $a_1 = a$ and $a_2 = a'$.
Define $\cV_{\Z} = \Span{a_2}^{\perp} / \Span{a_1}$ and $\cW_{\Z} = \Span{a_1}^{\perp} / \Span{a_2}$.
We can identify:
\begin{itemize}
\item $\cV_{\Z}$ with $\Span{b_1,a_2,a_3,b_3,\ldots,a_g,b_g} \cong \Z^{2g-2}$; and
\item $\cW_{\Z}$ with $\Span{a_1,b_2,a_3,b_3,\ldots,a_g,b_g} \cong \Z^{2g-2}$.
\end{itemize}
Under these identifications, the bilinear pairing $\oomega$ between $\cV_{\Z}$ and
$\cW_{\Z}$ induced by $\omega$ is identified with the bilinear pairing between
$\Span{b_1,a_2,a_3,b_3,\ldots,a_g,b_g}$ and $\Span{a_1,b_2,a_3,b_3,\ldots,a_g,b_g}$ given by $\omega$.
Let $\{e_1,\ldots,e_{2g-2}\}$ be the standard basis for $\Z^{2g-2}$ and $\{e_1^{\ast},\ldots,e_{2g-2}^{\ast}\}$
be the corresponding dual basis for $(\Z^{2g-1})^{\ast}$.  Let $\mu_1\colon (\Z^{2g-2})^{\ast} \rightarrow \cV_{\Z}$
and $\mu_2\colon \Z^{2g-1} \rightarrow \cW_{\Z}$ be the isomorphisms defined by
\begin{alignat}{4}
\label{eqn:mathfirst}
&&\mu_1(e_1^{\ast}) &= b_1,\quad  &\mu_1(e_2^{\ast}) &= a_2\\
&&\mu_2(e_1)        &= -a_1,\quad &\mu_2(e_2)        &= b_2\notag
\end{alignat}
and
\begin{alignat*}{5}
&&\mu_1(e_{2i-3}^{\ast}) &= a_i,\quad &\mu_1(e_{2i-2}^{\ast}) &= b_i\quad\quad&\text{for $3 \leq i \leq g$},\\
&&\mu_2(e_{2i-3})        &= b_i,\quad &\mu_2(e_{2i-2})        &= -a_i,\quad\quad&\text{for $3 \leq i \leq g$}.
\end{alignat*}
We chose these isomorphisms in part because they ensure that
\[f(x) = \oomega(\mu_1(f),\mu_2(x)) \quad \text{for all $f \in (\Z^{2g-2})^{\ast}$ and $x \in \Z^{2g-2}$}.\]
The precise form of \eqref{eqn:mathfirst} will be important in the next paragraph.

Recall that we defined the vector
space $\fA'_{2g-2}$ in Definition \ref{definition:sladjointvar}.  It is generated
by elements $\PAGen{f,v}$ with $f \in (\Z^{2g-2})^{\ast}$ and $v \in \Z^{2g-2}$ primitive
vectors such that $f(v) = 0$ and such that:
\begin{itemize}
\item $f$ is $e_1^{\ast}$-standard, which means that the $e_1^{\ast}$-coordinate of $f$ lies
in $\{-1,0,1\}$; and
\item $v$ is $e_2$-standard, which
means that the $e_2$-coordinate of $v$ lies in $\{-1,0,1\}$.
\end{itemize}
In light of \eqref{eqn:mathfirst}, this implies that both $a_1 \wedge \mu_1(v)$ and $a_2 \wedge \mu_2(v)$ are special pairs.
We can therefore define a map $\psi\colon \fA'_{2g-2} \rightarrow \Fix{a_1,a_2}$ via the formula
\[\psi(\PAGen{f,v}) = \PresA{a_1 \wedge \mu_1(f),a_2 \wedge \mu(v)} \quad \text{for a generator $\PAGen{f,v}$ of $\fA'_{2g-2}$.}\]
This takes relations to relations, and thus gives a well-defined map.  We also have:

\begin{unnumberedclaim}
The map $\psi$ is surjective.
\end{unnumberedclaim}
\begin{proof}[Proof of claim]
Consider a generator $\PresA{a_1 \wedge x,a_2 \wedge y}$ of
$\Fix{a_1,a_2}$.  We must show that $\PresA{a_1 \wedge x,a_2 \wedge y}$ is in
the image of $\psi$.  We can assume that our generator is nonzero, so $x \neq 0$ and $y \neq 0$.
Write $x = \mu_1(f)$ and $y = \mu_2(v)$ with $f \in (\Z^{2g-2})^{\ast}$ and $v \in \Z^{2g-2}$.  By definition,
$f$ is $e_1^{\ast}$-standard and $v$ is $e_2$-standard.  Write
$f = \lambda f'$ and $v = \eta v'$ with $\lambda,\eta \in \Z$ and $f'$ and $v'$ primitive.  
The elements $f'$ and $v'$ are $e_1^{\ast}$- and $e_2$-standard, respectively.
We therefore have a generator
$\PAGen{f',v'}$ of $\fA'_{2g-2}$, and
\begin{align*}
\psi(\lambda \eta \PAGen{f',v'}) &= \lambda \eta \PresA{a_1 \wedge \mu_1(f'),a_2 \wedge \mu_2(v')} = \lambda \PresA{a_1 \wedge \mu_1(f'),a_2 \wedge \left(\eta \mu_2(v')\right)}\\
                                 &= \lambda \PresA{a_1 \wedge \mu_1(f'),a_2 \wedge y} = \PresA{a_1 \wedge \left(\lambda \mu_1(f')\right),a_2 \wedge y} = \PresA{a_1 \wedge x,a_2 \wedge y},
\end{align*}
as desired.
\end{proof}

Since $g \geq 3$, we have $2g-2 \geq 4$.  Theorem \ref{maintheorem:sladjointvar} thus gives a linearization map
$\tPhi\colon \fA'_{2g-2} \rightarrow \fsl_{2g-2}(\Q)$ that is an isomorphism.  
Here $\fsl_{2g}(\Q)$ is the kernel of the trace map $(\Q^{2g-2})^{\ast} \otimes \Q^{2g-2} \rightarrow \Q$.  Recall
that $\cK(a_1,a_2)$ is the kernel of the symplectic pairing $\cV(a_1,a_2) \otimes \cW(a_1,a_2) \rightarrow \Q$.  Identifying
$\mu_1$ and $\mu_2$ with maps $(\Q^{2g-2})^{\ast} \rightarrow \cV(a_1,a_2)$ and $\Q^{2g-2} \rightarrow \cW(a_1,a_2)$, this
all fits into a commutative diagram
\[\begin{tikzcd}
\fA'_{2g-2} \arrow[two heads]{d}{\psi} \arrow{r}{\tPhi}[swap]{\cong} & \fsl_{2g-2}(\Q) \arrow{r}{\mu_1 \otimes \mu_2}[swap]{\cong} & \cK(a_1,a_2) \arrow[two heads]{d} \\
\Fix{a_1,a_2} \arrow{rr}{\Phi} & & \cK(a_1,a_2) / \Span{a_2 \otimes a_1}.
\end{tikzcd}\]
By \eqref{eqn:mathfirst}, we have $\psi(\PAGen{e_2^{\ast},-e_1}) = \PresA{a_1 \wedge a_2,a_2 \wedge a_1} = 0$.  Since the isomorphism on the top row
of this diagram takes $\PAGen{e_2^{\ast},-e_1}$ to $a_2 \otimes a_1$, we conclude that the map $\Phi$ on the bottom row
is an isomorphism, as desired.
\end{proof}

\section{Symmetric kernel, alternating version II: the set \texorpdfstring{$S$}{S} and \texorpdfstring{$\SymSp_g$}{SymSpg}}
\label{section:presentationalt2}

In the next three sections, we construct a set $S \subset \fK_g^a$ with $\Span{S} = \fK_g^a$.  This section defines
$S$ and studies its symmetries.  In \S \ref{section:presentationalt3} we will construct a large
number of elements in $\Span{S}$, and then finally in \S \ref{section:presentationalt4} we will prove that $\Span{S} = \fK_g^a$.
Throughout this section, like in the last section we relax our standing assumption that $g \geq 4$ (Assumption \ref{assumption:genusalt}),
so our results include explicit genus ranges when they are necessary.  See Remark \ref{remark:genusalt}.

\subsection{The set \texorpdfstring{$S$}{S}}

Fix a symplectic basis $\cB = \{a_1,b_1,\ldots,a_g,b_g\}$ for $H_{\Z}$.  Define $S = S_{12} \cup S_3$, where\footnote{The reason for calling this set $S_{12}$ is that later it will be expressed
as the union of sets $S_1$ and $S_2$, which also explains why the other set is $S_3$.}
\[S_{12} = \bigcup_{\substack{a,a' \in \cB \\ \omega(a,a') = 0}} \pFix{a,a'} \quad \text{and} \quad
S_3    = \bigcup_{\substack{1 \leq i,j \leq g \\ i \neq j}} \oFix{a_i-b_j,b_i-a_j}.\]
Just like we did here, we will write elements of $\Span{S_{12}}$ in purple and elements of $\Span{S_3}$ in orange; for instance,
\begin{align*}
&\PPresA{a_1 \wedge a_3,a_2 \wedge (a_3 - b_4)} \in S_{12},\\
&\OPresA{(a_1-b_2) \wedge (a_3+a_4),(b_1 - a_2) \wedge (b_3-b_4)} \in S_3.
\end{align*}
The sets $S_{12}$ and $S_3$ are not disjoint, so some elements could be written in either color; for instance,
\begin{align*}
&\PPresA{a_1 \wedge (a_2 - b_3),a_1 \wedge (b_2 - a_3)} \in S_{12},\\
&\OPresA{a_1 \wedge (a_2 - b_3),a_1 \wedge (b_2-a_3)} = \OPresA{(a_2-b_3) \wedge a_1,(b_2 - a_3) \wedge a_1} \in S_3.
\end{align*}

\subsection{Signed symmetric group}

Recall that $\SymSp_g$ consists of all $f \in \Sp_{2g}(\Z)$ such that
for all $x \in \cB$, we have either $f(x) \in \cB$ or $-f(x) \in \cB$.  This is a finite group.
Associated to each $f \in \Sp_{2g}(\Z)$ is a permutation $p$ of $\{1,\ldots,g\}$ such that
for all $1 \leq i \leq g$ the pair $(f(a_i),f(b_i))$ is one of the following:
\[(a_{p(i)},b_{p(i)}),   \quad \text{or} \quad
  (-a_{p(i)},-b_{p(i)}), \quad \text{or} \quad
  (b_{p(i)},-a_{p(i)}),  \quad \text{or} \quad
  (-b_{p(i)},a_{p(i)}).\]
Our main goal in this section is to prove:

\begin{lemma}
\label{lemma:altsymspacts}
For all $g \geq 3$, the action of $\SymSp_g$ on $\fK_g^a$ takes $\Span{S}$ to $\Span{S}$.
\end{lemma}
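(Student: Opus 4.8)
The plan is to follow the Step~3 pattern of \S\ref{section:prooftechnique}, reducing $\SymSp_g$-invariance of $\Span S$ to a check against a generating set of $\SymSp_g$. The first ingredient is the $\Sp_{2g}(\Z)$-equivariance of the subspaces $\Fix{a,a'}$: any $f\in\Sp_{2g}(\Z)$ preserves $\omega$, preserves the symmetric product on $\Sym^2(H)$, and carries special pairs to special pairs, so it sends the generators $\PresA{a\wedge x,a'\wedge y}$ of $\Fix{a,a'}$ bijectively onto those of $\Fix{f(a),f(a')}$; hence $f(\Fix{a,a'})=\Fix{f(a),f(a')}$ whenever $\omega(a,a')=0$. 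Combined with Lemma~\ref{lemma:isotropicbilineareasy} (to absorb signs), this turns the lemma into a statement about the $\SymSp_g$-orbits of the pairs defining $S_{12}$ and $S_3$.

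The $S_{12}$ part should be immediate: for $a,a'\in\cB$ with $\omega(a,a')=0$ and $f\in\SymSp_g$ we have $f(a),f(a')\in\pm\cB$ with $\omega(f(a),f(a'))=0$, so Lemma~\ref{lemma:isotropicbilineareasy}(b) gives $\Fix{f(a),f(a')}=\Fix{c,c'}$ for some $c,c'\in\cB$; thus $f$ permutes the summands $\pFix{a,a'}$ and $\Span{S_{12}}$ is $\SymSp_g$-invariant. For $S_3$, since $\SymSp_g$ is generated by the hyperbolic-pair transpositions $\sigma_{kl}$ (swapping $(a_k,b_k)\leftrightarrow(a_l,b_l)$) and the quarter-turns $\rho_k$ ($a_k\mapsto b_k$, $b_k\mapsto -a_k$; note $\rho_k^2$ negates the $k$-th pair), it suffices to compute the image of each $\oFix{a_i-b_j,b_i-a_j}$ under these. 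When the support misses $\{i,j\}$ the generator fixes the subspace; a $\sigma_{kl}$ meeting $\{i,j\}$ moves it to another summand of $S_3$; and a short sign-normalization (via Lemma~\ref{lemma:isotropicbilineareasy}) shows that, up to relabeling, $\langle\rho_i,\rho_j\rangle$ moves $\Fix{a_i-b_j,b_i-a_j}$ only to the three subspaces $\Fix{a_i+b_j,b_i+a_j}$, $\Fix{a_i+a_j,b_i-b_j}$ and $\Fix{a_i-a_j,b_i+b_j}$. So the whole lemma reduces to showing that these three extra subspaces lie in $\Span S$.

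This last reduction is, I expect, the main obstacle. The naive move — inside a generator $\PresA{(a_i+b_j)\wedge x,(b_i+a_j)\wedge y}$, split $(a_i+b_j)\wedge x=a_i\wedge x+b_j\wedge x$ and apply the linearity relations of Theorem~\ref{theorem:summarypresentation} — fails, because the two summands are individually \emph{not} sym-orthogonal to the other slot (only their sum is), so those relations do not apply. The plan is instead to choose the auxiliary vectors $x,y$ in each generator so that one slot becomes a \emph{symplectic} pair sitting in a rank-$4$ symplectic summand, and then to rewrite that generator explicitly as an integer combination of generators supported on $\cB$ — that is, of elements of the $\pFix{a,a'}$ with $a,a'\in\cB$ (which lie in $S_{12}$) and of the $\oFix{a_k-b_l,b_k-a_l}$ (which lie in $S_3$). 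To carry out and certify such rewritings I would lean on the identifications of Lemmas~\ref{lemma:mapbilinear1} and \ref{lemma:mapbilinear2}, which present $\Fix{a,a}$ and $\Fix{a,a'}$ (for an isotropic basis $(a,a')$) in terms of the already-understood representations $\fZ_{g-1}^a$ and $\fA'_{2g-2}$, so that the existence of the desired combination is reduced to identities in the standard and adjoint representations established in Part~\ref{part:1}. The genuinely laborious part of the proof will be the bookkeeping of these combinations; everything else is formal.
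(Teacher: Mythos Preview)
Your proposal is correct and follows essentially the same route as the paper. The paper carries out precisely the reduction you describe: it shows $\SymSp_g$ permutes the $\pFix{a,a'}$ summands of $S_{12}$ (your argument for this is identical), and for $S_3$ it reduces via Lemma~\ref{lemma:isotropicbilineareasy} to showing that $\Fix{a_i+b_j,b_i+a_j}$, $\Fix{a_i+a_j,b_i-b_j}$, and $\Fix{a_i-a_j,b_i+b_j}$ lie in $\Span S$ (this is Lemma~\ref{lemma:s3symmetricalt}, proved just before the present lemma). Each of these is then handled exactly as you propose, via the identification of Lemma~\ref{lemma:mapbilinear2} to produce an explicit finite generating set whose elements are checked one by one to lie in $\Span{S_{12}}$ or $\Span{S_3}$. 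One small point your outline does not anticipate: in each case a generator of the form $\PresA{(\cdots)\wedge(a_k+b_k),(\cdots)\wedge(a_k+b_k)}$ appears (for some $k\geq 3$), and to place it in $\Span S$ one first needs the auxiliary fact $\Fix{a_k+b_k,a_k+b_k}\subset\Span S$ (Lemma~\ref{lemma:s12twoa1weak}); this is a separate preliminary computation of the same flavor, using Lemma~\ref{lemma:mapbilinear1} rather than Lemma~\ref{lemma:mapbilinear2}.
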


The proof of Lemma \ref{lemma:altsymspacts} is at the end of this section after some
preliminaries.

\subsection{Symmetric group}

Embed the symmetric group $\fS_g$ on $g$ generators into $\Sp_{2g}(\Z)$ by letting $p\in \fS_g$ act as
$p(a_i) = a_{p(i)}$ and $p(b_i) = b_{p(i)}$ for $1 \leq i \leq g$.
The group $\fS_g$ is a subgroup of $\SymSp_g$.  We start with:

\begin{lemma}
\label{lemma:symmetrics}
For all $g \geq 3$, the action of $\fS_g$ on $\fK_g^a$ takes $\Span{S}$ to $\Span{S}$.
\end{lemma}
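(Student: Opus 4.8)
The plan is to exploit the fact that $\mathfrak{S}_g$, acting through its embedding $p \mapsto (a_i \mapsto a_{p(i)},\ b_i \mapsto b_{p(i)})$ into $\Sp_{2g}(\Z)$, does nothing more than permute the building blocks $\Fix{a,a'}$ out of which $S_{12}$ and $S_3$ are assembled. Since $\fK_g^a$ is the $-1$-eigenspace of the canonical involution inside $\fK_g$, on which $\Sp_{2g}(\Z)$ acts, the subgroup $\mathfrak{S}_g$ acts on $\fK_g^a$, and it suffices to show $p(\Span{S}) \subseteq \Span{S}$ for each $p \in \mathfrak{S}_g$; applying this to $p^{-1}$ then gives equality. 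Because $\Span{S} = \Span{S_{12}} + \Span{S_3}$, it is enough to show that $\Span{S_{12}}$ and $\Span{S_3}$ are each preserved.

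First I would record the basic equivariance of the subspaces from \S\ref{section:presentationalt1setup}: for every $f \in \Sp_{2g}(\Z)$ and every $a,a' \in H_{\Z}$ with $\omega(a,a')=0$, one has $f(\Fix{a,a'}) = \Fix{f(a),f(a')}$. This is immediate: $f$ carries a generator $\PresA{a \wedge x, a' \wedge y}$ to $\PresA{f(a) \wedge f(x), f(a') \wedge f(y)}$, and since $f$ preserves $\omega$ it preserves both the orthogonality conditions~(i) and the special-pair conditions~(ii) defining the generators of $\Fix{a,a'}$; applying the same to $f^{-1}$ gives the reverse inclusion. Now fix $p \in \mathfrak{S}_g$. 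Since $p$ permutes $\cB$ and preserves $\omega$, it permutes the finite set $\{(a,a') : a,a' \in \cB,\ \omega(a,a')=0\}$, so by the equivariance property it permutes the corresponding summands $\Fix{a,a'}$. Hence $p(\Span{S_{12}}) = \Span{S_{12}}$.

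For $S_3$, I would first note the one-line check $\omega(a_i-b_j,\,b_i-a_j) = \omega(a_i,b_i) - \omega(a_i,a_j) - \omega(b_j,b_i) + \omega(b_j,a_j) = 1 - 0 - 0 + (-1) = 0$ for $i \neq j$, so each $\Fix{a_i-b_j,\,b_i-a_j}$ is defined. For $p \in \mathfrak{S}_g$ we have $p(a_i-b_j) = a_{p(i)}-b_{p(j)}$ and $p(b_i-a_j) = b_{p(i)}-a_{p(j)}$, and $i \neq j$ forces $p(i) \neq p(j)$; so by the equivariance property $p$ carries the $(i,j)$-summand of $S_3$ onto the $(p(i),p(j))$-summand, which is again one of the summands defining $S_3$. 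Therefore $p(\Span{S_3}) = \Span{S_3}$. Combining, $p(\Span{S}) = p(\Span{S_{12}}) + p(\Span{S_3}) = \Span{S_{12}} + \Span{S_3} = \Span{S}$, as desired.

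There is no real obstacle here: the only things requiring (entirely routine) care are the verification that the defining conditions of $\Fix{a,a'}$ are $\Sp_{2g}(\Z)$-invariant and the bookkeeping that $\mathfrak{S}_g$ literally permutes the index sets for $S_{12}$ and $S_3$ rather than mapping them into something larger. This is in deliberate contrast with the harder Lemma~\ref{lemma:altsymspacts} for the full $\SymSp_g$: the sign-change generators in $\SymSp_g$ send $a_i-b_j$ to $a_i+b_j$ and similar pairs that are \emph{not} among the indices defining $S_3$, so that case genuinely requires extra input rather than just this permutation argument.
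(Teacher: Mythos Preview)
Your proposal is correct and follows essentially the same approach as the paper's proof: both arguments simply observe that $p \in \fS_g$ satisfies $p(\Fix{a,a'}) = \Fix{p(a),p(a')}$, that $p$ permutes the pairs $(a,a')$ indexing $S_{12}$, and that $p$ permutes the pairs $(a_i-b_j,b_i-a_j)$ indexing $S_3$. You spell out the equivariance check and the computation $\omega(a_i-b_j,b_i-a_j)=0$ more explicitly than the paper does, but the substance is identical.
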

\begin{proof}
For $p \in \fS_g$, we have
\begin{alignat*}{3}
&&p(\pFix{a,a'}) &= \pFix{p(a),p(a')} \subset S_{12} && \quad \text{for $a,a \in \cB$ with $\omega(a,a') = 0$},\\
&&p(\oFix{a_i-b_j,b_i-a_j}) &= \oFix{a_{p(i)} - b_{p(j)},b_{p(i)} - a_{p(j)}} \subset S_3 && \quad \text{for $1 \leq i,j \leq g$ distinct}.
\end{alignat*}
The lemma follows.
\end{proof}

\subsection{Some elements, I}

To extend Lemma \ref{lemma:symmetrics} to $\SymSp_g$, we need to construct some
elements in $\Span{S}$.  We start with:

\begin{lemma}
\label{lemma:s12twoa1weak}
Let $g \geq 3$.  For $1 \leq i \leq g$, we have $\Fix{a_i+b_i,a_i+b_i} \subset \Span{S}$.
\end{lemma}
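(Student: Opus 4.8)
The plan is to first reduce to $i=1$. The group $\fS_g$ lies in $\SymSp_g$ and sends $\Fix{a_1+b_1,a_1+b_1}$ to $\Fix{a_{p(1)}+b_{p(1)},a_{p(1)}+b_{p(1)}}$, so by Lemma \ref{lemma:symmetrics} it is enough to prove $\Fix{c,c}\subset\Span{S}$ for $c=a_1+b_1$. Next I would recall from the proof of Lemma \ref{lemma:mapbilinear1} (applied with $c$ in place of the primitive vector $a$) that there is a surjection $\psi\colon \fZ_{g-1}^a\twoheadrightarrow\Fix{c,c}$ carrying $\ZGenA{v_1,v_2}$ to $\PresA{c\wedge\mu(v_1),c\wedge\mu(v_2)}$, where $\mu$ identifies a standard symplectic $\Z^{2g-2}$ with $c^{\perp}_{\Z}/\Span{c}$; choosing $\mu$ to carry a symplectic basis of $\Z^{2g-2}$ onto $\cB'=\{a_2,b_2,\ldots,a_g,b_g\}$ and using the linearity and anti-symmetry relations inside $\Fix{c,c}$ (which realize the relations of $\fZ_{g-1}^a$), together with the fact that isotropic pairs span $\wedge^2 V$-type spaces in positive genus (as in Lemma \ref{lemma:generationomega}), it suffices to show that $\PresA{c\wedge x,c\wedge y}\in\Span{S}$ for $x,y$ ranging over a short explicit list of vectors built from $\cB'$ (the generic case being $x,y$ two basis vectors with distinct indices).

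\textbf{The engine.}
Write $c\wedge x=a_1\wedge x+b_1\wedge x$, so that $\Phi(\PresA{c\wedge x,c\wedge y})$ expands into the ``diagonal'' pieces $(a_1\wedge x)\wedge(a_1\wedge y)$ and $(b_1\wedge x)\wedge(b_1\wedge y)$, which are the images of the generators $\PresA{a_1\wedge x,a_1\wedge y}\in\Fix{a_1,a_1}$ and $\PresA{b_1\wedge x,b_1\wedge y}\in\Fix{b_1,b_1}$ (both contained in $S_{12}\subset\Span{S}$), plus the ``cross'' part $(a_1\wedge x)\wedge(b_1\wedge y)+(b_1\wedge x)\wedge(a_1\wedge y)$. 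The point is that although neither summand of the cross part is sym-orthogonal, their sum is, and it is supplied by the $S_3$-generators: pick an index $j$ (using $g\geq 4$, so that there is a spare index disjoint from $\{1\}$ and from the indices occurring in $x,y$) and consider the generators $\PresA{(a_1-b_j)\wedge x,(b_1-a_j)\wedge y}$ and $\PresA{(a_1-b_j)\wedge y,(b_1-a_j)\wedge x}$ lying in $\Fix{a_1-b_j,b_1-a_j}\subset S_3$; expanding and subtracting them isolates the cross part up to correction terms, each of which is a generator in some $\Fix{a,a'}\subset S_{12}$ with $a,a'\in\cB$ and $\omega(a,a')=0$ (and $\Fix{a,a'}$ is allowed to contain generators whose slots are symplectic as well as isotropic pairs). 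Combining all of this in the spirit of Claims 3.A.1--3.A.3 in the proof of Theorem \ref{maintheorem:sladjoint} yields an identity in $\fK_g^a$ exhibiting $\PresA{c\wedge x,c\wedge y}$ as an explicit $\Q$-linear combination of $S$-generators; the validity of this identity at the level of $\fK_g^a$ (not merely after applying $\Phi$) is guaranteed because every intermediate expression is a legitimate generator and every step is an instance of the linearity/anti-symmetry relations, with the linearization isomorphism $\Fix{c,c}\xrightarrow{\ \sim\ }\cK(c)$ of Lemma \ref{lemma:mapbilinear1}(c) and the isomorphism of Theorem \ref{maintheorem:spkernelalt} available as a bookkeeping check.

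\textbf{The main obstacle.}
The hard part is precisely that $a_1\wedge x$ and $b_1\wedge y$ are \emph{never} sym-orthogonal, so none of the generators in play can be split in the ``cross'' direction by the linearity relation: the cross contribution can only ever appear as a fragment of a genuinely sym-orthogonal generator, so the entire computation must be orchestrated so that all spurious fragments cancel against one another. Doing this correctly requires careful tracking of signs (bearing in mind Warning \ref{warning:alternating}, that wedges in $\wedge^2((\wedge^2 H)/\Q)$ cannot be rearranged) and a careful choice of the auxiliary index and of which $S_3$- and $S_{12}$-generators to combine. Finally, when $g=3$ there is no spare index, and one must instead argue directly with the available $S_3$-subspaces $\Fix{a_i-b_j,b_i-a_j}$ and their relabelings under $\fS_3$, which accounts for the fact that the statement is only asserted for $g\geq 3$.
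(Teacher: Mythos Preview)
Your proposal misses the one-line observation that makes the paper's proof trivial, and in its place offers an argument with a genuine gap.

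\textbf{The missed observation.} Since $\PresA{-,-}$ is alternating in each wedge factor, for $c=a_1+b_1$ and $x,y\in\{a_2,b_2,\ldots,a_g,b_g\}$ with $\omega(x,y)=0$ one has
\[
\PresA{c\wedge x,c\wedge y}=\PresA{x\wedge c,y\wedge c}\in\Fix{x,y}\subset S_{12},
\]
because this is a generator of $\Fix{x,y}$ with second slots $c,c$ (which satisfy all the required orthogonality and special-pair conditions). Similarly, for the ``diagonal'' generators one rewrites
\[
\PresA{c\wedge(a_i-b_j),c\wedge(b_i-a_j)}=\PresA{(a_i-b_j)\wedge c,(b_i-a_j)\wedge c}\in\Fix{a_i-b_j,b_i-a_j}\subset S_3.
\]
That is the entire proof: the generators of $\Fix{c,c}$ identified via Lemma~\ref{lemma:mapbilinear1} are already, after a harmless sign-free flip, generators of some $\Fix{x,y}$ or $\Fix{a_i-b_j,b_i-a_j}$ appearing in $S$. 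No cross-term analysis, no spare index, and it works uniformly for $g\geq 3$.

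\textbf{The gap in your approach.} Your engine decomposes $\Phi(\PresA{c\wedge x,c\wedge y})$ in $\wedge^2((\wedge^2 H)/\Q)$ and then assembles an element of $\Span{S}$ with the same image. But equality of $\Phi$-images does not give equality in $\fK_g^a$: at this point in the paper $\Phi$ is only known to be injective on each individual $\Fix{a,a'}$ (Lemmas~\ref{lemma:mapbilinear1}--\ref{lemma:mapbilinear2}), not on $\Fix{c,c}+\Span{S}$. You assert that ``every step is an instance of the linearity/anti-symmetry relations,'' but the expansion $c\wedge x=a_1\wedge x+b_1\wedge x$ is \emph{not} such a step: the linearity relation in $\fK_g$ only lets you expand one slot when the \emph{other} is a special pair, and $\PresA{a_1\wedge x,c\wedge y}$ is not even a generator since $\fc(a_1\wedge x,c\wedge y)=\omega(a_1,c)\,x\Cdot y+\cdots\neq 0$. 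So your identity, if it exists, is not produced by the manipulations you describe; you would have to carry out a genuine relation-level computation, which you have not done. Your own ``main obstacle'' paragraph correctly diagnoses this difficulty but does not resolve it, and the $g=3$ case is left open entirely.
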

\begin{proof}
By Lemma \ref{lemma:symmetrics}, we can apply any element of $\fS_g$ to $\Fix{a_i+b_i,a_i+b_i}$ without changing
whether or not it lies in $\Span{S}$.  Applying an appropriate such element, we reduce to 
showing that $\Fix{a_1+b_1,a_1+b_1} \subset \Span{S}$.  Following the notation in
\S \ref{section:identificationalt1}, define 
\[\cU = \Span{a_1+b_1}_{\Q}^{\perp}/\Span{a_1+b_1} \cong \Span{A}_{\Q} \quad \text{with} \quad A = \{a_2,b_2,\ldots,a_g,b_g\}.\]
We proved in Lemma \ref{lemma:mapbilinear1} that $\Fix{a_1+b_1,a_1+b_1}$ is isomorphic
to the kernel of the map $\wedge^2 \cU \rightarrow \Q$
induced by the symplectic form $\omega$.  Under this isomorphism, a generator
$\PresA{(a_1+b_1) \wedge x,(a_1+b_1) \wedge y}$ of $\Fix{a_1+b_1,a_1+b_1}$ maps 
to $x \wedge y \in \wedge^2 \cU$.

The kernel of $\wedge^2 \cU \rightarrow \Q$ is spanned by $X = \Set{$x \wedge y$}{$x,y \in A$, $\omega(x,y) = 0$}$
and $Y = \Set{$a_i \wedge b_i - a_j \wedge b_j$}{$2 \leq i < j \leq g$}$.  Since for $2 \leq i < j \leq g$ we have
\[(a_i - b_j) \wedge (b_i - a_j) = a_i \wedge b_i - a_j \wedge b_j + \text{an element of $\Span{X}$},\]
we can replace $Y$ by $\Set{$(a_i - b_j) \wedge (b_i - a_j)$}{$2 \leq i < j \leq g$}$.  It
follows that $\Fix{a_1+b_1,a_1+b_1}$ is generated by the following elements:

\begin{case}{1}
\label{case:s12twoa1weak.1}
$\PresA{(a_1+b_1) \wedge x,(a_1+b_1) \wedge y}$ for $x,y \in A$ with $\omega(x,y) = 0$.  
\end{case}

\noindent
These equal $\PPresA{x \wedge (a_1+b_1),y \wedge (a_1+b_1)} \in S_{12}$.

\begin{case}{2}
\label{case:s12twoa1weak.2}
$\PresA{(a_1+b_1) \wedge (a_i-b_j),(a_1+b_1) \wedge (b_i-a_j)}$ for $2 \leq i < j \leq g$.
\end{case}

\noindent
These equal $\OPresA{(a_i-b_j) \wedge (a_1+b_1),(b_i-a_j) \wedge (a_1+b_1)} \in S_3$.
\end{proof}

\subsection{Some elements, II}

We next handle the following variants of the elements
of $S_3$:

\begin{lemma}
\label{lemma:s3symmetricalt}
Let $g \geq 3$.  For all distinct $1 \leq i,j \leq g$ and $\epsilon,\epsilon' \in \{\pm 1\}$, both
$\Fix{\epsilon a_i + \epsilon' b_j,\epsilon b_i + \epsilon' a_j}$ and
$\Fix{\epsilon a_i + \epsilon' a_j,\epsilon b_i - \epsilon' b_j}$ are subsets of $\Span{S}$.
\end{lemma}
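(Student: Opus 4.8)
The plan is to derive the lemma from the $\SymSp_g$-symmetry of $\Span{S}$ established in Lemma~\ref{lemma:altsymspacts}, using the sign identities of Lemma~\ref{lemma:isotropicbilineareasy} to eliminate the parameters $\epsilon,\epsilon'$. First I would note that in every case listed the two displayed vectors are $\omega$-orthogonal: since $i\neq j$ all cross terms vanish and the two diagonal terms $\omega(a_i,b_i)$ and $\omega(a_j,b_j)$ cancel, so all the spaces $\Fix{\cdot,\cdot}$ in the statement are defined. Next, Lemma~\ref{lemma:isotropicbilineareasy}(b) says that negating either argument (or both) of $\Fix{\cdot,\cdot}$ does not change the space; applying this one checks that $\Fix{\epsilon a_i+\epsilon' b_j,\epsilon b_i+\epsilon' a_j}$ equals either $\oFix{a_i-b_j,b_i-a_j}$, which already lies in $S_3$, or $\Fix{a_i+b_j,b_i+a_j}$, and that $\Fix{\epsilon a_i+\epsilon' a_j,\epsilon b_i-\epsilon' b_j}$ equals either $\Fix{a_i+a_j,b_i-b_j}$ or $\Fix{a_i-a_j,b_i+b_j}$.

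It therefore remains to show that the three spaces $\Fix{a_i+b_j,b_i+a_j}$, $\Fix{a_i+a_j,b_i-b_j}$ and $\Fix{a_i-a_j,b_i+b_j}$ lie in $\Span{S}$. The key point is that each of these is carried by a single element of $\SymSp_g$ onto $\Fix{a_i-b_j,b_i-a_j}=\oFix{a_i-b_j,b_i-a_j}\subset S_3$; here I use that for $f\in\Sp_{2g}(\Z)$ one has $f(\Fix{a,a'})=\Fix{f(a),f(a')}$, since the conditions defining the generators of $\Fix{a,a'}$ are $\omega$-invariant and $f$ sends special pairs to special pairs. Concretely: for $\Fix{a_i+b_j,b_i+a_j}$ take the element of $\SymSp_g$ negating $a_i$ and $b_i$ and fixing the other basis vectors, and then apply Lemma~\ref{lemma:isotropicbilineareasy}(b) to the image $\Fix{-a_i+b_j,-b_i+a_j}$; for $\Fix{a_i-a_j,b_i+b_j}$ take the element sending $a_j\mapsto b_j$, $b_j\mapsto -a_j$; and for $\Fix{a_i+a_j,b_i-b_j}$ take the element sending $a_j\mapsto -b_j$, $b_j\mapsto a_j$. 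In each case a one-line computation of the images of the two displayed vectors identifies the result with $\Fix{a_i-b_j,b_i-a_j}$, and one only has to check that the chosen map lies in $\SymSp_g$ (it sends each element of $\cB$ to $\pm$ an element of $\cB$ and preserves $\omega$), which is immediate.

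Finally, since Lemma~\ref{lemma:altsymspacts} gives that $\SymSp_g$ takes $\Span{S}$ to itself and each of the maps $f$ above has $f^{-1}\in\SymSp_g$, the inclusion $f(\Fix{\cdot,\cdot})\subset\Span{S}$ upgrades to $\Fix{\cdot,\cdot}=f^{-1}(f(\Fix{\cdot,\cdot}))\subset\Span{S}$. Together with the sign reduction of the first paragraph this proves the lemma. I do not expect a genuine obstacle: the work is purely the bookkeeping of the four sign patterns in each of the two families and the verification that the chosen symmetries, which each touch only one of the indices $i,j$ (and so never clash, as $i\neq j$), indeed land the space inside $S_3$.
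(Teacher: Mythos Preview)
Your argument is circular. You invoke Lemma~\ref{lemma:altsymspacts} (that $\SymSp_g$ preserves $\Span{S}$) to prove Lemma~\ref{lemma:s3symmetricalt}, but in the paper the proof of Lemma~\ref{lemma:altsymspacts} is deferred to the end of the section and \emph{uses} Lemma~\ref{lemma:s3symmetricalt}: Claim~2 of that proof says that for $f\in\SymSp_g$ the set $f(\oFix{a_i-b_j,b_i-a_j})$ is one of the spaces $\Fix{\epsilon a_{i_0}+\epsilon' b_{j_0},\epsilon b_{i_0}+\epsilon' a_{j_0}}$ or $\Fix{\epsilon a_{i_0}+\epsilon' a_{j_0},\epsilon b_{i_0}-\epsilon' b_{j_0}}$, and then quotes Lemma~\ref{lemma:s3symmetricalt} to conclude these lie in $\Span{S}$. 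So the logical order is Lemma~\ref{lemma:s3symmetricalt} $\Rightarrow$ Lemma~\ref{lemma:altsymspacts}, not the reverse.

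The circularity is not just a citation issue but is intrinsic to your strategy. You correctly find $f\in\SymSp_g$ with $f(\Fix{a_i+b_j,b_i+a_j})=\oFix{a_i-b_j,b_i-a_j}\subset S_3$, and then want to apply $f^{-1}$ to land back in $\Span{S}$. But knowing that $f^{-1}$ sends this particular piece of $S_3$ into $\Span{S}$ is literally the statement $\Fix{a_i+b_j,b_i+a_j}\subset\Span{S}$ you are trying to prove. At this stage of the paper only the symmetric group $\fS_g\subset\SymSp_g$ is known to preserve $\Span{S}$ (Lemma~\ref{lemma:symmetrics}), and your maps (rotating $a_j\leftrightarrow\pm b_j$ or negating $a_i,b_i$) are not in $\fS_g$. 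The paper instead proves the three residual cases $\Fix{a_1+b_2,b_1+a_2}$, $\Fix{a_1+a_2,b_1-b_2}$, $\Fix{a_1-a_2,b_1+b_2}$ by direct computation (Lemmas~\ref{lemma:s3symmetricalt.1}--\ref{lemma:s3symmetricalt.3}), writing down explicit generators via Lemma~\ref{lemma:mapbilinear2} and checking each lies in $\Span{S_{12}\cup S_3}$ or in $\Fix{a_k+b_k,a_k+b_k}$ (handled by Lemma~\ref{lemma:s12twoa1weak}). That computational work is what your proposal is missing.
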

\begin{proof}
Since $\Fix{-,-}$ is not changed when its entries are multiplied by $-1$ (Lemma \ref{lemma:isotropicbilineareasy}),
it is enough to prove this for $\epsilon = 1$.  Also, using Lemma \ref{lemma:symmetrics} we can multiply
our elements by appropriate elements of the symmetric group $\fS_g$ and assume that $(i,j) = (1,2)$.  Since
we already know that $\oFix{a_1 - b_2,b_1-a_2} \subset S_3$, this reduces us to proving
that $\Fix{a_1+b_2,b_1+a_2}$ and $\Fix{a_1+a_2,b_1-b_2}$ and $\Fix{a_1-a_2,b_1+b_2}$ are contained
in $\Span{S}$.  We do this in Lemmas \ref{lemma:s3symmetricalt.1} and \ref{lemma:s3symmetricalt.2}
and \ref{lemma:s3symmetricalt.1} below.
\end{proof}

\begin{lemma}
\label{lemma:s3symmetricalt.1}
Let $g \geq 3$.  We have $\Fix{a_1+b_2,b_1+a_2} \subset \Span{S}$.
\end{lemma}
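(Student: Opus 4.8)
The plan is to prove that every generator $\PresA{(a_1+b_2)\wedge x,(b_1+a_2)\wedge y}$ of $\Fix{a_1+b_2,b_1+a_2}$ lies in $\Span S$. First I would record the elementary facts that frame the argument: $\omega(a_1+b_2,b_1+a_2)=0$ and $\{a_1+b_2,b_1+a_2\}$ spans a rank-$2$ direct summand of $H_{\Z}$ (it is part of the symplectic basis $\{a_1+b_2,\,b_1,\,b_1+a_2,\,b_2,\,a_3,b_3,\dots\}$), so $(a_1+b_2,b_1+a_2)$ is an isotropic basis and Lemma \ref{lemma:mapbilinear2} applies; hence $\Fix{a_1+b_2,b_1+a_2}$ is spanned by its generators, the linearization map $\Phi$ restricts to an \emph{injection} on this subspace, and — since $a_1+b_2$ and $b_1+a_2$ are special pairs — the linearity relations of Theorem \ref{theorem:summarypresentation} hold among these generators. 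I would also collect the membership facts available: $\pFix{a,a'}\subseteq S_{12}\subseteq\Span S$ for every $a,a'\in\cB$ with $\omega(a,a')=0$ (in particular for the pairs $\{a_1,a_1\},\{b_1,b_1\},\{a_2,a_2\},\{b_2,b_2\},\{a_1,a_2\},\{a_1,b_2\},\{b_1,a_2\},\{b_1,b_2\}$), $\oFix{a_1-b_2,b_1-a_2}\subseteq S_3\subseteq\Span S$, and $\Fix{a_i+b_i,a_i+b_i}\subseteq\Span S$ by Lemma \ref{lemma:s12twoa1weak}.

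Second, I would use linearity and the $\fS_g$-symmetry of Lemma \ref{lemma:symmetrics} to reduce to a short list of generators. With $y$ fixed, the admissible $x$ run over the lattice $\{x'\in H_{\Z}:\omega(b_1+a_2,x')=0=\omega(x',y)\}$ modulo $\Span{a_1+b_2}$, and splitting $x$ along a basis of this lattice consisting of vectors with $\omega(a_1+b_2,x')\in\{-1,0,1\}$ is a legitimate application of the linearity relation: the defining orthogonality hypotheses force $\fc\bigl((a_1+b_2)\wedge x',(b_1+a_2)\wedge y\bigr)=0$ for each piece, so sym-orthogonality is preserved. Performing the symmetric reduction in $y$ leaves finitely many ``atomic'' generators. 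For each atomic generator I would exhibit an explicit element of $\Span S$ and verify equality by pushing it through the injective map $\Phi$: expanding $(a_1+b_2)\wedge x=a_1\wedge x+b_2\wedge x$ and $(b_1+a_2)\wedge y=b_1\wedge y+a_2\wedge y$ in $\wedge^2((\wedge^2 H)/\Q)$ (minding Warning \ref{warning:alternating}), the ``off-diagonal'' terms $(a_1\wedge x)\wedge(a_2\wedge y)$ and $(b_2\wedge x)\wedge(b_1\wedge y)$ are visibly images of generators of $\pFix{a_1,a_2}$ and $\pFix{b_1,b_2}$, while the ``diagonal'' terms $(a_1\wedge x)\wedge(b_1\wedge y)$ and $(b_2\wedge x)\wedge(a_2\wedge y)$ are corrected using generators of $\Fix{a_1+b_1,a_1+b_1}$, $\Fix{a_1,a_1}$, $\Fix{b_1,b_1}$ (and their index-$2$ analogues) together with the $S_3$-element $\oFix{a_1-b_2,b_1-a_2}$, possibly after moving by an element of $\fS_g$.

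I expect the last step to be the main obstacle and the bulk of the work. The orthogonality conditions built into the definition of $\Fix{\cdot,\cdot}$ are incompatible with the decomposition $a_1+b_2\rightsquigarrow\{a_1,b_2\}$, so the diagonal cross-terms $(a_1\wedge p)\wedge(b_1\wedge q)$, in which $a_1$ and $b_1$ lie in the same symplectic plane, belong to no $\pFix{a,a'}$ and must be rewritten iteratively through $\Fix{a_1+b_1,a_1+b_1}$ and the equal-index subspaces $\Fix{a_1,a_1},\Fix{b_1,b_1}\subseteq S_{12}$; keeping track of which auxiliary summands arising in these rewrites are near-symplectic of positive genus, Lagrangian-free, and separate the relevant classes — the hypotheses under which the identification Lemma \ref{lemma:mapbilinear2} and the lift machinery of \S\ref{section:isotropicpairs2}--\S\ref{section:isotropicpairs4} apply — is where care is needed and is where $g\geq 3$ is used. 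Once every atomic case is settled, linearity and Lemma \ref{lemma:symmetrics} reassemble them into all of $\Fix{a_1+b_2,b_1+a_2}\subseteq\Span S$. The companion cases $\Fix{a_1+a_2,b_1-b_2}$ and $\Fix{a_1-a_2,b_1+b_2}$ needed for Lemma \ref{lemma:s3symmetricalt} (cf.\ Lemma \ref{lemma:s3symmetricalt.2}) are then handled by the same template, or reduced to the present one.
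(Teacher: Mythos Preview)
Your proposal has a genuine gap in the verification step. You plan to show each atomic generator $g\in\Fix{a_1+b_2,b_1+a_2}$ lies in $\Span S$ by exhibiting some $s\in\Span S$ and checking $\Phi(g)=\Phi(s)$, invoking injectivity of $\Phi$. But the only injectivity available at this point in the paper is Lemma~\ref{lemma:mapbilinear2}, which says $\Phi$ is injective \emph{on the single subspace} $\Fix{a_1+b_2,b_1+a_2}$. The candidate $s$ you describe is assembled from generators of $\pFix{a_1,a_2}$, $\pFix{b_1,b_2}$, $\Fix{a_i+b_i,a_i+b_i}$, etc., none of which lie in $\Fix{a_1+b_2,b_1+a_2}$; so $g-s$ does not live in any subspace on which $\Phi$ is yet known to be injective, and $\Phi(g)=\Phi(s)$ does not yield $g=s$. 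Global injectivity of $\Phi$ (even on $\Span{S_{12}}$) is established only later, in \S\ref{section:presentationalt5}--\S\ref{section:presentationalt6}, \emph{after} Lemma~\ref{lemma:presentationaltgenset}, which in turn relies on the present lemma---so appealing to it here would be circular. Relatedly, your ``diagonal'' pieces such as $(a_1\wedge x)\wedge(b_1\wedge y)$ are not even in $\cK_g^a$ individually (since $\fc(a_1\wedge x,b_1\wedge y)=x\cdot y\neq 0$ in general), so they have no preimage in $\fK_g^a$ to speak of; the ``correction'' you sketch therefore cannot be carried out term by term.

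The paper's argument avoids all of this by a simple observation you do not mention: rather than decomposing $a_1+b_2$ and $b_1+a_2$, one \emph{swaps} the roles of the two slots. Concretely, with $A_V=\{b_1,a_2,a_3,b_3,\dots\}$ and $A_W=\{a_1,b_2,a_3,b_3,\dots\}$, for $x\in A_V$ and $y\in A_W$ with $\omega(x,y)=0$ one has
\[
\PresA{(a_1+b_2)\wedge x,\ (b_1+a_2)\wedge y}
\;=\;\PresA{x\wedge(a_1+b_2),\ y\wedge(b_1+a_2)},
\]
which is \emph{already} a generator of $\pFix{x,y}\subset S_{12}$ since $x,y\in\cB$. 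This disposes of all the ``$X$'' generators in one line with no decomposition and no appeal to $\Phi$. The remaining ``$Y$'' generators are then handled by the same swap, landing directly in $\oFix{a_i-b_j,b_i-a_j}\subset S_3$ or in $\Fix{a_i+b_i,a_i+b_i}$ via Lemma~\ref{lemma:s12twoa1weak}. The entire proof stays inside $\fK_g^a$ and never needs injectivity beyond Lemma~\ref{lemma:mapbilinear2}.
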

\begin{proof}
Following the notation in \S \ref{section:identificationalt2}, define
\begin{alignat*}{8}
&&\cV &= &\Span{b_1+a_2}_{\Q}^{\perp}/\Span{a_1+b_2} &\cong &\Span{A_V}_{\Q} &\quad \text{with} \quad &A_V = \{b_1,a_2,a_3,b_3,\ldots,a_g,b_g\},&\\
&&\cW &= &\Span{a_1+b_2}_{\Q}^{\perp}/\Span{b_1+a_2} &\cong &\Span{A_W}_{\Q} &\quad \text{with} \quad &A_W = \{a_1,b_2,a_3,b_3,\ldots,a_g,b_g\}.&
\end{alignat*}
We proved in Lemma \ref{lemma:mapbilinear2} that $\Fix{a_1+b_2,b_1+a_2}$ is isomorphic
to a quotient of the kernel of the map $\cV \otimes \cW \rightarrow \Q$
induced by the symplectic form $\omega$.  Under this isomorphism, a generator
$\PresA{(a_1+b_2) \wedge x,(b_1+a_2) \wedge y}$ of $\Fix{a_1+b_2,b_1+a_2}$ maps
to $x \otimes y \in \cV \otimes \cW$.

The kernel of $\cV \otimes \cW \rightarrow \Q$ is spanned by $X \cup Y$ where\footnote{The first part of $Y$ along with the portion
of $X$ lying in it spans the kernel of the map $\Span{a_3,b_3,\ldots,a_g,b_g}^{\otimes 2} \rightarrow \Q$.  Indeed, quotienting
$\Span{a_3,b_3,\ldots,a_g,b_g}^{\otimes 2}$ by the portion of $X$ lying in it results in
\[U = \Span{a_3 \otimes b_3, b_3 \otimes a_3,\ldots,a_g \otimes b_g,b_g \otimes a_g}.\]
In $Y$, we have $a_i \otimes b_i + b_i \otimes a_i$ for all $3 \leq i \leq g$, and also for $3 \leq i,j \leq g$ distinct
the elements
\begin{align*}
a_i \otimes b_i - a_j \otimes b_j &= (a_i \otimes b_i + b_j \otimes a_j) - (a_j \otimes b_j + b_j \otimes a_j),\\
b_i \otimes a_i - b_j \otimes a_j &= (a_j \otimes b_j + b_i \otimes a_i) - (a_j \otimes b_j + b_j \otimes a_j).
\end{align*}
Quotienting $U$ by these gives $\Q$, as desired.  We will silently use calculations
like this in the next two sections.}
\begin{align*}
X = &\Set{$x \otimes y$}{$x \in A_V$, $y \in A_W$, $\omega(x,y)=0$},\\
Y = &\Set{$a_i \otimes b_i + b_j \otimes a_j$}{$3 \leq i,j \leq g$} \\
    &\cup \{a_3 \otimes b_3 + b_1 \otimes a_1,a_2 \otimes b_2 + b_3 \otimes a_3\}.
\end{align*}
Since for $1 \leq i,j \leq g$ distinct with either $i,j \geq 3$ or
$(i,j) \in \{(3,1),(2,3)\}$ and for $3 \leq k \leq g$ we have
\begin{align*}
(a_i-b_j) \otimes (b_i - a_j) &= a_i \otimes b_i + b_j \otimes a_j + \text{an element of $\Span{X}$},\\
(a_k+b_k) \otimes (a_k + b_k) &= a_k \otimes b_k + b_k \otimes a_k + \text{an element of $\Span{X}$},
\end{align*}
we can replace $Y$ by the set
\begin{align*}
&\Set{$(a_i-b_j) \otimes (b_i-a_j)$}{$1 \leq i,j \leq g$ distinct, either $i,j \geq 3$ or $(i,j) \in \{(3,1),(2,3)\}$} \\
&\quad\quad\cup \Set{$(a_i+b_i) \otimes (a_i + b_i)$}{$3 \leq i \leq g$}.
\end{align*}
From this, we see that $\Fix{a_1+b_2,b_1+a_2}$ is generated by the following elements:\footnote{The elements
$\PresA{(a_1+b_2) \wedge z,(b_1+a_2) \wedge w}$ listed below
each map to one of the generators for the kernel of $\cV \otimes \cW \rightarrow \Q$ we
identified.  It is important that these are indeed generators of $\Fix{a_1+b_2,b_1+a_2}$, i.e., that
both $(a_1+b_2) \wedge z$ and $(b_1+a_2) \wedge w$ are special pairs.}

\begin{case}{1}
$\PresA{(a_1+b_2) \wedge x,(b_1+a_2) \wedge y}$ for $x \in A_V$ and $y \in A_W$ with $\omega(x,y) = 0$.
\end{case}

\noindent
These equal $\PPresA{x \wedge (a_1+b_2),y \wedge (b_1+a_2)} \in S_{12}$.

\begin{case}{2}
$\PresA{(a_1+b_2) \wedge (a_i-b_j),(b_1+a_2) \wedge (b_i-a_j)}$ for $1 \leq i,j \leq g$ distinct
with either $i,j \geq 3$ or $(i,j) \in \{(3,1),(2,3)\}$.
\end{case}

\noindent
These equal $\OPresA{(a_i-b_j) \wedge (a_1+b_2),(b_i-a_j) \wedge (b_1+a_2)} \in S_3$.

\begin{case}{3}
$\PresA{(a_1+b_2) \wedge (a_i+b_i),(b_1+a_2) \wedge (a_i+b_i)}$ for $3 \leq i \leq g$.
\end{case}

\noindent
These equal $\PresA{(a_i+b_i) \wedge (a_1+b_2),(a_i+b_i) \wedge (b_1+a_2)}$, which lie
in $\Span{S}$ by Lemma \ref{lemma:s12twoa1weak}.
\end{proof}

\begin{lemma}
\label{lemma:s3symmetricalt.2}
Let $g \geq 3$.  We have $\Fix{a_1+a_2,b_1-b_2} \subset \Span{S}$.
\end{lemma}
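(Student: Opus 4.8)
The plan is to follow the proof of Lemma \ref{lemma:s3symmetricalt.1} almost verbatim, changing only the bookkeeping forced by the new isotropic basis $(a_1+a_2,b_1-b_2)$. First I would note that $\omega(a_1+a_2,b_1-b_2)=\omega(a_1,b_1)-\omega(a_2,b_2)=1-1=0$ and that $\{a_1+a_2,b_1-b_2\}$ spans a rank-$2$ direct summand of $H_{\Z}$ (one checks directly that it extends to a symplectic basis), so $(a_1+a_2,b_1-b_2)$ is an isotropic basis and Lemma \ref{lemma:mapbilinear2} applies. Following the notation of \S \ref{section:identificationalt2}, set $\cV=\Span{b_1-b_2}_{\Q}^{\perp}/\Span{a_1+a_2}$ and $\cW=\Span{a_1+a_2}_{\Q}^{\perp}/\Span{b_1-b_2}$; a short computation identifies $\cV\cong\Span{A_V}_{\Q}$ with $A_V=\{b_1,b_2,a_3,b_3,\ldots,a_g,b_g\}$ and $\cW\cong\Span{A_W}_{\Q}$ with $A_W=\{a_1,a_2,a_3,b_3,\ldots,a_g,b_g\}$. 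Lemma \ref{lemma:mapbilinear2} then identifies $\Fix{a_1+a_2,b_1-b_2}$ with a quotient of the kernel of the pairing $\cV\otimes\cW\rightarrow\Q$ induced by $\omega$, a generator $\PresA{(a_1+a_2)\wedge x,(b_1-b_2)\wedge y}$ corresponding to $x\otimes y$.

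Next I would exhibit a spanning set $X\cup Y$ for that kernel, just as in Lemma \ref{lemma:s3symmetricalt.1}: $X$ consists of the orthogonal tensors $x\otimes y$ (with $x\in A_V$, $y\in A_W$, $\oomega(x,y)=0$), and $Y$ is a small set handling the ``diagonal'' terms, whose images under the pairing are $b_1\otimes a_1\mapsto-1$, $b_2\otimes a_2\mapsto-1$, and $a_i\otimes b_i\mapsto 1$, $b_i\otimes a_i\mapsto-1$ for $i\geq 3$. Using the congruences (valid modulo $\Span{X}$) $(a_k+b_k)\otimes(a_k+b_k)\equiv a_k\otimes b_k+b_k\otimes a_k$ for $k\geq 3$ and $(a_i-b_j)\otimes(b_i-a_j)\equiv a_i\otimes b_i+b_j\otimes a_j$ for suitable distinct $i,j$ --- namely $i,j\geq 3$, together with the corrective pairs $(i,j)=(3,1)$ and $(i,j)=(3,2)$ needed to absorb $b_1\otimes a_1$ and $b_2\otimes a_2$ --- I would replace $Y$ by a set of ``realizable'' tensors of these two shapes that still spans the kernel modulo $\Span{X}$. (Since $\fS_g$, Lemma \ref{lemma:mapbilinear2}, and Lemma \ref{lemma:s12twoa1weak} all apply for $g\geq 3$ and the indices $3$ and $(3,1),(3,2)$ are available then, the argument covers the stated range $g\geq 3$.)

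Then comes the casework, in three cases parallel to Lemma \ref{lemma:s3symmetricalt.1}. A generator coming from an orthogonal tensor $x\otimes y$ equals $\PPresA{x\wedge(a_1+a_2),y\wedge(b_1-b_2)}$, which lies in $\pFix{x,y}\subset S_{12}$ once one checks $\omega(x,y)=0$ and that $x\wedge(a_1+a_2)$ and $y\wedge(b_1-b_2)$ are special pairs (their $\omega$-values lie in $\{-1,0,1\}$ for all $x\in A_V$, $y\in A_W$). A generator coming from $(a_i-b_j)\otimes(b_i-a_j)$ equals $\OPresA{(a_i-b_j)\wedge(a_1+a_2),(b_i-a_j)\wedge(b_1-b_2)}\in\oFix{a_i-b_j,b_i-a_j}\subset S_3$, again after verifying the orthogonality and specialness conditions (which hold for $i,j\geq 3$ and for $(i,j)\in\{(3,1),(3,2)\}$). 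A generator coming from $(a_k+b_k)\otimes(a_k+b_k)$ equals $\PresA{(a_k+b_k)\wedge(a_1+a_2),(a_k+b_k)\wedge(b_1-b_2)}\in\Fix{a_k+b_k,a_k+b_k}$, which lies in $\Span{S}$ by Lemma \ref{lemma:s12twoa1weak}. Since these generators span $\Fix{a_1+a_2,b_1-b_2}$, the lemma follows; note in particular that no use is made of Lemma \ref{lemma:s3symmetricalt} itself, so there is no circularity.

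The only genuinely new point --- everything else being a transcription of Lemma \ref{lemma:s3symmetricalt.1} --- is that here $b_1\wedge a_1$ and $b_2\wedge a_2$ both occur ``the same way around'' in $\cV\otimes\cW$ (unlike the mixed situation $b_1\otimes a_1$, $a_2\otimes b_2$ in Lemma \ref{lemma:s3symmetricalt.1}), so I must choose the corrective index pairs $(3,1)$ and $(3,2)$ and check that every intermediate factor $(a_1+a_2)\wedge(\cdot)$ and $(b_1-b_2)\wedge(\cdot)$ appearing really is a special pair. Confirming this specialness constraint for each tensor that shows up, and that the chosen realizable tensors still span the kernel modulo $\Span{X}$, is the main obstacle; it is routine but requires care with the first-block $\omega$-values.
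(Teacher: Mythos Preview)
Your proposal is correct and follows essentially the same approach as the paper's proof, which is explicitly presented as a sketch paralleling Lemma \ref{lemma:s3symmetricalt.1}. In particular, your identification of $A_V$, $A_W$, the three cases, and the corrective index pairs $(3,1)$ and $(3,2)$ (replacing the pair $(2,3)$ from Lemma \ref{lemma:s3symmetricalt.1}, exactly because both $b_1\otimes a_1$ and $b_2\otimes a_2$ now pair to $-1$) match the paper's treatment precisely.
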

\begin{proof}
This is similar to Lemma \ref{lemma:s3symmetricalt.1}, so we just sketch the argument.  Like
in that lemma, define
\begin{alignat*}{8}
&&\cV &= &\Span{b_1-b_2}_{\Q}^{\perp}/\Span{a_1+a_2} &\cong &\Span{A_V}_{\Q} &\quad \text{with} \quad &A_V = \{b_1,b_2,a_3,b_3,\ldots,a_g,b_g\},&\\
&&\cW &= &\Span{a_1+a_2}_{\Q}^{\perp}/\Span{b_1-b_2} &\cong &\Span{A_W}_{\Q} &\quad \text{with} \quad &A_W = \{a_1,a_2,a_3,b_3,\ldots,a_g,b_g\}.&
\end{alignat*} 
Elements of $\Fix{a_1+a_2,b_1-b_2}$ correspond to elements of a quotient of the kernel
of the map $\cV \otimes \cW \rightarrow \Q$ induced by the symplectic form $\omega$.  Using this, just
like in Lemma \ref{lemma:s3symmetricalt.1} we can reduce to the following three cases, each of which is
handled just like in Lemma \ref{lemma:s3symmetricalt.1}.
\begin{itemize}
\item $\PresA{(a_1+a_2) \wedge x,(b_1-b_2) \wedge y}$ for $x \in A_V$ and $y \in A_W$ with $\omega(x,y) = 0$. 
\item $\PresA{(a_1+a_2) \wedge (a_i-b_j),(b_1-b_2) \wedge (b_i-a_j)}$ for 
$1 \leq i,j \leq g$ distinct with either $i,j \geq 3$ or $(i,j) \in \{(3,1),(3,2)\}$.
\item $\PresA{(a_1+a_2) \wedge (a_i+b_i),(b_1-b_2) \wedge (a_i+b_i)}$ for $3 \leq i \leq g$.\qedhere
\end{itemize}
\end{proof}

\begin{lemma}
\label{lemma:s3symmetricalt.3}
Let $g \geq 3$.  We have $\Fix{a_1-a_2,b_1+b_2} \subset \Span{S}$.
\end{lemma}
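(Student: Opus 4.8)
The plan is to mimic the proofs of Lemmas~\ref{lemma:s3symmetricalt.1} and~\ref{lemma:s3symmetricalt.2}. First I would check that $(a_1-a_2,b_1+b_2)$ is an isotropic basis: one verifies directly that $\omega(a_1-a_2,b_1+b_2)=0$ and that $\{a_1-a_2,\,b_1,\,b_1+b_2,\,-a_2,\,a_3,b_3,\ldots,a_g,b_g\}$ is a symplectic basis for $H_{\Z}$, so $\{a_1-a_2,b_1+b_2\}$ spans a rank-$2$ direct summand. Hence Lemma~\ref{lemma:mapbilinear2} applies to $\Fix{a_1-a_2,b_1+b_2}$.

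Next, following the notation of~\S\ref{section:identificationalt2}, I would set
\[
\cV = \Span{b_1+b_2}_{\Q}^{\perp}/\Span{a_1-a_2} \cong \Span{A_V}_{\Q} \quad\text{with}\quad A_V = \{b_1,b_2,a_3,b_3,\ldots,a_g,b_g\},
\]
\[
\cW = \Span{a_1-a_2}_{\Q}^{\perp}/\Span{b_1+b_2} \cong \Span{A_W}_{\Q} \quad\text{with}\quad A_W = \{a_1,a_2,a_3,b_3,\ldots,a_g,b_g\};
\]
these are the same $A_V$ and $A_W$ that occur in the proof of Lemma~\ref{lemma:s3symmetricalt.2}. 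By Lemma~\ref{lemma:mapbilinear2}, elements of $\Fix{a_1-a_2,b_1+b_2}$ correspond to elements of a quotient of the kernel of the pairing $\cV\otimes\cW\to\Q$ induced by $\omega$, with the generator $\PresA{(a_1-a_2)\wedge x,(b_1+b_2)\wedge y}$ corresponding to $x\otimes y$. As in Lemma~\ref{lemma:s3symmetricalt.1}, that kernel is spanned by $X\cup Y$ where $X=\Set{$x\otimes y$}{$x\in A_V$, $y\in A_W$, $\omega(x,y)=0$}$ and $Y$ is a small explicit set of ``diagonal difference'' vectors; rewriting $Y$ modulo $\Span{X}$ via the identities $(a_i-b_j)\otimes(b_i-a_j)\equiv a_i\otimes b_i+b_j\otimes a_j$ and $(a_k+b_k)\otimes(a_k+b_k)\equiv a_k\otimes b_k+b_k\otimes a_k$ reduces the lemma to showing that the following three families of generators of $\Fix{a_1-a_2,b_1+b_2}$ lie in $\Span{S}$, each handled exactly as in Lemma~\ref{lemma:s3symmetricalt.1}:
\begin{itemize}
\item $\PresA{(a_1-a_2)\wedge x,(b_1+b_2)\wedge y}$ for $x\in A_V$ and $y\in A_W$ with $\omega(x,y)=0$; these equal $\PPresA{x\wedge(a_1-a_2),y\wedge(b_1+b_2)}\in S_{12}$.
\item $\PresA{(a_1-a_2)\wedge(a_i-b_j),(b_1+b_2)\wedge(b_i-a_j)}$ for $1\leq i,j\leq g$ distinct with either $i,j\geq 3$ or $(i,j)\in\{(3,1),(3,2)\}$; these equal $\OPresA{(a_i-b_j)\wedge(a_1-a_2),(b_i-a_j)\wedge(b_1+b_2)}\in S_3$.
\item $\PresA{(a_1-a_2)\wedge(a_i+b_i),(b_1+b_2)\wedge(a_i+b_i)}$ for $3\leq i\leq g$; these equal $\PresA{(a_i+b_i)\wedge(a_1-a_2),(a_i+b_i)\wedge(b_1+b_2)}$, which lie in $\Span{S}$ by Lemma~\ref{lemma:s12twoa1weak}.
\end{itemize}

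The main obstacle is the bookkeeping, not any new idea. One must verify that each reordered bracket really is a generator of the relevant $\Fix{\cdot,\cdot}$ subspace — i.e.\ that the required orthogonality relations hold and that the pairs $x\wedge(a_1-a_2)$, $y\wedge(b_1+b_2)$, $(a_i-b_j)\wedge(a_1-a_2)$, etc.\ are special pairs — and one must pick $Y$ (equivalently, the exact list of index pairs in the second family) so that $X\cup Y$ genuinely spans the kernel of $\cV\otimes\cW\to\Q$. Because $a_1-a_2$ carries a minus sign and $b_1+b_2$ a plus sign, the sign conventions need to be tracked carefully; otherwise the computation is essentially identical to the one in Lemma~\ref{lemma:s3symmetricalt.2}, whose $\cV$ and $\cW$ coincide with ours, so in practice the proof can be reduced to the remark that the argument there applies verbatim.
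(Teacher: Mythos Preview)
Your proposal is correct and follows essentially the same approach as the paper's proof: the same choice of $A_V$ and $A_W$, the same three families of generators (with the same index set $\{(3,1),(3,2)\}$ for the second family), and the same reductions to $S_{12}$, $S_3$, and Lemma~\ref{lemma:s12twoa1weak}. The paper's proof is in fact just a sketch pointing back to Lemma~\ref{lemma:s3symmetricalt.1}, so your write-up is if anything more detailed.
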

\begin{proof}
This is also similar to Lemma \ref{lemma:s3symmetricalt.1}, so we just sketch the argument.  Like
in that lemma, define
\begin{alignat*}{8}
&&\cV &= &\Span{b_1+b_2}_{\Q}^{\perp}/\Span{a_1-a_2} &\cong &\Span{A_V}_{\Q} &\quad \text{with} \quad &A_V = \{b_1,b_2,a_3,b_3,\ldots,a_g,b_g\},&\\
&&\cW &= &\Span{a_1-a_2}_{\Q}^{\perp}/\Span{b_1+b_2} &\cong &\Span{A_W}_{\Q} &\quad \text{with} \quad &A_W = \{a_1,a_2,a_3,b_3,\ldots,a_g,b_g\}.&
\end{alignat*}
Elements of $\Fix{a_1-a_2,b_1+b_2}$ correspond to elements of a quotient of the kernel
of the map $\cV \otimes \cW \rightarrow \Q$ induced by the symplectic form $\omega$.  Using this, just
like in Lemma \ref{lemma:s3symmetricalt.1} we can reduce to the following three cases, each of which is
handled just like in Lemma \ref{lemma:s3symmetricalt.1}.
\begin{itemize}
\item $\PresA{(a_1-a_2) \wedge x,(b_1+b_2) \wedge y}$ for $x \in A_V$ and $y \in A_W$ with $\omega(x,y) = 0$.
\item $\PresA{(a_1-a_2) \wedge (a_i-b_j),(b_1+b_2) \wedge (b_i-a_j)}$ for 
$1 \leq i,j \leq g$ distinct with either $i,j \geq 3$ or $(i,j) \in \{(3,1),(3,2)\}$.
\item $\PresA{(a_1-a_2) \wedge (a_i+b_i),(b_1+b_2) \wedge (a_i+b_i)}$ for $3 \leq i \leq g$.\qedhere
\end{itemize}
\end{proof}

\subsection{Closure under signed symmetric group}

We now prove Lemma \ref{lemma:altsymspacts}, whose statement we recall:

\newtheorem*{lemma:altsymspacts}{Lemma \ref{lemma:altsymspacts}}
\begin{lemma:altsymspacts}
For all $g \geq 3$, the action of $\SymSp_g$ on $\fK_g^a$ takes $\Span{S}$ to $\Span{S}$.
\end{lemma:altsymspacts}
\begin{proof}
It is enough to prove that $\SymSp_g$ takes both $S_{12}$ and $S_3$ into $\Span{S}$:

\begin{claim}{1}
For $f \in \SymSp_g$ and $a,a' \in \cB$ with $\omega(a,a') = 0$, we have $f(\pFix{a,a'}) \subset \Span{S}$.
\end{claim}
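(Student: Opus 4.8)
The plan is to reduce the claim to pure equivariance, exploiting that $S_{12}$ was defined to be manifestly $\SymSp_g$-invariant. First I would record a general equivariance principle: for any $f \in \Sp_{2g}(\Z)$ and any $v,v' \in H_{\Z}$ with $\omega(v,v') = 0$, one has $f(\Fix{v,v'}) = \Fix{f(v),f(v')}$. Indeed, $f$ carries a generator $\PresA{v \wedge x, v' \wedge y}$ of $\Fix{v,v'}$ to $\PresA{f(v) \wedge f(x), f(v') \wedge f(y)}$, and the conditions (i) and (ii) of \S\ref{section:presentationalt1setup} cutting out these generators are preserved by $f$: since $f$ is symplectic and maps $H_{\Z}$ to itself, it preserves both the vanishing conditions $\omega(v,y) = \omega(v',x) = \omega(x,y) = 0$ and the special-pair conditions $\omega(v,x), \omega(v',y) \in \{-1,0,1\}$. (Applying the same reasoning to $f^{-1}$ upgrades the inclusion to an equality, though only the inclusion is needed below.)

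Now take $f \in \SymSp_g$ and $a,a' \in \cB$ with $\omega(a,a')=0$. By the definition of $\SymSp_g$ there exist $c,c' \in \cB$ and signs $\epsilon,\epsilon' \in \{\pm 1\}$ with $f(a) = \epsilon c$ and $f(a') = \epsilon' c'$. The equivariance principle gives $f(\pFix{a,a'}) = \Fix{\epsilon c, \epsilon' c'}$, and Lemma~\ref{lemma:isotropicbilineareasy}(b) identifies this with $\Fix{c,c'}$. Since $f$ preserves $\omega$, we have $\omega(c,c') = \epsilon\epsilon'\,\omega(f(a),f(a')) = \epsilon\epsilon'\,\omega(a,a') = 0$, so $c,c' \in \cB$ with $\omega(c,c')=0$ and $\Fix{c,c'}$ is one of the summands $\pFix{-,-}$ whose union is $S_{12}$. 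Hence $f(\pFix{a,a'}) \subset S_{12} \subset \Span{S}$, which is the claim.

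I do not expect any real obstacle here; the only point demanding (routine) care is checking that the two defining conditions on the generators of $\Fix{v,v'}$ are genuinely $\Sp_{2g}(\Z)$-invariant, which is immediate. The reason $S_{12}$ behaves so well is precisely that $\SymSp_g$ permutes the basis $\cB$ up to signs, and signs are harmless by Lemma~\ref{lemma:isotropicbilineareasy}. The substantive difficulty in closing $S$ under $\SymSp_g$ lies entirely in the companion statement for $S_3$: there $f$ can send a generator $\oFix{a_i - b_j, b_i - a_j}$ to something like $\Fix{a_i + b_j, b_i + a_j}$ or $\Fix{a_i + a_j, b_i - b_j}$, which must be handled separately via Lemmas~\ref{lemma:s3symmetricalt} and \ref{lemma:s12twoa1weak}, but that is not part of the present claim.
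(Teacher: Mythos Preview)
Your proof is correct and follows essentially the same approach as the paper: write $f(a) = \epsilon a_0$ and $f(a') = \epsilon' a'_0$ with $a_0,a'_0 \in \cB$, use equivariance to get $f(\Fix{a,a'}) = \Fix{\epsilon a_0,\epsilon' a'_0}$, then invoke Lemma~\ref{lemma:isotropicbilineareasy} to strip the signs and land in $S_{12}$. The paper's version is terser, simply asserting the equality $f(\pFix{a,a'}) = \Fix{\epsilon a_0,\epsilon' a'_0} = \pFix{a_0,a'_0}$ without spelling out the general equivariance principle you isolate, but the content is the same.
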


Write $f(a) = \epsilon a_0$ and $f(a') = \epsilon' a'_0$ with $a_0,a'_0 \in \cB$ and $\epsilon,\epsilon \in \{\pm 1\}$.
We have $\omega(a_0,a'_0) = 0$, and by Lemma \ref{lemma:isotropicbilineareasy} we have
\[f(\pFix{a,a'}) = \Fix{\epsilon a_0,\epsilon' a'_0} = \pFix{a_0,a'_0} \subset S_{12}.\]

\begin{claim}{2}
For $f \in \SymSp_g$ and $1 \leq i < j \leq g$, we have $f(\oFix{a_i-b_j,b_i-a_j}) \subset \Span{S}$.
\end{claim}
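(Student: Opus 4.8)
The plan is to mimic the proof of Claim 1, reducing $f(\oFix{a_i-b_j,b_i-a_j})$ to configurations already known to lie in $\Span{S}$. Since $f\in\SymSp_g$, there is an associated permutation $p$ with $i':=p(i)$ and $j':=p(j)$ distinct, and $(f(a_i),f(b_i))$ is one of $(a_{i'},b_{i'})$, $(-a_{i'},-b_{i'})$, $(b_{i'},-a_{i'})$, $(-b_{i'},a_{i'})$, with an analogous statement for $(f(a_j),f(b_j))$ in terms of $j'$. Applying $f$ and using that it preserves $\omega$, one gets
\[f\bigl(\oFix{a_i-b_j,b_i-a_j}\bigr)=\Fix{f(a_i)-f(b_j),\,f(b_i)-f(a_j)},\]
where the two entries are primitive vectors supported on the basis elements of indices $i'$ and $j'$ (they are orthogonal since $\omega(a_i-b_j,b_i-a_j)=0$). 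If convenient one may first conjugate $f$ by an element of $\fS_g$, which by Lemma \ref{lemma:symmetrics} does not affect whether the result lies in $\Span{S}$, so as to assume $\{i',j'\}=\{1,2\}$; but this is not essential.

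Next I would run through the sixteen combinations of ``types'' for the two pairs $(f(a_i),f(b_i))$ and $(f(a_j),f(b_j))$. Grouping the four types at each index into the two sending $a\mapsto\pm a$, $b\mapsto\pm b$ (call these \emph{straight}) and the two sending $a\mapsto\pm b$, $b\mapsto\mp a$ (call these \emph{swapped}), and absorbing overall sign changes and the exchange of the two $\Fix$-arguments using Lemma \ref{lemma:isotropicbilineareasy}, a short direct computation shows that $\Fix{f(a_i)-f(b_j),f(b_i)-f(a_j)}$ always equals one of
\[\oFix{a_{i'}-b_{j'},\,b_{i'}-a_{j'}},\quad \Fix{a_{i'}+b_{j'},\,b_{i'}+a_{j'}},\quad \Fix{a_{i'}+a_{j'},\,b_{i'}-b_{j'}},\quad \Fix{a_{i'}-a_{j'},\,b_{i'}+b_{j'}};\]
the first two arise when the two indices have the same parity and the last two when they differ, the $\pm$ patterns being dictated by the relative signs inside each pair.

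Finally I would invoke the known inclusions: $\oFix{a_{i'}-b_{j'},b_{i'}-a_{j'}}\subset S_3\subset S$ by definition of $S_3$, while $\Fix{a_{i'}+b_{j'},b_{i'}+a_{j'}}$, $\Fix{a_{i'}+a_{j'},b_{i'}-b_{j'}}$ and $\Fix{a_{i'}-a_{j'},b_{i'}+b_{j'}}$ all lie in $\Span{S}$ by Lemma \ref{lemma:s3symmetricalt} (that is, by Lemmas \ref{lemma:s3symmetricalt.1}, \ref{lemma:s3symmetricalt.2} and \ref{lemma:s3symmetricalt.3}). This settles every case, giving $f(\oFix{a_i-b_j,b_i-a_j})\subset\Span{S}$, and hence completes Claim 2 and Lemma \ref{lemma:altsymspacts}. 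The only real difficulty is the bookkeeping: one must track signs carefully through the sixteen cases so as to land on exactly these four shapes, which is why applying Lemma \ref{lemma:isotropicbilineareasy} at every step (and, optionally, performing the $\fS_g$-reduction to indices $1,2$) is worthwhile.
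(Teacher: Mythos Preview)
Your proposal is correct and follows essentially the same route as the paper: compute $f(\Fix{a_i-b_j,b_i-a_j})=\Fix{f(a_i)-f(b_j),f(b_i)-f(a_j)}$, observe that (up to the sign/swap symmetries of Lemma~\ref{lemma:isotropicbilineareasy}) this lands in one of the shapes $\Fix{\epsilon a_{i'}+\epsilon' b_{j'},\epsilon b_{i'}+\epsilon' a_{j'}}$ or $\Fix{\epsilon a_{i'}+\epsilon' a_{j'},\epsilon b_{i'}-\epsilon' b_{j'}}$, and then invoke Lemma~\ref{lemma:s3symmetricalt}. The paper's proof is just a more compressed statement of the same case analysis; your ``straight/swapped'' parity bookkeeping and the optional $\fS_g$-reduction are organizational conveniences rather than a genuinely different argument.
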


For some distinct $1 \leq i_0,j_0 \leq g$ we have
\[f(a_i),f(b_i) \in \{\pm a_{i_0},\pm b_{i_0}\} \quad \text{and} \quad f(a_j),f(b_j) \in \{\pm a_{j_0},\pm b_{j_0}\}.\]
Since
\[\omega(f(a_i-b_i),f(b_i-a_j)) = \omega(a_i-b_i,b_i-a_j) = 0,\]
it follows that for some distinct $1 \leq i_0,j_0 \leq g$ and $\epsilon,\epsilon' \in \{\pm 1\}$ the
set $f(\oFix{a_i-b_j,b_i-a_j})$ equals either
\[\Fix{\epsilon a_{i_0} + \epsilon' b_{j_0},\epsilon b_{i_0} + \epsilon' a_{j_0}} \quad \text{or} \quad
\Fix{\epsilon a_{i_0} + \epsilon' a_{j_0},\epsilon b_{i_0} - \epsilon' b_{j_0}}.\]
Lemma \ref{lemma:s3symmetricalt} says that both of these are contained in $\Span{S}$.
\end{proof}

One useful consequence of Lemma \ref{lemma:altsymspacts} is the following
generalization of Lemma \ref{lemma:s12twoa1}:

\begin{lemma}
\label{lemma:s12twoa1}
Let $g \geq 3$.  For $1 \leq i \leq g$ and $\epsilon \in \{\pm 1\}$, we have $\Fix{a_i+\epsilon b_i,a_i+\epsilon b_i} \subset \Span{S}$.
\end{lemma}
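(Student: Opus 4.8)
The statement to prove is Lemma~\ref{lemma:s12twoa1}: for $g \geq 3$, $1 \leq i \leq g$, and $\epsilon \in \{\pm 1\}$, we have $\Fix{a_i+\epsilon b_i,a_i+\epsilon b_i} \subset \Span{S}$.

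The plan is to reduce to Lemma~\ref{lemma:s12twoa1weak}, which already establishes the case $\epsilon = +1$, namely $\Fix{a_i+b_i,a_i+b_i} \subset \Span{S}$. First I would observe that since $\Span{S}$ is by definition a subspace of $\fK_g^a$, and since Lemma~\ref{lemma:altsymspacts} tells us that $\SymSp_g$ takes $\Span{S}$ to itself, it suffices to find a single element $f \in \SymSp_g$ carrying $\Fix{a_i+b_i,a_i+b_i}$ to $\Fix{a_i-b_i,a_i-b_i}$. Then the $\epsilon = -1$ case follows from the $\epsilon = +1$ case by applying $f$. Concretely, let $f \in \SymSp_g$ be the element that sends $a_i \mapsto a_i$ and $b_i \mapsto -b_i$ and fixes all other basis vectors $a_j, b_j$ ($j \neq i$); this is the signed permutation with trivial underlying permutation of $\{1,\ldots,g\}$ that negates the $i$-th symplectic pair — precisely the kind of element described in the "signed symmetric group" discussion, where $(f(a_i),f(b_i)) = (a_i, -b_i)$ corresponds to one of the allowed forms $(-b_{p(i)}, a_{p(i)})$ after a further sign adjustment, or more simply it is the composite of two such. (If negating only $b_i$ is not directly one of the listed forms, one notes that $\SymSp_g$ is a group closed under composition and clearly contains the map negating $b_i$ alone: it is $\sigma_i \tau_i$ where $\sigma_i$ negates both $a_i,b_i$ and $\tau_i$ sends $a_i \mapsto b_i, b_i \mapsto -a_i$ twice, etc.—in any case $f \in \SymSp_g$.)

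Next I would compute $f(\Fix{a_i+b_i,a_i+b_i})$. Since $f(a_i+b_i) = a_i - b_i$, and since the functor $\Fix{-,-}$ is manifestly equivariant under $\Sp_{2g}(\Z)$ (a generator $\PresA{(a_i+b_i)\wedge x,(a_i+b_i)\wedge y}$ of $\Fix{a_i+b_i,a_i+b_i}$ is sent by $f$ to $\PresA{(a_i-b_i)\wedge f(x),(a_i-b_i)\wedge f(y)}$, which is a generator of $\Fix{a_i-b_i,a_i-b_i}$, using that $f$ preserves $\omega$ so the orthogonality and special-pair conditions are preserved), we get $f(\Fix{a_i+b_i,a_i+b_i}) = \Fix{a_i-b_i,a_i-b_i}$. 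By Lemma~\ref{lemma:s12twoa1weak}, $\Fix{a_i+b_i,a_i+b_i} \subset \Span{S}$, and by Lemma~\ref{lemma:altsymspacts}, $f(\Span{S}) \subset \Span{S}$; combining, $\Fix{a_i-b_i,a_i-b_i} \subset \Span{S}$. Together with Lemma~\ref{lemma:s12twoa1weak} this covers both values of $\epsilon$, completing the proof.

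The only mild obstacle is bookkeeping: confirming that the sign-negation element $f$ genuinely lies in $\SymSp_g$ as it is defined in the paper (the four allowed local forms for $(f(a_i),f(b_i))$), and that $\Fix{-,-}$ really is equivariant in the way needed — but both are immediate from the definitions, since $\SymSp_g$ is a group and $\Fix{a,a'}$ was defined purely in terms of $\omega$-conditions that any symplectic automorphism respects. There is no real calculation here; the lemma is a one-line corollary of the $\epsilon = +1$ case plus closure of $\Span{S}$ under $\SymSp_g$. I would write it accordingly as a short proof.
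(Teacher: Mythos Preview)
Your overall strategy is exactly the paper's: reduce the $\epsilon=-1$ case to Lemma~\ref{lemma:s12twoa1weak} via Lemma~\ref{lemma:altsymspacts} by exhibiting an element of $\SymSp_g$ taking $a_i+b_i$ to $a_i-b_i$.

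However, there is a genuine error in your choice of element. The map $a_i\mapsto a_i$, $b_i\mapsto -b_i$ (fixing the other basis vectors) is \emph{not} symplectic: it sends $\omega(a_i,b_i)=1$ to $\omega(a_i,-b_i)=-1$. No composition of elements of $\SymSp_g\subset\Sp_{2g}(\Z)$ can produce a non-symplectic map, so your hand-waving that ``in any case $f\in\SymSp_g$'' is incorrect. The four local forms listed for $\SymSp_g$ are exactly the symplectic signed permutations of $\{a_i,b_i\}$, and $(a_i,-b_i)$ is not among them for this reason.

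The fix is immediate: use instead the element $f$ with $f(a_i)=-b_i$ and $f(b_i)=a_i$ (identity elsewhere). This is the listed form $(-b_{p(i)},a_{p(i)})$ with $p(i)=i$, and $f(a_i+b_i)=-b_i+a_i=a_i-b_i$ as needed. This is precisely what the paper does. With this correction your proof goes through verbatim.
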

\begin{proof}
The group $\SymSp_g$ can take $a_i+b_i$ to $a_i-b_i$ by mapping $a_i$ to $-b_i$ and $b_i$ to $a_i$.  From
this and Lemma \ref{lemma:altsymspacts}, the lemma reduces to Lemma \ref{lemma:s12twoa1weak}.
\end{proof}

\section{Symmetric kernel, alternating version III: eight elements}
\label{section:presentationalt3}

We now re-impose our standing assumption $g \geq 4$ (Assumption \ref{assumption:genusalt}), which will
remain in place until we say otherwise.
We continue using all the notation from \S \ref{section:presentationalt2}.  Our goal
in this section is to prove eight lemmas saying that certain elements lie in $\Span{S}$.
They might appear random, but they are exactly\footnote{Except for two that are needed
for the proofs of ones needed in the next section.} the ones needed in the next section (\S \ref{section:presentationalt4}) to
prove that $\Span{S} = \fK_g^a$, and as motivation a reader might first consult that
proof.

\begin{remark}
We apologize for the repetitiveness of the proofs of our lemmas.  They all follow the same
pattern, but each has small twists and it is important to prove them in the right order
so that earlier lemmas can be invoked during the proofs of later ones.
\end{remark}

\subsection{Two vs one}

Our first three lemmas are about $\Fix{a,a'}$ where $a$ uses two generators and $a'$ is a single generator.
We remark that during our proofs
we will frequently silently be using the fact that $g \geq 4$ (Assumption \ref{assumption:genusalt}).

\begin{lemma}[Two vs one, I]
\label{lemma:twovsone1}
For all $x \in \cB \setminus \{a_1,b_1\}$, both $\Fix{a_1+b_1,x}$ and $\Fix{a_1-b_1,x}$ are subsets
of $\Span{S}$.
\end{lemma}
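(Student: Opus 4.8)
The goal is to show that for $x \in \cB \setminus \{a_1,b_1\}$, both $\Fix{a_1+b_1,x}$ and $\Fix{a_1-b_1,x}$ lie in $\Span{S}$. My plan is to follow the same template as the earlier ``some elements'' lemmas (\ref{lemma:s3symmetricalt.1} etc.), namely: use Lemma \ref{lemma:mapbilinear2} to identify $\Fix{a_1+b_1,x}$ with a quotient of the kernel $\cK(a_1+b_1,x)$ of a symplectic pairing $\cV \otimes \cW \rightarrow \Q$, write down an explicit spanning set for that kernel, replace the ``diagonal'' part of the spanning set by differences of the form $(a_i - b_j)\wedge(b_i - a_j)$ and by squares $(a_k+b_k)\wedge(a_k+b_k)$ so that each generator of $\Fix{a_1+b_1,x}$ lands in one of three cases, and then check that each case gives an element of $S_{12}$, an element of $S_3$, or an element already known to be in $\Span{S}$.

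Concretely, for $\Fix{a_1+b_1,x}$ set $\cV = \Span{x}_{\Q}^{\perp}/\Span{a_1+b_1}$ and $\cW = \Span{a_1+b_1}_{\Q}^{\perp}/\Span{x}$; since $(a_1+b_1)$ and $x$ are orthogonal with $x \in \cB \setminus\{a_1,b_1\}$, the pair $(a_1+b_1,x)$ is an isotropic basis (here is where $g \geq 4$ is silently used, to guarantee the relevant complements are near symplectic summands of positive genus), so Lemma \ref{lemma:mapbilinear2} applies. I would pick explicit bases $A_V, A_W$ as in the proof of Lemma \ref{lemma:s3symmetricalt.1} --- they will look like $\{b_1, \dots\} \setminus \{\text{the partner of }x\}$ adjusted so that $a_1+b_1$ and $x$ are removed appropriately --- and then the kernel of $\cV\otimes\cW \rightarrow \Q$ is spanned by $X = \{u \otimes v : \omega(u,v)=0\}$ together with a ``diagonal'' set $Y$ of elements $a_i\otimes b_i + b_j \otimes a_j$. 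As in Lemma \ref{lemma:s3symmetricalt.1}, one rewrites $Y$ using $(a_i-b_j)\otimes(b_i-a_j) = a_i\otimes b_i + b_j\otimes a_j + (\text{elt of }\Span{X})$ and $(a_k+b_k)\otimes(a_k+b_k) = a_k\otimes b_k + b_k\otimes a_k + (\text{elt of }\Span{X})$. The three resulting families of generators of $\Fix{a_1+b_1,x}$ are then: (i) $\PresA{(a_1+b_1)\wedge u,x\wedge v}$ with $\omega(u,v)=0$, which equals $\PPresA{u \wedge (a_1+b_1), v \wedge x}$ or a permuted variant lying in $S_{12}$; (ii) $\PresA{(a_1+b_1)\wedge(a_i-b_j),x\wedge(b_i-a_j)}$, which one recognizes as $\pm\OPresA{\cdots}$ in $S_3$ after possibly applying $\SymSp_g$ and Lemma \ref{lemma:s3symmetricalt} / Lemma \ref{lemma:altsymspacts}; (iii) $\PresA{(a_1+b_1)\wedge(a_k+b_k),x\wedge(a_k+b_k)} = \PresA{(a_k+b_k)\wedge(a_1+b_1),(a_k+b_k)\wedge x}$, handled by the already-established Lemma \ref{lemma:s12twoa1} (the $\Fix{a_k+b_k,a_k+b_k}$-type elements, extended via $\SymSp_g$) --- or, more precisely, this is of the form $\Fix{a_k+b_k, \cdot}$-adjacent and reduces to a combination of $S_{12}$ elements and the previous lemmas. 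The case $\Fix{a_1-b_1,x}$ is identical after applying the element of $\SymSp_g$ sending $a_1 \mapsto -b_1$, $b_1 \mapsto a_1$ (which carries $a_1+b_1$ to $a_1-b_1$ and fixes $x$ up to sign), so by Lemma \ref{lemma:altsymspacts} it suffices to treat $a_1+b_1$.

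\textbf{Main obstacle.} The genuine difficulty is bookkeeping, not ideas: getting the bases $A_V$, $A_W$ and the spanning set $Y$ for $\cK(a_1+b_1,x)$ exactly right --- in particular making sure that in case (ii) each $\PresA{(a_1+b_1)\wedge(a_i-b_j),x\wedge(b_i-a_j)}$ really is (up to sign and up to an element of $\SymSp_g$) one of the generators of $S_3 = \bigcup \oFix{a_i-b_j,b_i-a_j}$ or else falls into the scope of Lemma \ref{lemma:s3symmetricalt}, and that in case (iii) every term is genuinely covered by the earlier Lemmas \ref{lemma:s12twoa1weak}--\ref{lemma:s12twoa1}. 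One must also double-check that every pair appearing, e.g. $(a_1+b_1)\wedge u$ and $x \wedge v$, is a \emph{special pair} so that the corresponding $\PresA{\cdot,\cdot}$ is actually defined --- this is automatic here because $\omega(a_1+b_1,u) \in \{-1,0,1\}$ and $\omega(x,v)\in\{-1,0,1\}$ for $u,v$ drawn from (sums of) basis vectors, but it should be noted explicitly. As the paper itself remarks, the proof is repetitive and the order matters; I would state $\Fix{a_1+b_1,x}$ first, reduce $\Fix{a_1-b_1,x}$ to it by $\SymSp_g$, and lean on Lemmas \ref{lemma:mapbilinear2}, \ref{lemma:s3symmetricalt}, \ref{lemma:altsymspacts}, and \ref{lemma:s12twoa1} rather than redoing any symplectic linear algebra from scratch.
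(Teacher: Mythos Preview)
Your overall template is the same as the paper's, and the reduction of $\Fix{a_1-b_1,x}$ to $\Fix{a_1+b_1,x}$ via $\SymSp_g$ is correct. However, your three-case decomposition is incomplete, and this is not just bookkeeping.

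The point you are missing is that $a_1+b_1$ is \emph{not} built from orthogonal basis vectors: $\omega(a_1,b_1)=1$. This forces the basis $A_W$ for $\cW = \Span{a_1+b_1}_\Q^{\perp}/\Span{x}$ to contain $a_1+b_1$ itself (since $\Span{a_1+b_1}^\perp$ contains neither $a_1$ nor $b_1$ alone, but does contain $a_1+b_1$). Concretely, for $x=a_2$ one takes $A_V=\{a_1,a_2,a_3,b_3,\ldots\}$ and $A_W=\{a_1+b_1,b_2,a_3,b_3,\ldots\}$. The nonzero pairings are then $\omega(a_1,a_1+b_1)=1$, $\omega(a_2,b_2)=1$, and the usual $\omega(a_i,b_i),\omega(b_i,a_i)$ for $i\geq 3$. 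Your cases (ii) and (iii) only produce kernel elements supported on $\{a_2\otimes b_2,\ a_i\otimes b_i,\ b_i\otimes a_i : i\geq 3\}$; none of them touches $a_1\otimes(a_1+b_1)$. So your list does not span $\cK(a_1+b_1,a_2)$: you need one more generator involving $a_1\otimes(a_1+b_1)$, for instance $a_2\otimes b_2 - a_1\otimes(a_1+b_1)$, or equivalently (modulo $X$) the element $(a_1-a_2)\otimes(a_1+b_1+b_2)$.

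The corresponding generator $\PresA{(a_1+b_1)\wedge(a_2-a_1),\,a_2\wedge(a_1+b_1+b_2)}$ is not in $S_{12}$, not in $S_3$, and not of the $\Fix{a_k+b_k,a_k+b_k}$ type covered by Lemma~\ref{lemma:s12twoa1weak}. The paper handles this as a separate Case~4 by a direct computation: one uses that $\{a_1+b_1,\,a_2-a_1,\,a_2,\,a_1+b_1+b_2,\,a_3,b_3,\ldots\}$ is a symplectic basis, so in $(\wedge^2 H)/\Q$ one can solve for $a_2\wedge(a_1+b_1+b_2)$ and substitute into the second slot, after which everything expands into $\Span{S_{12}}$. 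This step is the genuine new ingredient in this lemma compared to the earlier ones you cite, and your plan does not account for it.
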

\begin{proof}
Using Lemma \ref{lemma:altsymspacts}, we can apply an appropriate element of $\SymSp_g$ and
reduce ourselves to proving that $\Fix{a_1+b_1,a_2} \subset \Span{S}$.
Following the notation in \S \ref{section:identificationalt2}, define
\begin{alignat*}{8}
&&\cV &= &\Span{a_2}_{\Q}^{\perp}/\Span{a_1+b_1} &\cong &\Span{A_V,A_U}_{\Q} &\quad \text{with} \quad &&A_V = \{a_1,a_2,a_3,b_3,\ldots,a_g,b_g\},&&\\
&&\cW &= &\Span{a_1+b_1}_{\Q}^{\perp}/\Span{a_2} &\cong &\Span{A_W,A_U}_{\Q} &\quad \text{with} \quad &&A_W = \{a_1+b_1,b_2,a_3,b_3,\ldots,a_g,b_g\}.&&
\end{alignat*}
We proved in Lemma \ref{lemma:mapbilinear2} that $\Fix{a_1+b_1,a_2}$ is isomorphic
to a quotient of the kernel of the map $\cV \otimes \cW \rightarrow \Q$
induced by the symplectic form $\omega$.  Under this isomorphism, a generator $\PresA{(a_1+b_1) \wedge x,a_2 \wedge y}$
of $\Fix{a_1+b_1,a_2}$ maps to $x \otimes y \in \cV \otimes \cW$.

The kernel of $\cV \otimes \cW \rightarrow \Q$ is spanned by $X \cup Y$ where\footnote{See the footnotes in the proof of
Lemma \ref{lemma:s3symmetricalt.1} for more on this.  The elements 
are carefully chosen such that all the elements $\PresA{(a_1+b_1) \wedge z,a_2 \wedge w}$ that appears
in Cases \ref{case:twovsone1.1} -- \ref{case:twovsone1.4} below are actually generators of $\Fix{a_1+b_1,a_2}$, i.e., both
$(a_1+b_1) \wedge z$ and $a_2 \wedge w$ are special pairs.  The most delicate choice here is
$a_2 \otimes b_2 - a_1 \otimes (a_1+b_1)$, which is also chosen to make the calculation in Case \ref{case:twovsone1.4} easier.}
\begin{align*}
X = &\Set{$x \otimes y$}{$x \in A_V$, $y \in A_W$, $\omega(x,y)=0$},\\
Y = &\Set{$a_i \otimes b_i + b_j \otimes a_j$}{$3 \leq i,j \leq g$}\\
    &\cup \{a_2 \otimes b_2 - a_1 \otimes (a_1+b_1),a_2 \otimes b_2 + b_3 \otimes a_3\}.
\end{align*}
Since for $1 \leq i,j \leq g$ distinct with either $i,j \geq 3$ or $(i,j) = (2,3)$ and for $3 \leq k \leq g$ we have
\begin{align*}
(a_i-b_j) \otimes (b_i - a_j)   &= a_i \otimes b_i       + b_j \otimes a_j + \text{an element of $\Span{X}$},\\
(a_k+b_k) \otimes (a_k + b_k)   &= a_k \otimes b_k       + b_k \otimes a_k + \text{an element of $\Span{X}$},\\
(a_1-a_2) \otimes (a_1+b_1+b_2) &= a_1 \otimes (a_1+b_1) - a_2 \otimes b_2 + \text{an element of $\Span{X}$},
\end{align*}
we can replace $Y$ by the set
\begin{align*}
&\Set{$(a_i-b_j) \otimes (b_i-a_j)$}{$1 \leq i,j \leq g$ distinct, either $i,j \geq 3$ or $(i,j) = (2,3)$} \\
&\cup \Set{$(a_i+b_i) \otimes (a_i + b_i)$}{$3 \leq i \leq g$} \\
&\cup \{(a_1-a_2) \otimes (a_1+b_1+b_2)\}.
\end{align*}
From this, we see that $\Fix{a_1+b_1,a_2}$ is generated by the following elements:

\begin{case}{1}
\label{case:twovsone1.1}
$\PresA{(a_1+b_1) \wedge x,a_2 \wedge y}$ for $x \in A_V$ and $y \in A_W$ with $\omega(x,y) = 0$.
\end{case}

\noindent
These equal $-\PPresA{x \wedge (a_1+b_1),a_2 \wedge y} \in S_{12}$.

\begin{case}{2}
\label{case:twovsone1.2}
$\PresA{(a_1+b_1) \wedge (a_i-b_j),a_2 \wedge (b_i-a_j)}$ with $1 \leq i,j \leq g$ distinct and either
$i,j \geq 3$ or $(i,j) = (2,3)$.
\end{case}

\noindent
These equal $\OPresA{(a_i-b_j) \wedge (a_1+b_1),(b_i-a_j) \wedge a_2} \in S_3$.

\begin{case}{3}
\label{case:twovsone1.3}
$\PresA{(a_1+b_1) \wedge (a_i+b_i),a_2 \wedge (a_i+b_i)}$ with $3 \leq i \leq g$.
\end{case}

\noindent
These equal $\PresA{(a_i+b_i) \wedge (a_1+b_1),(a_i+b_i) \wedge a_2}$, which lie
in $\Span{S}$ by Lemma \ref{lemma:s12twoa1weak}.

\begin{case}{4}
\label{case:twovsone1.4}
$\PresA{(a_1+b_1) \wedge (a_2-a_1),a_2 \wedge (a_1+b_1+b_2)}$.
\end{case}

\noindent
Since $\{a_1+b_1,a_2-a_1,a_2,a_1+b_1+b_2,a_3,b_3,\ldots,a_g,b_g\}$ is a symplectic basis for $H_{\Z}$,
in $(\wedge^2 H)/\Q$ we have
\[(a_1+b_1) \wedge (a_2+a_1) + a_2 \wedge (a_1+b_1+b_2) + a_3 \wedge b_3 + \cdots + a_g \wedge b_g = 0.\]
Solving for $a_2 \wedge (a_1+b_1+b_2)$ and plugging the result into the second
entry of our element, we see that $\PresA{(a_1+b_1) \wedge (a_2-a_1),a_2 \wedge (a_1+b_1+b_2)}$ equals
\begin{align*}
 &-\PresA{(a_1+b_1) \wedge (a_2-a_1),(a_1+b_1) \wedge (a_2-a_1)} - \sum\nolimits_{i=3}^g \PresA{(a_1+b_1) \wedge (a_2-a_1), a_i \wedge b_i} \\
=&-\sum\nolimits_{i=3}^g \left(\PPresA{a_1 \wedge a_2,a_i \wedge b_i} + \PPresA{b_1 \wedge a_2,a_i \wedge b_i} - \PPresA{b_1 \wedge a_1,a_i \wedge b_i}\right) \in \Span{S_{12}}.\qedhere
\end{align*}
\end{proof}

\begin{lemma}[Two vs one, II]
\label{lemma:twovsone2}
Both $\Fix{a_1+b_2,b_2}$ and $\Fix{a_2+b_1,b_1}$ are subsets of $\Span{S}$.
\end{lemma}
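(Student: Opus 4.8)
The statement to prove is Lemma~\ref{lemma:twovsone2}, asserting that both $\Fix{a_1+b_2,b_2}$ and $\Fix{a_2+b_1,b_1}$ lie in $\Span{S}$. The plan is to follow the now-routine template of \S\ref{section:presentationalt3}: reduce the two cases to a single computation, identify $\Fix{-,-}$ with (a quotient of) the kernel of a bilinear pairing via Lemma~\ref{lemma:mapbilinear2}, choose a convenient spanning set $X \cup Y$ for that kernel so that every generator of $\Fix{-,-}$ that arises is manifestly a special pair, and then check case by case that each resulting generator lands in $\Span{S_{12}}$, $\Span{S_3}$, or in a subspace already known to lie in $\Span{S}$ by an earlier lemma (principally Lemmas~\ref{lemma:altsymspacts}, \ref{lemma:s12twoa1weak}, and \ref{lemma:twovsone1}).

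First I would reduce the two assertions to one. Note that $(a_1+b_2,b_2)$ and $(a_2+b_1,b_1)$ are orthogonal pairs, and there is an element of $\SymSp_g$ swapping the roles of indices $1$ and $2$ while fixing the relevant spans appropriately; combined with Lemma~\ref{lemma:altsymspacts} and Lemma~\ref{lemma:isotropicbilineareasy}, it suffices to treat $\Fix{a_1+b_2,b_2}$. Here $(a_1+b_2,b_2)$ is an isotropic basis: $\omega(a_1+b_2,b_2)=0$ and $\{a_1+b_2,b_2\}$ spans a rank-$2$ direct summand of $H_{\Z}$ (extend to the symplectic basis $\{a_1+b_2, -b_1+a_2, a_2, b_2, a_3,b_3,\ldots\}$ or similar). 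Following \S\ref{section:identificationalt2}, set $\cV = \Span{b_2}_{\Q}^{\perp}/\Span{a_1+b_2}$ and $\cW = \Span{a_1+b_2}_{\Q}^{\perp}/\Span{b_2}$, choose bases $A_V$ and $A_W$ for $\cV$ and $\cW$ as in the previous lemmas, and invoke Lemma~\ref{lemma:mapbilinear2} to identify $\Fix{a_1+b_2,b_2}$ with a quotient of $\ker(\cV\otimes\cW\to\Q)$, a generator $\PresA{(a_1+b_2)\wedge x, b_2\wedge y}$ mapping to $x\otimes y$.

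Next I would write down a spanning set $X \cup Y$ for that kernel. As usual, $X = \Set{$x\otimes y$}{$x\in A_V,\ y\in A_W,\ \omega(x,y)=0$}$, and $Y$ consists of a carefully chosen set of ``diagonal-difference'' elements of the form $a_i\otimes b_i + b_j\otimes a_j$ (with appropriate modifications involving $a_1+b_2$ so that the resulting generators of $\Fix{a_1+b_2,b_2}$ are special pairs), then rewritten — modulo $\Span{X}$ — in the $(a_i-b_j)\otimes(b_i-a_j)$ and $(a_i+b_i)\otimes(a_i+b_i)$ normal forms used in Lemmas~\ref{lemma:s3symmetricalt.1} and \ref{lemma:twovsone1}. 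With this choice the generators of $\Fix{a_1+b_2,b_2}$ break into a handful of cases: (1) $\PresA{(a_1+b_2)\wedge x, b_2\wedge y}$ with $x\in A_V$, $y\in A_W$, $\omega(x,y)=0$, which equals $\pm\PPresA{x\wedge(a_1+b_2),y\wedge b_2}\in S_{12}$; (2) $\PresA{(a_1+b_2)\wedge(a_i-b_j), b_2\wedge(b_i-a_j)}$ for suitable distinct $i,j$, which equals $\pm\OPresA{(a_i-b_j)\wedge(a_1+b_2),(b_i-a_j)\wedge b_2}\in S_3$ (after possibly using Lemma~\ref{lemma:s3symmetricalt} on the indices that involve $b_2$ directly); (3) $\PresA{(a_1+b_2)\wedge(a_i+b_i), b_2\wedge(a_i+b_i)}$ for $i\geq 3$, which lies in $\Span{S}$ by Lemma~\ref{lemma:s12twoa1weak}; and (4) one or two ``delicate'' generators mixing $a_1, b_1, b_2$, which I would handle by the symplectic-basis trick of Case~\ref{case:twovsone1.4}: expand $\omega=0$ in $(\wedge^2 H)/\Q$ for an adapted symplectic basis, solve for the offending wedge, substitute, and expand the result into elements of $\Span{S_{12}}$ and terms already covered by Lemma~\ref{lemma:twovsone1}.

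The main obstacle is Step~(4): getting the spanning set $Y$ exactly right so that the few leftover generators simplify cleanly. The choice of which ``diagonal-difference'' elements to include (e.g.\ whether to use $b_2\otimes(a_1+b_2)$ or $a_1\otimes(a_1+b_2)$ type corrections) must simultaneously ensure every generator that appears is a genuine special pair \emph{and} make the final substitution land in $\Span{S}$; this is the same kind of bookkeeping as in Lemma~\ref{lemma:twovsone1}, but the asymmetry between the first entry $a_1+b_2$ (two generators) and the second entry $b_2$ (one generator, overlapping the first) means the adapted symplectic basis and the sign conventions require care. Everything else is a mechanical transcription of the template already established in \S\ref{section:presentationalt2}--\S\ref{section:presentationalt3}, and I would keep the write-up terse by citing Lemma~\ref{lemma:s3symmetricalt.1} for the footnote-level calculations rather than repeating them.
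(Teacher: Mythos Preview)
Your proposal is correct and matches the paper's approach essentially verbatim: same reduction via $\SymSp_g$, same use of Lemma~\ref{lemma:mapbilinear2}, same four-case breakdown. The one place where the paper is slicker than your plan is your ``main obstacle'' Case~(4): the single delicate generator is $\PresA{(a_1+b_2)\wedge(b_1-b_2),\, b_2\wedge(a_1+b_1+a_2)}$, and since $(a_1+b_2)\wedge(b_1-b_2) = (a_1+b_1)\wedge(b_1-b_2)$ (the difference $b_2-b_1$ is parallel to $b_1-b_2$) this element already lies in $\Fix{a_1+b_1,b_2}$, hence in $\Span{S}$ directly by Lemma~\ref{lemma:twovsone1} --- no symplectic-basis expansion required.
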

\begin{proof}
The subsets differ by an element of $\SymSp_g$, so by
Lemma \ref{lemma:altsymspacts} it is enough to prove that $\Fix{a_1+b_2,b_2} \subset \Span{S}$.
Following the notation in \S \ref{section:identificationalt2}, define
\begin{alignat*}{8}
&&\cV &= &\Span{b_2}_{\Q}^{\perp}/\Span{a_1+b_2} &\cong &\Span{A_V}_{\Q} &\quad \text{with} \quad &&A_V = \{b_1,b_2,a_3,b_3,\ldots,a_g,b_g\},&&\\
&&\cW &= &\Span{a_1+b_2}_{\Q}^{\perp}/\Span{b_2} &\cong &\Span{A_W}_{\Q} &\quad \text{with} \quad &&A_W = \{a_1,b_1+a_2,a_3,b_3,\ldots,a_g,b_g\}.&&
\end{alignat*}
We proved in Lemma \ref{lemma:mapbilinear2} that $\Fix{a_1+b_2,b_2}$ is isomorphic
to a quotient of the kernel of the map $\cV \otimes \cW \rightarrow \Q$
induced by the symplectic form $\omega$.  Under this isomorphism, a generator
$\PresA{(a_1+b_2) \wedge x,b_2 \wedge y}$ of $\Fix{a_1+b_2,b_2}$ maps to
$x \otimes y \in \cV \otimes \cW$.

The kernel of $\cV \otimes \cW \rightarrow \Q$ is spanned by $X \cup Y$ where
\begin{align*}
X = &\Set{$x \otimes y$}{$x \in A_V$, $y \in A_W$, $\omega(x,y)=0$},\\
Y = &\Set{$a_i \otimes b_i + b_j \otimes a_j$}{$i,j \geq 3$} \\
    &\cup \{a_3 \otimes b_3 + b_1 \otimes a_1,b_1 \otimes a_1 - b_2 \otimes (b_1+a_2)\}.
\end{align*}
Just like in the proof of Lemma \ref{lemma:twovsone1}, we can replace $Y$ by the set
\begin{align*}
&\Set{$(a_i-b_j) \otimes (b_i-a_j)$}{$1 \leq i,j \leq g$ distinct, either $i,j \geq 3$ or $(i,j) = (3,1)$} \\
&\cup \Set{$(a_i+b_i) \otimes (a_i + b_i)$}{$3 \leq i \leq g$}\\
&\cup \{(b_1-b_2) \otimes (a_1+b_1+a_2)\}.
\end{align*}
From this, we see that $\Fix{a_1+b_2,b_2}$ is generated by the following elements:

\begin{case}{1}
$\PresA{(a_1+b_2) \wedge x,b_2 \wedge y}$ for $x \in A_V$ and $y \in A_W$ with $\omega(x,y) = 0$. 
\end{case}

\noindent
These equal $-\PPresA{x \wedge (a_1+b_2),b_2 \wedge y} \in S_{12}$.

\begin{case}{2}
$\PresA{(a_1+b_2) \wedge (a_i-b_j),b_2 \wedge (b_i-a_j)}$ for $1 \leq i,j \leq g$ distinct
with either $i,j \geq 3$ or $(i,j) = (3,1)$.
\end{case}

\noindent
These equal $\OPresA{(a_i-b_j) \wedge (a_1+b_2),(b_i-a_j) \wedge b_2} \in S_3$.

\begin{case}{3}
$\PresA{(a_1+b_2) \wedge (a_i+b_i),b_2 \wedge (a_i+b_i)}$ for $3 \leq i \leq g$.
\end{case}

\noindent
These equal $\PresA{(a_i+b_i) \wedge (a_1+b_2),(a_i+b_i) \wedge b_2}$, which lie
in $\Span{S}$ by Lemma \ref{lemma:s12twoa1weak}.

\begin{case}{4}
$\PresA{(a_1+b_2) \wedge (b_1-b_2),b_2 \wedge (a_1+b_1+a_2)}$.
\end{case}

\noindent
This equals $\PresA{(a_1+b_1) \wedge (b_1-b_2),b_2 \wedge (a_1+b_1+a_2)}$, which
lies in $\Span{S}$ by Lemma \ref{lemma:twovsone1}.
\end{proof}

\begin{lemma}[Two vs one, III]
\label{lemma:twovsone3}
The following hold:
\begin{itemize}
\item For $a' \in \cB \setminus \{a_1,b_1,a_2,b_2\}$, we have $\Fix{a_1+b_2,a'} \subset \Span{S}$.
\item For $a' \in \cB \setminus \{a_1,a_2,a_2,b_2\}$, we have $\Fix{a_2+b_1,a'} \subset \Span{S}$.
\end{itemize}
\end{lemma}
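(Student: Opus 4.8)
The plan is to follow the template of Lemmas~\ref{lemma:twovsone1} and~\ref{lemma:twovsone2} essentially verbatim. First I would reduce to a single case. Since $\Span{S}$ is invariant under $\SymSp_g$ by Lemma~\ref{lemma:altsymspacts}, and since the index-swap $1\leftrightarrow 2$ in $\fS_g\subset\SymSp_g$ carries $a_2+b_1$ to $a_1+b_2$ while fixing each $a'\in\cB\setminus\{a_1,b_1,a_2,b_2\}$, and a sign/swap element of $\SymSp_g$ carries any such $a'$ to $a_3$ while fixing $a_1+b_2$, both bullets reduce to the single assertion $\Fix{a_1+b_2,a_3}\subset\Span{S}$. (Lemma~\ref{lemma:isotropicbilineareasy} handles the harmless signs that appear along the way.) Note $(a_1+b_2,a_3)$ is an isotropic basis, since $\omega(a_1+b_2,a_3)=0$ and $\{a_1+b_2,a_3\}$ spans a direct summand of $H_{\Z}$ (e.g.\ $\{a_1+b_2,b_2\}$ is a symplectic pair).

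Next I would apply Lemma~\ref{lemma:mapbilinear2} to this isotropic basis. Following \S\ref{section:identificationalt2}, set
\[\cV = \Span{a_3}_{\Q}^{\perp}/\Span{a_1+b_2} \quad\text{and}\quad \cW = \Span{a_1+b_2}_{\Q}^{\perp}/\Span{a_3},\]
each of rank $2g-2$, with convenient bases $A_V$ and $A_W$ drawn from the symplectic basis $\cB$ (dropping $b_2$ from $\Span{a_3}^{\perp}$ and $a_3$ from $\Span{a_1+b_2}^{\perp}$, and recording the direction $b_1+a_2$ in $\cW$). By Lemma~\ref{lemma:mapbilinear2}, $\Fix{a_1+b_2,a_3}$ is isomorphic to a quotient of the kernel of the map $\cV\otimes\cW\to\Q$ induced by $\omega$, with the generator $\PresA{(a_1+b_2)\wedge x,a_3\wedge y}$ corresponding to $x\otimes y$. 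The heart of the proof is then to exhibit a spanning set $X\cup Y$ for this kernel, where $X$ consists of pure tensors $x\otimes y$ with $x\in A_V$, $y\in A_W$, $\omega(x,y)=0$ (these lift to elements of $S_{12}$, written in purple), and $Y$ is a short list of correction terms. Modulo $\Span{X}$, each correction term is rewritten either as $(a_i-b_j)\otimes(b_i-a_j)$ (lifting to an element of $S_3$, written in orange), or as $(a_i+b_i)\otimes(a_i+b_i)$ for $3\le i\le g$ (whose lift lies in $\Span{S}$ by Lemma~\ref{lemma:s12twoa1weak}), or — for the single delicate term pairing $a_1$ against the $a_1+b_2$ direction — as something whose lift already lies in $\Span{S}$ by Lemma~\ref{lemma:twovsone1} or Lemma~\ref{lemma:twovsone2}. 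Throughout, I would check that every intermediate $\PresA{(a_1+b_2)\wedge z,a_3\wedge w}$ is an honest generator of $\Fix{a_1+b_2,a_3}$, i.e.\ that $(a_1+b_2)\wedge z$ and $a_3\wedge w$ are special pairs.

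I expect the main obstacle to be exactly the bookkeeping in this last point, just as in the two preceding lemmas: the bases $A_V,A_W$ and the correction set $Y$ must be chosen so that all relevant $\omega$-values lie in $\{-1,0,1\}$, and so that the one ``bad'' correction term (the analogue of $a_2\otimes b_2 - a_1\otimes(a_1+b_1)$ in the proof of Lemma~\ref{lemma:twovsone1}) collapses: after solving in $(\wedge^2 H)/\Q$ the relation coming from an appropriate symplectic basis of $H_{\Z}$ and substituting, it should reduce to a sum of purple terms together with an invocation of an already-proven ``two vs one'' lemma. Getting the order of the proofs right — so that Lemma~\ref{lemma:twovsone2} is available when proving this lemma — is what makes this step work, and everything else parallels Lemmas~\ref{lemma:twovsone1} and~\ref{lemma:twovsone2} routinely.
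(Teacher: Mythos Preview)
Your proposal is correct and follows essentially the same approach as the paper. The only difference is cosmetic: after reducing to $\Fix{a_1+b_2,a_3}$, the paper applies one more element of $\SymSp_g$ to transform this to $\Fix{a_1+a_3,a_2}$ (purely to simplify the index bookkeeping), and then runs the same four-case analysis you describe, with the ``delicate'' correction term handled by Lemma~\ref{lemma:twovsone1}.
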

\begin{proof}
The two sets differ by an element of $\SymSp_g$, so by Lemma \ref{lemma:altsymspacts} it
is enough to deal with $\Fix{a_1+b_2,a'}$.  Applying a further element of $\SymSp_g$, we can
reduce to the case $a' = a_3$, i.e., to $\Fix{a_1+b_2,a_3}$.  In fact, to simplify our notation
we will apply yet another element of $\SymSp_g$ and transform our goal into 
proving that $\Fix{a_1+a_3,a_2} \subset \Span{S}$.
Following the notation in \S \ref{section:identificationalt2}, define
\begin{alignat*}{8}
&&\cV &= &\Span{a_2}_{\Q}^{\perp}/\Span{a_1+a_3} &\cong &\Span{A_V}_{\Q} &\quad \text{with} \quad &&A_V = \{b_1,a_1,    a_2,b_3,a_4,b_4,\ldots,a_g,b_g\},&&\\
&&\cW &= &\Span{a_1+a_3}_{\Q}^{\perp}/\Span{a_2} &\cong &\Span{A_W}_{\Q} &\quad \text{with} \quad &&A_W = \{a_1,b_1-b_3,b_2,a_3,a_4,b_4,\ldots,a_g,b_g\}.&&
\end{alignat*}
We proved in Lemma \ref{lemma:mapbilinear2} that $\Fix{a_1+a_3,a_2}$ is isomorphic
to a quotient of the kernel of the map $\cV \otimes \cW \rightarrow \Q$
induced by the symplectic form $\omega$.  Under this isomorphism, a generator
$\PresA{(a_1+a_3) \wedge x,a_2 \wedge y}$ of $\Fix{a_1+a_3,a_2}$ maps to
$x \otimes y \in \cV \otimes \cW$.

The kernel of $\cV \otimes \cW \rightarrow \Q$ is spanned by $X \cup Y$ where
\begin{align*}
X = &\Set{$x \otimes y$}{$x \in A_V$, $y \in A_W$, $\omega(x,y)=0$},\\
Y = &\Set{$a_i \otimes b_i + b_j \otimes a_j$}{$4 \leq i,j \leq g$} \\ 
    &\cup \{a_4 \otimes b_4 + b_1 \otimes a_1,b_1 \otimes a_1 + a_1 \otimes (b_1-b_3),a_2 \otimes b_2+b_4 \otimes a_4,a_4 \otimes b_4 + b_3 \otimes a_3\}.
\end{align*}
Just like in the proof of Lemma \ref{lemma:twovsone1}, we can replace $Y$ by
\begin{align*}
&\Set{$(a_i-b_j) \otimes (b_i-a_j)$}{$1 \leq i,j \leq g$ distinct, either $i,j \geq 4$ or $(i,j) \in \{(4,1),(2,4),(4,3)\}$} \\
&\cup \Set{$(a_i+b_i) \otimes (a_i + b_i)$}{$4 \leq i \leq g$} \\
&\cup \{(a_1+b_1) \otimes (a_1+b_1-b_3)\}.
\end{align*}
From this, we see that $\Fix{a_1+a_3,a_2}$ is generated by the following elements:

\begin{case}{1}
$\PresA{(a_1+a_3) \wedge x,a_2 \wedge y}$ for $x \in A_V$ and $y \in A_W$ with $\omega(x,y) = 0$.
\end{case}

\noindent
These equal $-\PPresA{x \wedge (a_1+a_3),a_2 \wedge y} \in S_{12}$.

\begin{case}{2}
$\PresA{(a_1+a_3) \wedge (a_i-b_j),a_2 \wedge (b_i-a_j)}$ for $1 \leq i,j \leq g$ distinct
with either $i,j \geq 4$ or $(i,j) \in \{(4,1),(2,4),(4,3)\}$.
\end{case}

\noindent
These equal $\OPresA{(a_i-b_j) \wedge (a_1+a_3),(b_i-a_j) \wedge a_2} \in S_3$.

\begin{case}{3}
$\PresA{(a_1+a_3) \wedge (a_i+b_i),a_2 \wedge (a_i+b_i)}$ for $4 \leq i \leq g$.
\end{case}

\noindent
These equal $\PresA{(a_i+b_i) \wedge (a_1+a_3),(a_i+b_i) \wedge a_2}$, which lie
in $\Span{S}$ by Lemma \ref{lemma:s12twoa1weak}.

\begin{case}{4}
$\PresA{(a_1+a_3) \wedge (a_1+b_1),a_2 \wedge (a_1+b_1-b_3)}$.
\end{case}

\noindent
This equals $-\PresA{(a_1+b_1) \wedge (a_1+a_3),a_2 \wedge (a_1+b_1-b_3)}$, which 
lies in $\Span{S}$ by Lemma \ref{lemma:twovsone1}.
\end{proof}

\subsection{Two vs two}

Our next three lemmas are about $\Fix{a,a'}$ where both $a$ and $a'$ involve
two generators.  Note that we already proved many such results
in Lemma \ref{lemma:s3symmetricalt}.

\begin{lemma}[Two vs two, I]
\label{lemma:twovstwo1}
Both $\Fix{a_1+b_2,a_1+b_2}$ and $\Fix{a_2+b_1,a_2+b_1}$ are subsets of $\Span{S}$.
\end{lemma}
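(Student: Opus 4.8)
The plan is to run the ``two vs two'' template through an application of Lemma~\ref{lemma:mapbilinear1}. The element of $\SymSp_g$ interchanging $a_1\leftrightarrow a_2$ and $b_1\leftrightarrow b_2$ carries $\Fix{a_1+b_2,a_1+b_2}$ to $\Fix{a_2+b_1,a_2+b_1}$, so by Lemma~\ref{lemma:altsymspacts} it suffices to show $\Fix{a_1+b_2,a_1+b_2}\subset\Span{S}$. Since $a_1+b_2$ is primitive, Lemma~\ref{lemma:mapbilinear1} says that the linearization map takes $\Fix{a_1+b_2,a_1+b_2}$ isomorphically onto the kernel $\cK(a_1+b_2)$ of the map $\wedge^2\cU(a_1+b_2)\to\Q$ induced by $\omega$, where $\cU(a_1+b_2)=\Span{a_1+b_2}_{\Q}^{\perp}/\Span{a_1+b_2}$, and a generator $\PresA{(a_1+b_2)\wedge x,(a_1+b_2)\wedge y}$ of $\Fix{a_1+b_2,a_1+b_2}$ is sent to $x\wedge y$. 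The observation that makes the template work is that, although $a_1+b_2\notin\cB$, one can pick a symplectic basis for $\cU(a_1+b_2)$ all but one of whose vectors lie in $\cB$: since $b_2\equiv-a_1$ modulo $a_1+b_2$ and $\oomega(a_1,b_1+a_2)=1$, the set $\{a_1,\ b_1+a_2,\ a_3,b_3,\ldots,a_g,b_g\}$ is such a basis, with the single exceptional vector $b_1+a_2$ paired with $a_1$.

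As in the proofs of Lemmas~\ref{lemma:s12twoa1weak} and \ref{lemma:twovsone1}, I would then exhibit a finite spanning set for $\cK(a_1+b_2)$, consisting of the wedges $u\wedge v$ of distinct $\oomega$-orthogonal basis vectors together with $g-2$ hyperbolic-difference elements, using $a_3\wedge b_3$ as the pivot: the differences $a_3\wedge b_3-a_j\wedge b_j$ for $4\le j\le g$, which modulo orthogonal wedges equal $(a_3-b_j)\wedge(b_3-a_j)$, and the single further difference $a_3\wedge b_3-a_1\wedge(b_1+a_2)$, which modulo orthogonal wedges equals $(a_3-b_1-a_2)\wedge(b_3-a_1)$. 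By Lemma~\ref{lemma:mapbilinear1} it then suffices to show that the corresponding generators of $\Fix{a_1+b_2,a_1+b_2}$ lie in $\Span{S}$, and all but one of them do so routinely: if $x,y\in\{a_1,a_3,b_3,\ldots,a_g,b_g\}\subset\cB$, then after rewriting $(a_1+b_2)\wedge x=-x\wedge(a_1+b_2)$ the generator is (up to sign) a generator of $\Fix{x,y}$ and so lies in $S_{12}$ (for $x=a_1$ one uses that $a_1\wedge b_2$ is a special pair); if $\{x,y\}=\{a_3-b_j,b_3-a_j\}$ it is, up to sign, a generator of $\oFix{a_3-b_j,b_3-a_j}\subset S_3$; and if one of $x,y$ is $b_1+a_2$ and the other is one of $a_3,b_3,\ldots,a_g,b_g$ (the only $\cB$-basis vectors $\oomega$-orthogonal to $b_1+a_2$) it is a generator of $\Fix{a_1+b_2,c}$ with $c\in\cB\setminus\{a_1,b_1,a_2,b_2\}$, hence lies in $\Span{S}$ by Lemma~\ref{lemma:twovsone3}.

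The one remaining generator, $\PresA{(a_1+b_2)\wedge(a_3-b_1-a_2),(a_1+b_2)\wedge(b_3-a_1)}$, is where the real work lies, and I expect it to be the main obstacle. Both entries are special (isotropic) pairs, but neither two-plane $\Span{a_1+b_2,a_3-b_1-a_2}$ nor $\Span{a_1+b_2,b_3-a_1}$ contains a member of $\cB$, and no decomposition of one entry into pieces is sym-orthogonal to the other, so this generator cannot be reduced using the linearity relations of Theorem~\ref{theorem:summarypresentation} alone. I would dispatch it by a Case~4--style substitution: complete suitable combinations of $a_1+b_2$, $a_3-b_1-a_2$, $b_3-a_1$ to a symplectic basis of $H_{\Z}$, and use the resulting identity $\sum(\text{hyperbolic pairs})\equiv 0$ in $(\wedge^2 H)/\Q$ to rewrite one of the awkward wedges as a combination of wedges that are already handled, so that after substitution the generator lands in a combination of the families already treated (in $S_{12}$, $S_3$, Lemma~\ref{lemma:twovsone3}, or Lemma~\ref{lemma:s3symmetricalt}). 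The difficulty is finding the right symplectic frame for this substitution and then carrying out the bookkeeping: at each step one must verify that every entry that appears is a special pair and that the relevant pairs are sym-orthogonal, so that the linearity relations genuinely apply. Once $\Fix{a_1+b_2,a_1+b_2}\subset\Span{S}$ is established, $\Fix{a_2+b_1,a_2+b_1}\subset\Span{S}$ follows by applying the element of $\SymSp_g$ fixed at the start.
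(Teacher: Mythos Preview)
Your template is exactly the paper's, and everything up through the $(a_3-b_j)\wedge(b_3-a_j)$ generators is fine. Two cosmetic differences: the paper uses the basis $\{b_1+a_2,\,b_2,\,a_3,b_3,\ldots\}$ for $\cU(a_1+b_2)$ (your $a_1$ is replaced by $b_2\equiv -a_1$), and for the case where one of $x,y$ equals $b_1+a_2$ the paper invokes Lemma~\ref{lemma:s3symmetricalt} (viewing the element as a generator of $\Fix{a_1+b_2,\,b_1+a_2}$) rather than Lemma~\ref{lemma:twovsone3}. Both routes are valid.

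The genuine issue is your treatment of the hard generator. Your assertion that ``no decomposition of one entry into pieces is sym-orthogonal to the other'' is false, and this is precisely where the paper's argument is cleanest. Split the \emph{first factor} $a_1+b_2$ in the first entry: since $a_1$, $b_2$, and $a_3-b_1-a_2$ all lie in $\Span{a_1+b_2,\,b_3-a_1}^{\perp}$, Lemma~\ref{lemma:isotropicorthogonal} gives that both $a_1\wedge(a_3-b_1-a_2)$ and $b_2\wedge(a_3-b_1-a_2)$ are sym-orthogonal to the special pair $(a_1+b_2)\wedge(b_3-a_1)$, so the linearity relation of Theorem~\ref{theorem:summarypresentation} applies. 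The two pieces are then generators of $\Fix{a_1,\,a_1+b_2}$ and $\Fix{b_2,\,a_1+b_2}$ respectively (check: $\omega(a_1,a_3-b_1-a_2)=-1$ and $\omega(b_2,a_3-b_1-a_2)=1$, so both first entries are special pairs). Lemma~\ref{lemma:twovsone2} gives $\Fix{a_1+b_2,\,b_2}\subset\Span{S}$ directly, and $\Fix{a_1+b_2,\,a_1}$ differs from it by an element of $\SymSp_g$, so Lemma~\ref{lemma:altsymspacts} finishes the job. This is exactly how the paper handles its (cosmetically different) hard generator in Case~3; no symplectic-basis substitution is needed.
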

\begin{proof}
The pairs $(a_1+b_2,a_1+b_2)$ and $(a_2+b_1,a_2+b_1)$ differ by an element
of $\SymSp_g$, so by Lemma \ref{lemma:altsymspacts} it is enough to prove that
$\Fix{a_1+b_2,a_1+b_2} \subset \Span{S}$.  Following the notation in
\S \ref{section:identificationalt1}, define
\[\cU = \Span{a_1+b_2}_{\Q}^{\perp}/\Span{a_1+b_2} \cong \Span{A}_{\Q} \quad \text{with} \quad A = \{b_1+a_2,b_2,a_3,b_3,\ldots,a_g,b_g\}.\]
We proved in Lemma \ref{lemma:mapbilinear1} that $\Fix{a_1+b_2,a_1+b_2}$ is isomorphic
to the kernel of the map $\wedge^2 \cU \rightarrow \Q$
induced by the symplectic form $\omega$.  Under this isomorphism, a generator
$\PresA{(a_1+b_2) \wedge x,(a_1+b_2) \wedge y}$ of $\Fix{a_1+b_2,a_1+b_2}$ maps to
$x \wedge y \in \wedge^2 \cU$.

The kernel of $\wedge^2 \cU \rightarrow \Q$ is spanned by $X \cup Y$ with
\begin{align*}
X = &\Set{$x \wedge y$}{$x,y \in A$ distinct, $\omega(x,y) = 0$},\\
Y = &\Set{$a_i \wedge b_i - a_j \wedge b_j$}{$3 \leq i < j \leq g$} 
    \cup \{(b_1+a_2) \wedge b_2 - a_3 \wedge b_3\}.
\end{align*}
Just like in the proof of Lemma \ref{lemma:twovsone1}, we can replace $Y$ by
\begin{align*}
&\Set{$(a_i - b_j) \wedge (b_i - a_j)$}{$3 \leq i < j \leq g$} 
\cup \{(b_1+a_2+b_3) \wedge (b_2+a_3)\}.
\end{align*}
It follows that $\Fix{a_1+b_2,a_1+b_2}$ is generated by the following elements:

\begin{case}{1}
\label{case:twovstwo1.1}
$\PresA{(a_1+b_2) \wedge x,(a_1+b_2) \wedge y}$ for $x,y \in A$ with $\omega(x,y) = 0$.
\end{case}

\noindent
If $x,y \in \{b_2,a_3,b_3,\ldots,a_g,b_g\}$, then
\[\PresA{(a_1+b_2) \wedge x,(a_1+b_2) \wedge y} = \PPresA{x \wedge (a_1+b_2),y \wedge (a_1+b_2)} \in S_{12}.\]
If instead one of $x$ and $y$ equals $b_1+a_2$, then swapping $x$ and $y$ if necessary we can assume that
$y = b_1+a_2$.  In this case, using Lemma \ref{lemma:s3symmetricalt} we have
\[\PresA{(a_1+b_2) \wedge x,(a_1+b_2) \wedge (b_1+a_2)} = -\PresA{(a_1+b_2) \wedge x,(b_1+a_2) \wedge (a_1+b_2)} \in \Span{S}.\]

\begin{case}{2}
\label{case:twovstwo1.2}
$\PresA{(a_1+b_2) \wedge (a_i-b_j),(a_1+b_2) \wedge (b_i-a_j)}$ for $3 \leq i < j \leq g$.
\end{case}

\noindent
These equal $\OPresA{(a_i-b_j) \wedge (a_1+b_2),(b_i-a_j) \wedge (a_1+b_2)} \in S_3$.

\begin{case}{3}
\label{case:twovstwo1.3}
$\PresA{(a_1+b_2) \wedge (b_1+a_2+b_3),(a_1+b_2) \wedge (b_2+a_3)}$.
\end{case}

\noindent
This element equals
\begin{equation}
\label{eqn:twovstwo1.3toprove}
\PresA{a_1 \wedge (b_1+a_2+b_3),(a_1+b_2) \wedge (b_2+a_3)} + \PresA{b_2 \wedge (b_1+a_2+b_3),(a_1+b_2) \wedge (b_2+a_3)}.
\end{equation}
This is the sum of an element of $\Fix{a_1,a_1+b_2} = \Fix{a_1+b_2,a_1}$ and an element of $\Fix{b_2,a_1+b_2} = \Fix{a_1+b_2,b_2}$.
Lemma \ref{lemma:twovsone2} says that $\Fix{a_1+b_2,b_2} \subset \Span{S}$.  The set $\Fix{a_1+b_2,a_1}$ differs from
$\Fix{a_1+b_2,b_2}$ by an element of $\SymSp_g$, so by Lemma \ref{lemma:altsymspacts} it also lies in $\Span{S}$.  We conclude
that \eqref{eqn:twovstwo1.3toprove} also lies in $\Span{S}$, as desired.
\end{proof}

The next two lemmas are not needed in the next section, but will be invoked during proofs
later in this section.

\begin{lemma}[Two vs two, II]
\label{lemma:twovstwo2}
We have $\Fix{a_1+a_2,a_1+a_3} \subset \Span{S}$.
\end{lemma}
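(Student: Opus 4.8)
The plan is to run the same machine as in Lemmas \ref{lemma:twovsone1}--\ref{lemma:twovsone3}, built on the identification of $\Fix{-,-}$ furnished by Lemma \ref{lemma:mapbilinear2}. First I would observe that $\omega(a_1+a_2,a_1+a_3)=0$ and that $\{a_1+a_2,a_1+a_3\}$ spans a rank-$2$ direct summand of $H_{\Z}$ (it is part of a $\Z$-basis of $H_{\Z}$, e.g.\ completed by $b_1,a_2,b_2,b_3,a_4,b_4,\dots,a_g,b_g$), so $(a_1+a_2,a_1+a_3)$ is an isotropic basis and Lemma \ref{lemma:mapbilinear2} applies. Following \S\ref{section:identificationalt2}, set
\[
\cV=\Span{a_1+a_3}_{\Q}^{\perp}/\Span{a_1+a_2},\qquad \cW=\Span{a_1+a_2}_{\Q}^{\perp}/\Span{a_1+a_3},
\]
with $\Z$-bases $A_V=\{a_1,a_3,b_2,b_1-b_3,a_4,b_4,\dots,a_g,b_g\}$ and $A_W=\{a_1,a_2,b_3,b_1-b_2,a_4,b_4,\dots,a_g,b_g\}$, embedded via $x\mapsto(a_1+a_2)\wedge x$ and $y\mapsto(a_1+a_3)\wedge y$; under this identification a generator $\PresA{(a_1+a_2)\wedge x,(a_1+a_3)\wedge y}$ of $\Fix{a_1+a_2,a_1+a_3}$ corresponds to $x\otimes y$ in $\ker(\cV\otimes\cW\to\Q)$.

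Next I would exhibit a spanning set $X\cup Y$ for $\ker(\cV\otimes\cW\to\Q)$, with $X$ the orthogonal tensors $x\otimes y$ ($x\in A_V$, $y\in A_W$, $\omega(x,y)=0$) and $Y$ a set of trace-zero combinations of the diagonal tensors coming from the finitely many non-orthogonal pairs (a handful involving the indices $1,2,3$, plus $(a_i,b_i)$ and $(b_i,a_i)$ for $4\le i\le g$). Exactly as in the neighboring lemmas I would then replace $Y$ by a more convenient spanning set: the rank $\ge 4$ part becomes $(a_i-b_j)\otimes(b_i-a_j)$ for $4\le i,j\le g$ distinct together with $(a_k+b_k)\otimes(a_k+b_k)$ for $4\le k\le g$, and the remaining terms are replaced by a short explicit list of tensors $u\otimes v$ with $u,v$ primitive, chosen so that each $(a_1+a_2)\wedge u$ and $(a_1+a_3)\wedge v$ is a special pair (i.e.\ the corresponding element really is a generator of $\Fix{a_1+a_2,a_1+a_3}$). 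The verification then splits into four families: orthogonal tensors give $\pm\PPresA{x\wedge(a_1+a_2),y\wedge(a_1+a_3)}\in S_{12}$; the $(a_i-b_j)\otimes(b_i-a_j)$ terms give $\OPresA{(a_i-b_j)\wedge(a_1+a_2),(b_i-a_j)\wedge(a_1+a_3)}\in S_3$; the $(a_k+b_k)\otimes(a_k+b_k)$ terms rewrite as $\PresA{(a_k+b_k)\wedge(a_1+a_2),(a_k+b_k)\wedge(a_1+a_3)}\in\Fix{a_k+b_k,a_k+b_k}\subset\Span{S}$ by Lemma \ref{lemma:s12twoa1weak}; and each special term is handled by peeling off an $S_{12}$-part using a relation $\sum_i a_i'\wedge b_i'=0$ in $(\wedge^2H)/\Q$ coming from a symplectic basis (solving for one wedge) and recognizing what remains as a subset of the form $\Fix{a_1+b_1,\cdot}$ or $\Fix{a_1+b_2,\cdot}$ already known to lie in $\Span{S}$ by Lemma \ref{lemma:twovsone1}, \ref{lemma:twovsone2}, or \ref{lemma:twovsone3} --- possibly after applying an element of $\SymSp_g$ and invoking Lemma \ref{lemma:altsymspacts}, and using Lemma \ref{lemma:isotropicbilineareasy} to absorb signs. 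A shortcut worth trying first: whenever $\omega(a_1,y)=\omega(a_2,y)=0$ (true for every element of $A_W$ except $b_1-b_2$), the linearity relations of Theorem \ref{theorem:summarypresentation} split $\PresA{(a_1+a_2)\wedge x,(a_1+a_3)\wedge y}=\PresA{a_1\wedge x,(a_1+a_3)\wedge y}+\PresA{a_2\wedge x,(a_1+a_3)\wedge y}$ into an element of $\Fix{a_1+a_3,a_1}$ and one of $\Fix{a_1+a_3,a_2}$, both of which lie in $\Span{S}$ by (relabelled versions of) Lemmas \ref{lemma:twovsone2} and \ref{lemma:twovsone3}; this disposes of most generators at once and isolates the genuinely new work to the few $y$ with $\omega(a_1,y)\neq0$.

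I expect the main obstacle to be, as with all eight lemmas of \S\ref{section:presentationalt3}, the combinatorial bookkeeping in selecting the special elements of $Y$: one must arrange simultaneously that (i) $X$ together with the replaced $Y$ genuinely spans $\ker(\cV\otimes\cW\to\Q)$, (ii) every resulting element has \emph{both} entries special pairs so it lies in $\Fix{a_1+a_2,a_1+a_3}$, and (iii) each special term reduces to an \emph{already-established} lemma --- so that the ordering among the eight lemmas is respected --- rather than circularly to Lemma \ref{lemma:twovstwo2} itself. Making these three constraints line up, and in particular finding the correct symplectic-basis relation to solve against for each special term, is where essentially all the effort lies; the remaining algebra is routine expansion in $(\wedge^2 H)/\Q$.
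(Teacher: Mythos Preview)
Your overall plan matches the paper's proof: identify $\Fix{a_1+a_2,a_1+a_3}$ with a quotient of $\ker(\cV\otimes\cW\to\Q)$ via Lemma~\ref{lemma:mapbilinear2}, span that kernel by $X\cup Y$, and treat each family of generators in turn. Your splitting ``shortcut'' is exactly the device the paper uses for its two special $Y$-terms (Case~4 there): one splits $a_1+a_3$ in the second slot (resp.\ $a_1+a_2$ in the first) and lands in $\Fix{a_1+a_2,a_1}$ and $\Fix{a_1+a_2,a_3}$ (resp.\ $\Fix{a_1,a_1+a_3}$ and $\Fix{a_2,a_1+a_3}$), which reduce via $\SymSp_g$ to Lemmas~\ref{lemma:twovsone2}--\ref{lemma:twovsone3}. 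So the skeleton and the endgame are right.

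There is, however, a genuine oversight in your handling of the orthogonal tensors. Your assertion that these give $\pm\PPresA{x\wedge(a_1+a_2),\,y\wedge(a_1+a_3)}\in S_{12}$ silently assumes $x,y\in\cB$, but each of your bases contains one element \emph{not} in $\cB$: $b_1-b_3\in A_V$ and $b_1-b_2\in A_W$. When $y=b_1-b_2$ (or symmetrically $x=b_1-b_3$) the element is not in $S_{12}$, and your shortcut is unavailable precisely there, as you yourself note. The tool you are missing is not a two-vs-one lemma but the two-vs-two Lemma~\ref{lemma:s3symmetricalt}: for $y=b_1-b_2$ the paper rewrites the generator as $-\PresA{(a_1+a_2)\wedge x,\,(b_1-b_2)\wedge(a_1+a_3)}\in\Fix{a_1+a_2,\,b_1-b_2}\subset\Span{S}$, and symmetrically $\Fix{a_1+a_3,\,b_1-b_3}\subset\Span{S}$ for $x=b_1-b_3$. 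Without invoking Lemma~\ref{lemma:s3symmetricalt} your argument is incomplete on exactly these generators; the ``peel off via a symplectic-basis relation and reduce to $\Fix{a_1+b_1,\cdot}$ or $\Fix{a_1+b_2,\cdot}$'' manoeuvre you propose does not reach these two-vs-two subspaces.
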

\begin{proof}
Following the notation in \S \ref{section:identificationalt2}, define
\begin{alignat*}{8}
&&\cV &= &\Span{a_1+a_3}_{\Q}^{\perp}/\Span{a_1+a_2} &\cong &\Span{A_V}_{\Q} &\quad \text{with} \quad &&A_V = \{a_1,    b_1-b_3,b_2,a_3,a_4,b_4,\ldots,a_g,b_g\},&&\\
&&\cW &= &\Span{a_1+a_2}_{\Q}^{\perp}/\Span{a_1+a_3} &\cong &\Span{A_W}_{\Q} &\quad \text{with} \quad &&A_W = \{b_1-b_2,a_1,    a_2,b_3,a_4,b_4,\ldots,a_g,b_g\}.&&
\end{alignat*}
We proved in Lemma \ref{lemma:mapbilinear2} that $\Fix{a_1+a_2,a_1+a_3}$ is isomorphic
to a quotient of the kernel of the map $\cV \otimes \cW \rightarrow \Q$
induced by the symplectic form $\omega$.  Under this isomorphism, a generator
$\PresA{(a_1+a_2) \wedge x,(a_1+a_3) \wedge y}$ of $\Fix{a_1+a_2,a_1+a_3}$ maps to
$x \otimes y \in \cV \otimes \cW$.

The kernel of $\cV \otimes \cW \rightarrow \Q$ is spanned by $X \cup Y$ where
\begin{align*}
X = &\Set{$x \otimes y$}{$x \in A_V$, $y \in A_W$, $\omega(x,y)=0$},\\
Y = &\Set{$a_i \otimes b_i + b_j \otimes a_j$}{$4 \leq i,j \leq g$}\\ 
    &\cup \{a_1 \otimes (b_1-b_2)+b_2 \otimes a_2, a_3 \otimes b_3 + (b_1-b_3) \otimes a_1, \\
&\quad\quad a_4 \otimes b_4 + b_2 \otimes a_2,a_3 \otimes b_3 + b_4 \otimes a_4\}.
\end{align*}
Just like in the proof of Lemma \ref{lemma:twovsone1}, we can replace $Y$ by
\begin{align*}
&\Set{$(a_i-b_j) \otimes (b_i-a_j)$}{$1 \leq i,j \leq g$ distinct, either $i,j \geq 4$ or $(i,j) \in \{(4,2),(3,4))$} \\
&\cup \Set{$(a_i+b_i) \otimes (a_i + b_i)$}{$4 \leq i \leq g$} \\
&\cup \{(a_1+b_2) \otimes (b_1+a_2-b_2), (b_1+a_3-b_3) \otimes (a_1+b_3)\}.
\end{align*}
From this, we see that $\Fix{a_1+a_2,a_1+a_3}$ is generated by the following elements:

\begin{case}{1}
$\PresA{(a_1+a_2) \wedge x,(a_1+a_3) \wedge y}$ for $x \in A_V$ and $y \in A_W$ with $\omega(x,y) = 0$.
\end{case}

\noindent
There are three cases:
\begin{itemize}
\item If $y = b_1-b_2$, then this equals $-\PresA{(a_1+a_2) \wedge x,(b_1-b_2) \wedge (a_1+a_3)}$, which
lies in $\Span{S}$ by Lemma \ref{lemma:s3symmetricalt}.
\item If $x = b_1-b_3$, then this equals $\PresA{(a_1+a_3) \wedge y,(b_1-b_3) \wedge (a_1+a_2)}$, which
lies in $\Span{S}$ by Lemma \ref{lemma:s3symmetricalt}.
\item If neither equality holds, then
this equals $\PPresA{x \wedge (a_1+a_2),y \wedge (a_1+a_3)} \in S_{12}$.
\end{itemize}

\begin{case}{2}
$\PresA{(a_1+a_2) \wedge (a_i-b_j),(a_1+a_3) \wedge (b_i-a_j)}$ for $1 \leq i,j \leq g$ distinct
with either $i,j \geq 4$ or $(i,j) \in \{(4,2),(3,4))$.
\end{case}

\noindent
These equal $\OPresA{(a_i-b_j) \wedge (a_1+a_2),(b_i-a_j) \wedge (a_1+a_3)} \in S_3$.

\begin{case}{3}
$\PresA{(a_1+a_2) \wedge (a_i+b_i),(a_1+a_3) \wedge (a_i+b_i)}$ for $4 \leq i \leq g$.
\end{case}

\noindent
These equal $\PresA{(a_i+b_i) \wedge (a_1+a_2),(a_i+b_i) \wedge (a_1+a_3)}$, which lie
in $\Span{S}$ by Lemma \ref{lemma:s12twoa1weak}.

\begin{case}{4}
The following elements:
\begin{itemize}
\item[(a)] $\PresA{(a_1+a_2) \wedge (a_1+b_2),(a_1+a_3) \wedge (b_1+a_2-b_2)}$
\item[(a)] $\PresA{(a_1+a_2) \wedge (b_1+a_3-b_3),(a_1+a_3) \wedge (a_1+b_3)}$
\end{itemize}
\end{case}

\noindent
The element in (a) equals
\begin{equation}
\label{eqn:twotwo.2}
\PresA{(a_1+a_2) \wedge (a_1+b_2),a_1 \wedge (b_1+a_2-b_2)} + \PresA{(a_1+a_2) \wedge (a_1+b_2),a_3 \wedge (b_1+a_2-b_2)}
\end{equation}
The first term lies in $\Fix{a_1+a_2,a_1}$, which differs from $\Fix{a_1+b_2,b_2}$ by an element of
$\SymSp_g$.  Lemma \ref{lemma:twovsone2} says that $\Fix{a_1+b_2,b_2} \subset \Span{S}$, so by
Lemma \ref{lemma:altsymspacts} the set $\Fix{a_1+a_2,a_1}$ lies in $\Span{S}$ too.  The same argument (but
using Lemma \ref{lemma:twovsone3} instead of Lemma \ref{lemma:twovsone2})
shows that the second term in \eqref{eqn:twotwo.2} also lies in $\Span{S}$, so \eqref{eqn:twotwo.2} lies in
$\Span{S}$.

The element in (b) equals
\[\PresA{a_1 \wedge (b_1+a_3-b_3),(a_1+a_3) \wedge (a_1+b_3)} + \PresA{a_2 \wedge (b_1+a_3-b_3),(a_1+a_3) \wedge (a_1+b_3)}.\]
Using the fact that $\Fix{-,-}$ is symmetric in its inputs, this can be shown to lie in $\Span{S}$ just like in the previous paragraph.
\end{proof}

\begin{lemma}[Two vs two, III]
\label{lemma:twovstwo3}
We have $\Fix{a_1+a_2,a_3+a_4} \subset \Span{S}$.
\end{lemma}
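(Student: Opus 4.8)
The plan is to argue exactly as in the proofs of Lemmas~\ref{lemma:twovstwo1} and~\ref{lemma:twovstwo2}, using the identification of $\Fix{-,-}$ provided by Lemma~\ref{lemma:mapbilinear2}. First I would observe that $(a_1+a_2,a_3+a_4)$ is an isotropic basis: the two vectors are orthogonal and $\{a_1+a_2,a_3+a_4\}$ spans a rank-$2$ direct summand of $H_{\Z}$. Hence $\Phi$ identifies $\Fix{a_1+a_2,a_3+a_4}$ with a quotient of the kernel of the pairing $\cV\otimes\cW\rightarrow\Q$ induced by $\omega$, where, following the notation of \S\ref{section:identificationalt2},
\[\cV=\Span{a_3+a_4}_{\Q}^{\perp}/\Span{a_1+a_2}\cong\Span{A_V}_{\Q},\qquad \cW=\Span{a_1+a_2}_{\Q}^{\perp}/\Span{a_3+a_4}\cong\Span{A_W}_{\Q},\]
with $A_V=\{a_1,b_1,b_2,a_3,a_4,b_3-b_4,a_5,b_5,\ldots,a_g,b_g\}$ and $A_W=\{a_1,b_1-b_2,a_2,a_3,b_3,b_4,a_5,b_5,\ldots,a_g,b_g\}$; under this identification a generator $\PresA{(a_1+a_2)\wedge x,(a_3+a_4)\wedge y}$ of $\Fix{a_1+a_2,a_3+a_4}$ corresponds to $x\otimes y$.

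Next I would express $\ker(\cV\otimes\cW\rightarrow\Q)$ as the span of $X\cup Y$, where $X=\Set{$x\otimes y$}{$x\in A_V$, $y\in A_W$, $\omega(x,y)=0$}$ and $Y$ is a finite set of ``diagonal'' relations among the finitely many tensors on which the pairing is nonzero, namely $a_1\otimes(b_1-b_2)$, $b_1\otimes a_1$, $b_2\otimes a_2$, $a_3\otimes b_3$, $a_4\otimes b_4$, $(b_3-b_4)\otimes a_3$, together with $a_i\otimes b_i$ and $b_i\otimes a_i$ for $5\le i\le g$. Arguing modulo $\Span{X}$ exactly as in the proofs of Lemmas~\ref{lemma:twovsone1} and~\ref{lemma:twovstwo2}, I would then replace $Y$ by a set consisting of tensors of the form $(a_i-b_j)\otimes(b_i-a_j)$ for a suitable list of distinct index pairs (some of them mixing an index in $\{1,2\}$ with one in $\{3,4\}$), tensors $(a_i+b_i)\otimes(a_i+b_i)$ for $5\le i\le g$, and a small number of exceptional corrections, each arranged so as to be a simple tensor $v\otimes w$ with $\omega(v,w)=0$ and hence the image of an honest generator of $\Fix{a_1+a_2,a_3+a_4}$.

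Then I would check that the generator attached to each element of this spanning set lies in $\Span{S}$. The generators coming from $X$ lie in $S_{12}$---possibly after using Lemma~\ref{lemma:s3symmetricalt} together with Lemma~\ref{lemma:altsymspacts} when a factor of the generator is itself a sum of two basis vectors, as in Case~1 of Lemma~\ref{lemma:twovstwo1}. The $(a_i-b_j)\otimes(b_i-a_j)$ terms give $\pm$ elements of $S_3$. The $(a_i+b_i)\otimes(a_i+b_i)$ terms lie in $\Span{S}$ by Lemma~\ref{lemma:s12twoa1weak}. Finally, the exceptional corrections I would handle by splitting one factor of the associated generator---using linearity of $\Pres{-,-}$ in a special-pair coordinate, or the relation coming from a symplectic basis, as in Case~4 of Lemmas~\ref{lemma:twovsone1}--\ref{lemma:twovstwo1}---so as to express them as sums of elements of subspaces of the form $\Fix{a_i\pm b_i,-}$, $\Fix{a_i+b_j,-}$ with one vector, or $\Fix{a_i+a_j,a_k+a_l}$-type spaces, that are already known to be contained in $\Span{S}$ by Lemmas~\ref{lemma:twovsone1}, \ref{lemma:twovsone2}, \ref{lemma:twovsone3}, \ref{lemma:s3symmetricalt}, and~\ref{lemma:twovstwo2}, invoking Lemma~\ref{lemma:altsymspacts} to move between $\SymSp_g$-equivalent pairs where convenient.

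The main obstacle is the bookkeeping for $Y$. Since the index pairs $\{1,2\}$ and $\{3,4\}$ are disjoint, both $\cV$ and $\cW$ acquire a ``difference'' basis vector ($b_3-b_4$ in $A_V$ and $b_1-b_2$ in $A_W$), so the diagonal relations are less symmetric than in Lemma~\ref{lemma:twovstwo2}; one must choose the representatives in $Y$ so that simultaneously each is congruent mod $\Span{X}$ to a genuine kernel relation, each resulting $\PresA{-,-}$ is a legitimate generator (at least one factor a special pair), and the exceptional ones reduce cleanly to the earlier lemmas. This is routine but finicky, and no harder than the corresponding step in the proof of Lemma~\ref{lemma:twovstwo2}.
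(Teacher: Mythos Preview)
Your plan is correct and matches the paper's proof essentially step for step: the same identification via Lemma~\ref{lemma:mapbilinear2}, the same choice of $A_V$ and $A_W$, the same decomposition of the kernel into $X$ and a set $Y$ of diagonal relations (the paper splits into $Y_4$ and $Y_5$ according to whether $g=4$ or $g\geq 5$), and the same four-case verification. The paper's five exceptional corrections are handled exactly as you anticipate---two by splitting $a_3+a_4$ or $a_1+a_2$ and invoking Lemma~\ref{lemma:twovsone3}, and three by rearranging into the form covered by Lemma~\ref{lemma:s3symmetricalt}; Lemma~\ref{lemma:twovstwo2} turns out not to be needed here.
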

\begin{proof}
Following the notation in \S \ref{section:identificationalt2}, define
\begin{alignat*}{8}
&&\cV &= &\Span{a_3+a_4}_{\Q}^{\perp}/\Span{a_1+a_2} &\cong &\Span{A_V}_{\Q} &\text{ with } &&A_V = \{a_1,b_1,b_2,a_3,a_4,b_3-b_4,a_5,b_5,\ldots,a_g,b_g\},&&\\
&&\cW &= &\Span{a_1+a_2}_{\Q}^{\perp}/\Span{a_3+a_4} &\cong &\Span{A_W}_{\Q} &\text{ with } &&A_W = \{b_1-b_2,a_1,a_2,b_3,b_4,a_3,a_5,b_5,\ldots,a_g,b_g\}.&&
\end{alignat*}
We proved in Lemma \ref{lemma:mapbilinear2} that $\Fix{a_1+a_2,a_3+a_4}$ is isomorphic
to a quotient of the kernel of the map $\cV \otimes \cW \rightarrow \Q$
induced by the symplectic form $\omega$.  Under this isomorphism, a generator
$\PresA{(a_1+a_2) \wedge x,(a_3+a_4) \wedge y}$ of $\Fix{a_1+a_2,a_3+a_4}$ maps to
$x \otimes y \in \cV \otimes \cW$.

Recall that $g \geq 4$ (Assumption \ref{assumption:genusalt}).
The argument is slightly different when $g = 4$ and when $g \geq 5$.  Assume
first that $g = 4$.
The kernel of $\cV \otimes \cW \rightarrow \Q$ is spanned by $X \cup Y_4$ where
\begin{align*}
X   = &\Set{$x \otimes y$}{$x \in A_V$, $y \in A_W$, $\omega(x,y)=0$},\\
Y_4 = &\{a_1 \otimes (b_1-b_2) +b_1 \otimes a_1,b_1 \otimes a_1 - b_2 \otimes a_2,a_3 \otimes b_3 + b_2 \otimes a_2, \\
      &\quad a_3 \otimes b_3 - a_4 \otimes b_4,(b_3-b_4) \otimes a_3 + a_4 \otimes b_4\}.
\end{align*}
When $g \geq 5$, you instead take $X \cup Y_4 \cup Y_5$ with
\[Y_5 = \Set{$a_i \otimes b_i + b_j \otimes a_j$}{$5 \leq i,j \leq g$} \cup \{a_4 \otimes b_4 + b_5 \otimes a_5\}.\]
Just like in the proof of Lemma \ref{lemma:twovsone1}, you can replace $Y_4$ with
\begin{align*}
&\{(a_1+b_1) \otimes (a_1+b_1-b_2),(b_1-b_2) \otimes (a_1+a_2),(b_2+a_3) \otimes (a_2+b_3),\\
&\quad (a_3+a_4) \otimes (b_3 - b_4),(b_3+a_4-b_4) \otimes (a_3+b_4)\}
\end{align*}
and $Y_5$ by
\begin{align*}
&\Set{$(a_i-b_j) \otimes (b_i-a_j)$}{$4 \leq i,j \leq g$ distinct, either $i,j \geq 5$ or $(i,j) = (4,5)$} \\
&\cup \Set{$(a_i+b_i) \otimes (a_i + b_i)$}{$5 \leq i \leq g$}.
\end{align*}
From this, we see that $\Fix{a_1+a_2,a_3+a_4}$ is generated by the following elements:

\begin{case}{1}
$\PresA{(a_1+a_2) \wedge x,(a_3+a_4) \wedge y}$ for $x \in A_V$ and $Y \in A_W$ with $\omega(x,y) = 0$.
\end{case}

\noindent
There are three cases:
\begin{itemize}
\item If $y = b_1-b_2$, then this equals $-\PresA{(a_1+a_2) \wedge x,(b_1-b_2) \wedge (a_3+a_4)}$,
which lies in $\Span{S}$ by Lemma \ref{lemma:s3symmetricalt}.
\item If $x = b_3-b_4$, then this equals $\PresA{(a_3+a_4) \wedge y,(b_3-b_4) \wedge (a_1+a_2)}$,
which lies in $\Span{S}$ by Lemma \ref{lemma:s3symmetricalt}.
\item If neither equality holds, then 
this equals $\PPresA{x \wedge (a_1+a_2),y \wedge (a_3+a_4)} \in S_{12}$.
\end{itemize}

\begin{case}{2}
When $g \geq 5$, elements $\PresA{(a_1+a_2) \wedge (a_i-b_j),(a_3+a_4) \wedge (b_i-a_j)}$ 
for either $5 \leq i,j \leq g$ distinct or $(i,j) = (4,5)$.
\end{case}

\noindent
These equal $\OPresA{(a_i-b_j) \wedge (a_1+a_2),(b_i-a_j) \wedge (a_3+a_4)} \in S_3$.

\begin{case}{3}
When $g \geq 5$, elements $\PresA{(a_1+a_2) \wedge (a_i+b_i),(a_3+a_4) \wedge (a_i+b_i)}$ for $5 \leq i \leq g$.
\end{case}

\noindent
These equal $\PresA{(a_i+b_i) \wedge (a_1+a_2),(a_i+b_i) \wedge (a_3+a_4)}$, which lie
in $\Span{S}$ by Lemma \ref{lemma:s12twoa1weak}.

\begin{case}{4}
The following elements:
\begin{itemize}
\item[(a)] $\PresA{(a_1+a_2) \wedge (a_1+b_1),(a_3+a_4) \wedge (a_1+b_1-b_2)}$
\item[(b)] $\PresA{(a_1+a_2) \wedge (b_3+a_4-b_4),(a_3+a_4) \wedge (a_3+b_4)}$
\item[(c)] $\PresA{(a_1+a_2) \wedge (b_1-b_2),(a_3+a_4) \wedge (a_1+a_2)}$
\item[(d)] $\PresA{(a_1+a_2) \wedge (a_3+a_4),(a_3+a_4) \wedge (b_3-b_4)}$
\item[(e)] $\PresA{(a_1+a_2) \wedge (b_2+a_3),(a_3+a_4) \wedge (a_2+b_3)}$
\end{itemize}
\end{case}

The element in (a) equals
\begin{equation}
\label{eqn:twotwo.1}
\PresA{(a_1+a_2) \wedge (a_1+b_1),a_3 \wedge (a_1+b_1-b_2)} + \PresA{(a_1+a_2) \wedge (a_1+b_1),a_4 \wedge (a_1+b_1-b_2)}.
\end{equation}
The first term of \eqref{eqn:twotwo.1} lies in $\Fix{a_1+a_2,a_3}$, which differs from $\Fix{a_1+b_2,a_3}$ by an element
of $\SymSp_g$.  Lemma \ref{lemma:twovsone3} says that $\Fix{a_1+b_2,a_3} \subset \Span{S}$, so by
Lemma \ref{lemma:altsymspacts} the set $\Fix{a_1+a_2,a_3}$ lies in $\Span{S}$ too.  The same argument
shows that the second term in \eqref{eqn:twotwo.1} also lies in $\Span{S}$, so \eqref{eqn:twotwo.1} lies in
$\Span{S}$.

The element in (b) equals
\[\PresA{a_1 \wedge (b_3+a_4-b_4),(a_3+a_4) \wedge (a_3+b_4)} + \PresA{a_2 \wedge (b_3+a_4-b_4),(a_3+a_4) \wedge (a_3+b_4)}.\]
Since $\PresA{-,-}$ is anti-symmetric, the same argument from the previous paragraph applies here too.

For the elements in (c) and (d), note that they equal
\begin{align*}
-&\PresA{(a_1+a_2) \wedge (a_3+a_4),(b_1-b_2) \wedge (a_1+a_2)} \quad \text{and}\\
 &\PresA{(a_3+a_4) \wedge (a_1+a_2),(b_3-b_4) \wedge (a_3+a_4)}
\end{align*}
Lemma \ref{lemma:s3symmetricalt} says that both of these lie in $\Span{S}$.

Finally, the element in (e) equals
$-\PresA{(a_2+b_3) \wedge (a_3+a_4), (b_2+a_3) \wedge (a_1+a_2)}$.
Lemma \ref{lemma:s3symmetricalt} says that this lies in $\Span{S}$.
\end{proof}

\subsection{Three vs two}

Our final two results are about $\Fix{a,a'}$ where $a$ uses three generators
and $a'$ uses two:

\begin{lemma}[Three vs two, I]
\label{lemma:threevstwo1}
For all $j \geq 2$, both $\Fix{a_1+b_1-b_j,b_1-a_j}$ and $\Fix{a_1-b_1-b_j,b_1-a_j}$ are
subsets of $\Span{S}$.
\end{lemma}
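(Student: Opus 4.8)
The plan is to run the template already used in Lemmas \ref{lemma:twovsone1}--\ref{lemma:twovstwo3}. First note that $(a_1+b_1-b_j,\,b_1-a_j)$ is an isotropic basis: the two vectors are orthogonal (a direct computation gives $\omega(a_1+b_1-b_j,b_1-a_j)=1-1=0$) and they span a rank-$2$ direct summand of $H_{\Z}$ (row-reduce their coordinate matrix in $\{a_1,b_1,a_j,b_j\}$; the minor on the $a_1,b_1$ columns is $1$), so Lemma \ref{lemma:mapbilinear2} applies to $\Fix{a_1+b_1-b_j,b_1-a_j}$. Next I would use Lemma \ref{lemma:altsymspacts} to shrink the problem: applying the element of $\SymSp_g$ with $(a_1,b_1)\mapsto(b_1,-a_1)$ and fixing all other basis vectors carries $\Fix{a_1-b_1-b_j,b_1-a_j}$ to $\Fix{a_1+b_1-b_j,-a_1-a_j}=\Fix{a_1+b_1-b_j,a_1+a_j}$ (Lemma \ref{lemma:isotropicbilineareasy}), and a permutation in $\fS_g$ lets us fix a convenient index (say $j=2$, or $j=3$ if we want to keep block $2$ free). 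So it suffices to treat the two base cases $\Fix{a_1+b_1-b_2,b_1-a_2}$ and $\Fix{a_1+b_1-b_2,a_1+a_2}$, both of the "three generators versus two" shape.

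For each base case, following \S\ref{section:identificationalt2} I would set $\cV=\Span{b_1-a_2}_{\Q}^{\perp}/\Span{a_1+b_1-b_2}$ and $\cW=\Span{a_1+b_1-b_2}_{\Q}^{\perp}/\Span{b_1-a_2}$ (and similarly for the second case), identify each with the $\Q$-span of an explicit subset $A_V,A_W$ of $H_{\Z}$ adapted to the symplectic basis, and recall from Lemma \ref{lemma:mapbilinear2} that $\Fix{a_1+b_1-b_2,b_1-a_2}$ maps isomorphically onto a quotient of the kernel of $\cV\otimes\cW\to\Q$, with the generator $\PresA{(a_1+b_1-b_2)\wedge x,(b_1-a_2)\wedge y}$ corresponding to $x\otimes y$. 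I would then exhibit a spanning set $X\cup Y$ for that kernel, where $X$ is the set of "obviously split" tensors $x\otimes y$ with $\omega(x,y)=0$ and $Y$ is a carefully chosen correction set (elements like $a_i\otimes b_i+b_k\otimes a_k$), replace $Y$ by tensors $u\otimes v$ and $w\otimes w$ tailored so that each corresponding element of $\fK_g^a$ is a bona fide generator of $\Fix{\cdot,\cdot}$ and visibly lies in $S_{12}$, in $S_3$, or in a subspace already known to lie in $\Span{S}$ via Lemmas \ref{lemma:s12twoa1weak}, \ref{lemma:s3symmetricalt}, \ref{lemma:twovsone1}, \ref{lemma:twovsone2}, \ref{lemma:twovsone3}, \ref{lemma:twovstwo1}, \ref{lemma:twovstwo2}, \ref{lemma:twovstwo3}. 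A short case analysis then finishes the base cases, using the relation $\omega=a_1\wedge b_1+\cdots+a_g\wedge b_g=0$ in $(\wedge^2 H)/\Q$ to rewrite any "diagonal" generator (both entries equal up to special pairs) exactly as in Case~4 of the earlier lemmas, and Lemma \ref{lemma:isotropicbilineareasy} to move signs and factors around. Finally Lemma \ref{lemma:altsymspacts} propagates the conclusion to all $j$ and to both $\pm b_1$.

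The main obstacle, as in every lemma of \S\ref{section:presentationalt3}, is the bookkeeping in choosing $Y$: since the first coordinate $a_1+b_1-b_2$ involves three basis vectors, one must check for every correction element that the pair $\bigl((a_1+b_1-b_2)\wedge z,\ (b_1-a_2)\wedge w\bigr)$ consists of two special pairs (i.e. $\omega(a_1+b_1-b_2,z)\in\{-1,0,1\}$ and $\omega(b_1-a_2,w)\in\{-1,0,1\}$) and that the resulting generator is manifestly in $\Span{S}$; getting a single workable choice of $Y$ that makes all of this come out is the delicate part, and, as in Lemma \ref{lemma:twovstwo3}, the cases $g=4$ and $g\ge 5$ may have to be separated because only $g-2$ symplectic blocks are left untouched by $a_1+b_1-b_2$ and $b_1-a_2$. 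Everything after that choice is routine.
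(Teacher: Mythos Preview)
Your outline is correct and follows the paper's template. A few remarks on where it diverges in detail. First, your $\SymSp_g$ move sending the $-b_1$ case to $\Fix{a_1+b_1-b_j,a_1+a_j}$ is valid, but it does not reduce the work: you still have two base cases to run, just a different second one. The paper instead observes that the two original cases admit near-identical arguments and writes out only $\Fix{a_1+b_1-b_2,b_1-a_2}$. Second, the $g=4$ versus $g\ge 5$ split you anticipate does not arise here; the choice $A_V=\{b_1,a_2,a_3,b_3,\dots\}$, $A_W=\{a_1+b_1,b_2,a_3,b_3,\dots\}$ works uniformly for $g\ge 4$. Third, and this is the one substantive point: the ``obviously split'' tensors $x\otimes y$ with $\omega(x,y)=0$ do \emph{not} land in $S_{12}$ here, since neither $a_1+b_1-b_2$ nor $b_1-a_2$ is a single basis vector. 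The paper handles them by flipping each such generator to $\PresA{(b_1-a_2)\wedge y,\,x\wedge(a_1+b_1-b_2)}\in\Fix{b_1-a_2,x}$ and then citing the two-vs-one Lemmas \ref{lemma:twovsone2}--\ref{lemma:twovsone3} (after a $\SymSp_g$ adjustment). Similarly, the single exceptional element in $Y$ is disposed of by flipping into a two-vs-two $\Fix{b_1+a_3,b_1-a_2}$ and invoking Lemma \ref{lemma:twovstwo2}, rather than by the $\omega=0$ rewriting you suggest. Your list of auxiliary lemmas already contains everything needed, so these are matters of emphasis, not gaps.
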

\begin{proof}
These two subsets do not differ by an element of $\SymSp_g$, but the proofs that they are
contained in $\Span{S}$ are almost identical.  We will therefore give the details
for $\Fix{a_1+b_1-b_j,b_1-a_j}$ and leave the other case to the reader.
By Lemma \ref{lemma:altsymspacts}, we can apply an appropriate element of $\SymSp_g$ and
reduce ourselves to proving that $\Fix{a_1+b_1-b_2,b_1-a_2} \subset \Span{S}$.
Following the notation in \S \ref{section:identificationalt2}, define
\begin{alignat*}{8}
&&\cV &= &\Span{b_1-a_2}_{\Q}^{\perp}/\Span{a_1+b_1-b_2} &\cong &\Span{A_V}_{\Q} &\quad \text{with} \quad &&A_V = \{b_1,    a_2,a_3,b_3,\ldots,a_g,b_g\},&&\\
&&\cW &= &\Span{a_1+b_1-b_2}_{\Q}^{\perp}/\Span{b_1-a_2} &\cong &\Span{A_W}_{\Q} &\quad \text{with} \quad &&A_W = \{a_1+b_1,b_2,a_3,b_3,\ldots,a_g,b_g\}.&&
\end{alignat*}
We proved in Lemma \ref{lemma:mapbilinear2} that $\Fix{a_1+b_1-b_2,b_1-a_2}$ is isomorphic
to a quotient of the kernel of the map $\cV \otimes \cW \rightarrow \Q$
induced by the symplectic form $\omega$.  Under this isomorphism, a generator
$\PresA{(a_1+b_1-b_2) \wedge x,(b_1-a_2) \wedge y}$ of $\Fix{a_1+b_1-b_2,b_1-a_2}$ maps to
$x \otimes y \in \cV \otimes \cW$.

The kernel of $\cV \otimes \cW \rightarrow \Q$ is spanned by $X \cup Y$ where
\begin{align*}
X = &\Set{$x \otimes y$}{$x \in A_V$, $y \in A_W$, $\omega(x,y)=0$},\\
Y = &\Set{$a_i \otimes b_i + b_j \otimes a_j$}{$3 \leq i,j \leq g$}\\ 
    &\cup \{a_3 \otimes b_3 + b_1 \otimes (a_1+b_1), a_2 \otimes b_2+b_3 \otimes a_3\}.
\end{align*}
Just like in the proof of Lemma \ref{lemma:twovsone1}, we can replace $Y$ by
\begin{align*}
&\Set{$(a_i-b_j) \otimes (b_i-a_j)$}{$1 \leq i,j \leq g$ distinct, either $i,j \geq 3$ or $(i,j) = (2,3)$} \\
&\cup \Set{$(a_i+b_i) \otimes (a_i + b_i)$}{$3 \leq i \leq g$} \\
&\cup \{(b_1+a_3) \otimes (a_1+b_1+b_3)\}.
\end{align*}
From this, we see that $\Fix{a_1+b_1-b_2,b_1-a_2}$ is generated by the following elements:

\begin{case}{1}
$\PresA{(a_1+b_1-b_2) \wedge x,(b_1-a_2) \wedge y}$ for $x \in A_V$ and $y \in A_W$ with $\omega(x,y) = 0$.
\end{case}

\noindent
These equal $\PresA{(b_1-a_2) \wedge y,x \wedge (a_1+b_1-b_2)} \in \Fix{b_1-a_2,x}$.  There are
three cases:
\begin{itemize}
\item $x=b_1$.  In this case, $\Fix{b_1-a_2,b_1}$ differs from $\Fix{a_1+b_2,b_2}$ by an element of
$\SymSp_g$.  Lemma \ref{lemma:twovsone2} says that $\Fix{a_1+b_2,b_2} \subset \Span{S}$, so
by Lemma \ref{lemma:altsymspacts} so does $\Fix{b_1-a_2,b_1}$.
\item $x = a_2$.  In this case, $\Fix{b_1-a_2,a_2} = \Fix{b_1-a_2,-a_2}$ by Lemma \ref{lemma:isotropicbilineareasy}.
The set $\Fix{b_1-a_2,-a_2}$ differs from $\Fix{a_1+b_2,b_2}$ by an element of
$\SymSp_g$, so just like in the case $x=b_1$ it follows that $\Fix{b_1-a_2,-a_2}$ lies
in $\Span{S}$.
\item $x \in \{a_3,b_3,\ldots,a_g,b_g\}$.  In this case, $\Fix{b_1-a_2,x}$ differs from $\Fix{a_1+b_2,a_3}$
by an element of $\SymSp_g$.  Lemma \ref{lemma:twovsone3} says that $\Fix{a_1+b_2,a_3} \subset \Span{S}$, so
by Lemma \ref{lemma:altsymspacts} so does $\Fix{b_1-a_2,x}$.
\end{itemize}

\begin{case}{2}
$\PresA{(a_1+b_1-b_2) \wedge (a_i-b_j),(b_1-a_2) \wedge (b_i-a_j)}$ for $1 \leq i,j \leq g$ distinct
with either $i,j \geq 3$ or $(i,j) = (2,3)$.
\end{case}

\noindent
These equal $\OPresA{(a_i-b_j) \wedge (a_1+b_1-b_2),(b_i-a_j) \wedge (b_1-a_2)} \in S_3$.

\begin{case}{3}
$\PresA{(a_1+b_1-b_2) \wedge (a_i+b_i),(b_1-a_2) \wedge (a_i+b_i)}$ for $3 \leq i \leq g$.
\end{case}

\noindent
These equal $\PresA{(a_i+b_i) \wedge (a_1+b_1-b_2),(a_i+b_i) \wedge (b_1-a_2)}$, which lie 
in $\Span{S}$ by Lemma \ref{lemma:s12twoa1weak}.

\begin{case}{4}
$\PresA{(a_1+b_1-b_2) \wedge (b_1+a_3),(b_1-a_2) \wedge (a_1+b_1+b_3)}$.
\end{case}

\noindent
This equals $\PresA{(b_1+a_3) \wedge (a_1+b_1-b_2), (b_1-a_2) \wedge (a_1+b_1+b_3)} \in \Fix{b_1+a_3,b_1-a_2}$
The set $\Fix{b_1+a_3,b_1-a_2}$ differs from $\Fix{a_1+a_2,a_1+a_3}$ by an element of $\SymSp_g$.
Lemma \ref{lemma:twovstwo2} says that $\Fix{a_1+a_2,a_1+a_3}$ is contained in $\Span{S}$, so by
Lemma \ref{lemma:altsymspacts} the set $\Fix{b_1+a_3,b_1-a_2}$ is as well.
\end{proof}

\begin{lemma}[Three vs two, II]
\label{lemma:threevstwo2}
For $3 \leq j \leq g$, both $\Fix{b_1+a_2-b_j,b_2-a_j}$ and
$\Fix{a_1+b_2-b_j,b_1-a_j}$ are subsets of $\Span{S}$.
\end{lemma}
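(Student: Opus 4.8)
The plan is to follow the same template as the preceding lemmas in this section (e.g.\ Lemmas \ref{lemma:twovsone1} and \ref{lemma:threevstwo1}): reduce to a single explicit isotropic basis by symmetry, identify the corresponding $\Fix{-,-}$ with a kernel inside $\cV\otimes\cW$ via Lemma \ref{lemma:mapbilinear2}, and then dispatch the resulting generators one family at a time into $S_{12}$, into $S_3$, and into subspaces already known to lie in $\Span{S}$.

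First I would note that the two subspaces in the statement are in fact related by an element of $\SymSp_g$: the element of $\SymSp_g$ that interchanges the indices $1$ and $2$ (fixing all other basis vectors, since $j\geq 3$) carries $b_1+a_2-b_j$ to $a_1+b_2-b_j$ and $b_2-a_j$ to $b_1-a_j$, hence carries $\Fix{b_1+a_2-b_j,b_2-a_j}$ to $\Fix{a_1+b_2-b_j,b_1-a_j}$. By Lemma \ref{lemma:altsymspacts} it is therefore enough to handle one of them. Applying a further element of $\SymSp_g$ permuting $\{3,\dots,g\}$, one reduces to $j=3$, so the goal becomes $\Fix{a_1+b_2-b_3,b_1-a_3}\subset\Span{S}$. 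One checks directly that $\omega(a_1+b_2-b_3,b_1-a_3)=0$ and that $\{a_1+b_2-b_3,b_1-a_3\}$ spans a rank-$2$ direct summand of $H_{\Z}$, so this is an isotropic basis in the sense of \S\ref{section:identificationalt2}.

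Next, following \S\ref{section:identificationalt2}, I would put $\cV=\Span{b_1-a_3}_{\Q}^{\perp}/\Span{a_1+b_2-b_3}$ and $\cW=\Span{a_1+b_2-b_3}_{\Q}^{\perp}/\Span{b_1-a_3}$, choose convenient bases $A_V$ and $A_W$ for them, and invoke Lemma \ref{lemma:mapbilinear2} to identify $\Fix{a_1+b_2-b_3,b_1-a_3}$ with a quotient of the kernel of the pairing $\cV\otimes\cW\to\Q$ induced by $\omega$. I would then exhibit a spanning set $X\cup Y$ for that kernel, with $X$ the ``obviously orthogonal'' tensors $x\otimes y$ ($x\in A_V$, $y\in A_W$, $\omega(x,y)=0$) and $Y$ a short list of diagonal correction terms, and rewrite $Y$ — using the same manipulation as in the proof of Lemma \ref{lemma:twovsone1} — in terms of elements of the shape $(a_i-b_j)\otimes(b_i-a_j)$, $(a_i+b_i)\otimes(a_i+b_i)$, together with one or two genuinely new correctors. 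Back in $\Fix{a_1+b_2-b_3,b_1-a_3}$, the generators coming from $X$ are (up to sign) of the form $\PPresA{x\wedge(a_1+b_2-b_3),(b_1-a_3)\wedge y}\in S_{12}$, or, when $x$ or $y$ is itself a two-term vector such as $b_1+a_2$, lie in $\Span{S}$ by Lemma \ref{lemma:s3symmetricalt}; the $(a_i-b_j)\otimes(b_i-a_j)$ generators give elements of $S_3$; the $(a_i+b_i)\otimes(a_i+b_i)$ generators lie in $\Span{S}$ by Lemma \ref{lemma:s12twoa1weak}; and the remaining one or two generators I would split (as in the ``Case 4'' steps of the earlier proofs) into sums of elements of $\Fix{-,-}$ subspaces already shown to lie in $\Span{S}$ — the likely candidates being Lemmas \ref{lemma:twovsone2}, \ref{lemma:twovsone3}, \ref{lemma:twovstwo2}, and \ref{lemma:threevstwo1}, after possibly moving by an element of $\SymSp_g$ (Lemma \ref{lemma:altsymspacts}) and using that $\Fix{-,-}$ is symmetric in its two slots (Lemma \ref{lemma:isotropicbilineareasy}).

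The hard part will be the bookkeeping in choosing $A_V,A_W$ (equivalently the isomorphisms $\mu_1,\mu_2$ of Lemma \ref{lemma:mapbilinear2}) and the replacement set for $Y$: these must be arranged so that every generator of $\Fix{a_1+b_2-b_3,b_1-a_3}$ that appears is an honest generator (both slots special pairs), and so that each leftover ``Case 4'' corrector decomposes, after peeling off a single basis vector in one slot, into pieces each governed by one of the already-established lemmas. As in the previous proofs the computation is delicate but entirely mechanical once the right choices are fixed, and $g\geq 4$ (Assumption \ref{assumption:genusalt}) is used freely to supply enough room — to produce the auxiliary indices and to keep all summands that arise Lagrangian-free of positive genus.
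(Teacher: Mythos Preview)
Your outline follows exactly the template the paper uses, and after the $\SymSp_g$-reductions the argument is essentially the same; the paper just applies one further element of $\SymSp_g$ to work with $\Fix{a_1+a_2+b_3,\,b_1+a_3}$ instead of $\Fix{a_1+b_2-b_3,\,b_1-a_3}$, which is purely cosmetic.

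Two small corrections to your tentative choices. First, for the $X$-generators you should follow the pattern of Case~1 in Lemma~\ref{lemma:threevstwo1} rather than appeal to Lemma~\ref{lemma:s3symmetricalt}: rewrite $\PresA{(a_1+b_2-b_3)\wedge x,\,(b_1-a_3)\wedge y}$ as an element of $\Fix{b_1-a_3,\,x}$ (with $x\in A_V$ a single basis vector) and then invoke the two-vs-one Lemmas~\ref{lemma:twovsone2} and~\ref{lemma:twovsone3}; Lemma~\ref{lemma:s3symmetricalt} does not cover these shapes. Second, for the leftover ``Case~4'' corrector the paper's choice of $Y$ lands in $\Fix{a_2+b_4,\,b_1+a_3}$ and so uses Lemma~\ref{lemma:twovstwo3} (two-vs-two with \emph{disjoint} indices), which you omitted from your list. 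Your candidate Lemma~\ref{lemma:twovstwo2} can in fact be made to work too, but only if you pick the $Y$-replacement so that the corrector pairs the awkward diagonal term with one sharing an index with $b_1-a_3$ (e.g.\ $(a_2+b_1)\otimes(a_1+b_2-b_1)$ in the paper's coordinates), rather than with an index-$4$ term as the paper does.
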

\begin{proof}
The two sets differ by an element of $\SymSp_g$, so by Lemma \ref{lemma:altsymspacts} it
is enough to deal with $\Fix{b_1+a_2-b_j,b_2-a_j}$.  Applying a further element of $\SymSp_g$, we can
reduce to the case $j=3$, i.e., to $\Fix{b_1+a_2-b_3,b_2-a_3}$.  In fact, to simplify our notation
we will apply yet another element of $\SymSp_g$ and transform our goal into 
proving that $\Fix{a_1+a_2+b_3,b_1+a_3} \subset \Span{S}$.
Following the notation in \S \ref{section:identificationalt2}, define
\begin{alignat*}{8}
&&\cV &= &\Span{b_1+a_3}_{\Q}^{\perp}/\Span{a_1+a_2+b_3} &\cong &\Span{A_V}_{\Q} &\quad \text{with} \quad &&A_V = \{b_1,a_2,    b_2,a_3,a_4,b_4,\ldots,a_g,b_g\},&&\\
&&\cW &= &\Span{a_1+a_2+b_3}_{\Q}^{\perp}/\Span{b_1+a_3} &\cong &\Span{A_W}_{\Q} &\quad \text{with} \quad &&A_W = \{a_1,b_2-b_1,a_2,b_3,a_4,b_4,\ldots,a_g,b_g\}.&&
\end{alignat*}
We proved in Lemma \ref{lemma:mapbilinear2} that $\Fix{a_1+a_2+b_3,b_1+a_3}$ is isomorphic
to a quotient of the kernel of the map $\cV \otimes \cW \rightarrow \Q$
induced by the symplectic form $\omega$.  Under this isomorphism, a generator
$\PresA{(a_1+a_2+b_3) \wedge x,(b_1+a_3) \wedge y}$ of $\Fix{a_1+a_2+b_3,b_1+a_3}$ maps to
$x \otimes y \in \cV \otimes \cW$.

The kernel of $\cV \otimes \cW \rightarrow \Q$ is spanned by $X \cup Y$ where
\begin{align*}
X = &\Set{$x \otimes y$}{$x \in A_V$, $y \in A_W$, $\omega(x,y)=0$},\\
Y = &\Set{$a_i \otimes b_i + b_j \otimes a_j$}{$4 \leq i,j \leq g$} \\
    &\cup \{a_4 \otimes b_4 + b_1 \otimes a_1,a_2 \otimes (b_2-b_1) + b_4 \otimes a_4,a_4 \otimes b_4 + b_2 \otimes a_2,a_3 \otimes b_3 + b_4 \otimes a_4\}.
\end{align*}
Just like in the proof of Lemma \ref{lemma:twovsone1}, we can replace $Y$ by
\begin{align*}
&\Set{$(a_i-b_j) \otimes (b_i-a_j)$}{$1 \leq i,j \leq g$ distinct, either $i,j \geq 4$ or $(i,j) \in \{(4,1),(4,2),(3,4)\}$} \\
&\cup \Set{$(a_i+b_i) \otimes (a_i + b_i)$}{$4 \leq i \leq g$} \\
&\cup \{(a_2+b_4) \otimes (b_2-b_1+a_4)\}.
\end{align*}
From this, we see that $\Fix{a_1+a_2+b_3,b_1+a_3}$ is generated by the following elements:

\begin{case}{1}
$\PresA{(a_1+a_2+b_3) \wedge x,(b_1+a_3) \wedge y}$ for $x \in A_V$ and $y \in A_W$ with $\omega(x,y) = 0$.
\end{case}

\noindent
These equal $\PresA{(b_1+a_3) \wedge y,x \wedge (a_1+a_2+b_3)} \in \Fix{b_1+a_3,x}$.  There are
two cases:
\begin{itemize}
\item $x=b_1$ or $x=a_3$.  In this case, $\Fix{b_1+a_3,x}$ differs from $\Fix{a_1+b_2,b_2}$ by an element of
$\SymSp_g$.  Lemma \ref{lemma:twovsone2} says that $\Fix{a_1+b_2,b_2} \subset \Span{S}$, so
by Lemma \ref{lemma:altsymspacts} so does $\Fix{b_1+a_3,x}$.
\item $x \in \{a_2,b_2,a_4,b_4,\ldots,a_g,b_g\}$.  In this case, $\Fix{b_1+a_3,x}$ differs from $\Fix{a_1+b_2,a_3}$
by an element of $\SymSp_g$.  Lemma \ref{lemma:twovsone3} says that $\Fix{a_1+b_2,a_3} \subset \Span{S}$, so
by Lemma \ref{lemma:altsymspacts} so does $\Fix{b_1+a_3,x}$.
\end{itemize}

\begin{case}{2}
$\PresA{(a_1+a_2+b_3) \wedge (a_i-b_j),(b_1+a_3) \wedge (b_i-a_j)}$ for $1 \leq i,j \leq g$ distinct
with either $i,j \geq 4$ or $(i,j) \in \{(4,1),(4,2),(3,4)\}$.
\end{case}

\noindent
These equal $\OPresA{(a_i-b_j) \wedge (a_1+a_2+b_3),(b_i-a_j) \wedge (b_1+a_3)} \in S_3$.

\begin{case}{3}
$\PresA{(a_1+a_2+b_3) \wedge (a_i+b_i),(b_1+a_3) \wedge (a_i+b_i)}$ for $4 \leq i \leq g$.
\end{case}

\noindent
These equal $\PresA{(a_i+b_i) \wedge (a_1+a_2+b_3),(a_i+b_i) \wedge (b_1+a_3)}$, which lie
in $\Span{S}$ by Lemma \ref{lemma:s12twoa1weak}.

\begin{case}{4}
$\PresA{(a_1+a_2+b_3) \wedge (a_2+b_4),(b_1+a_3) \wedge (b_2-b_1+a_4)}$.
\end{case}

\noindent
This equals $-\PresA{(a_2+b_4) \wedge (a_1+a_2+b_3), (b_1+a_3) \wedge (b_2-b_1+a_4)} \in \Fix{a_2+b_4,b_1+a_3}$
The set $\Fix{a_2+b_4,b_1+a_3}$ differs from $\Fix{a_1+a_2,a_3+a_4}$ by an element of $\SymSp_g$.
Lemma \ref{lemma:twovstwo3} says that $\Fix{a_1+a_2,a_3+a_4}$ is contained in $\Span{S}$, so by 
Lemma \ref{lemma:altsymspacts} the set $\Fix{a_2+b_4,b_1+a_3}$ is as well.
\end{proof}

\section{Symmetric kernel, alternating version IV: the set \texorpdfstring{$S$}{S} spans \texorpdfstring{$\fK_g^a$}{Kga}}
\label{section:presentationalt4}

We continue using all the notation from \S \ref{section:presentationalt2} -- \S \ref{section:presentationalt3}.
Recall that $S = S_{12} \cup S_3$, where
\[S_{12} = \bigcup_{\substack{a,a' \in \cB \\ \omega(a,a') = 0}} \pFix{a,a'} \quad \text{and} \quad
S_3    = \bigcup_{\substack{1 \leq i,j \leq g \\ i \neq j}} \oFix{a_i-b_j,b_i-a_j}.\]
Our goal in this section is to prove the following lemma:

\begin{lemma}
\label{lemma:presentationaltgenset}
The set $S$ spans $\fK_g^a$.
\end{lemma}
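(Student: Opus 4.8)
Following the proof technique from \S\ref{section:prooftechnique}, this is Steps 2 and 3 for $\fK_g^a$: we must show the $\Sp_{2g}(\Z)$-orbit of $S$ spans $\fK_g^a$, and that $\Sp_{2g}(\Z)$ takes $\Span{S}$ to itself (the latter forces $\Span{S} = \fK_g^a$ since $S$ consists of generators or sums of generators). Since by Theorem \ref{theorem:summarypresentation} the vector space $\fK_g^a$ is generated by elements $\PresA{\kappa_1,\kappa_2}$ with $\kappa_1$ or $\kappa_2$ a special pair, and since $\Sp_{2g}(\Z)$ acts transitively on special pairs of each type (symplectic, isotropic, zero), the first task reduces to exhibiting a single nonzero element of $\Span{S}$ of each shape $\PresA{\text{special pair},\kappa}$ — e.g. $\PresA{a_1 \wedge b_1, a_2 \wedge b_2} \in \pFix{a_1,a_2} \subset S_{12}$ handles the symplectic case, and $\PresA{a_1 \wedge a_2, a_3 \wedge b_3} \in \pFix{a_1,a_2} \subset S_{12}$ handles the isotropic case; once an element is in $\Span{S}$ and $\Sp_{2g}(\Z)$-translates of it together with the linearity relations recover all generators, Step 2 follows.

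The real work is Step 3: showing that each generator of $\Sp_{2g}(\Z)$ sends $\Span{S}$ into $\Span{S}$. By Corollary \ref{corollary:gensp}, $\Sp_{2g}(\Z)$ is generated as a monoid by $\SymSp_g$ together with $\{X_1, X_1^{-1}, Y_{12}\}$. The $\SymSp_g$-part is exactly Lemma \ref{lemma:altsymspacts}, already proved. So I would run through $X_1^{\pm 1}$ and $Y_{12}$ applied to each generator $s$ of $S_{12}$ and $S_3$. Writing $s = \PresA{a \wedge x, a' \wedge y}$ with $a,a' \in \cB$ (or $a = a_i - b_j$, $a' = b_i - a_j$), one computes $f(s) = \PresA{f(a) \wedge f(x), f(a') \wedge f(y)}$ and rewrites it, using the linearity relations in $\fK_g^a$, as a combination of generators living in various subspaces $\Fix{\cdot,\cdot}$. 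The point of the eight lemmas in \S\ref{section:presentationalt3} (together with Lemmas \ref{lemma:s12twoa1weak}, \ref{lemma:s3symmetricalt}, \ref{lemma:s12twoa1}) is precisely that every subspace $\Fix{\cdot,\cdot}$ that can arise from applying $X_1^{\pm 1}$ or $Y_{12}$ to a generator in $S$ is already known to lie in $\Span{S}$: $X_1$ produces terms like $\Fix{a_1 + b_1, \cdot}$ (Lemma \ref{lemma:twovsone1}, Lemma \ref{lemma:twovstwo1}, Lemma \ref{lemma:threevstwo1}), and $Y_{12}$ produces terms like $\Fix{a_1 + b_2, \cdot}$ and $\Fix{b_1 + a_2, \cdot}$ (Lemmas \ref{lemma:twovsone2}, \ref{lemma:twovsone3}, \ref{lemma:threevstwo2}). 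So the proof is a finite bookkeeping exercise: enumerate which generators of $S_{12}$ and $S_3$ are moved by $X_1$, $X_1^{-1}$, $Y_{12}$ (most are fixed since these elements fix most basis vectors), and for each, identify the resulting element as lying in one of the $\Fix{\cdot,\cdot}$ subspaces covered by the lemmas of \S\ref{section:presentationalt3}.

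\textbf{Main obstacle.} The hard part is not conceptual but organizational: making sure the case analysis for $Y_{12}$ acting on $S_3$ is complete. The generators in $S_3$ have the form $\PresA{(a_i - b_j)\wedge x,(b_i - a_j)\wedge y}$, and when $Y_{12}$ (which sends $a_1 \mapsto a_1 + b_2$ and $a_2 \mapsto a_2 + b_1$) acts, the image can be a genuinely three-term-vs-two-term element like $\Fix{a_1 + b_1 - b_j, b_1 - a_j}$ or $\Fix{b_1 + a_2 - b_j, b_2 - a_j}$ — which is exactly why Lemmas \ref{lemma:threevstwo1} and \ref{lemma:threevstwo2} were proved. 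One must check that the list of eight lemmas genuinely exhausts all these cases, which requires being careful about which index $i$ equals $1$ or $2$, which index $j$ equals $1$ or $2$, and the subcases where $x$ or $y$ involves $a_1, b_1, a_2, b_2$. I would organize the proof as: first dispose of $S_{12}$ under all three generators (easier, since $\pFix{a,a'}$ with $a,a' \in \cB$ and the image always lands in a $\Fix{\cdot,\cdot}$ of the two-vs-one or two-vs-two type), then handle $S_3$ generator by generator under $X_1^{\pm 1}$ (most fixed, a few land in $\Fix{a_1 \pm b_1,\cdot}$-type subspaces), then $S_3$ under $Y_{12}$ (the delicate one), in each instance citing the relevant lemma from \S\ref{section:presentationalt3}. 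If some case fails to be covered, the fix would be to add another lemma of the same flavor — but the remark at the start of \S\ref{section:presentationalt3} asserts these eight are exactly what is needed, so I expect the enumeration to close.
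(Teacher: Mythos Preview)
Your proposal is correct and follows essentially the same approach as the paper. Two minor simplifications worth noting: for Step~2 the paper avoids Theorem~\ref{theorem:summarypresentation} entirely and instead uses the original generating set from Definition~\ref{definition:kgalt} together with Lemma~\ref{lemma:generationomega} to reduce to elements $\PresA{a\wedge b,a'\wedge b'}$ with orthogonal symplectic pairs, on which $\Sp_{2g}(\Z)$ acts transitively (so only the symplectic case is needed, not the isotropic one); and for Step~3 the paper observes that $f(\Fix{a,a'}) = \Fix{f(a),f(a')}$ as a whole, so rather than decomposing individual $f(s)$ one simply checks that each resulting subspace $\Fix{f(a),f(a')}$ lies in $\Span{S}$ via the lemmas of \S\ref{section:presentationalt2}--\ref{section:presentationalt3}.
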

\begin{proof}
We first claim that the $\Sp_{2g}(\Z)$-orbit of $S$ spans $\fK_g^a$.
Indeed, using our original generating set for $\fK_g^a$ from Definition \ref{definition:kgalt} together
with Lemma \ref{lemma:generationomega}, we see that $\fK_g^a$ is generated by elements of the form
$\PresA{a \wedge b,a' \wedge b'}$, where $a \wedge b$ and $a' \wedge b'$ are symplectic pairs
such that $\Span{a,b}$ and $\Span{a',b'}$ are orthogonal.  The group $\Sp_{2g}(\Z)$
acts transitively on such elements.  The set $S$ contains many elements of this form; for instance,
it contains $\PPresA{a_1 \wedge b_1,a_2 \wedge b_2} \in \pFix{a_1,a_2}$.  It follows that $\Sp_{2g}(\Z)$-orbit of $S$
spans $\fK_g^a$, as claimed.

To prove the lemma, therefore, we must prove that the action of $\Sp_{2g}(\Z)$ on $\fK_g^a$ takes
$\Span{S}$ to itself.  By Corollary \ref{corollary:gensp}, the group $\Sp_{2g}(\Z)$ is generated as a monoid by
the set $\Lambda = \SymSp_g \cup \{X_1,X_1^{-1},Y_{12}\}$.  We will recall the definitions of these
elements as we use them.  We must prove that for $f \in \Lambda$ and $s \in S = S_{12} \cup S_3$ we have
$f(s) \in \Span{S}$.  We already did this for $\SymSp_g$ in Lemma \ref{lemma:altsymspacts}, so
we must handle the other generators.  We divide this into four claims. 

\begin{claim}{1}
Recall that $X_1 \in \Sp_{2g}(\Z)$ takes $a_1$ to $a_1+b_1$ and fixes all other generators in $\cB$.
For $\epsilon \in \{\pm 1\}$ and
\[s \in S_{12} = \bigcup_{\substack{a,a' \in \cB \\ \omega(a,a') = 0}} \pFix{a,a'},\]
we have $X_1^{\epsilon}(s) \in \Span{S}$.
\end{claim}

We have $s \in \pFix{a,a'}$ for some $a,a' \in \cB$ with $\omega(a,a') = 0$, so $X_1^{\epsilon}(s) \in \Fix{X_1^{\epsilon}(a),X_1^{\epsilon}(a')}$.  
We must show that $\Fix{X_1^{\epsilon}(a),X_1^{\epsilon}(a')} \subset \Span{S}$.  There are three cases:
\begin{itemize}
\item If $a,a' \neq a_1$, then
\[\Fix{X_1^{\epsilon}(a),X_1^{\epsilon}(a')} = \pFix{a,a'} \subset \Span{S}.\]
\item If one of $a$ and $a'$ equals $a_1$ and the other is not $a_1$, then 
since $\Fix{-,-}$ is symmetric in its entries (Lemma \ref{lemma:isotropicbilineareasy})
we can assume without loss of generality that $a = a_1$ and $a' \neq a_1$.  Since $\omega(a_1,a') = 0$, we also have $a' \neq b_1$. 
By Lemma \ref{lemma:twovsone1},
\[\Fix{X_1^{\epsilon}(a_1),X_1^{\epsilon}(a')} = \Fix{a_1 + \epsilon b_1,a'} \subset \Span{S}.\]
\item If both $a$ and $a'$ equal $a_1$ then we can apply Lemma \ref{lemma:s12twoa1} to see that
\[\Fix{X_1^{\epsilon}(a_1),X_1^{\epsilon}(a_1)} = \Fix{a_1 + \epsilon b_1,a_1 + \epsilon b_1} \subset \Span{S}.\]
\end{itemize}
The claim follows.

\begin{claim}{2}
For $\epsilon \in \{\pm 1\}$ and
\[s \in S_{3} = \bigcup_{1 \leq i < j \leq g} \oFix{a_i-b_j,b_i-a_j},\]
we have $X_1^{\epsilon}(s) \in \Span{S}$.
\end{claim}

We have $s \in \oFix{a_i-b_j,b_i-a_j}$ for some $1 \leq i < j \leq g$, 
so $X_1^{\epsilon}(s) \in \Fix{X_1^{\epsilon}(a_i-b_j),X_1^{\epsilon}(b_i-a_j)}$.
We must show that $\Fix{X_1^{\epsilon}(a_i-b_j),X_1^{\epsilon}(b_i-a_j)} \subset \Span{S}$.  There are two cases:
\begin{itemize}
\item If $i \geq 2$, then $j \geq 2$ as well, so $X_1^{\epsilon}$ fixes both $a_i-b_j$ and $b_i-a_j$.  It follows
that
\[\Fix{X_1^{\epsilon}(a_i-b_j),X_1^{\epsilon}(b_i-a_j)} = \oFix{a_i-b_j,b_i-a_j} \subset \Span{S}.\]
\item If $i = 1$, then $j \geq 2$, so $X_1^{\epsilon}$ fixes both $a_j$ and $b_j$.  By Lemma \ref{lemma:threevstwo1},
\[\Fix{X_1^{\epsilon}(a_1-b_j),X_1^{\epsilon}(b_1-a_j)} = \Fix{a_1 + \epsilon b_1 - b_j,b_1-a_j} \subset \Span{S}.\]
\end{itemize}
The claim follows.

\begin{claim}{3}
Recall that $Y_{12} \in \Sp_{2g}(\Z)$ takes $a_1$ to $a_1+b_2$ and $a_2$ to $a_2+b_1$ and fixes all other generators in $\cB$.
For 
\[s \in S_{12} = \bigcup_{\substack{a,a' \in \cB \\ \omega(a,a') = 0}} \pFix{a,a'},\]
we have $Y_{12}(s) \in \Span{S}$.
\end{claim}

We have $s \in \pFix{a,a'}$ for some $a,a' \in \cB$ with $\omega(a,a') = 0$, so $Y_{12}(s) \in \Fix{Y_{12}(a),Y_{12}(a')}$.
We must show that $\Fix{Y_{12}(a),Y_{12}(a')} \subset \Span{S}$.  There are six cases:
\begin{itemize}
\item If $a,a' \in \cB \setminus \{a_1,a_2\}$, then both $a$ and $a'$ are fixed by $Y_{12}$, so
\[\Fix{Y_{12}(a),Y_{12}(a')} = \pFix{a,a'} \subset \Span{S}.\]
\item If one of $a$ and $a'$ is $a_1$ and the other lies in $\cB \setminus \{a_1,a_2\}$, then
since $\Fix{-,-}$ is symmetric in its entries (Lemma \ref{lemma:isotropicbilineareasy})
we can assume without loss of generality that $a = a_1$ and $a' \in \cB \setminus \{a_1,a_2\}$.  
Since $\omega(a_1,a') = 0$, we also have $a' \neq b_1$.  By Lemmas \ref{lemma:twovsone2} and \ref{lemma:twovsone3},
\[\Fix{Y_{12}(a_1),Y_{12}(a')} = \Fix{a_1+b_2,a'} \subset \Span{S}.\]
\item If one of $a$ and $a'$ is $a_2$ and the other lies in $\cB \setminus \{a_1,a_2\}$, then
since $\Fix{-,-}$ is symmetric in its entries (Lemma \ref{lemma:isotropicbilineareasy})
we can assume without loss of generality that $a = a_2$ and $a' \in \cB \setminus \{a_1,a_2\}$.
Since $\omega(a_1,a') = 0$, we also have $a' \neq b_2$.  By Lemmas \ref{lemma:twovsone2} and \ref{lemma:twovsone3},
\[\Fix{Y_{12}(a_2),Y_{12}(a')} = \Fix{a_2+b_1,a'} \subset \Span{S}.\]
\item If $a = a' = a_1$, then by Lemma \ref{lemma:twovstwo1}
\[\Fix{Y_{12}(a_1),Y_{12}(a_1)} = \Fix{a_1+b_2,a_1+b_2} \subset \Span{S}.\]
\item If $a = a' = a_2$, then by Lemma \ref{lemma:twovstwo1} 
\[\Fix{Y_{12}(a_2),Y_{12}(a_2)} = \Fix{a_2+b_1,a_2+b_1} \subset \Span{S}.\]
\item If one of $a$ and $a'$ is $a_1$ and the other is $a_2$, then
since $\Fix{-,-}$ is symmetric in its entries (Lemma \ref{lemma:isotropicbilineareasy})
we can assume without loss of generality that $a = a_1$ and $a' = a_2$.  By Lemma \ref{lemma:s3symmetricalt}, 
\[\Fix{Y_{12}(a_1),Y_{12}(a_2)} = \Fix{a_1+b_2,b_1+a_2} \subset \Span{S}.\]
\end{itemize}
The claim follows.

\begin{claim}{4}
For 
\[s \in S_{3} = \bigcup_{1 \leq i < j \leq g} \oFix{a_i-b_j,b_i-a_j},\]
we have $Y_{12}(s) \in \Span{S}$.
\end{claim}

We have $s \in \pFix{a_i-b_j,b_i-a_j}$ for some $1 \leq i < j \leq g$, so 
$Y_{12}(s) \in \Fix{Y_{12}(a_i-b_j),Y_{12}(b_i-a_j)}$.
We must show that $\Fix{Y_{12}(a_i-b_j),Y_{12}(b_i-a_j)} \subset \Span{S}$.  There are four cases:
\begin{itemize}
\item If $i \geq 3$, then $j \geq 3$ as well and thus $Y_{12}$ fixes $a_i-b_j$ and $b_i-a_j$.  Therefore,
\[\Fix{Y_{12}(a_i-b_j),Y_{12}(b_i-a_j)} = \oFix{a_i-b_j,b_i-a_j} \subset \Span{S}.\]
\item If $i = 2$, then $j \geq 3$ and $Y_{12}$ fixes $a_j$ and $b_j$.  By Lemma \ref{lemma:threevstwo2}, we have
\[\Fix{Y_{12}(a_2-b_j),Y_{12}(b_2-a_j)} = \Fix{b_1+a_2-b_j,b_2-a_j} \subset \Span{S}.\]
\item If $i = 1$ and $j \geq 3$, then $Y_{12}$ fixes $a_j$ and $b_j$.  By Lemma \ref{lemma:threevstwo2}, we have
\[\Fix{Y_{12}(a_1-b_j),Y_{12}(b_1-a_j)} = \Fix{a_1+b_2-b_j,b_1-a_j} \subset \Span{S}.\]
\item If $i = 1$ and $j = 2$, then\footnote{This is the one easy case!}
\[\Fix{Y_{12}(a_1-b_2),Y_{12}(b_1-a_2)} = \pFix{a_1,a_2} \subset \Span{S_{12}}.\]
\end{itemize}
The claim follows.
\end{proof}

\section{Symmetric kernel, alternating version V: \texorpdfstring{$S_1$}{S1} and structure of target} 
\label{section:presentationalt5}

We will use the notation from \S \ref{section:presentationalt1} - \S \ref{section:presentationalt4}, and in
particular the set $S = S_{12} \cup S_3$.  We now turn to 
Theorem \ref{maintheorem:presentationalt}, which says that the 
linearization map $\Phi\colon \fK_g^a \rightarrow \cK_g^a$ is an isomorphism.
We will prove this by first handling $S_{12}$ in the next two sections (\S \ref{section:presentationalt5} -- \S \ref{section:presentationalt6}), and then in the final two sections (\S \ref{section:presentationalt7} -- \S \ref{section:presentationalt8})
extending this to $S_3$ and hence to all of $\Span{S_{12},S_3} = \fK_g^a$.

The vector space
$\cK_g^a$ is the kernel of the symmetric contraction $\fc\colon \wedge^2((\wedge^2 H)/\Q) \rightarrow \Sym^2(H)$,
and this section studies $\wedge^2((\wedge^2 H)/\Q)$ and constructs a subset $S_1$ of $S_{12}$
such that the restriction of $\Phi$ to $\Span{S_1}$ is an isomorphism onto its image.

\subsection{Generators and relations for target}
\label{section:genrelalt2}

Let $\prec$ be the following total order on $\cB$:
\[a_1 \prec b_1 \prec a_2 \prec b_2 \prec \cdots \prec a_g \prec b_g.\]
Using this ordering, order $\Set{$x \wedge y$}{$x,y \in \cB$, $x \prec y$}$
lexicographically, so $(x_1 \wedge y_1) \prec (x_2 \wedge y_2)$ if either $x_1 \prec x_2$ or if $x_1 = x_2$ and $y_1 \prec y_2$.  
Define $T = T_1 \cup T_2 \cup T_3$, where:
\begin{align*}
T_1 &= \SetLong{$(x \wedge y) \wedge (z \wedge w)$}{$x,y,z,w \in \cB$, $x \prec y$, $z \prec w$, $(x \wedge y) \prec (z \wedge w)$, \\ and $\omega(x,z) = \omega(x,w) = \omega(y,z) = \omega(y,w) = 0$}{,}{$x,y,z,w \in \cB$, $x \prec y$, $z \prec w$, $(x \wedge y) \prec (z \wedge w)$,} \\
T_2 &= \SetLong{$(a \wedge a_i) \wedge (a' \wedge b_i)$}{$1 \leq i \leq g$, $a \in \cB \setminus \{a_i\}$, $a' \in \cB \setminus \{b_i\}$, \\ and $\omega(a,a')=\omega(a_i,a')=\omega(b_i,a)=0$}{,}{and $\omega(a,a')=\omega(a_i,a')=\omega(b_i,a)=0$}\\
T_3 &= \Set{$(a_i \wedge a_j) \wedge (b_i \wedge b_j)$, $(a_i \wedge b_j) \wedge (b_i \wedge a_j)$}{$1 \leq i < j \leq g$}.
\end{align*}
The set $T$ generates $\wedge^2((\wedge^2 H)/\Q)$, so every element of $\wedge^2((\wedge^2 H)/\Q)$ can be written
as a linear combination of elements of $T$.  If we were working with $\wedge^2(\wedge^2 H)$, then $T$ would be
a basis; however, since we are working with $\wedge^2((\wedge^2 H)/\Q)$ there are relations between elements of $T$.
These relations are linear combinations of elements of $T$.
It is often awkward to write these linear combinations while maintaining
the orderings on the terms of $T$, so we introduce the following convention:

\begin{convention}
\label{convention:alttconvention}
Consider an expression
\[\sum_{i=1}^n \lambda_i (x_i \wedge y_i) \wedge (z_i \wedge w_i) \quad \text{with each $\lambda_i \in \Q$ and $x_i,y_i,z_i,w_i \in \cB$}.\]
We regard this as a linear combination of elements of $T$ in the following way:
\begin{itemize}
\item First, delete all terms where either $x_i=y_i$ or where $z_i = w_i$.  These terms vanish
in $\wedge^2((\wedge^2 H)/\Q)$.
\item Second, delete all terms where $\{x_i,y_i\} = \{z_i,w_i\}$ as unordered $2$-element sets.
These terms also vanish in $\wedge^2((\wedge^2 H)/\Q)$.
\item Finally, replace each term $(x_i \wedge y_i) \wedge (z_i \wedge w_i)$ with $\epsilon t$ for some
$\epsilon \in \{\pm 1\}$ and $t \in T$.  This involves possibly flipping $x_i$ and $y_i$, flipping $z_i$ and $w_i$, and
flipping $x_i \wedge y_i$ and $z_i \wedge w_i$.  Each flip introduces a sign.\qedhere
\end{itemize}
\end{convention}

Using this convention, the relations between elements of $T$ are generated by the set
\[R = \Set{$\sum\nolimits_{i=1}^g (a_i \wedge b_i) \wedge (x \wedge y)$}{$x,y \in \cB$, $x \prec y$}\]
of linear combinations of elements of $T$.

\subsection{Lifting elements of \texorpdfstring{$T_1$}{T1}}
\label{section:alts1}

For $(x \wedge y) \wedge (z \wedge w) \in T_1$, we have
\[\fc(x \wedge y,z \wedge w) = \omega(x,z) y \Cdot w - \omega(x,w) y \Cdot z - \omega(y,z) x \Cdot w + \omega(y,w) x \Cdot z = 0.\]
It follows that $(x \wedge y) \wedge (z \wedge w) \in \cK_g^a$, and
thus can be lifted to $\fK_g^a$.  Indeed, define
\[S_1 = \SetLong{$\BPresA{x \wedge y,z \wedge w}$}{$x,y,z,w \in \cB$, $x \prec y$, $z \prec w$, $(x \wedge y) \prec (z \wedge w)$, \\ and $\omega(x,z) = \omega(x,w) = \omega(y,z) = \omega(y,w) = 0$}{.}{$x,y,z,w \in \cB$, $x \prec y$, $z \prec w$, $(x \wedge y) \prec (z \wedge w)$,}\]
Like we did here, we will write elements of $\Span{S_1}$ in blue.  A generator $\BPresA{x \wedge y, z \wedge w}$ of
$S_1$ lies in $\pFix{x,z} \subset S_{12}$, so $S_1 \subset S_{12}$.
For $\BPresA{x \wedge y,z \wedge w} \in S_1$, we have
\[\Phi(\BPresA{x \wedge y,z \wedge w}) = (x \wedge y) \wedge (z \wedge w) \in T_1.\]
The map $\Phi$ restricts to a bijection between $S_1$ and $T_1$.

\subsection{Restricting linearization to \texorpdfstring{$S_1$}{S1}}

Recall that our goal is to prove Theorem \ref{maintheorem:presentationalt}, which
says that the linearization map $\Phi\colon \fK_g^a \rightarrow \cK_g^a$ 
is an isomorphism.  We
now prove the following partial result in this direction:

\begin{lemma}
\label{lemma:altspans1}
The linearization map $\Phi$ takes $\Span{S_1}$ isomorphically to $\Span{T_1}$.
\end{lemma}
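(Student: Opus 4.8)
The map $\Phi$ restricts to a bijection from $S_1$ to $T_1$, so the composite $\Span{S_1} \xrightarrow{\Phi} \Span{T_1}$ is surjective and the only question is injectivity, i.e.\ that there are no nontrivial linear relations among the elements $\BPresA{x \wedge y,z \wedge w}$ of $S_1$ beyond those forced by the relations $R$ among the elements of $T_1$. Since $\Phi$ carries $S_1$ bijectively to $T_1$, injectivity of $\Phi|_{\Span{S_1}}$ is equivalent to the statement that the vector space $\Span{S_1}\subseteq \fK_g^a$ has dimension equal to $\dim \Span{T_1}$. The plan is therefore to exhibit an explicit upper bound on $\dim\Span{S_1}$ that matches $\dim\Span{T_1}$, by producing, directly inside $\fK_g^a$, all the relations among elements of $S_1$ that are the lifts of the relations in $R$ and showing these are the only ones needed.

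Concretely, first I would record that for each $x,y\in\cB$ with $x\prec y$ there is a relation in $\fK_g^a$ of the form $\sum_{i=1}^g \PresA{a_i\wedge b_i, x\wedge y}=0$ (interpreted via Convention~\ref{convention:alttconvention}, so that terms with repeated basis vectors or with $\{a_i,b_i\}=\{x,y\}$ drop out, and the surviving terms are $\pm$ elements of $S_1$). This relation holds because $a_1\wedge b_1+\cdots+a_g\wedge b_g=\omega$ is $0$ in $(\wedge^2 H)/\Q$, so the linearity relations of Theorem~\ref{theorem:summarypresentation} (applied in the second slot, with $\zeta$ ranging over the symplectic pairs $a_i\wedge b_i$ and the linear combination being $\omega=0$) force $\sum_i\PresA{a_i\wedge b_i,x\wedge y}=\PresA{0,x\wedge y}=0$ after anti-symmetrizing. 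This gives a spanning set of relations among the elements of $S_1$ whose image under $\Phi$ is exactly $R$. Then, since $T=T_1\sqcup(\text{rest})$ is a generating set for $\wedge^2((\wedge^2 H)/\Q)$ whose relations are generated by $R$, the subspace $\Span{T_1}$ is precisely $\Span{S_1}$-many basis vectors modulo the sub-collection of relations in $R$ supported on $T_1$; matching the two counts gives $\dim\Span{S_1}\le\dim\Span{T_1}$, and combined with surjectivity this proves the isomorphism.

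The step I expect to be the main obstacle is verifying that \emph{no other} relations hold among the elements of $S_1$ in $\fK_g^a$ — i.e.\ that the defining relations of $\fK_g^a$ from Theorem~\ref{theorem:summarypresentation}, when restricted to expressions involving only the generators $\Pres{x\wedge y,z\wedge w}$ with $x,y,z,w\in\cB$ and all four pairwise orthogonality conditions holding, produce exactly the span of the $\sum_i\PresA{a_i\wedge b_i,x\wedge y}$ relations and nothing more. A clean way to handle this is to build a linear functional on the free vector space on $S_1$ (equivalently, on the relevant generators of $\fK_g$) that kills precisely these relations, or — more in the spirit of the paper — to show that the obvious surjection from $\Span{T_1}$ (viewed as a quotient of the free space on $T_1$ by the $R$-relations) back to $\Span{S_1}$ is well-defined, by checking that each relator used to define $\fK_g^a$ that lands among $S_1$-generators is a consequence of the $\omega=0$ relations. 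Because the linearity relations of Theorem~\ref{theorem:summarypresentation} are the only relations, and each such relation with $\zeta$ a symplectic pair among $\cB$ and $\kappa$'s among $\cB$ either involves a non-orthogonal pair (hence leaves $S_1$) or reduces to the $\omega$-relation, this check should be short; the bookkeeping with Convention~\ref{convention:alttconvention} and the sign flips is the only genuinely fiddly part.
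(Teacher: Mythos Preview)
Your proposal has the right skeleton but contains a concrete error and an unnecessary detour.

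The error is in the second paragraph: it is \emph{not} true that for every $x,y\in\cB$ with $x\prec y$ the relation $\sum_i\PresA{a_i\wedge b_i,x\wedge y}=0$ is a relation among elements of $S_1$.  Take $(x,y)=(a_1,a_2)$.  After applying Convention~\ref{convention:alttconvention}, the terms with $i=1,2$ survive as $(a_1\wedge b_1)\wedge(a_1\wedge a_2)$ and $(a_2\wedge b_2)\wedge(a_1\wedge a_2)$, which lie in $T_2$, not $T_1$; the corresponding elements $\PresA{a_1\wedge b_1,a_1\wedge a_2}$ are not even defined in $\fK_g^a$ since $\fc(a_1\wedge b_1,a_1\wedge a_2)=a_1\cdot a_2\neq 0$.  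The only relations in $R$ supported entirely on $T_1$ are those with $(x,y)=(a_k,b_k)$ for some $k$; call this subset $R_1$.

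Once you have this correction, the ``main obstacle'' you identify disappears, and the argument is much shorter than you anticipate.  You do not need to analyze the defining relations of $\fK_g^a$ and check that none of them produce extra relations among $S_1$.  Instead, argue on the target side: every relation in $R\setminus R_1$ involves a generator from $T\setminus T_1$ that appears in no other relation (e.g.\ $(a_k\wedge b_k)\wedge(a_k\wedge a_\ell)$ in the relation $\sum_i(a_i\wedge b_i)\wedge(a_k\wedge a_\ell)$), so $\Span{T_1}$ is precisely the quotient of the free space on $T_1$ by $R_1$.  Since each $R_1$-relation does lift to $\fK_g^a$ via $\sum_i\PresA{a_i\wedge b_i,a_k\wedge b_k}=\PresA{0,a_k\wedge b_k}=0$, the bijection $T_1\to S_1$ descends to a well-defined map $\Span{T_1}\to\Span{S_1}$ inverse to $\Phi$.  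This is exactly the paper's proof; your ``or --- more in the spirit of the paper'' alternative is the whole argument, and it needs only the check that $R_1$ lifts, not any analysis of the $\fK_g^a$ relations.
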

\begin{proof}
Let $R_1$ be the subset of 
the relations $R$ consisting of relations between elements of $T_1$.  The set
$R_1$ consists of relations of the form
\[\text{$\sum\nolimits_{i=1}^g (a_i \wedge b_i) \wedge (a_k \wedge b_k)$ with $1 \leq k \leq g$}.\]
Set $R_2 = R \setminus R_1$.  Each element of $R_2$ involves an element of $T \setminus T_1$ that appears
in no other relations in $R$.  For instance, for $1 \leq k < \ell \leq g$ the set $R_2$ contains
the relation
\[\sum_{i=1}^g (a_i \wedge b_i) \wedge (a_k \wedge a_{\ell}),\]
and no other relation in $R$ uses the generator\footnote{Though $a_k$ appears twice in
$(a_k \wedge b_k) \wedge (a_k \wedge a_{\ell})$, this element is not $0$.  See Warning \ref{warning:alternating}.}
$(a_k \wedge b_k) \wedge (a_k \wedge a_{\ell})$.
This implies that the subspace of $\wedge^2((\wedge^2 H)/\Q)$ spanned by $T_1$ is generated
by $T_1$ subject to only the relations in $R_1$.
The map $\Phi$ takes $S_1$ bijectively to $T_1$.  The relations in $R_1$ lift 
to relations between the elements of $S_1$ due to the bilinearity relations in $\fK_g^a$:
\[\sum_{i=1}^g \BPresA{a_i \wedge b_i,a_k \wedge b_k} = \BPresA{\sum_{i=1}^g a_i \wedge b_i,a_k \wedge b_k} = \BPresA{0,a_k \wedge b_k} = 0.\]
Combining all of this, we conclude that $\Phi$ takes $\Span{S_1}$ isomorphically to $\Span{T_1}$, as desired.
\end{proof}

\section{Symmetric kernel, alternating version VI: \texorpdfstring{$S_2$}{S2} and \texorpdfstring{$S_{12}$}{S12}}
\label{section:presentationalt6}

We continue using all the notation from \S \ref{section:presentationalt1} -- \S \ref{section:presentationalt5}.
Recall that our goal is to prove Theorem \ref{maintheorem:presentationalt}, which says that
the linearization map $\Phi\colon \fK_g^a \rightarrow \cK_g^a$ is an isomorphism.  In the last 
section, we constructed a set $S_1 \subset S_{12}$ and proved Lemma \ref{lemma:altspans1}, which
says that $\Phi$ restricts to an isomorphism between $\Span{S_1}$ and $\Span{T_1}$.  In this section,
we prove that $\Phi$ restricts to an isomorphism between $\Span{S_{12}}$ and $\cK_g^a \cap \Span{T_1,T_2}$.

\subsection{The set \texorpdfstring{$T_2$}{T2}}
\label{section:analyzet2}

Consider an element $(a \wedge a_i) \wedge (a' \wedge b_i)$ in
\[T_2 = \SetLong{$(a \wedge a_i) \wedge (a' \wedge b_i)$}{$1 \leq i \leq g$, $a \in \cB \setminus \{a_i\}$, $a' \in \cB \setminus \{b_i\}$, \\ and $\omega(a,a')=\omega(a_i,a')=\omega(b_i,a)=0$}{,}{and $\omega(a,a')=\omega(a_i,a')=\omega(b_i,a)=0\quad\quad$}\]
The image of $(a \wedge a_i) \wedge (a' \wedge b_i)$ in $\Sym^2(H)$ under the symmetric contraction is
\[\fc(a \wedge a_i,a' \wedge b_i) = \omega(a,a') a_i \Cdot b_i - \omega(a,b_i) a_i \Cdot a' - \omega(a_i,a') a \Cdot b_i + \omega(a_i,b_i) a \Cdot a' = a \Cdot a'.\]
The set $\Set{$a \Cdot a'$}{$a,a' \in \cB$}$ is a basis for $\Sym^2(H)$, and this calculation
suggests dividing $T_2$ into subsets mapping to different basis elements.  Define
$S^2_0(\cB) = \Set{$a \Cdot a'$}{$a,a' \in \cB$, $\omega(a,a') = 0$}$.  For $a \Cdot a' \in S^2_0(\cB)$,
define
\begin{small}
\begin{align}
\label{eqn:definet2}
T_2(a \Cdot a') = &\Set{$(a \wedge a_i) \wedge (a' \wedge b_i)$}{$1 \leq i \leq g$, $a_i \neq a$, $b_i \neq a'$, $\omega(a_i,a')=\omega(b_i,a)=0$} \\
                  &\cup\Set{$(a' \wedge a_i) \wedge (a \wedge b_i)$}{$1 \leq i \leq g$, $a_i \neq a'$, $b_i \neq a$, $\omega(a_i,a)=\omega(b_i,a')=0$}.\notag
\end{align}
\end{small}%
Note that this is symmetric in $a$ and $a'$.  We have
\[T_2 = \bigcup_{a \Cdot a' \in S^2_0(\cB)} T_2(a \Cdot a').\]
By our calculation above, $\fc$ takes each element of $T_2(a \Cdot a')$ to $a \Cdot a'$.  Since we will need
it later, we record the following consequence the above calculations:

\begin{lemma}
\label{lemma:imagect2}
The symmetric contraction $\fc\colon \wedge^2((\wedge^2 H)/\Q) \rightarrow \Sym^2(H)$ takes $\Span{T_1,T_2}$ surjectively
onto $\Span{S^2_0(\cB)}$.
\end{lemma}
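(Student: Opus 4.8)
\textbf{Proof plan for Lemma \ref{lemma:imagect2}.}

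The plan is to establish the two containments that together with the preceding computation yield surjectivity onto $\Span{S^2_0(\cB)}$. First I would observe that the inclusion $\fc(\Span{T_1,T_2}) \subset \Span{S^2_0(\cB)}$ is already essentially verified: every element of $T_1$ lies in $\cK_g^a$ and so maps to $0$, while the computation displayed just before the statement shows that each $(a \wedge a_i) \wedge (a' \wedge b_i) \in T_2$ maps to $a \Cdot a'$, where $\omega(a,a') = 0$, hence to an element of $S^2_0(\cB)$. Thus the image is contained in $\Span{S^2_0(\cB)}$, and the content of the lemma is the reverse inclusion: every basis element $a \Cdot a'$ of $\Span{S^2_0(\cB)}$ is hit.

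So the main step is, for each pair $a, a' \in \cB$ with $\omega(a,a') = 0$, to exhibit an element of $T_2$ mapping to $a \Cdot a'$ under $\fc$. By the very definition \eqref{eqn:definet2} of $T_2(a \Cdot a')$, it suffices to show this set is nonempty for every such pair. Here I would split into the two cases that occur among orthogonal basis elements. If $a$ and $a'$ are ``independent'' basis vectors in the sense that they do not form a symplectic pair — so $\omega(a,a') = 0$ and $\{a,a'\} \not\supset \{a_i, b_i\}$ for any $i$ — then I can pick an index $i$ with $a_i, b_i \notin \{a, a'\}$ (possible since $g \geq 4$, so there are at least three ``free'' symplectic index blocks beyond those possibly used by $a$ and $a'$), and then $(a \wedge a_i) \wedge (a' \wedge b_i)$ lies in $T_2(a \Cdot a')$ because $\omega(a_i,a') = \omega(b_i,a) = \omega(a,a') = 0$ and the required distinctness conditions hold. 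If instead $a \Cdot a' = a_j \Cdot b_j$ for some $j$, then I pick any index $i \neq j$ and use $(a_j \wedge a_i) \wedge (b_j \wedge b_i)$: one checks $\omega(a_i, b_j) = 0$, $\omega(b_i, a_j) = 0$, $a_i \neq a_j$, $b_i \neq b_j$, so this lies in $T_2(a_j \Cdot b_j)$ and maps to $a_j \Cdot b_j$. In every case $T_2(a \Cdot a') \neq \emptyset$, so $\fc$ hits every generator of $\Span{S^2_0(\cB)}$.

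I do not expect any serious obstacle here; the lemma is a bookkeeping consequence of the explicit formula for $\fc$ together with the genus hypothesis $g \geq 4$ (Assumption \ref{assumption:genusalt}), which guarantees enough free symplectic blocks to form the witnessing elements of $T_2$. The only point requiring minor care is checking that the witness elements genuinely satisfy all the side conditions built into the definition of $T_2$ (distinctness of the basis vectors involved, and the three orthogonality conditions $\omega(a,a')=\omega(a_i,a')=\omega(b_i,a)=0$), but this is immediate once the index $i$ is chosen to avoid the blocks occupied by $a$ and $a'$. Combining the two containments completes the proof.
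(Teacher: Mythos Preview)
Your proposal is correct and takes essentially the same approach as the paper, which simply records the result as immediate from the computation of $\fc$ on generators of $T_2(a \Cdot a')$ together with the vanishing of $\fc$ on $T_1$. Note that your second case ($a \Cdot a' = a_j \Cdot b_j$) is vacuous, since $\omega(a_j,b_j)=1 \neq 0$ means such products are not in $S^2_0(\cB)$; your first case already covers every element of $S^2_0(\cB)$, including the diagonal ones $a \Cdot a$.
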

\begin{proof}
Immediate from the above calculation of $\fc$ on generators for $T_2(a \Cdot a')$ along with
the fact that $\fc$ vanishes on $T_1$.
\end{proof}

\subsection{Lifting \texorpdfstring{$T_2(a \Cdot a')$}{T2(a a')}}

Consider $a \Cdot a' \in S^2_0(\cB)$.
Recall from \S \ref{section:presentationalt1image} that
$\FixBigIm{a,a'}$ is the subspace of $\wedge^2((\wedge^2 H)/\Q)$ spanned
by elements of the form $(a \wedge x) \wedge (a' \wedge y)$ with $x,y \in H_{\Z}$
satisfying $\omega(x,y) = 0$.  Each element of $T_2(a \Cdot a')$ is a generator
of $\FixBigIm{a,a'}$.  It follows
that
\[\Span{T_2(a \Cdot a')} \subset \FixBigIm{a,a'}.\]
Since $\fc$ takes every element of $T_2(a \Cdot a')$ to $a \Cdot a' \in \Sym^2(H)$, 
the intersection of the symmetric kernel $\cK_g^a = \ker(\fc)$ with $\Span{T_2(a \Cdot a')}$
has codimension $1$ in $\Span{T_2(a \Cdot a')}$.  Indeed, it is spanned by  
elements of the form $t_1 - t_2$ with $t_1,t_2 \in T_2(a \Cdot a')$.
Letting\footnote{The set $\pFix{a,a'}$ is purple since it lies in $S_{12}$.} 
$\pFix{a,a'}$ be as in \S \ref{section:presentationalt1setup}, define
\[S_2(a \Cdot a') = \Set{$\eta \in \pFix{a,a'}$}{$\Phi(\eta) \in \Span{T_2(a \Cdot a')}$}.\]
By construction, this is a subspace of $\pFix{a,a'}$.  We will prove
below that $\Phi$ takes $S_2(a \Cdot a')$ isomorphically onto $\cK_g^a \cap \Span{T_2(a \Cdot a')}$.
In fact, we will do more than this.  Define
\[S_1(a \Cdot a') = \SetLong{$\BPresA{a \wedge x,a' \wedge y}$}{$x \in \cB \setminus \{a\}$, $y \in \cB \setminus \{a'\}$, $\{a,x\} \neq \{a',y\}$, \\ \text{and $\omega(a,y) = \omega(x,a') = \omega(x,y) = 0$}}{.}{$x \in \cB \setminus \{a\}$, $y \in \cB \setminus \{a'\}$, $\{a,x\} \neq \{a',y\}$,}\]
For each $\eta \in S_1(a \Cdot a')$, either $\eta$ or $-\eta$ lies in $S_1$.  Just like for $S_2(a \Cdot a')$,
we have $S_1(a \Cdot a') \subset \pFix{a,a'}$.  We will prove:

\begin{lemma}
\label{lemma:altlists2}
Consider $a \Cdot a' \in S^2_0(\cB)$.  Then:
\begin{itemize}
\item[(a)] The linearization map $\Phi$ restricts to an isomorphism between $S_2(a \Cdot a')$ and
$\cK_g^a \cap \Span{T_2(a \Cdot a')}$.
\item[(b)] We have $\Span{S_1(a \Cdot a'),S_2(a \Cdot a')} = \pFix{a,a'}$.
\end{itemize}
\end{lemma}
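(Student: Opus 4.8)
\textbf{Proof plan for Lemma \ref{lemma:altlists2}.}

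The plan is to reduce everything to the identification results of \S \ref{section:presentationalt1}, namely Lemmas \ref{lemma:mapbilinear1} and \ref{lemma:mapbilinear2}, which already tell us that $\Phi$ takes $\Fix{a,a'}$ isomorphically onto its image $\FixIm{a,a'} = \cK_g^a \cap \FixBigIm{a,a'}$ (in the two relevant cases $a = a'$ primitive, or $(a,a')$ an isotropic basis). Since $\pFix{a,a'} = \Fix{a,a'}$ (it lies in $S_{12}$), $\Phi$ is already known to be injective on $\pFix{a,a'}$, so for part (a) the only real content is to identify the preimage of $\cK_g^a \cap \Span{T_2(a \Cdot a')}$. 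First I would observe that $\Span{T_2(a \Cdot a')} \subseteq \FixBigIm{a,a'}$ with $\fc$ mapping it onto $\Span{a \Cdot a'}$, so $\cK_g^a \cap \Span{T_2(a \Cdot a')}$ is the codimension-$1$ subspace spanned by differences $t_1 - t_2$. By definition $S_2(a \Cdot a') = \Phi^{-1}(\Span{T_2(a \Cdot a')}) \cap \pFix{a,a'}$, so injectivity of $\Phi$ on $\pFix{a,a'}$ immediately gives that $\Phi$ restricts to an injection $S_2(a \Cdot a') \hookrightarrow \cK_g^a \cap \Span{T_2(a \Cdot a')}$; surjectivity follows because every $t_1 - t_2$ with $t_i \in T_2(a \Cdot a')$ is $\Phi$ of an honest difference $\PresA{a \wedge x_1, a' \wedge y_1} - \PresA{a \wedge x_2, a' \wedge y_2}$ of generators of $\Fix{a,a'}$, using Lemma \ref{lemma:mapbilinear1}(c) or \ref{lemma:mapbilinear2}(c) to lift. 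So part (a) is essentially bookkeeping on top of the earlier identification lemmas.

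For part (b) the plan is to show that the two subspaces $\Span{S_1(a \Cdot a')}$ and $\Span{S_2(a \Cdot a')}$ together exhaust $\pFix{a,a'}$, and I would do this by pushing forward under $\Phi$ and using that $\Phi$ is an isomorphism onto $\FixIm{a,a'}$. Concretely, $\Phi(\Span{S_1(a \Cdot a')})$ is spanned by the elements $(a \wedge x) \wedge (a' \wedge y) \in T_1$ appearing in the definition of $S_1(a \Cdot a')$ (those with $\omega(x,y) = 0$, i.e. $\fc$ vanishes), while $\Phi(\Span{S_2(a \Cdot a')}) = \cK_g^a \cap \Span{T_2(a \Cdot a')}$ by part (a). The claim becomes: inside $\FixIm{a,a'} = \cK_g^a \cap \FixBigIm{a,a'}$, the span of those $T_1$-type elements together with $\cK_g^a \cap \Span{T_2(a \Cdot a')}$ is everything. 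Here I would invoke the explicit description of $\FixBigIm{a,a'}$ from \S \ref{section:presentationalt1}: it is $\wedge^2 \cU(a)$ when $a = a'$, or $\cV(a,a') \wedge \cW(a,a')$ when $(a,a')$ is an isotropic basis, and in either case a symplectic (resp.\ dual) basis of $\cU(a)$ (resp.\ of $\cV,\cW$) drawn from $\cB$ makes the spanning set of $T_1$- and $T_2$-type elements visibly a spanning set for the whole codimension-$1$ kernel — this is exactly the kind of computation already carried out in the proofs of Lemmas \ref{lemma:s12twoa1weak}, \ref{lemma:twovsone1}, etc., where the kernel of $\wedge^2\cU \to \Q$ or $\cV \otimes \cW \to \Q$ is written down explicitly in terms of $X$ and $Y$. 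Since $\Phi$ is injective on $\pFix{a,a'}$, equality of images forces equality of the subspaces, giving (b).

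The main obstacle I anticipate is part (b), and specifically being careful about the two separate geometric cases ($a = a'$ versus $(a,a')$ an isotropic basis), together with the degenerate subtlety that $\{a,x\} = \{a',y\}$ must be excluded (this is why $S_1(a \Cdot a')$ carries that side condition, and it corresponds to the one-dimensional ambiguity $a' \otimes a$ / the element $a \wedge a'$ that was quotiented out in Lemma \ref{lemma:mapbilinear2}). I would handle this by first treating $a = a'$ using the $\wedge^2 \cU(a)$ picture and Lemma \ref{lemma:mapbilinear1}, then the isotropic-basis case using the $\cV \otimes \cW$ picture and Lemma \ref{lemma:mapbilinear2}, in each case checking that the restriction of the standard spanning set of $\ker$ to the $\fc$-vanishing part lands in $\Phi(S_1(a\Cdot a'))$ and the rest is captured by differences in $\Phi(S_2(a \Cdot a'))$. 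A secondary bookkeeping point is checking that the generators named in $S_1(a\Cdot a')$ and $S_2(a\Cdot a')$ are genuinely generators of $\Fix{a,a'}$, i.e.\ that all the relevant pairs are special pairs — but this is forced by the orthogonality conditions imposed in their definitions, exactly as in \S \ref{section:presentationalt1setup}.
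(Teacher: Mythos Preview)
Your proposal is correct and follows essentially the same approach as the paper: part (a) is deduced immediately from Lemmas~\ref{lemma:mapbilinear1} and~\ref{lemma:mapbilinear2}, and part (b) is handled by splitting into the cases $a=a'$ and $a\neq a'$, using the explicit $X\cup Y$ spanning set for the kernel of $\wedge^2\cU\to\Q$ (resp.\ $\cV\otimes\cW\to\Q$) to see that the $X$-type generators lie in $S_1(a\Cdot a')$ while the $Y$-type generators land in $\Span{S_1(a\Cdot a'),S_2(a\Cdot a')}$ after correcting by $S_1$-terms so that the image falls into $\Span{T_2(a\Cdot a')}$. The paper carries out exactly this computation (writing it for $a=a_1$, $a'=a_1$ or $a_2$), and your identification of the degenerate case $\{a,x\}=\{a',y\}$ with the quotient by $a'\otimes a$ in Lemma~\ref{lemma:mapbilinear2} is on point.
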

\begin{proof}
Conclusion (a) is immediate from Lemmas \ref{lemma:mapbilinear1} and \ref{lemma:mapbilinear2}, which 
together imply that $\Phi$ restricts to an isomorphism between $\pFix{a,a'}$
and $\cK_g^a \cap \FixBigIm{a,a'}$.  We must prove (b).  There are two cases:
 
\begin{case}{1}
\label{case:altlists2.1}
$a = a'$.
\end{case}

To simplify our notation, we will assume that $a \in \cB = \{a_1,b_1,\ldots,a_g,b_g\}$ equals $a_1$.  The other
cases are identical up to changes in indices.  Following the notation in
\S \ref{section:identificationalt1}, define
\[\cU = \Span{a_1}_{\Q}^{\perp}/\Span{a_1} \cong \Span{A}_{\Q} \quad \text{with} \quad A = \{a_2,b_2,\ldots,a_g,b_g\}.\]
We proved in Lemma \ref{lemma:mapbilinear1} that $\Fix{a_1,a_1}$ is isomorphic
to the kernel of the map $\wedge^2 \cU \rightarrow \Q$
induced by the symplectic form $\omega$.  Under this isomorphism, a generator
$\PresA{a_1 \wedge x,a_1 \wedge y}$ of $\Fix{a_1,a_1}$ maps
to $x \wedge y \in \wedge^2 \cU$.

The kernel of $\wedge^2 \cU \rightarrow \Q$ is spanned by $X \cup Y$ where
\begin{align*}
X &= \Set{$x \wedge y$}{$x,y \in A$, $\omega(x,y) = 0$}, \\
Y &= \Set{$a_i \wedge b_i - a_j \wedge b_j$}{$2 \leq i < j \leq g$}.
\end{align*}
Since for $2 \leq i < j \leq g$ we have
\[(a_i - b_j) \wedge (b_i - a_j) = a_i \wedge b_i - a_j \wedge b_j + \text{an element of $\Span{X}$},\]
we can replace $Y$ by 
\[\Set{$(a_i - b_j) \wedge (b_i - a_j)$}{$2 \leq i < j \leq g$}.\]
It follows that $\Fix{a_1,a_1}$ is generated by the following elements:
\begin{itemize}
\item $\BPresA{a_1 \wedge x,a_1 \wedge y}$ for $x,y \in A$ with $\omega(x,y) = 0$.  These
are elements of $S_1(a_1 \Cdot a_1)$.
\item $\PPresA{a_1 \wedge (a_i-b_j), a_1 \wedge (b_i - a_j)}$ for $2 \leq i < j \leq g$.
\end{itemize}
It is thus enough to prove that for $2 \leq i < j \leq g$ the element
$\PPresA{a_1 \wedge (a_i - b_j),a_1 \wedge (b_i - a_j)}$ lies in 
the span of $S_1(a_1 \Cdot a_1)$ and $S_2(a_1 \Cdot a_1)$.  For this, note that
\begin{align*}
&\Phi(\PPresA{a_1 \wedge (a_i - b_j),a_1 \wedge (b_i - a_j)} + \BPresA{a_1 \wedge a_i,a_1 \wedge a_j} + \BPresA{a_1 \wedge b_j,a_1 \wedge b_i}) \\
=&(a_1 \wedge (a_i - b_j)) \wedge (a_1 \wedge (b_i - a_j)) + (a_1 \wedge a_i) \wedge (a_1 \wedge a_j) + (a_1 \wedge b_j) \wedge (a_1 \wedge b_i) \\
=&(a_1 \wedge a_i) \wedge (a_1 \wedge b_i) - (a_1 \wedge a_j) \wedge (a_1 \wedge b_j) \in \Span{T_2(a_1 \Cdot a_1)}.
\end{align*}
It follows that
\[\PPresA{a_1 \wedge (a_i - b_j),a_1 \wedge (b_i - a_j)} + \BPresA{a_1 \wedge a_i,a_1 \wedge a_j} + \BPresA{a_1 \wedge b_j,a_1 \wedge b_i} \in S_2(a_1 \Cdot a_1).\]
Since
\[\BPresA{a_1 \wedge a_i,a_1 \wedge a_j} + \BPresA{a_1 \wedge b_j,a_1 \wedge b_i} \in \Span{S_1(a_1 \Cdot a_1)},\]
the case follows.

\begin{case}{2}
\label{case:altlists2.2}
$a \neq a'$.
\end{case}

To simplify our notation, we will assume that $a,a' \in \cB = \{a_1,b_1,\ldots,a_g,b_g\}$ are $a = a_1$ and $a' = a_2$.
The other cases are identical up to changes in indices.  Following the notation in \S \ref{section:identificationalt2}, define
\begin{alignat*}{8}
&&\cV &= &\Span{a_2}_{\Q}^{\perp}/\Span{a_1} &\cong &\Span{A_V}_{\Q} &\quad \text{with} \quad &A_V = \{b_1,a_2,a_3,b_3,\ldots,a_g,b_g\},&\\
&&\cW &= &\Span{a_1}_{\Q}^{\perp}/\Span{a_2} &\cong &\Span{A_W}_{\Q} &\quad \text{with} \quad &A_W = \{a_1,b_2,a_3,b_3,\ldots,a_g,b_g\}.&
\end{alignat*}
We proved in Lemma \ref{lemma:mapbilinear2} that $\Fix{a_1,a_2}$ is isomorphic
to a quotient of the kernel of the map $\cV \otimes \cW \rightarrow \Q$
induced by the symplectic form $\omega$.  Under this isomorphism, a generator
$\PresA{a_1 \wedge x,a_2 \wedge y}$ of $\Fix{a_1,a_2}$ maps
to $x \otimes y \in \cV \otimes \cW$.

The kernel of $\cV \otimes \cW \rightarrow \Q$ is spanned by $X \cup Y$ where\footnote{After reading all the proofs
in \S \ref{section:presentationalt2} -- \S \ref{section:presentationalt3}, the reader might expect the word ``distinct'' to not appear in $Y$.  To make
some of the proofs in \S \ref{section:presentationalt2} -- \S \ref{section:presentationalt3} work (e.g., the proof
of Lemma \ref{lemma:s12twoa1weak}), the set $Y$ needs to contain elements
of the form $a_i \otimes b_i + b_i \otimes a_i$.
Here, however, we can require $i$ and $j$ be distinct.  Indeed, consider $3 \leq i \leq g$.
We want to prove that $a_i \otimes b_i + b_i \otimes a_i$ is in the span of $Y$.  For this, note that
\[a_i \otimes b_i + b_i \otimes a_i = (a_i \otimes b_i + b_1 \otimes a_1) - (a_2 \otimes b_2 + b_1 \otimes a_1) + (a_2 \otimes b_2 + a_i \otimes b_i).\]}
\begin{align*}
X = &\Set{$x \otimes y$}{$x \in A_V$, $y \in A_W$, $\omega(x,y)=0$},\\
Y = &\Set{$a_i \otimes b_i + b_j \otimes a_j$}{$1 \leq i,j \leq g$ distinct, $i \neq 1$, $j \neq 2$}.
\end{align*}
Since for $1 \leq i,j \leq g$ distinct with $i \neq 1$ and $j \neq 2$ we have 
\begin{align*}
(a_i-b_j) \otimes (b_i - a_j) &= a_i \otimes b_i + b_j \otimes a_j + \text{an element of $\Span{X}$},
\end{align*}
we can replace $Y$ by the set
\begin{align*}
&\Set{$(a_i-b_j) \otimes (b_i-a_j)$}{$1 \leq i,j \leq g$ distinct, $i \neq 1$, $j \neq 2$}.
\end{align*}
From this, we see that $\Fix{a_1,a_2}$ is generated by the following elements:
\begin{itemize}
\item $\BPresA{a_1 \wedge x,a_2 \wedge y}$ for $x \in A_V$ and $y \in A_W$ with $\omega(x,y) = 0$.  These
are elements of $S_1(a_1 \Cdot a_2)$.
\item $\PPresA{a_1 \wedge (a_i-b_j), a_2 \wedge (b_i - a_j)}$ for $1 \leq i < j \leq g$ distinct
with $i \neq 1$ and $j \neq 2$. 
\end{itemize}
It is thus enough to prove that for $1 \leq i < j \leq g$ distinct with $i \neq 1$ and $j \neq 2$, the element 
$\PPresA{a_1 \wedge (a_i - b_j),a_2 \wedge (b_i - a_j)}$ lies in
the span of $S_1(a_1 \Cdot a_2)$ and $S_2(a_1 \Cdot a_2)$.  The proof
of this is similar the argument we gave in Case \ref{case:altlists2.1}, so we omit it.
\end{proof}

\subsection{The set \texorpdfstring{$S_2$}{S2}} 
Define
\[S_2 = \bigcup_{a,a' \in \cB} S_2(a \Cdot a').\]
Recall that we proved in Lemma \ref{lemma:altspans1} that the linearization
map $\Phi$ takes $\Span{S_1}$ isomorphically to $\Span{T_1}$.  We have
$\Span{T_1} \subset \cK_g^a$, but $\Span{T_1,T_2}$ is not contained in $\cK_g^a$.
We now prove:

\begin{lemma}
\label{lemma:altspans2}
The linearization map $\Phi$ takes $\Span{S_1,S_2}$ isomorphically to $\cK_g^a \cap \Span{T_1,T_2}$.
\end{lemma}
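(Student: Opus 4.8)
The statement combines the two pieces already assembled: Lemma~\ref{lemma:altspans1} says $\Phi$ takes $\Span{S_1}$ isomorphically onto $\Span{T_1}$, and Lemma~\ref{lemma:altlists2}(a) says that for each $a \Cdot a' \in S^2_0(\cB)$ the map $\Phi$ takes $S_2(a \Cdot a')$ isomorphically onto $\cK_g^a \cap \Span{T_2(a \Cdot a')}$. The plan is to glue these together by organizing everything according to the value of the symmetric contraction $\fc$, exploiting that $\fc$ sends each element of $T_2(a \Cdot a')$ to the basis vector $a \Cdot a' \in \Sym^2(H)$ and vanishes on all of $T_1$ (Lemma~\ref{lemma:imagect2}). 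In particular the subspaces $\Span{T_1}$ and $\Span{T_2(a \Cdot a')}$ for varying $a \Cdot a'$ have images under $\fc$ that are, respectively, $0$ and the distinct coordinate lines $\Span{a \Cdot a'}$, so the intersections $\cK_g^a \cap \Span{T_1,T_2}$ decompose cleanly.

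\textbf{Key steps, in order.} First I would record the target-side decomposition: since $T = T_1 \cup T_2 \cup T_3$ generates $\wedge^2((\wedge^2 H)/\Q)$ and the only relations among these generators (by the analysis in \S\ref{section:genrelalt2}, i.e.\ the set $R$) involve at most one generator outside $T_1$, the subspace $\Span{T_1,T_2}$ is freely generated by $T_1 \cup T_2$ modulo exactly the relations $R_1$ (relations purely among $T_1$). Hence $\Span{T_1,T_2} = \Span{T_1} \oplus \bigoplus_{a \Cdot a' \in S^2_0(\cB)} \Span{T_2(a \Cdot a')}'$ where the primed summands are complements chosen so that the decomposition is compatible with $\fc$; more robustly, I would argue directly that $\cK_g^a \cap \Span{T_1,T_2}$ is spanned by $\Span{T_1}$ together with the $\cK_g^a \cap \Span{T_2(a \Cdot a')}$. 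Concretely: given $v \in \cK_g^a \cap \Span{T_1,T_2}$, write $v = v_1 + \sum_{a \Cdot a'} v_{a \Cdot a'}$ with $v_1 \in \Span{T_1}$ and $v_{a \Cdot a'} \in \Span{T_2(a \Cdot a')}$ (possible after absorbing relations, which only move mass within $T_1$); then $\fc(v) = \sum_{a \Cdot a'} \fc(v_{a \Cdot a'}) = \sum_{a \Cdot a'} c_{a \Cdot a'} (a \Cdot a')$ for scalars $c_{a \Cdot a'}$, and since the $a \Cdot a'$ are linearly independent in $\Sym^2(H)$, $\fc(v) = 0$ forces each $c_{a \Cdot a'} = 0$, i.e.\ each $v_{a \Cdot a'} \in \cK_g^a$. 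Second, on the source side: $S_{12} = \bigcup_{a,a'} \pFix{a,a'}$ by definition, and Lemma~\ref{lemma:altlists2}(b) gives $\pFix{a,a'} = \Span{S_1(a \Cdot a'), S_2(a \Cdot a')}$; since each $S_1(a \Cdot a') \subset S_1$ and each $S_2(a \Cdot a') \subset S_2$, we get $\Span{S_{12}} = \Span{S_1, S_2}$, so the map in question is indeed the restriction of $\Phi$ to $\Span{S_1,S_2}$. Third, assemble injectivity and surjectivity: surjectivity onto $\cK_g^a \cap \Span{T_1,T_2}$ follows from Step~1 together with Lemmas~\ref{lemma:altspans1} and~\ref{lemma:altlists2}(a) ($\Phi(\Span{S_1}) = \Span{T_1}$ and $\Phi(S_2(a \Cdot a')) = \cK_g^a \cap \Span{T_2(a \Cdot a')}$). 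For injectivity, suppose $\Phi(w_1 + \sum w_{a \Cdot a'}) = 0$ with $w_1 \in \Span{S_1}$, $w_{a \Cdot a'} \in S_2(a \Cdot a')$; apply $\fc$-grading again on the image side—$\Phi(w_1) \in \Span{T_1}$, $\Phi(w_{a \Cdot a'}) \in \Span{T_2(a \Cdot a')}$—and use that $\Span{T_1}$ and the $\Span{T_2(a \Cdot a')}$ meet only in the relations $R_1$, which are already accounted for in $\Span{S_1}$; this forces $\Phi(w_1) + (\text{a term in } \Span{T_1} \text{ coming from } R_1) = 0$ and each $\Phi(w_{a \Cdot a'}) = 0$ individually, whence by the injectivity clauses of Lemmas~\ref{lemma:altspans1} and~\ref{lemma:altlists2}(a) every summand vanishes.

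\textbf{Main obstacle.} The delicate point is bookkeeping the relation set $R$: elements of $T_1$ and of different $T_2(a \Cdot a')$ are not a basis for $\wedge^2((\wedge^2 H)/\Q)$, so I cannot naively treat $\Span{T_1,T_2}$ as a direct sum, and I must be careful that the $\Sp$-relations $\sum_{i=1}^g (a_i \wedge b_i) \wedge (x \wedge y)$ never mix two distinct $T_2(a \Cdot a')$ blocks nor a $T_2$ block with $T_1$ in a way that isn't already a consequence of $R_1$. This should follow from the observation (used in Lemma~\ref{lemma:altspans1}) that for $(x \wedge y) \notin T_1$ the generator $(a_k \wedge b_k) \wedge (x \wedge y)$ appears in a unique relation of $R$, so the only relations internal to $\Span{T_1,T_2}$ are the $R_1$ ones—but verifying that each $T_2$ generator likewise sits in at most one relation of $R$, and that that relation's other terms are all in $T_1$, needs a short case-check against the definition \eqref{eqn:definet2} of $T_2(a \Cdot a')$. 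Once that is nailed down, the $\fc$-grading argument makes the rest essentially formal, mirroring the structure of Lemma~\ref{lemma:altspans1}'s proof but one level up.
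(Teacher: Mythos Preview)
Your overall strategy matches the paper's: decompose $\Span{T_1,T_2}$ via the $\fc$-grading, apply Lemmas~\ref{lemma:altspans1} and~\ref{lemma:altlists2}(a) on the pieces, then glue. But your ``Main obstacle'' paragraph resolves the obstacle incorrectly, and this creates a genuine gap in the injectivity argument.

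You assert that ``the only relations internal to $\Span{T_1,T_2}$ are the $R_1$ ones,'' and you plan to verify this by checking that ``each $T_2$ generator sits in at most one relation of $R$, and that that relation's other terms are all in $T_1$.'' Both claims are false. Take the relation $r = \sum_{i=1}^g (a_i \wedge b_i) \wedge (a_k \wedge a_\ell)$ for distinct $k,\ell$. After applying Convention~\ref{convention:alttconvention}, the terms for $i=k$ and $i=\ell$ are
\[
(a_\ell \wedge a_k) \wedge (a_k \wedge b_k)\quad\text{and}\quad -(a_k \wedge a_\ell) \wedge (a_\ell \wedge b_\ell),
\]
both of which lie in $T_2(a_k \Cdot a_\ell)$; the remaining $g-2$ terms lie in $T_1$. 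So every relation in $R_2$ is a nontrivial relation in $\Span{T_1,T_2}$ involving two $T_2$ generators, and it is not a consequence of $R_1$. (Indeed, this is precisely the content of the paper's Claim~1, which shows every $r\in R$ lies in $\Span{T_1}\oplus\Span{T_2}$.) In particular, in your injectivity step, writing $\Phi(w_1+\sum w_{a\Cdot a'})=0$ does \emph{not} force each $\Phi(w_{a\Cdot a'})=0$: the $T_2$-part of an $R_2$ relation is a nonzero element of $\cK_g^a\cap\Span{T_2(a\Cdot a')}$ whose negative lies in $\Span{T_1}$.

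What is missing is the step the paper calls Claim~2: one must show that each $R_2$ relation \emph{lifts} to a relation among elements of $S_1\cup S_2$ inside $\fK_g^a$. This is not formal; it requires an explicit computation in the presented vector space $\fK_g^a$, exhibiting for instance that
\[
-\PresA{a_1\wedge(b_1+a_2),\,a_2\wedge(a_1+b_2)}+\PresA{a_1\wedge b_1,\,a_2\wedge b_2}+\sum_{i=3}^g\PresA{a_i\wedge b_i,\,a_1\wedge a_2}=0
\]
holds in $\fK_g^a$ (done in the paper by expanding the first term via the identity $a_1\wedge(b_1+a_2)+a_2\wedge(a_1+b_2)+\sum_{i\ge 3}a_i\wedge b_i=0$ in $(\wedge^2 H)/\Q$). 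Once the $R_2$ relations are lifted, your decomposition argument goes through; without this lift, injectivity is unproved.
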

\begin{proof}

Recall from \S \ref{section:genrelalt2} that $T = T_1 \cup T_2 \cup T_3$ where 
\begin{align*} 
T_1 &= \SetLong{$(x \wedge y) \wedge (z \wedge w)$}{$x,y,z,w \in \cB$, $x \prec y$, $z \prec w$, $x \wedge y \prec z \wedge w$,\\ and $\omega(x,z)=\omega(x,w)=\omega(y,z)=\omega(y,w)=0$}{,}{and $\omega(x,z)=\omega(x,w)=\omega(y,z)=\omega(y,w)=0$}\\
T_2 &= \SetLong{$(a \wedge a_i) \wedge (a' \wedge b_i)$}{$1 \leq i \leq g$, $a \in \cB \setminus \{a_i\}$, $a' \in \cB \setminus \{b_i\}$, \\ and $\omega(a,a')=\omega(a_i,a')=\omega(b_i,a)=0$}{,}{and $\omega(a,a')=\omega(a_i,a')=\omega(b_i,a)=0$}\\
T_3 &= \Set{$(a_i \wedge a_j) \wedge (b_i \wedge b_j)$, $(a_i \wedge b_j) \wedge (b_i \wedge a_j)$}{$1 \leq i < j \leq g$}.
\end{align*}
Moreover, $\wedge^2((\wedge^2 H)/\Q)$ is the $\Q$-vector space with generators $T$ subject to the relations
\[R = \Set{$\sum\nolimits_{i=1}^g (a_i \wedge b_i) \wedge (x \wedge y)$}{$x,y \in \cB$, $x \prec y$}.\]
Here each element of $R$ should be interpreted as a linear combination of elements of $T$ 
using Convention \ref{convention:alttconvention}.
Another way of stating this is that $\wedge^2((\wedge^2 H)/\Q)$ is the quotient of\footnote{Here
the $\Span{T_i}$ are subspaces of $\wedge^2((\wedge^2 H)/\Q)$, so possibly some relations in $R$
already hold in $\Span{T_1} \oplus \Span{T_2} \oplus \Span{T_3}$; indeed, as we will see this is in
fact the case.} $\Span{T_1} \oplus \Span{T_2} \oplus \Span{T_3}$
by the span of elements corresponding to $R$.  We have:

\begin{claim}{1}
\label{claim:allrelationst12}
Each $r \in R$ corresponds to an element of $\Span{T_1} \oplus \Span{T_2}$.
\end{claim}
\begin{proof}[Proof of claim]
Consider $x,y \in \cB$ with $x \prec y$, so we have an element
\[r = \sum\nolimits_{i=1}^g (a_i \wedge b_i) \wedge (x \wedge y) \in R.\]
There are two cases.  The first is that $\omega(x,y) \neq 0$, so $(x,y) = (a_k,b_k)$ for some $1 \leq k \leq g$.
In this case, $r$ is actually an element of $\Span{T_1}$, or rather a linear combination of elements of $T_1$ that
vanishes in $\Span{T_1}$.  The point
here is that our convention is that the term $(a_k \wedge a_k) \wedge (a_k \wedge b_k)$ is deleted, and
the rest of the terms clearly lie in $T_1$.  

The second case is that $\omega(x,y) = 0$.  There are a number of cases, so we will explain how
to deal with $x = a_k$ and $y = a_{\ell}$ for some $1 \leq k < \ell \leq g$.  The other cases are similar (but
with slightly different notation).  We have
\begin{align*}
r &= \sum\nolimits_{i=1}^g (a_i \wedge b_i) \wedge (a_k \wedge a_{\ell}) \\ 
  &= (a_k \wedge b_k) \wedge (a_k \wedge a_{\ell}) + (a_{\ell} \wedge b_{\ell}) \wedge (a_k \wedge a_{\ell}) + \sum_{\substack{1 \leq i \leq g \\ i \neq k,\ell}} \blue{(a_i \wedge b_i) \wedge (a_k \wedge a_{\ell})}.
\end{align*}
The blue terms lie in $\Span{T_1}$, while the remaining terms lie in $\Span{T_2}$ since
\begin{align*}
&(a_k \wedge b_k) \wedge (a_k \wedge a_{\ell}) + (a_{\ell} \wedge b_{\ell}) \wedge (a_k \wedge a_{\ell}) \\
=&(a_{\ell} \wedge a_k) \wedge (a_k \wedge b_k) - (a_k \wedge a_{\ell}) \wedge (a_{\ell} \wedge b_{\ell}).
\end{align*}
The claim follows.
\end{proof}

The set $T_2$ is the disjoint
union of the $T_2(a \Cdot a')$ as $a \Cdot a'$ ranges over elements of 
$S^2_0(\cB) = \Set{$a \Cdot a' \in \Sym^2(H)$}{$a,a' \in \cB$, $\omega(a,a') = 0$}$.  
Using the above claim, we deduce that
$\Span{T_1,T_2}$ is the quotient of the direct sum
\[\Span{T_1} \oplus \bigoplus_{s \in S^2_0(\cB)} \Span{T_2(s)}\]
by the subspace generated by the relations in $R$.  The subspace $\cK_g^a$ is the kernel of the symmetric
contraction $\fc\colon \wedge^2((\wedge^2 H)/\Q) \rightarrow \Sym^2(H)$.  
The symmetric contraction $\fc$ vanishes on $T_1$, and for $s \in S^2_0(\cB)$ it
takes elements of $T_2(s)$ to $s$ (see \S \ref{section:analyzet2}).
Since $S^2_0(\cB)$ is a linearly independent subset of $\Sym^2(H)$, 
we deduce that $\cK_g^a \cap \Span{T_1,T_2}$ is the quotient of 
\begin{equation}
\label{eqn:decomposet1t2}
\Span{T_1} \oplus \bigoplus_{s \in S^2_0(\cB)} \cK_g^a \cap \Span{T_2(s)}
\end{equation}
by the relations in $R$.  We remark that the relations in $R$ must lie in the above
direct sum since otherwise they would map to nontrivial elements of $\Sym^2(H)$ under $\fc$.

We proved in Lemma \ref{lemma:altspans1} that $\Phi$ restricts to an isomorphism
between $\Span{S_1}$ and $\Span{T_1}$.  For $s \in S^2_0(\cB)$, recall that $S_2(s)$ is a
vector space.  We proved in Lemma \ref{lemma:altlists2} that $\Phi$ restricts to an
isomorphism between $S_2(s)$ and $\cK_g^a \cap \Span{T_2(s)}$.  From this and in light of the
previous paragraph,\footnote{In particular, the decomposition \eqref{eqn:decomposet1t2}.} to prove that $\Phi$
restricts to an isomorphism between $\Span{S_1,S_2}$ and $\Span{T_1,T_2}$, it is 
enough to prove that each relation in $R$ lifts to a relation in $\Span{S_1,S_2}$.

The relations of the form
\[r = \sum\nolimits_{i=1}^g (a_i \wedge b_i) \wedge (a_k \wedge b_k)\]
for some $1 \leq k \leq g$ are relations between elements of $T_1$, and since
$\Phi$ restricts to an isomorphism between $\Span{S_1}$ and $\Span{T_1}$ these
relations lift\footnote{See the proof of Lemma \ref{lemma:altspans1} for explicit lifts.} 
to relations in $\Span{S_1}$.  We must therefore only deal with the relations in the following claim:

\begin{claim}{2}
Consider $x,y \in \cB$ with $x \prec y$.  Then the relation
\[r = \sum\nolimits_{i=1}^g (a_i \wedge b_i) \wedge (x \wedge y)\]
lifts to a relation in $\fK_g^a$.
\end{claim}
\begin{proof}[Proof of claim]
We will give the details for $(x,y) = (a_1,a_2)$.
The other cases are similar but require worse notation.  Write our relation as
\begin{align*}
r &= \sum\nolimits_{i=1}^g (a_i \wedge b_i) \wedge (a_1 \wedge a_2) \\
  &= -(a_1 \wedge b_1) \wedge (a_2 \wedge a_1) - (a_1 \wedge a_2) \wedge (a_2 \wedge b_2)+ \sum\nolimits_{i=3}^g \blue{(a_i \wedge b_i) \wedge (a_1 \wedge a_2)}.
\end{align*}
The blue sum lifts to the following element of $\Span{S_1}$:
\[\sum\nolimits_{i=3}^g \BPresA{a_i \wedge b_i,a_1 \wedge a_2}.\]
We claim that the remaining part lifts to the following element of $\Span{S_1,S_2}$:
\begin{equation}
\label{eqn:partoflift}
-\PresA{a_1 \wedge (b_1+a_2),a_2 \wedge (a_1+b_2)} + \BPresA{a_1 \wedge b_1,a_2 \wedge b_2}.
\end{equation}
To see this, note that $\Phi$ maps \eqref{eqn:partoflift} to\footnote{Part of this calculation is that $(a_1 \wedge a_2) \wedge (a_2 \wedge a_1) = 0$.}
\begin{align*}
&-(a_1 \wedge (b_1+a_2)) \wedge (a_2 \wedge (a_1+b_2)) + (a_1 \wedge b_1) \wedge (a_2 \wedge b_2) \\
=&-(a_1 \wedge b_1) \wedge (a_2 \wedge a_1) - (a_1 \wedge a_2) \wedge (a_2 \wedge a_1) -(a_1 \wedge b_1) \wedge (a_2 \wedge b_2) \\
 &- (a_1 \wedge a_2) \wedge (a_2 \wedge b_2) + (a_1 \wedge b_1) \wedge (a_2 \wedge b_2) \\
=&-(a_1 \wedge b_1) \wedge (a_2 \wedge a_1) - (a_1 \wedge a_2) \wedge (a_2 \wedge b_2),
\end{align*}
as desired.

Combining the above lifts, we see that the relation in $\fK_g^a$ we must verify is
\begin{equation}
\label{eqn:r2liftalt}
0 = -\PresA{a_1 \wedge (b_1+a_2),a_2 \wedge (a_1+b_2)} + \BPresA{a_1 \wedge b_1,a_2 \wedge b_2} + \sum\nolimits_{i=3}^g \BPresA{a_i \wedge b_i,a_1 \wedge a_2}.
\end{equation}
For this, note that $\{a_1,b_1+a_2,a_2,a_1+b_2,a_3,b_3,\ldots,a_g,b_g\}$ is a symplectic basis.  In $(\wedge^2 H)/\Q$, we therefore
have
\[a_1 \wedge (b_1+a_2) + a_2 \wedge (a_1+b_2) + a_3 \wedge b_3 + \cdots + a_g \wedge b_g = 0.\]
By plugging this into its first term, we calculate that $\PresA{a_1 \wedge (b_1+a_2),a_2 \wedge (a_1+b_2)}$ equals
\begin{align*}
&-\PresA{a_2 \wedge (a_1+b_2),a_2 \wedge (a_1+b_2)} - \sum\nolimits_{i=3}^g \PresA{a_i \wedge b_i,a_2 \wedge (a_1+b_2)} \\
=&-\sum\nolimits_{i=3}^g \left(\BPresA{a_i \wedge b_i,a_2 \wedge a_1} + \BPresA{a_i \wedge b_i,a_2 \wedge b_2}\right) \\
=&- \left(\sum\nolimits_{i=3}^g \BPresA{a_i \wedge b_i,a_2 \wedge b_2}\right) + \left(\sum\nolimits_{i=3}^g \BPresA{a_i \wedge b_i,a_1 \wedge a_2}\right).
\end{align*}
In $(\wedge^2 H)/\Q$, we have $\sum\nolimits_{i=1}^g a_i \wedge b_i = 0$.  Plugging this into the first term of our formula, the formula becomes
\begin{align*}
&\BPresA{a_1 \wedge b_1,a_2 \wedge b_2} + \BPresA{a_2 \wedge b_2,a_2 \wedge b_2} + \sum\nolimits_{i=3}^g \BPresA{a_i \wedge b_i,a_1 \wedge a_2} \\
=&\BPresA{a_1 \wedge b_1,a_2 \wedge b_2} + \sum\nolimits_{i=3}^g \BPresA{a_i \wedge b_i,a_1 \wedge a_2}.
\end{align*}
Since this equals $\PresA{a_1 \wedge (b_1+a_2),a_2 \wedge (a_1+b_2)}$, the relation \eqref{eqn:r2liftalt} follows.
\end{proof}

This completes the proof of the lemma.
\end{proof}

\subsection{Relations in \texorpdfstring{$T_3$}{T3}}

We extract a useful consequence of the above proof:

\begin{lemma}
\label{lemma:quotientdim}
The quotient of $\wedge^2((\wedge^2 H)/\Q)$ by $\Span{T_1,T_2}$ has dimension $g(g-1)$.
\end{lemma}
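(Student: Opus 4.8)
\textbf{Proof proposal for Lemma \ref{lemma:quotientdim}.}

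The plan is to compute the dimension of the quotient $\wedge^2((\wedge^2 H)/\Q)\big/\Span{T_1,T_2}$ directly, using the generators-and-relations description of $\wedge^2((\wedge^2 H)/\Q)$ established in \S\ref{section:genrelalt2}. Recall that $T=T_1\cup T_2\cup T_3$ generates $\wedge^2((\wedge^2 H)/\Q)$ subject to the relations $R$, and that in the proof of Lemma \ref{lemma:altspans2} (Claim 1) it was shown that every relation $r\in R$ corresponds to an element of $\Span{T_1}\oplus\Span{T_2}$. Consequently, quotienting $\wedge^2((\wedge^2 H)/\Q)$ by $\Span{T_1,T_2}$ kills all the relations in $R$ along with the subspaces $\Span{T_1}$ and $\Span{T_2}$, and the quotient is simply the free $\Q$-vector space on the set $T_3$ modulo nothing. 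Hence the dimension equals $|T_3|$.

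The main step is therefore to count $T_3$. By definition,
\[
T_3 = \Set{$(a_i \wedge a_j) \wedge (b_i \wedge b_j)$, $(a_i \wedge b_j) \wedge (b_i \wedge a_j)$}{$1 \leq i < j \leq g$},
\]
so there are two elements for each pair $1\le i<j\le g$, giving $2\binom{g}{2}=g(g-1)$ elements in total. To conclude, I need to verify that these $g(g-1)$ elements remain linearly independent in the quotient, i.e.\ that no nontrivial linear combination of elements of $T_3$ lies in $\Span{T_1,T_2}$ plus the span of the relations $R$. Since $\Span{T_1}\oplus\Span{T_2}\oplus\Span{T_3}$ is the free vector space on $T=T_1\sqcup T_2\sqcup T_3$, and the relations $R$ all lie in $\Span{T_1,T_2}$ by Claim 1 of Lemma \ref{lemma:altspans2}, the image of $T_3$ in the quotient is a basis: any linear dependence among the images of $T_3$ would pull back to an element of $\Span{T_3}$ lying in $\Span{T_1,T_2}+\Span{R}=\Span{T_1,T_2}$, which forces it to be zero since $\Span{T_3}\cap\Span{T_1,T_2}=0$ in the free vector space.

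The only genuinely delicate point is making sure that the description "$\wedge^2((\wedge^2 H)/\Q)$ is the free vector space on $T$ modulo $R$, with $R\subset\Span{T_1,T_2}$" is correctly invoked: this is exactly what was set up in \S\ref{section:genrelalt2} and refined in Claim 1 of the proof of Lemma \ref{lemma:altspans2}, so no new work is needed there. I would also double-check that no two distinct elements of $T_3$ coincide (they do not, since the pairs $(i,j)$ are ordered with $i<j$ and the two families are visibly distinct because $\omega(a_i,b_i)=1$ distinguishes $(a_i\wedge a_j)\wedge(b_i\wedge b_j)$ from $(a_i\wedge b_j)\wedge(b_i\wedge a_j)$), and that none of them vanishes in $\wedge^2((\wedge^2 H)/\Q)$ — this last point is flagged in Warning \ref{warning:alternating} and the footnote in Lemma \ref{lemma:altspans1}, where it is noted that elements like $(a_k\wedge b_k)\wedge(a_k\wedge a_\ell)$ are nonzero despite a repeated basis vector. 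I expect the entire argument to be short; the "obstacle," such as it is, is purely bookkeeping: confirming that all of $R$ has already been absorbed into $\Span{T_1,T_2}$ so that the quotient really is free on $T_3$.
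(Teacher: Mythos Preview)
Your proposal is correct and follows essentially the same approach as the paper's proof: both invoke Claim \ref{claim:allrelationst12} from the proof of Lemma \ref{lemma:altspans2} to conclude that all relations $R$ lie in $\Span{T_1,T_2}$, so the quotient is free on $T_3$, and then count $|T_3|=2\binom{g}{2}=g(g-1)$. The paper's version is more terse, omitting the explicit verification that the elements of $T_3$ are distinct and nonzero, but the argument is the same.
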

\begin{proof}
Claim \ref{claim:allrelationst12} of the proof of Lemma \ref{lemma:altspans2} says
that all relations between $T = T_1 \cup T_2 \cup T_3$ are actually relations between $T_1 \cup T_2$.
Since $T$ generates $\wedge^2((\wedge^2 H)/\Q)$, it follows that the indicated quotient
has dimension $|T_3|$.  Since
\[T_3 = \Set{$(a_i \wedge a_j) \wedge (b_i \wedge b_j)$, $(a_i \wedge b_j) \wedge (b_i \wedge a_j)$}{$1 \leq i < j \leq g$},\]
the set $T_3$ has cardinality $2 \binom{g}{2} = g(g-1)$.  The lemma follows.
\end{proof}

\subsection{The set \texorpdfstring{$S_{12}$}{S12}}

Recall that
\[S_{12} = \bigcup_{a \Cdot a' \in S^2_0(\cB)} \pFix{a,a'}.\]
Our goal in the rest of Part \ref{part:alt} is to prove Theorem \ref{maintheorem:presentationalt}, which says that
$\Phi$ is an isomorphism from $\fK_g^a = \Span{S} = \Span{S_{12},S_3}$ to
\[\cK_g^a \subset \wedge^2((\wedge^2 H)/\Q) = \Span{T_1,T_2,T_3}.\]
We close this section by proving the following partial result in this direction:

\begin{lemma}
\label{lemma:altspans12}
The linearization map $\Phi$ takes $\Span{S_{12}}$ isomorphically to $\cK_g^a \cap \Span{T_1,T_2}$.
\end{lemma}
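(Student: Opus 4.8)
The plan is to assemble the result from the three lemmas already in hand: Lemma \ref{lemma:altspans1} (that $\Phi$ takes $\Span{S_1}$ isomorphically onto $\Span{T_1}$), Lemma \ref{lemma:altspans2} (that $\Phi$ takes $\Span{S_1,S_2}$ isomorphically onto $\cK_g^a \cap \Span{T_1,T_2}$), and Lemma \ref{lemma:altlists2}(b) together with the definition $S_{12} = \bigcup_{a \Cdot a' \in S^2_0(\cB)} \pFix{a,a'}$ and $S_2 = \bigcup_{a,a'} S_2(a \Cdot a')$. The key point is that $S_1$ and $S_2$ together span all of $\Span{S_{12}}$: by definition $S_{12}$ is the union of the subspaces $\pFix{a,a'}$ over $a \Cdot a' \in S^2_0(\cB)$ (the case $a = a'$ being included, since then $a \Cdot a' \in S^2_0(\cB)$ as $\omega(a,a)=0$), and Lemma \ref{lemma:altlists2}(b) says $\Span{S_1(a \Cdot a'),S_2(a \Cdot a')} = \pFix{a,a'}$ for each such pair. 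Since each $S_1(a \Cdot a')$ consists (up to sign) of elements of $S_1$ and each $S_2(a \Cdot a') \subset S_2$, taking the union over all $a \Cdot a' \in S^2_0(\cB)$ gives $\Span{S_1,S_2} = \Span{S_{12}}$.

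First I would record this spanning statement: $\Span{S_{12}} = \Span{S_1,S_2}$. This requires only unwinding the definitions above — one inclusion is immediate because $S_1(a \Cdot a') \cup S_2(a \Cdot a') \subset \pFix{a,a'} \subset S_{12}$ for every $a \Cdot a'$, hence $\Span{S_1,S_2} \subset \Span{S_{12}}$; the reverse inclusion is exactly Lemma \ref{lemma:altlists2}(b) applied term by term, using that every generator of $S_{12}$ lies in some $\pFix{a,a'}$ with $a \Cdot a' \in S^2_0(\cB)$ and hence in $\Span{S_1(a \Cdot a'),S_2(a \Cdot a')}$. The only mild care needed is the orientation/sign bookkeeping: an element of $S_1(a \Cdot a')$ is of the form $\BPresA{a \wedge x,a' \wedge y}$ and may differ by a sign from the corresponding element of $S_1$ (which is indexed by the total order $\prec$ with the larger wedge second); but a sign does not affect spans, so $\Span{S_1(a \Cdot a')} \subset \Span{S_1}$ and we are fine.

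Then I would simply invoke Lemma \ref{lemma:altspans2}: since $\Span{S_{12}} = \Span{S_1,S_2}$, the map $\Phi$ restricts on $\Span{S_{12}}$ to the isomorphism $\Span{S_1,S_2} \xrightarrow{\cong} \cK_g^a \cap \Span{T_1,T_2}$ provided by that lemma. This gives the claim. Concretely the proof is about four lines: state $\Span{S_{12}} = \Span{S_1,S_2}$ with the two-inclusion argument of the previous paragraph, cite Lemma \ref{lemma:altlists2}(b) for the nontrivial inclusion, and then quote Lemma \ref{lemma:altspans2}.

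I do not expect a serious obstacle here; this lemma is a repackaging step. The one thing to be vigilant about is making sure the index set matches: $S_{12}$ is a union over pairs $a,a' \in \cB$ with $\omega(a,a')=0$, and $S^2_0(\cB) = \Set{$a \Cdot a'$}{$a,a' \in \cB$, $\omega(a,a')=0$}$ is precisely the set of products arising from exactly these pairs, so the decomposition $S_{12} = \bigcup_{a \Cdot a' \in S^2_0(\cB)} \pFix{a,a'}$ used here is literally the definition and no case (such as $a = a'$, or the unordered-pair identification $a \Cdot a' = a' \Cdot a$) gets lost. If one wanted to be maximally explicit one could note that $\pFix{a,a'} = \pFix{a',a}$ (Lemma \ref{lemma:isotropicbilineareasy}(a)), so the union is genuinely over unordered pairs and matches the indexing of $S^2_0(\cB)$; but this is a parenthetical remark rather than a real step.
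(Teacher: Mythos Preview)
Your proposal is correct and follows essentially the same argument as the paper: reduce to showing $\Span{S_{12}} = \Span{S_1,S_2}$ via Lemma \ref{lemma:altlists2}(b) applied to each $\pFix{a,a'}$, then invoke Lemma \ref{lemma:altspans2}. Your remarks on sign bookkeeping and the unordered-pair indexing are accurate and slightly more explicit than the paper's own version.
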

\begin{proof}
Lemma \ref{lemma:altspans2} says that $\Phi$ takes $\Span{S_1,S_2}$ isomorphically to
$\cK_g^a \cap \Span{T_1,T_2}$, so it is enough to prove that $\Span{S_{12}} = \Span{S_1,S_2}$.
For $a \Cdot a' \in S^2_0(\cB)$,
we proved in Lemma \ref{lemma:altlists2} that $S_1(a \Cdot a') \cup S_2(a \Cdot a')$ spans
$\pFix{a,a'}$.  For $i=1,2$, we have
\[S_i = \bigcup_{a \Cdot a' \in S^2_0(\cB)} S_i(a \Cdot a').\]
This is a disjoint union for $i=2$, but the $S_1(a \Cdot a')$ for different $a \Cdot a' \in S^2_0(\cB)$
overlap.  Combining these two facts, we see that
\[\Span{S_1,S_2} = \Span{\bigcup\nolimits_{a \Cdot a' \in S^2_0(\cB)} S_1(a \Cdot a') \cup S_2(a \Cdot a')} = \Span{\bigcup\nolimits_{a \Cdot a' \in S^2_0(\cB)} \pFix{a,a'}} = \Span{S_{12}}.\qedhere\]
\end{proof}

\section{Symmetric kernel, alternating version VII: structure of \texorpdfstring{$S_3$}{S3}}
\label{section:presentationalt7}

We will continue using all the notation from \S \ref{section:presentationalt1} -- \S \ref{section:presentationalt6}.
Having proved in Lemma \ref{lemma:altspans12} that $\Phi$ takes $\Span{S_{12}}$ isomorphically
to $\cK_g^a \cap \Span{T_1,T_2}$, our remaining task in Part \ref{part:alt} is to extend this
to $\Span{S} = \Span{S_{12},S_3}$ and prove Theorem \ref{maintheorem:presentationalt}.  We
will do this in \S \ref{section:presentationalt9}.  This section and the next one contain
some preliminary results about $S_3$.

\subsection{Quotients}

Define 
\begin{align*}
\fT_g &= \fK_g^a / \Span{S_{12}}, \\
\cT_g &= \cK_g^a / (\cK_g^a \cap \Span{T_1,T_2}).
\end{align*}
Lemma \ref{lemma:altspans12} says that the linearization map $\Phi\colon \fK_g^a \rightarrow \cK_g^a$ 
takes $\Span{S_{12}}$ isomorphically
to $\cK_g^a \cap \Span{T_1,T_2}$.  It follows that $\Phi$ descends to a map $\oPhi\colon \fT_g \rightarrow \cT_g$.
Our goal is to prove that $\Phi$ is an isomorphism (Theorem \ref{maintheorem:presentationalt}).
Since $\Phi$ restricts to an isomorphism from $\Span{S_{12}}$ to $\cK_g^a \cap \Span{T_1,T_2}$,
this is equivalent to proving that $\oPhi$ is an isomorphism.  That $\oPhi$ is surjective is easy:

\begin{lemma}
\label{lemma:quotientsurjective}
The map $\oPhi\colon \fT_g \rightarrow \cT_g$ is surjective.
\end{lemma}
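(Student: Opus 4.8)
\textbf{Proof proposal for Lemma \ref{lemma:quotientsurjective}.}

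The plan is to show that $\oPhi$ hits a generating set of $\cT_g$, and the natural choice is the image of $T_3$, since by Lemma \ref{lemma:quotientdim} the quotient $\cT_g \subseteq \wedge^2((\wedge^2 H)/\Q)/\Span{T_1,T_2}$ is spanned by the classes of the elements $(a_i \wedge a_j) \wedge (b_i \wedge b_j)$ and $(a_i \wedge b_j) \wedge (b_i \wedge a_j)$ for $1 \leq i < j \leq g$ (we need only those classes lying in $\cT_g$, i.e.\ in the image of $\cK_g^a$, but it suffices to exhibit preimages for enough of them). So the first step is: for each such generator $t \in T_3$, produce an element $\eta \in \fK_g^a$ with $\Phi(\eta) \equiv t$ modulo $\Span{T_1,T_2}$. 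The obvious candidate is an element of $S_3$, which was built precisely for this purpose: $\OPresA{(a_i-b_j) \wedge (b_i-a_j),\ \cdot\ }$-type generators expand, after applying the linearization map, into $(a_i \wedge b_i) \wedge(\cdots)$ plus cross terms. Concretely, for the generator $\OPresA{(a_i - b_j) \wedge (b_i-a_j), \cdot}$ one computes $\Phi$ of the associated element of $\Fix{a_i-b_j,b_i-a_j}$ and checks that, modulo $\Span{T_1, T_2}$, it equals (a sign times) an element of $T_3$ up to terms that are themselves in $T_1 \cup T_2$.

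Second step: carry out that bookkeeping. Expanding $(a_i - b_j) \wedge (b_i - a_j)$ as a wedge inside $(\wedge^2 H)/\Q$ gives $a_i \wedge b_i - a_i \wedge a_j - b_j \wedge b_i + b_j \wedge a_j$; wedging two such expressions (carefully, per Warning \ref{warning:alternating}, not treating this as $\wedge^4 H$) produces a sum whose terms are each of the shape $(x \wedge y) \wedge (z \wedge w)$ with $x,y,z,w \in \cB$. Each such term lands in $T_1$ (if all four pairings vanish), in $T_2$ (if exactly one pair is Darboux-dual), or in $T_3$ (the $(a_i \wedge a_j)\wedge(b_i\wedge b_j)$ and $(a_i\wedge b_j)\wedge(b_i\wedge a_j)$ terms). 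Reading off the $T_3$-components shows the class of $\Phi$ of the chosen $S_3$-element in $\cT_g$ is a nonzero combination of the two $T_3$-generators attached to the index pair $(i,j)$; combining with the analogous element coming from a different choice (e.g.\ swapping roles via $\SymSp_g$, using Lemma \ref{lemma:altsymspacts}) lets one solve for each $T_3$-generator individually. Since these generators span $\cT_g$ by Lemma \ref{lemma:quotientdim}, surjectivity of $\oPhi$ follows.

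I expect the only real obstacle to be the term-tracking in the second step: one must be disciplined about signs and about the fact that $\wedge^2((\wedge^2 H)/\Q)$ is not $\wedge^4 H$, so that no spurious cancellations or rearrangements are assumed. But this is routine once organized — there are at most sixteen terms per wedge, and all but the two $T_3$-terms are explicitly absorbed into $\Span{T_1,T_2}$, which is exactly what we quotient by. No deep input is needed beyond Lemmas \ref{lemma:altspans12} and \ref{lemma:quotientdim} and the definition of $S_3$; in particular injectivity of $\oPhi$ (the genuinely hard direction, deferred to \S\ref{section:presentationalt9}) plays no role here.
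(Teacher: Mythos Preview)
Your proposal has a genuine gap. The key error is the goal of ``solving for each $T_3$-generator individually'' in the quotient. The image of $\Phi$ lies in $\cK_g^a$, so the image of $\oPhi$ lies in $\cT_g$, which is a \emph{proper} subspace of $V := \wedge^2((\wedge^2 H)/\Q)/\Span{T_1,T_2}$ (it has codimension $g$ by Lemmas \ref{lemma:quotientdim} and \ref{lemma:ctgdim}). But the individual $T_3$-classes are \emph{not} in $\cT_g$: for instance,
\[
\fc\bigl((a_i \wedge a_j),\,(b_i \wedge b_j)\bigr) = a_i \Cdot b_i + a_j \Cdot b_j, \qquad
\fc\bigl((a_i \wedge b_j),\,(b_i \wedge a_j)\bigr) = a_j \Cdot b_j - a_i \Cdot b_i,
\]
and neither of these lies in $\fc(\Span{T_1,T_2}) = \Span{S_0^2(\cB)}$ (Lemma \ref{lemma:imagect2}), since $a_i \Cdot b_i$ and $a_j \Cdot b_j$ are not in $S_0^2(\cB)$. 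So no element of $\fK_g^a$ can map, modulo $\Span{T_1,T_2}$, to a single $T_3$-generator, and no linear combination of the two $T_3$-generators for a fixed pair $(i,j)$ lies in $\cT_g$ either. Your second step therefore cannot conclude what you want; you would instead need to identify a spanning set for $\cT_g$ itself (which involves combinations across several index pairs) and hit those, a substantially more delicate computation.

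The paper sidesteps all of this: it observes that $\cK_g^a$ is an irreducible algebraic representation of $\Sp_{2g}(\Z)$ (the one with highest weight corresponding to the partition $2+1+1$), and since $\Phi$ is $\Sp_{2g}(\Z)$-equivariant and nonzero, its image must be all of $\cK_g^a$. Surjectivity of $\oPhi$ follows immediately. This is a one-line argument and avoids any term-tracking.
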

\begin{proof}
This follows from the fact that $\Phi\colon \fK_g^a \rightarrow \cK_g^a$ is surjective.  This could be proved directly, but
another approach is to note that
\[\cK_g^a = \ker(\wedge^2((\wedge^2 H)/\Q) \stackrel{\fc}{\longrightarrow} \Sym^2(H))\]
is an irreducible algebraic representation of $\Sp_{2g}(\Z)$.  Such representations are indexed by
partitions with at most $g$ parts (see \cite[\S 17]{FultonHarris}), and $\cK_g^a$ is the one
corresponding to the partition $2+1+1$.  Since $\Phi$ is not the zero map, its image is a nonzero
subrepresentation of the irreducible representation $\cK_g^a$, and hence its image must
be $\cK_g^a$.
\end{proof}

\subsection{Dimension of target}

We now prove:

\begin{lemma}
\label{lemma:ctgdim}
The vector space $\cT_g$ is $g(g-2)$-dimensional.
\end{lemma}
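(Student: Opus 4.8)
The aim is to compute $\dim \cT_g = \dim(\cK_g^a) - \dim(\cK_g^a \cap \Span{T_1,T_2})$, and the cleanest route is to avoid computing either of these two quantities directly and instead work entirely inside the quotient. I would start from Lemma \ref{lemma:quotientdim}, which says that the quotient $Q := \wedge^2((\wedge^2 H)/\Q) / \Span{T_1,T_2}$ has dimension $g(g-1)$, with the classes of $T_3$ as a basis. Then $\cT_g$ is exactly the image of $\cK_g^a$ in $Q$, i.e.\ the image of $\ker(\fc)$ under the quotient map $q\colon \wedge^2((\wedge^2 H)/\Q) \twoheadrightarrow Q$. By the surjectivity statement built into Lemma \ref{lemma:imagect2} (the symmetric contraction $\fc$ takes $\Span{T_1,T_2}$ onto $\Span{S^2_0(\cB)}$), the map $\fc$ descends to a well-defined linear map $\bar\fc\colon Q \to \Sym^2(H)/\Span{S^2_0(\cB)}$, and a diagram chase shows that $q(\cK_g^a) = \ker(\bar\fc)$. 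Hence $\dim \cT_g = \dim Q - \dim(\Image \bar\fc)$.

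Next I would identify $\Image \bar\fc$. We have $\Image\bar\fc = q'(\Image \fc)$ where $q'\colon \Sym^2(H) \twoheadrightarrow \Sym^2(H)/\Span{S^2_0(\cB)}$. The image of $\fc$ on all of $\wedge^2((\wedge^2 H)/\Q)$ is $\Sym^2(H)$ (since $\Sym^2(H)$ is irreducible and $\fc \neq 0$), so $\Image\bar\fc = \Sym^2(H)/\Span{S^2_0(\cB)}$, and its dimension is $\dim\Sym^2(H) - |S^2_0(\cB)|$. Here $\dim\Sym^2(H) = \binom{2g+1}{2} = g(2g+1)$, and $S^2_0(\cB) = \Set{$a \Cdot a'$}{$a,a' \in \cB$, $\omega(a,a')=0$}$ consists of all basis monomials $a\Cdot a'$ except the $g$ "diagonal-symplectic" ones $a_i \Cdot b_i$; since $|\Set{$a\Cdot a'$}{$a,a'\in\cB$}| = \binom{2g+1}{2} = g(2g+1)$, we get $|S^2_0(\cB)| = g(2g+1) - g = 2g^2$. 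Therefore $\dim(\Image\bar\fc) = g(2g+1) - 2g^2 = g$. Combining, $\dim\cT_g = g(g-1) - g = g(g-2)$.

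\textbf{Main obstacle.} The only genuinely nontrivial point is the diagram-chase claim $q(\cK_g^a) = \ker(\bar\fc)$: the inclusion $q(\cK_g^a)\subset\ker(\bar\fc)$ is immediate, but the reverse inclusion requires that every $v\in\wedge^2((\wedge^2 H)/\Q)$ with $\fc(v)\in\Span{S^2_0(\cB)}$ can be corrected by an element of $\Span{T_1,T_2}$ to land in $\cK_g^a$ — this is exactly the surjectivity of $\fc|_{\Span{T_1,T_2}}$ onto $\Span{S^2_0(\cB)}$ provided by Lemma \ref{lemma:imagect2}, so subtract a preimage. Once this is in place, the rest is the bookkeeping above. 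I would also double-check the count $|S^2_0(\cB)| = 2g^2$ by a small-$g$ sanity check and confirm the irreducibility/nonvanishing argument for $\Image\fc = \Sym^2(H)$ (alternatively one can see $\Sym^2(H)\subset\Image\fc$ directly from the formula for $\fc$, since $\fc(a_i\wedge a_j, a_i \wedge b_i) = a_i \Cdot a_j$ and similar identities realize every monomial $a\Cdot a'$, including the diagonal ones via $\fc(a_i\wedge a_j, b_i\wedge b_j)$).
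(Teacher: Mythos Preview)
Your proposal is correct and essentially the same as the paper's proof: both pass to the quotient $V = \wedge^2((\wedge^2 H)/\Q)/\Span{T_1,T_2}$ of dimension $g(g-1)$, use Lemma~\ref{lemma:imagect2} to get a well-defined induced contraction $\ofc\colon V \to \Sym^2(H)/\Span{S^2_0(\cB)}$, identify $\cT_g$ with $\ker(\ofc)$, and use surjectivity of $\fc$ (via irreducibility of $\Sym^2(H)$) to conclude $\dim\cT_g = g(g-1) - g$. The only cosmetic difference is that the paper computes $\dim W = g$ by directly exhibiting $\{a_i \Cdot b_i\}$ as a basis for the complement rather than computing $|S^2_0(\cB)| = 2g^2$.
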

\begin{proof}
The vector space $\cK_g^a$ is the kernel of the symmetric contraction
$\fc\colon \wedge^2((\wedge^2 H)/\Q) \rightarrow \Sym^2(H)$.  Let:
\begin{itemize}
\item $V$ be be the quotient of $\wedge^2((\wedge^2 H)/\Q)$ by $\Span{T_1,T_2}$; and
\item $W$ be the quotient of $\Sym^2(H)$ by $\fc(\Span{T_1,T_2})$.
\end{itemize}
The symmetric contraction induces a map $\ofc\colon V \rightarrow W$, and
$\cT_g$ is isomorphic to $\ker(\ofc)$.

Lemma \ref{lemma:quotientdim} says that $\dim(V) = g(g-1)$.  To calculate $\dim(W)$, note
that Lemma \ref{lemma:imagect2} implies that $\fc(\Span{T_1,T_2})$ is the subspace of
$\Sym^2(H)$ spanned by $\Set{$a \Cdot a'$}{$a,a' \in \cB$, $\omega(a,a')=0$}$.  This
implies that the set $\{a_1 \Cdot b_1,\ldots,a_g \Cdot b_g\}$ is a basis for a complement
to $\fc(\Span{T_1,T_2})$, so $\dim(W) = g$.

The map $\fc$ is surjective: this could be proved directly, but just like in the proof of Lemma \ref{lemma:quotientsurjective}
it also follows from the fact that $\Sym^2(H)$ is an irreducible algebraic representation
of $\Sp_{2g}(\Z)$.  The corresponding partition is simply $2$.  This implies that $\ofc$ is also
surjective.  Consequently,
\[\dim(\cT_g) = \dim(V) - \dim(W) = g(g-1) - g = g(g-2).\qedhere\]
\end{proof}

\subsection{Proof strategy}

Recall that we want to prove that $\oPhi\colon \fT_g \rightarrow \cT_g$ is an isomorphism.
Lemmas \ref{lemma:quotientsurjective} and \ref{lemma:ctgdim} say that $\oPhi$ is a surjective
map to a $g(g-2)$-dimensional vector space.  To prove that $\oPhi$ is an isomorphism, it is enough
to prove that $\fT_g$ is at most $g(g-2)$-dimensional.  We will do this via a calculation
involving generators and relations.  The rest of this section is devoted to constructing
a generating set for $\fT_g$.  We will then give some relations in $\fT_g$ in \S \ref{section:presentationalt8},
and complete the proof in \S \ref{section:presentationalt9}.

\subsection{Basic elements}

Recall that
\[S_3    = \bigcup_{\substack{1 \leq i,j \leq g \\ i \neq j}} \oFix{a_i-b_j,b_i-a_j}.\]
Elements of $S_3$ are written in orange.
For $\eta \in \fK_g^a$, let $\oeta$ be its image in $\fT_g = \fK_g^a/\Span{S_{12}}$.  For
$1 \leq i,j,k \leq g$ distinct, define
\[\Delta^i_{jk} = \overline{\OPresA{(a_i-b_j) \wedge (a_k - b_i),(b_i-a_j) \wedge (b_k - a_i)}} \in \fT_g.\]
We call $\Delta^i_{jk}$ a {\em basic element} of $\fT_g$.  These satisfy:

\begin{lemma}
\label{lemma:basiceasy}
For $1 \leq i,j,k \leq g$ distinct, we have 
$\Delta^i_{kj} = -\Delta^i_{jk}$.
\end{lemma}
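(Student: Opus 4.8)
\textbf{Proof proposal for Lemma \ref{lemma:basiceasy}.}

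The plan is to show that swapping the roles of $j$ and $k$ in the definition of $\Delta^i_{jk}$ amounts, after working modulo $\Span{S_{12}}$, to transposing the two entries of an anti-symmetric generator, which introduces the sign $-1$. First I would write out both definitions side by side:
\[
\Delta^i_{jk} = \overline{\OPresA{(a_i-b_j) \wedge (a_k - b_i),(b_i-a_j) \wedge (b_k - a_i)}},
\qquad
\Delta^i_{kj} = \overline{\OPresA{(a_i-b_k) \wedge (a_j - b_i),(b_i-a_k) \wedge (b_j - a_i)}}.
\]
The first step is to expand each of the four wedge-vectors appearing above using the linearity (in each slot) of the generators $\PresA{-,-}$ guaranteed by Theorem \ref{theorem:summarypresentation}. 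This rewrites each $\Delta$ as a finite $\Z$-linear combination of terms $\PresA{u \wedge v, u' \wedge v'}$ with $u,v,u',v' \in \cB$. The key point is to track which of these terms survive in the quotient $\fT_g = \fK_g^a/\Span{S_{12}}$: any term $\PresA{u \wedge v,u' \wedge v'}$ with $u,v,u',v' \in \cB$ and $\omega$ vanishing on all cross-pairs lies in some $\pFix{u,u'} \subset S_{12}$ (by the definition of $S_{12}$ in \S\ref{section:presentationalt2} together with Lemma \ref{lemma:isotropicbilineareasy}), so it maps to $0$ in $\fT_g$; only the terms involving a symplectic-paired component like $a_m \wedge b_m$ can contribute, and those are exactly the ones that reassemble into the $S_3$-type generators.

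The cleaner route, which I would actually carry out, avoids the full expansion: observe that the defining generator of $\Delta^i_{jk}$ lies in $\oFix{a_i-b_j,\,a_k-b_i}$-type data, and the anti-symmetry relation $\PresA{\kappa_2,\kappa_1} = -\PresA{\kappa_1,\kappa_2}$ for generators of $\fK_g^a$ gives
\[
\OPresA{(a_i-b_j) \wedge (a_k - b_i),(b_i-a_j) \wedge (b_k - a_i)}
= -\OPresA{(b_i-a_j) \wedge (b_k - a_i),(a_i-b_j) \wedge (a_k - b_i)}.
\]
Now I apply an element $f \in \SymSp_g$ that realizes the relabeling: take $f$ to fix all $a_m,b_m$ with $m \notin \{i,j,k\}$ and act on the $i,j,k$ block so that it sends $a_i \mapsto b_i$, $b_i \mapsto -a_i$, $a_j \mapsto b_k$, $b_j \mapsto -a_k$, $a_k \mapsto -b_j$, $b_k \mapsto a_j$ (one checks this preserves $\omega$ on the relevant pairs). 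Applying $f$ to the right-hand side above and using that $f$ preserves $\Span{S_{12}}$ (Lemma \ref{lemma:altsymspacts}), hence descends to $\fT_g$, I get that $\Delta^i_{jk}$ maps to $-\Delta^i_{kj}$ in $\fT_g$; equivalently $\Delta^i_{kj} = -\Delta^i_{jk}$.

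The main obstacle will be pinning down the correct element of $\SymSp_g$ (or the correct sequence of sign changes and coordinate permutations) that simultaneously (i) sends the first generator's data to the second generator's data up to the overall transposition and (ii) does so without spoiling the special-pair and sym-orthogonality conditions needed for everything in sight to be a genuine generator. There is genuine bookkeeping in verifying that $\omega\big((a_i-b_j)\wedge(a_k-b_i),\,(b_i-a_j)\wedge(b_k-a_i)\big)$-style contraction conditions are symmetric under $j \leftrightarrow k$ — but since $\fc$ is itself alternating and the whole configuration is symmetric in the two "paired" vectors by construction, this should reduce to a short sign check rather than a computation. If the direct $\SymSp_g$ argument proves awkward, the fallback is the brute-force expansion into $\cB$-generators described above, discarding the $S_{12}$ terms and matching the surviving terms, which is guaranteed to work but is less illuminating.
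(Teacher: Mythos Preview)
Your approach has a genuine gap: applying an element $f \in \SymSp_g$ to a representative of $\Delta^i_{jk}$ changes the element of $\fT_g$, so the chain of reasoning ``$\Delta^i_{jk}$ equals minus the swapped generator, and $f$ of the swapped generator equals a representative of $\Delta^i_{kj}$, therefore $\Delta^i_{jk}=-\Delta^i_{kj}$'' is not valid --- it only shows $f(\Delta^i_{jk})=-\Delta^i_{kj}$ (or some variant), not the desired equality. Moreover, your proposed $f$ does not do what you claim: for instance $f(b_i-a_j)=-a_i-b_k$, so the image involves $a_i+b_k$ rather than $a_i-b_k$.

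The entire $\SymSp_g$ manoeuvre (and the fallback brute-force expansion, and working modulo $\Span{S_{12}}$) is unnecessary. After the anti-symmetry step
\[
\PresA{(a_i-b_j)\wedge(a_k-b_i),\,(b_i-a_j)\wedge(b_k-a_i)}
= -\PresA{(b_i-a_j)\wedge(b_k-a_i),\,(a_i-b_j)\wedge(a_k-b_i)},
\]
you only need to reorder each wedge and absorb signs: $(b_i-a_j)\wedge(b_k-a_i)=(a_i-b_k)\wedge(b_i-a_j)=-(a_i-b_k)\wedge(a_j-b_i)$ and $(a_i-b_j)\wedge(a_k-b_i)=(b_i-a_k)\wedge(a_i-b_j)=-(b_i-a_k)\wedge(b_j-a_i)$. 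The two extra signs cancel, yielding exactly $-\PresA{(a_i-b_k)\wedge(a_j-b_i),\,(b_i-a_k)\wedge(b_j-a_i)}$, which is $-1$ times the defining representative of $\Delta^i_{kj}$. This is an exact identity in $\fK_g^a$ before passing to $\fT_g$, and it is precisely the paper's one-line proof.
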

\begin{proof}
Immediate from the fact that
\begin{small}
\[\OPresA{(a_i-b_j) \wedge (a_k - b_i),(b_i-a_j) \wedge (b_k - a_i)}
= -\OPresA{(a_i-b_k) \wedge (b_i - a_j),(b_i-a_k) \wedge (a_i - b_j)}.\qedhere\]
\end{small}%
\end{proof}

\subsection{Generation by basic elements}

We now prove:

\begin{lemma}
\label{lemma:basicgen}
The vector space $\fT_g$ is spanned by
$\Set{$\Delta^i_{jk}$}{$1 \leq i,j,k \leq g$ distinct}$.
\end{lemma}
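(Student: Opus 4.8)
The goal is to show that $\fT_g = \fK_g^a/\Span{S_{12}}$ is spanned by the basic elements $\Delta^i_{jk}$. By Lemma \ref{lemma:presentationaltgenset} we know $\fK_g^a = \Span{S} = \Span{S_{12},S_3}$, so $\fT_g$ is spanned by the images $\overline{\eta}$ of elements $\eta \in S_3$. By definition $S_3 = \bigcup_{i \neq j} \oFix{a_i-b_j,b_i-a_j}$, so it suffices to fix distinct $i,j$ and show that the image of every generator of $\Fix{a_i-b_j,b_i-a_j}$ in $\fT_g$ lies in the span of the $\Delta^{i'}_{j'k'}$. After relabeling by an element of $\fS_g \subset \SymSp_g$ (which acts on the basic elements by permuting indices, using Lemma \ref{lemma:basiceasy} and the $\fS_g$-equivariance of the whole setup), I would reduce to analyzing $\Fix{a_1-b_2,b_1-a_2}$.

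\textbf{Key steps.} First I would apply Lemma \ref{lemma:mapbilinear2} to identify $\Fix{a_1-b_2,b_1-a_2}$ (noting $(a_1-b_2,b_1-a_2)$ is an isotropic basis since $\omega(a_1-b_2,b_1-a_2)=0$ and these span a direct summand) with a quotient of the kernel of a pairing $\cV \otimes \cW \rightarrow \Q$, where $\cV = \Span{b_1-a_2}^{\perp}/\Span{a_1-b_2}$ and $\cW = \Span{a_1-b_2}^{\perp}/\Span{b_1-a_2}$, each free of rank $2g-2$ on explicit bases $A_V, A_W$ built from $\cB$. This exhibits $\Fix{a_1-b_2,b_1-a_2}$ as generated by elements $\PresA{(a_1-b_2)\wedge x,(b_1-a_2)\wedge y}$ as $x\otimes y$ ranges over a spanning set $X \cup Y$ of $\ker(\cV\otimes\cW\to\Q)$, exactly as in the lemmas of \S\ref{section:presentationalt3}. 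The pairs $x\otimes y \in X$ (with $\omega(x,y)=0$ and $x,y$ among the ``coordinate'' basis vectors) give generators whose image in $\fK_g^a$ already lies in $S_{12}$, hence vanish in $\fT_g$. The remaining generators, coming from $Y$, are (after the now-standard replacement of $Y$ by differences $(a_p-b_q)\otimes(b_p-a_q)$ and squares $(a_p+b_p)^{\otimes 2}$ used throughout \S\ref{section:presentationalt3}) of two types: squares $(a_p+b_p)^{\otimes 2}$, which lift to elements of $\Fix{-,-}$ with a repeated symplectic-pair entry and hence lie in $\Span{S}$ by Lemma \ref{lemma:s12twoa1weak} and in fact project into $\Span{S_{12}}$; and difference terms $(a_p-b_q)\otimes(b_p-a_q)$, which lift to $\OPresA{(a_1-b_2)\wedge(a_p-b_q),(b_1-a_2)\wedge(b_p-a_q)}$. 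For these last elements, I would expand each factor $a_1\wedge(a_p-b_q)$ etc.\ bilinearly using the linearity relations of Theorem \ref{theorem:summarypresentation}: most of the resulting terms land in $S_{12}$ (a wedge of two coordinate symplectic pairs) and die in $\fT_g$, while the single surviving term is, up to sign and an $\fS_g$-relabeling, precisely of the form $\overline{\OPresA{(a_{i}-b_{j})\wedge(a_{k}-b_{i}),(b_{i}-a_{j})\wedge(b_{k}-a_{i})}} = \Delta^{i}_{jk}$.

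\textbf{Main obstacle.} The delicate point is the same bookkeeping that recurs in \S\ref{section:presentationalt3}: one must choose the spanning set $Y$ of $\ker(\cV\otimes\cW\to\Q)$ and the replacement differences so that (a) every lift $\PresA{(a_1-b_2)\wedge z,(b_1-a_2)\wedge w}$ is genuinely a generator — i.e.\ both $(a_1-b_2)\wedge z$ and $(b_1-a_2)\wedge w$ are special pairs — and (b) after bilinear expansion only a single non-$S_{12}$ term survives, and that term is a $\Delta$ (rather than some more exotic orange element not yet known to be spanned). Getting the index pattern to match $\Delta^i_{jk}$ requires the specific shape $(a_i-b_j)\wedge(a_k-b_i)$, so the expansion must be arranged to leave exactly that combination. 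I expect this to be a finite, mechanical (if fiddly) case check, entirely parallel to the proofs of Lemmas \ref{lemma:twovsone1}--\ref{lemma:threevstwo2}; no new idea beyond careful index tracking should be needed, and Lemmas \ref{lemma:basiceasy} and \ref{lemma:altsymspacts} handle the symmetry reductions.
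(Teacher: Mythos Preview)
Your overall strategy matches the paper's: reduce to $\Fix{a_1-b_2,b_1-a_2}$ via $\fS_g$, apply Lemma \ref{lemma:mapbilinear2}, and show that a spanning set $X\cup Y$ for $\ker(\cV\otimes\cW\to\Q)$ lifts to elements that either lie in $\Span{S_{12}}$ (for $X$) or are $\Delta$'s (for $Y$). But the step you identify as the ``main obstacle'' is exactly where your plan breaks down.

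Your proposed resolution for the difference terms---expand $\PresA{(a_1-b_2)\wedge(a_p-b_q),(b_1-a_2)\wedge(b_p-a_q)}$ bilinearly and watch most pieces fall into $S_{12}$---does not work. The linearity relations in Theorem \ref{theorem:summarypresentation} only let you expand one slot against a fixed special pair in the other, and the expanded pieces must remain sym-orthogonal to that pair. If you try to split off, say, $a_1\wedge(a_p-b_q)$ from $(a_1-b_2)\wedge(a_p-b_q)$, then for $p,q\geq 3$ one computes
\[
\fc\bigl(a_1\wedge(a_p-b_q),\,(b_1-a_2)\wedge(b_p-a_q)\bigr) = (a_p-b_q)\Cdot(b_p-a_q) \neq 0,
\]
so the piece $\PresA{a_1\wedge(a_p-b_q),(b_1-a_2)\wedge(b_p-a_q)}$ is not even a generator of $\fK_g^a$. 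The same obstruction blocks the other natural expansions. (Your claim about the square terms is also off: Lemma \ref{lemma:s12twoa1weak} puts $\Fix{a_i+b_i,a_i+b_i}$ in $\Span{S}$, not $\Span{S_{12}}$, and its proof explicitly uses $S_3$; fortunately no squares are actually needed here.)

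What the paper does instead is choose $Y$ so that no expansion is needed: the lifts are already $\Delta$'s by definition. Concretely, with $A_V=\{b_1,a_2,a_3,b_3,\ldots\}$ and $A_W=\{a_1,b_2,a_3,b_3,\ldots\}$, one takes
\[
Y=\{(a_k-b_1)\otimes(b_k-a_1),\ (a_2-b_k)\otimes(b_2-a_k)\ :\ 3\leq k\leq g\}\cup\{(a_2-b_1)\otimes(b_2-a_1)\}.
\]
The first family lifts to $\OPresA{(a_1-b_2)\wedge(a_k-b_1),(b_1-a_2)\wedge(b_k-a_1)}$, whose image is $\Delta^1_{2k}$ \emph{verbatim} from the definition. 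The second family lifts to $-\OPresA{(a_2-b_1)\wedge(a_k-b_2),(b_2-a_1)\wedge(b_k-a_2)}$, which is $-\Delta^2_{1k}$. The final term lifts to $\PresA{(a_1-b_2)\wedge(a_2-b_1),(b_1-a_2)\wedge(b_2-a_1)}$, which vanishes because $(b_1-a_2)\wedge(b_2-a_1)=-(a_1-b_2)\wedge(a_2-b_1)$ and $\PresA{-,-}$ is alternating. So the key is not expansion but recognition: arrange the spanning set so each surviving term already has the precise shape $(a_i-b_j)\wedge(a_k-b_i)$ paired with $(b_i-a_j)\wedge(b_k-a_i)$.
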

\begin{proof}
Lemma \ref{lemma:presentationaltgenset} says that $\fK_g^a$ is spanned by $S = S_{12} \cup S_3$.
It follows that $\fT_g = \fK_g^a / \Span{S_{12}}$ is spanned by the image of $S_3$.  Fixing
some $1 \leq i,j \leq g$ distinct, it is therefore enough to prove that the image of
$\oFix{a_i-b_j,b_i-a_j}$ in $\fT_g$ is contained in the span of the indicated generating set.

In Lemma \ref{lemma:symmetrics}, we proved that the action of the symmetric group $\fS_g$ on
$\fK_g^a$ takes $\Span{S}$ to itself.  It follows from the proof of that lemma that this action
also takes $\Span{S_{12}}$ to itself, so we get an induced action of $\fS_g$ on $\fT_g$.  Applying
an appropriate of $\fS_g$, we reduce ourselves to proving that the image of $\oFix{a_1-b_2,b_1-a_2}$ 
is contained in the span of the indicated generating set.

We construct generators for $\oFix{a_1-b_2,b_1-a_2}$ in the now-familiar way and then show
that their images in $\fT_g$ are in the span of the indicated generating set.
Following the notation in \S \ref{section:identificationalt2}, define
\begin{alignat*}{8}
&&\cV &= &\Span{b_1-a_2}_{\Q}^{\perp}/\Span{a_1-b_2} &\cong &\Span{A_V}_{\Q} &\quad \text{with} \quad &A_V = \{b_1,a_2,a_3,b_3,\ldots,a_g,b_g\},&\\
&&\cW &= &\Span{a_1-b_2}_{\Q}^{\perp}/\Span{b_1-a_2} &\cong &\Span{A_W}_{\Q} &\quad \text{with} \quad &A_W = \{a_1,b_2,a_3,b_3,\ldots,a_g,b_g\}.&
\end{alignat*}
We proved in Lemma \ref{lemma:mapbilinear2} that $\oFix{a_1-b_2,b_1-a_2}$ is isomorphic
to a quotient of the kernel of the map $\cV \otimes \cW \rightarrow \Q$
induced by the symplectic form $\omega$.  Under this isomorphism, a generator
$\OPresA{(a_1-b_2) \wedge x,(b_1-a_2) \wedge y}$ of $\oFix{a_1-b_2,b_1-a_2}$ maps
to $x \otimes y \in \cV \otimes \cW$.

The kernel of $\cV \otimes \cW \rightarrow \Q$ is spanned by $X \cup Y$ where
\begin{align*}
X = &\Set{$x \otimes y$}{$x \in A_V$, $y \in A_W$, $\omega(x,y)=0$},\\
Y = &\Set{$a_k \otimes b_k + b_1 \otimes a_1$, $a_2 \otimes b_2 + b_k \otimes a_k$}{$3 \leq k \leq g$} \\
    &\cup \{a_2 \otimes b_2 + b_1 \otimes a_1\}.
\end{align*}
Since for $3 \leq k \leq g$ we have
\begin{align*}
(a_k-b_1) \otimes (b_k-a_1) &= a_k \otimes b_k + b_1 \otimes a_1 + \text{an element of $\Span{X}$},\\
(a_2-b_k) \otimes (b_2-a_k) &= a_2 \otimes b_2 + b_k \otimes a_k + \text{an element of $\Span{X}$},\\
(a_1-b_2) \otimes (b_1-a_2) &= a_1 \otimes b_1 + b_2 \otimes a_2 + \text{an element of $\Span{X}$},
\end{align*}
we can replace $Y$ by the set
\begin{align*}
&\Set{$(a_k-b_1) \otimes (b_k-a_1)$, $(a_2-b_k) \otimes (b_2-a_k)$}{$3 \leq k \leq g$},\\
&\quad\quad\cup \{(a_2-b_1) \otimes (b_2-a_1)\}.
\end{align*}
From this, we see that $\oFix{a_1-b_2,b_1-a_2}$ is generated by the elements listed in the following
cases.  To prove the lemma, we must prove that each of these generators maps to something
in the span of the indicated generators of $\fT_g$.

\begin{case}{1}
$\OPresA{(a_1-b_2) \wedge x,(b_1-a_2) \wedge y}$ for $x \in A_V$ and $y \in A_W$ with $\omega(x,y) = 0$.
\end{case}

\noindent
These equal $\PPresA{x \wedge (a_1-b_2),y \wedge (b_1-a_2)} \in S_{12}$, and thus
go to zero in $\fT_g$.

\begin{case}{2}
$\OPresA{(a_1-b_2) \wedge (a_k-b_1),(b_1-a_2) \wedge (b_k-a_1)}$ with $3 \leq k \leq g$.
\end{case}

\noindent
This maps to $\Delta^1_{2k} \in \fT_g$.

\begin{case}{3}
$\OPresA{(a_1-b_2) \wedge (a_2-b_k),(b_1-a_2) \wedge (b_2-a_k)}$ with $3 \leq k \leq g$.
\end{case}

\noindent
This equals $-\OPresA{(a_2-b_1) \wedge (a_k-b_2),(b_2-a_1) \wedge (b_k-a_2)}$, which
maps to $-\Delta^2_{1k} \in \fT_g$.

\begin{case}{4}
$\OPresA{(a_1-b_2) \wedge (a_2-b_1),(b_1-a_2) \wedge (b_2-a_1)}$.
\end{case}

Since $\OPres{-,-}$ is alternating, this equals $-\PresA{(a_1-b_2) \wedge (a_2-b_1),(a_1-b_2) \wedge (a_2-b_1)} = 0$.
We remark that this is why we always insist that $\oFix{a_1-b_2,b_1-a_2}$ is a {\em quotient}
of the kernel of the map $\cV \otimes \cW \rightarrow \Q$; see Lemma \ref{lemma:mapbilinear2}.
\end{proof}

\section{Symmetric kernel, alternating version VIII: relations between basic elements}
\label{section:presentationalt8}

We will continue using all the notation from \S \ref{section:presentationalt1} -- \S \ref{section:presentationalt7}.
This section contains three relations involving our basic elements $\Delta^i_{jk}$.
We will use these relations to prove Theorem \ref{maintheorem:presentationalt} in \S \ref{section:presentationalt9}.

\subsection{Relation I}

The first is:

\begin{lemma}
\label{lemma:altrelation1}
Let $1 \leq i,j,k,\ell \leq g$ be distinct.  Then $\Delta^k_{ij} + \Delta^i_{\ell k} = \Delta^{\ell}_{ij} + \Delta^j_{\ell k}$.
\end{lemma}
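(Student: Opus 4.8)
The plan is to verify the relation directly by computing in $\fK_g^a$, working with explicit representatives of each basic element and using the linearity relations together with the symplectic-basis identity for $\omega$ in $(\wedge^2 H)/\Q$. Recall that $\Delta^i_{jk} = \overline{\OPresA{(a_i-b_j)\wedge(a_k-b_i),(b_i-a_j)\wedge(b_k-a_i)}}$, where $\overline{\cdot}$ denotes the image in $\fT_g = \fK_g^a/\Span{S_{12}}$. So the strategy is: for each of the four terms $\Delta^k_{ij},\Delta^i_{\ell k},\Delta^{\ell}_{ij},\Delta^j_{\ell k}$, write out the corresponding generator $\OPresA{-,-}$ of $\fK_g^a$, then expand both slots using the linearity relations from Theorem \ref{theorem:summarypresentation} so that each term becomes a sum of generators whose first slot is a symplectic pair $x\wedge y$ with $x,y$ drawn from among $\{a_i,b_i,a_j,b_j,a_k,b_k,a_\ell,b_\ell\}$ (and similarly for the second slot). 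Terms of the form $\PresA{\zeta,\kappa}$ where $\zeta$ lies in $(\wedge^2 H)/\Q$ with both entries in $\cB$ — i.e.\ elements of $S_{12}$ — are killed in $\fT_g$, so I only need to track the ``cross'' terms that genuinely survive to $\fT_g$.

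First I would set up the bookkeeping: fix the four indices and expand. For instance $\Delta^k_{ij}$ involves $(a_k-b_i)\wedge(a_j-b_k)$ in the first slot and $(b_k-a_i)\wedge(b_j-a_k)$ in the second; expanding bilinearly gives $2\times 2 = 4$ terms in each slot, i.e.\ up to $16$ generator-terms, but most of these have both $\zeta$-entries in $\cB$ and hence vanish mod $\Span{S_{12}}$. The only subtlety is that a generator $\PresA{x\wedge y, z\wedge w}$ requires $x\wedge y$ (or $z\wedge w$) to be a special pair, which is automatic since each entry is a difference of two distinct basis vectors with $\omega$-value in $\{-1,0,1\}$; and we must check sym-orthogonality, which holds because all four indices are distinct so the relevant $\Span{-,-}$ subspaces are $\omega$-orthogonal. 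After expansion, I would collect the surviving terms and observe a cancellation pattern: the terms that survive in $\Delta^k_{ij}+\Delta^i_{\ell k}$ should match, term by term (possibly after applying the anti-symmetry relation $\PresA{\kappa_2,\kappa_1}=-\PresA{\kappa_1,\kappa_2}$ and Lemma \ref{lemma:basiceasy}), the surviving terms in $\Delta^{\ell}_{ij}+\Delta^j_{\ell k}$.

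A cleaner route, which I would try first, is to pass to the linearization side: since $\oPhi\colon\fT_g\to\cT_g$ need not yet be known injective, this alone does not suffice, but it is a valuable consistency check and may suggest the right grouping of terms. More promisingly, I would look for a single generator in $\fK_g^a$ — something like $\PresA{(a_i-b_j)\wedge(a_k+a_\ell-b_i),(b_i-a_j)\wedge(b_k+b_\ell-a_i)}$ or a nearby variant built from a genuine symplectic sub-basis on the index set $\{i,j,k,\ell\}$ — whose two natural linear expansions (grouping $a_k+a_\ell$ one way versus the other) yield $\Delta^k_{ij}+\Delta^i_{\ell k}$ modulo $S_{12}$ on one side and $\Delta^{\ell}_{ij}+\Delta^j_{\ell k}$ modulo $S_{12}$ on the other. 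This is the same device used repeatedly in \S\ref{section:presentationalt3} and in Lemma \ref{lemma:liftspecial}: exhibit one element, expand it two ways. Concretely I would choose basis-adapted vectors so that the ``error'' terms from each expansion all land in $\Span{S_{12}}$, then read off the identity.

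The main obstacle is purely organizational: keeping the signs straight through the bilinear expansions and the repeated use of anti-symmetry, and making sure every intermediate generator is actually a legitimate generator of $\fK_g^a$ (special-pair condition, sym-orthogonality) — this requires the distinctness of all four indices $i,j,k,\ell$ and the standing hypothesis $g\geq 4$ so that the relevant spans are near symplectic summands of positive genus. I expect no conceptual difficulty beyond that; the identity is ultimately a shadow of the associativity/linearity of the $\PresA{-,-}$ symbols, and once the right ``pivot'' element is chosen the two-way expansion closes the argument in a few lines.
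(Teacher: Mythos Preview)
Your first strategy --- expanding each $\Delta^{\bullet}_{\bullet\bullet}$ bilinearly into sixteen basis-vector terms --- does not work as stated. The linearity relations in $\fK_g^a$ (Theorem~\ref{theorem:summarypresentation}) only allow you to split $\PresA{\kappa_1+\kappa_2,\zeta}$ when each $\kappa_i$ is sym-orthogonal to the special pair $\zeta$. If you try to expand, say, the first slot of $\PresA{(a_k-b_i)\wedge(a_j-b_k),\,(b_k-a_i)\wedge(b_j-a_k)}$ into pieces like $a_k\wedge a_j$ or $a_k\wedge b_k$, these individual pieces are \emph{not} sym-orthogonal to the second entry (for instance $\fc(a_k\wedge a_j,(b_k-a_i)\wedge(b_j-a_k))=a_j\Cdot b_j-a_j\Cdot a_k+a_k\Cdot b_k-a_k\Cdot a_i\neq 0$), so the expansion is illegal. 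This is precisely why the paper never attempts a raw bilinear expansion and instead works inside specific $\Fix{c,c'}$ subspaces, where Lemma~\ref{lemma:mapbilinear2} has already established the isomorphism with (a quotient of) the kernel of $\cV\otimes\cW\to\Q$; only there is genuine bilinearity available.

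Your second strategy --- a single pivot element expanded two ways --- is in the right spirit and is essentially what the paper does, but your proposed pivot $\PresA{(a_i-b_j)\wedge(a_k+a_\ell-b_i),(b_i-a_j)\wedge(b_k+b_\ell-a_i)}$ is not even a generator: one computes $\omega(a_k+a_\ell-b_i,\,b_k+b_\ell-a_i)=1$, so sym-orthogonality fails. The paper's actual pivot (after reducing to $(i,j,k,\ell)=(1,2,3,4)$ by the $\fS_g$-action) is $\overline{\PresA{(a_3-b_1)\wedge(a_2-b_4),(b_3-a_1)\wedge(b_2-a_4)}}$, chosen because it lies simultaneously in $\Fix{a_3-b_1,b_3-a_1}$ and in $\Fix{a_2-b_4,b_2-a_4}$. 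Each side of the desired identity is then computed in one of these two $\Fix{-,-}$ spaces via the $\cV\otimes\cW$ model, working modulo the set $X$ of pure tensors (which map to $S_{12}$). A crucial step you do not anticipate is that in the Claim~1 computation a leftover term $(a_1-b_3)\otimes(b_1-a_3)$ appears, and it is killed only by anti-symmetry of $\PresA{-,-}$ --- this is the reason the argument uses $\fK_g^a$ specifically rather than $\fK_g$.
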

\begin{proof}
As in the proof of Lemma \ref{lemma:basicgen}, we can apply an appropriate element of the symmetric
group and reduce ourselves to proving that $\Delta^3_{12} + \Delta^1_{4 3} = \Delta^{4}_{12} + \Delta^2_{4 3}$.
What we will prove is that both sides of this identity equal
$\overline{\OPres{(a_3-b_1) \wedge (a_2 - b_4),(b_3-a_1) \wedge (b_2 - a_4)}}$:

\begin{claim}{1}
\label{claim:basicrelations1.1}
$\Delta^3_{12} + \Delta^1_{4 3} = \overline{\OPres{(a_3-b_1) \wedge (a_2 - b_4),(b_3-a_1) \wedge (b_2 - a_4)}}$.
\end{claim}

We have
\begin{align*}
\Delta^3_{12} &= \overline{\OPres{(a_3-b_1) \wedge (a_2-b_3),(b_3-a_1) \wedge (b_2-a_3)}}, \\
\Delta^1_{43} &= \overline{\OPres{(a_1-b_4) \wedge (a_3-b_1),(b_1-a_4) \wedge (b_3-a_1)}} \\
              &= \overline{\OPres{(a_3-b_1) \wedge (a_1-b_4),(b_3-a_1) \wedge (b_1-a_4)}}.
\end{align*}
We must therefore prove that
\begin{small}
\begin{equation}
\label{eqn:basicrelations1.1a}
\OPres{(a_3-b_1) \wedge (a_2-b_3),(b_3-a_1) \wedge (b_2-a_3)} + \OPres{(a_3-b_1) \wedge (a_1-b_4),(b_3-a_1) \wedge (b_1-a_4)}
\end{equation}
\end{small}%
equals
\begin{equation}
\label{eqn:basicrelations1.1b}
\OPres{(a_3-b_1) \wedge (a_2 - b_4),(b_3-a_1) \wedge (b_2 - a_4)}
\end{equation}
modulo elements of $\Span{S_{12}}$.  These both live in $\oFix{a_3-b_1,b_3-a_1}$, so we work there.

Following the notation in \S \ref{section:identificationalt2}, define 
\begin{alignat*}{8}
&&\cV &= &\Span{b_3-a_1}_{\Q}^{\perp}/\Span{a_3-b_1} &\cong &\Span{A_V}_{\Q} &\quad \text{with} \quad &A_V = \{b_3,a_1,a_2,b_2,a_3,b_3,\ldots,a_g,b_g\},&\\
&&\cW &= &\Span{a_3-b_1}_{\Q}^{\perp}/\Span{b_3-a_1} &\cong &\Span{A_W}_{\Q} &\quad \text{with} \quad &A_W = \{a_3,b_1,a_2,b_2,a_3,b_3,\ldots,a_g,b_g\}.&
\end{alignat*}
We proved in Lemma \ref{lemma:mapbilinear2} that $\oFix{a_3-b_1,b_3-a_1}$ is isomorphic
to a quotient of the kernel of the map $\cV \otimes \cW \rightarrow \Q$
induced by the symplectic form $\omega$.  Under this isomorphism, a generator
$\OPresA{(a_3-b_1) \wedge x,(b_3-a_1) \wedge y}$ of $\oFix{a_3-b_1,b_3-a_1}$ maps
to $x \otimes y \in \cV \otimes \cW$.

The elements of $\cV \otimes \cW$ corresponding to elements of $S_{12}$ are
\[X = \Set{$x \otimes y$}{$x \in A_V$, $y \in A_W$, $\omega(x,y) = 0$}.\]
We can therefore work modulo $\Span{X}$.  Let $\equiv$ denote equality modulo $\Span{X}$.
The element of $\cV \otimes \cW$ corresponding to \eqref{eqn:basicrelations1.1a} is
\begin{align*}
(a_2-b_3) \otimes (b_2-a_3) + (a_1-b_4) \otimes (b_1-a_4) &\equiv (a_2 \otimes b_2 + b_3 \otimes a_3) + (a_1 \otimes b_1 + b_4 \otimes a_4) \\
                                                          &=      (a_1 \otimes b_1 + b_3 \otimes a_3) + (a_2 \otimes b_2 + b_4 \otimes a_4) \\
                                                          &\equiv (a_1-b_3) \otimes (b_1-a_3) + (a_2-b_4) \otimes (b_2-a_4).
\end{align*}
The element $(a_2-b_4) \otimes (b_2-a_4)$ corresponds to \eqref{eqn:basicrelations1.1b}, so what we must
prove is that $(a_1-b_3) \otimes (b_1-a_3)$ corresponds to $0$.  In fact, this corresponds to
\[\OPresA{(a_3-b_1) \wedge (a_1-b_3),(b_3-a_1) \wedge (b_1-a_3)}.\]
Since $\OPresA{-,-}$ is alternating, this vanishes.  The claim follows.

\begin{claim}{2}
$\Delta^{4}_{12} + \Delta^2_{4 3} = \overline{\OPres{(a_3-b_1) \wedge (a_2 - b_4),(b_3-a_1) \wedge (b_2 - a_4)}}$.
\end{claim}

We have
\begin{align*}
\Delta^4_{12} &= \overline{\OPres{(a_4-b_1) \wedge (a_2-b_4),(b_4-a_1) \wedge (b_2-a_4)}}\\
              &= \overline{\OPres{(a_2-b_4) \wedge (a_4-b_1),(b_2-a_4) \wedge (b_4-a_1)}},\\
\Delta^2_{43} &= \overline{\OPres{(a_2-b_4) \wedge (a_3-b_2),(b_2-a_4) \wedge (b_3-a_2)}}. \\
\end{align*}
and
\[\overline{\OPres{(a_3-b_1) \wedge (a_2 - b_4),(b_3-a_1) \wedge (b_2 - a_4)}} = \overline{\OPres{(a_2 - b_4) \wedge (a_3-b_1),(b_2 - a_4) \wedge (b_3-a_1)}}.\]
We must therefore prove that
\begin{small}
\begin{equation}
\label{eqn:basicrelations1.2a}
\OPres{(a_2-b_4) \wedge (a_4-b_1),(b_2-a_4) \wedge (b_4-a_1)} + \OPres{(a_2-b_4) \wedge (a_3-b_2),(b_2-a_4) \wedge (b_3-a_2)}
\end{equation}
\end{small}%
equals
\begin{equation}
\label{eqn:basicrelations1.2b}
\OPres{(a_2 - b_4) \wedge (a_3-b_1),(b_2 - a_4) \wedge (b_3-a_1)}
\end{equation}
modulo elements of $\Span{S_{12}}$.  These both live in $\oFix{a_2-b_4,b_2-a_4}$.  The calculation
is similar to the one from Claim \ref{claim:basicrelations1.1}, so we omit it.
\end{proof}

\subsection{Relation II}

The second relation is:

\begin{lemma}
\label{lemma:altrelation2}
For $1 \leq i,j,k \leq g$ distinct, we have
\[\Delta^i_{jk} = \overline{\OPresA{(a_i-b_j) \wedge (a_k+a_j),(b_i-a_j) \wedge (b_k-b_j)}}.\]
\end{lemma}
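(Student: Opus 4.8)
The plan is to prove Lemma~\ref{lemma:altrelation2} by the same computation-inside-$\oFix{a_i-b_j,b_i-a_j}$ technique used throughout this part. After applying an appropriate element of the symmetric group $\fS_g$ (which acts on $\fT_g$, as established in the proof of Lemma~\ref{lemma:basicgen}), it suffices to treat the case $(i,j,k)=(1,2,3)$, i.e.\ to prove
\[\Delta^1_{23} = \overline{\OPresA{(a_1-b_2) \wedge (a_3+a_2),(b_1-a_2) \wedge (b_3-b_2)}}.\]
Both $\Delta^1_{23} = \overline{\OPresA{(a_1-b_2) \wedge (a_3-b_1),(b_1-a_2) \wedge (b_3-a_1)}}$ and the right-hand side live in $\oFix{a_1-b_2,b_1-a_2}$, so the whole computation takes place there.

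First I would set up the isomorphism from Lemma~\ref{lemma:mapbilinear2}: with $\cV = \Span{b_1-a_2}_{\Q}^{\perp}/\Span{a_1-b_2} \cong \Span{A_V}_{\Q}$ where $A_V = \{b_1,a_2,a_3,b_3,\ldots,a_g,b_g\}$ and $\cW = \Span{a_1-b_2}_{\Q}^{\perp}/\Span{b_1-a_2} \cong \Span{A_W}_{\Q}$ where $A_W = \{a_1,b_2,a_3,b_3,\ldots,a_g,b_g\}$, a generator $\OPresA{(a_1-b_2) \wedge x,(b_1-a_2) \wedge y}$ maps to $x \otimes y \in \cV \otimes \cW$. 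The elements of $\cV \otimes \cW$ corresponding to elements of $\Span{S_{12}}$ are spanned by $X = \Set{$x \otimes y$}{$x \in A_V$, $y \in A_W$, $\omega(x,y) = 0$}$, so I would work modulo $\Span{X}$, writing $\equiv$ for that congruence. Then $\Delta^1_{23}$ corresponds to $(a_3-b_1) \otimes (b_3-a_1) \equiv a_3 \otimes b_3 + b_1 \otimes a_1$, while the right-hand side corresponds to $(a_3+a_2) \otimes (b_3-b_2)$; I would expand the latter as $a_3 \otimes b_3 - a_3 \otimes b_2 + a_2 \otimes b_3 - a_2 \otimes b_2$ and reduce modulo $\Span{X}$. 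The cross terms $a_3 \otimes b_2$ and $a_2 \otimes b_3$ lie in $X$ (since $\omega(a_3,b_2)=\omega(a_2,b_3)=0$), so they vanish; and $a_2 \otimes b_2 \equiv -b_1 \otimes a_1$ because $(a_2 - b_1)\otimes(b_2 - a_1)$, being the image of the alternating element $\OPresA{(a_1-b_2)\wedge(a_2-b_1),(b_1-a_2)\wedge(b_2-a_1)}=-\PresA{(a_1-b_2)\wedge(a_2-b_1),(a_1-b_2)\wedge(a_2-b_1)}=0$, is itself $\equiv 0$, giving $a_2\otimes b_2 + b_1\otimes a_1 \in \Span{X}$. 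Combining these, the right-hand side is $\equiv a_3\otimes b_3 + b_1\otimes a_1$, matching $\Delta^1_{23}$.

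The only subtlety — and the step I expect to need the most care — is verifying that all the $\OPresA{(a_1-b_2)\wedge x,(b_1-a_2)\wedge y}$ appearing in the argument are genuine generators of $\oFix{a_1-b_2,b_1-a_2}$, i.e.\ that both $(a_1-b_2)\wedge x$ and $(b_1-a_2)\wedge y$ are special pairs (the orthogonality and special-pair conditions in the definition of $\Fix{a,a'}$). For the terms that occur here ($x,y$ drawn from $A_V,A_W$ and the specific combinations $a_3-b_1$, $b_3-a_1$, $a_3+a_2$, $b_3-b_2$, $a_2-b_1$, $b_2-a_1$) this is a routine check of a few symplectic pairings, but it must be carried out so that the passage through $\cV\otimes\cW$ in Lemma~\ref{lemma:mapbilinear2} is legitimate; I would also note that $\Span{X}$ is precisely the image of $\Span{S_{12}}\cap\oFix{a_1-b_2,b_1-a_2}$ under $\Phi$, which is what licenses reading ``$\equiv 0$ mod $\Span{X}$'' as ``maps to $0$ in $\fT_g$''. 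Everything else is linear algebra in $\cV\otimes\cW$ of exactly the kind already performed repeatedly in \S\ref{section:presentationalt7} and \S\ref{section:presentationalt8}.
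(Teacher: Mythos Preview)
Your approach is correct and is essentially identical to the paper's proof: reduce to $(i,j,k)=(1,2,3)$, work inside $\oFix{a_1-b_2,b_1-a_2}$ via the isomorphism of Lemma~\ref{lemma:mapbilinear2}, and compute the difference modulo $\Span{X}$ until only the alternating-hence-zero element $(a_2-b_1)\otimes(b_2-a_1)$ remains.

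One wording issue to fix: the claim ``$a_2\otimes b_2 + b_1\otimes a_1 \in \Span{X}$'' is false (neither $a_2\otimes b_2$ nor $b_1\otimes a_1$ satisfies $\omega=0$, and their sum is not a combination of such terms). What is true is that $a_2\otimes b_2 + b_1\otimes a_1 \equiv (a_2-b_1)\otimes(b_2-a_1)$ modulo $\Span{X}$, and the latter lies in the kernel of the surjection $\cK(a_1-b_2,b_1-a_2)\to\Fix{a_1-b_2,b_1-a_2}$ from Lemma~\ref{lemma:mapbilinear2} (this is exactly the ``quotient'' in that lemma's statement). So the difference lands in $\Span{S_{12}}$ because elements of $\Span{X}$ map into $\Span{S_{12}}$ \emph{and} the residual term maps to $0$ in $\Fix{a_1-b_2,b_1-a_2}$---two separate mechanisms, not one. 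The paper phrases it exactly this way; your argument is sound once this is untangled.
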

\begin{proof}
As in the proof of Lemma \ref{lemma:basicgen}, we can apply an appropriate element
of the symmetric group and reduce ourselves to proving that
\[\Delta^1_{23} = \overline{\OPresA{(a_1-b_2) \wedge (a_3+a_2),(b_1-a_2) \wedge (b_3-b_2)}}.\]
By the definition of $\Delta^1_{23}$ and $\fT_g$, this lemma is asserting that
\begin{align}
\label{eqn:trianglealttoprove}
&\OPresA{(a_1-b_2) \wedge (a_3 - b_1),(b_1-a_2) \wedge (b_3 - a_1)}\\
&\quad-\OPresA{(a_1-b_2) \wedge (a_3+a_2),(b_1-a_2) \wedge (b_3-b_2)} \notag
\end{align}
lies in the span of $S_{12}$.  This difference lies in $\oFix{a_1-b_2,b_1-a_2}$, so we work there.

Following the notation in \S \ref{section:identificationalt2}, define
\begin{alignat*}{8}
&&\cV &= &\Span{b_1-a_2}_{\Q}^{\perp}/\Span{a_1-b_2} &\cong &\Span{A_V}_{\Q} &\quad \text{with} \quad &A_V = \{b_1,a_2,a_3,b_3,\ldots,a_g,b_g\},&\\
&&\cW &= &\Span{a_1-b_2}_{\Q}^{\perp}/\Span{b_1-a_2} &\cong &\Span{A_W}_{\Q} &\quad \text{with} \quad &A_W = \{a_1,b_2,a_3,b_3,\ldots,a_g,b_g\}.&
\end{alignat*}
We proved in Lemma \ref{lemma:mapbilinear2} that $\oFix{a_1-b_2,b_1-a_2}$ is isomorphic
to a quotient of the kernel of the map $\cV \otimes \cW \rightarrow \Q$
induced by the symplectic form $\omega$.  Under this isomorphism, a generator
$\OPresA{(a_1-b_2) \wedge x,(b_1-a_2) \wedge y}$ of $\oFix{a_1-b_2,b_1-a_2}$ maps
to $x \otimes y \in \cV \otimes \cW$.

The elements of $\cV \otimes \cW$ corresponding to elements of $S_{12}$ are
\[X = \Set{$x \otimes y$}{$x \in A_V$, $y \in A_W$, $\omega(x,y) = 0$}.\]  
We can therefore work modulo $\Span{X}$.  Let $\equiv$ denote equality modulo $\Span{X}$.  The 
difference \eqref{eqn:trianglealttoprove} corresponds to the following element of
$\cV \otimes \cW$:
\begin{align*}
&(a_3 - b_1) \otimes (b_3 - a_1) - (a_3+a_2) \otimes (b_3-b_2) \equiv (a_3 \otimes b_3 + b_1 \otimes a_1) - (a_3 \otimes b_3 -a_2 \otimes b_2) \\
=&a_2 \otimes b_2 + b_1 \otimes a_1 \equiv (a_2-b_1) \otimes (b_2-a_1).
\end{align*}
This corresponds to the element
\begin{small}
\[\OPresA{(a_1-b_2) \wedge (a_2-b_1),(b_1-a_2) \wedge (b_2-a_1)} = -\OPresA{(a_1-b_2) \wedge (a_2-b_1),(a_1-b_2) \wedge (a_2-b_1)}.\]
\end{small}%
Since $\OPresA{-,-}$ is alternating, this is $0$.  The lemma follows.
\end{proof}

\subsection{Relation III}

Our third and final relation is:

\begin{lemma} 
\label{lemma:altrelation3}
Let $1 \leq i,j,k,\ell \leq g$ be distinct.  Then 
\[\Delta^i_{jk} + \Delta^i_{k \ell} = \Delta^{i}_{j\ell} + \overline{\PresA{(a_{\ell}+a_j) \wedge (b_k-b_j),(b_{\ell}-b_j) \wedge (a_k+a_j)}}.\]
\end{lemma}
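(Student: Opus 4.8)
\textbf{Proof proposal for Lemma \ref{lemma:altrelation3}.}

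The plan is to follow the same template used in Lemmas \ref{lemma:altrelation1} and \ref{lemma:altrelation2}: reduce to a fixed set of indices using the symmetric group action, and then carry out all computations inside a single subspace $\oFix{a,a'}$, working modulo $\Span{X}$ where $X$ is the explicit set of tensors corresponding to elements of $S_{12}$ (as provided by Lemma \ref{lemma:mapbilinear2}). First I would apply an appropriate element of $\fS_g$ (which acts on $\fT_g$ by the remarks in the proof of Lemma \ref{lemma:basicgen}) to reduce to, say, $(i,j,k,\ell)=(1,2,3,4)$, so that the claim becomes
\[\Delta^1_{23} + \Delta^1_{34} = \Delta^1_{24} + \overline{\PresA{(a_4+a_2) \wedge (b_3-b_2),(b_4-b_2) \wedge (a_3+a_2)}}.\]

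Next I would observe that the left-hand terms $\Delta^1_{23}$ and $\Delta^1_{34}$ both naturally live in $\oFix{a_1-b_2,b_1-a_2}$ and $\oFix{a_1-b_3,b_1-a_3}$ respectively, but by Lemma \ref{lemma:altrelation2} (Relation II) I can rewrite $\Delta^1_{23}$ as $\overline{\OPresA{(a_1-b_2)\wedge(a_3+a_2),(b_1-a_2)\wedge(b_3-b_2)}}$ and similarly rewrite $\Delta^1_{34}$ and $\Delta^1_{24}$ using Relation II with the appropriate indices. The point of doing this is that all three rewritten terms — as well as the extra $S_{12}$-flavored term on the right — then manifestly lie in a single $\oFix{a,a'}$, namely (after a further $\fS_g$ reduction if needed) one built from $a_1-b_\ast$ and $b_1-a_\ast$; the choice of which $\oFix$ to land in is the one delicate bookkeeping point and I would pick it so that all four special-pair first coordinates match. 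Once everything is in one $\oFix{a,a'}$, I pass through the isomorphism of Lemma \ref{lemma:mapbilinear2} to a quotient of $\ker(\cV\otimes\cW\to\Q)$, work modulo $\Span{X}$ (equality $\equiv$), and verify that the alternating sum of the four corresponding tensors in $\cV\otimes\cW$ is congruent modulo $\Span{X}$ to a tensor of the form $(v-w)\otimes(v'-w')$ whose preimage is $\OPresA{\zeta\wedge v,\zeta'\wedge v'}$ with a repeated special pair, hence vanishes since $\OPresA{-,-}$ is alternating — exactly as in the final steps of Lemmas \ref{lemma:altrelation1} and \ref{lemma:altrelation2}.

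The main obstacle I anticipate is purely organizational rather than conceptual: getting all four terms (three triangle-type lifts plus the residual $\PresA{(a_4+a_2)\wedge(b_3-b_2),(b_4-b_2)\wedge(a_3+a_2)}}$ term) to live simultaneously in one $\oFix{a,a'}$ so that a single application of Lemma \ref{lemma:mapbilinear2} handles them, and then tracking signs correctly through the flips built into Convention \ref{convention:alttconvention} and the anti-symmetry of $\OPresA{-,-}$. Concretely, after rewriting via Relation II the identity to prove will be that the tensor
\[(a_3+a_2)\otimes(b_3-b_2) + (a_4+a_3)\otimes(b_4-b_3) - (a_4+a_2)\otimes(b_4-b_2)\]
(together with the tensor corresponding to the residual term) reduces modulo $\Span{X}$ to something proportional to $(a_2+\cdots)\otimes(a_2+\cdots)$-type diagonal expressions whose lifts are alternating and hence zero; a short computation modulo $\Span{X}$ — collecting $a_i\otimes b_i$ and $b_i\otimes a_i$ terms and using $a_i\otimes b_i + b_j\otimes a_j \equiv (a_i-b_j)\otimes(b_i-a_j)$ — should close it. I do not expect any new ideas beyond what is already deployed in \S\ref{section:presentationalt8}; the lemma is the third in a family and is proved by the same machinery.
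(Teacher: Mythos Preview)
Your reduction to $(i,j,k,\ell)=(1,2,3,4)$ via the $\fS_g$-action is fine, and your instinct to use Lemma \ref{lemma:altrelation2} and then compute in a single $\oFix{a,a'}$ via Lemma \ref{lemma:mapbilinear2} is the right machinery. The gap is exactly where you flagged it: the four terms \emph{cannot} be placed simultaneously in one $\oFix{a,a'}$. After applying Relation~II, $\Delta^1_{23}$ lands in $\Fix{a_1-b_2,b_1-a_2}$, $\Delta^1_{34}$ lands in $\Fix{a_1-b_3,b_1-a_3}$, $\Delta^1_{24}$ lands in $\Fix{a_1-b_2,b_1-a_2}$, and the residual term lies in $\Fix{a_4+a_2,b_4-b_2}$. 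No single choice of $(a,a')$ accommodates all of these, so your displayed tensor sum $(a_3+a_2)\otimes(b_3-b_2)+(a_4+a_3)\otimes(b_4-b_3)-(a_4+a_2)\otimes(b_4-b_2)$ is not a computation in any one $\cV\otimes\cW$ and the argument does not close.

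The paper's fix is to introduce an auxiliary element $\overline{\OPresA{(a_1-b_3)\wedge(a_4+a_2),(b_1-a_3)\wedge(b_4-b_2)}}$ and prove \emph{both sides} of the identity equal it, via two separate claims in two different $\Fix$ subspaces. For the left side one rewrites $\Delta^1_{23}$ as $-\Delta^1_{32}$ so that both $\Delta^1_{23}$ and $\Delta^1_{34}$ lie in $\oFix{a_1-b_3,b_1-a_3}$, and the tensor computation there (exactly as in Claim~\ref{claim:basicrelations1.1} of Lemma~\ref{lemma:altrelation1}) yields the auxiliary element. For the right side one uses Relation~II only on $\Delta^1_{24}$, placing it together with the residual term in $\Fix{a_4+a_2,b_4-b_2}$, and computes there. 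The missing idea in your proposal is this two-subspace bridge via a common intermediate element; once you have it, the calculations are of the type you describe.
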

\begin{proof}
As in the proof of Lemma \ref{lemma:basicgen}, we can apply an appropriate element of the symmetric
group and reduce ourselves to proving that 
\[\Delta^1_{23} + \Delta^1_{34} = \Delta^{1}_{24} + \overline{\PresA{(a_4+a_2) \wedge (b_3-b_2),(b_4-b_2) \wedge (a_3+a_2)}}.\]
We will prove that both sides of this equal
$\overline{\OPres{(a_1-b_3) \wedge (a_4+a_2),(b_1-a_3) \wedge (b_4-b_2)}}$.

\begin{claim}{1}
\label{claim:altrelations3.1}
$\Delta^1_{23} + \Delta^1_{34} = \overline{\OPres{(a_1-b_3) \wedge (a_4+a_2),(b_1-a_3) \wedge (b_4-b_2)}}$.
\end{claim}

We have
\begin{align*}
\Delta^1_{23} &= \overline{\OPres{(a_1-b_2) \wedge (a_3-b_1),(b_1-a_2) \wedge (b_3-a_1)}}\\
              &= -\overline{\OPres{(a_1-b_3) \wedge (b_1-a_2),(b_1-a_3) \wedge (a_1-b_2)}},\\
\Delta^1_{34} &= \overline{\OPres{(a_1-b_3) \wedge (a_4-b_1),(b_1-a_3) \wedge (b_4-a_1)}}. \\
\end{align*}
We must therefore prove that
\begin{small}
\[-\OPres{(a_1-b_3) \wedge (b_1-a_2),(b_1-a_3) \wedge (a_1-b_2)} + \OPres{(a_1-b_3) \wedge (a_4-b_1),(b_1-a_3) \wedge (b_4-a_1)}\]
\end{small}%
equals
\[\OPres{(a_1-b_3) \wedge (a_4+a_2),(b_1-a_3) \wedge (b_4-b_2)}\]
modulo elements of $\Span{S_{12}}$.  These both live in $\oFix{a_1-b_3,b_1-a_3}$.  The calculation
is similar to the one from Claim \ref{claim:basicrelations1.1} of the proof of Lemma \ref{lemma:altrelation1}, so we omit it.

\begin{claim}{2}
\label{claim:altrelations3.2}
The elements
\[\Delta^{1}_{24} + \overline{\PresA{(a_4+a_2) \wedge (b_3-b_2),(b_4-b_2) \wedge (a_3+a_2)}}\]
and
\[\overline{\OPres{(a_1-b_3) \wedge (a_4+a_2),(b_1-a_3) \wedge (b_4-b_2)}}\]
are equal.
\end{claim}

Lemma \ref{lemma:altrelation2} says that
\begin{align*}
\Delta^1_{24} &= \overline{\OPresA{(a_1-b_2) \wedge (a_4+a_2),(b_1-a_2) \wedge (b_4-b_2)}} \\
              &= \overline{\OPresA{(a_4+a_2) \wedge (a_1-b_2),(b_4-b_2) \wedge (b_1-a_2)}}.
\end{align*}
Also,
\begin{small}
\[\overline{\OPres{(a_1-b_3) \wedge (a_4+a_2),(b_1-a_3) \wedge (b_4-b_2)}} = \overline{\OPres{(a_4+a_2) \wedge (a_1-b_3),(b_4-b_2) \wedge (b_1-a_3)}}.\]
\end{small}%
We must therefore prove that
\begin{small}
\[\OPresA{(a_4+a_2) \wedge (a_1-b_2),(b_4-b_2) \wedge (b_1-a_2)} + \PresA{(a_4+a_2) \wedge (b_3-b_2),(b_4-b_2) \wedge (a_3+a_2)}\]
\end{small}%
equals
\[\OPres{(a_4+a_2) \wedge (a_1-b_3),(b_4-b_2) \wedge (b_1-a_3)}\]
modulo elements of $\Span{S_{12}}$.  These both live in $\Fix{a_4+a_2,b_4-b_2}$.  The calculation
is similar to the one from Claim \ref{claim:basicrelations1.1} of the proof of Lemma \ref{lemma:altrelation1}, so we omit it.
\end{proof}

\section{Symmetric kernel, alternating version IX: the proof of Theorem \ref{maintheorem:presentationalt}}
\label{section:presentationalt9}

We will continue using all the notation from \S \ref{section:presentationalt1} -- \S \ref{section:presentationalt8}.
We finally prove Theorem \ref{maintheorem:presentationalt}, whose statement we recall.

\newtheorem*{maintheorem:presentationalt}{Theorem \ref{maintheorem:presentationalt}}
\begin{maintheorem:presentationalt}
For $g \geq 4$, the linearization map $\Phi\colon \fK_g^a \rightarrow \cK_g^a$ is an isomorphism.
\end{maintheorem:presentationalt}
\begin{proof}
Recall that Lemma \ref{lemma:presentationaltgenset} says that $\fK_g^a$ is generated
by $S = S_{12} \cup S_3$.  Lemma \ref{lemma:altspans12} says that $\Phi$ takes $\Span{S_{12}}$ isomorphically
onto its image.  Letting $\fT_g = \fK_g^a/\Span{S_{12}}$ and letting
$\cT_g$ be the quotient of $\cK_g^a$ by $\Span{\Phi(S_{12})}$, it is therefore
enough to prove that the induced map $\oPhi\colon \fT_g \rightarrow \cT_g$
is an isomorphism.
Lemmas \ref{lemma:quotientsurjective} and \ref{lemma:ctgdim} say that $\oPhi$ is a surjective
map to a $g(g-2)$-dimensional vector space.  It is therefore enough to prove
that $\fT_g$ is at most $g(g-2)$-dimensional.

Define
\[R_g = \Set{$\Delta^i_{jk} \in \fT_g$}{$1 \leq i,j,k \leq g$ distinct}.\]
By Lemma \ref{lemma:basicgen}, the set $R_g$ spans $\fT_g$ for $g \geq 4$.
The key to the proof is the following smaller generating set:

\begin{unnumberedclaim}
For all $g \geq 4$, the vector space $\fT_g$ is spanned by $R'_g \cup R_g[\leq g-1]$, where:
\begin{align*}
R'_g = &\Set{$\Delta^g_{1i}$}{$2 \leq i \leq g-1$} 
       \cup\Set{$\Delta^1_{gi}$}{$2 \leq i \leq g-1$} \cup \{\Delta^2_{g1}\}, \\
R_g[\leq g-1] = &\Set{$\Delta^i_{jk}$}{$1 \leq i,j,k \leq g-1$ distinct}.
\end{align*}
\end{unnumberedclaim}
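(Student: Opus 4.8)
The plan is to deduce this spanning statement from Lemma \ref{lemma:basicgen}, which says that $\fT_g$ is already spanned by the basic elements $\Delta^i_{jk}$ with $1\le i,j,k\le g$ distinct. Every basic element not involving the index $g$ lies in $R_g[\leq g-1]$ by definition, so it suffices to show that every basic element involving $g$ lies in $\Span{R'_g\cup R_g[\leq g-1]}$. Using the antisymmetry $\Delta^i_{kj}=-\Delta^i_{jk}$ from Lemma \ref{lemma:basiceasy}, these split into two families: the \emph{superscript type} $\Delta^g_{jk}$ with $j,k\leq g-1$, and the \emph{subscript type} $\Delta^i_{gk}$ with $i,k\leq g-1$. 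The crucial organizational point, which keeps the reductions from feeding back into themselves, is to dispose of the superscript type first and then express each subscript-type element in terms of superscript-type elements together with elements of the target set.

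For the superscript type $\Delta^g_{jk}$: if $1\in\{j,k\}$ then, up to the sign from Lemma \ref{lemma:basiceasy}, this is one of the elements $\Delta^g_{1i}\in R'_g$ and there is nothing to do. If $1\notin\{j,k\}$ (so $j,k\in\{2,\dots,g-1\}$), apply Relation III (Lemma \ref{lemma:altrelation3}) with superscript $g$ and fourth index $1$; this rewrites $\Delta^g_{jk}$ as $-\Delta^g_{1j}+\Delta^g_{1k}$, both in $R'_g$, plus the correction term $\overline{\PresA{(a_1+a_j)\wedge(b_k-b_j),(b_1-b_j)\wedge(a_k+a_j)}}$, which involves only indices in $\{1,j,k\}\subseteq\{1,\dots,g-1\}$. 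To finish one must show this correction term lies in $\Span{R_g[\leq g-1]}$: when $g\geq 5$ there is a fourth index $m\leq g-1$ distinct from $1,j,k$, and reading Relation III with superscript $m$ shows the correction term equals $\Delta^m_{jk}+\Delta^m_{k1}-\Delta^m_{j1}$, a combination of elements of $R_g[\leq g-1]$. For $g=4$ the only correction terms that survive have index set $\{1,2,3\}$, with no free fourth index available; here one verifies the single remaining element $\Delta^4_{23}$ directly, either by a short computation in the relevant $\Fix{a,a'}$ space as in the proof of Lemma \ref{lemma:basicgen} or by invoking Relation II (Lemma \ref{lemma:altrelation2}).

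For the subscript type $\Delta^i_{gk}$ with $i,k\leq g-1$ distinct: if $i=1$ then $k\neq 1$ forces $2\le k\le g-1$, so $\Delta^1_{gk}\in R'_g$; the pair $(i,k)=(2,1)$ gives $\Delta^2_{g1}\in R'_g$. In all other cases apply Relation I (Lemma \ref{lemma:altrelation1}) to isolate $\Delta^i_{gk}$, choosing the free index to be $1$ when $i,k\geq 2$ and to be $2$ when $i\geq 3$ and $k=1$. The result expresses $\Delta^i_{gk}$ as a superscript-type element $\Delta^g_{i\ell}$ (with $\ell\in\{1,2\}$), plus one of $\Delta^1_{gk}$ or $\Delta^2_{g1}$ (both in $R'_g$), plus a basic element with all three indices $\leq g-1$ (in $R_g[\leq g-1]$); since the superscript type has already been handled, this completes the reduction.

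The main obstacle is the correction terms thrown off by Relation III: once the order of the case analysis is fixed (superscript type before subscript type), everything else is formal bookkeeping with Lemmas \ref{lemma:basiceasy}--\ref{lemma:altrelation3}, but the correction terms must be routed back into $R_g[\leq g-1]$, and this is exactly the place where the genus enters — for $g\geq 5$ a free index is always available, while the base case $g=4$ forces a small explicit verification rather than a reindexing argument.
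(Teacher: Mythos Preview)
Your argument is correct and follows the same two-stage strategy as the paper: first reduce the superscript-type elements $\Delta^g_{jk}$ using Relation~III (Lemma~\ref{lemma:altrelation3}), then reduce the subscript-type elements $\Delta^i_{gk}$ using Relation~I (Lemma~\ref{lemma:altrelation1}). Your handling of the correction term for $g\geq 5$ --- reapplying Relation~III with a fresh superscript $m\leq g-1$ --- is a concrete unpacking of the paper's appeal to Lemma~\ref{lemma:basicgen} in genus $g-1$, and your case split in the subscript reduction (free index $1$ when $i,k\geq 2$, free index $2$ when $i\geq 3$ and $k=1$) is actually more explicit than the paper's.

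The one point that needs tightening is $g=4$. Relation~II alone does not obviously dispose of the correction term $\overline{\PresA{(a_1+a_2)\wedge(b_3-b_2),(b_1-b_2)\wedge(a_3+a_2)}}$: that relation just rewrites a single basic element and gives no direct decomposition of this particular expression. The paper instead observes that this term lives in $\fK_3^a$ (only indices $1,2,3$ appear) and invokes Lemma~\ref{lemma:s3symmetricalt}, which holds already in genus~$3$, to write it in $\Span{S}$; the proof of Lemma~\ref{lemma:basicgen} then shows its image in $\fT_3$ is a combination of basic elements, hence lies in $\Span{R_4[\leq 3]}$. Your first alternative (a direct computation in the relevant $\Fix{a,a'}$ space) amounts to exactly this route, so simply drop the reference to Relation~II.
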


We will prove this claim in a moment, but let us first see why it implies that
$\fT_g$ is at most $g(g-2)$-dimensional.  The proof is by induction on $g$.
The base case is $g=4$, where we have to prove that $\fT_4$ is at most
$4(4-2) = 8$ dimensional.  For this, note that 
\[R'_4 = \{\Delta^4_{12},\Delta^4_{13},\Delta^1_{42},\Delta^1_{43},\Delta^2_{41}\}\]
has $5$ elements.  The set 
\[R_4[\leq 3] = \{\Delta^1_{23},\Delta^1_{32},\Delta^2_{13},\Delta^2_{31},\Delta^3_{12},\Delta^3_{21}\}\]
has $6$ elements, but since $\Delta^i_{kj} = - \Delta^i_{jk}$ (see Lemma \ref{lemma:basiceasy}) only
$3$ are needed to span $\fT_4$.  We conclude that $\fT_4$ is at most $5+3=8$ dimensional, as desired.

For the inductive step, assume that $g \geq 5$ and that $\fT_{g-1}$ is at
most $(g-1)((g-1)-2)$-dimensional.  We must prove that
$\fT_g$ is at most $g(g-2)$-dimensional.  The natural map $\fK_{g-1}^a \rightarrow \fK_g^a$ taking
a generator $\PresA{\kappa_1,\kappa_2}$ of $\fK_{g-1}^a$ to the same generator
of $\fK_g^a$ induces a map $\iota\colon \fT_{g-1} \rightarrow \fT_g$.  The map
$\iota$ takes a basic element $\Delta^i_{jk} \in \fT_{g-1}$ to the same
basic element of $\fT_g$.  It follows that the image of $\iota$ is the span
of $R_g[\leq g-1]$.  In particular, $\Span{R_g[\leq g-1]}$ is at most
$(g-1)((g-1)-2)$-dimensional.  The set $R'_g$ from the above claim has
\[(g-2)+(g-2)+1 = 2g-3\]
elements, so $\Span{R'_g}$ is at most $2g-3$ dimensional.  Since
$\fT_g$ is spanned by $R'_g$ and $R_g[\leq g-1]$, we conclude that
$\fT_g$ is at most
\[(2g-3)+(g-1)((g-1)-2) = (2g-3) + (g^2-4g+3) = g^2-2g=g(g-2)\]
dimensional, as desired.

It remains to prove the above claim:

\begin{proof}[Proof of claim]
It is enough to prove that every element $\Delta$ of the generating set
\[R_g = \Set{$\Delta^i_{jk} \in \fT_g$}{$1 \leq i,j,k \leq g$ distinct}.\]
that does not lie in $R_g[\leq g-1]$ can be written as a linear combination
of elements of $R'_g$ and $R_g[\leq g-1]$.  There are
two families of elements of $R_g$ that do not lie in $R_g[\leq g-1]$.

The first are elements of the form $\Delta^g_{ij}$ with $1 \leq i,j \leq g-1$ distinct.
Since $\Delta^g_{ji} = - \Delta^g_{ij}$ (Lemma \ref{lemma:basiceasy}), we can
assume that $i < j$.  If $i=1$, then $\Delta^g_{ij} \in R'_g$, so we can assume that
$2 \leq i < j \leq g-1$.  Lemma \ref{lemma:altrelation3} gives a relation
\begin{equation}
\label{eqn:userelation3}
\Delta^g_{i1} + \Delta^g_{1j} = \Delta^{g}_{ij} + \overline{\PresA{(a_{j}+a_i) \wedge (b_1-b_i),(b_{j}-b_i) \wedge (a_1+a_i)}}.
\end{equation}
Set
\[\kappa = \PresA{(a_{j}+a_i) \wedge (b_1-b_i),(b_{j}-b_i) \wedge (a_1+a_i)}.\]
Since $1,i,j \leq g-1$, we can view $\kappa$ as an element of $\fK_{g-1}^a$.  
If $g \geq 5$, then Lemma \ref{lemma:basicgen} says that $\fT_{g-1}$ is generated
by basic elements, so $\okappa$ is in the span of $R_g[\leq g-1]$.  This
argument does not work if $g=4$; however, in this case  
Lemma \ref{lemma:s3symmetricalt} (which does work in genus $3$) says that $\kappa \in \fK_{g-1}^a$
can be written\footnote{It is enlightening to go through the proof and work this out
explicitly.} as a linear combination of elements of our basis $S$.  By the proof
of Lemma \ref{lemma:basicgen} these map to linear combinations of basic
elements in $\fT_{g-1}$, so again we deduce that $\okappa$ is in the span of $R_g[\leq g-1]$.

In either case, since $\Delta^g_{i1} = -\Delta^g_{1i}$ (Lemma \ref{lemma:basiceasy})
we can rearrange \eqref{eqn:userelation3} and see that
\[\Delta^g_{ij} = \Delta^g_{1j} - \Delta^g_{1i} - \okappa\]
lies in the span of $R_g[\leq g-1]$ and $R'_g$, as desired.

The other family of elements of $R_g$ that do not lie in $R_g[\leq g-1]$ are
those of the form $\Delta^i_{jg}$ and $\Delta^i_{gj}$ for $1 \leq i,j \leq g-1$ distinct.
We must show that these lie in the span of $R_g[\leq g-1]$ and $R'_g$.
Since $\Delta^i_{jg} = - \Delta^i_{gj}$ (Lemma \ref{lemma:basiceasy}), it is enough
to deal with $\Delta^i_{gj}$.  If $i=1$ then $\Delta^i_{gj} \in R'_g$, so
we can assume that $i \neq 1$.  Since we also have $\Delta^2_{g1} \in R'_g$,
we can assume that if $i=2$ then $j \neq 1$.  In other words, we can assume
that $2 \leq i,j \leq g-1$.  Lemma \ref{lemma:altrelation1} gives a relation
\[\Delta^1_{gj} + \Delta^g_{i1} = \Delta^{i}_{gj} + \Delta^j_{i1}.\]
We have $\Delta^1_{gj} \in R'_g$ and $\Delta^j_{i1} \in R_g[\leq g-1]$, and we
already proved that $\Delta^g_{i1}$ is in the span of $R'_g$ and $R_g[\leq g-1]$.
We conclude that $\Delta^{i}_{gj}$ is also in the span of $R'_g$ and $R_g[\leq g-1]$,
as desired.
\end{proof}

This completes the proof of the theorem.
\end{proof}

\part{Verifying the presentation for the symmetric kernel, symmetric version}
\label{part:sym}

We now turn to Theorem \ref{maintheorem:presentationsym}.  See
the introductory \S \ref{section:presentationsym0} for an outline of what we do in this part.
Throughout, we make the following genus assumption:

\begin{assumption}
\label{assumption:genussym}
Throughout Part \ref{part:sym}, we assume that $g \geq 4$.
\end{assumption}

\section{Symmetric kernel, symmetric version: introduction}
\label{section:presentationsym0}

We start by recalling some results and definitions from earlier in the paper, and
then outline what we prove in this part.

\subsection{Symmetric contraction}
Recall that $\omega$ is the symplectic form on $H$.
The {\em symmetric contraction} is the alternating $\Sym^2(H)$-valued alternating
form $\fc$ on $(\wedge^2 H)/\Q$ defined via the formula
\[\text{$\fc(x \wedge y,z \wedge w) = \omega(x,z) y \Cdot w - \omega(x,w) y \Cdot z - \omega(y,z) x \Cdot w + \omega(y,w) x \Cdot z$ for $x,y,z,w \in H$}.\]
Elements $\kappa_1,\kappa_2 \in (\wedge^2 H)/\Q$ are
{\em sym-orthogonal} if $\fc(\kappa_1,\kappa_2) = 0$.  The
{\em sym-orthogonal complement} of $\kappa \in (\wedge^2 H)/\Q$ is the subspace $\kappa^{\perp}$
consisting of all elements that are sym-orthogonal to $\kappa$.

\subsection{Special pairs}
A {\em special pair} in $(\wedge^2 H_{\Z})/\Z$ is an element of the form $x \wedge y$ with
$\omega(x,y) \in \{-1,0,1\}$.  Examples include symplectic pairs and isotropic pairs.
Lemmas \ref{lemma:symplecticorthogonal} and \ref{lemma:isotropicorthogonal} say that
the sym-orthogonal complements in $(\wedge^2 H)/\Q$ of these are:
\begin{itemize}
\item for a symplectic pair $a \wedge b$,  we have $(a \wedge b)^{\perp} = \overline{\wedge^2 \Span{a,b}_{\Q}^{\perp}}$; and
\item for an isotropic pair $a \wedge a'$, we have $(a \wedge a')^{\perp} = \overline{\wedge^2 \Span{a,a'}_{\Q}^{\perp}}$.
\end{itemize}

\subsection{Non-symmetric presentation}
We will use the generators and relations for $\fK_g$ from Theorem \ref{theorem:summarypresentation}, whose statement
we recall:

\newtheorem*{theorem:summarypresentation2}{Theorem \ref{theorem:summarypresentation}}
\begin{theorem:summarypresentation2}
For $g \geq 4$, the vector space $\fK_g$ has the following presentation:
\begin{itemize}
\item {\bf Generators}.
A generator $\Pres{\kappa_1,\kappa_2}$ for all sym-orthogonal
$\kappa_1,\kappa_2 \in (\wedge^2 H)/\Q$ such that either
$\kappa_1$ or $\kappa_2$ (or both) is a special pair.
\item {\bf Relations}.  The following two families of relations:
\begin{itemize}
\item For special pairs $\zeta \in (\wedge^2 H)/\Q$
and all $\kappa_1,\kappa_2 \in (\wedge^2 H)/\Q$ that are sym-orthogonal to $\zeta$
and all $\lambda_1,\lambda_2 \in \Q$, the linearity relations
\begin{align*}
\Pres{\zeta,\lambda_1 \kappa_1 + \lambda_2 \kappa_2} &= \lambda_1 \Pres{\zeta,\kappa_1} + \lambda_2 \Pres{\zeta,\kappa_2} \quad \text{and} \\
\Pres{\lambda_1 \kappa_1 + \lambda_2 \kappa_2,\zeta} &= \lambda_1 \Pres{\kappa_1,\zeta} + \lambda_2 \Pres{\kappa_2,\zeta}.
\end{align*}
\item For all special pairs $\zeta \in (\wedge^2 H)/\Q$ and all $\kappa \in (\wedge^2 H)/\Q$ that are sym-orthogonal to $\zeta$
and all $n \in \Z$ such that $n \zeta$ is a special pair, the relations
\begin{align*}
\Pres{n \zeta,\kappa} &= n \Pres{\zeta,\kappa} \quad \text{and} \\
\Pres{\kappa,n \zeta} &= n \Pres{\kappa,\zeta}.\qedhere
\end{align*}
\end{itemize}
\end{itemize}
\end{theorem:summarypresentation2}

\subsection{Symmetrizing}

Recall from Lemma \ref{lemma:decomposekg} that $\fK_g^s$ is the $+1$-eigenspace of the involution of
$\fK_g$ that takes a generator $\Pres{\kappa_1,\kappa_2}$ to $\Pres{\kappa_2,\kappa_1}$.  We symmetrize
a generator $\Pres{\kappa_1,\kappa_2}$ of $\fK_g$ to
\[\PresS{\kappa_1,\kappa_2} = \frac{1}{2} \left(\Pres{\kappa_1,\kappa_2} + \Pres{\kappa_2,\kappa_1}\right) \in \fK_g^s.\]
The symmetrized generators generate $\fK_g^s$.  They satisfy the same relations as the generators
of $\fK_g$, and also the symmetry relation
$\PresS{\kappa_2,\kappa_1} = \PresS{\kappa_1,\kappa_2}$.

\subsection{Goal and outline}

We have a linearization map $\Phi\colon \fK_g^s \rightarrow \Sym^2((\wedge^2 H)/\Q)$.  On generators,
it satisfies
\[\Phi(\PresS{\kappa_1,\kappa_2}) = \kappa_1 \Cdot \kappa_2 \in \Sym^2((\wedge^2 H)/\Q).\]
Our goal in this part of the paper is to prove Theorem \ref{maintheorem:presentationsym}, which says that $\Phi$ is an isomorphism
from $\fK_g^s$ to $\Sym^2((\wedge^2 H)/\Q)$.
The proof uses the proof technique described in \S \ref{section:prooftechnique}, and is modeled
on the proofs of Theorems \ref{maintheorem:slstd}--\ref{maintheorem:spsym}.  However, since
the calculations are lengthy we spread them out over nine sections:
\begin{itemize}
\item In \S \ref{section:presentationsym1} -- \S \ref{section:presentationsym4}, we construct
a subset $S$ of $\fK_g^s$ such that $\Phi$ restricted to $\Span{S}$ is an isomorphism (Step 1).  This calculation
is lengthy since it depends on the construction of three important families of elements of $\fK_g^s$
(the $\Theta$-, the $\Lambda$-, and the $\Omega$-elements), and it takes work to prove their basic properties.
\item In \S \ref{section:presentationsym5}, we prove that the
$\Sp_{2g}(\Z)$-orbit of $S$ spans $\fK_g^s$ (Step 2).  We also outline the proof that $\Sp_{2g}(\Z)$ takes
takes $\Span{S}$ to itself (Step 3).  
\item Finally, in \S \ref{section:presentationsym6} -- \ref{section:presentationsym9}
we complete the outlined proof that $\Sp_{2g}(\Z)$ takes $\Span{S}$ to itself.
Together with Step 2 this implies that $\Span{S} = \fK_g^s$, so by
Step 1 we conclude that $\Phi$ is an isomorphism.
\end{itemize}
Throughout the following nine sections, $\Phi$ will always mean the linearization map
$\Phi\colon \fK_g^s \rightarrow \Sym^2((\wedge^2 H)/\Q)$.  Also, $\fc$ will always mean
the symmetric contraction.
Finally, we will fix a symplectic basis $\cB = \{a_1,b_1,\ldots,a_g,b_g\}$ for $H_{\Z}$.

\section{Symmetric kernel, symmetric version I: \texorpdfstring{$S_1$}{S1} and structure of target}
\label{section:presentationsym1}

We discuss the structure of $\Sym^2((\wedge^2 H)/\Q)$, and then begin our construction of $S$.

\subsection{Generators and relations for target}
\label{section:genrelsym2}

Let $\prec$ be the following total order on $\cB$:
\[a_1 \prec b_1 \prec a_2 \prec b_2 \prec \cdots \prec a_g \prec b_g.\]
Set
\[T = \Set{$(x \wedge y) \Cdot (z \wedge w)$}{$x,y,z,w \in \cB$, $x \prec y$, $z \prec w$} \subset \Sym^2((\wedge^2 H)/\Q).\]
The set $T$ generates $\Sym^2((\wedge^2 H)/\Q)$, and the relations between elements of $T$ are generated by
the set
\[R = \Set{$\sum\nolimits_{i=1}^g (a_i \wedge b_i) \Cdot (x \wedge y)$}{$x,y \in \cB$, $x \prec y$}.\]

\subsection{Lifting easy generators}

Some elements of $T$ are easily lifted to $\fK_g^s$.  Define
\[S_1 = \Set{$\SPresS{x \wedge y,z \wedge w}$}{$x,y,z,w \in \cB$, $x \prec y$, $z \prec w$, $\fc(x \wedge y,z \wedge w) = 0$}.\]
For $\SPresS{x \wedge y,z \wedge w} \in S_1$, we have
\[\Phi(\SPresS{x \wedge y,z \wedge w}) = (x \wedge y) \Cdot (z \wedge w) \in T.\]
Like we did above, we will write elements of $S_1 \subset \fK_g^s$ in blue.  More generally, we will use blue to write
elements of $\fK_g^s$ that lie in $\Span{S_1}$.  The set $S_1$ consists of two kinds of elements:
\begin{itemize}
\item those of the form $\SPresS{x \wedge y,z \wedge w}$ for $x,y,z,w \in \cB$ with $x \prec y$ and $z \prec w$ and
$\omega(x,z) = \omega(x,w) = \omega(y,z) = \omega(y,w) = 0$; and
\item those of the form $\SPresS{a_i \wedge b_i,a_i \wedge b_i}$ for some $1 \leq i \leq g$.  These lie
in $S_1$ since $\fc$ is alternating, or more concretely due to the calculation
\[\fc(a_i \wedge b_i,a_i \wedge b_i) = -\omega(a_i,b_i) (b_i \Cdot a_i) - \omega(b_i,a_i) (a_i \Cdot b_i) = -(b_i \Cdot a_i) + (a_i \Cdot b_i) = 0.\]
\end{itemize}
Let $T_1 \subset T$ be the image of $S_1$.

\subsection{Lifting easy relations}
Let $R_1$ be the subset of $R$ consisting of relations
between elements of $T_1$.  Thus $R_1$ consists of relations of the form
\[\text{$\sum\nolimits_{i=1}^g (a_i \wedge b_i) \Cdot (a_k \wedge b_k)$ with $1 \leq k \leq g$}.\]
These lift to relations between the elements of $S_1$ due to the bilinearity relations in $\fK_g^s$:
\[\sum_{i=1}^g \SPresS{a_i \wedge b_i,a_k \wedge b_k} = \SPresS{\sum_{i=1}^g a_i \wedge b_i,a_k \wedge b_k} = \SPresS{0,a_k \wedge b_k} = 0.\]

\subsection{Other relations do not affect \texorpdfstring{$T_1$}{T1}}

Set $R_2 = R \setminus R_1$.  Each element of $R_2$ involves an element of $T$ that appears
in no other relations in $R$.  For instance, for $1 \leq k < \ell \leq g$ the set $R_2$ contains
the relation
\[\sum_{i=1}^g (a_i \wedge b_i) \Cdot (a_k \wedge a_{\ell}),\] 
and no other relation in $R$ uses the generator $(a_k \wedge b_k) \Cdot (a_k \wedge a_{\ell})$.
This implies that the subspace of $\Sym^2((\wedge^2 H)/\Q)$ spanned by $T_1$ is generated
by $T_1$ subject to only the relations in $R_1$.  This implies:

\begin{lemma}
\label{lemma:symspans1}
The linearization map $\Phi$ takes $\Span{S_1}$ isomorphically to $\Span{T_1}$.
\end{lemma}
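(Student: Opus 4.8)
The statement to prove is Lemma \ref{lemma:symspans1}: the linearization map $\Phi$ takes $\Span{S_1}$ isomorphically onto $\Span{T_1}$. The strategy mirrors the proof of Lemma \ref{lemma:altspans1} (the alternating-version analogue), so the proof will be short. First I would observe that $\Phi$ restricts to a bijection $S_1 \to T_1$ by construction; indeed each generator $\SPresS{x \wedge y,z \wedge w} \in S_1$ maps to $(x \wedge y) \Cdot (z \wedge w) \in T_1$, and these exhaust $T_1$. Consequently $\Phi$ restricted to the free vector space on $S_1$ surjects onto $\Span{T_1}$. The content of the lemma is therefore that the kernel of this map is spanned by the ``obvious'' relations, i.e.\ that no extra relations are imposed.

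\textbf{Key steps.} The decisive input is the analysis of the relation set $R$ carried out just before the lemma. I would write $R = R_1 \cup R_2$ where $R_1$ consists of the relations $\sum_{i=1}^g (a_i \wedge b_i) \Cdot (a_k \wedge b_k)$ for $1 \leq k \leq g$ (these are the relations lying entirely inside $\Span{T_1}$), and $R_2 = R \setminus R_1$. As noted in the paragraph ``Other relations do not affect $T_1$'', every relation in $R_2$ involves some generator $(a_k \wedge b_k) \Cdot (a_k \wedge a_\ell)$ (or a similar one with an $a$ or $b$ repeated) that appears in no other relation of $R$; hence one can use these relations to eliminate those generators, and the upshot is that the subspace $\Span{T_1} \subset \Sym^2((\wedge^2 H)/\Q)$ has presentation: generators $T_1$, relations $R_1$ only. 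On the $\fK_g^s$ side, each relation in $R_1$ lifts to a genuine relation among elements of $S_1$ via the bilinearity relations of $\fK_g^s$, exactly as displayed in the ``Lifting easy relations'' paragraph:
\[
\sum_{i=1}^g \SPresS{a_i \wedge b_i,a_k \wedge b_k} = \SPresS{\sum_{i=1}^g a_i \wedge b_i,a_k \wedge b_k} = \SPresS{0,a_k \wedge b_k} = 0.
\]
Putting these together: $\Span{S_1}$ is spanned by $S_1$ subject to (at least) the relations $R_1$, $\Phi$ maps $S_1$ bijectively to $T_1$ carrying the relations $R_1$ to the relations $R_1$, and $\Span{T_1}$ is \emph{exactly} $T_1$ modulo $R_1$. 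A dimension/rank count then forces $\Phi\colon \Span{S_1} \to \Span{T_1}$ to be an isomorphism.

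\textbf{Main obstacle.} The only real subtlety, and the step I would be most careful about, is justifying that the relations in $R_2$ genuinely do not affect $T_1$ — i.e.\ that $\Span{T_1}$ as a subspace of $\Sym^2((\wedge^2 H)/\Q)$ receives no relations beyond $R_1$. This is a bookkeeping argument about which monomials $(x\wedge y)\Cdot(z\wedge w)$ occur in which elements of $R$; the point is that each $r \in R_2$ contains a ``pivot'' generator outside $T_1$ occurring uniquely, so Gaussian elimination on $R$ triangulates cleanly and leaves $R_1$ as the relations internal to $\Span{T_1}$. Everything else is formal. I would also note in passing that, unlike the alternating case, here there is no $T_3$ to worry about and no sign issues from $\wedge$ versus $\Cdot$ (Warning \ref{warning:alternating} is irrelevant in the symmetric setting), which makes the symmetric version of this lemma if anything slightly cleaner than its alternating counterpart.
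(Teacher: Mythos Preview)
Your proposal is correct and takes essentially the same approach as the paper: the paper's proof is a one-liner (``Immediate from the fact $\Phi$ takes $S_1$ bijectively to $T_1$ and each relation in $R_1$ lifts to a relation between the elements of $S_1$''), relying entirely on the preceding paragraphs that set up the bijection $S_1 \to T_1$, the lifting of $R_1$, and the observation that each $r \in R_2$ contains a pivot generator outside $T_1$ --- exactly the ingredients you assemble. Your passing remark that ``there is no $T_3$ to worry about'' is slightly off (the symmetric case does have $T_3$ and $T_4$, and in any case $T_3$ plays no role in the alternating $S_1$ lemma either), but this does not affect the argument.
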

\begin{proof}
Immediate from the fact $\Phi$ takes $S_1$ bijectively to $T_1$ and each relation in $R_1$ lifts
to a relation between the elements of $S_1$.
\end{proof}

\subsection{Remaining generators}

Define
\begin{align*}
T_2 &= \Set{$(a_i \wedge b_i) \Cdot (x \wedge b_i)$, $(a_i \wedge b_i) \Cdot (a_i \wedge y)$}{$1 \leq i \leq g$, $x,y \in \cB \setminus \{a_i,b_i\}$},\\
T_3 &= \Set{$(a_i \wedge y) \Cdot (x \wedge b_i)$}{$1 \leq i \leq g$, $x,y \in \cB \setminus \{a_i,b_i\}$, $\omega(x,y)=0$},\\
T_4 &= \Set{$(a_i \wedge a_j) \Cdot (b_i \wedge b_j)$, $(a_i \wedge b_j) \Cdot (b_i \wedge a_j)$}{$1 \leq i < j \leq g$}.
\end{align*}
The set $T_2 \cup T_3 \cup T_4$ is almost equal to $T \setminus T_1$.  The only difference is that
for some $\eta \in T \setminus T_1$ we have $-\eta \in T_2 \cup T_3 \cup T_4$.  For instance, we have
$(a_1 \wedge b_1) \Cdot (b_1 \wedge a_2) \in T \setminus T_1$ but
\[(a_1 \wedge b_1) \Cdot (a_2 \wedge b_1) = -(a_1 \wedge b_1) \Cdot (b_1 \wedge a_2) \in T_2.\]
In any case, we have that $\Sym^2((\wedge^2 H)/\Q)$ is generated by $T_1 \cup T_2 \cup T_3 \cup T_4$ subject
to appropriate versions of the relations in $R$.  In the next three sections, we will construct
sets $S_2,S_3,S_4 \subset \fK_g^s$ such that $\Phi$ takes $S_i$ bijectively to $T_i$, and we will
prove that all relations in $R$ lift to relations between elements of $S = S_1 \cup \cdots \cup S_4$.
The elements of $S_2$ and $S_3$ and $S_4$ are called $\Theta$-elements, $\Lambda$-elements, and $\Omega$-elements.

\subsection{Obvious blue elements}
\label{section:obviousbluesym}

Before we continue, we make a useful observation.
Recall that we write elements of $\Span{S_1}$ in blue.  One easy way to recognize these is as follows.
Consider a generator $\PresS{\kappa_1,\kappa_2}$ such that there exists subsets $\cB_1,\cB_2 \subset \cB$ such that:
\begin{itemize}
\item $\kappa_1 \in \overline{\wedge^2 \Span{\cB_1}}$ and $\kappa_2 \in \overline{\wedge^2 \Span{\cB_2}}$; and
\item $\omega(x,y) = 0$ for all $x \in \cB_1$ and $y \in \cB_2$.
\end{itemize}
Note that $\cB_1$ and $\cB_2$ need not be disjoint.  We then have that $\PresS{\kappa_1,\kappa_2} \in \Span{S_1}$,
so we can write $\SPresS{\kappa_1,\kappa_2}$.  This is most easily seen by example:
\begin{align*}
\PresS{(a_1+3b_1) \wedge a_2, a_2 \wedge (a_3 - 2b_3)} = &\PresS{(a_1 + 3b_1) \wedge a_2, a_2 \wedge a_3} - 2 \PresS{(a_1+3b_1) \wedge a_2,a_2 \wedge b_3} \\
                                                       = &\SPresS{a_1 \wedge a_2,a_2 \wedge a_3} + 3 \SPresS{b_1 \wedge a_2,a_2 \wedge a_3} \\
                                                         &- 2\SPresS{a_1 \wedge a_2,a_2 \wedge b_3} -6 \SPresS{b_1 \wedge a_2, a_2 \wedge b_3}.
\end{align*}
We thus have $\SPresS{(a_1+3b_1) \wedge a_2, a_2 \wedge (a_3 - 2b_3)} \in \Span{S_1}$.

\section{Symmetric kernel, symmetric version II: \texorpdfstring{$S_2$}{S2} and the \texorpdfstring{$\Theta$}{Theta}-elements}
\label{section:presentationsym2}

We continue using all the notation from \S \ref{section:presentationsym1}.
This section constructs the set $S_2$ that lifts $T_2$.  It consists of what are called
$\Theta$-elements of $\fK_g^s$, and the first part of this section constructs
these in more generality than is needed for $S_2$ alone.

\subsection{Definition}
Let $a \wedge b$ be a symplectic pair in $\wedge^2 H_{\Z}$ and let $x,y \in \Span{a,b}^{\perp}$.  The 
$\Theta$-elements $\ThetaS{a \wedge b,x \wedge b}$ and $\ThetaS{a \wedge b,a \wedge y}$ are elements
of $\fK_g^s$ that are taken by $\Phi$ to
\[(a \wedge b) \Cdot (x \wedge b) \in \Sym^2((\wedge^2 H)/\Q) \quad \text{and} \quad
  (a \wedge b) \Cdot (a \wedge y) \in \Sym^2((\wedge^2 H)/\Q),\]
respectively.
To find these elements, note that
\begin{align*}
\left((a+x) \wedge b\right) \Cdot \left((a+x) \wedge b\right) &= (a \wedge b) \Cdot (a \wedge b) + 2 (a \wedge b) \Cdot (x \wedge b) + (x \wedge b) \Cdot (x \wedge b),\\
\left(a \wedge (b+y)\right) \Cdot \left(a \wedge (b+y)\right) &= (a \wedge b) \Cdot (a \wedge b) + 2 (a \wedge b) \Cdot (a \wedge y) + (a \wedge y) \Cdot (a \wedge y).
\end{align*}
Since $\fc$ is alternating, for any $z \in (\wedge^2 H)/\Q$ we have $\fc(z,z) = 0$ and thus there exists
a generator $\PresS{z,z}$ of $\fK_g^s$.  This suggests:

\begin{definition}
\label{definition:theta}
For a symplectic pair $a \wedge b$ in $\wedge^2 H_{\Z}$ and $x,y \in \Span{a,b}^{\perp}$, define
\begin{align*}
\ThetaS{a \wedge b,x \wedge b} &= \frac{1}{2}\left(\PresS{(a+x) \wedge b,(a+x) \wedge b} - \PresS{a \wedge b,a \wedge b} - \PresS{x \wedge b,x \wedge b}\right),\\
\ThetaS{a \wedge b,a \wedge y} &= \frac{1}{2}\left(\PresS{a \wedge (b+y),a \wedge (b+y)} - \PresS{a \wedge b,a \wedge b} - \PresS{a \wedge y,a \wedge y}\right).\qedhere
\end{align*}
\end{definition}

By construction, we have
\begin{align*}
\Phi(\ThetaS{a \wedge b,x \wedge b}) &= (a \wedge b) \Cdot (x \wedge b), \\
\Phi(\ThetaS{a \wedge b,a \wedge y}) &= (a \wedge b) \Cdot (a \wedge y).
\end{align*}

\begin{remark}
\label{remark:thetaabuse}
Despite our notation $\ThetaS{a \wedge b,x \wedge b}$, this depends on the ordered
tuple $(a,b,x)$, not on $a \wedge b$ and $x \wedge b$.  A similar remark applies to $\ThetaS{a \wedge b,a \wedge y}$.
We chose to abuse notation like
this to emphasize that $\ThetaS{a \wedge b,x \wedge b}$ should be regarded as the ``missing''
element $\PresS{a \wedge b,x \wedge b}$ of $\fK_g^s$ that should exist if $\fK_g^s$ is isomorphic
to $\Sym^2((\wedge^2 H)/\Q)$.  Later we will prove that $\ThetaS{a \wedge b,x \wedge b}$ behaves
as if it only depends on $a \wedge b$ and $x \wedge b$, e.g., Lemma \ref{lemma:thetabilinear2}
below says that $\ThetaS{(a+nb) \wedge b,x \wedge b} = \ThetaS{a \wedge b,x \wedge b}$ for
all $n \in \Z$.  
\end{remark}

\subsection{\texorpdfstring{$\Theta$}{Theta}-expansion I}

If $x,y,z \in H_{\Z}$ are pairwise orthogonal elements such that $x \wedge z$ and $y \wedge z$ and $(x+y) \wedge z$ are isotropic pairs,
then the relations in $\fK_g^s$ imply that
\[\PresS{(x+y) \wedge z,(x+y) \wedge z} = \PresS{x \wedge z,x \wedge z} + 2 \PresS{x \wedge z,y \wedge z} + \PresS{y \wedge z,y \wedge z}.\]
Using $\Theta$-elements, we can similarly expand out some other elements:

\begin{lemma}[$\Theta$-expansion I]
\label{lemma:thetaexpansion1}
Let $a \wedge b$ be a symplectic pair and let $x,y \in \Span{a,b}^{\perp}$.  Then
\begin{align*}
\PresS{(a+x) \wedge b,(a+x) \wedge b} &= \PresS{a \wedge b,a \wedge b} + 2\ThetaS{a \wedge b,x \wedge b} + \PresS{x \wedge b,x \wedge b},\\
\PresS{a \wedge (b+y),a \wedge (b+y)} &= \PresS{a \wedge b,a \wedge b} + 2\ThetaS{a \wedge b,y \wedge a} + \PresS{y \wedge a,y \wedge a}.
\end{align*}
\end{lemma}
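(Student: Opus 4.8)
The statement ``$\Theta$-expansion I'' (Lemma~\ref{lemma:thetaexpansion1}) asks to rewrite $\PresS{(a+x)\wedge b,(a+x)\wedge b}$ and $\PresS{a\wedge(b+y),a\wedge(b+y)}$ in terms of $\Theta$-elements and the ``diagonal'' generators $\PresS{z,z}$. By the symmetry of the roles of $b$ and $a$ inside a symplectic pair (replacing $(a,b)$ by $(b,-a)$ swaps the two formulas up to signs that cancel because everything here is quadratic), it suffices to prove the first identity; I would state this reduction in one line and then focus on the first formula.

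First I would simply unwind the definition of $\ThetaS{a\wedge b,x\wedge b}$ from Definition~\ref{definition:theta}:
\[
2\,\ThetaS{a\wedge b,x\wedge b} = \PresS{(a+x)\wedge b,(a+x)\wedge b} - \PresS{a\wedge b,a\wedge b} - \PresS{x\wedge b,x\wedge b}.
\]
Rearranging this equation gives exactly
\[
\PresS{(a+x)\wedge b,(a+x)\wedge b} = \PresS{a\wedge b,a\wedge b} + 2\,\ThetaS{a\wedge b,x\wedge b} + \PresS{x\wedge b,x\wedge b},
\]
which is the desired identity. The only thing that needs checking is that every generator appearing on the right-hand side is actually a well-defined element of $\fK_g^s$, i.e.\ that $\ThetaS{a\wedge b,x\wedge b}$ makes sense. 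This is precisely the hypothesis of Definition~\ref{definition:theta}: $a\wedge b$ is a symplectic pair and $x\in\Span{a,b}^\perp$, so $(a+x)\wedge b$, $a\wedge b$, and $x\wedge b$ are all special pairs (the first two are symplectic pairs since $\omega(a+x,b)=\omega(a,b)=1$, and $x\wedge b$ is either an isotropic pair or zero since $\omega(x,b)=0$), and $\fc$ is alternating so each $\PresS{z,z}$ exists. Hence the right-hand side is legitimate and the identity holds as a tautology once the definition is substituted.

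For the second formula I would observe that $\ThetaS{a\wedge b,y\wedge a}$ is, by Definition~\ref{definition:theta} applied with the symplectic pair written as $a\wedge b$ and second coordinate $a\wedge y = -(y\wedge a)$, equal to $\ThetaS{a\wedge b, a\wedge y}$ up to the sign bookkeeping in the definition; more directly, $\ThetaS{a\wedge b,a\wedge y} = \tfrac12(\PresS{a\wedge(b+y),a\wedge(b+y)} - \PresS{a\wedge b,a\wedge b} - \PresS{a\wedge y,a\wedge y})$, and since $\PresS{a\wedge y,a\wedge y} = \PresS{y\wedge a,y\wedge a}$ (the generator $\PresS{z,z}$ only depends on $z$ up to the ambient identification, and here $a\wedge y$ and $y\wedge a$ differ by a sign so $\PresS{a\wedge y,a\wedge y}=(-1)^2\PresS{y\wedge a,y\wedge a}$ via bilinearity) and $\ThetaS{a\wedge b,a\wedge y}=\ThetaS{a\wedge b,y\wedge a}$ by the same sign considerations, rearranging gives the claimed identity. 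There is essentially no obstacle here: the whole content of the lemma is the definitional rearrangement, and the only point requiring a sentence of justification is the verification that the $\Theta$-elements are defined, which is immediate from the standing hypotheses. I would keep the write-up to a few lines.
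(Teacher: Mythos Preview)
Your proposal is correct and matches the paper's approach exactly: the paper's entire proof is the single line ``Immediate from Definition~\ref{definition:theta}'', and you unwind that definition and rearrange, which is all there is to do.

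One small caveat: your sign discussion for the second formula is slightly off. With the conventions later introduced in \S\ref{section:thetasymmetry}, one has $\ThetaS{a\wedge b,y\wedge a}=-\ThetaS{a\wedge b,a\wedge y}$, not equality; the appearance of $y\wedge a$ rather than $a\wedge y$ in the lemma statement is almost certainly a typo (compare how the lemma is actually invoked in the proof of Lemma~\ref{lemma:lambdawelldefined}, where $\ThetaS{a\wedge b,a\wedge y}$ and $\PresS{a\wedge y,a\wedge y}$ appear). Since the symmetry conventions have not yet been introduced at this point in the paper, and since the content is a literal rearrangement of Definition~\ref{definition:theta}, this does not affect the substance of your argument.
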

\begin{proof}
Immediate from Definition \ref{definition:theta}.
\end{proof}

\subsection{\texorpdfstring{$\Theta$}{Theta}-linearity}

The following is a key property of the $\Theta$-elements:

\begin{lemma}[$\Theta$-linearity]
\label{lemma:thetalinear}
Let $a \wedge b$ be a symplectic pair.  Then for all $z_1,z_2 \in (a \wedge b)^{\perp}$ and $\lambda_1,\lambda_2 \in \Z$ we have
\begin{align*}
\ThetaS{a \wedge b,(\lambda_1 z_1 + \lambda_2 z_2) \wedge b} &= \lambda_1 \ThetaS{a \wedge b, z_1 \wedge b} + \lambda_2 \ThetaS{a \wedge b,z_2 \wedge b}, \\
\ThetaS{a \wedge b,a \wedge (\lambda_1 z_1 + \lambda_2 z_2)} &= \lambda_1 \ThetaS{a \wedge b, a \wedge z_1} + \lambda_2 \ThetaS{a \wedge b,a \wedge z_2}.
\end{align*}
\end{lemma}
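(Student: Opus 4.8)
\textbf{Proof proposal for Lemma \ref{lemma:thetalinear} ($\Theta$-linearity).}

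The plan is to reduce everything to the bilinearity relations already present in $\fK_g^s$ (coming from Theorem \ref{theorem:summarypresentation}) together with the defining formula for $\Theta$-elements in Definition \ref{definition:theta}. The two statements are proved by the same argument, so I would treat the first one, $\ThetaS{a \wedge b,(\lambda_1 z_1 + \lambda_2 z_2) \wedge b} = \lambda_1 \ThetaS{a \wedge b, z_1 \wedge b} + \lambda_2 \ThetaS{a \wedge b,z_2 \wedge b}$, and remark that the second follows by the symmetric argument (interchanging the roles of $a$ and $b$, using $\Span{a,b}^{\perp}$ in place of... well, the same subspace). The starting observation is that $z_1, z_2 \in (a \wedge b)^{\perp} = \overline{\wedge^2 \Span{a,b}_{\Q}^{\perp}}$ by Lemma \ref{lemma:symplecticorthogonal}, so $z_1, z_2 \in \Span{a,b}^{\perp}$, hence $a \wedge z_i$, $b \wedge z_i$, and $(a+z_i) \wedge b$ are all special pairs (in fact $z_i \wedge b$ has $\omega(z_i,b)=0$ so it is an isotropic pair or zero), and all the generators appearing below are legitimate generators of $\fK_g^s$.

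The key step is to expand $\ThetaS{a \wedge b,(\lambda_1 z_1 + \lambda_2 z_2) \wedge b}$ using its definition:
\[
\ThetaS{a \wedge b,(\lambda_1 z_1 + \lambda_2 z_2) \wedge b} = \frac{1}{2}\Bigl(\PresS{(a+\lambda_1 z_1+\lambda_2 z_2) \wedge b,(a+\lambda_1 z_1+\lambda_2 z_2) \wedge b} - \PresS{a \wedge b,a \wedge b} - \PresS{(\lambda_1 z_1+\lambda_2 z_2) \wedge b,(\lambda_1 z_1+\lambda_2 z_2) \wedge b}\Bigr).
\]
Now I would apply the bilinearity relations of $\fK_g^s$ (both on the left and right entry) to the first and third terms, using that $a \wedge b$ is a symplectic pair, $(\lambda_1 z_1 + \lambda_2 z_2) \wedge b$ is a special pair (isotropic or zero), and the relevant vectors are sym-orthogonal to these. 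Expanding $\PresS{(a+\lambda_1 z_1+\lambda_2 z_2) \wedge b,(a+\lambda_1 z_1+\lambda_2 z_2) \wedge b}$ bilinearly in both slots produces nine terms: $\PresS{a\wedge b,a\wedge b}$, cross terms $2\lambda_i\PresS{a\wedge b,z_i\wedge b}$, the diagonal terms $\lambda_i^2\PresS{z_i\wedge b,z_i\wedge b}$, and cross terms $2\lambda_1\lambda_2\PresS{z_1\wedge b,z_2\wedge b}$; similarly $\PresS{(\lambda_1 z_1+\lambda_2 z_2) \wedge b,(\lambda_1 z_1+\lambda_2 z_2) \wedge b}$ expands to the pieces not involving $a$. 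Taking the difference and dividing by $2$, everything not involving $a$ cancels, leaving exactly $\lambda_1 \PresS{a \wedge b, z_1 \wedge b} + \lambda_2 \PresS{a \wedge b, z_2 \wedge b}$ --- but each $\PresS{a \wedge b, z_i \wedge b}$ is precisely a legitimate generator (it is $\PresS{(\text{symplectic pair}),(\text{special pair})}$), and one checks by unwinding Definition \ref{definition:theta} applied with a single $z_i$, together with the same bilinear expansion, that $\PresS{a \wedge b, z_i \wedge b} = \ThetaS{a \wedge b, z_i \wedge b}$. So in fact the cleaner route is to first observe the \emph{identity} $\ThetaS{a \wedge b, z \wedge b} = \PresS{a \wedge b, z \wedge b}$ for any $z$ with $z \wedge b$ a special pair sym-orthogonal to $a \wedge b$ --- which is immediate from Definition \ref{definition:theta} and the bilinearity of $\PresS{-,-}$ in the first slot applied to $\PresS{(a+z)\wedge b,(a+z)\wedge b}$ --- and then the lemma is just the bilinearity relation for $\PresS{a\wedge b, -}$.

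I do not expect a serious obstacle here; the only thing to be careful about is bookkeeping --- making sure that every generator written down ($\PresS{z_i \wedge b, z_i \wedge b}$, $\PresS{z_1 \wedge b, z_2 \wedge b}$, etc.) is genuinely a generator of $\fK_g^s$, i.e.\ that one of the two entries is a special pair and the entries are sym-orthogonal. Since $z_i \wedge b$ is an isotropic pair (or zero) when $z_i \in \Span{a,b}^\perp$ and $b \notin \Span{z_i}$, and these are sym-orthogonal to each other because $\Span{z_i, b}$ and $\Span{z_j,b}$ both lie inside an isotropic subspace, Lemma \ref{lemma:isotropicorthogonal} handles sym-orthogonality, and the degenerate cases ($z_i$ a multiple of $b$, or $z_i = 0$) are absorbed by the conventions $\PresS{0,\kappa}=0$. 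The remaining work is the routine symmetric expansion for the second identity, obtained by applying the first slot/second slot argument with $a$ and $b$ swapped.
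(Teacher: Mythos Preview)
Your approach has a genuine gap: the bilinearity relations in $\fK_g^s$ only apply when the entries are sym-orthogonal, and the key terms you want to produce are \emph{not}. Concretely, for $z \in \Span{a,b}^{\perp}$ with $z \neq 0$ one computes
\[
\fc(a \wedge b,\, z \wedge b) \;=\; -\,\omega(a,b)\,(b \Cdot z) \;=\; -\,b \Cdot z \;\neq\; 0,
\]
so $a \wedge b$ and $z \wedge b$ are \emph{not} sym-orthogonal. Thus $\PresS{a \wedge b,\, z \wedge b}$ is not a generator of $\fK_g^s$ at all, and your claimed identity $\ThetaS{a \wedge b, z \wedge b} = \PresS{a \wedge b, z \wedge b}$ is meaningless. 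This is exactly why the $\Theta$-elements had to be introduced in the first place: they stand in for the ``missing'' generators $\PresS{a \wedge b, z \wedge b}$. For the same reason you cannot expand $\PresS{(a+\lambda_1 z_1+\lambda_2 z_2) \wedge b,\,(a+\lambda_1 z_1+\lambda_2 z_2) \wedge b}$ bilinearly into nine terms: splitting off $a \wedge b$ in one slot would require it to be sym-orthogonal to $(a+\lambda_1 z_1+\lambda_2 z_2)\wedge b$, which fails by the same computation. (Your claim that $z_1\wedge b$ and $z_2\wedge b$ are sym-orthogonal also fails in general: $\fc(z_1\wedge b, z_2\wedge b)=\omega(z_1,z_2)\,b\Cdot b$, and nothing forces $\omega(z_1,z_2)=0$.)

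The paper's proof works around this obstruction indirectly. It first reduces (via the $\Sp_{2g}(\Z)$-action and injectivity of $\Phi$ on $\Span{S_1}$) to verifying the additivity identity modulo $\Span{S_1}$ in the special case of a partial basis $\{z_1,z_2\}$ with $\omega(z_1,z_2)=0$. The crucial trick is then to use the relation $\sum_i a_i\wedge b_i = 0$ in $(\wedge^2 H)/\Q$ to rewrite $(a_1+n_2a_2+n_3a_3)\wedge b_1$ as a sum of terms each of which \emph{is} sym-orthogonal to $(a_1+n_2a_2+n_3a_3)\wedge b_1$; only then can the bilinearity relations be legitimately invoked. Finally Theorem~\ref{maintheorem:spstd} bootstraps from this partial-basis case to the general statement.
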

\begin{proof}
Both formulas are proved similarly, so we will prove the first.  
The key calculation is the following special case of the lemma:

\begin{unnumberedclaim}
For a partial basis $\{z_1,z_2\}$ of $(a \wedge b)^{\perp}$ with $\omega(z_1,z_2) = 0$ and $n_1,n_2 \in \Z$, we have
\[\ThetaS{a \wedge b,(n_1 z_1+n_2 z_2) \wedge b} = \ThetaS{a \wedge b,n_1 z_1 \wedge b} + \ThetaS{a \wedge b,n_2 z_2 \wedge b}.\]
\end{unnumberedclaim}
\begin{proof}[Proof of claim]
Whether or not the claim holds is invariant under
the action of $\Sp_{2g}(\Z)$ on $\fK_g^s$.  Recall that we have our fixed symplectic basis
$\cB = \{a_1,b_1,\ldots,a_g,b_g\}$ for $H_{\Z}$.  Applying an appropriate element of $\Sp_{2g}(\Z)$,
we can assume that
\[\text{$a_1 = a$,\ \ $b_1 = b$,\ \ $a_2 = z_1$,\ \ $a_3 = z_2$}.\]
To make our notation easier to digest, replace $n_1,n_2 \in \Z$ by $n_2,n_3 \in \Z$. 
Since the restriction of $\Phi$ to $\Span{S_1}$
is injective (Lemma \ref{lemma:symspans1}) and $\Phi$ takes both
\begin{equation}
\label{eqn:thetalineartoprove1}
\ThetaS{a_1 \wedge b_1,(n_2 a_2 + n_3 a_3) \wedge b_1}
\end{equation}
and
\begin{equation}
\label{eqn:thetalineartoprove2}
\ThetaS{a_1 \wedge b_1,n_2 a_2 \wedge b_1} + \ThetaS{a_1 \wedge b_1,n_3 a_3 \wedge b_1}
\end{equation}
to the same element of $\Sym^2((\wedge^2 H)/\Q)$, it is enough to prove that
\eqref{eqn:thetalineartoprove1} and \eqref{eqn:thetalineartoprove2} are equal
modulo $\Span{S_1}$.  Let $\equiv$ denote equality in $\fK_g^s$ modulo $\Span{S_1}$.

By definition, $2\ThetaS{a_1 \wedge b_1,(n_2 a_2 + n_3 a_3) \wedge b_1}$
equals\footnote{Here and in future calculations we use \S \ref{section:obviousbluesym} to identify and
then delete blue terms lying in $\Span{S_1}$.}
\begin{align*}
&\PresS{(a_1+n_2 a_2+n_3 a_3) \wedge b_1,(a_1+n_2 a_2+n_3 a_3) \wedge b_1} \\
&- \SPresS{a_1 \wedge b_1,a_1 \wedge b_1} - \SPresS{(n_2 a_2+n_3 a_3) \wedge b_1,(n_2 a_2+n_3 a_3) \wedge b_1} \\
\equiv &\PresS{(a_1+n_2 a_2+n_3 a_3) \wedge b_1,(a_1+n_2 a_2+n_3 a_3) \wedge b_1}
\end{align*}
and $2\ThetaS{a_1 \wedge b_1,n_2 a_2 \wedge b_1} + 2\ThetaS{a_1 \wedge b_1,n_3 a_3 \wedge b_1}$ equals
\begin{align*}
&\PresS{(a_1+n_2 a_2) \wedge b_1,(a_1+n_2 a_2) \wedge b_1} - \SPresS{a_1 \wedge b_1,a_1 \wedge b_1} - \SPresS{n_2 a_2 \wedge b_1,n_2 a_2 \wedge b_1}, \\
&+\PresS{(a_1+n_3 a_3) \wedge b_1,(a_1+n_3 a_3) \wedge b_1} - \SPresS{a_1 \wedge b_1,a_1 \wedge b_1} - \SPresS{n_3 a_3 \wedge b_1,n_3 a_3 \wedge b_1} \\
\equiv &\PresS{(a_1+n_2 a_2) \wedge b_1,(a_1+n_2 a_2) \wedge b_1} + \PresS{(a_1+n_3 a_3) \wedge b_1,(a_1+n_3 a_3) \wedge b_1}.
\end{align*}
Our goal, therefore, is to prove that
\begin{small}
\begin{align}
\label{eqn:thetalineartoprove3}
       &\PresS{(a_1+n_2 a_2+n_3 a_3) \wedge b_1,(a_1+n_2 a_2+n_3 a_3) \wedge b_1} \\
\equiv &\PresS{(a_1+n_2 a_2) \wedge b_1,(a_1+n_2 a_2) \wedge b_1} + \PresS{(a_1+n_3 a_3) \wedge b_1,(a_1+n_3 a_3) \wedge b_1} \nonumber
\end{align}
\end{small}%
In $(\wedge^2 H)/\Q$, we have the following identities.  The colors are there to help the reader
match up terms with later formulas.\footnote{The only color which has a definite meaning right now is blue,
which is used to indicate elements of $\Span{S_1}$.  In later sections we will give meanings to purple
and orange and green terms, but currently these colors have no meaning and we are free to use them.}
\begin{alignat*}{5}
&\purple{(a_1+n_2 a_2+n_3 a_3) \wedge b_1} &+& a_2 \wedge (b_2 - n_2 b_1) &+& a_3 \wedge (b_3 - n_3 b_1)   &+& \sum\nolimits_{i=4}^g a_i \wedge b_i &= 0, \\
&\orange{(a_1+n_2 a_2) \wedge b_1}         &+& a_2 \wedge (b_2 - n_2 b_1) &+& a_3 \wedge b_3           &+& \sum\nolimits_{i=4}^g a_i \wedge b_i &= 0, \\
&\green{(a_1+n_3 a_3) \wedge b_1}         &+& a_2 \wedge b_2             &+& a_3 \wedge (b_3-n_3 a_1) &+& \sum\nolimits_{i=4}^g a_i \wedge b_i &= 0.
\end{alignat*}
We plug these into the terms of \eqref{eqn:thetalineartoprove3}.
Matching up terms of the same color,
we see that $\PresS{(a_1+n_2 a_2+n_3 a_3) \wedge b_1,\purple{(a_1+n_2 a_2+n_3 a_3) \wedge b_1}}$ equals
\begin{small}
\begin{align*}
&-\PresS{(a_1+n_2 a_2+n_3 a_3) \wedge b_1,a_2 \wedge (b_2 - n_2 b_1)} 
- \PresS{(a_1+n_2 a_2+n_3 a_3) \wedge b_1,a_3 \wedge (b_3 - n_3 b_1)} \\
&- \sum\nolimits_{i=4}^g \SPresS{(a_1+n_2 a_2+n_3 a_3) \wedge b_1,a_i \wedge b_i} \\
\equiv &-\PresS{(a_1+n_2 a_2) \wedge b_1,a_2 \wedge (b_2 - n_2 b_1)} -n_3 \SPresS{a_3 \wedge b_1,a_2 \wedge (b_2 - n_2 b_1)} \\
&\quad  -\PresS{(a_1+n_3 a_3) \wedge b_1,a_3 \wedge (b_3 - n_3 b_1)} - n_2 \SPresS{a_2 \wedge b_1,a_3 \wedge (b_3 - n_3 b_1)} \\
\equiv &-\PresS{(a_1+n_2 a_2) \wedge b_1,a_2 \wedge (b_2 - n_2 b_1)} - \PresS{(a_1+n_3 a_3) \wedge b_1,a_3 \wedge (b_3 - n_3 b_1)}
\end{align*}
\end{small}%
and $\PresS{(a_1+n_2 a_2) \wedge b_1,\orange{(a_1+n_2 a_2) \wedge b_1}}$ equals
\begin{small}
\begin{align*}
&-\PresS{(a_1+n_2 a_2) \wedge b_1,a_2 \wedge (b_2 - n_2 b_1)}-\SPresS{(a_1+n_2 a_2) \wedge b_1,a_3 \wedge b_3} \\
&-\sum\nolimits_{i=4}^g \SPresS{(a_1+n_2 a_2) \wedge b_1,a_i,b_i} \\
\equiv &-\PresS{(a_1+n_2 a_2) \wedge b_1,a_2 \wedge (b_2 - n_2 b_1)}
\end{align*}
\end{small}%
and $\PresS{(a_1+n_3 a_3) \wedge b_1,\green{(a_1+n_3 a_3) \wedge b_1}}$ equals
\begin{small}
\begin{align*}
&-\SPresS{(a_1+n_3 a_3) \wedge b_1,a_2 \wedge b_2} - \PresS{(a_1+n_3 a_3) \wedge b_1,a_3 \wedge (b_3-n_3 a_1)} \\
&-\sum\nolimits_{i=4}^g \SPresS{(a_1+n_3 a_3) \wedge b_1,a_i \wedge b_i} \\
\equiv &-\PresS{(a_1+n_3 a_3) \wedge b_1,a_3 \wedge (b_3-n_3 a_1)}.
\end{align*}
\end{small}%
Combining these three equalities gives \eqref{eqn:thetalineartoprove3}.
\end{proof}

We now return to the proof of the lemma.  Define
$\fK_g^s[a \wedge b,- \wedge b]$ to be the subspace of $\fK_g^s$ spanned by the $\ThetaS{a \wedge b,x \wedge b}$ as $x$
ranges over all elements of $\Span{a,b}^{\perp}$.  The linearization map $\Phi\colon \fK_g^s \rightarrow \Sym^2((\wedge^2 H)/\Q)$
takes $\fK_g^s[a \wedge b,- \wedge b]$ to the subspace
\begin{equation}
\label{eqn:thetalinearitytarget}
\Set{$(a \wedge b) \Cdot (h \wedge b)$}{$h \in \Span{a,b}^{\perp}_{\Q}$} \cong \Span{a,b}^{\perp}_{\Q}.
\end{equation}
To prove the lemma, it is enough to prove that 
the restriction of $\Phi$ to $\fK_g^s[a \wedge b,- \wedge b]$ is an isomorphism.

Let $\fK_g^s[a \wedge b,- \wedge b]_{\prim}$ be the subspace of
$\fK_g^s[a \wedge b,- \wedge b]$ spanned by $\ThetaS{a \wedge b,x \wedge b}$ with $x$ a primitive element of
$\Span{a,b}^{\perp}$.  The case $n_1 = n_2 = 1$ of the above claim implies that we can
use Theorem \ref{maintheorem:spstd} to see that $\Phi$ takes $\fK_g^s[a \wedge b,- \wedge b]_{\prim}$ isomorphically
to \eqref{eqn:thetalinearitytarget}.  

To complete the proof, we must prove that every element of
$\fK_g^s[a \wedge b,- \wedge b]$ equals an element of $\fK_g^s[a \wedge b,- \wedge b]_{\prim}$.
For this, consider a general $x \in \Span{a,b}^{\perp}$.  Write $x = n x'$ with $x'$ primitive.  Let
$y$ be such that $\{x',y\}$ is a partial basis of $(a \wedge b)^{\perp}$ with $\omega(x',y) = 0$.
The above claim then implies that
\[\ThetaS{a \wedge b,(x+y) \wedge b} = \ThetaS{a \wedge b,x \wedge b} + \ThetaS{a \wedge b,y \wedge b}.\]
On the other hand, since $x+y$ is primitive the fact that $\Phi$ takes $\fK_g^s[a \wedge b,- \wedge b]_{\prim}$ isomorphically
to \eqref{eqn:thetalinearitytarget} implies that
\[\ThetaS{a \wedge b,(x+y) \wedge b} = \ThetaS{a \wedge b,(n x'+y) \wedge b} = n \ThetaS{a \wedge b,x' \wedge b} + \ThetaS{a \wedge b,y \wedge b}.\]
Combining these two identities, we conclude that
\[\ThetaS{a \wedge b,x \wedge b} = n \ThetaS{a \wedge b,x' \wedge b} \in \fK_g^s[a \wedge b,- \wedge b]_{\prim},\]
as desired.
\end{proof}

\subsection{\texorpdfstring{$\Theta$}{Theta}-symmetry}
\label{section:thetasymmetry}

It is inconvenient to require the entries of
$\ThetaS{a \wedge b,x \wedge b}$ and $\ThetaS{a \wedge b,a \wedge y}$ to appear
in a definite order.  We therefore define that each of the following terms
equals $\ThetaS{a \wedge b,x \wedge b}$:
\begin{alignat*}{6}
&&\ThetaS{a \wedge b,x \wedge b},\ \ &-&\ThetaS{b \wedge a,x \wedge b},\ \ &-&\ThetaS{a \wedge b,b \wedge x},\ \ &\ThetaS{b \wedge a,b \wedge x},\\
&&\ThetaS{x \wedge b,a \wedge b},\ \ &-&\ThetaS{x \wedge b,b \wedge a},\ \ &-&\ThetaS{b \wedge x,a \wedge b},\ \ &\ThetaS{b \wedge x,b \wedge a}.
\end{alignat*}
Similarly, we define that each of the following terms equals $\ThetaS{a \wedge b,a \wedge y}$:
\begin{alignat*}{6}
&&\ThetaS{a \wedge b,a \wedge y},\ \ &-&\ThetaS{b \wedge a,a \wedge y},\ \ &-&\ThetaS{a \wedge b,y \wedge a},\ \ &\ThetaS{b \wedge a,y \wedge a},\\
&&\ThetaS{a \wedge y,a \wedge b},\ \ &-&\ThetaS{a \wedge y,b \wedge a},\ \ &-&\ThetaS{y \wedge a,a \wedge b},\ \ &\ThetaS{y \wedge a,b \wedge a}.
\end{alignat*}

\subsection{\texorpdfstring{$\Theta$}{Theta}-signs}
\label{section:thetasigns}

Lemma \ref{lemma:thetalinear} ($\Theta$-linearity) implies that
\[\ThetaS{a \wedge b,(-x) \wedge b} = -\ThetaS{a \wedge b,x \wedge b} \quad \text{and} \quad
\ThetaS{a \wedge b,a \wedge (-y)} = -\ThetaS{a \wedge b,a \wedge y}.\]
For a symplectic pair $a \wedge b$, there are three other symplectic pairs obtained by swapping $a$ and $b$ while
multiplying them by $\pm 1$, namely $(-b) \wedge a$ and $b \wedge (-a)$ and $(-a) \wedge (-b)$.  The following
shows that changing $a \wedge b$ to one of these changes $\ThetaS{a \wedge b,x \wedge b}$ and $\ThetaS{a \wedge b,y \wedge a}$
in the obvious way.  The statement uses the notation from \S \ref{section:thetasymmetry}:

\begin{lemma}
\label{lemma:thetasign}
Let $a \wedge b$ be a symplectic pair in $H_{\Z}$ and let $x,y \in \Span{a,b}^{\perp}$.  Then
\begin{small}
\begin{alignat*}{6}
&&\ThetaS{a \wedge (-b), x \wedge (-b)}    &= & \ThetaS{a \wedge b,x \wedge b}, \quad &&\ThetaS{a \wedge (-b),a \wedge y}       &= &-\ThetaS{a \wedge b,a \wedge y}, \\
&&\ThetaS{(-a) \wedge b,x \wedge b}        &= &-\ThetaS{a \wedge b,x \wedge b}, \quad &&\ThetaS{(-a) \wedge b,(-a) \wedge y}    &= & \ThetaS{a \wedge b,a \wedge y}, \\
&&\ThetaS{(-a) \wedge (-b),x \wedge (-b)}  &= &-\ThetaS{a \wedge b,x \wedge b}, \quad &&\ThetaS{(-a) \wedge (-b),(-a) \wedge y} &= &-\ThetaS{a \wedge b,a \wedge y}.
\end{alignat*}
\end{small}%
\end{lemma}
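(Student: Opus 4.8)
The plan is to verify all six identities by unwinding Definition \ref{definition:theta} and then exploiting the bilinearity (linearity) relations in $\fK_g^s$. The key observation is that in the defining expression for $\ThetaS{a \wedge b,x \wedge b}$ — namely $\frac{1}{2}\left(\PresS{(a+x) \wedge b,(a+x) \wedge b} - \PresS{a \wedge b,a \wedge b} - \PresS{x \wedge b,x \wedge b}\right)$ — every bracket $\PresS{\zeta,\zeta}$ has equal entries, so negating $\zeta$ introduces a factor $(-1)^2 = 1$ and leaves the bracket unchanged. Consequently, when the left-hand sides are interpreted via the same formula with the sign-changed vectors substituted in, certain substitutions disappear entirely and the others turn into a change of sign on the ``variable'' entry $x$ or $y$, which is governed by $\Theta$-linearity.

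Concretely, for the first identity I would substitute $b \mapsto -b$ in the defining formula: each $v \wedge (-b) = -(v \wedge b)$, so every bracket is unchanged and $\ThetaS{a \wedge (-b),x \wedge (-b)} = \ThetaS{a \wedge b,x \wedge b}$. For the third identity I would substitute $a \mapsto -a$: the brackets $\PresS{(-a)\wedge b,(-a)\wedge b} = \PresS{a\wedge b,a\wedge b}$ and $\PresS{x \wedge b,x \wedge b}$ are unchanged, while $\PresS{(-a+x)\wedge b,(-a+x)\wedge b} = \PresS{(a-x)\wedge b,(a-x)\wedge b}$; the resulting expression is exactly the defining formula for $\ThetaS{a \wedge b,(-x)\wedge b}$, which equals $-\ThetaS{a \wedge b,x \wedge b}$ by Lemma \ref{lemma:thetalinear} ($\Theta$-linearity), as recorded in \S \ref{section:thetasigns}. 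The second and fourth identities are the analogous computations for the second family $\ThetaS{a \wedge b,a \wedge y}$: substituting $a \mapsto -a$ changes nothing, whereas substituting $b \mapsto -b$ produces the defining formula for $\ThetaS{a \wedge b,a \wedge (-y)} = -\ThetaS{a \wedge b,a \wedge y}$. Finally, the fifth and sixth identities (the $(-a)\wedge(-b)$ cases) follow by composing the two substitutions, so they reduce to cases already handled.

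The main obstacle is notational rather than mathematical: one must first pin down what $\ThetaS{a \wedge (-b),x \wedge (-b)}$ and its siblings \emph{mean}, since $a \wedge (-b)$ is not literally a symplectic pair. The point (cf.\ Remark \ref{remark:thetaabuse}) is that the right-hand side of Definition \ref{definition:theta} makes sense for arbitrary $a,b,x \in H_{\Z}$ with $x \in \Span{a,b}^{\perp}$, and that is the intended reading. Once this is agreed, the proof is a short bookkeeping exercise with signs; the only external input is Lemma \ref{lemma:thetalinear}, which in turn rests on Theorem \ref{maintheorem:spstd}.
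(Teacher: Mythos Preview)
Your argument is correct and is essentially the paper's proof. The paper also reduces everything to the observation that each term in Definition \ref{definition:theta} has the form $\PresS{\zeta,\zeta}$ and hence is invariant under $\zeta \mapsto -\zeta$, so after substituting signs one recognizes the defining formula for $\ThetaS{a \wedge b,(-x)\wedge b}$ or $\ThetaS{a \wedge b,a \wedge(-y)}$ and finishes with Lemma \ref{lemma:thetalinear}. The paper even singles out $\ThetaS{(-a)\wedge b,x\wedge b}$ as its representative case.

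One small correction concerns your reading of the notation. The intended meaning of the left-hand sides is \emph{not} ``plug the sign-changed vectors into the right-hand side of Definition \ref{definition:theta}'' (Remark \ref{remark:thetaabuse} does not say this); rather, the sentence before the lemma explicitly says the statement uses the conventions of \S\ref{section:thetasymmetry}. Thus, for example, $\ThetaS{(-a)\wedge b,x\wedge b}$ is by definition $\ThetaS{b\wedge(-a),b\wedge x}$, where $b\wedge(-a)$ \emph{is} a symplectic pair, and one then applies the second line of Definition \ref{definition:theta}. Since $b\wedge z = -(z\wedge b)$ and $\PresS{-\zeta,-\zeta}=\PresS{\zeta,\zeta}$, this yields exactly the same expression you wrote down, so the distinction is purely bookkeeping and your computation goes through unchanged.
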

\begin{proof}
These are all proved the same way, so we will give the details for $\ThetaS{(-a) \wedge b,x \wedge b} =-\ThetaS{a \wedge b,x \wedge b}$
and leave the others to the reader.\footnote{We chose this one because it is slightly harder than the other cases.}  
Here $(-a) \wedge b$ is not a symplectic pair, but $b \wedge (-a)$ is a symplectic pair.  Using
the notation from \S \ref{section:thetasymmetry}, we interpret $\ThetaS{(-a) \wedge b,x \wedge b}$ as\footnote{There is no
sign change here since both $(-a) \wedge b$ and $x \wedge b$ are flipped and each flip causes a sign change.}
$\ThetaS{b \wedge (-a),b \wedge x}$, so our goal is to prove that $\ThetaS{b \wedge (-a),b \wedge x} = -\ThetaS{a \wedge b,x \wedge b}$.
By definition, $\ThetaS{b \wedge (-a),b \wedge x}$ equals
\begin{align*}
&\PresS{b \wedge (-a+x),b \wedge (-a+x)} - \PresS{b \wedge (-a),b \wedge (-a)} - \PresS{b \wedge x,b \wedge x} \\
&=\PresS{(a-x) \wedge b,(a-x) \wedge b} - \PresS{a \wedge b,a \wedge b} - \PresS{(-x) \wedge b,(-x) \wedge b}.
\end{align*}
This last expression equals $\ThetaS{a \wedge b,-x \wedge b}$, which by Lemma \ref{lemma:thetalinear} ($\Theta$-linearity) equals
$-\ThetaS{a \wedge b,x \wedge b}$.
\end{proof}

\subsection{The set \texorpdfstring{$S_2$}{S2}}

We now return to constructing $S_2$.  Recall that
\[T_2 = \Set{$(a_i \wedge b_i) \Cdot (x \wedge b_i)$, $(a_i \wedge b_i) \Cdot (a_i \wedge y)$}{$1 \leq i \leq g$, $x,y \in \cB \setminus \{a_i,b_i\}$}.\]
Define
\[S_2 = \Set{$\SThetaS{a_i \wedge b_i,x \wedge b_i}$, $\SThetaS{a_i \wedge b_i,a_i \wedge y}$}{$1 \leq i \leq g$, $x,y \in \cB \setminus \{a_i,b_i\}$}.\]
Like we did here, we will write elements of $\Span{S_2}$ in purple.  For example, using
Lemma \ref{lemma:thetalinear} ($\Theta$-linearity), for $1 \leq i \leq g$ and $z \in \Span{a_i,b_i}^{\perp}$ we have
elements $\SThetaS{a_i \wedge b_i,z \in b_i}$ and $\SThetaS{a_i \wedge b_i,a_i \wedge z}$ in $\Span{S_2}$.
By construction, the linearization map $\Phi$ takes $S_2$ bijectively to $T_2$.  Even better:

\begin{lemma}
\label{lemma:symspans2}
The linearization map $\Phi$ takes $\Span{S_1,S_2}$ isomorphically to $\Span{T_1,T_2}$.
\end{lemma}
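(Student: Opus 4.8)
The plan is to show that $\Phi$ restricts to a surjection from $\Span{S_1,S_2}$ onto $\Span{T_1,T_2}$ which is also injective. Surjectivity is immediate, since $\Phi$ takes $S_1$ bijectively to $T_1$ and $S_2$ bijectively to $T_2$. For injectivity, the first step is to present $\Span{T_1,T_2}$ by generators and relations. Exactly as in the proof of Lemma~\ref{lemma:altspans2} (see Claim~\ref{claim:allrelationst12} there for the alternating analogue), I would check by a short case analysis that every relation
\[
r_{x,y} = \sum\nolimits_{i=1}^g (a_i \wedge b_i) \Cdot (x \wedge y) \in R, \qquad x,y \in \cB,\ x \prec y,
\]
involves only generators lying in $T_1 \cup T_2$ (up to sign): when $\{x,y\} = \{a_k,b_k\}$ every term lies in $T_1$, while when $\omega(x,y) = 0$ the terms with $\{a_i,b_i\} \cap \{x,y\} = \emptyset$ lie in $T_1$ and the remaining (at most two) terms are $\pm$ elements of $T_2$. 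Hence $\Span{T_1,T_2}$ is the quotient of the free vector space on $T_1 \cup T_2$ by the relations in $R$.

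Since $\Phi$ identifies $S_1 \cup S_2$ bijectively with $T_1 \cup T_2$, there is a linear map $\rho$ from the free vector space on $T_1 \cup T_2$ to $\fK_g^s$ sending each generator $t$ to $\Phi^{-1}(t)$; its image is $\Span{S_1,S_2}$ and $\Phi \circ \rho$ is the identity on generators. It then suffices to show that $\rho$ kills every relation in $R$: for then $\rho$ descends to a right inverse $\Span{T_1,T_2} \to \Span{S_1,S_2}$ of $\Phi$, which forces $\Phi$ to be injective on $\Span{S_1,S_2}$. For the relations $r_{a_k,b_k}$ this is the bilinearity computation already used to lift the relations coming from symplectic pairs: $\rho(r_{a_k,b_k}) = \sum_{i=1}^g \SPresS{a_i \wedge b_i, a_k \wedge b_k} = \SPresS{0, a_k \wedge b_k} = 0$. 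So the work lies in the relations $r_{x,y}$ with $\omega(x,y) = 0$.

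For these, I would first note that $\rho$ is $\SymSp_g$-equivariant: $\Span{S_1}$ is visibly $\SymSp_g$-stable, $\Span{S_2}$ is $\SymSp_g$-stable by Lemma~\ref{lemma:thetasign} together with $\Theta$-linearity (Lemma~\ref{lemma:thetalinear}), and since the preimage under $\Phi$ of a generator is the unique element of $\pm(S_1 \cup S_2)$ above it, $\rho$ commutes with the $\SymSp_g$-action. As $\SymSp_g$ carries every pair $(x,y) \in \cB^2$ with $\omega(x,y) = 0$ to $(a_1,a_2)$ up to sign, it is enough to prove $\rho(r_{a_1,a_2}) = 0$. Unwinding $\rho(r_{a_1,a_2})$ via the identification of its terms with generators of $T_1 \cup T_2$ gives
\[
L := \SThetaS{a_1 \wedge b_1, a_1 \wedge a_2} - \SThetaS{a_2 \wedge b_2, a_2 \wedge a_1} + \sum\nolimits_{i=3}^g \SPresS{a_i \wedge b_i, a_1 \wedge a_2},
\]
and a direct computation gives $\Phi(L) = \omega \Cdot (a_1 \wedge a_2) = 0$ in $\Sym^2((\wedge^2 H)/\Q)$. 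Thus it is enough to show $L \in \Span{S_1}$, for then $L = 0$ by the injectivity of $\Phi$ on $\Span{S_1}$ (Lemma~\ref{lemma:symspans1}). To see this I would use Definition~\ref{definition:theta} to write, modulo $\Span{S_1}$,
\[
2\SThetaS{a_1 \wedge b_1, a_1 \wedge a_2} \equiv \PresS{a_1 \wedge (b_1+a_2), a_1 \wedge (b_1+a_2)}, \qquad 2\SThetaS{a_2 \wedge b_2, a_2 \wedge a_1} \equiv \PresS{a_2 \wedge (a_1+b_2), a_2 \wedge (a_1+b_2)}
\]
(the other two terms in each definition being blue), and then substitute into one entry of each using that $\{a_1, b_1+a_2, a_2, a_1+b_2, a_3, b_3, \ldots, a_g, b_g\}$ is a symplectic basis and $\omega = 0$ in $(\wedge^2 H)/\Q$; this substitution is legitimized by the linearity relations of Theorem~\ref{theorem:summarypresentation}, with the required sym-orthogonality checks supplied by Lemma~\ref{lemma:symplecticorthogonal}. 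Both then reduce, modulo $\Span{S_1}$, to $-\PresS{a_1 \wedge (b_1+a_2), a_2 \wedge (a_1+b_2)}$, and since $\PresS{-,-}$ is symmetric the two $\Theta$-terms of $L$ are congruent modulo $\Span{S_1}$; hence $L \in \Span{S_1}$.

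The step I expect to be the main obstacle is this final reduction of the $\Theta$-elements modulo $\Span{S_1}$: one must verify that the $(\wedge^2 H)/\Q$-relation substitutions keep every intermediate bracket a genuine generator of $\fK_g^s$ (entries sym-orthogonal, one of them a special pair), and that the leftover terms are indeed ``blue'' in the sense of \S\ref{section:obviousbluesym}. This is the same flavor of multi-step manipulation in $(\wedge^2 H)/\Q$ that recurs throughout the paper (for instance in the proof of Lemma~\ref{lemma:altspans2}), now with the $\Theta$-elements playing the role of the symplectic-pair brackets used there; it is routine but must be carried out with care.
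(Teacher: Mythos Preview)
Your proposal is correct and follows essentially the same approach as the paper: show that $\Phi$ gives a bijection $S_1\cup S_2\to T_1\cup T_2$, observe that all of $R$ lives in $T_1\cup T_2$, and then verify that each $r_{x,y}$ lifts to a relation in $\fK_g^s$ by expanding the two $\Theta$-terms via Definition~\ref{definition:theta}, discarding blue terms, and using the vanishing of $\omega$ in $(\wedge^2 H)/\Q$ coming from the symplectic basis $\{a_1,b_1+a_2,a_2,a_1+b_2,a_3,b_3,\dots\}$. The paper packages the core computation as the separate Lemma~\ref{lemma:liftsymr2} ($\Theta$-symplectic basis) and works out the representative case $(z,w)=(a_1,b_2)$, whereas you reduce via $\SymSp_g$ to $(a_1,a_2)$ and argue inline; your endgame (both $\PresS{\,\cdot\,,\,\cdot\,}$ collapse to the common term $-\PresS{a_1\wedge(b_1+a_2),\,a_2\wedge(a_1+b_2)}$ modulo $\Span{S_1}$, hence agree by symmetry) is a slight streamlining of the paper's chain of substitutions, but the ideas are identical.
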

\begin{proof}
Recall that in Lemma \ref{lemma:symspans1} we proved that $\Phi$ takes $\Span{S_1}$ isomorphically
onto $\Span{T_1}$.  Part of the proof of that lemma was that $\Phi$ takes $S_1$ bijectively to
$T_1$.  It follows that $\Phi$ takes $S_1 \cup S_2$ bijectively to $T_1 \cup T_2$.  What we must
prove is that all relations between elements of $T_1 \cup T_2$ lift to relations between $S_1 \cup S_2$.

We constructed all the relations between elements of $T = T_1 \cup \cdots \cup T_4$ in
\S \ref{section:genrelsym2}, and in fact all of them only involve elements of $T_1 \cup T_2$.  Some
only involve elements of $T_1$, and as we observed in the proof of Lemma \ref{lemma:symspans1} these all lift
to relations between elements of $S_1$.  The remaining relations are of the form
\[\text{$\sum\nolimits_{i=1}^g (a_i \wedge b_i) \Cdot (x \wedge y)$ with $x,y \in \cB$ with $x \prec y$ and $\omega(x,y)=0$}.\]
Lemma \ref{lemma:liftsymr2} below proves that these do indeed lift to relations between elements of
$S_1 \cup S_2$.
\end{proof}

The above proof used the following, which for later use we state in more generality than we need at the moment:

\begin{lemma}[$\Theta$-symplectic basis]
\label{lemma:liftsymr2}
Let $\{x_1,y_1,\ldots,x_g,y_g\}$ be a symplectic basis for $H_{\Z}$, let $1 \leq n < m \leq g$, and
let $z \in \{x_n,y_n\}$ and $w \in \{x_m,y_m\}$.  Then
\[\ThetaS{x_n \wedge y_n,z \wedge w} + \ThetaS{x_m \wedge y_m,z \wedge w} + \sum_{\substack{1 \leq i \leq g \\ i \neq n,m}} \PresS{x_i \wedge y_i,z \wedge w} = 0.\]
\end{lemma}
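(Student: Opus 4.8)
The plan is to reduce to one convenient case using $\Sp_{2g}(\Z)$-equivariance, carry out an explicit expansion of the left-hand side modulo $\Span{S_1}$, and then upgrade the resulting congruence to an equality via the injectivity of $\Phi$ on $\Span{S_1}$ established in Lemma \ref{lemma:symspans1}.

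For the reduction, note first that the $\Theta$-construction of Definition \ref{definition:theta} is $\Sp_{2g}(\Z)$-equivariant: for $f \in \Sp_{2g}(\Z)$ we have $\Span{f(a),f(b)}^{\perp} = f(\Span{a,b}^{\perp})$, so $f(\ThetaS{a \wedge b,x \wedge b}) = \ThetaS{f(a) \wedge f(b),f(x) \wedge f(b)}$, and similarly for the other type of $\Theta$-element. Since $f$ is an automorphism of $\fK_g^s$, the left-hand side of the claimed identity vanishes for the symplectic basis $\{x_i,y_i\}$ and the chosen $z,w$ exactly when its $f$-image does; the $f$-image is the left-hand side for the basis $\{f(x_i),f(y_i)\}$ and the elements $f(z),f(w)$. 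As $\Sp_{2g}(\Z)$ acts transitively on ordered symplectic bases, and as there are elements of $\Sp_{2g}(\Z)$ permuting the members of $\cB$ up to sign within the hyperbolic planes indexed by $1$ and $2$ (for instance $a_1 \mapsto b_1$, $b_1 \mapsto -a_1$, fixing all other basis vectors), it suffices to treat the single case $x_i = a_i$, $y_i = b_i$, $n = 1$, $m = 2$, $z = a_1$, $w = a_2$.

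For the computation, write $L$ for the left-hand side in this case. On the one hand $\Phi(L) = \bigl(\sum_{i=1}^g a_i \wedge b_i\bigr) \Cdot (a_1 \wedge a_2) = \omega \Cdot (a_1 \wedge a_2) = 0$ in $\Sym^2((\wedge^2 H)/\Q)$, since $\omega = 0$ there. On the other hand I will show $L \in \Span{S_1}$; together with Lemma \ref{lemma:symspans1} this forces $L = 0$. To establish $L \in \Span{S_1}$, expand $2\ThetaS{a_1 \wedge b_1,a_1 \wedge a_2}$ and $2\ThetaS{a_2 \wedge b_2,a_1 \wedge a_2} = -2\ThetaS{a_2 \wedge b_2,a_2 \wedge a_1}$ via Definition \ref{definition:theta}; the two resulting copies of $\PresS{a_1 \wedge a_2,a_1 \wedge a_2}$ cancel, using $\PresS{a_2 \wedge a_1,a_2 \wedge a_1} = \PresS{a_1 \wedge a_2,a_1 \wedge a_2}$. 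Each of the four surviving ``diagonal'' generators $\PresS{a_1 \wedge b_1,a_1 \wedge b_1}$, $\PresS{a_2 \wedge b_2,a_2 \wedge b_2}$, $\PresS{a_1 \wedge (b_1+a_2),a_1 \wedge (b_1+a_2)}$, $\PresS{a_2 \wedge (a_1+b_2),a_2 \wedge (a_1+b_2)}$ is rewritten by substituting, into one of its two slots, a relation $\sum_i u_i \wedge v_i = 0$ of $(\wedge^2 H)/\Q$ coming from a symplectic basis and then applying the linearity relations in $\fK_g^s$; the two relations needed are $\sum_{i=1}^g a_i \wedge b_i = 0$ and $a_1 \wedge (b_1+a_2) + a_2 \wedge (a_1+b_2) + \sum_{i=3}^g a_i \wedge b_i = 0$, the latter coming from the symplectic basis $\{a_1,\ b_1+a_2,\ a_2,\ a_1+b_2,\ a_3,b_3,\dots,a_g,b_g\}$. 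Every term produced this way has the form $\PresS{\kappa_1,\kappa_2}$ with $\kappa_1$ and $\kappa_2$ supported on $\omega$-orthogonal subsets of $\cB$, hence lies in $\Span{S_1}$ by the criterion of \S\ref{section:obviousbluesym}, with the sole exception of two terms $\pm\PresS{a_1 \wedge (b_1+a_2),a_2 \wedge (a_1+b_2)}$, which cancel against each other after using the symmetry relation $\PresS{\kappa_1,\kappa_2} = \PresS{\kappa_2,\kappa_1}$. Finally, each $\PresS{a_i \wedge b_i,a_1 \wedge a_2}$ with $i \geq 3$ already lies in $\Span{S_1}$ for the same reason. Collecting everything gives $2L \in \Span{S_1}$, hence $L \in \Span{S_1}$, as needed.

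The main obstacle is bookkeeping: keeping track of all the generators produced by the two symplectic-basis substitutions and checking in each case that a term is either ``blue'' (i.e.\ in $\Span{S_1}$) or cancels in a pair. There are no conceptual surprises here — the calculation is of the same shape as, and no harder than, the one in the proof of Lemma \ref{lemma:thetalinear} ($\Theta$-linearity) — and the three remaining cases for $(z,w)$ need no new computation, being immediate from the equivariance reduction.
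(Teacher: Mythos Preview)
Your proof is correct and follows essentially the same approach as the paper: reduce via $\Sp_{2g}(\Z)$-equivariance to the standard basis with $n=1$, $m=2$, expand the $\Theta$-elements from their definition, use symplectic-basis relations in $(\wedge^2 H)/\Q$ to rewrite the diagonal terms, and conclude by the injectivity of $\Phi$ on $\Span{S_1}$ from Lemma~\ref{lemma:symspans1}. The only differences are cosmetic---you compute the case $(z,w)=(a_1,a_2)$ whereas the paper does $(z,w)=(a_1,b_2)$, and your cancellation of the two non-blue terms $\pm\PresS{a_1\wedge(b_1+a_2),a_2\wedge(a_1+b_2)}$ replaces the paper's slightly longer chain of substitutions.
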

\begin{proof}
Whether this holds is invariant under the action of $\Sp_{2g}(\Z)$ on $\fK_g^s$, so applying an appropriate
element of $\Sp_{2g}(\Z)$ we can assume that the given symplectic basis is our fixed symplectic basis
$\cB = \{a_1,b_1,\ldots,a_g,b_g\}$.  Moreover, recalling the subgroup $\SymSp_{g}$ from \S \ref{section:spgen}
we can apply an appropriate element of $\SymSp_{g}$ and ensure that $n=1$ and $m=2$.  Our desired relation
is thus
\begin{equation}
\label{eqn:liftsymr2toprove1}
\SThetaS{a_1 \wedge b_1,z \wedge w} + \SThetaS{a_2 \wedge b_2,z \wedge w} + \sum\nolimits_{i=3}^g \SPresS{a_i \wedge b_i,z \wedge 2} = 0.
\end{equation}
Finally, since $\Phi$ restricted to $\Span{S_1}$ is injective (Lemma \ref{lemma:symspans1}) and
$\Phi$ takes \eqref{eqn:liftsymr2toprove1} to a true relation in $\Sym^2((\wedge^2 H)/\Q)$, it is enough
to prove that \eqref{eqn:liftsymr2toprove1} holds modulo $\Span{S_1}$.  In other words, letting $\equiv$
denote equality modulo $\Span{S_1}$ we must prove that
\begin{equation}
\label{eqn:liftsymr2toprove2}
\SThetaS{a_1 \wedge b_1,z \wedge w} + \SThetaS{a_2 \wedge b_2,z \wedge w} \equiv 0.
\end{equation}
For concreteness, we will prove this for $z = a_1$ and $w = b_2$.  The other cases are similar.
Using Lemma \ref{lemma:thetalinear} ($\Theta$-linearity), the relation \eqref{eqn:liftsymr2toprove2} is equivalent to
\begin{equation}
\label{eqn:liftsymrel1}
\SThetaS{a_1 \wedge b_1, a_1 \wedge b_2} - \SThetaS{a_2 \wedge b_2, -a_1 \wedge b_2} \equiv 0.
\end{equation}
By definition, we have
\begin{align*}
2 \SThetaS{a_1 \wedge b_1, a_1 \wedge b_2}  = &\PresS{a_1 \wedge (b_1+b_2),a_1 \wedge (b_1+b_2)} - \SPresS{a_1 \wedge b_1,a_1 \wedge b_1} \\
                                              &- \SPresS{a_1 \wedge b_2,a_1 \wedge b_2} \equiv \PresS{a_1 \wedge (b_1+b_2),a_1 \wedge (b_1+b_2)},\\
2 \SThetaS{a_2 \wedge b_2, -a_1 \wedge b_2} = &\PresS{(a_2-a_1) \wedge b_2,(a_2-a_1) \wedge b_2} - \SPresS{a_2 \wedge b_2,a_2 \wedge b_2} \\
                                              &- \SPresS{a_1 \wedge b_2,a_1 \wedge b_2} \equiv \PresS{(a_2-a_1) \wedge b_2,(a_2-a_1) \wedge b_2}.
\end{align*}
The relation \eqref{eqn:liftsymrel1} is thus equivalent to
\begin{equation}
\label{eqn:liftsymrel2} 
\PresS{a_1 \wedge (b_1+b_2),a_1 \wedge (b_1+b_2)} - \PresS{(a_2-a_1) \wedge b_2,(a_2-a_1) \wedge b_2} \equiv 0.
\end{equation}
In $(\wedge^2 H)/\Q$, we have
\[a_1 \wedge (b_1+b_2) + (a_2-a_1) \wedge b_2 + \sum\nolimits_{i=3}^g a_i \wedge b_i = 0.\]
This implies that
\begin{align*}
\PresS{a_1 \wedge (b_1+b_2),a_1 \wedge (b_1+b_2)} = &-\PresS{a_1 \wedge (b_1+b_2),(a_2-a_1) \wedge b_2} \\
                                                    &- \sum\nolimits_{i=3}^g \SPresS{a_1 \wedge (b_1+b_2),a_i \wedge b_i} \\
                                                  \equiv &-\PresS{a_1 \wedge (b_1+b_2),(a_2-a_1) \wedge b_2}.
\end{align*}
Plugging this into \eqref{eqn:liftsymrel2}, we see that our desired relation is equivalent to showing
that the following is equivalent to $0$:
\begin{align*}
 &\PresS{a_1 \wedge (b_1+b_2),(a_2-a_1) \wedge b_2} + \PresS{(a_2-a_1) \wedge b_2,(a_2-a_1) \wedge b_2} \\
=&\PresS{a_1 \wedge (b_1+b_2) + (a_2-a_1) \wedge b_2, (a_2-a_1) \wedge b_2} \\
=&\PresS{a_1 \wedge b_1+a_2 \wedge b_2,(a_2-a_1) \wedge b_2} =-\sum\nolimits_{i=3}^g \SPresS{a_i \wedge b_i,(a_2-a_1) \wedge b_2} \equiv 0.\qedhere
\end{align*}
\end{proof}

\subsection{Additional bilinearity relations}

We close this section by proving some additional relations between the $\Theta$-elements.

\begin{lemma}[$\Theta$-bilinearity I]
\label{lemma:thetabilinear1}
Let $a \wedge b$ be a symplectic pair in $H_{\Z}$ and $z \in \Span{a,b}^{\perp}$.  Then:
\begin{itemize}
\item for $x \in \Span{a,b,z}^{\perp}$ we have
$\ThetaS{(a+z) \wedge b,x \wedge b} = \ThetaS{a \wedge b,x \wedge b} + \PresS{z \wedge b,x \wedge b}$.
\item for $y \in \Span{a,b,z}^{\perp}$ we have
$\ThetaS{a \wedge (b+z),a \wedge y} = \ThetaS{a \wedge b,a \wedge y} + \PresS{a \wedge z,a \wedge y}$.
\end{itemize}
\end{lemma}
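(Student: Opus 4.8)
�The plan is to prove both identities in the same way, so I would do the first one, $\ThetaS{(a+z) \wedge b,x \wedge b} = \ThetaS{a \wedge b,x \wedge b} + \PresS{z \wedge b,x \wedge b}$, and then remark that the second is proved identically. The key point to exploit is that $\Phi$ restricted to $\Span{S_1}$ is injective (Lemma \ref{lemma:symspans1}), together with the fact that $\Phi$ takes all three terms on display to manifestly compatible elements of $\Sym^2((\wedge^2 H)/\Q)$: indeed $\Phi(\ThetaS{(a+z)\wedge b,x\wedge b}) = ((a+z)\wedge b)\Cdot(x\wedge b) = (a\wedge b)\Cdot(x\wedge b) + (z\wedge b)\Cdot(x\wedge b)$, which matches $\Phi$ of the right-hand side. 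So it suffices to prove the identity modulo $\Span{S_1}$, i.e.\ that the difference of the two sides lies in $\Span{S_1}$.

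First I would reduce to a standard position. Whether the identity holds is invariant under the $\Sp_{2g}(\Z)$-action on $\fK_g^s$, so applying an appropriate element I may assume $a = a_1$, $b = b_1$, $z = a_2$, $x = a_3$ (using that $x \in \Span{a,b,z}^{\perp}$ is orthogonal to $a$, $b$, $z$). Then I expand both $\Theta$-terms using Definition \ref{definition:theta}. Writing $\equiv$ for equality modulo $\Span{S_1}$ and using \S\ref{section:obviousbluesym} to discard blue terms, $2\ThetaS{(a_1+a_2)\wedge b_1,a_3\wedge b_1} \equiv \PresS{(a_1+a_2+a_3)\wedge b_1,(a_1+a_2+a_3)\wedge b_1}$ after deleting the blue pieces $\SPresS{(a_1+a_2)\wedge b_1,(a_1+a_2)\wedge b_1}$ and $\SPresS{a_3\wedge b_1,a_3\wedge b_1}$ (these are blue because all the relevant vectors are orthogonal), and similarly $2\ThetaS{a_1\wedge b_1,a_3\wedge b_1} \equiv \PresS{(a_1+a_3)\wedge b_1,(a_1+a_3)\wedge b_1}$, while $2\PresS{a_2\wedge b_1,a_3\wedge b_1}$ stays as is. So the goal becomes
\[
\PresS{(a_1+a_2+a_3)\wedge b_1,(a_1+a_2+a_3)\wedge b_1} \equiv \PresS{(a_1+a_3)\wedge b_1,(a_1+a_3)\wedge b_1} + 2\PresS{a_2\wedge b_1,a_3\wedge b_1}.
\]

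Then I would run the by-now-familiar symplectic-basis substitution trick, exactly as in the proof of Lemma \ref{lemma:thetalinear}. The vectors $(a_1+a_2+a_3)\wedge b_1$ and $(a_1+a_3)\wedge b_1$ each fit into a symplectic basis of $H_{\Z}$, giving identities in $(\wedge^2 H)/\Q$ of the shape
\begin{align*}
(a_1+a_2+a_3)\wedge b_1 + a_2\wedge(b_2-b_1) + a_3\wedge(b_3-b_1) + \sum\nolimits_{i=4}^g a_i\wedge b_i &= 0,\\
(a_1+a_3)\wedge b_1 + a_3\wedge(b_3-b_1) + \sum\nolimits_{i\neq 1,3} a_i\wedge b_i &= 0.
\end{align*}
Substituting these into the first entry of the respective $\PresS{-,-}$ terms, using the linearity relations in $\fK_g^s$, and discarding all blue terms, both sides collapse to $-\PresS{(a_1+a_3)\wedge b_1,a_3\wedge(b_3-b_1)}$ plus a $2\PresS{a_2\wedge b_1,a_3\wedge b_1}$ contribution on one side, matching up after one more blue deletion. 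The main obstacle is purely bookkeeping: keeping careful track of which terms are blue (this is where one silently invokes \S\ref{section:obviousbluesym}) and making sure the substitution is applied consistently to the chosen entry; there is no conceptual difficulty, and I would present only the skeleton of the computation, remarking that the second identity is entirely analogous with the roles of $a$ and $b$ interchanged.
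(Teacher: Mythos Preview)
Your approach has two genuine gaps. First, the $\Sp_{2g}(\Z)$-reduction to $(a,b,z,x)=(a_1,b_1,a_2,a_3)$ is not valid as stated: the lemma is for arbitrary $z\in\Span{a,b}^{\perp}$ and $x\in\Span{a,b,z}^{\perp}$, but such a tuple lies in the $\Sp_{2g}(\Z)$-orbit of $(a_1,b_1,a_2,a_3)$ only when $z$ and $x$ are primitive and $\{a,b,z,x\}$ extends to a symplectic basis. You would need a separate linearity argument in $z$ to close this, and that is precisely what the lemma is trying to establish. Second, your claim that $\PresS{(a_1+a_2)\wedge b_1,(a_1+a_2)\wedge b_1}$ is blue is false: by $\Theta$-expansion I it equals
\[
\SPresS{a_1\wedge b_1,a_1\wedge b_1}+2\,\SThetaS{a_1\wedge b_1,a_2\wedge b_1}+\SPresS{a_2\wedge b_1,a_2\wedge b_1},
\]
and the middle term lies in $S_2$, not $S_1$. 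Working modulo $\Span{S_1,S_2}$ via Lemma~\ref{lemma:symspans2} would repair this second point, but not the first.

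The paper's proof is much shorter and sidesteps both issues entirely: it expands $\PresS{(a+z+x)\wedge b,(a+z+x)\wedge b}$ in two ways using $\Theta$-expansion I (Lemma~\ref{lemma:thetaexpansion1}), once grouping as $(a+z)+x$ and once as $a+(z+x)$, then applies $\Theta$-linearity (Lemma~\ref{lemma:thetalinear}) to split $\ThetaS{a\wedge b,(z+x)\wedge b}$ and also re-expands $\PresS{(a+z)\wedge b,(a+z)\wedge b}$. Equating the two expressions and cancelling yields the identity directly, for general $z$ and $x$, with no reduction to a standard basis and no need to argue modulo $\Span{S_1}$.
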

\begin{proof}
Both are proved the same way, so we prove the first.  By Lemma \ref{lemma:thetaexpansion1} ($\Theta$-expansion I), 
the element
$\PresS{(a+z+x) \wedge b,(a+z+x) \wedge b}$ equals
\begin{align*}
&\PresS{(a+z) \wedge b,(a+z) \wedge b} + 2 \ThetaS{(a+z) \wedge b,x \wedge b} + \PresS{x \wedge b,x \wedge b} \\
&=\PresS{a \wedge b,a \wedge b} + 2\ThetaS{a \wedge b,z \wedge b} + \PresS{z \wedge b,z \wedge b} \\
&\quad+2 \ThetaS{(a+z) \wedge b,x \wedge b} + \PresS{x \wedge b,x \wedge b}.
\end{align*}
On the other hand, it also equals
\begin{align*}
&\PresS{a \wedge b,a \wedge b} + 2 \ThetaS{a \wedge b,(z+x) \wedge b} + \PresS{(z+x) \wedge b,(z+x) \wedge b} \\
&=\PresS{a \wedge b,a \wedge b} + 2 \ThetaS{a \wedge b,z \wedge b} + 2 \ThetaS{a \wedge b,x \wedge b} \\
&\quad+\PresS{z \wedge b,z \wedge b} + 2\PresS{z \wedge b,x \wedge b} + \PresS{x \wedge b,x \wedge b}.
\end{align*}
Here the equality uses Lemma \ref{lemma:thetalinear} ($\Theta$-linearity).  The above
two displays are thus equal, and the result follows.
\end{proof}

Lemma \ref{lemma:thetabilinear1} allows many standard generators of $\fK_g^s$ to be written as the sum
of two $\Theta$-elements:  

\begin{lemma}[$\Theta$-expansion II]
\label{lemma:thetaexpansion2}
Let $a \wedge b$ and $a' \wedge b'$ be symplectic pairs in $H_{\Z}$ such that 
$\Span{a,b}$ and $\Span{a',b'}$ are orthogonal.  Then
\begin{small}
\begin{alignat*}{5}
&\PresS{(a+a') \wedge (b-b'),x \wedge (b-b')}\  &=\ &\ThetaS{a \wedge (b-b'),x \wedge (b-b')}\  &+\ &\ThetaS{a' \wedge (b-b'),x \wedge (b-b')},\\
&\PresS{(a+a') \wedge (b-b'),(a+a') \wedge y}\  &=\ &\ThetaS{(a+a') \wedge b,(a+a') \wedge y}\  &-\ &\ThetaS{(a+a') \wedge b',(a+a') \wedge y},\\
&\PresS{(a+b') \wedge (b+a'),x' \wedge (b+a')}\ &=\ &\ThetaS{a \wedge (b+a'),x' \wedge (b+a')}\ &+\ &\ThetaS{b' \wedge (b+a'),x' \wedge (b+a')},\\
&\PresS{(a+b') \wedge (b+a'),(a+b') \wedge y'}\ &=\ &\ThetaS{(a+b') \wedge b,(a+b') \wedge y'}\ &+\ &\ThetaS{(a+b') \wedge a',(a+b') \wedge y'}.
\end{alignat*}
\end{small}%
for $x \in \Span{a,a',b-b'}^{\perp}$ and $y \in \Span{b,b',a+a'}^{\perp}$ and $x' \in \Span{a,b',b+a'}$
and $y' \in \Span{b,a',a+b'}$.
\end{lemma}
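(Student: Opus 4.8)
The plan is to prove each of the four identities by the method already used for Lemma \ref{lemma:thetalinear} ($\Theta$-linearity), Lemma \ref{lemma:thetabilinear1} ($\Theta$-bilinearity I) and Lemma \ref{lemma:liftsymr2} ($\Theta$-symplectic basis): reduce to a concrete configuration using equivariance, expand the $\Theta$-elements into ordinary generators, and finish with a short computation modulo $\Span{S_1}$. I would treat the identities in the order listed; the ``$-$'' sign in the second and the ``$+$'' signs in the other three are accounted for by the fact that $a'\wedge(b-b')$, $b'\wedge(b+a')$ and $(a+b')\wedge a'$ all have $\omega$-value $-1$ (so the $\Theta$-sign rules of Lemma \ref{lemma:thetasign} intervene), whereas the second identity only involves the genuine symplectic pairs $(a+a')\wedge b$ and $(a+a')\wedge b'$.

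First I would record the checks needed so that every term in each identity is defined: that $(a+a')\wedge(b-b')$ and $(a+b')\wedge(b+a')$ are isotropic pairs (their $\omega$-values vanish, using $\omega(a',b')=1$ and the orthogonality of $\Span{a,b}$ and $\Span{a',b'}$), that the second arguments $x\wedge(b-b')$, $(a+a')\wedge y$, $x'\wedge(b+a')$, $(a+b')\wedge y'$ are sym-orthogonal to the corresponding isotropic pair, and that the $\Theta$-elements on the right are legitimate once the symmetry/sign conventions of \S\ref{section:thetasymmetry}--\S\ref{section:thetasigns} are applied. All of this is immediate from the defining formula for $\fc$ and the orthogonality hypotheses on $x,y,x',y'$.

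For a fixed identity I would then use the $\Sp_{2g}(\Z)$-equivariance of $\Phi$, of $\Span{S_1}$, and of the $\Theta$-construction to replace $a,b,a',b'$ by members of $\cB$ spanning a rank-$4$ symplectic summand (say $a_1,b_1,a_2,b_2$), and $x$ (resp.\ $y,x',y'$) by a further basis element of the complementary summand; the standing assumption $g\geq 4$ gives enough room. Next, via Definition \ref{definition:theta} together with Lemma \ref{lemma:thetasign} where needed, I would expand each $\Theta$-element on the right as $\frac{1}{2}$ of a ``big'' diagonal generator $\PresS{z\wedge d,z\wedge d}$ plus two correction generators; after the reduction to $\cB$ those corrections are of the form $\SPresS{a_i\wedge b_i,a_i\wedge b_i}$ or become blue by the recipe of \S\ref{section:obviousbluesym}, hence lie in $\Span{S_1}$. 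The left-hand side is likewise a single ``big'' generator. Since $\Phi$ is injective on $\Span{S_1}$ (Lemma \ref{lemma:symspans1}) and sends both sides to the same element of $\Sym^2((\wedge^2 H)/\Q)$, it suffices to prove each identity modulo $\Span{S_1}$, i.e.\ to match the surviving ``big'' generators in $\fK_g^s/\Span{S_1}$.

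The last step is that computation, done exactly as in the proof of Lemma \ref{lemma:thetalinear}: for each ``big'' generator $\PresS{z\wedge d,z\wedge d}$ one completes $\{z,d\}$ to a symplectic basis, uses $\sum_i x_i\wedge y_i = 0$ in $(\wedge^2 H)/\Q$ to rewrite one copy of $z\wedge d$, distributes, and discards the blue terms; the remainders on the two sides agree. The main obstacle is purely bookkeeping: choosing the adapted symplectic bases so that the three or four distinguished pairs are simultaneously standard, and tracking signs through the conventions of \S\ref{section:thetasymmetry}, Lemma \ref{lemma:thetasign}, and the $\Theta$-linearity of Lemma \ref{lemma:thetalinear} (the source of the sole sign discrepancy among the four identities). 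No conceptual difficulty arises beyond what has already appeared — the proof is four instances of the Lemma \ref{lemma:thetalinear} computation with heavier indices.
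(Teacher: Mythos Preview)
Your approach is far more elaborate than the paper's, and it contains a gap.  The paper proves each identity in one stroke from Lemma~\ref{lemma:thetabilinear1} ($\Theta$-bilinearity~I), which you cite only as a \emph{model} but never actually invoke.  For the first identity, apply $\Theta$-bilinearity~I with the symplectic pair $(-a')\wedge(b-b')$ and $z=a+a'$: since $a+a'\in\Span{-a',\,b-b'}^{\perp}$ and $x\in\Span{-a',\,b-b',\,a+a'}^{\perp}$, that lemma gives
\[
\ThetaS{a\wedge(b-b'),\,x\wedge(b-b')} \;=\; \ThetaS{(-a')\wedge(b-b'),\,x\wedge(b-b')} \;+\; \PresS{(a+a')\wedge(b-b'),\,x\wedge(b-b')},
\]
and rearranging (using $\ThetaS{(-a')\wedge(b-b'),\cdot}=-\ThetaS{a'\wedge(b-b'),\cdot}$ from Lemma~\ref{lemma:thetasign}) yields the result immediately.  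The other three are identical.  No equivariance reduction, no expansion to diagonal generators, no computation modulo $\Span{S_1}$ is needed.

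The gap in your plan is the claim that, after passing to a basis configuration, the two ``correction'' terms in each expanded $\Theta$-element are blue.  They are not.  With $a=a_1$, $b=b_1$, $a'=a_2$, $b'=b_2$, expanding $\ThetaS{a_1\wedge(b_1-b_2),\,x\wedge(b_1-b_2)}$ via Definition~\ref{definition:theta} produces the correction $\PresS{a_1\wedge(b_1-b_2),\,a_1\wedge(b_1-b_2)}$; by $\Theta$-expansion~I this equals two blue terms plus $-2\,\SThetaS{a_1\wedge b_1,\,a_1\wedge b_2}$, which lies in $\Span{S_2}$, not $\Span{S_1}$.  You could repair the argument by working modulo $\Span{S_1,S_2}$ and appealing to Lemma~\ref{lemma:symspans2} instead, but at that point you are essentially rederiving $\Theta$-bilinearity~I inside a harder computation rather than simply quoting it.
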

\begin{proof}
All are proved the same way, so we will prove the first. 
Lemma \ref{lemma:thetabilinear1} ($\Theta$-bilinearity I) implies that
\[\ThetaS{(-a') \wedge (b-b'),x \wedge (b-b')} + \PresS{(a+a') \wedge (b-b'), x \wedge (b-b')}\]
equals
\[\ThetaS{(-a' + (a+a')) \wedge (b-b'),x \wedge (b-b')} = \ThetaS{a \wedge (b-b'),x \wedge (b-b')}.\]
Rearranging this, we see that
\[\PresS{(a+a') \wedge (b-b'), x \wedge (b-b')} = \ThetaS{a \wedge (b-b'),x \wedge (b-b')} + \ThetaS{a' \wedge (b-b'),x \wedge (b-b')}.\qedhere\]
\end{proof}

\begin{lemma}[$\Theta$-bilinearity II]
\label{lemma:thetabilinear2}
Let $a \wedge b$ be a symplectic pair in $H_{\Z}$ and $n \in \Z$.  Then:
\begin{itemize}
\item for $x \in \Span{a,b}^{\perp}$ we have
\begin{align*}
\ThetaS{(a+nb) \wedge b,x \wedge b} &= \ThetaS{a \wedge b,x \wedge b},\\
\ThetaS{a \wedge (b+n a),x \wedge (b+na)} &= \ThetaS{a \wedge b, x \wedge b} + n \ThetaS{a \wedge b,x \wedge a}.
\end{align*}
\item for $y \in \Span{a,b}^{\perp}$ we have
\begin{align*}
\ThetaS{a \wedge (b+na),a \wedge y} &= \ThetaS{a \wedge b,a \wedge y},\\
\ThetaS{(a+nb) \wedge b,(a+nb) \wedge y} &= \ThetaS{a \wedge b, a \wedge y} + n \ThetaS{a \wedge b,b \wedge y}.
\end{align*}
\end{itemize}
\end{lemma}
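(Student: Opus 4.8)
I would split the four identities into two groups. The first and third, $\ThetaS{(a+nb) \wedge b,x \wedge b} = \ThetaS{a \wedge b,x \wedge b}$ and $\ThetaS{a \wedge (b+na),a \wedge y} = \ThetaS{a \wedge b,a \wedge y}$, are immediate from Definition \ref{definition:theta}: substituting $b \mapsto b+na$ (resp.\ $a \mapsto a+nb$) into the defining formula and using that $(a+x)\wedge(b+na) = (a+x)\wedge b$ is false in general but $(a+x+nb)\wedge b = (a+x)\wedge b$ and $(a+nb)\wedge b = a\wedge b$ hold literally in $(\wedge^2 H)/\Q$ (resp.\ $a\wedge(b+na+y) = a\wedge(b+y)$ and $a\wedge(b+na) = a\wedge b$), one sees that the three generators $\PresS{\kappa,\kappa}$ appearing in $\ThetaS{(a+nb)\wedge b, x\wedge b}$ coincide with those appearing in $\ThetaS{a\wedge b,x\wedge b}$; hence the two $\Theta$-elements are equal by definition. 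This disposes of (1) and (3) in a line each.

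The second and fourth identities have genuine content; I would prove the second, the fourth being handled identically after interchanging the roles of $a$ and $b$ and invoking the $\Theta$-symmetry conventions of \S\ref{section:thetasymmetry} together with Lemma \ref{lemma:thetasign} ($\Theta$-signs). Both sides of $\ThetaS{a \wedge (b+na),x \wedge (b+na)} = \ThetaS{a \wedge b,x \wedge b} + n \ThetaS{a \wedge b,x \wedge a}$ are linear in $x$ by Lemma \ref{lemma:thetalinear} ($\Theta$-linearity), so writing $x = m x'$ with $x'$ primitive it suffices to treat $x$ primitive. Since whether the identity holds is invariant under the $\Sp_{2g}(\Z)$-action on $\fK_g^s$, I would then apply a suitable element of $\Sp_{2g}(\Z)$ to arrange $a = a_1$, $b = b_1$, $x = a_2$ (the stabilizer of $a_1,b_1$ acts as $\Sp_{2g-2}(\Z)$ on $\Span{a_1,b_1}^{\perp}$, which is transitive on primitive vectors since $g \geq 4$). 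A direct computation shows that $\Phi$ sends both sides to $(a_1\wedge b_1)\Cdot(a_2 \wedge b_1) + n\,(a_1\wedge b_1)\Cdot(a_2\wedge a_1)$; because $\Phi$ restricted to $\Span{S_1}$ is injective (Lemma \ref{lemma:symspans1}), it is then enough to check the identity modulo $\Span{S_1}$, i.e.\ that the difference of the two sides lies in $\Span{S_1}$.

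To finish, I would expand each $\Theta$-element via Definition \ref{definition:theta} into a combination of generators $\PresS{\kappa,\kappa}$, substitute the relation $\sum_{i=1}^{g} a_i\wedge b_i = 0$ in $(\wedge^2 H)/\Q$ into each such generator (choosing for each generator a symplectic basis adapted to its entries, exactly as in the proofs of Lemma \ref{lemma:thetalinear} and Lemma \ref{lemma:liftsymr2} (\texorpdfstring{$\Theta$}{Theta}-symplectic basis)), delete the resulting ``blue'' terms lying in $\Span{S_1}$ using the recognition criterion of \S\ref{section:obviousbluesym}, and verify that the surviving terms cancel. This last bookkeeping step — keeping track of which expansions produce $\Span{S_1}$-terms and confirming the non-blue remainders cancel, with the correct signs — is the main obstacle; it is mechanical in the same sense as the computations already carried out in \S\ref{section:presentationsym2}, but delicate. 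One way to shorten it is to reduce first to the case $n=1$: writing $b+na = (b+(n-1)a)+a$ and inducting on $n$ (the case $n<0$ following by the substitution $n \mapsto -n$), the inductive step uses the base case applied to the symplectic pair $a\wedge(b+(n-1)a)$ together with the already-proved third identity to rewrite the cross term $\ThetaS{a\wedge(b+(n-1)a),x\wedge a} = -\ThetaS{a\wedge(b+(n-1)a),a\wedge x} = -\ThetaS{a\wedge b,a\wedge x} = \ThetaS{a\wedge b,x\wedge a}$, so that only the single case $n=1$ need be checked by the explicit manipulation above.
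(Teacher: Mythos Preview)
Your treatment of the first and third identities is exactly the paper's: a one-line check that substituting $a\mapsto a+nb$ (resp.\ $b\mapsto b+na$) into Definition~\ref{definition:theta} leaves all three $\PresS{\kappa,\kappa}$ terms literally unchanged.

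For the second and fourth identities your route diverges from the paper's. After the same reduction to primitive $x$ (resp.\ $y$) via $\Theta$-linearity and a choice of symplectic basis, the paper does \emph{not} unwind Definition~\ref{definition:theta} and work modulo $\Span{S_1}$ directly. Instead it applies Lemma~\ref{lemma:liftsymr2} ($\Theta$-symplectic basis) twice, once to the standard basis and once to the shifted basis $\{x_1+ny_1,y_1,x_2,y_2,\ldots\}$, and reads off the identity by comparing the two resulting relations; $\Theta$-linearity handles the $n$ in one stroke, so no induction is needed. This is cleaner than your proposal: the ``delicate bookkeeping'' you flag is already packaged inside the proof of Lemma~\ref{lemma:liftsymr2}, and invoking that lemma avoids repeating it. Your induction-on-$n$ idea is correct and is a pleasant reduction the paper does not make (your handling of negative $n$ is slightly glib---``substitution $n\mapsto -n$'' does not literally work, but downward induction from $n=0$ using the case $n=-1$ does), though it becomes unnecessary once one uses Lemma~\ref{lemma:liftsymr2}. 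Your direct mod-$\Span{S_1}$ computation for $n=1$ should go through, but you have not actually carried it out; the paper's route sidesteps this entirely.
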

\begin{proof}
The two bullet points are proved the same way, so we will prove the second.  Observe first that
$2\ThetaS{a \wedge (b+na),a \wedge y}$ equals
\begin{align*}
&\PresS{a \wedge (b+na+y),a \wedge (b+na+y)} - \PresS{a \wedge (b+na),a \wedge (b+na)} - \PresS{a \wedge y,a \wedge y}\\
&=\PresS{a \wedge (b+y),a \wedge (b+y)} - \PresS{a \wedge b,a \wedge b} - \PresS{a \wedge y,a \wedge y},
\end{align*}
which equals $2\ThetaS{a \wedge b,a \wedge y}$.  This gives the first equation.  For the second, by 
Lemma \ref{lemma:thetalinear} ($\Theta$-linearity) it is enough to prove it for $y$ primitive.
We can then find a symplectic basis $\{x_1,y_1,\ldots,x_g,y_g\}$ for $H_{\Z}$ such that $x_1 = a$ and $y_1 = b$
and $y_2 = y$.  Our goal is to prove that
\[\ThetaS{(x_1+ny_1) \wedge y_1,(x_1+ny_1) \wedge y_2} = \ThetaS{x_1 \wedge y_1, x_1 \wedge y_2} + n \ThetaS{x_1 \wedge y_1,y_1 \wedge y_2}.\]
Applying Lemma \ref{lemma:liftsymr2} ($\Theta$-symplectic basis) to the alternate symplectic basis $\cB' = \{x_1+ny_1,y_1,x_2,y_2,\ldots,x_g,y_g\}$, we see that
\begin{align*}
0=&\ThetaS{(x_1+ny_1) \wedge y_1, (x_1+ny_1) \wedge y_2} \\
  &\quad+ \ThetaS{x_2 \wedge y_2, (x_1+ny_1) \wedge y_2} + \sum\nolimits_{i=3}^g \PresS{x_i \wedge y_i,(x_1+ny_1) \wedge y_2}
\end{align*}
From this, we see that
$\ThetaS{(x_1+ny_1) \wedge y_1, (x_1+ny_1) \wedge y_2}$ equals
\begin{align*}
&-\ThetaS{x_2 \wedge y_2, (x_1+ny_1) \wedge y_2} - \sum\nolimits_{i=3}^g \PresS{x_i \wedge y_i,(x_1+ny_1) \wedge y_2} \\
&=-\left(\ThetaS{x_2 \wedge y_2, x_1 \wedge y_2} - \sum\nolimits_{i=3}^g \PresS{x_i \wedge y_i,x_1 \wedge y_2}\right)\\
&\quad-n\left(\ThetaS{x_2 \wedge y_2, y_1 \wedge y_2} - \sum\nolimits_{i=3}^g \PresS{x_i \wedge y_i,y_1 \wedge y_2}\right).
\end{align*}
The equality here uses Lemma \ref{lemma:thetalinear} ($\Theta$-linearity).  Again using Lemma \ref{lemma:liftsymr2} ($\Theta$-symplectic basis),
we recognize this as being $\ThetaS{x_1 \wedge y_1, x_1 \wedge y_2} + n \ThetaS{x_1 \wedge y_1,y_1 \wedge y_2}$, as desired.
\end{proof}

\section{Symmetric kernel, symmetric version III: \texorpdfstring{$S_3$}{S3} and the \texorpdfstring{$\Lambda$}{Lambda}-elements}
\label{section:presentationsym3}

We continue using all the notation from \S \ref{section:presentationsym1} -- \S \ref{section:presentationsym2}.
This section constructs the set $S_3$ that lifts $T_3$.  It consists of what are called
$\Lambda$-elements of $\fK_g^s$, and the first part of this section constructs
these in more generality than is needed for $S_3$ alone.

\subsection{Definition}
Let $a \wedge b$ be a symplectic pair in $\wedge^2 H_{\Z}$ and let $x,y \in \Span{a,b}^{\perp}$ satisfy $\omega(x,y)=0$.  The
$\Lambda$-element $\LambdaS{a \wedge y,x \wedge b}$ is an element
of $\fK_g^s$ that is taken by $\Phi$ to
\[(a \wedge y) \Cdot (x \wedge b) \in \Sym^2((\wedge^2 H)/\Q).\]
To find it, note that
\begin{align*}
\left(a \wedge (b+y)\right) \Cdot \left(x \wedge (b+y)\right) &= \left(a \wedge (b+y)\right) \Cdot (x \wedge y) 
+ (a \wedge b) \Cdot (x \wedge b) + (a \wedge y) \Cdot (x \wedge b),\\
\left((a+x) \wedge b\right) \Cdot \left((a+x) \wedge y\right) &= \left((a+x) \wedge b\right) \Cdot (x \wedge y)
+ (a \wedge b) \Cdot (a \wedge y) + (x \wedge b) \Cdot (a \wedge y).
\end{align*}
This suggests two possible elements of $\fK_g^s$ projecting to $(a \wedge y) \Cdot (x \wedge b)$:

\begin{definition}
\label{definition:lambda}
For a symplectic pair $a \wedge b$ in $H_{\Z}$ and $x,y \in \Span{a,b}^{\perp}$ with $\omega(x,y)=0$, define:
\begin{align*}
\LambdaIS{a \wedge y, x \wedge b}  &= \ThetaS{a \wedge (b+y),x \wedge (b+y)} - \ThetaS{a \wedge b,x \wedge b} - \PresS{a \wedge (b+y),x \wedge y},\\
\LambdaIIS{a \wedge y,x \wedge b}  &= \ThetaS{(a+x) \wedge b,(a+x) \wedge y} - \ThetaS{a \wedge b,a \wedge y} - \PresS{(a+x) \wedge b,x \wedge y}.\qedhere
\end{align*}
\end{definition}

See Remark \ref{remark:lambdaabuse} below for a caveat about this notation.  By construction, we have
\[\Phi(\LambdaIS{a \wedge y,x \wedge b}) = \Phi(\LambdaIIS{a \wedge y,x \wedge b}) = (a \wedge y) \Cdot (x \wedge b).\]
Since we are trying to prove that $\Phi$ is an isomorphism, we must prove that these are actually the same element:

\begin{lemma}
\label{lemma:lambdawelldefined}
Let $a \wedge b$ be a symplectic pair in $H_{\Z}$ and let $x,y \in \Span{a,b}^{\perp}$ satisfy $\omega(x,y) = 0$.
Then $\LambdaIS{a \wedge y,x \wedge b} = \LambdaIIS{a \wedge y,x \wedge b}$.
\end{lemma}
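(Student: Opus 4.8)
The plan is to show that $D := \LambdaIS{a \wedge y,x \wedge b} - \LambdaIIS{a \wedge y,x \wedge b}$ is zero. By construction both $\LambdaIS{a \wedge y,x \wedge b}$ and $\LambdaIIS{a \wedge y,x \wedge b}$ are sent by $\Phi$ to $(a \wedge y) \Cdot (x \wedge b)$, so $D \in \ker(\Phi)$. Lemma \ref{lemma:symspans2} says that $\Phi$ is injective on $\Span{S_1,S_2}$, so it is enough to prove that $D \in \Span{S_1,S_2}$, i.e. to compute $D$ modulo $\Span{S_1,S_2}$ and check that the result is $0$.

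I would first dispose of degenerate configurations. If $y = 0$ or $x = 0$, then both $\LambdaIS{a \wedge y,x \wedge b}$ and $\LambdaIIS{a \wedge y,x \wedge b}$ collapse to $0$ using Lemma \ref{lemma:thetalinear} ($\Theta$-linearity) and the fact that $\PresS{\zeta,0} = 0$; the case where $x$ and $y$ are proportional is handled similarly after one application of $\Theta$-linearity. In the remaining cases I would invoke $\Sp_{2g}(\Z)$-equivariance: both $\Lambda$-elements are built functorially out of standard generators and $\Theta$-elements, and $\ThetaS{-,-}$ is equivariant by Definition \ref{definition:theta}, so an element $f \in \Sp_{2g}(\Z)$ carries $\LambdaIS{a \wedge y,x \wedge b}$ to $\LambdaIS{f(a) \wedge f(y),f(x) \wedge f(b)}$ and likewise for $\Lambda_2$. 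Combining this with $\Theta$-linearity to reduce $x$ and $y$ to primitive vectors, we may assume that $(a,b,x,y) = (a_1,b_1,a_2,a_3)$ for our fixed symplectic basis $\cB$.

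In this normalized situation one expands both $\Lambda$-elements using Definition \ref{definition:lambda}. The auxiliary terms $\ThetaS{a_1 \wedge b_1,a_2 \wedge b_1}$ and $\ThetaS{a_1 \wedge b_1,a_1 \wedge a_3}$ already lie in $S_2$, and the standard generators $\PresS{a_1 \wedge (b_1+a_3),a_2 \wedge a_3}$ and $\PresS{(a_1+a_2) \wedge b_1,a_2 \wedge a_3}$ lie in $\Span{S_1}$ (they are ``obviously blue'' in the sense of \S\ref{section:obviousbluesym}), so modulo $\Span{S_1,S_2}$ the difference $D$ reduces to $\ThetaS{a_1 \wedge (b_1+a_3),a_2 \wedge (b_1+a_3)} - \ThetaS{(a_1+a_2) \wedge b_1,(a_1+a_2) \wedge a_3}$. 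It remains to show that these two tilted $\Theta$-elements are congruent modulo $\Span{S_1,S_2}$, which I would do by rewriting each of them using $\Theta$-expansion II (Lemma \ref{lemma:thetaexpansion2}), $\Theta$-bilinearity (Lemmas \ref{lemma:thetabilinear1} and \ref{lemma:thetabilinear2}), and the $\Theta$-symmetry and $\Theta$-sign conventions of \S\ref{section:thetasymmetry}--\S\ref{section:thetasigns}, after which the extra terms either lie in $\Span{S_1,S_2}$ or cancel against one another. (Alternatively, Lemma \ref{lemma:liftsymr2}, applied to two carefully chosen auxiliary symplectic bases of $H_{\Z}$, can be used to push each tilted $\Theta$-element into $\Span{S_1,S_2}$ directly.)

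The main obstacle is this last computation. The tilted $\Theta$-elements and the auxiliary generators produced along the way (for instance $\PresS{(a_1+b_3) \wedge (b_1+a_3),a_2 \wedge (b_1+a_3)}$) are not individually in $\Span{S_1,S_2}$, so the simplification cannot be done term by term; the relevant cancellations only take place in the full expression, which makes the bookkeeping delicate and forces the auxiliary vectors introduced by $\Theta$-expansion II to be chosen compatibly. A secondary technical point is making sure that the equivariance reduction genuinely covers every non-generic configuration of $x$ and $y$, which is exactly where $\Theta$-linearity is needed.
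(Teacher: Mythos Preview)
Your overall strategy of showing $D \in \Span{S_1,S_2}$ and invoking Lemma~\ref{lemma:symspans2} is fine, and in fact the normalized computation you describe works out more cleanly than you suggest: expanding each tilted $\Theta$-element via Definition~\ref{definition:theta} makes the shared term $\PresS{(a_1+a_2)\wedge(b_1+a_3),(a_1+a_2)\wedge(b_1+a_3)}$ cancel, and the four remaining square terms are each in $\Span{S_1,S_2}$ by a single application of $\Theta$-expansion~I. So the ``main obstacle'' is not where you think.

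The genuine gap is in your reduction step. You want to use $\Theta$-linearity to reduce $x$ and $y$ to primitive vectors and then apply $\Sp_{2g}(\Z)$ to reach $(a_1,b_1,a_2,a_3)$. But $\Theta$-linearity only makes $\Lambda_1$ linear in $x$ and $\Lambda_2$ linear in $y$; it does \emph{not} make $\Lambda_1$ linear in $y$ or $\Lambda_2$ linear in $x$, since in each case the relevant variable sits inside the symplectic pair of the $\Theta$-element, not in its free slot. Full $\Lambda$-linearity (Lemma~\ref{lemma:lambdalinear}) is proved only after this lemma and in fact relies on it. And $\Sp_{2g}(\Z)$-equivariance alone is not transitive on tuples $(a,b,x,y)$: for instance it cannot carry $(a_1,b_1,a_2,2a_3)$ or $(a_1,b_1,a_2,a_2+2a_3)$ to $(a_1,b_1,a_2,a_3)$. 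So the normalization is not justified.

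The paper avoids this entirely by doing the computation uniformly in $(a,b,x,y)$: it expands $\PresS{(a+x)\wedge(b+y),(a+x)\wedge(b+y)}$ twice via Lemma~\ref{lemma:thetaexpansion1}, once as $((a\wedge(b+y))+x)$-then-$y$ and once as $((a+x)\wedge b)$-then-$y$, and equates the results. This is exactly the computation you would do in the normalized case, but it costs nothing to write it with general $a,b,x,y$, and doing so eliminates the need for any equivariance or linearity reduction. You should drop the normalization and run your argument directly.
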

\begin{proof}
We must prove that
\begin{align*}
&\ThetaS{a \wedge (b+y),x \wedge (b+y)} - \ThetaS{a \wedge b,x \wedge b} - \PresS{a \wedge (b+y),x \wedge y},\\
=& \ThetaS{(a+x) \wedge b,(a+x) \wedge y} - \ThetaS{a \wedge b,a \wedge y} - \PresS{(a+x) \wedge b,x \wedge y}.
\end{align*}
Since
\begin{align*}
\PresS{a \wedge (b+y),x \wedge y} &= \PresS{a \wedge b,x \wedge y} + \PresS{a \wedge y,x \wedge y},\\
\PresS{(a+x) \wedge b,x \wedge y} &= \PresS{a \wedge b,x \wedge y} + \PresS{x \wedge b,x \wedge y},
\end{align*}
we can rearrange this and see that it is equivalent to prove that
\begin{equation}
\label{eqn:lambdadefined1}
\ThetaS{a \wedge (b+y),x \wedge (b+y)} - \ThetaS{(a+x) \wedge b,(a+x) \wedge y}
\end{equation}
equals
\begin{equation}
\label{eqn:lambdadefined2}
\ThetaS{a \wedge b,x \wedge b} - \ThetaS{a \wedge b,a \wedge y} + \PresS{a \wedge y,x \wedge y} - \PresS{x \wedge b,x \wedge y}.
\end{equation}
Applying Lemma \ref{lemma:thetaexpansion1} ($\Theta$-expansion I) twice, the element $\PresS{(a+x) \wedge (b+y),(a+x) \wedge (b+y)}$ equals
\begin{align*}
&\PresS{a \wedge (b+y),a \wedge (b+y)} + 2\ThetaS{a \wedge (b+y),x \wedge (b+y)} + \PresS{x \wedge (b+y),x \wedge (b+y)} \\
&=\PresS{a \wedge b,a \wedge b} +2 \ThetaS{a \wedge b,a \wedge y} + \PresS{a \wedge y,a \wedge y} \\
&\quad+ 2\ThetaS{a \wedge (b+y),x \wedge (b+y)} + \PresS{x \wedge (b+y),x \wedge (b+y)}.
\end{align*}
Applying Lemma \ref{lemma:thetaexpansion1} ($\Theta$-expansion I) twice again but in a different order,
the same element $\PresS{(a+x) \wedge (b+y),(a+x) \wedge (b+y)}$ also equals
\begin{align*}
&\PresS{(a+x) \wedge b,(a+x) \wedge b} + 2\ThetaS{(a+x) \wedge b, (a+x) \wedge y} + \PresS{(a+x) \wedge y,(a+x) \wedge y)} \\
&=\PresS{a \wedge b,a \wedge b} + 2\ThetaS{a \wedge b,x \wedge b} + \PresS{x \wedge b,x \wedge b} \\
&\quad+ 2\ThetaS{(a+x) \wedge b, (a+x) \wedge y} + \PresS{(a+x) \wedge y,(a+x) \wedge y)}.
\end{align*}
Equating the previous two displays and rearranging terms, we deduce that $2$ times \eqref{eqn:lambdadefined1} equals
\begin{align*}
&2\ThetaS{a \wedge b,x \wedge b} + \PresS{x \wedge b,x \wedge b} + \PresS{(a+x) \wedge y,(a+x) \wedge y)} \\
&\quad - 2 \ThetaS{a \wedge b,a \wedge y} - \PresS{a \wedge y,a \wedge y} - \PresS{x \wedge (b+y),x \wedge (b+y)},
\end{align*}
which using the usual bilinearity relations in $\fK_g^s$ equals
\begin{align*}
&2\ThetaS{a \wedge b,x \wedge b} - 2 \ThetaS{a \wedge b,a \wedge y} + \PresS{x \wedge b,x \wedge b} - \PresS{a \wedge y,a \wedge y} \\
&\quad +(\PresS{a \wedge y,a \wedge y} + 2\PresS{a \wedge y,x \wedge y} + \PresS{x \wedge y,x \wedge y}) \\
&\quad -(\PresS{x \wedge b,x \wedge b} + 2\PresS{x \wedge b,x \wedge y} + \PresS{x \wedge y,x \wedge y}),
\end{align*}
which after canceling terms equals $2$ times \eqref{eqn:lambdadefined2}.
\end{proof}

In light of this lemma, we will denote the common value of $\LambdaIS{a \wedge y,x \wedge b}$ and
$\LambdaIIS{a \wedge y,x \wedge b}$ by $\LambdaS{a \wedge y,x \wedge b}$.

\begin{remark}
\label{remark:lambdaabuse}
Just like for the $\Theta$-elements (cf.\ Remark \ref{remark:thetaabuse}), the elements
$\LambdaIS{a \wedge y,x \wedge b}$ and $\LambdaIIS{a \wedge y,x \wedge b}$ and $\LambdaS{a \wedge y,x \wedge b}$
depend on the ordered tuple $(a,y,x,b)$, not
on $a \wedge y$ and $x \wedge b$.
\end{remark}

\subsection{\texorpdfstring{$\Lambda$}{Lambda}-expansion I}

The following is an important way that $\Lambda$-elements appear in our calculations:

\begin{lemma}[$\Lambda$-expansion I]
\label{lemma:lambdaexpansion1}
Let $a \wedge b$ be a symplectic pair and let $x,y \in \Span{a,b}^{\perp}$ satisfy $\omega(x,y) = 0$.  Then
\begin{align*}
\ThetaS{a \wedge (b+y),x \wedge (b+y)} &= \LambdaS{a \wedge y, x \wedge b} + \ThetaS{a \wedge b,x \wedge b} + \PresS{a \wedge (b+y),x \wedge y},\\
\ThetaS{(a+x) \wedge b,(a+x) \wedge y} &= \LambdaS{a \wedge y,x \wedge b} + \ThetaS{a \wedge b,a \wedge y} + \PresS{(a+x) \wedge b,x \wedge y}.
\end{align*}
\end{lemma}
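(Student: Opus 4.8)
The two identities are nothing more than algebraic rearrangements of Definition \ref{definition:lambda}, combined with Lemma \ref{lemma:lambdawelldefined}. For the first identity, start from the defining formula
\[\LambdaIS{a \wedge y, x \wedge b} = \ThetaS{a \wedge (b+y),x \wedge (b+y)} - \ThetaS{a \wedge b,x \wedge b} - \PresS{a \wedge (b+y),x \wedge y}\]
and move the two subtracted terms to the left-hand side, obtaining
\[\ThetaS{a \wedge (b+y),x \wedge (b+y)} = \LambdaIS{a \wedge y, x \wedge b} + \ThetaS{a \wedge b,x \wedge b} + \PresS{a \wedge (b+y),x \wedge y}.\]
Then invoke Lemma \ref{lemma:lambdawelldefined}, which asserts that $\LambdaIS{a \wedge y, x \wedge b} = \LambdaS{a \wedge y, x \wedge b}$, to get the claimed identity. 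For the second identity, do exactly the same starting from the defining formula for $\LambdaIIS{a \wedge y,x \wedge b}$, rearranging to
\[\ThetaS{(a+x) \wedge b,(a+x) \wedge y} = \LambdaIIS{a \wedge y,x \wedge b} + \ThetaS{a \wedge b,a \wedge y} + \PresS{(a+x) \wedge b,x \wedge y},\]
and again applying Lemma \ref{lemma:lambdawelldefined} to replace $\LambdaIIS$ with $\LambdaS$.

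There is no real obstacle here: all of the genuine work — showing that the two a priori different expressions $\LambdaIS$ and $\LambdaIIS$ coincide — was already carried out in the proof of Lemma \ref{lemma:lambdawelldefined}, which reduced the equality to the usual bilinearity relations in $\fK_g^s$ together with two applications of Lemma \ref{lemma:thetaexpansion1} ($\Theta$-expansion I). So the proof of Lemma \ref{lemma:lambdaexpansion1} is immediate from the definitions and that earlier lemma, and I would present it in one or two sentences without any further computation. The only thing worth being careful about is the abuse of notation flagged in Remark \ref{remark:lambdaabuse}: the $\Lambda$- and $\Theta$-symbols depend on the ordered tuples, not merely on the wedge products, so one should make sure the tuples $(a,b,x)$, $(a,y,x,b)$, etc., match on both sides of each identity — but they do, by inspection of how the formulas were written.
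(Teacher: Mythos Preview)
Your proposal is correct and matches the paper's approach: the paper's proof is the single sentence ``Immediate from Definition \ref{definition:lambda},'' and your more detailed version simply spells out that rearrangement together with the identification $\LambdaIS = \LambdaIIS = \LambdaS$ from Lemma \ref{lemma:lambdawelldefined}.
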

\begin{proof}
Immediate from Definition \ref{definition:lambda}.
\end{proof}

\subsection{\texorpdfstring{$\Lambda$}{Lambda}-linearity}

The following says that $\LambdaS{a \wedge y,x \wedge b}$ is linear in both $x$ and $y$:

\begin{lemma}[$\Lambda$-linearity]
\label{lemma:lambdalinear}
Let $a \wedge b$ be a symplectic pair in $H_{\Z}$.  Then:
\begin{itemize}
\item For $x,y_1,y_2 \in \Span{a,b}^{\perp}$ with $\omega(x,y_1) = \omega(x,y_2) = 0$ and $\lambda_1,\lambda_2 \in \Z$, we have
\[\LambdaS{a \wedge (\lambda_1 y_1 + \lambda_2 y_2),x \wedge b} = \lambda_1 \LambdaS{a \wedge y_1, x \wedge b} + \lambda_2 \LambdaS{a \wedge y_2,x \wedge b}.\]
\item For $x_1,x_2,y \in \Span{a,b}^{\perp}$ with $\omega(x_1,y) = \omega(x_2,y) = 0$ and $\lambda_1,\lambda_2 \in \Z$, we have
\[\LambdaS{a \wedge y,(\lambda_1 x_1 + \lambda_2 x_2) \wedge b} = \lambda_1 \LambdaS{a \wedge y, x_1 \wedge b} + \lambda_2 \LambdaS{a \wedge y,x_2 \wedge b}.\]
\end{itemize}
\end{lemma}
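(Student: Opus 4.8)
The plan is to prove $\Lambda$-linearity by reducing to a known linearity statement, namely Lemma \ref{lemma:thetalinear} ($\Theta$-linearity), combined with the expansion formulas just established. Since the two bullet points are symmetric in spirit, I would prove the first one (linearity in the $y$-slot) in detail and leave the second (linearity in the $x$-slot) to a parallel argument, noting the tweaks needed. By the bilinearity relations already available in $\fK_g^s$, it suffices to treat the case $\lambda_1 = \lambda_2 = 1$ with $\{y_1, y_2\}$ a partial basis of $(a\wedge b)^\perp$ orthogonal to $x$ and to each other (the general case then follows by the standard argument: expand a primitive sum, handle non-primitive multiples via the $\Theta$-linearity already proved, exactly as in the proof of Lemma \ref{lemma:thetalinear}). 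For this reduced case, I would use the first of the two formulas in Lemma \ref{lemma:lambdaexpansion1} ($\Lambda$-expansion I), which expresses $\LambdaS{a \wedge y, x \wedge b}$ (after rearranging) as
\[\LambdaS{a \wedge y,x\wedge b} = \ThetaS{a \wedge (b+y),x \wedge (b+y)} - \ThetaS{a \wedge b,x \wedge b} - \PresS{a \wedge (b+y),x \wedge y}.\]

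The key step is then to compute $\LambdaS{a\wedge(y_1+y_2),x\wedge b}$ using this formula and massage each of the three resulting terms. The middle term $\ThetaS{a\wedge b,x\wedge b}$ does not depend on $y$ at all, so it contributes $-2\,\ThetaS{a\wedge b,x\wedge b}$ after we also subtract the copies coming from $\LambdaS{a\wedge y_1,x\wedge b}$ and $\LambdaS{a\wedge y_2,x\wedge b}$; but the naive sum has an extra copy, so I will want to add and subtract a spare $\ThetaS{a\wedge b,x\wedge b}$ and absorb it. The first term $\ThetaS{a\wedge(b+y_1+y_2),x\wedge(b+y_1+y_2)}$ is the delicate one: I would use Lemma \ref{lemma:thetabilinear1} ($\Theta$-bilinearity I) to peel the $y_2$ (and then $y_1$) out of both slots, turning it into $\ThetaS{a\wedge(b+y_1),x\wedge(b+y_1)}$ plus correction terms of the form $\ThetaS{\dots}$ and $\PresS{a\wedge y_2,x\wedge(b+y_1)}$, and iterate; alternatively one can invoke Lemma \ref{lemma:thetalinear} in the second slot combined with $\Theta$-bilinearity in the first. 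The third term $\PresS{a\wedge(b+y_1+y_2),x\wedge y}$ with $y = y_1+y_2$ is handled by the ordinary bilinearity relations in $\fK_g^s$: it splits as $\PresS{a\wedge b,x\wedge(y_1+y_2)} + \PresS{a\wedge y_1,x\wedge(y_1+y_2)} + \PresS{a\wedge y_2,x\wedge(y_1+y_2)}$, and each piece expands again bilinearly (using $\omega(x,y_i)=0$ and $\omega(y_1,y_2)=0$ to ensure all the generators that appear are legitimate). Collecting the three computations and comparing with the sum of the two separate $\Lambda$-expansions should yield the identity, with the cross terms of the form $\PresS{a\wedge y_1,x\wedge y_2}$ and $\PresS{a\wedge y_2,x\wedge y_1}$ exactly accounting for the discrepancy.

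I expect the main obstacle to be bookkeeping: keeping track of which $\Theta$-terms are genuinely the same (using the $\Theta$-symmetry conventions of \S\ref{section:thetasymmetry} to identify things like $\ThetaS{a\wedge b,y_2\wedge x}$ written in different orders) and verifying that every standard generator $\PresS{\kappa_1,\kappa_2}$ that appears has one of its entries a special pair, so that it actually exists in $\fK_g^s$. A cleaner alternative, which I would likely adopt to sidestep the mess, is to prove the reduced ($\lambda_i=1$, partial-basis) case by the $\Span{S_1}$-comparison technique used repeatedly above: since $\Phi$ is injective on $\Span{S_1}$ (Lemma \ref{lemma:symspans1}) and $\Phi$ sends both sides of the desired identity to the same element of $\Sym^2((\wedge^2 H)/\Q)$, it is enough to show the difference lies in $\Span{S_1}$; after applying an element of $\Sp_{2g}(\Z)$ to normalize $a = a_1$, $b = b_1$, $x = a_2$, $y_1 = a_3$, $y_2 = a_4$, one unwinds the definitions of the $\Lambda$- and $\Theta$-elements, substitutes the standard relation $\sum_i a_i\wedge b_i = 0$ in $(\wedge^2 H)/\Q$ into the appropriate slots, and checks that all the resulting terms are blue. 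This is the same pattern as Lemmas \ref{lemma:thetalinear} and \ref{lemma:liftsymr2}, so once normalized it is a finite, if tedious, verification. From linearity in the $y$-slot one gets linearity in the $x$-slot either by the symmetric argument using the \emph{second} formula of Lemma \ref{lemma:lambdaexpansion1}, or by observing that $\LambdaS{a\wedge y, x\wedge b}$ is, up to the symmetry conventions, unchanged under swapping the roles of $(a,y)$ and $(x,b)$ in a suitable sense.
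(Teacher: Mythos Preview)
Your approach is workable in principle but misses the key simplification that the paper exploits, and your primary line of attack has a gap.

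The paper's proof is essentially one line: for linearity in $y$, it uses the \emph{second} formula $\LambdaIIS{a \wedge y,x \wedge b} = \ThetaS{(a+x) \wedge b,(a+x) \wedge y} - \ThetaS{a \wedge b,a \wedge y} - \PresS{(a+x) \wedge b,x \wedge y}$, in which $y$ appears only in the second slot of each term; then $\Theta$-linearity (Lemma~\ref{lemma:thetalinear}) and the ordinary bilinearity of $\PresS{\cdot,\cdot}$ give the result immediately. Symmetrically, for linearity in $x$ the paper uses $\LambdaIS$, where $x$ sits only in the easy slots. This is the whole reason the paper established $\LambdaIS = \LambdaIIS$ (Lemma~\ref{lemma:lambdawelldefined}) before proving $\Lambda$-linearity: each formula is tailored to make one variable transparent. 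You chose $\LambdaIS$ for $y$-linearity, which places $y$ inside the symplectic-pair slot of $\ThetaS{a \wedge (b+y),x \wedge (b+y)}$ in both entries simultaneously, and this is exactly the hard direction.

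Your proposed fix via $\Theta$-bilinearity~I does not work as stated: that lemma lets you peel $z$ out of the $a$-slot in $\ThetaS{(a+z)\wedge b,x\wedge b}$ (or out of the $b$-slot in $\ThetaS{a\wedge(b+z),a\wedge y}$), but in $\ThetaS{a\wedge(b+y_1+y_2),x\wedge(b+y_1+y_2)}$ the element $y_2$ sits in the $b$-position of \emph{both} entries at once, and neither form of $\Theta$-bilinearity~I covers that. Your fallback $\Span{S_1}$-comparison approach after $\Sp_{2g}(\Z)$-normalization could be pushed through, but it only directly handles the case where $x,y_1,y_2$ are basis elements, and bootstrapping to general $\lambda_i$ and non-primitive vectors then requires care (your appeal to ``the standard argument'' via $\Theta$-linearity runs into the same two-slot problem for scalar multiples). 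The fix is simply to switch formulas: use $\LambdaIIS$ for the first bullet and $\LambdaIS$ for the second.
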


In fact, we will prove something more general.  Let $a \wedge b$ be a symplectic pair in $H_{\Z}$.  Define
$\fK_g^{s,\Lambda}[a \wedge -,- \wedge b]$ to be the subspace of $\fK_g^s$ spanned by $\LambdaS{a \wedge y,x \wedge b}$ as $x$ and $y$ range
over elements of $\Span{a,b}^{\perp}$ satisfying $\omega(x,y) = 0$.  The linearization map
$\Phi\colon \fK_g^s \rightarrow \Sym^2((\wedge^2 H)/\Q)$ takes $\fK_g^{s,\Lambda}[a \wedge -,- \wedge b]$ into
\[\Span{\text{$(a \wedge y) \Cdot (x \wedge b)$ $|$ $x,y \in \Span{a,b}^{\perp}$}} \cong \left(\Span{a,b}^{\perp}_{\Q}\right)^{\otimes 2}.\]
This isomorphism takes $(a \wedge y) \Cdot (x \wedge b)$ to $y \otimes x$.  The image is in the kernel of map
\[\left(\Span{a,b}^{\perp}_{\Q}\right)^{\otimes 2} \longrightarrow \Q\]
induced by $\omega$, which we denote $\cZ(\Span{a,b}^{\perp}_{\Q})$ (c.f.\ \S \ref{section:nonsymmetriczero}).  We
will prove the following, which strengthens Lemma \ref{lemma:lambdalinear}:

\begin{lemma}[strong $\Lambda$-linearity]
\label{lemma:lambdalinear2}
Let $a \wedge b$ and $\cZ(\Span{a,b}^{\perp}_{\Q})$ be as above.  Then the linearization map
\[\Phi\colon \fK_g^{s,\Lambda}[a \wedge -,- \wedge b] \longrightarrow \cZ(\Span{a,b}^{\perp}_{\Q}).\]
is an isomorphism.
\end{lemma}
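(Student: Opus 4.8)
The plan is to apply the proof technique of \S\ref{section:prooftechnique} by directly comparing $\fK_g^{s,\Lambda}[a \wedge -,- \wedge b]$ with the representation $\fZ_{g-1}$ identified in Theorem \ref{theorem:spkernel}. Since $a \wedge b$ is a symplectic pair, $\Span{a,b}^{\perp}_{\Q}$ is a symplectic vector space of dimension $2(g-1)$, and $\cZ(\Span{a,b}^{\perp}_{\Q})$ is precisely the ``$\cZ$'' of Theorem \ref{theorem:spkernel} for this space; as $g \geq 4$ we have $g-1 \geq 2$, so that theorem is available. First I would record that $\LambdaS{a \wedge y,x \wedge b}$ is $\Z$-linear in $x$ and in $y$ separately (this is Lemma \ref{lemma:lambdalinear}): for linearity in $x$ use the expression $\LambdaIS{a \wedge y,x \wedge b}$ of Definition \ref{definition:lambda}, noting that $a \wedge (b+y)$ is a symplectic pair and $x \in \Span{a,b+y}^{\perp}$, so that $\ThetaS{a \wedge (b+y),x \wedge (b+y)}$ and $\ThetaS{a \wedge b,x \wedge b}$ are $\Z$-linear in $x$ by Lemma \ref{lemma:thetalinear} ($\Theta$-linearity) while $\PresS{a \wedge (b+y),x \wedge y}$ is linear in $x$ by the bilinearity relations in $\fK_g^s$; for linearity in $y$ use the expression $\LambdaIIS{a \wedge y,x \wedge b}$, whose three terms are $\Z$-linear in $y$ for the analogous reasons, now with the symplectic pair $(a+x) \wedge b$. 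Since $\LambdaIS{a \wedge y,x \wedge b} = \LambdaIIS{a \wedge y,x \wedge b}$ by Lemma \ref{lemma:lambdawelldefined}, the common value $\LambdaS{a \wedge y,x \wedge b}$ is linear in each slot, and, because $\Theta$-linearity already covers scalar multiples and imprimitive vectors, so does this $\Lambda$-linearity.

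Next I would fix an isomorphism $\mu \colon \Z^{2(g-1)} \to \Span{a,b}^{\perp}$ of abelian groups equipped with symplectic forms and define a map $\psi \colon \fZ_{g-1} \to \fK_g^{s,\Lambda}[a \wedge -,- \wedge b]$ by $\psi(\ZGen{v_1,v_2}) = \LambdaS{a \wedge \mu(v_1),\mu(v_2) \wedge b}$ for orthogonal primitive $v_1,v_2 \in \Z^{2(g-1)}$; this is meaningful since $\mu(v_1),\mu(v_2) \in \Span{a,b}^{\perp}$ and $\omega(\mu(v_1),\mu(v_2)) = 0$. The defining relations of $\fZ_{g-1}$ from Definition \ref{definition:spkernel} are exactly additivity of $\ZGen{-,-}$ in each argument along partial bases, and the $\Lambda$-linearity from the previous paragraph shows these are respected, so $\psi$ is well defined. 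Moreover $\psi$ is surjective: any spanning element $\LambdaS{a \wedge y,x \wedge b}$ of $\fK_g^{s,\Lambda}[a \wedge -,- \wedge b]$, with $x = \lambda x'$ and $y = \eta y'$ for primitive $x',y'$, equals $\lambda\eta\,\LambdaS{a \wedge y',x' \wedge b} = \psi(\lambda\eta\,\ZGen{\mu^{-1}(y'),\mu^{-1}(x')})$ by $\Lambda$-linearity.

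Finally, on a generator $\Phi(\psi(\ZGen{v_1,v_2})) = (a \wedge \mu(v_1)) \Cdot (\mu(v_2) \wedge b)$, which under the identification (described just before the lemma) of the image of $\Phi$ on $\fK_g^{s,\Lambda}[a \wedge -,- \wedge b]$ with a subspace of $(\Span{a,b}^{\perp}_{\Q})^{\otimes 2}$ corresponds to $\mu(v_1) \otimes \mu(v_2)$. Hence, after transporting along the isomorphism $\mu \otimes \mu$, the composite $\Phi \circ \psi$ is identified with the linearization map $\fZ_{g-1} \to \cZ(\Span{a,b}^{\perp}_{\Q})$, which is an isomorphism by Theorem \ref{theorem:spkernel}. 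In particular $\Phi \circ \psi$ is injective, so $\psi$ is injective and therefore an isomorphism by its surjectivity; consequently $\Phi$ restricted to $\fK_g^{s,\Lambda}[a \wedge -,- \wedge b]$ equals $(\Phi \circ \psi) \circ \psi^{-1}$ and is an isomorphism onto $\cZ(\Span{a,b}^{\perp}_{\Q})$, which is the assertion of the lemma (and, as a corollary, yields Lemma \ref{lemma:lambdalinear}).

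I expect the main obstacle to be the bookkeeping in the first paragraph: one must confirm that each $\Theta$- and $\PresS$-term appearing in $\LambdaIS{a \wedge y,x \wedge b}$ and $\LambdaIIS{a \wedge y,x \wedge b}$ genuinely falls under an already-established linearity statement — in particular that the relevant pairs $a \wedge (b+y)$ and $(a+x) \wedge b$ are symplectic pairs and that the varying vectors lie in the appropriate orthogonal complements — and that this linearity is strong enough (handling scalar multiples and imprimitive arguments) for well-definedness and surjectivity of $\psi$ to go through. The remaining steps are the routine ``compare with a known representation'' argument, where the only care needed is tracking the identification $\mu \otimes \mu$ and the ordering conventions in the linearization maps.
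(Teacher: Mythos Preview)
Your proposal is correct and follows essentially the same approach as the paper. The paper's proof is terser: it simply observes that in light of the presentation of $\cZ(\Span{a,b}^{\perp}_{\Q})$ from Theorem \ref{theorem:spkernel}, it suffices to verify the two additive relations for partial bases, then checks linearity in $y$ via $\LambdaIIS{a \wedge y,x \wedge b}$ and linearity in $x$ via $\LambdaIS{a \wedge y,x \wedge b}$, exactly as you do; the construction of $\psi$, its surjectivity, and the factorization $\Phi = (\Phi\circ\psi)\circ\psi^{-1}$ are left implicit.
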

\begin{proof}
Theorem \ref{theorem:spkernel} gives a presentation for $\cZ(\Span{a,b}^{\perp}_{\Q})$.  In light of this presentation,
it is enough to prove the following two special cases of
Lemma \ref{lemma:lambdalinear}:
\begin{itemize}
\item For $x \in \Span{a,b}^{\perp}$ and a partial basis $\{y_1,y_2\}$ of $\Span{a,b,x}^{\perp}$, we have
\[\LambdaS{a \wedge (y_1 + y_2) ,x \wedge b} = \LambdaS{a \wedge y_1, x \wedge b} + \LambdaS{a \wedge y_2,x \wedge b}.\]
\item For $y \in \Span{a,b}^{\perp}$ and a partial basis $\{x_1,x_2\}$ of $\Span{a,b,y}^{\perp}$, we have
\[\LambdaS{a \wedge y,(x_1 + x_2) \wedge b} = \LambdaS{a \wedge y, x_1 \wedge b} + \LambdaS{a \wedge y,x_2 \wedge b}.\]
\end{itemize}
For the first bullet point, $\LambdaS{a \wedge (y_1+y_2),x \wedge b} = \LambdaIIS{a \wedge (y_1+y_2),x \wedge b}$ equals
\[\ThetaS{(a+x) \wedge b,(a+x) \wedge (y_1+y_2)} - \ThetaS{a \wedge b,a \wedge (y_1+y_2)} - \PresS{(a+x) \wedge b,x \wedge (y_1+y_2)}.\]
Using Lemma \ref{lemma:thetalinear} ($\Theta$-linearity), all three terms are linear in the $y_i$:
\begin{align*}
\ThetaS{(a+x) \wedge b,(a+x) \wedge (y_1+y_2)} = &\ThetaS{(a+x) \wedge b,(a+x) \wedge y_1} \\
                                                 &+ \ThetaS{(a+x) \wedge b,(a+x) \wedge y_2},\\
\ThetaS{a \wedge b,a \wedge (y_1+y_2)}         = &\ThetaS{a \wedge b,a \wedge y_1} + \ThetaS{a \wedge b,a \wedge y_2},\\
\PresS{(a+x) \wedge b,x \wedge (y_1+y_2)}      = &\PresS{(a+x) \wedge b,x \wedge y_1} + \PresS{(a+x) \wedge b,x \wedge y_2}.
\end{align*}
The first bullet point follows.  The second bullet point is proved the same way, but using $\LambdaIS{a \wedge -,- \wedge b}$
instead of $\LambdaIIS{a \wedge -,- \wedge b}$.
\end{proof}

\subsection{\texorpdfstring{$\Lambda$}{Lambda}-symmetry}
\label{section:lambdasymmetry}

It is inconvenient to require the entries of
$\LambdaS{a \wedge y, x \wedge b}$ to appear
in a definite order.  We therefore define that each of the following terms
equals $\LambdaS{a \wedge y, x \wedge b}$:
\begin{alignat*}{6}
&&\LambdaS{a \wedge y,x \wedge b},\ \ &-&\LambdaS{y \wedge a,x \wedge b},\ \ &-&\LambdaS{a \wedge y,b \wedge x},\ \ &\LambdaS{y \wedge a,b \wedge x},\\
&&\LambdaS{x \wedge b,a \wedge y},\ \ &-&\LambdaS{x \wedge b,y \wedge a},\ \ &-&\LambdaS{b \wedge x,a \wedge y},\ \ &\LambdaS{b \wedge x,y \wedge a}.
\end{alignat*}

\subsection{\texorpdfstring{$\Lambda$}{Lambda}-signs}
\label{section:lambdasigns}

Lemma \ref{lemma:lambdalinear} ($\Lambda$-linearity) implies that
\[\LambdaS{a \wedge (-y),x \wedge b} = -\LambdaS{a \wedge y,x \wedge b} \quad \text{and} \quad
\LambdaS{a \wedge y,(-x) \wedge b} = -\LambdaS{a \wedge y,x \wedge b}.\]
For a symplectic pair $a \wedge b$, there are three other symplectic pairs obtained by swapping $a$ and $b$ while
multiplying them by $\pm 1$, namely $(-b) \wedge a$ and $b \wedge (-a)$ and $(-a) \wedge (-b)$.  The following
shows that changing $a \wedge b$ to one of these changes $\LambdaS{a \wedge y,x \wedge b}$
in the obvious way.  The statement uses the notation from \S \ref{section:lambdasymmetry}:

\begin{lemma}
\label{lemma:lambdasign}
Let $a \wedge b$ be a symplectic pair in $H_{\Z}$ and let $x,y \in \Span{a,b}^{\perp}$ satisfy $\omega(x,y) = 0$.  Then
\begin{alignat*}{6}
&&\LambdaS{a \wedge y,x \wedge (-b)}    &= &-\LambdaS{a \wedge y,x \wedge b}, \\
&&\LambdaS{(-a) \wedge y,x \wedge b}    &= &-\LambdaS{a \wedge y,x \wedge b}, \\
&&\LambdaS{(-a) \wedge y,x \wedge (-b)} &= & \LambdaS{a \wedge y,x \wedge b}.
\end{alignat*}
\end{lemma}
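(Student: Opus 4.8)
The plan is to reduce everything to Lemma \ref{lemma:lambdasign}'s underlying definitions together with the two already-established symmetry/linearity tools: the $\Lambda$-symmetry conventions from \S\ref{section:lambdasymmetry} and $\Lambda$-linearity (Lemma \ref{lemma:lambdalinear}). Since each of the three identities is structurally the same, I would prove one in detail and indicate that the others are identical. As in the proof of Lemma \ref{lemma:thetasign}, the natural candidate to do in full is $\LambdaS{(-a) \wedge y,x \wedge b} = -\LambdaS{a \wedge y,x \wedge b}$, since here $(-a) \wedge b$ is not itself a symplectic pair, so one must first reinterpret the symbol via the conventions of \S\ref{section:lambdasymmetry}.

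First I would note that the first identity, $\LambdaS{a \wedge y,x \wedge (-b)} = -\LambdaS{a \wedge y,x \wedge b}$, is immediate: $a \wedge (-b)$ is a symplectic pair, so $\LambdaS{a \wedge y,x \wedge (-b)} = \LambdaIIS{a \wedge y,x \wedge(-b)}$ unwinds via Definition \ref{definition:lambda} to $\ThetaS{(a+x) \wedge (-b),(a+x) \wedge y} - \ThetaS{a \wedge (-b),a \wedge y} - \PresS{(a+x) \wedge (-b),x \wedge y}$; applying Lemma \ref{lemma:thetasign} to the two $\Theta$-terms and the usual bilinearity of $\PresS{-,-}$ to the third pulls out an overall sign. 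For the main case $\LambdaS{(-a) \wedge y,x \wedge b}$: by the convention in \S\ref{section:lambdasymmetry} this equals $-\LambdaS{b \wedge x,y \wedge (-a)}$ after rewriting the non-symplectic slot as a genuine symplectic pair $b \wedge (-a)$ — and here both entries get flipped relative to $(-a) \wedge y, x \wedge b$, which introduces a sign I must track carefully. (Alternatively one can interpret $(-a)\wedge y$ via the conventions directly.) Then I unwind the $\LambdaIS$ or $\LambdaIIS$ form, apply $\Theta$-signs (Lemma \ref{lemma:thetasign}) and $\Theta$-linearity, and collect terms. The third identity $\LambdaS{(-a) \wedge y,x \wedge (-b)} = \LambdaS{a \wedge y,x \wedge b}$ then follows by composing the first two.

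Concretely, the skeleton I would write is: interpret $\LambdaS{(-a) \wedge y,x \wedge b}$ using \S\ref{section:lambdasymmetry} as $\LambdaIIS{b \wedge(-a)\ldots}$-type expression, then expand via Definition \ref{definition:lambda}, observe that $(a - x) \wedge b$ and $(-x) \wedge b$ appear where $(a+x)\wedge b$ and $x \wedge b$ did, invoke Lemma \ref{lemma:thetasign} on each $\Theta$-summand and Lemma \ref{lemma:lambdalinear} (or plain bilinearity of $\PresS{-,-}$ in $\fK_g^s$) on the remaining term, and read off the sign $-1$. This mirrors exactly the final paragraph of the proof of Lemma \ref{lemma:thetasign}, where $\ThetaS{(-a)\wedge b, x \wedge b}$ was unwound to $\ThetaS{a \wedge b, -x \wedge b} = -\ThetaS{a \wedge b, x \wedge b}$.

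The main obstacle — though it is bookkeeping rather than conceptual — is getting the signs right when passing through the \S\ref{section:lambdasymmetry} conventions: each slot-flip and each overall swap contributes a sign, and for $(-a)\wedge y$ one must be careful whether the reinterpretation flips one slot or both. The safest route is to always rewrite any non-symplectic-pair argument as an honest symplectic pair (multiplying both generators by $-1$, which is sign-neutral for the pair itself but changes what $\LambdaIS/\LambdaIIS$ expand to) before touching Definition \ref{definition:lambda}, exactly as Lemma \ref{lemma:thetasign}'s proof does for $\Theta$. Given that, the whole lemma is three short unwindings, and I would present only the hardest one in full and say ``the others are proved in the same way'' for the rest.
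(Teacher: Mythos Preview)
Your overall plan matches the paper's approach: reinterpret the undefined expression via the conventions of \S\ref{section:lambdasymmetry} so that the underlying symplectic pair is genuine, expand via Definition~\ref{definition:lambda}, and then compare terms. The paper in fact chooses the \emph{first} identity $\LambdaS{a \wedge y,x \wedge (-b)} = -\LambdaS{a \wedge y,x \wedge b}$ as the representative case, reinterprets it as $\LambdaS{(-b) \wedge x,y \wedge a}$, expands $\LambdaIS{(-b) \wedge x,y \wedge a}$, and recognizes the result as $-\LambdaIIS{a \wedge y,x \wedge b}$.

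There is one concrete error in your sketch: you claim the first identity is ``immediate'' because ``$a \wedge (-b)$ is a symplectic pair''. It is not: $\omega(a,-b) = -1$, so $\LambdaIIS{a \wedge y,x \wedge(-b)}$ is \emph{not} directly given by Definition~\ref{definition:lambda}, and the $\Theta$-terms you wrote (e.g.\ $\ThetaS{(a+x)\wedge(-b),(a+x)\wedge y}$) are likewise undefined without first passing through the symmetry conventions. In other words, the first identity is no easier than the second --- it requires exactly the same reinterpretation step you correctly planned for $\LambdaS{(-a)\wedge y,x\wedge b}$. Once you carry out that reinterpretation (rewriting $x \wedge (-b)$ so that the ambient pair becomes the honest symplectic pair $(-b)\wedge a$), your method goes through; this is precisely what the paper does. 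Your derivation of the third identity from the first two is fine.
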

\begin{proof}
These are all proved the same way, so we will give the details for $\LambdaS{a \wedge y,x \wedge (-b)} = -\LambdaS{a \wedge y,x \wedge b}$
and leave the others to the reader.  Using
the notation from \S \ref{section:lambdasymmetry}, we interpret $\LambdaS{a \wedge y,x \wedge (-b)}$ as
$\LambdaS{(-b) \wedge x,y \wedge a}$.  Our goal is to prove that $\LambdaS{(-b) \wedge x,y \wedge a} = - \LambdaS{a \wedge y,x \wedge b}$
By the definition of $\Lambda$-elements (Definition \ref{definition:lambda}), $\LambdaS{(-b) \wedge x,y \wedge a} = \LambdaIS{(-b) \wedge x,y \wedge a}$ equals
\begin{align*}
&\ThetaS{(-b) \wedge (a+x),y \wedge (a+x)} - \ThetaS{(-b) \wedge a,y \wedge a} - \PresS{(-b) \wedge (a+x),y \wedge x}\\
&=-\ThetaS{b \wedge (a+x),y \wedge (a+x)} + \ThetaS{b \wedge a,y \wedge a} + \PresS{b \wedge (a+x),y \wedge x}\\
&=-\ThetaS{(a+x) \wedge b,(a+x) \wedge y} + \ThetaS{a \wedge b,a \wedge y} + \PresS{(a+x) \wedge b,x \wedge y}.
\end{align*}
This last expression equals $-\LambdaIIS{a \wedge y,x \wedge b} = -\LambdaS{a \wedge y,x \wedge b}$.
\end{proof}

\subsection{The set \texorpdfstring{$S_3$}{S3}}

We now return to constructing $S_3$.  Recall that 
\[T_3 = \Set{$(a_i \wedge y) \Cdot (x \wedge b_i)$}{$1 \leq i \leq g$, $x,y \in \cB \setminus \{a_i,b_i\}$, $\omega(x,y)=0$}.\]
Define
\[S_3 = \Set{$\SLambdaS{a_i \wedge y,x \wedge b_i}$}{$1 \leq i \leq g$, $x,y \in \cB \setminus \{a_i,b_i\}$, $\omega(x,y)=0$}.\]
Like we did here, we will write elements of $\Span{S_3}$ in orange.  For example, using 
Lemma \ref{lemma:lambdalinear} ($\Lambda$-linearity) the following holds for $1 \leq i \leq g$.  Consider
$x,y \in \Span{a_i,b_i}^{\perp}$ with $\omega(x,y) = 0$.  Assume there exist
$\cB_1,\cB_2 \subset \cB \setminus \{a_i,b_i\}$ (not necessarily disjoint) with $\omega(z_1,z_2)=0$ for all $z_1 \in \cB_1$ and $z_2 \in \cB_2$
such that $x \in \Span{\cB_1}$ and $y \in \Span{\cB_2}$.  Then
$\SLambdaS{a_i \wedge y,x \wedge b_i} \in \Span{S_3}$.

\begin{remark}
It is not true, however, that in general elements of the form $\LambdaS{a_i \wedge y,x \wedge b_i}$ with
$x,y \in \Span{a_i,b_i}^{\perp}$ lie in $\Span{S_3}$.  See \S \ref{section:lambdarelation} below.
\end{remark}

By construction,
the linearization map $\Phi$ takes $S_3$ bijectively to $T_3$.  Even better:

\begin{lemma}
\label{lemma:symspans3}
The linearization map $\Phi$ takes $\Span{S_1,S_2,S_3}$ isomorphically to $\Span{T_1,T_2,T_3}$.
\end{lemma}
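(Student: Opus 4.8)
The plan is to bootstrap from Lemma~\ref{lemma:symspans2}, which already identifies $\Span{S_1,S_2}$ with $\Span{T_1,T_2}$ via $\Phi$, by showing that $T_3$ sits ``freely'' on top of $T_1\cup T_2$ inside $\Sym^2((\wedge^2 H)/\Q)$. The first step is to observe that every relation in the generating set $R$ of relations among $T$ (see \S\ref{section:genrelsym2}) involves only elements of $T_1\cup T_2$. This is the exact analogue of Claim~1 in the proof of Lemma~\ref{lemma:altspans2}, checked the same way: in $\sum_{i=1}^g (a_i\wedge b_i)\Cdot(x\wedge y)$ each summand is either blue, i.e.\ an element of $T_1$ (this happens when the hyperbolic pair $\langle a_i,b_i\rangle$ is orthogonal to both $x$ and $y$), or, after adjusting signs, an element of $T_2$ (when $i$ indexes one of the at most two hyperbolic pairs meeting $\{x,y\}$); none of these summands can lie in $T_3$ or $T_4$, because the factor $a_i\wedge b_i$ cannot equal $\pm a_j\wedge y'$ or $\pm x'\wedge b_j$ with $x',y'\notin\{a_j,b_j\}$ (two decomposable bivectors spanning these planes cannot agree modulo $\Q\omega$ since $\omega$ has rank $2g\geq 8$). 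Consequently, writing the target as a quotient of $\Span{T_1}\oplus\Span{T_2}\oplus\Span{T_3}\oplus\Span{T_4}$ by the relations corresponding to $R$, the relations live in $\Span{T_1}\oplus\Span{T_2}$, so $\Span{T_1,T_2,T_3}=\Span{T_1,T_2}\oplus\Span{T_3}$ with $T_3$ a basis of the second summand.

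With this decomposition in hand the lemma is essentially formal. By construction $\Phi$ carries $S_3$ bijectively onto $T_3$, and since these images are distinct and linearly independent, $\Phi$ restricts to an isomorphism $\Span{S_3}\xrightarrow{\sim}\Span{T_3}$; in particular $S_3$ is linearly independent in $\fK_g^s$. For surjectivity, $\Phi(\Span{S_1,S_2,S_3})$ contains $\Phi(\Span{S_1,S_2})=\Span{T_1,T_2}$ by Lemma~\ref{lemma:symspans2} together with $\Span{T_3}$, hence equals $\Span{T_1,T_2,T_3}$. For injectivity, take $v\in\Span{S_1,S_2,S_3}$ with $\Phi(v)=0$ and write $v=v_{12}+v_3$ with $v_{12}\in\Span{S_1,S_2}$ and $v_3\in\Span{S_3}$; then $\Phi(v_{12})\in\Span{T_1,T_2}$ and $\Phi(v_3)\in\Span{T_3}$ sum to $0$, so by the direct-sum decomposition both vanish, whence $v_{12}=0$ by Lemma~\ref{lemma:symspans2} and $v_3=0$ by injectivity of $\Phi$ on $\Span{S_3}$.

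I expect the only point that needs genuine care to be the verification in the first step that no relation in $R$ touches $T_3$ (or $T_4$) — that is, the book-keeping showing every summand $(a_i\wedge b_i)\Cdot(x\wedge y)$ lands in $T_1\cup T_2$ once one tracks the signs introduced when reordering the two legs of a product and swapping the two factors of a symmetric product, and discards the summands that vanish in $\Sym^2((\wedge^2 H)/\Q)$. This is routine and parallels the corresponding argument in Lemma~\ref{lemma:altspans2}; everything downstream is a short diagram chase through the direct-sum decomposition, using only Lemmas~\ref{lemma:symspans1} and~\ref{lemma:symspans2}.
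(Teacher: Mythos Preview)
Your proof is correct and follows essentially the same approach as the paper: both rely on the fact (established in the proof of Lemma~\ref{lemma:symspans2}) that every relation in $R$ involves only generators in $T_1\cup T_2$, so that $T_3$ sits freely over $\Span{T_1,T_2}$, and then combine this with the bijection $S_1\cup S_2\cup S_3\to T_1\cup T_2\cup T_3$ under $\Phi$. The paper's proof is terser, simply citing that the relations lift and concluding; your explicit direct-sum diagram chase unpacks the same reasoning. One minor point: your justification that no summand $(a_i\wedge b_i)\Cdot(x\wedge y)$ lies in $T_3\cup T_4$ via ``two decomposable bivectors spanning these planes cannot agree modulo $\Q\omega$'' is correct but more elaborate than needed---it suffices to observe that every such summand has $a_i\wedge b_i$ as one factor, whereas no element of $T_3$ or $T_4$ does.
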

\begin{proof}
Recall that in Lemma \ref{lemma:symspans2} we proved that $\Phi$ takes $\Span{S_1,S_2}$ isomorphically
to $\Span{T_1,T_2}$.  Part of the proof of that lemma was that $\Phi$ takes $S_1 \cup S_2$ bijectively to 
$T_1 \cup T_2$.  It follows that $\Phi$ takes $S_1 \cup S_2 \cup S_3$ bijectively to $T_1 \cup T_2 \cup T_3$.  
What is more, in the proof of Lemma \ref{lemma:symspans2} we proved that all relations between elements of 
$T_1 \cup \cdots \cup T_4$ are actually relations between elements of $T_1 \cup T_2$ and lift to relations
between elements of $S_1 \cup S_2$.  The lemma follows.
\end{proof}

\subsection{Additional bilinearity relations}

We close this section by proving some additional relations between the $\Lambda$-elements.

\begin{lemma}[$\Lambda$-bilinearity I]
\label{lemma:lambdabilinear1}
Let $a \wedge b$ be a symplectic pair in $H_{\Z}$, let $x,y \in \Span{a,b}^{\perp}$ satisfy $\omega(x,y)=0$,
and let $z \in \Span{a,b,x,y}^{\perp}$.  Then:
\begin{align*}
\LambdaS{(a+z) \wedge y,x \wedge b} &= \LambdaS{a \wedge y,x \wedge b} + \PresS{z \wedge y,x \wedge b},\\
\LambdaS{a \wedge y,x \wedge (b+z)} &= \LambdaS{a \wedge y,x \wedge b} + \PresS{a \wedge y,x \wedge z}.
\end{align*}
\end{lemma}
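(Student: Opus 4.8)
The plan is to prove each of the two identities by a direct computation: unfold the left-hand $\Lambda$-element via one of the two definitions in Definition \ref{definition:lambda}, apply $\Theta$-bilinearity I (Lemma \ref{lemma:thetabilinear1}) to the two $\Theta$-terms it produces, expand the remaining standard generators $\PresS{\cdot,\cdot}$ using the bilinearity relations of $\fK_g^s$, and then re-fold a subcombination of the resulting $\Theta$- and $\PresS$-terms back into $\LambdaS{a \wedge y,x \wedge b}$ by the same definition. The point is that the perturbation vector $z$ enters \emph{both} entries of the relevant $\Theta$-element (in the ``$a$''-slot of the underlying symplectic pair and, after the perturbation $b \mapsto b+y$, also in the second factor), which is exactly the situation handled by Lemma \ref{lemma:thetabilinear1}.

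For the first identity I would write $\LambdaS{(a+z) \wedge y,x \wedge b} = \LambdaIS{(a+z) \wedge y,x \wedge b}$ over the symplectic pair $(a+z) \wedge b$ (symplectic since $\omega(a+z,b)=1$, with $x,y \in \Span{a+z,b}^\perp$), so that it equals $\ThetaS{(a+z) \wedge (b+y),x \wedge (b+y)} - \ThetaS{(a+z) \wedge b,x \wedge b} - \PresS{(a+z) \wedge (b+y),x \wedge y}$. Since $z \in \Span{a,b+y}^\perp$ and $x \in \Span{a,b+y,z}^\perp$, Lemma \ref{lemma:thetabilinear1} (first bullet, with $b$ replaced by $b+y$, resp.\ with $b$) replaces the first two terms by $\ThetaS{a \wedge (b+y),x \wedge (b+y)} + \PresS{z \wedge (b+y),x \wedge (b+y)}$ and $\ThetaS{a \wedge b,x \wedge b} + \PresS{z \wedge b,x \wedge b}$. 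Expanding $\PresS{(a+z) \wedge (b+y),x \wedge y}$, $\PresS{z \wedge (b+y),x \wedge (b+y)}$, etc.\ bilinearly, using the identity $\ThetaS{a \wedge (b+y),x \wedge (b+y)} - \ThetaS{a \wedge b,x \wedge b} - \PresS{a \wedge (b+y),x \wedge y} = \LambdaS{a \wedge y,x \wedge b}$, and cancelling, everything collapses to $\LambdaS{a \wedge y,x \wedge b} + \PresS{z \wedge y,x \wedge b}$. For the second identity the computation is the mirror image: unfold $\LambdaS{a \wedge y,x \wedge (b+z)}$ as $\LambdaIIS{a \wedge y,x \wedge (b+z)}$ over the symplectic pair $a \wedge (b+z)$, giving $\ThetaS{(a+x) \wedge (b+z),(a+x) \wedge y} - \ThetaS{a \wedge (b+z),a \wedge y} - \PresS{(a+x) \wedge (b+z),x \wedge y}$; apply Lemma \ref{lemma:thetabilinear1} (second bullet, perturbing the $b$-slot by $z$) to the two $\Theta$-terms; expand the $\PresS$-terms bilinearly; and re-fold $\ThetaS{(a+x) \wedge b,(a+x) \wedge y}$ into $\LambdaS{a \wedge y,x \wedge b}$ via $\LambdaIIS$. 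The leftover is $\LambdaS{a \wedge y,x \wedge b} + \PresS{x \wedge z,a \wedge y} = \LambdaS{a \wedge y,x \wedge b} + \PresS{a \wedge y,x \wedge z}$.

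The step that needs care — and the one I would write out — is checking that every $\PresS{\cdot,\cdot}$ occurring at intermediate stages is a legitimate generator of $\fK_g^s$, i.e.\ that its two entries are sym-orthogonal. For the terms that actually appear ($\PresS{z \wedge b,x \wedge b}$, $\PresS{z \wedge b,x \wedge y}$, $\PresS{z \wedge y,x \wedge b}$, $\PresS{z \wedge y,x \wedge y}$, $\PresS{a \wedge z,a \wedge y}$, $\PresS{a \wedge z,x \wedge y}$, $\PresS{x \wedge z,a \wedge y}$, $\PresS{a \wedge b,x \wedge y}$, $\PresS{x \wedge b,x \wedge y}$, and so on) this follows at once from $\{x,y,z\}$ being pairwise $\omega$-orthogonal and each orthogonal to $\Span{a,b}$. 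This is precisely why one routes through Lemma \ref{lemma:thetabilinear1}: a naive bilinear expansion of $\PresS{(a+z) \wedge (b+y),(a+z) \wedge (b+y)}$ would produce ill-defined terms such as $\PresS{a \wedge y,z \wedge b}$, for which $\fc(a \wedge y,z \wedge b) = -\,y \Cdot z \neq 0$, whereas the $\Theta$-combinations absorb exactly this defect.

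Two minor points round out the proof. The sign and symmetry conventions of \S\ref{section:thetasymmetry} and \S\ref{section:lambdasymmetry} must be applied consistently when normalising the entries of the $\Theta$- and $\Lambda$-elements (e.g.\ to read $\PresS{x \wedge z,a \wedge y}$ as $\PresS{a \wedge y,x \wedge z}$), but this is purely mechanical. And the degenerate configurations in which $\{x,y\}$, $\{a,z\}$, or $\{a+z,y\}$ fails to be linearly independent are covered by the convention that $\PresS{\cdot,\cdot}$ vanishes when an entry is $0$ together with $\Lambda$-linearity (Lemma \ref{lemma:lambdalinear}) and $\Theta$-linearity (Lemma \ref{lemma:thetalinear}), so no separate case analysis is required.
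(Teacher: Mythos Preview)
Your proposal is correct and follows essentially the same route as the paper: unfold the first identity via $\Lambda_1$, apply $\Theta$-bilinearity I (Lemma \ref{lemma:thetabilinear1}) to both $\Theta$-terms, expand the $\PresS{\cdot,\cdot}$ terms bilinearly, re-fold into $\LambdaS{a \wedge y,x \wedge b}$, and cancel; the paper likewise says the second identity is proved the same way (your use of $\Lambda_2$ there is the natural mirror). Your explicit check that every intermediate $\PresS{\cdot,\cdot}$ is sym-orthogonal and your remark on degenerate configurations are extra care the paper leaves implicit.
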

\begin{proof}
Both formulas are proved the same way, so we will prove the first.  By definition,
$\LambdaS{(a+z) \wedge y,x \wedge b} = \LambdaIS{(a+z) \wedge y,x \wedge b}$ equals
\[\ThetaS{(a+z) \wedge (b+y),x \wedge (b+y)} - \ThetaS{(a+z) \wedge b,x \wedge b} - \PresS{(a+z) \wedge (b+y), x \wedge y}.\]
By Lemma \ref{lemma:thetabilinear1} ($\Theta$-bilinearity I), this equals
\begin{align*}
&\ThetaS{a \wedge (b+y),x \wedge (b+y)} + \PresS{z \wedge (b+y),x \wedge (b+y)} - \ThetaS{a \wedge b,x \wedge b} - \PresS{z \wedge b,x \wedge b} \\
&- \PresS{(a+z) \wedge (b+y), x \wedge y} \\
=&\LambdaS{a \wedge y,x \wedge b} + \PresS{a \wedge (b+y),x \wedge y} + \PresS{z \wedge (b+y),x \wedge (b+y)} - \PresS{z \wedge b,x \wedge b}\\ 
&- \PresS{a \wedge (b+y), x \wedge y} - \PresS{z \wedge (b+y), x \wedge y}.
\end{align*}
All the terms after the first in this can be expanded out and many of the resulting terms cancel, leaving
$\LambdaS{a \wedge y,x \wedge b} + \PresS{z \wedge y,x \wedge b}$.
\end{proof}

Lemma \ref{lemma:lambdabilinear1} allows some standard generators of $\fK_g^s$ to be written as the sum
of two $\Lambda$-elements:\footnote{There are many variants of this in the style of Lemma \ref{lemma:thetaexpansion2}, but
we give the only one we use.}

\begin{lemma}[$\Lambda$-expansion II]
\label{lemma:lambdaexpansion2}
Let $a \wedge b$ and $a' \wedge b'$ be
symplectic pairs in $H_{\Z}$ such that $\Span{a,b}$ and $\Span{a',b'}$ are orthogonal and let $y,w \in \Span{a,b',b+a'}^{\perp}$
satisfy $\omega(y,w)=0$.  Then
\[\PresS{(a+b') \wedge y,(b+a') \wedge w} = \LambdaS{a \wedge y,(b+a') \wedge w} + \LambdaS{b' \wedge y,(b+a') \wedge w}.\]
\end{lemma}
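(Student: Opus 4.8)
The plan is to reduce this identity to an application of Lemma \ref{lemma:lambdabilinear1} ($\Lambda$-bilinearity I), exactly in the style of how Lemma \ref{lemma:thetaexpansion2} ($\Theta$-expansion II) was deduced from Lemma \ref{lemma:thetabilinear1} ($\Theta$-bilinearity I). The key observation is that $(a+b')\wedge y$ can be written as $\big((-a') + (a+a'+b')\big)\wedge y$, or more usefully that we want a symplectic pair of the form $(\zeta)\wedge(b+a')$ to anchor the $\Lambda$-element. Since $a\wedge b$ and $a'\wedge b'$ are symplectic pairs with $\Span{a,b}\perp\Span{a',b'}$, one checks that $b+a'$ together with $a - b'$ forms a symplectic pair: $\omega(a-b', b+a') = \omega(a,b) + \omega(a,a') - \omega(b',b) - \omega(b',a') = 1 + 0 - 0 - (-1)$. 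Hmm, that gives $2$, so instead the right anchor is that $\omega(a, b+a') = \omega(a,b) = 1$, i.e. $a\wedge(b+a')$ is a symplectic pair, and similarly $b'\wedge(b+a')$ is a symplectic pair since $\omega(b',b+a') = \omega(b',a') = -1$, so $(b+a')\wedge b'$ is a symplectic pair.

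First I would verify the orthogonality bookkeeping needed to legitimately invoke $\Lambda$-bilinearity I: with the symplectic pair $a\wedge(b+a')$ (note $\omega(a,b+a')=1$), the elements $y, w \in \Span{a,b',b+a'}^\perp$ satisfy $\omega(y,w)=0$, and $b' \in \Span{a,b+a'}^\perp$ since $\omega(b',a)=0$ and $\omega(b',b+a')=\omega(b',a')=-1\neq 0$ — wait, that fails. So $b'$ is not orthogonal to $b+a'$. The correct route is instead to use the tuple appearing in $\LambdaS{b'\wedge y,(b+a')\wedge w}$: here the anchoring symplectic pair is $(b+a')\wedge b'$ (valid since $\omega(b+a',b') = \omega(a',b')=-1$... still wrong sign). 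Let me reconsider: $\omega(b+a',b') = \omega(b,b')+\omega(a',b') = 0 + (-1) = -1$, so $(b+a')\wedge(-b')$ is a symplectic pair, equivalently by $\Lambda$-signs (Lemma \ref{lemma:lambdasign}) we may work with $\LambdaS{b'\wedge y,(b+a')\wedge w}$ directly via the interpretation conventions of \S\ref{section:lambdasymmetry} and \S\ref{section:lambdasigns}. The clean statement to apply is: take the symplectic pair $(b+a')\wedge(-b')$ — rename to keep $\omega=1$ — and apply $\Lambda$-bilinearity I with the "shift" element $z = a - \text{(something in the span)}$. Concretely, $\LambdaS{(b'+ z)\wedge y,(b+a')\wedge w}$ where we choose $z$ so that $b'+z = a+b'$, i.e. $z = a$, and check $a \in \Span{b+a', b', y, w}^\perp$: indeed $\omega(a,b+a')=1\neq 0$, which again obstructs a naive application.

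Given these sign subtleties, the actual approach I would take is to unfold both sides using the definitions of the $\Lambda$-elements (Definition \ref{definition:lambda}) in terms of $\Theta$-elements and standard generators, then apply $\Theta$-bilinearity I (Lemma \ref{lemma:thetabilinear1}) and $\Theta$-linearity (Lemma \ref{lemma:thetalinear}) to expand $\ThetaS{(a+b')\wedge b,(a+b')\wedge y}$ and related terms, following the template of the proof of Lemma \ref{lemma:lambdawelldefined}. Specifically, I would: (1) write $\LambdaS{a\wedge y,(b+a')\wedge w} = \LambdaIS{a\wedge y,(b+a')\wedge w}$ and $\LambdaS{b'\wedge y,(b+a')\wedge w}$ using whichever of $\LambdaIS{}$ or $\LambdaIIS{}$ makes the anchoring symplectic pair cleanest — here the pair $(b+a')\wedge w$... no, $w$ is orthogonal to $b+a'$, so the natural symplectic pair is the one formed by the left entry. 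The honest symplectic pair here is $b+a'$ paired with some $c$ with $\omega(b+a',c)=1$; since $\omega(b+a',a)=1$, the pair is $(b+a')\wedge a$ reversed. So both $\Lambda$-elements share the "ambient" symplectic pair structure around $b+a'$, and the identity is precisely $\Lambda$-bilinearity I applied to the symplectic pair $(b+a')\wedge a$ with shift $z$ chosen appropriately — but one must first use $\Lambda$-symmetry to move $(b+a')$ into the anchor slot. (2) Apply Lemma \ref{lemma:lambdabilinear1} with the symplectic pair $(b+a')\wedge a$, writing $a = (a+b') - b'$, so that $\LambdaS{(b+a')\wedge w, ((a+b')-b')\wedge y}$... this requires $a+b'$ and $b'$ to be orthogonal to $b+a'$, which fails for $a+b'$. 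The resolution is that $\Lambda$-bilinearity I only requires the \emph{shift} direction $z$ to lie in $\Span{a,b,x,y}^\perp$, and here the shift is $b'$, with $\omega(b',b+a')=-1\neq 0$ — so strictly Lemma \ref{lemma:lambdabilinear1} does not directly apply and one needs the analogue where the shift is not orthogonal to the anchor.

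The hard part, therefore, will be establishing the precise variant of $\Lambda$-bilinearity needed — namely handling a shift direction $b'$ that pairs nontrivially with the anchor $b+a'$ — which I expect requires going back to the $\Theta$-level and reusing $\Theta$-bilinearity I (Lemma \ref{lemma:thetabilinear1}) together with the $\Theta$-expansion and $\Lambda$-expansion lemmas, mirroring the multi-step cancellation in the proof of Lemma \ref{lemma:lambdawelldefined}. Concretely I would express $\PresS{(a+b')\wedge y,(b+a')\wedge w}$ via $\LambdaIS{}$ relative to the symplectic pair $(b+a')\wedge a$ (legitimate since $\omega(a,b+a')=1$ and $y,w\in\Span{a,b+a'}^\perp$ — this last needs checking but is part of the hypothesis $y,w\in\Span{a,b',b+a'}^\perp$), obtaining $\PresS{(a+b')\wedge y,(b+a')\wedge w} = \LambdaS{(b+a')\wedge w,(a+b')\wedge y}$ after a $\Lambda$-symmetry move, and then apply Lemma \ref{lemma:lambdabilinear1} in the form $\LambdaS{(b+a')\wedge w, ((a) + b')\wedge y} = \LambdaS{(b+a')\wedge w, a\wedge y} + \PresS{(b+a')\wedge w, b'\wedge y}$ — but this is exactly $\Lambda$-bilinearity I with shift $b'$, which does require $b' \in \Span{b+a', w, a, y}^\perp$; since $\omega(b',b+a')=-1$ this is false, so in fact the correct decomposition uses $b + a'$ being orthogonal to $b'$... which it is not. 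At this point the cleanest honest path is: split $\PresS{(a+b')\wedge y,(b+a')\wedge w}$ bilinearly in the \emph{left} entry as $\PresS{a\wedge y,(b+a')\wedge w} + \PresS{b'\wedge y,(b+a')\wedge w}$ — but the first summand is not a valid generator since $a\wedge y$ need not be a symplectic pair and $(b+a')\wedge w$ is isotropic, not symplectic, so $\PresS{a\wedge y,(b+a')\wedge w}$ is undefined. Thus the $\Lambda$-elements are genuinely needed as replacements for these missing generators, and the identity should be read as: $\LambdaS{a\wedge y,(b+a')\wedge w}$ and $\LambdaS{b'\wedge y,(b+a')\wedge w}$ are the unique lifts (via the relevant isomorphism, Lemma \ref{lemma:lambdalinear2}) of $(a\wedge y)\Cdot((b+a')\wedge w)$ and $(b'\wedge y)\Cdot((b+a')\wedge w)$, and $\PresS{(a+b')\wedge y,(b+a')\wedge w}$ lifts their sum; since $\Phi$ is injective on $\Span{S_1}$ and on the relevant $\Lambda$-subspace, it suffices to prove the identity holds modulo $\Span{S_1}$, which I would do by a direct computation in $\fK_g^s$ expanding everything to $\Theta$-elements and standard generators and applying $\Theta$-bilinearity I, $\Theta$-linearity, and the ordinary bilinearity relations, in the manner of all the preceding proofs in this section. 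The main obstacle is organizing this cancellation cleanly; I do not expect any conceptual difficulty beyond careful sign- and span-bookkeeping, since all needed tools ($\Theta$-bilinearity I, $\Lambda$-expansion I, $\Lambda$-linearity, $\Lambda$-symmetry/signs) are already in place.
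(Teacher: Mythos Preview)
Your overall instinct is right: this lemma is a one-step consequence of Lemma~\ref{lemma:lambdabilinear1} ($\Lambda$-bilinearity I), in direct analogy with how $\Theta$-expansion II follows from $\Theta$-bilinearity I. But you never find the correct anchor/shift, and this leads you to propose unfolding everything to $\Theta$-elements, which is far more work than needed.

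The trick you miss is this. You repeatedly try to anchor at a symplectic pair involving $a$ or $b'$ and then shift by the other, and you correctly observe that $b'$ is not orthogonal to $b+a'$ (since $\omega(b',b+a')=-1$), which blocks those applications of $\Lambda$-bilinearity I. The paper instead anchors at the symplectic pair $(-b')\wedge(b+a')$ --- note $\omega(-b',b+a')=+1$, so this really is a symplectic pair --- and shifts by $z=a+b'$. One checks $\omega(a+b',-b')=0$, $\omega(a+b',b+a')=1-1=0$, and $\omega(a+b',y)=\omega(a+b',w)=0$ by hypothesis, so $z\in\Span{-b',b+a',y,w}^{\perp}$ and Lemma~\ref{lemma:lambdabilinear1} applies directly:
\[
\LambdaS{(-b'+(a+b'))\wedge y,\,w\wedge(b+a')}=\LambdaS{(-b')\wedge y,\,w\wedge(b+a')}+\PresS{(a+b')\wedge y,\,w\wedge(b+a')}.
\]
Rearranging and using the $\Lambda$-sign convention $\LambdaS{(-b')\wedge y,\,w\wedge(b+a')}=-\LambdaS{b'\wedge y,\,w\wedge(b+a')}$ gives the result immediately.

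So there is no need for a ``variant'' of $\Lambda$-bilinearity I and no need to descend to the $\Theta$-level; you just have to pick the anchor at $-b'$ rather than $b'$ and shift by the isotropic direction $a+b'$ rather than by $a$ or $b'$ separately. (Incidentally, your claim that $a+b'$ fails to be orthogonal to $b+a'$ is a computational slip: $\omega(a+b',b+a')=\omega(a,b)+\omega(b',a')=1-1=0$.)
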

\begin{proof}
Lemma \ref{lemma:lambdabilinear1} implies that
\[\LambdaS{(-b') \wedge y,w \wedge (b+a')} + \PresS{(a+b') \wedge y, w \wedge (b+a')}\]
equals
\[\LambdaS{(-b' + (a+b')) \wedge y,w \wedge (b+a')} = \LambdaS{a \wedge y,w \wedge (b+a')}.\]
Rearranging this, we see that
\[\PresS{(a+b') \wedge y,(b+a') \wedge w} = \LambdaS{a \wedge y,(b+a') \wedge w} + \LambdaS{b' \wedge y,(b+a') \wedge w}.\qedhere\]
\end{proof}

\begin{lemma}[$\Lambda$-bilinearity II]
\label{lemma:lambdabilinear2}
For a symplectic pair $a \wedge b$ in $H_{\Z}$ and $x,y \in \Span{a,b}^{\perp}$ with $\omega(x,y) = 0$ and $n \in \Z$,
we have
\begin{align*}
\LambdaS{(a+nb) \wedge y,x \wedge b} &= \LambdaS{a \wedge y,x \wedge b} + n\PresS{b \wedge y,x \wedge b},\\
\LambdaS{a \wedge y,x \wedge (b+na)} &= \LambdaS{a \wedge y,x \wedge b} + n\PresS{a \wedge y,x \wedge a}.
\end{align*}
\end{lemma}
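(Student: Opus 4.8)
The plan is to prove both identities by direct computation, expanding each $\Lambda$-element via Definition \ref{definition:lambda} and rewriting the resulting $\Theta$-terms using the already-established $\Theta$-bilinearity lemmas (Lemmas \ref{lemma:thetabilinear1} and \ref{lemma:thetabilinear2}) together with the standard bilinearity relations of $\fK_g^s$ from Theorem \ref{theorem:summarypresentation}. We may assume $x \neq 0$ and $y \neq 0$, since otherwise both sides vanish; note also that $(a+nb) \wedge b = a \wedge b$ and $a \wedge (b+na) = a \wedge b$ are symplectic pairs and that $\Span{a,b}^{\perp} = \Span{a+nb,b}^{\perp} = \Span{a,b+na}^{\perp}$, so every $\Lambda$-element in the statement is defined.

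For the first identity I would start from the defining expression
\[\LambdaS{(a+nb) \wedge y,x \wedge b} = \ThetaS{(a+nb) \wedge (b+y),x \wedge (b+y)} - \ThetaS{(a+nb) \wedge b,x \wedge b} - \PresS{(a+nb) \wedge (b+y),x \wedge y}\]
and handle the three terms in turn. For the first term, write $a+nb = (a+n(b+y)) - ny$: Lemma \ref{lemma:thetabilinear2} (first bullet, applied with the symplectic complement $b+y$ of $a$ in place of $b$) absorbs the summand $n(b+y)$, and Lemma \ref{lemma:thetabilinear1} ($\Theta$-bilinearity I) peels off $-ny$; since $(-ny) \wedge (b+y) = -n\,y \wedge b$ this rewrites the first term as $\ThetaS{a \wedge (b+y),x \wedge (b+y)} - n\PresS{y \wedge b,x \wedge (b+y)}$, after which bilinearity in $\fK_g^s$ splits $x \wedge (b+y) = x \wedge b + x \wedge y$. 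For the second term, Lemma \ref{lemma:thetabilinear2} gives $\ThetaS{(a+nb) \wedge b,x \wedge b} = \ThetaS{a \wedge b,x \wedge b}$. For the third term, $(a+nb) \wedge (b+y) = a \wedge (b+y) + n\,b \wedge y$, so it splits as $\PresS{a \wedge (b+y),x \wedge y} + n\PresS{b \wedge y,x \wedge y}$. Recombining, the $n$-free pieces reassemble (again via Definition \ref{definition:lambda}) into $\LambdaS{a \wedge y,x \wedge b}$, while the $n$-pieces collapse: $-n\PresS{y \wedge b,x \wedge y}$ cancels $-n\PresS{b \wedge y,x \wedge y} = n\PresS{y \wedge b,x \wedge y}$, leaving $-n\PresS{y \wedge b,x \wedge b} = n\PresS{b \wedge y,x \wedge b}$, which is exactly the claimed correction term.

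The second identity is entirely parallel, using instead the $\LambdaIIS$ form of Definition \ref{definition:lambda}: from $\LambdaIIS{a \wedge y,x \wedge (b+na)} = \ThetaS{(a+x) \wedge (b+na),(a+x) \wedge y} - \ThetaS{a \wedge (b+na),a \wedge y} - \PresS{(a+x) \wedge (b+na),x \wedge y}$, one writes $b+na = (b+n(a+x)) - nx$, applies the second bullets of Lemmas \ref{lemma:thetabilinear2} and \ref{lemma:thetabilinear1} (with $a+x$ in place of $a$) to the first $\Theta$-term, applies Lemma \ref{lemma:thetabilinear2} to the second, and expands $(a+x) \wedge (b+na) = (a+x) \wedge b + n\,x \wedge a$ in the last; collecting terms and using $\PresS{x \wedge a,\cdot} = -\PresS{a \wedge x,\cdot}$ along with the symmetry of $\PresS{-,-}$ leaves the residual $n\PresS{a \wedge y,x \wedge a}$. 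Throughout, the only thing requiring care is the routine verification that each intermediate symbol is a legitimate generator — that the pairs occurring (such as $y \wedge b$, $x \wedge b$, $a \wedge x$, $x \wedge y$) are special pairs or zero and satisfy the relevant $\fc$-vanishing conditions, so that the relations of Theorem \ref{theorem:summarypresentation} and Lemmas \ref{lemma:thetabilinear1}--\ref{lemma:thetabilinear2} genuinely apply; these checks are immediate from $x,y \in \Span{a,b}^{\perp}$ and $\omega(x,y)=0$. This bookkeeping, not any conceptual difficulty, is the main obstacle.
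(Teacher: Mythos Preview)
Your proposal is correct and follows essentially the same approach as the paper: both expand $\LambdaIS{(a+nb) \wedge y,x \wedge b}$, use the decomposition $a+nb = (a-ny) + n(b+y)$ to apply $\Theta$-bilinearity II and then $\Theta$-bilinearity I to the first $\Theta$-term, simplify the second $\Theta$-term and the $\PresS$-term directly, and collect the $n$-terms to obtain $n\PresS{b \wedge y,x \wedge b}$. The paper only writes out the first identity and declares the second analogous, while you sketch both using $\LambdaIIS$ for the second; the computations are otherwise identical.
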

\begin{proof}
These two relations are proved in the same way, so we will give the details for the first.
The element $\LambdaS{(a+nb) \wedge y,x \wedge b} = \LambdaIS{(a+nb) \wedge y,x \wedge b}$ equals
\begin{align}
\label{eqn:lambdalinear2.1}
&\ThetaS{(a+nb) \wedge (b+y),x \wedge (b+y)} - \ThetaS{(a+nb) \wedge b,x \wedge b} \\
&\quad- \PresS{(a+nb) \wedge b,x \wedge y} - \PresS{(a+nb) \wedge y,x \wedge y} \nonumber\\
&=\ThetaS{(a+nb) \wedge (b+y),x \wedge (b+y)} - \ThetaS{a \wedge b,x \wedge b} \nonumber \\
&\quad - \PresS{a \wedge b,x \wedge y} - \PresS{a \wedge y,x \wedge y} - n\PresS{b \wedge y,x \wedge y}.\nonumber
\end{align}
Here we use Lemma \ref{lemma:thetabilinear2} ($\Theta$-bilinearity II).  That lemma also implies that $\ThetaS{(a+nb) \wedge (b+y),x \wedge (b+y)}$ equals
\begin{align*}
&\ThetaS{(a-ny) \wedge (b+y),x \wedge (b+y)} \\
                                            &= \ThetaS{a \wedge (b+y),x \wedge (b+y)} - n\PresS{y \wedge (b+y),x \wedge (b+y)} \\
                                            &= \ThetaS{a \wedge (b+y),x \wedge (b+y)} - n\PresS{y \wedge b,x \wedge b} - n \PresS{y \wedge b,x \wedge y},
\end{align*}
where we are also using Lemma \ref{lemma:thetabilinear1} ($\Theta$-bilinearity I).
Plugging this into \eqref{eqn:lambdalinear2.1} and canceling terms gives
\[\ThetaS{a \wedge (b+y),x \wedge (b+y)} - \ThetaS{a \wedge b,x \wedge b} - \PresS{a \wedge b,x \wedge y} - \PresS{a \wedge y,x \wedge y} - n\PresS{y \wedge b,x \wedge b},\]
which equals $\LambdaS{a \wedge y,x \wedge b} + n\PresS{b \wedge y,x \wedge b}$.
\end{proof}

Again, Lemma \ref{lemma:lambdabilinear2} allows some standard generators of $\fK_g^s$ to be written as the sum
of two $\Lambda$-elements:

\begin{lemma}[$\Lambda$-expansion III]
\label{lemma:lambdaexpansion3}
Let $a \wedge b$ be a symplectic pair in $H_{\Z}$ and let $y,w \in \Span{a,b}^{\perp}$
satisfy $\omega(y,w)=0$.  Then for $\epsilon \in \{\pm 1\}$ we have
\[\PresS{(a+\epsilon b) \wedge y,(a+\epsilon b) \wedge w} = \LambdaS{a \wedge y,(a+\epsilon b) \wedge w} + \epsilon \LambdaS{b \wedge y,(a+\epsilon b) \wedge w}.\]
\end{lemma}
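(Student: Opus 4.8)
The plan is to imitate the proof of Lemma~\ref{lemma:lambdaexpansion2} ($\Lambda$-expansion II), but with Lemma~\ref{lemma:lambdabilinear1} replaced by Lemma~\ref{lemma:lambdabilinear2} ($\Lambda$-bilinearity II), since here $a$ and $b$ are not orthogonal but form a symplectic pair. Write $c = a+\epsilon b$. First I would record that all three elements in the statement make sense: $(a+\epsilon b)\wedge y$ and $(a+\epsilon b)\wedge w$ are special pairs (we have $\omega(a+\epsilon b,y)=\omega(a+\epsilon b,w)=0$) that are sym-orthogonal to one another (since also $\omega(y,w)=0$, one computes $\fc((a+\epsilon b)\wedge y,(a+\epsilon b)\wedge w)=0$), so $\PresS{(a+\epsilon b)\wedge y,(a+\epsilon b)\wedge w}$ is defined; and $a\wedge(a+\epsilon b)$, $b\wedge(a+\epsilon b)$ are symplectic pairs (in one or the other orientation, as $\omega(a,a+\epsilon b)=\epsilon$ and $\omega(b,a+\epsilon b)=-\epsilon$), with $y,w\in\Span{a,b}^{\perp}=\Span{a,c}^{\perp}=\Span{b,c}^{\perp}$ and $\omega(y,w)=0$, so $\LambdaS{a\wedge y,(a+\epsilon b)\wedge w}$ and $\LambdaS{b\wedge y,(a+\epsilon b)\wedge w}$ are defined via Definition~\ref{definition:lambda}, Lemma~\ref{lemma:lambdawelldefined}, and the conventions of \S\ref{section:lambdasymmetry}--\S\ref{section:lambdasigns}.

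The key step is a single application of Lemma~\ref{lemma:lambdabilinear2}. To keep the symplectic pair in the standard orientation for both signs of $\epsilon$, I would set $d = \epsilon a + b$ (so $d=\epsilon c$ and $\omega(a,d)=1$, whence $a\wedge d$ is a symplectic pair), and note $\Span{a,d}^{\perp}=\Span{a,b}^{\perp}$, so $y,w$ lie in it with $\omega(y,w)=0$. Applying the first formula of Lemma~\ref{lemma:lambdabilinear2} to the symplectic pair $a\wedge d$ with the roles of its $x,y$ played by $w,y$ and with $n=-\epsilon$, and using $a-\epsilon d=-\epsilon b$, yields
\[
\LambdaS{(-\epsilon b)\wedge y,\,w\wedge d}=\LambdaS{a\wedge y,\,w\wedge d}-\epsilon\,\PresS{d\wedge y,\,w\wedge d}.
\]
By Lemma~\ref{lemma:lambdasign} ($\Lambda$-signs) the left-hand side equals $-\epsilon\,\LambdaS{b\wedge y,\,w\wedge d}$, so rearranging gives
\[
\epsilon\,\PresS{d\wedge y,\,w\wedge d}=\LambdaS{a\wedge y,\,w\wedge d}+\epsilon\,\LambdaS{b\wedge y,\,w\wedge d}.
\]

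Finally I would clean up. Substituting $d=\epsilon c$ and using the bilinearity relations for $\PresS{-,-}$ in $\fK_g^s$ (Theorem~\ref{theorem:summarypresentation}) together with Lemma~\ref{lemma:lambdasign} to pull the scalar $\epsilon$ out of each term — namely $\PresS{d\wedge y,w\wedge d}=\PresS{c\wedge y,w\wedge c}$, $\LambdaS{a\wedge y,w\wedge d}=\epsilon\,\LambdaS{a\wedge y,w\wedge c}$, and $\LambdaS{b\wedge y,w\wedge d}=\epsilon\,\LambdaS{b\wedge y,w\wedge c}$ — the displayed identity becomes $\PresS{c\wedge y,w\wedge c}=\LambdaS{a\wedge y,w\wedge c}+\epsilon\,\LambdaS{b\wedge y,w\wedge c}$; rewriting $w\wedge c$ as $c\wedge w$ throughout via the symmetry conventions of \S\ref{section:lambdasymmetry} (which negates all three terms) then produces $\PresS{(a+\epsilon b)\wedge y,(a+\epsilon b)\wedge w}=\LambdaS{a\wedge y,(a+\epsilon b)\wedge w}+\epsilon\,\LambdaS{b\wedge y,(a+\epsilon b)\wedge w}$, as desired. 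The main obstacle is the sign bookkeeping when $\epsilon=-1$, which is precisely why it is worth passing through $d=\epsilon c$ so that Lemma~\ref{lemma:lambdabilinear2} is invoked for a standardly-oriented symplectic pair; checking the orthogonality hypotheses of Lemmas~\ref{lemma:lambdabilinear2} and~\ref{lemma:lambdasign} at each step is routine, since $y,w$ lie in the common orthogonal complement $\Span{a,b}^{\perp}$ and $\omega(y,w)=0$.
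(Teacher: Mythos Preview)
Your proof is correct and follows essentially the same approach as the paper's: both deduce the identity from a single application of $\Lambda$-bilinearity~II (Lemma~\ref{lemma:lambdabilinear2}) to pass from $-\epsilon b$ to $a$ in the first slot. The paper's proof actually cites Lemma~\ref{lemma:lambdabilinear1}, which appears to be a typo (its hypothesis $z\in\Span{a,b,x,y}^\perp$ fails for $z=a+\epsilon b$), and your passage through $d=\epsilon a+b$ handles the $\epsilon=-1$ orientation more carefully than the paper does.
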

\begin{proof}
Lemma \ref{lemma:lambdabilinear1} implies that
\[\LambdaS{(-\epsilon b) \wedge y,w \wedge (a+\epsilon b)} + \PresS{(a+\epsilon b) \wedge y, w \wedge (a+\epsilon b)}\]
equals
\[\LambdaS{(-\epsilon b + (a+\epsilon b)) \wedge y,w \wedge (a+\epsilon b)} = \LambdaS{a \wedge y,w \wedge (a+\epsilon b)}.\]
Rearranging this, we see that
\[\PresS{(a+\epsilon b) \wedge y,(a+\epsilon b) \wedge w} = \LambdaS{a \wedge y,(a+\epsilon b) \wedge w} + \epsilon \LambdaS{b \wedge y,(a+\epsilon b) \wedge w}\qedhere.\]
\end{proof}

\section{Symmetric kernel, symmetric version IV: \texorpdfstring{$S_4$}{S4} and the \texorpdfstring{$\Omega$}{Omega}-elements}
\label{section:presentationsym4}

We continue using all the notation from \S \ref{section:presentationsym1} -- \S \ref{section:presentationsym3}.
This section constructs the set $S_4$ that lifts $T_4$.  It consists of what are called
$\Omega$-elements of $\fK_g^s$, and the first part of this section constructs
these in more generality than is needed for $S_4$ alone.

\subsection{Definition}
Let $a \wedge b$ and $a' \wedge b'$ be symplectic pairs in $\wedge^2 H_{\Z}$ such that $\Span{a,b}$ and $\Span{a',b'}$
are orthogonal.  The $\Omega$-element $\OmegaS{a \wedge a',b' \wedge b}$ is an element of 
$\fK_g^s$ that is taken by $\Phi$ to
\[(a \wedge a') \Cdot (b' \wedge b) \in \Sym^2((\wedge^2 H)/\Q).\]
To find it, note that $\left(a \wedge (b+a')\right) \Cdot \left((a+b') \wedge (b+a')\right)$ equals
\begin{align*}
& \left(a \wedge (b+a')\right) \Cdot \left(a \wedge (b+a')\right)
+ (a \wedge b) \Cdot (b' \wedge b)\\
&\quad + (a \wedge b) \Cdot (b' \wedge a')
+ (a \wedge a') \Cdot (b' \wedge b)
+ (a \wedge a') \Cdot (b' \wedge a').
\end{align*}
There are similar formulas involving
\begin{align*}
&\left(a'     \wedge (a+b')\right) \Cdot \left((b+a') \wedge (a+b')\right),\ \text{and}\\
&\left((a+b') \wedge b \right)     \Cdot \left((a+b') \wedge (b+a')\right),\ \text{and}\\
&\left((b+a') \wedge b'\right)     \Cdot \left((b+a') \wedge (a+b')\right).
\end{align*}
This suggests four possible elements of $\fK_g^s$ projecting to $(a \wedge a') \Cdot (b' \wedge b)$:

\begin{definition}
\label{definition:omega}
For symplectic pairs $a \wedge b$ and $a' \wedge b'$ in $H_{\Z}$ with $\Span{a,b}$ and $\Span{a',b'}$ orthogonal, 
define 
\begin{align*}
\OmegaIS{a \wedge a',b' \wedge b} = &\ThetaS{a \wedge (b+a'), (a+b') \wedge (b+a')}
-\PresS{a \wedge (b+a'), a \wedge (b+a')} \\
&-\ThetaS{a \wedge b,b' \wedge b}
-\PresS{a \wedge b,b' \wedge a'}
-\ThetaS{a \wedge a',b' \wedge a'},\\
\OmegaIIS{a \wedge a',b' \wedge b} = &\ThetaS{a' \wedge (a+b'),(b+a') \wedge (a+b')}
-\PresS{a' \wedge (a+b'),a' \wedge (a+b')} \\
&-\ThetaS{a' \wedge b',b \wedge b'}
-\PresS{a' \wedge b',b \wedge a}
-\ThetaS{a' \wedge a,b \wedge a},\\
\OmegaIIIS{a \wedge a',b' \wedge b} = &\ThetaS{(a+b') \wedge b,(a+b') \wedge (b+a')}
-\PresS{(a+b') \wedge b,(a+b') \wedge b} \\
&-\ThetaS{a \wedge b,a \wedge a'}
-\PresS{a \wedge b,b' \wedge a'}
-\ThetaS{b' \wedge b,b' \wedge a'},\\
\OmegaIVS{a \wedge a',b' \wedge b} = &\ThetaS{(b+a') \wedge b',(b+a') \wedge (a+b')}
-\PresS{(b+a') \wedge b',(b+a') \wedge b'} \\
&-\ThetaS{a'\wedge b',a' \wedge a}
-\PresS{a'\wedge b',b \wedge a}
-\ThetaS{b \wedge b',b \wedge a}.\qedhere
\end{align*}
\end{definition}

See Remark \ref{remark:omegaabuse} below for a caveat about this notation.  By construction, we have
\[\Phi(\OmegaiS{a \wedge a',b' \wedge b}) = (a \wedge a') \Cdot (b' \wedge b) \quad \text{for $1 \leq i \leq 4$}.\]
Since we are trying to prove that $\Phi$ is an isomorphism, we must prove that these are actually the same element:

\begin{lemma}
\label{lemma:omegawelldefined}
Let $a \wedge b$ and $a' \wedge b'$ be symplectic pairs in $H_{\Z}$ such that $\Span{a,b}$ and $\Span{a',b'}$
are orthogonal.  Then the $\OmegaiS{a \wedge a',b' \wedge b}$ for $1 \leq i \leq 4$ are all equal.
\end{lemma}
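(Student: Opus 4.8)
The plan is to follow the template used for Lemma \ref{lemma:lambdawelldefined} (well-definedness of the $\Lambda$-elements): it suffices to prove that $\OmegaIS{a \wedge a',b' \wedge b}$ equals each of $\OmegaIIS{a \wedge a',b' \wedge b}$, $\OmegaIIIS{a \wedge a',b' \wedge b}$, and $\OmegaIVS{a \wedge a',b' \wedge b}$, which forces all four to agree. Each of these is an identity in $\fK_g^s$ between two combinations of $\Theta$-elements, $\Lambda$-elements, honest generators $\PresS{\kappa_1,\kappa_2}$, and terms lying in $\Span{S_1}$; both sides have the same image under $\Phi$ by construction, so once each side is rewritten as an element of $\Span{S_1,S_2,S_3}$ it is enough to invoke the injectivity of $\Phi$ on that subspace (Lemma \ref{lemma:symspans3}).

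Before computing I would cut down the work. All the $\Theta$- and $\Lambda$-machinery, and the statement itself, are $\Sp_{2g}(\Z)$-equivariant, so it costs nothing to take $a = a_1$, $b = b_1$, $a' = a_2$, $b' = b_2$. Moreover, comparing the four formulas of Definition \ref{definition:omega} term by term and using commutativity ($a+b' = b'+a$, $b+a' = a'+b$) one checks the purely formal identities
\[
\OmegaIIS{a \wedge a',b' \wedge b} = \OmegaIS{a' \wedge a,b \wedge b'}
\quad\text{and}\quad
\OmegaIVS{a \wedge a',b' \wedge b} = \OmegaIIIS{a' \wedge a,b \wedge b'},
\]
obtained by interchanging the two orthogonal symplectic pairs. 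Hence, once the identities $\Omega_1=\Omega_2$ and $\Omega_1=\Omega_3$ are known for all admissible data, specializing them to the relabeled data $(a',b',a,b)$ gives $\Omega_1=\Omega_4$ as well, so only two genuine computations remain. (A sign-twisted relabeling, replacing $(a,b,a',b')$ by $(b,-a,b',-a')$ and using the sign conventions of \S\ref{section:thetasigns}, \S\ref{section:lambdasigns} and Lemmas \ref{lemma:thetasign} and \ref{lemma:lambdasign}, may reduce this to a single computation, but I would not rely on that.)

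For the two remaining identities I would argue as in the proof of Lemma \ref{lemma:lambdawelldefined}. Take $\Omega_1=\Omega_3$ first. The ``large'' $\Theta$-terms occurring in $\OmegaIS{a \wedge a',b' \wedge b}$ and $\OmegaIIIS{a \wedge a',b' \wedge b}$, namely $\ThetaS{a\wedge(b+a'),(a+b')\wedge(b+a')}$ and $\ThetaS{(a+b')\wedge b,(a+b')\wedge(b+a')}$, are attached to the non-coordinate symplectic pairs $a\wedge(b+a')$ and $(a+b')\wedge b$. Applying $\Theta$-expansion I (Lemma \ref{lemma:thetaexpansion1}) to suitable bona fide generators $\PresS{\sigma,\sigma}$ with $\sigma$ a genuine symplectic pair (for instance $\sigma = (2a+b')\wedge(b+a')$ and $\sigma = (a+b')\wedge(2b+a')$, both of which are symplectic pairs) rewrites each large term as a combination of $\Theta$-elements attached to the coordinate pairs $a\wedge b$ and $a'\wedge b'$, plus honest correction terms. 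Then repeated use of $\Theta$-linearity (Lemma \ref{lemma:thetalinear}), $\Theta$-bilinearity I and II (Lemmas \ref{lemma:thetabilinear1} and \ref{lemma:thetabilinear2}), the $\Lambda$-expansion and $\Lambda$-bilinearity lemmas, and the ordinary bilinearity relations in $\fK_g^s$ brings each side of the putative identity into $\Span{S_1,S_2,S_3}$; since the two sides have the same image under $\Phi$, they are equal by Lemma \ref{lemma:symspans3}. As in the proofs of Lemmas \ref{lemma:thetalinear} and \ref{lemma:lambdawelldefined}, the bookkeeping is best organized by writing out the relevant symplectic-basis identities in $(\wedge^2 H)/\Q$, colour-matching terms, and silently discarding blue ($\Span{S_1}$) summands as explained in \S\ref{section:obviousbluesym}. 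The identity $\Omega_1=\Omega_2$ is handled the same way, now with large $\Theta$-terms $\ThetaS{a\wedge(b+a'),(a+b')\wedge(b+a')}$ and $\ThetaS{a'\wedge(a+b'),(b+a')\wedge(a+b')}$.

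The main obstacle is purely computational. Because the $\Theta$- and $\Lambda$-elements do not obey an unrestricted bilinearity, reducing a large $\Theta$-term to coordinate $\Theta$-terms produces a sizable crop of honest-generator corrections, and the crux is to order the successive $\Theta$-expansions so that these corrections cancel in pairs or collapse into $\Span{S_1}$ rather than proliferate. This is the same delicate bookkeeping that makes the proof of Lemma \ref{lemma:lambdawelldefined} long, with more terms to track; I do not expect any new idea beyond those already developed for the $\Theta$- and $\Lambda$-elements to be needed.
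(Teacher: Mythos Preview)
Your symmetry observation is correct: comparing the formulas in Definition~\ref{definition:omega} term by term gives $\OmegaIIS{a\wedge a',b'\wedge b}=\OmegaIS{a'\wedge a,b\wedge b'}$ and $\OmegaIVS{a\wedge a',b'\wedge b}=\OmegaIIIS{a'\wedge a,b\wedge b'}$, so the problem does reduce to two comparisons. The paper uses a slightly different pairing ($\Omega_1\equiv\Omega_2$, $\Omega_3\equiv\Omega_4$, then $\Omega_1\equiv\Omega_4$), but your reduction is equally valid.

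The genuine gap is in the next step. You assert that each $\OmegaiS{a\wedge a',b'\wedge b}$ can be rewritten as an element of $\Span{S_1,S_2,S_3}$, after which injectivity of $\Phi$ on that subspace finishes. This is impossible: by construction $\Phi(\OmegaiS{a\wedge a',b'\wedge b})=(a\wedge a')\Cdot(b'\wedge b)\in T_4$, and this element does \emph{not} lie in $\Span{T_1,T_2,T_3}$ (the relations $R$ involve only $T_1\cup T_2$, so $T_4$ maps injectively into the quotient $\Sym^2((\wedge^2 H)/\Q)/\Span{T_1,T_2,T_3}$; see the proof of Lemma~\ref{lemma:symspans2}). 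Hence no amount of $\Theta$- or $\Lambda$-manipulation can place an individual $\Omega_i$ inside $\Span{S_1,S_2,S_3}$. Concretely, the large $\Theta$-term $\ThetaS{a_1\wedge(b_1+a_2),(a_1+b_2)\wedge(b_1+a_2)}$ cannot be split via $\Theta$-linearity because $b_2\notin\Span{a_1,b_1+a_2}^{\perp}$, nor via $\Lambda$-expansion~I because $a_1+b_2\notin\Span{a_1,b_1}^{\perp}$; expanding it from the definition just reproduces terms like $\PresS{(a_1+b_2)\wedge(b_1+a_2),(a_1+b_2)\wedge(b_1+a_2)}$ that themselves carry the $T_4$-content.

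What the paper does instead is work \emph{modulo} $\Span{S_1,S_2,S_3}$ throughout. Each $\Omega_i$ is shown to be congruent to its single large $\Theta$-term (the other terms in Definition~\ref{definition:omega} are visibly in $\Span{S_1,S_2}$). The key step you are missing is then to relate these large $\Theta$-terms directly: apply the $\Theta$-symplectic basis relation (Lemma~\ref{lemma:liftsymr2}) to the \emph{non-standard} symplectic basis $\{a_1,\,b_1+a_2,\,a_2,\,a_1+b_2,\,a_3,b_3,\ldots\}$, which immediately gives $\ThetaS{a_1\wedge(b_1+a_2),(a_1+b_2)\wedge(b_1+a_2)}\equiv\ThetaS{a_2\wedge(a_1+b_2),(b_1+a_2)\wedge(a_1+b_2)}$ and hence $\Omega_1\equiv\Omega_2$. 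The remaining comparison ($\Omega_1\equiv\Omega_4$) is handled by showing both are congruent to $\tfrac{1}{2}\PresS{(a_1+b_2)\wedge(b_1+a_2),(a_1+b_2)\wedge(b_1+a_2)}$ via $\Theta$-expansion~II (Lemma~\ref{lemma:thetaexpansion2}). The $\Theta$-symplectic basis lemma applied to a shifted basis is the decisive tool here, and it does not appear in your outline.
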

\begin{proof}
Whether or not they are equal is invariant under the action of $\Sp_{2g}(\Z)$.
Recall that we have our fixed symplectic basis
$\cB = \{a_1,b_1,\ldots,a_g,b_g\}$ for $H_{\Z}$.  Applying an appropriate element of $\Sp_{2g}(\Z)$,
we can assume that
\[\text{$a_1 = a$,\ \ $b_1 = b$,\ \ $a_2 = a'$,\ \ $b_2 = b'$}.\]
Since $\Phi$ takes the $\OmegaiS{a_1 \wedge a_2,b_2 \wedge b_1}$ to the same thing and the restriction
of $\Phi$ to $\Span{S_1,S_2,S_3}$ is injective (Lemma \ref{lemma:symspans3}), it is enough
to prove that the $\OmegaiS{a_1 \wedge a_2,b_2 \wedge b_1}$ are equal modulo $\Span{S_1,S_2,S_3}$.  Let
$\equiv$ denote equality modulo $\Span{S_1,S_2,S_3}$.  We have
\begin{small}
\begin{align*}
\OmegaIS{a_1 \wedge a_2,b_2 \wedge b_1} = &\ThetaS{a_1 \wedge (b_1+a_2), (a_1+b_2) \wedge (b_1+a_2)}
-\PresS{a_1 \wedge (b_1+a_2), a_1 \wedge (b_1+a_2)} \\
&-\SThetaS{a_1 \wedge b_1,b_2 \wedge b_1}
-\SPresS{a_1 \wedge b_1,b_2 \wedge a_2}
-\SThetaS{a_1 \wedge a_2,b_2 \wedge a_2} \\
\equiv &\ThetaS{a_1 \wedge (b_1+a_2), (a_1+b_2) \wedge (b_1+a_2)} \\
&-\left(\SPresS{a_1 \wedge b_1,a_1 \wedge b_1} + 2\SThetaS{a_1 \wedge b_1,a_1 \wedge a_2} + \SPresS{a_1 \wedge a_2,a_1 \wedge a_2}\right) \\
\equiv &\ThetaS{a_1 \wedge (b_1+a_2), (a_1+b_2) \wedge (b_1+a_2)}.
\end{align*}
\end{small}%
Similarly,
\begin{align*}
\OmegaIIS{a_1 \wedge a_2,b_2 \wedge b_1} &\equiv \ThetaS{a_2 \wedge (a_1+b_2),(b_1+a_2) \wedge (a_1+b_2)},\\
\OmegaIIIS{a_1 \wedge a_2,b_2 \wedge b_1} &\equiv \ThetaS{(a_1+b_2) \wedge b_1,(a_1+b_2) \wedge (b_1+a_2)},\\
\OmegaIVS{a_1 \wedge a_2,b_2 \wedge b_1} &\equiv \ThetaS{(b_1+a_2) \wedge b_2,(b_1+a_2) \wedge (a_1+b_2)}.
\end{align*}
It is therefore enough to prove the following two claims:

\begin{claim}{1}
We have $\ThetaS{a_1 \wedge (b_1+a_2), (a_1+b_2) \wedge (b_1+a_2)} \equiv \ThetaS{a_2 \wedge (a_1+b_2),(b_1+a_2) \wedge (a_1+b_2)}$
and $\ThetaS{(a_1+b_2) \wedge b_1,(a_1+b_2) \wedge (b_1+a_2)} \equiv \ThetaS{(b_1+a_2) \wedge b_2,(b_1+a_2) \wedge (a_1+b_2)}$.
\end{claim}

\noindent
Both equalities are proved the same way, so we will give details for the first.  Consider the symplectic basis
$\{a_1,b_1+a_2,a_2,a_1+b_2,a_3,b_3,\ldots,a_g,b_g\}$ for $H_{\Z}$.
Lemma \ref{lemma:liftsymr2} ($\Theta$-symplectic basis) implies that
\begin{align*}
0=&\ThetaS{a_1 \wedge (b_1+a_2),(a_1+b_2) \wedge (b_1+a_2)} + \ThetaS{a_2 \wedge (a_1+b_2),(a_1+b_2) \wedge (b_1+a_2)} \\
&+ \sum\nolimits_{i=3}^g \SThetaS{a_i \wedge b_i,(a_1+b_2) \wedge (b_1+a_2)}.
\end{align*}
We conclude that
\begin{align*}
\ThetaS{a_1 \wedge (b_1+a_2),(a_1+b_2) \wedge (b_1+a_2)} &\equiv -\ThetaS{a_2 \wedge (a_1+b_2),(a_1+b_2) \wedge (b_1+a_2)} \\
                                                         &=      \ThetaS{a_2 \wedge (a_1+b_2),(b_1+a_2) \wedge (a_1+b_2)}.
\end{align*}

\begin{claim}{2}
We have $\ThetaS{a_1 \wedge (b_1+a_2), (a_1+b_2) \wedge (b_1+a_2)} \equiv \ThetaS{(b_1+a_2) \wedge b_2,(b_1+a_2) \wedge (a_1+b_2)}$.
\end{claim}

\noindent
Using Lemma \ref{lemma:thetalinear} ($\Theta$-linearity), we have
\[-\ThetaS{a_1 \wedge (b_1+a_2), (a_1+b_2) \wedge (b_1+a_2)} = \ThetaS{a_1 \wedge (b_1+a_2), (-a_1-b_2) \wedge (b_1+a_2)}.\]
By definition (Definition \ref{definition:theta}), this equals
\begin{align*}
\frac{1}{2}(&\PresS{(-b_2) \wedge (b_1+a_2),(-b_2) \wedge (b_1+a_2)} - \PresS{a_1 \wedge (b_1+a_2),a_1 \wedge (b_1+a_2)} \\
& - \PresS{(a_1+b_2) \wedge (b_1+a_2),(a_1+b_2) \wedge (b_1+a_2)}).
\end{align*}
The term $\PresS{(-b_2) \wedge (b_1+a_2),(-b_2) \wedge (b_1+a_2)}$ equals
\[\SPresS{b_2 \wedge b_1,b_2 \wedge b_1} + 2\SThetaS{b_2 \wedge b_1,b_2 \wedge a_2} + \SPresS{b_2 \wedge a_2,b_2 \wedge a_2} \equiv 0\]
and the term $\PresS{a_1 \wedge (b_1+a_2),a_1 \wedge (b_1+a_2)}$ equals
\[\SPresS{a_1 \wedge b_1,a_1 \wedge b_1} + 2 \SThetaS{a_1 \wedge b_1,a_1 \wedge a_2} + \SPresS{a_1 \wedge a_2,a_1 \wedge a_2} \equiv 0.\]
We deduce that
\[\ThetaS{a_1 \wedge (b_1+a_2), (a_1+b_2) \wedge (b_1+a_2)} \equiv \frac{1}{2} \PresS{(a_1+b_2) \wedge (b_1+a_2),(a_1+b_2) \wedge (b_1+a_2)}.\]
Lemma \ref{lemma:thetaexpansion2} ($\Theta$-expansion II) implies that
\begin{align*}
\PresS{(a_1+b_2) \wedge (b_1+a_2),(a_1+b_2) \wedge (b_1+a_2)} = &\ThetaS{a_1 \wedge (b_1+a_2),(a_1+b_2) \wedge (b_1+a_2)} \\
&+\ThetaS{b_2 \wedge (b_1+a_2),(a_1+b_2) \wedge (b_1+a_2)}.
\end{align*}
Combining this with our previous formula, we conclude that
\begin{align*}
\ThetaS{a_1 \wedge (b_1+a_2),(a_1+b_2) \wedge (b_1+a_2)} \equiv &\ThetaS{b_2 \wedge (b_1+a_2),(a_1+b_2) \wedge (b_1+a_2)} \\
                                                         =      &\ThetaS{(b_1+a_2) \wedge b_2,(b_1+a_2) \wedge (a_1+b_2)}.\qedhere
\end{align*}
\end{proof}

In light of this lemma, we will denote the common value of $\OmegaiS{a \wedge a',b' \wedge b}$
by $\OmegaS{a \wedge a',b' \wedge b}$.

\begin{remark}
\label{remark:omegaabuse}
Just like for the $\Theta$- and $\Lambda$-elements (cf.\ Remarks \ref{remark:thetaabuse} and \ref{remark:lambdaabuse}), the elements
$\OmegaiS{a \wedge a',b' \wedge b}$ and $\OmegaS{a \wedge a',b' \wedge b}$
depend on the ordered tuple $(a,a',b',b)$, not
on $a \wedge a'$ and $b \wedge b'$.
\end{remark}

\subsection{Relation to \texorpdfstring{$\Lambda$}{Lambda}-elements}
\label{section:lambdarelation}

The following lemma shows that the $\Omega$-elements can almost (but not quite) be written
in terms of the $\Lambda$-elements:

\begin{lemma}[$\Lambda$ to $\Omega$]
\label{lemma:omegalambda}
Let $a \wedge b$ and $a' \wedge b'$ and $a'' \wedge b''$ be symplectic pairs in $H_{\Z}$ such
that $\Span{a,b}$ and $\Span{a',b'}$ and $\Span{a'',b''}$ are all orthogonal.  Then
\[\LambdaS{a \wedge (a' + a''),(b' - b'') \wedge b} - \LambdaS{a \wedge a'',b' \wedge b} + \LambdaS{a \wedge a',b'' \wedge b}\]
equals $\OmegaS{a \wedge a',b' \wedge b} - \OmegaS{a \wedge a'',b'' \wedge b}$.
\end{lemma}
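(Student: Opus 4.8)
The plan is to verify the identity by reducing to the well-definedness of the $\Omega$-elements, namely to the equality $\OmegaS{-,-} = \OmegaIS{-,-}$, and then expanding everything in terms of $\Theta$-elements and ordinary generators of $\fK_g^s$. First I would observe that both sides of the claimed identity are taken by $\Phi$ to the same element of $\Sym^2((\wedge^2 H)/\Q)$: for the left-hand side, $\Lambda$-linearity (Lemma~\ref{lemma:lambdalinear}) gives $\Phi$ of it equal to $(a \wedge (a'+a'')) \Cdot ((b'-b'') \wedge b) - (a \wedge a'') \Cdot (b' \wedge b) + (a \wedge a') \Cdot (b'' \wedge b)$, which expands and cancels to $(a \wedge a') \Cdot (b' \wedge b) - (a \wedge a'') \Cdot (b'' \wedge b)$; and the right-hand side visibly maps to the same thing. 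Since $\Span{a,b}$, $\Span{a',b'}$, $\Span{a'',b''}$ are pairwise orthogonal, applying an element of $\Sp_{2g}(\Z)$ (whether the claimed identity holds is $\Sp_{2g}(\Z)$-invariant) lets me assume $a = a_1$, $b = b_1$, $a' = a_2$, $b' = b_2$, $a'' = a_3$, $b'' = b_3$; this requires $g \geq 4$ (Assumption~\ref{assumption:genussym}), which we have. Then by injectivity of $\Phi$ on $\Span{S_1,S_2,S_3}$ (Lemma~\ref{lemma:symspans3}), it suffices to prove the identity modulo $\Span{S_1,S_2,S_3}$.

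Next I would expand. Using $\Lambda$-expansion via the definition $\LambdaIS{-,-}$ (Definition~\ref{definition:lambda}), each $\Lambda$-term becomes a combination of $\Theta$-elements and standard generators $\PresS{-,-}$; after reducing modulo $\Span{S_1,S_2,S_3}$ (absorbing blue terms using \S\ref{section:obviousbluesym}, purple terms, and orange $\Lambda$-terms that land in $S_3$), I expect the left-hand side to collapse to a short combination of $\Theta$-elements of the shape $\ThetaS{a_1 \wedge (b_1 + \text{something}), (\text{something}) \wedge (b_1 + \text{something})}$. Simultaneously, I would feed the proof of Lemma~\ref{lemma:omegawelldefined} back in: that proof shows $\OmegaIS{a_1 \wedge a_2,b_2 \wedge b_1} \equiv \ThetaS{a_1 \wedge (b_1+a_2),(a_1+b_2) \wedge (b_1+a_2)}$ modulo $\Span{S_1,S_2,S_3}$, and analogously $\OmegaS{a_1 \wedge a_3,b_3 \wedge b_1} \equiv \ThetaS{a_1 \wedge (b_1+a_3),(a_1+b_3) \wedge (b_1+a_3)}$. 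So the right-hand side, modulo $\Span{S_1,S_2,S_3}$, reduces to a difference of two such $\Theta$-elements. The identity then amounts to matching these two $\Theta$-level expressions, which is a calculation entirely inside the $\Theta$-world governed by $\Theta$-linearity (Lemma~\ref{lemma:thetalinear}), $\Theta$-bilinearity I and II (Lemmas~\ref{lemma:thetabilinear1} and~\ref{lemma:thetabilinear2}), $\Theta$-expansion I and II (Lemmas~\ref{lemma:thetaexpansion1} and~\ref{lemma:thetaexpansion2}), and the $\Theta$-symplectic basis relation (Lemma~\ref{lemma:liftsymr2}).

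Concretely, I anticipate the decisive step is a single application of Lemma~\ref{lemma:liftsymr2} ($\Theta$-symplectic basis) to a cleverly chosen symplectic basis — one built so that $a_1 \wedge (b_1 + a_2 + a_3)$, $(a_1 + b_2 - b_3) \wedge$ the appropriate partner, and the remaining $a_i \wedge b_i$ together form a symplectic basis for $H_\Z$ — mirroring how Claims~1 and~2 in the proof of Lemma~\ref{lemma:omegawelldefined} were handled. After that, $\Theta$-linearity splits the combined term into the two pieces corresponding to $\OmegaS{a_1 \wedge a_2,b_2 \wedge b_1}$ and $\OmegaS{a_1 \wedge a_3,b_3 \wedge b_1}$, and the extra $\Lambda$-term on the left ($\LambdaS{a_1 \wedge (a_2+a_3),(b_2-b_3) \wedge b_1}$) accounts exactly for the ``mixed'' $\Theta$-contribution. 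I would also note that the reduction modulo $\Span{S_1,S_2,S_3}$ must be done with some care: $\Lambda$-terms such as $\LambdaS{a_1 \wedge a_3,b_2 \wedge b_1}$ where the two coordinates pair nontrivially are \emph{not} automatically in $\Span{S_3}$, so I would track those honestly rather than discarding them.

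\textbf{Expected main obstacle.} The hard part will be the bookkeeping in the $\Theta$-expansion: once everything is written in terms of $\Theta$-elements and $\PresS{-,-}$ generators with $a_1,b_1,a_2,b_2,a_3,b_3$ and the auxiliary vectors $b_1+a_2$, $b_1+a_3$, $b_2-b_3$, etc., there are many terms, and correctly identifying which are blue/purple/orange (hence killable modulo $\Span{S_1,S_2,S_3}$) versus which genuinely combine requires choosing the right symplectic basis for the Lemma~\ref{lemma:liftsymr2} application and being disciplined about the sign and symmetry conventions of \S\ref{section:thetasymmetry} and \S\ref{section:lambdasymmetry}. The structural content, however, is already present in the proof of Lemma~\ref{lemma:omegawelldefined}; this lemma is essentially a ``linearized'' version of that well-definedness statement, so I expect no new ideas beyond careful computation.
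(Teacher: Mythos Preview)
Your approach is correct and matches the paper's proof essentially step for step: reduce via $\Sp_{2g}(\Z)$ to the standard basis, work modulo $\Span{S_1,S_2,S_3}$ using Lemma~\ref{lemma:symspans3}, express each $\Omega$-term as a $\Theta$-element (the paper uses the symplectic basis $\{a_1,\,b_1+a_2+a_3,\,a_2,\,a_1+b_2,\,a_3,\,a_1+b_3,\,a_4,b_4,\ldots\}$ with Lemma~\ref{lemma:liftsymr2}, exactly as you anticipate), and then match via $\Theta$-linearity.

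One small correction: your caution about $\LambdaS{a_1 \wedge a_3,b_2 \wedge b_1}$ is misplaced. Here $y=a_3$, $x=b_2$, and $\omega(b_2,a_3)=0$, so this term \emph{is} in $S_3$; likewise $\LambdaS{a_1 \wedge a_2,b_3 \wedge b_1} \in S_3$. Thus modulo $\Span{S_1,S_2,S_3}$ the left-hand side reduces immediately to the single term $\LambdaS{a_1 \wedge (a_2+a_3),(b_2-b_3)\wedge b_1}$, and the computation is slightly cleaner than you feared.
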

\begin{proof}
Whether or not they are equal is invariant under the action of $\Sp_{2g}(\Z)$.
Recall that we have our fixed symplectic basis 
$\cB = \{a_1,b_1,\ldots,a_g,b_g\}$ for $H_{\Z}$.  Applying an appropriate element of $\Sp_{2g}(\Z)$,
we can assume that
\[\text{$a_1 = a$,\ \ $b_1 = b$,\ \ $a_2 = a'$,\ \ $b_2 = b'$,\ \ $a_3 = a''$,\ \ $b_3 = b''$}.\]
Since $\Phi$ takes our two elements to the same thing and the restriction
of $\Phi$ to $\Span{S_1,S_2,S_3}$ is injective (Lemma \ref{lemma:symspans3}), it is enough
to prove that they are equal modulo $\Span{S_1,S_2,S_3}$.  Let $\equiv$ denote equality
modulo $\Span{S_1,S_2,S_3}$.  The element 
\[\LambdaS{a_1 \wedge (a_2 + a_3),(b_2 - b_3) \wedge b_1} - \SLambdaS{a_1 \wedge a_2,b_2 \wedge b_1} + \SLambdaS{a_1 \wedge a_3,b_3 \wedge b_1}\]
is equivalent to $\LambdaS{a_1 \wedge (a_2 + a_3),(b_2 - b_3) \wedge b_1}$, and we must prove that this
is equivalent to $\OmegaS{a_1 \wedge a_2,b_2 \wedge b_1} - \OmegaS{a_1 \wedge a_3,b_3 \wedge b_1}$.
Below we will prove the following three facts:
\begin{align*}
\LambdaS{a_1 \wedge (a_2 + a_3),(b_2 - b_3) \wedge b_1} &\equiv \ThetaS{a_1 \wedge (b_1+a_2+a_3),(b_2-b_3) \wedge (b_1+a_2+a_3)}, \\
\OmegaS{a_1 \wedge a_2,b_2 \wedge b_1} &\equiv \ThetaS{a_1 \wedge (b_1+a_2+a_3),(a_1+b_2) \wedge (b_1+a_2+a_3)},\\
\OmegaS{a_1 \wedge a_3,b_3 \wedge b_1} &\equiv \ThetaS{a_1 \wedge (b_1+a_2+a_3),(a_1+b_3) \wedge (b_1+a_2+a_3)}.
\end{align*}
These will imply the lemma; indeed, the linearity of $\Theta$-elements will imply that
\begin{align*}
&\OmegaS{a_1 \wedge a_2,b_2 \wedge b_1} - \OmegaS{a_1 \wedge a_3,b_3 \wedge b_1} \\
\equiv &\ThetaS{a_1 \wedge (b_1+a_2+a_3),(a_1+b_2) \wedge (b_1+a_2+a_3)} \\
       &- \ThetaS{a_1 \wedge (b_1+a_2+a_3),(a_1+b_3) \wedge (b_1+a_2+a_3)} \\
=      &\ThetaS{a_1 \wedge (b_1+a_2+a_3),(b_2-b_3) \wedge (b_1+a_2+a_3)} \\
\equiv &\LambdaS{a_1 \wedge (a_2 + a_3),(b_2 - b_3) \wedge b_1}.
\end{align*}
It remains to prove the above three facts:

\begin{claim}{1}
We have 
\[\LambdaS{a_1 \wedge (a_2 + a_3),(b_2 - b_3) \wedge b_1} \equiv \ThetaS{a_1 \wedge (b_1+a_2+a_3),(b_2-b_3) \wedge (b_1+a_2+a_3)}.\]
\end{claim}

\noindent
By definition, $\LambdaS{a_1 \wedge (a_2 + a_3),(b_2 - b_3) \wedge b_1} = \LambdaIS{a_1 \wedge (a_2 + a_3),(b_2 - b_3) \wedge b_1}$
equals
\begin{align*}
&\ThetaS{a_1 \wedge (b_1+a_2+a_3),(b_2 - b_3) \wedge (b_1+a_2+a_3)} \\
&- \SThetaS{a_1 \wedge b_1,(b_2 - b_3) \wedge b_1} - \PresS{a_1 \wedge (b_1+a_2+a_3),(b_2 - b_3) \wedge (a_2+a_3)}\\
\equiv &\ThetaS{a_1 \wedge (b_1+a_2+a_3),(b_2 - b_3) \wedge (b_1+a_2+a_3)} \\
       &-\SPresS{a_1 \wedge b_1,(b_2 - b_3) \wedge (a_2+a_3)} -\PresS{a_1 \wedge (a_2+a_3),(b_2 - b_3) \wedge (a_2+a_3)} \\
\equiv &\ThetaS{a_1 \wedge (b_1+a_2+a_3),(b_2 - b_3) \wedge (b_1+a_2+a_3)} \\
       &-\PresS{(a_2 + a_3) \wedge (b_2 - b_3),(a_2+a_3) \wedge a_1}
\end{align*}
To prove the claim, we must show that $\PresS{(a_2 + a_3) \wedge (b_2 - b_3),(a_2+a_3) \wedge a_1}$ is equivalent
to $0$.  By Lemma \ref{lemma:thetaexpansion2} ($\Theta$-expansion II), this element equals
\[\ThetaS{(a_2+a_3) \wedge b_2,(a_2+a_3) \wedge a_1} - \ThetaS{(a_2+a_3) \wedge b_3,(a_2+a_3) \wedge a_1}.\]
By Lemma \ref{lemma:lambdaexpansion1} ($\Lambda$-expansion I),
the element $\ThetaS{(a_2+a_3) \wedge b_2,(a_2+a_3) \wedge a_1}$ equals
\[\SLambdaS{a_2 \wedge a_1,a_3 \wedge b_2} + \SThetaS{a_2 \wedge b_2,a_2 \wedge a_1} + \SPresS{(a_2+a_3) \wedge b_2,a_3 \wedge a_1} \equiv 0.\]
Similarly, $\ThetaS{(a_2+a_3) \wedge b_3,(a_2+a_3) \wedge a_1} \equiv 0$.
The claim follows.

\begin{claim}{2}
We have
\begin{align*}
\OmegaS{a_1 \wedge a_2,b_2 \wedge b_1} &\equiv \ThetaS{a_1 \wedge (b_1+a_2+a_3),(a_1+b_2) \wedge (b_1+a_2+a_3)},\\
\OmegaS{a_1 \wedge a_3,b_3 \wedge b_1} &\equiv \ThetaS{a_1 \wedge (b_1+a_2+a_3),(a_1+b_3) \wedge (b_1+a_2+a_3)}.
\end{align*}
\end{claim}

\noindent
Both equalities are proved in the same way, so we will give the details for the first.  Consider
the symplectic basis
\[\{a_1,b_1+a_2+a_3,a_2,a_1+b_2,a_3,a_1+b_3,a_4,b_4,\ldots,a_g,b_g\}\]
for $H_{\Z}$.  Applying Lemma \ref{lemma:liftsymr2} ($\Theta$-symplectic basis), we deduce that
\begin{small}
\begin{align*}
&\ThetaS{a_1 \wedge (b_1+a_2+a_3),(a_1+b_2) \wedge (b_1+a_2+a_3)} - \ThetaS{a_2 \wedge (a_1+b_2),(b_1+a_2+a_3) \wedge (a_1+b_2)} \\
&+\PresS{a_3 \wedge (a_1+b_3),(a_1+b_2) \wedge (b_1+a_2+a_3)} + \sum\nolimits_{i=4}^g \SPresS{a_i \wedge b_i,(a_1+b_2) \wedge (b_1+a_2+a_3)}
\end{align*}
\end{small}%
equals $0$.  Throwing away terms that are equivalent modulo $\Span{S_1,S_2,S_3}$ to $0$, we deduce that
$\ThetaS{a_1 \wedge (b_1+a_2+a_3),(a_1+b_2) \wedge (b_1+a_2+a_3)}$ is equivalent to
\begin{align*}
&\ThetaS{a_2 \wedge (a_1+b_2),(b_1+a_2+a_3) \wedge (a_1+b_2)} - \PresS{a_3 \wedge (a_1+b_3),(a_1+b_2) \wedge (b_1+a_2+a_3)} \\
= &\ThetaS{a_2 \wedge (a_1+b_2),(b_1+a_2) \wedge (a_1+b_2)} + \ThetaS{a_2 \wedge (a_1+b_2),a_3 \wedge (a_1+b_2)}\\
  &-\PresS{a_3 \wedge (a_1+b_3),(a_1+b_2) \wedge (b_1+a_2+a_3)}.
\end{align*}
Just like at the beginning of the proof of Lemma \ref{lemma:omegawelldefined}, we have
\[\ThetaS{a_2 \wedge (a_1+b_2),(b_1+a_2) \wedge (a_1+b_2)} \equiv \OmegaIIS{a_1 \wedge a_2,b_2 \wedge b_1} = \OmegaS{a_1 \wedge a_2,b_2 \wedge b_1}.\]
To prove the claim, we must therefore prove that the other two terms in the above sum are equivalent to $0$.

For $\ThetaS{a_2 \wedge (a_1+b_2),a_3 \wedge (a_1+b_2)}$, it follows from
Lemma \ref{lemma:lambdaexpansion1} ($\Lambda$-expansion I) that
it equals
\[\SLambdaS{a_2 \wedge a_1, a_3 \wedge b_2} + \SThetaS{a_2 \wedge b_2,a_3 \wedge b_2} + \SPresS{a_2 \wedge (b_2+a_1),a_3 \wedge a_1} \equiv 0.\]
For $\PresS{a_3 \wedge (a_1+b_3),(a_1+b_2) \wedge (b_1+a_2+a_3)}$, it equals
\begin{align*}
&\SPresS{a_3 \wedge (a_1+b_3),(a_1+b_2) \wedge a_2} + \PresS{a_3 \wedge (a_1+b_3),(a_1+b_2) \wedge (b_1+a_3)} \\
\equiv &\PresS{a_3 \wedge (a_1+b_3),a_1 \wedge (b_1+a_3)} + \PresS{a_3 \wedge (a_1+b_3),b_2 \wedge (b_1+a_3)}.
\end{align*}
We must show that both of these terms vanish.  For $\PresS{a_3 \wedge (a_1+b_3),a_1 \wedge (b_1+a_3)}$, in
$(\wedge^2 H)/\Q$ we have
\[a_3 \wedge (a_1+b_3) + a_1 \wedge (b_1+a_3) + \sum_{\substack{2 \leq i \leq g \\ i \neq 3}} a_i \wedge b_i = 0.\]
This implies that $\PresS{a_3 \wedge (a_1+b_3),a_1 \wedge (b_1+a_3)}$ equals
\begin{align*}
&-\PresS{a_1 \wedge (b_1+a_3),a_1 \wedge (b_1+a_3)} - \sum_{\substack{2 \leq i \leq g \\ i \neq 3}} \SPresS{a_i \wedge b_i,a_1 \wedge (b_1+a_3)} \\
\equiv &-\PresS{a_1 \wedge b_1,a_1 \wedge b_1} - 2 \ThetaS{a_1 \wedge b_1,a_1 \wedge a_3} - \PresS{a_1 \wedge a_3,a_1 \wedge a_3} \equiv 0.
\end{align*}
For $\PresS{a_3 \wedge (a_1+b_3),b_2 \wedge (b_1+a_3)}$, we use the same symplectic basis
\[\{a_3,a_1+b_3,a_1,b_1+a_3,a_2,b_2,a_4,b_4,\ldots,a_g,b_g\}\]
for $H_{\Z}$, but this time we use Lemma \ref{lemma:liftsymr2} ($\Theta$-symplectic basis) to see that
\begin{align*}
0=&\PresS{a_3 \wedge (a_1+b_3),b_2 \wedge (b_1+a_3)} + \ThetaS{a_1 \wedge (b_1+a_3),b_2 \wedge (b_1+a_3)} \\
&- \SThetaS{a_2 \wedge b_2,(b_1+a_3) \wedge b_2} + \sum_{\substack{2 \leq i \leq g \\ i \neq 3}} \SPresS{a_i \wedge b_i,b_2 \wedge (b_1+a_3)}.
\end{align*}
Throwing away terms that are equivalent to $0$, we deduce that 
\[\PresS{a_3 \wedge (a_1+b_3),b_2 \wedge (b_1+a_3)} \equiv -\ThetaS{a_1 \wedge (b_1+a_3),b_2 \wedge (b_1+a_3)}.\]
By Lemma \ref{lemma:lambdaexpansion1} ($\Lambda$-expansion I), this equals $-1$ times
\[\SLambdaS{a_1 \wedge a_3, b_2 \wedge b_1}  + \SThetaS{a_1 \wedge b_1,b_2 \wedge b_1} + \SPresS{a_1 \wedge (b_1+a_3),b_2 \wedge a_3} \equiv 0,\]
as desired.
\end{proof}

\subsection{\texorpdfstring{$\Omega$}{Omega}-symmetry and signs}
\label{section:omegasigns}

It is inconvenient to require the entries of
$\OmegaS{a \wedge a',b' \wedge b}$ to appear in a definite order.
We therefore would like to define that each of the following terms
equals $\OmegaS{a \wedge a',b' \wedge b}$:
\begin{alignat*}{6}
&&\OmegaS{a \wedge a',b' \wedge b},\ \ &-&\OmegaS{a' \wedge a,b' \wedge b},\ \ &-&\OmegaS{a \wedge a',b \wedge b'},\ \ &\OmegaS{a' \wedge a,b \wedge b'},\\
&&\OmegaS{b' \wedge b,a \wedge a'},\ \ &-&\OmegaS{b' \wedge b,a' \wedge a},\ \ &-&\OmegaS{b \wedge b',a \wedge a'},\ \ &\OmegaS{b \wedge b',a' \wedge a}.
\end{alignat*}
Similarly, we would like to by able to multiply terms by $-1$ in the usual way
and define that each of the following terms equals $\OmegaS{a \wedge a',b' \wedge b}$:
\begin{alignat*}{4}
&& &\OmegaS{a \wedge a',b' \wedge b},\ \       &-&\OmegaS{(-a) \wedge a',b' \wedge b},\\
&&-&\OmegaS{a \wedge (-a'),b' \wedge b},\ \    & &\OmegaS{(-a) \wedge (-a'),b' \wedge b},\\
&&-&\OmegaS{a \wedge a',(-b') \wedge b},\ \    & &\OmegaS{(-a) \wedge a',(-b') \wedge b},\\
&& &\OmegaS{a \wedge (-a'),(-b') \wedge b},\ \ &-&\OmegaS{(-a) \wedge (-a'),(-b') \wedge b},\\
&&-&\OmegaS{a \wedge a',b' \wedge (-b)},\ \    & &\OmegaS{(-a) \wedge a',b' \wedge (-b)},\\
&& &\OmegaS{a \wedge (-a'),b' \wedge (-b)},\ \ &-&\OmegaS{(-a) \wedge (-a'),b' \wedge (-b)},\\
&& &\OmegaS{a \wedge a',(-b') \wedge (-b)},\ \ &-&\OmegaS{(-a) \wedge a',(-b') \wedge (-b)},\\
&&-&\OmegaS{a \wedge (-a'),(-b') \wedge (-b)},\ \ & &\OmegaS{(-a) \wedge (-a'),(-b') \wedge (-b)}.
\end{alignat*}
The problem is that these definitions introduce ambiguity into our notation since some of these
are other $\Omega$-elements.  For instance, $\OmegaS{a \wedge (-a'),(-b') \wedge b}$ is
another $\Omega$-element associated to the symplectic pairs $a \wedge b$ and $(-a') \wedge (-b')$.
The following lemma says that all the possible other $\Omega$-elements obtained in this
way are actually the same:

\begin{lemma}
\label{lemma:omegaambiguity}
Let $a \wedge b$ and $a' \wedge b'$ be symplectic pairs in $H_{\Z}$ such
that $\Span{a,b}$ and $\Span{a',b'}$ are orthogonal.  Then the following are
all equal to $\OmegaS{a \wedge a',b' \wedge b}$:
\begin{small}
\[\begin{tabular}{lll}
\OmegaS{(-a)  \wedge a'   ,b'    \wedge (-b)}  & \OmegaS{a'    \wedge a   ,b     \wedge b'}    & \OmegaS{(-b) \wedge (-b'),a'    \wedge a} \\
\OmegaS{a     \wedge (-a'),(-b') \wedge b}     & \OmegaS{(-a') \wedge a   ,b     \wedge (-b')} & \OmegaS{b    \wedge (-b'),a'    \wedge (-a)} \\
\OmegaS{(-a)  \wedge (-a'),(-b') \wedge (-b)}  & \OmegaS{a'    \wedge (-a),(-b)  \wedge b'}    & \OmegaS{(-b) \wedge b'   ,(-a') \wedge a} \\
\OmegaS{(-b') \wedge (-b) , a    \wedge a'}    & \OmegaS{(-a') \wedge (-a),(-b)  \wedge (-b')} & \OmegaS{b    \wedge b'   ,(-a') \wedge (-a)} \\
\OmegaS{b'    \wedge (-b) , a    \wedge (-a')} & \OmegaS{(-b') \wedge b   ,(-a) \wedge a'}     & \OmegaS{b'   \wedge b    ,(-a)  \wedge (-a')}
\end{tabular}\]
\end{small}%
\end{lemma}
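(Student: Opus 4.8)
The strategy is the one used in the proofs of Lemmas \ref{lemma:omegawelldefined} and \ref{lemma:omegalambda}: transport everything into a fixed symplectic basis, reduce modulo $\Span{S_1,S_2,S_3}$ to a statement about $\Theta$-elements, and use Lemma \ref{lemma:liftsymr2} ($\Theta$-symplectic basis) together with $\Theta$-linearity and Lemmas \ref{lemma:thetasign}, \ref{lemma:lambdasign}. Concretely, first I would observe that whether two of these $\Omega$-elements are equal is invariant under $\Sp_{2g}(\Z)$, and that the restriction of $\Phi$ to $\Span{S_1,S_2,S_3}$ is injective (Lemma \ref{lemma:symspans3}) while $\Phi$ sends every one of these $\Omega$-elements to $(a \wedge a') \Cdot (b' \wedge b) \in \Sym^2((\wedge^2 H)/\Q)$ (using that each is manifestly a $\Phi$-preimage of that element once one checks signs). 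Hence it suffices to work modulo $\Span{S_1,S_2,S_3}$, and by applying an element of $\Sp_{2g}(\Z)$ we may assume $a = a_1$, $b = b_1$, $a' = a_2$, $b' = b_2$.

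The key reduction step is the computation performed inside the proof of Lemma \ref{lemma:omegawelldefined}: modulo $\Span{S_1,S_2,S_3}$,
\[\OmegaS{a_1 \wedge a_2,b_2 \wedge b_1} \equiv \ThetaS{a_1 \wedge (b_1+a_2), (a_1+b_2) \wedge (b_1+a_2)},\]
and the same argument (tracking through Definition \ref{definition:omega}, deleting blue terms via \S\ref{section:obviousbluesym}, and using $\Theta$-expansion I) produces an analogous ``reduced form'' for every $\Omega$-element appearing in the list. The heart of the proof is then to show that all the resulting reduced $\Theta$-expressions coincide modulo $\Span{S_1,S_2,S_3}$. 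This is exactly the type of identity handled by the two claims at the end of the proof of Lemma \ref{lemma:omegawelldefined}: one applies Lemma \ref{lemma:liftsymr2} to the symplectic basis $\{a_1, b_1+a_2, a_2, a_1+b_2, a_3, b_3, \ldots, a_g, b_g\}$ (and to sign-variants of it), throws away all the $\SThetaS{a_i \wedge b_i, \cdot}$ and $\SPresS{\cdot,\cdot}$ terms for $i \geq 3$ which lie in $\Span{S_1,S_2,S_3}$, and reads off that the two reduced $\Theta$-expressions in question differ only by an application of $\Theta$-linearity (Lemma \ref{lemma:thetalinear}) and the $\Theta$-sign rules (Lemma \ref{lemma:thetasign}). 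I would organize the fifteen entries into orbit classes under the group generated by the four ``symmetry'' moves ($a \leftrightarrow a'$, $b \leftrightarrow b'$, $(a,b) \leftrightarrow (b',b)$-type transpositions) and the sign moves, check one representative of each class carefully, and note that the remaining cases are obtained by the same bookkeeping with permuted indices and inserted signs.

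\textbf{Main obstacle.} The genuine difficulty is purely organizational rather than conceptual: the list has fifteen entries, each involving a quadruple of vectors with independent sign choices, and one must verify that the sign produced by reducing a given entry to its $\Theta$-form matches the sign predicted by the $\Theta$-sign rules --- a single dropped sign invalidates the whole table. I would mitigate this by proving a single clean ``master identity'' first: for any symplectic pair $c \wedge d$ with $\Span{c,d} \perp \Span{a,b}$, the element $\ThetaS{a \wedge (b+c), (a+d) \wedge (b+c)}$ modulo $\Span{S_1,S_2,S_3}$ is unchanged under the substitutions $(a,b,c,d) \mapsto (b,a,-d,-c)$, $(a,b,c,d)\mapsto (c,d,a,b)$, etc., up to the expected overall sign; then every entry of the table follows by specializing $(c,d) = (a_2,b_2)$ or its sign-variants and invoking $\Omega$-symmetry for $\Lambda$- and $\Theta$-inputs already established in \S\ref{section:thetasymmetry}, \S\ref{section:thetasigns}. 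With that lemma in hand, the proof of the present statement reduces to a short finite verification that each of the fifteen listed tuples maps to $(a_1,b_1,a_2,b_2)$ under the master identity's symmetry group, which I would present as a table with the relevant sign bookkeeping and then conclude.
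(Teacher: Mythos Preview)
Your overall framework---normalize via $\Sp_{2g}(\Z)$, work modulo $\Span{S_1,S_2,S_3}$, and compare $\Theta$-reductions---is sound and matches the paper's spirit. But there is a genuine gap in the toolset you list, and you miss two structural shortcuts the paper exploits.

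\textbf{The gap.} Your proposed tools (Lemma~\ref{lemma:liftsymr2}, $\Theta$-linearity, Lemmas~\ref{lemma:thetasign} and~\ref{lemma:lambdasign}) handle the ``swap'' symmetries, but they do not suffice for the sign-change move $(a',b')\mapsto(-a',-b')$, i.e.\ showing $\OmegaS{a\wedge a',b'\wedge b}=\OmegaS{a\wedge(-a'),(-b')\wedge b}$. In $\Theta$-language this becomes
\[
\ThetaS{a_1\wedge(b_1+a_2),(a_1+b_2)\wedge(b_1+a_2)}\equiv\ThetaS{a_1\wedge(b_1-a_2),(a_1-b_2)\wedge(b_1-a_2)}
\]
modulo $\Span{S_1,S_2,S_3}$, and no single application of the $\Theta$-symplectic-basis relation or the sign rules connects these two: they live over different symplectic bases with no shared row. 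The paper handles precisely this case by invoking Lemma~\ref{lemma:omegalambda} ($\Lambda$ to $\Omega$) with an auxiliary symplectic pair $a''\wedge b''$ (using $g\ge 4$): it expresses each of the two $\Omega$-elements as a $\Lambda$-combination plus a common term $\OmegaS{a\wedge a'',b''\wedge b}$ that cancels, and then compares the $\Lambda$-combinations via strong $\Lambda$-linearity (Lemma~\ref{lemma:lambdalinear2}), which reduces to a trivial identity in $(\Span{a,b}^{\perp})^{\otimes 2}$. You mention Lemma~\ref{lemma:omegalambda} in passing but never deploy it; without it (or an equivalent auxiliary-pair trick) your master identity cannot be closed for this substitution.

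\textbf{The structural shortcuts you miss.} The paper does not attack all fifteen entries uniformly. It first observes that three moves generate the full symmetry group acting on the table: (i) $(a,b)\leftrightarrow(a',b')$, (ii) $(a',b')\mapsto(-a',-b')$, and (iii) $(a,a',b',b)\mapsto(-b',-b,a,a')$. It then proves (i) and (iii) not by any $\Theta$-calculation at all, but by \emph{direct term-by-term comparison} of two of the four defining formulas $\Omega_i$ from Definition~\ref{definition:omega}: for instance $\OmegaIS{a\wedge a',b'\wedge b}$ and $\OmegaIIS{a'\wedge a,b\wedge b'}$ are literally the same five-term expression. Only case (ii) requires real work, and that is where Lemma~\ref{lemma:omegalambda} enters. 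This organization collapses your fifteen-entry bookkeeping table to a three-line check plus one substantive argument, and bypasses your ``master identity'' entirely.
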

\begin{proof}
Below we will prove the following three special cases:
\begin{itemize}
\item[(i)]    $\OmegaS{a \wedge a',b' \wedge b} = \OmegaS{a' \wedge a,b \wedge b'}$
\item[(ii)]   $\OmegaS{a \wedge a',b' \wedge b} = \OmegaS{a \wedge (-a'),(-b') \wedge b}$
\item[(iiii)] $\OmegaS{a \wedge a',b' \wedge b} = \OmegaS{(-b') \wedge (-b),a \wedge a'}$
\end{itemize}
As is easily verified, all the other equalities we are trying to prove can be obtained by composing these
three.  For instance, to see that $\OmegaS{a \wedge a',b' \wedge b} = \OmegaS{(-a) \wedge a',b' \wedge (-b)}$ we compose
(i) and (ii) and (i):
\[\OmegaS{a \wedge a',b' \wedge b} = \OmegaS{a' \wedge a,b \wedge b'} = \OmegaS{a' \wedge (-a),(-b) \wedge b'} = \OmegaS{(-a) \wedge a',b' \wedge (-b)}.\]
We separate the proofs of (i) and (ii) and (iii) into the following three claims:

\begin{claim}{1}
$\OmegaS{a \wedge a',b' \wedge b} = \OmegaS{a' \wedge a,b \wedge b'}$.
\end{claim}

\noindent
We can calculate these using any of the $\Omega_i$-formulas from
Definition \ref{definition:omega}, so the claim follows from the fact that the
following are equal: $\OmegaIS{a \wedge a',b' \wedge b}$, which is
\begin{align*}
&\ThetaS{a \wedge (b+a'), (a+b') \wedge (b+a')}
-\PresS{a \wedge (b+a'), a \wedge (b+a')} \\
&-\ThetaS{a \wedge b,b' \wedge b}
-\PresS{a \wedge b,b' \wedge a'}
-\ThetaS{a \wedge a',b' \wedge a'}.
\end{align*}
and $\OmegaIIS{a' \wedge a,b \wedge b'}$, which is
\begin{align*}
&\ThetaS{a \wedge (a'+b),(b'+a) \wedge (a'+b)}
-\PresS{a \wedge (a'+b),a \wedge (a'+b)} \\
&-\ThetaS{a \wedge b,b' \wedge b}
-\PresS{a \wedge b,b' \wedge a'}
-\ThetaS{a \wedge a',b' \wedge a'}
\end{align*}

\begin{claim}{2}
\label{claim:omegasignclaim2}
$\OmegaS{a \wedge a',b' \wedge b} = \OmegaS{a \wedge (-a'),(-b') \wedge b}$.
\end{claim}

\noindent
Since $g \geq 4$ (Assumption \ref{assumption:genussym}), we can find a symplectic pair
$a'' \wedge b''$ such that $\Span{a'',b''}$ is orthogonal to both $\Span{a,b}$ and
$\Span{a',b'}$.  
Lemma \ref{lemma:omegalambda} ($\Lambda$ to $\Omega$) says that
\begin{equation}
\label{eqn:omegasignclaim2.1}
\LambdaS{a \wedge (a' + a''),(b' - b'') \wedge b} - \LambdaS{a \wedge a'',b' \wedge b} + \LambdaS{a \wedge a',b'' \wedge b}
\end{equation}
equals $\OmegaS{a \wedge a',b' \wedge b} - \OmegaS{a \wedge a'',b'' \wedge b}$
and that
\begin{equation}
\label{eqn:omegasignclaim2.2}
\LambdaS{a \wedge (-a' + a''),(-b' - b'') \wedge b} - \LambdaS{a \wedge a'',(-b') \wedge b} + \LambdaS{a \wedge (-a'),b'' \wedge b}
\end{equation}
equals $\OmegaS{a \wedge (-a'),(-b') \wedge b} - \OmegaS{a \wedge a'',b'' \wedge b}$.
To prove the claim, it is thus enough to prove that
\eqref{eqn:omegasignclaim2.1} equals \eqref{eqn:omegasignclaim2.2}.  By 
Lemma \ref{lemma:lambdalinear2} (strong $\Lambda$-linearity),
this is equivalent to the following identity in $(\Span{a,b}^{\perp})^{\otimes 2}$:
\[(a' + a'') \otimes (b' - b'') - a'' \otimes b' + a' \otimes b''
= (-a' + a'') \otimes (-b' - b'') - a'' \otimes (-b') + (-a') \otimes b''.\]
In fact, both sides of this equal $a' \otimes b' - a'' \otimes b''$.

\begin{claim}{3}
$\OmegaS{a \wedge a',b' \wedge b} = \OmegaS{(-b') \wedge (-b),a \wedge a'}$.
\end{claim}

\noindent
By Claim \ref{claim:omegasignclaim2}, it is enough to prove instead that
$\OmegaS{a \wedge (-a'),(-b') \wedge b} = \OmegaS{(-b') \wedge (-b),a \wedge a'}$.
We can calculate these using any of the $\Omega_i$-formulas from
Definition \ref{definition:omega}, so the claim follows from the fact that the
following are equal: $\OmegaIS{a \wedge (-a'),(-b') \wedge b}$, which is
\begin{align*}
&\ThetaS{a \wedge (b-a'), (a-b') \wedge (b-a')}
-\PresS{a \wedge (b-a'), a \wedge (b-a')} \\
&-\ThetaS{a \wedge b,(-b') \wedge b}
-\PresS{a \wedge b,(-b') \wedge (-a')}
-\ThetaS{a \wedge (-a'),(-b') \wedge (-a')},
\end{align*}
and $\OmegaIVS{(-b') \wedge (-b),a \wedge a'}$, which is
\begin{align*}
&\ThetaS{(a'-b) \wedge a,(a'-b) \wedge (-b'+a)}
-\PresS{(a'-b) \wedge a,(a'-b) \wedge a} \\
&-\ThetaS{(-b) \wedge a,(-b) \wedge (-b')}
-\PresS{(-b) \wedge a,a' \wedge (-b')}
-\ThetaS{a' \wedge a,a' \wedge (-b')}.\qedhere
\end{align*}
\end{proof}

In light of this lemma, we can permute terms and multiply them by $-1$ as described
before the lemma.  For instance, if $a \wedge b$ and $a' \wedge b'$ are symplectic
pairs such that $\Span{a,b}$ and $\Span{a',b'}$ are orthogonal, then we
can write things in the natural order and discuss $\OmegaS{a \wedge a',b \wedge b'}$, which
equals $-\OmegaS{a \wedge a',b' \wedge b}$.  We only used the unnatural order to avoid
signs in our formulas.  We can also now 
talk about $\OmegaS{a \wedge b',b \wedge a'}$, which equals
$\OmegaS{a \wedge (-b'),a' \wedge b}$ or $\OmegaS{a \wedge b',(-a') \wedge b}$.

\subsection{The set \texorpdfstring{$S_4$}{S4}}

We now return to constructing $S_4$.  Recall that
\[T_4 = \Set{$(a_i \wedge a_j) \Cdot (b_i \wedge b_j)$, $(a_i \wedge b_j) \Cdot (b_i \wedge a_j)$}{$1 \leq i < j \leq g$}.\]
Define
\[S_4 = \Set{$\SOmegaS{a_i \wedge a_j,b_i \wedge b_j}$, $\SOmegaS{a_i \wedge b_j,b_i \wedge a_j}$}{$1 \leq i < j \leq g$}.\]
Like we did here, we will write elements of $\Span{S_4}$ in green.  By construction,
the linearization map $\Phi$ takes $S_4$ bijectively to $T_4$.  Even better:

\begin{lemma}
\label{lemma:symspans4}
The linearization map $\Phi$ takes $\Span{S_1,\ldots,S_4}$ isomorphically to 
$\Sym^2((\wedge^2 H)/\Q)$.
\end{lemma}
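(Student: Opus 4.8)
The plan is to mirror the structure of Lemmas \ref{lemma:symspans1}, \ref{lemma:symspans2}, and \ref{lemma:symspans3}, each of which successively enlarged the domain on which $\Phi$ is an isomorphism. First I would recall from the previous three lemmas that $\Phi$ maps $S_1\cup S_2\cup S_3$ bijectively to $T_1\cup T_2\cup T_3$ and that all relations among $T_1\cup\cdots\cup T_4$ are already relations among $T_1\cup T_2$ which lift to relations among $S_1\cup S_2$. By Lemma \ref{lemma:omegawelldefined} the $\Omega$-elements are well-defined, and by construction $\Phi$ takes each $\SOmegaS{a_i\wedge a_j,b_i\wedge b_j}$ to $(a_i\wedge a_j)\Cdot(b_i\wedge b_j)$ and each $\SOmegaS{a_i\wedge b_j,b_i\wedge a_j}$ to $(a_i\wedge b_j)\Cdot(b_i\wedge a_j)$, so $\Phi$ takes $S_4$ bijectively to $T_4$. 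Hence $\Phi$ maps $S_1\cup\cdots\cup S_4$ bijectively to $T_1\cup\cdots\cup T_4$, which is a generating set for $\Sym^2((\wedge^2 H)/\Q)$.

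Next I would argue that $\Phi$ restricted to $\Span{S_1,\ldots,S_4}$ is injective. Since $\Sym^2((\wedge^2 H)/\Q)$ is the $\Q$-vector space generated by $T = T_1\cup\cdots\cup T_4$ subject to the relations $R$ described in \S\ref{section:genrelsym2}, and since $\Phi$ restricts to a bijection $S_1\cup\cdots\cup S_4 \to T_1\cup\cdots\cup T_4$, it suffices to check that every relation in $R$ lifts to a relation among elements of $S_1\cup\cdots\cup S_4$. But the proof of Lemma \ref{lemma:symspans2} already showed that \emph{every} relation in $R$ is in fact a relation among elements of $T_1\cup T_2$ (this is the content of the analysis of $R_1$ and $R_2 = R\setminus R_1$, together with Lemma \ref{lemma:liftsymr2}), and that each such relation lifts to a relation among $S_1\cup S_2\subset S_1\cup\cdots\cup S_4$. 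Therefore the natural surjection from the abstract span of $T_1\cup\cdots\cup T_4$ modulo $R$ onto $\Span{S_1,\ldots,S_4}$ is an isomorphism after applying $\Phi$, so $\Phi$ is injective on $\Span{S_1,\ldots,S_4}$, hence an isomorphism onto $\Sym^2((\wedge^2 H)/\Q)$.

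The only genuinely new input needed beyond bookkeeping is that the $\Omega$-elements are honest elements of $\fK_g^s$ with the claimed images — which is exactly Lemmas \ref{lemma:omegawelldefined} and \ref{lemma:omega} (their well-definedness) — and that $S_4$ maps bijectively onto $T_4$, which is immediate from Definition \ref{definition:omega} and the computation preceding it. I do not expect any serious obstacle here: the substantive work (constructing the $\Theta$-, $\Lambda$-, and $\Omega$-elements and proving they are well-defined and satisfy the right linearity relations) has already been done in \S\ref{section:presentationsym2}--\S\ref{section:presentationsym4}. The one point requiring a sentence of care is that the relations $R$ genuinely never involve $T_3$ or $T_4$ essentially — i.e., each element of $T_3\cup T_4$ appears in no relation in $R$ (or appears only up to sign, as noted for $(a_1\wedge b_1)\Cdot(b_1\wedge a_2)$) — so that the direct-sum-modulo-relations description of the target restricts cleanly; but this is exactly what was established in the proof of Lemma \ref{lemma:symspans2}, so the proof of the present lemma is a two-line invocation of that argument together with the bijection $S_4\to T_4$.

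\begin{proof}
Recall from Lemma \ref{lemma:symspans3} that $\Phi$ takes $\Span{S_1,S_2,S_3}$ isomorphically to $\Span{T_1,T_2,T_3}$, and that part of that proof was that $\Phi$ takes $S_1\cup S_2\cup S_3$ bijectively to $T_1\cup T_2\cup T_3$. By Lemma \ref{lemma:omegawelldefined}, each $\Omega$-element is a well-defined element of $\fK_g^s$, and by construction (Definition \ref{definition:omega} and the computation preceding it) we have
\[\Phi(\SOmegaS{a_i \wedge a_j,b_i \wedge b_j}) = (a_i \wedge a_j) \Cdot (b_i \wedge b_j) \quad \text{and} \quad \Phi(\SOmegaS{a_i \wedge b_j,b_i \wedge a_j}) = (a_i \wedge b_j) \Cdot (b_i \wedge a_j)\]
for $1 \leq i < j \leq g$. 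Thus $\Phi$ takes $S_4$ bijectively to $T_4$, and hence $\Phi$ takes $S_1 \cup \cdots \cup S_4$ bijectively to $T_1 \cup \cdots \cup T_4$.

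It remains to check that all relations between elements of $T = T_1 \cup \cdots \cup T_4$ lift to relations between elements of $S_1 \cup \cdots \cup S_4$. As we saw in \S \ref{section:presentationsym1} and \S \ref{section:presentationsym4}, the space $\Sym^2((\wedge^2 H)/\Q)$ is the $\Q$-vector space generated by $T_1 \cup \cdots \cup T_4$ subject to appropriate versions of the relations in $R$. But in the proof of Lemma \ref{lemma:symspans2} we showed that every such relation is actually a relation between elements of $T_1 \cup T_2$, and that each of these lifts to a relation between elements of $S_1 \cup S_2$; the relations among $T_1$ lift to relations among $S_1$ by the proof of Lemma \ref{lemma:symspans1}, and the remaining ones lift by Lemma \ref{lemma:liftsymr2}. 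Since $S_1 \cup S_2 \subset S_1 \cup \cdots \cup S_4$, all relations between elements of $T$ lift to relations between elements of $S_1 \cup \cdots \cup S_4$. Combined with the bijection $S_1 \cup \cdots \cup S_4 \to T_1 \cup \cdots \cup T_4$, this shows that $\Phi$ takes $\Span{S_1,\ldots,S_4}$ isomorphically onto $\Sym^2((\wedge^2 H)/\Q)$.
\end{proof}
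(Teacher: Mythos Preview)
Your proof is correct and follows essentially the same approach as the paper's: both recall from Lemma~\ref{lemma:symspans3} that $\Phi$ maps $S_1\cup S_2\cup S_3$ bijectively to $T_1\cup T_2\cup T_3$, observe that $\Phi$ maps $S_4$ bijectively to $T_4$ by construction, and then invoke the fact (established in the proof of Lemma~\ref{lemma:symspans2}) that all relations among $T_1\cup\cdots\cup T_4$ are already relations among $T_1\cup T_2$ which lift to $S_1\cup S_2$. Your version is slightly more detailed in spelling out the role of Lemma~\ref{lemma:omegawelldefined} and Definition~\ref{definition:omega}, but the argument is the same.
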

\begin{proof}
Recall that in Lemma \ref{lemma:symspans3} we proved that $\Phi$ takes $\Span{S_1,S_2,S_3}$ isomorphically
to $\Span{T_1,T_2,T_3}$.  Part of the proof of that lemma was that $\Phi$ takes $S_1 \cup S_2 \cup S_3$ bijectively to
$T_1 \cup T_2 \cup T_3$.  It follows that $\Phi$ takes $S_1 \cup \cdots \cup S_4$ bijectively to $T_1 \cup \cdots \cup T_4$.
As we discussed in \S \ref{section:presentationsym1}, the set $T_1 \cup \cdots \cup T_4$ generates
$\Sym^2((\wedge^2 H)/\Q)$. 
In the proof of Lemma \ref{lemma:symspans3} we proved that all relations between elements of
$T_1 \cup \cdots \cup T_4$ are actually relations between elements of $T_1 \cup T_2$ and lift to relations
between elements of $S_1 \cup S_2$.  The lemma follows.
\end{proof}

\subsection{Additional relations}

For later use, we record the following.  Its statement uses our
fixed symplectic basis $\cB = \{a_1,b_1,\ldots,a_g,b_g\}$.

\begin{lemma}
\label{lemma:identifyomega}
Let $\equiv$ denote equality modulo $\Span{S_1,S_2,S_3}$.  For distinct
$1 \leq i,j \leq g$, we have
\begin{align*}
\PresS{(a_i+b_j) \wedge (b_i+a_j),(a_i+b_j) \wedge (b_i+a_j)} &\equiv \PresS{(a_i-b_j) \wedge (b_i-a_j),(a_i-b_j) \wedge (b_i-a_j)} \\
                                                              &\equiv -2 \SOmegaS{a_i \wedge a_j,b_i \wedge b_j}
\end{align*}
and
\begin{align*}
\PresS{(a_i+a_j) \wedge (b_i-b_j),(a_i+a_j) \wedge (b_i-b_j)} &\equiv \PresS{(a_i-a_j) \wedge (b_i+b_j),(a_i-a_j) \wedge (b_i+b_j)} \\
                                                              &\equiv 2 \SOmegaS{a_i \wedge b_j,b_i \wedge a_j}.
\end{align*}
\end{lemma}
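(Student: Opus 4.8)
The plan is to prove each of the four equivalences in Lemma~\ref{lemma:identifyomega} by the same technique used throughout this section: compute both sides under the linearization map $\Phi$, observe that they agree, invoke the injectivity of $\Phi$ on $\Span{S_1,S_2,S_3}$ from Lemma~\ref{lemma:symspans3}, and then verify the identity modulo $\Span{S_1,S_2,S_3}$ by hand. Since whether the identities hold is invariant under the action of $\Sp_{2g}(\Z)$ on $\fK_g^s$, and since we can further apply an element of $\SymSp_g$, I would reduce to the case $(i,j) = (1,2)$, so that the four statements become assertions about $\PresS{(a_1\pm b_2) \wedge (b_1\pm a_2),\dots}$ and $\PresS{(a_1\pm a_2) \wedge (b_1\mp b_2),\dots}$ and the $\Omega$-elements $\SOmegaS{a_1 \wedge a_2,b_1 \wedge b_2}$ and $\SOmegaS{a_1 \wedge b_2,b_1 \wedge a_2}$.

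First I would handle the equivalences between the ``$+$'' and ``$-$'' versions of each squared generator. For instance, to see $\PresS{(a_1+b_2) \wedge (b_1+a_2),(a_1+b_2) \wedge (b_1+a_2)} \equiv \PresS{(a_1-b_2) \wedge (b_1-a_2),(a_1-b_2) \wedge (b_1-a_2)}$, I would note that the map sending $a_2 \mapsto -a_2$, $b_2 \mapsto -b_2$ and fixing all other basis vectors is an element of $\SymSp_g$, and it carries the first generator to the second while preserving $\Span{S_1,S_2,S_3}$ (it permutes $S_1$, $S_2$, $S_3$ among themselves, as in the proof of Lemma~\ref{lemma:altsymspacts}); hence the two are not merely equivalent but literally related by a symmetry, so they have the same image in $\fK_g^s$ and a fortiori modulo $\Span{S_1,S_2,S_3}$. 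The analogous symmetry ($a_2 \mapsto -a_2$, $b_2 \mapsto -b_2$) handles the $\PresS{(a_1+a_2) \wedge (b_1-b_2),\dots} \equiv \PresS{(a_1-a_2) \wedge (b_1+b_2),\dots}$ equivalence.

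The substantive content is the identification with the $\Omega$-elements. For the first chain, I would compute $\PresS{(a_1+b_2) \wedge (b_1+a_2),(a_1+b_2) \wedge (b_1+a_2)}$ modulo $\Span{S_1,S_2,S_3}$. Using Lemma~\ref{lemma:thetaexpansion2} ($\Theta$-expansion II), this squared generator expands as $\ThetaS{(a_1+b_2) \wedge b_1,(a_1+b_2) \wedge (b_1+a_2)} + \ThetaS{(a_1+b_2) \wedge a_2,(a_1+b_2) \wedge (b_1+a_2)}$; each summand is, up to terms in $\Span{S_1,S_2,S_3}$, a $\Theta$-element of the shape that appeared in the proof of Lemma~\ref{lemma:omegawelldefined} (indeed $\OmegaIIIS{a_1 \wedge a_2,b_2 \wedge b_1} \equiv \ThetaS{(a_1+b_2) \wedge b_1,(a_1+b_2) \wedge (b_1+a_2)}$ and the other term relates similarly to $\OmegaS{a_1 \wedge b_2,b_1 \wedge a_2}$ or its negative). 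Matching signs carefully using the $\Omega$-symmetry conventions of \S\ref{section:omegasigns} and Lemma~\ref{lemma:omegaambiguity}, and discarding blue/purple/orange terms via \S\ref{section:obviousbluesym} and $\Theta$-linearity, I expect the total to collapse to $-2\SOmegaS{a_1 \wedge a_2,b_1 \wedge b_2}$. The second chain, identifying $\PresS{(a_1+a_2) \wedge (b_1-b_2),\dots}$ with $2\SOmegaS{a_1 \wedge b_2,b_1 \wedge a_2}$, is handled identically after the relabeling $a_2 \leftrightarrow b_2$ (which sends the symplectic pair $a_2 \wedge b_2$ to $b_2 \wedge (-a_2)$), again using $\Theta$-expansion II and matching against the appropriate $\OmegaiS$-formula.

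The main obstacle will be bookkeeping the signs: the $\Omega$-elements are only defined up to the sign conventions of \S\ref{section:omegasigns}, and which of $\OmegaIS,\dots,\OmegaIVS$ one reads off from a given $\Theta$-expansion is not canonical, so one must track through Lemma~\ref{lemma:omegaambiguity} to confirm that, e.g., $\ThetaS{a_1 \wedge (b_1+a_2),(a_1+b_2) \wedge (b_1+a_2)} \equiv \OmegaS{a_1 \wedge a_2,b_2 \wedge b_1} = -\OmegaS{a_1 \wedge a_2,b_1 \wedge b_2}$ and that this is compatible with the factor of $-2$ rather than $+2$. Beyond this, the calculation is a routine instance of the now-standard ``expand, delete colored terms, recognize the remainder'' procedure, so I would present the first chain in full and remark that the second is obtained by the symmetry $a_2 \leftrightarrow b_2$, citing the relevant lemmas rather than repeating the computation.
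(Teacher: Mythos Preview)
Your overall strategy is essentially the paper's: apply $\Theta$-expansion II (Lemma~\ref{lemma:thetaexpansion2}) to split the squared generator into two $\Theta$-elements, then recognize each as one of the $\OmegaiS{-,-}$ from the proof of Lemma~\ref{lemma:omegawelldefined}. Your shortcut of using the $\SymSp_g$ element $a_2\mapsto -a_2$, $b_2\mapsto -b_2$ to pass between the ``$+$'' and ``$-$'' versions is slightly cleaner than the paper's approach, which instead expands both versions separately and invokes Lemma~\ref{lemma:omegaambiguity} for the ``$-$'' case.

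There is, however, a genuine slip in your identification of the second summand. You write that $\ThetaS{(a_1+b_2)\wedge a_2,(a_1+b_2)\wedge(b_1+a_2)}$ ``relates similarly to $\OmegaS{a_1\wedge b_2,b_1\wedge a_2}$ or its negative''. This is wrong: using $\Theta$-symmetry, that term equals $\ThetaS{a_2\wedge(a_1+b_2),(b_1+a_2)\wedge(a_1+b_2)}$, which is exactly $\OmegaIIS{a_1\wedge a_2,b_2\wedge b_1}$ from the proof of Lemma~\ref{lemma:omegawelldefined}. So \emph{both} summands are $\equiv\OmegaS{a_1\wedge a_2,b_2\wedge b_1}=-\SOmegaS{a_1\wedge a_2,b_1\wedge b_2}$, and this is precisely why the total collapses to $-2\SOmegaS{a_1\wedge a_2,b_1\wedge b_2}$. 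If the two summands really did relate to the two \emph{different} $\Omega$-elements of $S_4$, as you suggest, their sum would not be $-2$ times one of them. (The paper uses the other form of $\Theta$-expansion II, splitting along the first factor rather than the second, and identifies the summands with $\OmegaIS$ and $\OmegaIVS$; the conclusion is the same.) Once you correct this identification, the rest of your outline goes through.
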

\begin{proof}
Both equalities are proved the same way, so we will give the details for the first.  
Lemma \ref{lemma:thetaexpansion2} ($\Theta$-expansion II)
implies that $\PresS{(a_i+b_j) \wedge (b_i+a_j),(a_i+b_j) \wedge (b_i+a_j)}$ equals
\begin{equation}
\label{eqn:identifyomega1}
\ThetaS{a_i \wedge (b_i+a_j),(a_i+b_j) \wedge (b_i+a_j)} + \ThetaS{b_j \wedge (b_i+a_j),(a_i+b_j) \wedge (b_i+a_j)}
\end{equation}
and $\PresS{(a_i-b_j) \wedge (b_i-a_j),(a_i-b_j) \wedge (b_i-a_j)}$ equals
\begin{equation}
\label{eqn:identifyomega2}
\ThetaS{a_i \wedge (b_i-a_j),(a_i-b_j) \wedge (b_i-a_j)} + \PresS{(-b_j) \wedge (b_i-a_j),(a_i-b_j) \wedge (b_i-a_j)}.
\end{equation}
It is enough to prove that each term in \eqref{eqn:identifyomega1} and \eqref{eqn:identifyomega2} is equivalent to 
$-\SOmegaS{a_i \wedge a_j,b_i \wedge b_j} = \SOmegaS{a_i \wedge a_j,b_j \wedge b_i}$.  For
\eqref{eqn:identifyomega1}, we proved in the beginning of the proof of 
Lemma \ref{lemma:omegawelldefined} that\footnote{That lemma only dealt with $i=1$ and $j=2$, but the proof
works in general.}
\begin{align*}
\OmegaIS{a_i \wedge a_j,b_j \wedge b_i}  &\equiv \ThetaS{a_i \wedge (b_i+a_j), (a_i+b_j) \wedge (b_i+a_j)},\\
\OmegaIVS{a_i \wedge a_j,b_j \wedge b_i} &\equiv \ThetaS{(b_i+a_j) \wedge b_j,(b_i+a_j) \wedge (a_i+b_j)} \\
                                          &=      \ThetaS{b_j \wedge (b_i+a_j),(a_i+b_j) \wedge (b_i+a_j)}.
\end{align*}
For \eqref{eqn:identifyomega2}, that same argument shows that
\begin{align*}
\OmegaIS{a_i \wedge (-a_j),(-b_j) \wedge b_i}  &\equiv \ThetaS{a_i \wedge (b_i-a_j), (a_i-b_j) \wedge (b_i-a_j)},\\
\OmegaIVS{a_i \wedge (-a_j),(-b_j) \wedge b_i} &\equiv \ThetaS{(b_i-a_j) \wedge (-b_j),(b_i-a_j) \wedge (a_i-b_j)} \\
                                               &=      \ThetaS{(-b_j) \wedge (b_i-a_j),(a_i-b_j) \wedge (b_i-a_j)}.
\end{align*}
Lemma \ref{lemma:omegaambiguity} implies that $\OmegaS{a_i \wedge (-a_j),(-b_j) \wedge b_i} = \OmegaS{a_i \wedge a_j,b_j \wedge b_i}$,
so this is enough.
\end{proof}

\section{Symmetric kernel, symmetric version V: skeleton of rest of proof}
\label{section:presentationsym5}

We continue using all the notation from \S \ref{section:presentationsym1} -- \S \ref{section:presentationsym4}.
Recall that our goal in this part of the paper is to prove:

\newtheorem*{maintheorem:presentationsym}{Theorem \ref{maintheorem:presentationsym}}
\begin{maintheorem:presentationsym}
For\footnote{Note that this is our standing assumption in this part; see Assumption \ref{assumption:genussym}.} $g \geq 4$, the linearization map $\Phi\colon \fK_g^s \rightarrow \Sym^2((\wedge^2 H)/\Q)$ is an isomorphism.
\end{maintheorem:presentationsym}

We prove this using the three step proof technique outlined in \S \ref{section:prooftechnique}.

\subsection{Step 1}
In the previous four sections, we took the first step towards proving Theorem \ref{maintheorem:presentationsym}.  We
accomplished the following, which is a restatement with more details of Lemma \ref{lemma:symspans4}.
Recall that $\cB = \{a_1,b_1,\ldots,a_g,b_g\}$ is our fixed symplectic basis for $H_{\Z}$, which 
is endowed with the total order $\prec$ in which the indicated list is strictly increasing.

\begin{lemma}[Step 1]
\label{lemma:presentationsymstep1}
Let $S = S_1 \cup \cdots \cup S_4$, where the $S_i$ are:
\begin{align*}
S_1 &= \Set{$\SPresS{x \wedge y,z \wedge w}$}{$x,y,z,w \in \cB$, $x \prec y$, $z \prec w$, $\fc(x \wedge y,z \wedge w) = 0$},\\
S_2 &= \Set{$\SThetaS{a_i \wedge b_i,x \wedge b_i}$, $\SThetaS{a_i \wedge b_i,a_i \wedge y}$}{$1 \leq i \leq g$, $x,y \in \cB \setminus \{a_i,b_i\}$},\\
S_3 &= \Set{$\SLambdaS{a_i \wedge y,x \wedge b_i}$}{$1 \leq i \leq g$, $x,y \in \cB \setminus \{a_i,b_i\}$, $\omega(x,y)=0$},\\
S_4 &= \Set{$\SOmegaS{a_i \wedge a_j,b_i \wedge b_j}$, $\SOmegaS{a_i \wedge b_j,b_i \wedge a_j}$}{$1 \leq i < j \leq g$}.
\end{align*}
Then the restriction of $\Phi$ to $\Span{S}$ is an isomorphism.
\end{lemma}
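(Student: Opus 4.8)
The plan is to read this statement off from Lemma \ref{lemma:symspans4}. That lemma already says $\Phi$ carries $\Span{S_1,\ldots,S_4}$ isomorphically onto $\Sym^2((\wedge^2 H)/\Q)$, and the four sets named in the present statement are precisely the ones assembled over \S\ref{section:presentationsym1}--\S\ref{section:presentationsym4}; the only thing to check is that the $\Theta$-, $\Lambda$-, and $\Omega$-notations appearing in the displayed description of $S_2$, $S_3$, $S_4$ are unambiguous, which is exactly the content of Definitions \ref{definition:theta}, \ref{definition:lambda}, \ref{definition:omega} together with the well-definedness Lemmas \ref{lemma:lambdawelldefined} and \ref{lemma:omegawelldefined}. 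So in the body of the proof I would simply invoke Lemma \ref{lemma:symspans4}.

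For completeness I would first recall the logical skeleton underlying Lemma \ref{lemma:symspans4}, since that is what makes Step 1 true. One fixes in \S\ref{section:genrelsym2} a generating set $T = T_1 \cup \cdots \cup T_4$ for $\Sym^2((\wedge^2 H)/\Q)$ together with a set $R$ of relations presenting it; one then builds, for each $i$, a subset $S_i \subset \fK_g^s$ that $\Phi$ maps bijectively onto $T_i$ (the ``blue'' generators $\SPresS{x \wedge y,z \wedge w}$ for $T_1$, the $\Theta$-elements for $T_2$, the $\Lambda$-elements for $T_3$, the $\Omega$-elements for $T_4$). The nested Lemmas \ref{lemma:symspans1}--\ref{lemma:symspans4} then show that $\Phi$ restricts to an isomorphism from $\Span{S_1,\ldots,S_k}$ onto $\Span{T_1,\ldots,T_k}$ for $k = 1,2,3,4$, and at $k = 4$ the target is all of $\Sym^2((\wedge^2 H)/\Q)$.

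The one substantive point in that chain — and hence the step I would flag as the real obstacle, now already behind us — is that every relation in $R$ lifts to a relation among elements of $S$. As observed in \S\ref{section:presentationsym1}, each element of $R$ is actually a relation among $T_1 \cup T_2$: the relations $\sum_{i=1}^g (a_i \wedge b_i) \Cdot (a_k \wedge b_k)$ lie in $\Span{T_1}$ and lift via the bilinearity relations in $\fK_g^s$, while the remaining relations $\sum_{i=1}^g (a_i \wedge b_i) \Cdot (x \wedge y)$ with $\omega(x,y) = 0$ involve a single $\Theta$-term and lift by Lemma \ref{lemma:liftsymr2} ($\Theta$-symplectic basis). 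Thus $\Phi$ is injective on $\Span{S}$ (it is a bijection $S \to T$ losing no relation) with image $\Span{T} = \Sym^2((\wedge^2 H)/\Q)$. No new difficulty arises at this stage; the genuine work, and the only place the representation theory of $\Sp_{2g}(\Z)$ entered, was the linearity and well-definedness of the $\Theta$-, $\Lambda$-, and $\Omega$-elements, which rested on Theorems \ref{maintheorem:spstd}, \ref{theorem:spkernel}, and \ref{maintheorem:sladjointvar}.
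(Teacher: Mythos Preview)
Your proposal is correct and matches the paper's approach: the paper explicitly introduces this lemma as ``a restatement with more details of Lemma~\ref{lemma:symspans4}'' and gives no separate proof, so invoking Lemma~\ref{lemma:symspans4} is exactly what is needed. Your additional recap of the logical skeleton is accurate, with one small slip in the closing remark: Theorem~\ref{maintheorem:sladjointvar} is not used in Part~\ref{part:sym} (it appears in Part~\ref{part:alt}); the inputs here are Theorem~\ref{maintheorem:spstd} (for $\Theta$-linearity) and Theorem~\ref{theorem:spkernel} (for strong $\Lambda$-linearity).
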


\subsection{Step 2}
The next step is:

\begin{lemma}[Step 2]
\label{lemma:presentationsymstep2}
The $\Sp_{2g}(\Z)$-orbit of the set $S$ from Lemma \ref{lemma:presentationsymstep1} spans
$\fK_g^s$.
\end{lemma}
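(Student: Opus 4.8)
The plan is to mimic the proof of Lemma~\ref{lemma:presentationaltgenset} from the alternating case. First I would recall that, by the presentation of $\fK_g$ in Theorem~\ref{theorem:summarypresentation} together with the symmetrization discussion of \S\ref{section:presentationsym0} (equivalently, by the presentation of $\fK_g^s$ in Definition~\ref{definition:kgsym}), the vector space $\fK_g^s$ is spanned by the symmetrized generators $\PresS{\kappa_1,\kappa_2}$ where $\kappa_1,\kappa_2 \in (\wedge^2 H)/\Q$ are sym-orthogonal and at least one of them is a symplectic pair. Using the symmetry relation $\PresS{\kappa_2,\kappa_1} = \PresS{\kappa_1,\kappa_2}$, I may assume $\kappa_1 = a \wedge b$ is a symplectic pair.

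Next I would carry out the reduction to generators whose \emph{both} entries are symplectic pairs with orthogonal supports. Since $\kappa_1 = a \wedge b$ is a symplectic pair, Lemma~\ref{lemma:symplecticorthogonal} gives $\kappa_2 \in (a \wedge b)^{\perp} = \overline{\wedge^2 \Span{a,b}_{\Q}^{\perp}}$. The summand $X = \Span{a,b}_{\Z}^{\perp}$ is a symplectic summand of $H_{\Z}$ of genus $g-1 \geq 3$, so Lemma~\ref{lemma:generationomega} lets me write $\kappa_2 = \sum_i \lambda_i \sigma_i$ with $\lambda_i \in \Q$ and each $\sigma_i$ a symplectic pair lying in $\overline{\wedge^2 \Span{a,b}_{\Q}^{\perp}}$; in particular each $\sigma_i$ is sym-orthogonal to $a \wedge b$. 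The linearity relations satisfied by the generators of $\fK_g^s$ then give $\PresS{a \wedge b,\kappa_2} = \sum_i \lambda_i \PresS{a \wedge b,\sigma_i}$. Hence $\fK_g^s$ is spanned by elements of the form $\PresS{a \wedge b,a' \wedge b'}$, where $a \wedge b$ and $a' \wedge b'$ are symplectic pairs with $\Span{a,b}$ and $\Span{a',b'}$ orthogonal.

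Finally I would invoke transitivity. The group $\Sp_{2g}(\Z)$ acts transitively on ordered pairs of symplectic pairs with orthogonal supports: given such a pair, the four vectors involved form a partial symplectic basis of $H_{\Z}$, which (using $g \geq 4 \geq 2$) extends to a full symplectic basis, and $\Sp_{2g}(\Z)$ is transitive on symplectic bases. On the other hand $S$ already contains one element of this form, namely $\SPresS{a_1 \wedge b_1,a_2 \wedge b_2} \in S_1$, which lies in $S_1$ because $\fc(a_1 \wedge b_1,a_2 \wedge b_2) = 0$. Therefore the $\Sp_{2g}(\Z)$-orbit of $S$ contains, up to nonzero scalars, every generator $\PresS{a \wedge b,a' \wedge b'}$ of the spanning set produced above, so it spans $\fK_g^s$, as desired.

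The argument is essentially routine and parallels the alternating case; the only point requiring a little care is the first reduction, where one must check that a symplectic pair can always be moved into the first slot and that Lemma~\ref{lemma:generationomega} applies to the genus-$(g-1)$ symplectic summand $\Span{a,b}_{\Z}^{\perp}$. There is no genuine obstacle in Step~2 itself; the real work in closing $\Span{S}$ under the $\Sp_{2g}(\Z)$-action lies in Step~3, which is carried out in the sections that follow.
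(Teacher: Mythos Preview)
Your proof is correct and follows essentially the same approach as the paper's: both reduce via Lemma~\ref{lemma:generationomega} to generators $\PresS{a\wedge b,a'\wedge b'}$ with $\Span{a,b}\perp\Span{a',b'}$, then use transitivity of $\Sp_{2g}(\Z)$ on such pairs together with the observation that $\SPresS{a_1\wedge b_1,a_2\wedge b_2}\in S_1$. You simply spell out the intermediate steps (the symmetry relation, Lemma~\ref{lemma:symplecticorthogonal}, the genus count) more explicitly than the paper does.
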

\begin{proof}
Using our original generating set for $\fK_g^s$ from Definition \ref{definition:kgsym} together
with Lemma \ref{lemma:generationomega}, we see that $\fK_g^s$ is generated by elements of the form
$\PresS{a \wedge b,a' \wedge b'}$, where $a \wedge b$ and $a' \wedge b'$ are symplectic pairs
such that $\Span{a,b}$ and $\Span{a',b'}$ are orthogonal.  The group $\Sp_{2g}(\Z)$
acts transitively on such elements.  The set $S$ contains many such elements; for instance,
it contains $\SPresS{a_1 \wedge b_1,a_2 \wedge b_2}$.  It follows that $\Sp_{2g}(\Z)$-orbit of $S$
spans $\fK_g^s$. 
\end{proof}

\subsection{Step 3}
The following lemma completes the proof of Theorem \ref{maintheorem:presentationsym}.

\begin{lemma}[Step 3]
\label{lemma:presentationsymstep3}
Let $S \subset \fK_g^s$ be the set from Lemma \ref{lemma:presentationsymstep1}.  Then
the action of $\Sp_{2g}(\Z)$ on $\fK_g^s$ takes $\Span{S}$ to itself.
By Lemma \ref{lemma:presentationsymstep2} this implies that $\Span{S} = \fK_g^s$,
and thus by Lemma \ref{lemma:presentationsymstep1} that $\Phi$ is an isomorphism.
\end{lemma}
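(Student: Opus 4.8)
```latex
\begin{proof}[Proof proposal for Lemma \ref{lemma:presentationsymstep3}]
The plan is to follow Step 3 of the proof technique from \S \ref{section:prooftechnique}: by
Corollary \ref{corollary:gensp}, the group $\Sp_{2g}(\Z)$ is generated as a monoid by
$\Lambda = \SymSp_g \cup \{X_1,X_1^{-1},Y_{12}\}$, so it suffices to show that for each
$f \in \Lambda$ and each $s \in S = S_1 \cup \cdots \cup S_4$ the element $f(s)$ lies in
$\Span{S}$. The key tool throughout is Lemma \ref{lemma:presentationsymstep1} (Step 1): since
$\Phi$ restricted to $\Span{S}$ is an isomorphism onto its image, and $\Phi$ is $\Sp_{2g}(\Z)$-equivariant,
$f(s)$ lies in $\Span{S}$ as soon as $\Phi(f(s)) = f(\Phi(s))$ lies in $\Phi(\Span{S})$; but
$\Phi(\Span{S}) = \Sym^2((\wedge^2 H)/\Q)$ by Lemma \ref{lemma:symspans4}, so in fact the only real
content is producing an \emph{explicit} element of $\Span{S}$ with the correct image. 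Equivalently, we
must expand $f(s)$ in terms of the $\Theta$-, $\Lambda$-, and $\Omega$-elements together with the
``obviously blue'' generators of \S \ref{section:obviousbluesym}.

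First I would dispatch $f \in \SymSp_g$. Since $\SymSp_g$ permutes $\cB$ up to signs, it sends each
$S_i$ into the set obtained by the same permutation-with-signs; using $\Theta$-symmetry and
$\Theta$-signs (\S \ref{section:thetasymmetry}--\S \ref{section:thetasigns}), the analogous statements
for $\Lambda$-elements (\S \ref{section:lambdasymmetry}--\S \ref{section:lambdasigns}), and
Lemma \ref{lemma:omegaambiguity} together with \S \ref{section:omegasigns} for $\Omega$-elements, one
checks that these sign-and-permutation variants still lie in $\Span{S}$ (the $\Theta$- and
$\Lambda$-elements need their second index to be $a_i$-or-$b_i$-normalized, which a further application
of the relevant symmetry achieves). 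Next, for $f = X_1^{\pm 1}$, which fixes all of $\cB$ except
$a_1 \mapsto a_1 \pm b_1$: applied to an element of $S_1$ this produces a generator one of whose entries
is $(a_1\pm b_1)\wedge y$ or $x \wedge(a_1\pm b_1)$, which is either already blue (if the contraction
still vanishes) or expands into a $\Theta$-element via Lemma \ref{lemma:thetaexpansion1} ($\Theta$-expansion I);
applied to a $\Theta$-element it changes by a $\PresS{b_1\wedge b_1,\cdot}=0$-type term or by
Lemma \ref{lemma:thetabilinear1} ($\Theta$-bilinearity I) and Lemma \ref{lemma:thetabilinear2} ($\Theta$-bilinearity II);
applied to a $\Lambda$-element it is controlled by Lemma \ref{lemma:lambdabilinear1}--\ref{lemma:lambdabilinear2};
and applied to an $\Omega$-element by Lemma \ref{lemma:omegalambda} ($\Lambda$ to $\Omega$) together with
Lemma \ref{lemma:identifyomega}.

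The hard part will be $f = Y_{12}$, which moves two generators simultaneously ($a_1 \mapsto a_1+b_2$,
$a_2 \mapsto a_2+b_1$) and so genuinely mixes the two symplectic blocks. This is precisely the case where
the ``expansion'' lemmas of \S \ref{section:presentationsym2}--\S \ref{section:presentationsym4} were
designed to be used: $\Theta$-expansion II (Lemma \ref{lemma:thetaexpansion2}), $\Lambda$-expansion II and III
(Lemmas \ref{lemma:lambdaexpansion2} and \ref{lemma:lambdaexpansion3}), and the translation
lemmas between $\Lambda$- and $\Omega$-elements, all of which are stated for pairs of orthogonal
symplectic pairs exactly in order to handle cross terms like $(a_1+b_2)\wedge(\cdot)$. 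I would
organize the verification by the type of $s$ (one subcase for each of $S_1,\dots,S_4$, each further
broken according to which of $a_1,b_1,a_2,b_2$ appear in $s$), reduce each $Y_{12}(s)$ via bilinearity
and the expansion lemmas to a $\Z$-linear combination of $\Theta$-, $\Lambda$-, $\Omega$-elements with
the second index normalized, and then invoke the corresponding symmetry/sign rules to see each summand
lies in $\Span{S}$. The bookkeeping — keeping track of which of the many symmetry conventions is being
invoked, and checking that every element produced really is one of the normalized generators rather than
a near-miss — is the true obstacle; the underlying relations are all already available. Once all
$f\in\Lambda$ are handled, $\Span{S}=\fK_g^s$ follows from Lemma \ref{lemma:presentationsymstep2}, and
then $\Phi$ is an isomorphism by Lemma \ref{lemma:presentationsymstep1}, proving
Theorem \ref{maintheorem:presentationsym}.
\end{proof}
```
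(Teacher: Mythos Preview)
Your overall plan is the paper's: reduce via Corollary \ref{corollary:gensp} to the monoid generators $\SymSp_g \cup \{X_1,X_1^{-1},Y_{12}\}$, dispatch $\SymSp_g$ using the sign and symmetry rules for $\Theta$-, $\Lambda$-, and $\Omega$-elements, and then for the remaining $f$ case-split on which $S_i$ contains $s$ and expand $f(s)$ using the bilinearity and expansion lemmas of \S\ref{section:presentationsym2}--\S\ref{section:presentationsym4}. The paper carries this out in \S\ref{section:presentationsym6}--\S\ref{section:presentationsym9}, one section per $S_i$.

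However, your first paragraph contains a genuine logical error. You assert that ``$f(s)$ lies in $\Span{S}$ as soon as $\Phi(f(s))$ lies in $\Phi(\Span{S})$,'' and since $\Phi(\Span{S}) = \Sym^2((\wedge^2 H)/\Q)$ this would trivialize the lemma. That implication is false: injectivity of $\Phi$ on $\Span{S}$ says nothing about elements \emph{outside} $\Span{S}$ that might share an image with something inside; proving global injectivity of $\Phi$ is exactly the goal. Your next sentence, ``the only real content is producing an explicit element of $\Span{S}$ with the correct image,'' is therefore not sufficient, and your ``Equivalently'' is wrong: expanding $f(s)$ itself as a linear combination of elements of $S$ via the relations in $\fK_g^s$ is strictly stronger than merely exhibiting something in $\Span{S}$ with the same $\Phi$-image. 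The paper does use injectivity of $\Phi$ restricted to $\Span{S_1}$ or $\Span{S_1,S_2,S_3}$, but only in the legitimate direction: once a direct computation has placed an element $E$ inside that span \emph{and} one knows $\Phi(E)=0$, then $E=0$. Drop the circular ``key tool'' framing; the direct expansion you describe in the later paragraphs is what is actually required and is what the paper does.
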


\noindent
We begin the proof of Lemma \ref{lemma:presentationsymstep3}, with the main steps postponed
to the next 4 sections.

\begin{proof}[Beginning of proof of Lemma \ref{lemma:presentationsymstep3}]
Corollary \ref{corollary:gensp} says that $\Sp_{2g}(\Z)$ is generated as a monoid by
$\SymSp_g \cup \{X_1,X_1^{-1},Y_{12}\}$.
Let $f \in \SymSp_g \cup \{X_1,X_1^{-1},Y_{12}\}$ and let $s \in S$.  It is enough
to prove that $f(s) \in \Span{S}$.

The first case is $f \in \SymSp_g$.  Assume first that $s \in S_1$.  Write
$s = \SPresS{x \wedge y,z \wedge w}$ with $x,y,z,w \in \cB$ satisfying $x \prec y$ and $z \prec w$ and
$\fc(x \wedge y,z \wedge w) = 0$.  There exist $x',y',z',w' \in \cB$ and signs
$\epsilon_1,\ldots,\epsilon_4 \in \{\pm 1\}$ such that
\[f(x) = \epsilon_1 x',\ f(y) = \epsilon_2 y',\ f(z) = \epsilon_3 z',\ f(w) = \epsilon_4 w'.\]
We then have that $f(s)$ equals
\[\SPresS{(\epsilon_1 x') \wedge (\epsilon_2 y'),(\epsilon_3 z') \wedge (\epsilon_4 w')} = \epsilon_1 \cdots \epsilon_4 \SPresS{x' \wedge y',z' \wedge w'} \in \Span{S_1}.\]
We remark that it is possible that either $y' \prec x'$ or $w' \prec z'$, so $\SPresS{x' \wedge y',z' \wedge w'}$ itself might not lie in $S_1$; however,
either it or $(-1)$ times it lies in $S_1$.
The cases where $s \in S_2$ or $s \in S_3$ or $s \in S_4$ are handled the same way, using the sign
rules for $\Theta$- and $\Lambda$- and $\Omega$-elements discussed in \S \ref{section:thetasigns}
and \S \ref{section:lambdasigns} and \S \ref{section:omegasigns}.

We now must deal with the cases where $f \in \{X_1,X_1^{-1},Y_{12}\}$.  These calculations are lengthy, so
we postpone them.  We deal with $s \in S_1$ in \S \ref{section:presentationsym6}, with
$s \in S_2$ in \S \ref{section:presentationsym7}, with $s \in S_3$ in \S \ref{section:presentationsym8},
and finally with $s \in S_4$ in \S \ref{section:presentationsym9}.
\end{proof}

\section{Symmetric kernel, symmetric version VI: closure of \texorpdfstring{$S_1$}{S1}}
\label{section:presentationsym6}

We continue using all the notation from \S \ref{section:presentationsym1} -- \S \ref{section:presentationsym5}.
In this section, we continue the proof of Lemma \ref{lemma:presentationsymstep3} by proving that for
$f \in \{X_1,X_1^{-1},Y_{12}\}$ and $s \in S_1$, we have $f(s) \in \Span{S}$.  

\subsection{X-closure}
Recall that $X_1 \in \Sp_{2g}(\Z)$ takes $a_1$ to $a_1+b_1$ and fixes all other generators in $\cB$.
We start with:

\begin{lemma}
\label{lemma:presentationsymxs1}
Let 
\[s \in S_1 = \Set{$\SPresS{x \wedge y,z \wedge w}$}{$x,y,z,w \in \cB$, $x \prec y$, $z \prec w$, $\fc(x \wedge y,z \wedge w) = 0$}.\]
Then $X_1^{\epsilon}(s) \in \Span{S}$ for $\epsilon \in \{\pm 1\}$.
\end{lemma}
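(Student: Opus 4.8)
The plan is to case-split on the shape of $s = \SPresS{x \wedge y, z \wedge w} \in S_1$ according to how the four basis vectors $x,y,z,w$ interact with the pair $\{a_1,b_1\}$ on which $X_1^{\epsilon}$ acts nontrivially. Recall that $X_1$ sends $a_1 \mapsto a_1 + b_1$ and fixes $b_1$ and all $a_i, b_i$ with $i \geq 2$; similarly $X_1^{-1}$ sends $a_1 \mapsto a_1 - b_1$. So $X_1^{\epsilon}(s) = \PresS{X_1^{\epsilon}(x) \wedge X_1^{\epsilon}(y), X_1^{\epsilon}(z) \wedge X_1^{\epsilon}(w)}$, and this differs from $s$ only if $a_1 \in \{x,y\}$ and/or $a_1 \in \{z,w\}$. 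The condition $\fc(x \wedge y, z \wedge w) = 0$ (equivalently all four pairwise $\omega$'s among $\{x,y\}$ and $\{z,w\}$ vanish, or $\{x,y\} = \{z,w\} = \{a_i,b_i\}$ for some $i$) tightly constrains which cases actually occur.

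First I would dispose of the trivial case: if $a_1 \notin \{x,y,z,w\}$ then $X_1^{\epsilon}(s) = s \in S_1$. Next, suppose $a_1 \in \{x,y\}$ but $a_1 \notin \{z,w\}$; by $\prec$-minimality of $a_1$ we have $x = a_1$. Then $X_1^{\epsilon}(s) = \PresS{(a_1 + \epsilon b_1) \wedge y, z \wedge w} = \PresS{a_1 \wedge y, z \wedge w} + \epsilon \PresS{b_1 \wedge y, z \wedge w}$ using the linearity relations in $\fK_g^s$. The first summand is $\pm s \in \Span{S_1}$. For the second, the orthogonality hypotheses on $s$ force $\omega(z, a_1) = \omega(w, a_1) = 0$, hence $b_1 \notin \{z,w\}$ (otherwise $\omega(b_1, a_1) = -1$), and also $\omega(z,b_1) = \omega(w,b_1) = 0$ automatically would need checking — actually here one uses that $\omega(y,z)=\omega(y,w)=0$ already, plus $\omega(a_1,z)=\omega(a_1,w)=0$ implies $z,w \in \{a_2,b_2,\ldots\} \cup \{b_1\}$, and $b_1$ is excluded, so $z,w$ are orthogonal to both $a_1$ and $b_1$; hence $b_1 \wedge y$ and $z \wedge w$ are sym-orthogonal and $\PresS{b_1 \wedge y, z \wedge w}$ is (up to sign and reordering) in $S_1$ — this is precisely the ``obvious blue'' principle of \S\ref{section:obviousbluesym}. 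The symmetric sub-case $a_1 \in \{z,w\}$ but $a_1 \notin \{x,y\}$ is identical by the symmetry relation $\PresS{\kappa_1,\kappa_2} = \PresS{\kappa_2,\kappa_1}$.

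The remaining case is $a_1 \in \{x,y\}$ and $a_1 \in \{z,w\}$, so $x = z = a_1$. The sym-orthogonality $\fc(a_1 \wedge y, a_1 \wedge w) = 0$ gives $\omega(y,w) = 0$ (the other three terms in $\fc$ vanish since they involve $\omega(a_1,\cdot)$ against $a_1$), and since $\omega(a_1, a_1) = 0$ we need no further condition — but wait, $S_1$ also requires $\omega(x,z)=\omega(x,w)=\omega(y,z)=\omega(y,w)=0$, i.e. $\omega(a_1,w)=\omega(y,a_1)=0$, so in fact $y,w \perp a_1$, hence also $y, w \neq b_1$. Then $X_1^{\epsilon}(s) = \PresS{(a_1+\epsilon b_1) \wedge y, (a_1 + \epsilon b_1) \wedge w}$, which by bilinearity expands as $\PresS{a_1 \wedge y, a_1 \wedge w} + \epsilon \PresS{a_1 \wedge y, b_1 \wedge w} + \epsilon \PresS{b_1 \wedge y, a_1 \wedge w} + \PresS{b_1 \wedge y, b_1 \wedge w}$. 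The first and last terms lie in $\Span{S_1}$ by the obvious-blue principle (both $a_1$ and $b_1$ are orthogonal to $y$ and to $w$). The two cross terms $\PresS{a_1 \wedge y, b_1 \wedge w}$ and $\PresS{b_1 \wedge y, a_1 \wedge w}$ have $\fc(a_1 \wedge y, b_1 \wedge w) = \omega(a_1,b_1)\, y \Cdot w + \cdots = y \Cdot w$, which is generally nonzero, so these are \emph{not} in $S_1$ — but I expect them to be expressible as $\Theta$-elements (or $\Lambda$-elements) via the definitions in \S\ref{section:presentationsym2}. Concretely, $\PresS{a_1 \wedge w, b_1 \wedge w}$-type terms are exactly $\ThetaS{a_1 \wedge b_1, \cdot}$ after using $\Theta$-linearity and the flip conventions of \S\ref{section:thetasymmetry}; and $\PresS{a_1 \wedge y, b_1 \wedge w}$ with $y \neq w$ should be handled using $\Theta$-expansion (Lemma \ref{lemma:thetaexpansion2}) applied to the symplectic pair $a_1 \wedge b_1$, or directly recognized as a combination of $\SThetaS{a_1 \wedge b_1, \cdot}$ terms in $\Span{S_2}$. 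The main obstacle will be this last sub-case: correctly bookkeeping the cross terms, verifying that they lie in $\Span{S_2}$ (not just that $\Phi$ sends them there, which is automatic, but that they genuinely equal $S_2$-combinations in $\fK_g^s$), and tracking signs through the $\Theta$-symmetry conventions. Everything else is routine application of bilinearity plus the obvious-blue observation.
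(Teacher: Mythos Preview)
Your case analysis is along the right lines, and the easy cases (where $a_1$ appears on only one side, or not at all) are handled correctly.  There are two gaps in the final case.

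First, a minor omission: you assert that $y,w \neq b_1$, but this is not quite true.  The condition on $s$ is $\fc(a_1 \wedge y, a_1 \wedge w) = 0$, not the four vanishing conditions $\omega(x,z)=\cdots=0$; the element $\SPresS{a_1 \wedge b_1, a_1 \wedge b_1}$ belongs to $S_1$ and must be treated separately.  (It is easy: $(a_1+\epsilon b_1)\wedge b_1 = a_1 \wedge b_1$, so $X_1^{\epsilon}$ fixes this element.)

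Second, and more seriously: in the main case $y,w \in \cB \setminus \{a_1,b_1\}$ with $\omega(y,w)=0$, your ``bilinear expansion'' of $\PresS{(a_1+\epsilon b_1)\wedge y,(a_1+\epsilon b_1)\wedge w}$ into four terms is not valid in $\fK_g^s$.  The linearity relations in Theorem~\ref{theorem:summarypresentation} only allow you to split $\PresS{\zeta,\kappa_1+\kappa_2}$ when both $\kappa_1$ and $\kappa_2$ are sym-orthogonal to $\zeta$.  Here $\fc\bigl((a_1+\epsilon b_1)\wedge y,\; a_1 \wedge w\bigr) = -\epsilon\, y \Cdot w \neq 0$, so $a_1 \wedge w$ is \emph{not} sym-orthogonal to $(a_1+\epsilon b_1)\wedge y$, and the symbol $\PresS{a_1 \wedge y, b_1 \wedge w}$ is simply undefined in $\fK_g^s$.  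You correctly sense that the cross terms should be $\Lambda$-elements, but you cannot get there by first writing down undefined $\PresS{\cdot,\cdot}$'s and then reinterpreting them.  The paper instead applies Lemma~\ref{lemma:lambdaexpansion3} ($\Lambda$-expansion III), which gives
\[
\PresS{(a_1+\epsilon b_1)\wedge y,(a_1+\epsilon b_1)\wedge w} = \LambdaS{a_1 \wedge y,(a_1+\epsilon b_1)\wedge w} + \epsilon\,\LambdaS{b_1 \wedge y,(a_1+\epsilon b_1)\wedge w},
\]
and then Lemma~\ref{lemma:lambdabilinear2} ($\Lambda$-bilinearity II) to reduce each $\Lambda$-term to an element of $\Span{S_1,S_3}$.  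This produces exactly the four pieces you were aiming for, but with the cross terms realized as the well-defined elements $\SLambdaS{a_1 \wedge y, b_1 \wedge w}$ and $\SLambdaS{b_1 \wedge y, a_1 \wedge w}$ in $S_3$.
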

\begin{proof}
Write $s = \SPresS{x \wedge y,z \wedge w}$.
There is nothing to prove if $a_1 \notin \{x,y,z,w\}$.  What is more, the result is obvious if one of
$\{x,y\}$ and $\{z,w\}$ contains $a_1$ and the other does not.  For instance, we
have (c.f.\ \S \ref{section:obviousbluesym})
\[X_1^{\epsilon}(\SPresS{a_1 \wedge a_2,a_3 \wedge b_3}) = \SPresS{(a_1+\epsilon b_1) \wedge a_2,a_3 \wedge b_3} \in \Span{S_1}.\]
Since $x \prec y$ and $z \prec w$, we have reduced ourselves to 
$s = \SPresS{a_1 \wedge y,a_1 \wedge w}$ with $y,w \in \cB \setminus \{a_1\}$ satisfying $\fc(a_1 \wedge y,a_1 \wedge w) = 0$.  The condition $\fc(a_1 \wedge y,a_1 \wedge w) = 0$ implies that one
of the following holds:
\begin{itemize}
\item $y,w \in \cB \setminus \{a_1,b_1\}$ and $\omega(y,w) = 0$; or
\item $y = w = b_1$.
\end{itemize}
For instance, if $w \in \cB \setminus \{a_1,b_1\}$ then
\[\fc(a_1 \wedge b_1,a_1 \wedge w) = a_1 \Cdot w \neq 0.\]
We deal with the above cases separately.  Let $\equiv$ denote equality modulo 
$\Span{S}$, so our goal is to prove that $X_1^{\epsilon}(s) \equiv 0$.

\begin{case}{1}
$s = \SPresS{a_1 \wedge b_1,a_1 \wedge b_1}$.
\end{case}

\noindent
We have
\[X_1^{\epsilon}(\SPresS{a_1 \wedge b_1,a_1 \wedge b_1}) = \PresS{(a_1+\epsilon b_1) \wedge b_1,(a_1+\epsilon b_1) \wedge b_1} = \SPresS{a_1 \wedge b_1,a_1 \wedge b_1} \equiv 0.\]

\begin{case}{2}
$s = \SPresS{a_1 \wedge y,a_1 \wedge w}$ with $y,w \in \cB \setminus \{a_1,b_1\}$ satisfying $\omega(y,w)=0$.
\end{case}

\noindent
By Lemma \ref{lemma:lambdaexpansion3} ($\Lambda$-expansion III),
we have that $X_1^{\epsilon}(\SPresS{a_1 \wedge y,a_1 \wedge w})$ equals
\[\PresS{(a_1+\epsilon b_1) \wedge y,(a_1+\epsilon b_1) \wedge w} = \LambdaS{a_1 \wedge y,(a_1+\epsilon b_1) \wedge w} + \epsilon \LambdaS{b_1 \wedge y,(a_1+\epsilon b_1) \wedge w}.\]
By Lemma \ref{lemma:lambdabilinear2} ($\Lambda$-bilinearity II), this equals
\[\SPresS{a_1 \wedge y,a_1 \wedge w} + \epsilon \SLambdaS{a_1 \wedge y,b_1 \wedge w}
+\epsilon \SLambdaS{b_1 \wedge y,a_1 \wedge w} + \epsilon^2 \SPresS{b_1 \wedge y,b_1 \wedge w} \equiv 0.\qedhere\]
\end{proof}

\subsection{1-2 swaps}
Recall that $Y_{12} \in \Sp_{2g}(\Z)$ takes $a_1$ to $a_1+b_2$ and $a_2$ to $a_2+b_1$ and fixes all other generators in $\cB$.
For this element, the indices $1$ and $2$ are special.  The {\em $1$-$2$ swap} is the element
$\sigma \in \SymSp_g$ such that
\[\sigma(a_1) = a_2,\ \sigma(b_1) = b_2,\ \sigma(a_2) = a_1,\ \sigma(b_2) = b_1\]
and such that $\sigma$ fixes all other elements of $\cB$.  It satisfies the following:

\begin{lemma}
\label{lemma:12swapsym}
Let $\sigma \in \SymSp_g$ be the $1$-$2$ swap and let $z \in \fK_g^s$.  Then $Y_{12}(z) \in \Span{S}$ if and only
if $Y_{12}(\sigma(z)) \in \Span{S}$.
\end{lemma}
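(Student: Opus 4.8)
\textbf{Proof plan for Lemma \ref{lemma:12swapsym}.}

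The plan is to exploit the fact that the $1$-$2$ swap $\sigma$ conjugates $Y_{12}$ into itself. First I would verify by a direct calculation on the symplectic basis $\cB$ that $\sigma Y_{12} \sigma^{-1} = Y_{12}$ (equivalently $\sigma Y_{12} \sigma = Y_{12}$, since $\sigma^2 = \id$): both sides fix every element of $\cB$ outside $\{a_1,a_2\}$, while $\sigma Y_{12} \sigma$ sends $a_1 \mapsto \sigma Y_{12}(a_2) = \sigma(a_2+b_1) = a_1 + b_2$ and $a_2 \mapsto \sigma Y_{12}(a_1) = \sigma(a_1+b_2) = a_2+b_1$, which agrees with $Y_{12}$. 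This is the only computation in the argument and it is routine.

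Given this conjugation identity, the lemma is a formal consequence. Suppose $Y_{12}(z) \in \Span{S}$. Since $\sigma \in \SymSp_g$, the already-established fact that $\SymSp_g$ preserves $\Span{S}$ (this is the first case handled in the proof of Lemma \ref{lemma:presentationsymstep3}, which applies to arbitrary elements of $S_1 \cup \cdots \cup S_4$ and hence to all of $\Span{S}$) gives $\sigma(Y_{12}(z)) \in \Span{S}$. Now
\[
Y_{12}(\sigma(z)) = \sigma \bigl(\sigma Y_{12} \sigma\bigr)(z) = \sigma\bigl(Y_{12}(z)\bigr) \in \Span{S},
\]
using $\sigma^{-1} = \sigma$ and the conjugation identity. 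The converse implication follows by applying the same reasoning with $z$ replaced by $\sigma(z)$ and using $\sigma^2 = \id$.

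I expect no real obstacle here: the only content is the basis computation $\sigma Y_{12} \sigma = Y_{12}$, and the rest is bookkeeping with the fact that $\SymSp_g$ preserves $\Span{S}$. One point to be careful about is that I am invoking preservation of $\Span{S}$ by $\sigma$ for a general element $z \in \fK_g^s$ rather than for a single basis element of $S$; this is legitimate because the $\SymSp_g$-case of Lemma \ref{lemma:presentationsymstep3} shows each generator in $S$ is sent into $\Span{S}$, hence so is every linear combination, and in particular $Y_{12}(z)$ (which lies in $\Span{S}$ by hypothesis) is sent into $\Span{S}$ by $\sigma$. The utility of this lemma, to be used in the sequel, is that it lets us replace any of the four cases $(1,2)$, $(2,1)$, $(1,k)$, $(2,k)$ etc.\ that arise when computing $Y_{12}$ on $\Lambda$- and $\Omega$-elements by a single representative, halving the number of subcases.
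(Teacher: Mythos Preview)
The proposal is correct and follows essentially the same approach as the paper: both use that $\sigma$ commutes with $Y_{12}$ (you verify this by direct computation on $\cB$, the paper simply asserts it), together with the already-established fact that $\SymSp_g$ preserves $\Span{S}$, to conclude $Y_{12}(\sigma(z)) = \sigma(Y_{12}(z))$ and deduce the equivalence.
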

\begin{proof}
The element $\sigma$ commutes with $Y_{12}$, so
\begin{equation}
\label{eqn:12swapsym}
Y_{12}(\sigma(z)) = \sigma(Y_{12}(z)).
\end{equation}
We already proved at the end of \S \ref{section:presentationsym5} that
$\SymSp_g$ takes $\Span{S}$ to itself.  This holds in particular for $\sigma$.
In light of \eqref{eqn:12swapsym}, the lemma follows.
\end{proof}

\subsection{Y-closure}
We now prove:

\begin{lemma}
\label{lemma:presentationsymys1}
Let 
\[s \in S_1 = \Set{$\SPresS{x \wedge y,z \wedge w}$}{$x,y,z,w \in \cB$, $x \prec y$, $z \prec w$, $\fc(x \wedge y,z \wedge w) = 0$}.\]
Then $Y_{12}(s) \in \Span{S}$.
\end{lemma}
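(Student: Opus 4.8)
The plan is to verify that $Y_{12}(s) \in \Span{S}$ for every $s \in S_1$, reducing as much as possible to the obviously blue elements of \S\ref{section:obviousbluesym} and to the $\Lambda$- and $\Theta$-expansion machinery. Recall that $Y_{12}$ fixes all of $\cB$ except for $a_1 \mapsto a_1+b_2$ and $a_2 \mapsto a_2+b_1$. Write $s = \SPresS{x \wedge y, z \wedge w}$ with $x \prec y$, $z \prec w$, and $\fc(x \wedge y, z\wedge w)=0$. If none of $x,y,z,w$ is $a_1$ or $a_2$, there is nothing to prove. If exactly one of the two pairs $\{x,y\}$ and $\{z,w\}$ meets $\{a_1,a_2\}$, then after applying $Y_{12}$ only one factor changes, and since that factor is replaced by something still orthogonal to the unchanged pair, the result lies in $\Span{S_1}$ by the criterion of \S\ref{section:obviousbluesym}. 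So the core of the argument is the case where both pairs meet $\{a_1,a_2\}$. Using the $1$-$2$ swap $\sigma$ and Lemma \ref{lemma:12swapsym}, I can normalize so that, say, $a_1$ appears in the first pair (or handle $a_1 \in \{x,y\}$ and $a_2 \in \{z,w\}$ versus $a_1,a_2$ both in one pair, etc.).

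Next I would enumerate the possibilities for the two pairs subject to $x\prec y$, $z\prec w$, and $\fc=0$, keeping track of which of $a_1,a_2$ occurs and with which partner. The constraint $\fc(x\wedge y,z\wedge w)=0$ together with $x,y,z,w\in\cB$ forces each such generator to be of one of the shapes already analyzed in the construction of $S_1$: either $\omega$ vanishes between all four cross-pairs, or the two pairs are the same symplectic pair $a_i\wedge b_i$. After applying $Y_{12}$, a pair like $a_1\wedge b_1$ becomes $(a_1+b_2)\wedge b_1$, a pair like $a_1\wedge a_2$ becomes $(a_1+b_2)\wedge(a_2+b_1)$, and a pair like $a_1\wedge b_2$ becomes $(a_1+b_2)\wedge b_2$. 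For each resulting element I would expand the non-standard symplectic pairs using $\Theta$-expansion II (Lemma \ref{lemma:thetaexpansion2}) and $\Lambda$-expansion II/III (Lemmas \ref{lemma:lambdaexpansion2}, \ref{lemma:lambdaexpansion3}), then use $\Theta$-linearity, $\Lambda$-linearity, $\Theta$-bilinearity (Lemmas \ref{lemma:thetabilinear1}, \ref{lemma:thetabilinear2}), and $\Lambda$-bilinearity (Lemmas \ref{lemma:lambdabilinear1}, \ref{lemma:lambdabilinear2}) to rewrite everything as an explicit linear combination of elements of $S_1 \cup S_2 \cup S_3 \cup S_4$. In several subcases $Y_{12}$ acts trivially because the changing vectors land back in the span of the fixed ones; for instance $(a_1+b_2)\wedge b_2 = a_1\wedge b_2$ in $(\wedge^2 H)/\Q$ only up to the $b_2\wedge b_2=0$ issue, but more usefully $Y_{12}$ fixes any generator both of whose pairs lie in $\overline{\wedge^2\Span{a_3,b_3,\dots,a_g,b_g,b_1,b_2}}$, and one checks the remaining generators by hand.

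The main obstacle I expect is the handful of genuinely mixed subcases, above all $s = \SPresS{a_1\wedge y, a_2 \wedge w}$ and $s = \SPresS{a_1\wedge b_1, a_1 \wedge b_1}$ and the "diagonal" generators $\SPresS{a_i\wedge b_i,a_i\wedge b_i}$ with $i\in\{1,2\}$: here $Y_{12}(s)$ is a product of two non-standard symplectic pairs, so one must expand \emph{both} factors, and the bookkeeping of signs coming from the symmetry conventions of \S\ref{section:thetasymmetry}, \S\ref{section:lambdasymmetry} is delicate. The standard trick — used repeatedly in the preceding sections — is to apply $\Phi$, observe that the claimed identity holds in $\Sym^2((\wedge^2 H)/\Q)$ since $\Phi$ takes everything to the same image, and then note that by Lemma \ref{lemma:symspans1} (injectivity of $\Phi$ on $\Span{S_1}$) it suffices to verify the identity \emph{modulo} $\Span{S_1}$; working modulo $\Span{S_1}$ collapses most terms to zero via \S\ref{section:obviousbluesym}, leaving a short residual computation in $\fK_g^s$ that one finishes using a convenient symplectic basis and Lemma \ref{lemma:liftsymr2}. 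I would organize the proof as a case analysis with this "reduce mod $\Span{S_1}$, then expand" strategy applied uniformly, giving full detail only for the two or three hardest subcases and indicating that the rest are analogous, exactly in the style of Lemmas \ref{lemma:presentationsymxs1} and \ref{lemma:omegalambda}.

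\begin{proof}[Plan of proof]
As noted above, one reduces to the case where both pairs of $s$ meet $\{a_1,a_2\}$, normalizes via the $1$-$2$ swap (Lemma \ref{lemma:12swapsym}), enumerates the finitely many shapes of such $s$ compatible with $\fc(x\wedge y,z\wedge w)=0$, and for each applies $Y_{12}$, expands the resulting non-standard symplectic pairs via $\Theta$- and $\Lambda$-expansion, and simplifies using the various linearity and bilinearity lemmas; the residual identities are checked modulo $\Span{S_1}$ using injectivity of $\Phi$ there (Lemma \ref{lemma:symspans1}) together with Lemma \ref{lemma:liftsymr2}.
\end{proof}
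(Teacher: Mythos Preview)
Your plan is correct and matches the paper's approach: reduce via the $Y_{12}$-symmetries (including the $1$--$2$ swap of Lemma~\ref{lemma:12swapsym}) to a short list of cases, then expand each using the $\Theta$- and $\Lambda$-machinery to land in $\Span{S}$. Two small corrections: in this lemma one works modulo $\Span{S}$, not modulo $\Span{S_1}$, so the injectivity-of-$\Phi$ trick from Lemma~\ref{lemma:symspans1} is not what is used here---one simply expands and drops terms already in $\Span{S}$; and the case $s=\SPresS{a_1\wedge a_2,\,a_1\wedge a_2}$ produces $\PresS{(a_1+b_2)\wedge(b_1+a_2),(a_1+b_2)\wedge(b_1+a_2)}$, which is handled by Lemma~\ref{lemma:identifyomega} (yielding an $\Omega$-element in $S_4$) rather than by $\Theta$/$\Lambda$-expansion alone.
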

\begin{proof}
Write $s = \SPresS{x \wedge y,z \wedge w}$.  To avoid having the deal with even more special cases, we relax
the conditions $x \prec y$ and $z \prec w$ to $x \neq y$ and $z \neq w$.

There are a large number of cases to consider: each of $x$ and $y$ and $z$ and $w$ can either lie in
$\{a_1,b_1,a_2,b_2\}$, or they can be some other element of $\cB$.  The condition $\fc(x \wedge y,z \wedge w)=0$ eliminates
some of these, but there are still far too many cases.  To cut this down to something reasonable, we use
the following three symmetries, which we will call the {\em $Y_{12}$-symmetries}:
\begin{itemize}
\item Flipping $x \wedge y$ and $z \wedge w$ does not change $s$.
\item Flipping $x$ and $y$ multiplies $s$ by $-1$, and thus does not change the truth of the lemma.  Similarly, we
can flip $z$ and $w$.
\item Finally, by Lemma \ref{lemma:12swapsym} we can apply a $1$-$2$ swap to $s$ without
changing whether or not it lies in $\Span{S}$.
\end{itemize}
Using these, we will reduce ourselves to six cases as follows.

If $a_1,a_2 \notin \{x,y,z,w\}$, then there is nothing to prove.  Otherwise, after
performing a sequence of $Y_{12}$-symmetries we can assume that $x$ is either $a_1$ or $a_2$.  
Applying a $1$-$2$ swap if necessary, we can assume that $x = a_1$, so $s = \SPresS{a_1 \wedge y,z \wedge w}$.

If $a_2 \notin \{y,z,w\}$, then
in most cases we have $Y_{12}(\SPresS{a_1 \wedge y,z \wedge w}) \in \Span{S_1}$.  For instance, we have (c.f.\ \S \ref{section:obviousbluesym})
\[Y_{12}(\SPresS{a_1 \wedge a_3, a_1 \wedge b_2}) = \SPresS{(a_1+b_2) \wedge a_3,(a_1+b_2) \wedge b_2} \in \Span{S_1}.\]
Up to flipping $z$ and $w$, the only case where $a_2 \notin \{y,z,w\}$ and 
$Y_{12}(\SPresS{a_1 \wedge y,z \wedge w}) \notin \Span{S_1}$ is
$\SPresS{a_1 \wedge b_1,a_1 \wedge b_1}$, which is Case \ref{case:symys1.1} below.

It remains to enumerate the cases where $a_2 \in \{y,z,w\}$.  We start by enumerating
the cases where $y = a_2$, so $s = \SPresS{a_1 \wedge a_2,z \wedge w}$.
If $a_1,a_2 \notin \{z,w\}$, then $Y_{12}(\SPresS{a_1 \wedge a_2,z \wedge w}) \in \Span{S_1}$.  For instance,
\[Y_{12}(\SPresS{a_1 \wedge a_2, a_3 \wedge b_3}) = \SPresS{(a_1+b_2) \wedge (b_1+a_2),a_3 \wedge b_3} \in \Span{S_1}.\]
If instead either $a_1$ or $a_2$ lie in $\{z,w\}$, then up to $Y_{12}$-symmetries (including possibly a $1$-$2$ swap) 
we can assume that $z = a_1$.  The condition
$\fc(a_1 \wedge a_2,a_1 \wedge w) = 0$ implies that $\omega(a_1,w) = \omega(a_2,w) = 0$, so there are two cases:
$s=\SPresS{a_1 \wedge a_2,a_1 \wedge a_2}$, and $s=\SPresS{a_1 \wedge a_2,a_1 \wedge w}$ with $w \in \cB \setminus \{a_1,b_1,a_2,b_2\}$.
These are Cases \ref{case:symys1.2} and \ref{case:symys1.3} below.  This
completes our enumeration of the cases where $y = a_2$.

The remaining cases are $s = \SPresS{a_1 \wedge y,z \wedge w}$ with $y \neq a_2$ but 
$a_2 \in \{z,w\}$.  After possibly flipping $z$ and $w$,
we can assume that $z = a_2$.  In other words, we have reduced ourselves to enumerating
the cases where $s = \SPresS{a_1 \wedge y,a_2 \wedge w}$ with $y \neq a_2$.
Up to $Y_{12}$-symmetries, we have already handled the case where $w = a_1$, so we
can also assume that $w \neq a_1$.  The condition
$\fc(a_1 \wedge y,a_2 \wedge w) = 0$ implies that $y \neq b_2$ and $w \neq b_1$.  There are now four cases:
\begin{itemize}
\item $\SPresS{a_1 \wedge b_1,a_2 \wedge b_2}$, which is Case \ref{case:symys1.4} below.
\item $\SPresS{a_1 \wedge b_1,a_2 \wedge w}$ with $w \in \cB \setminus \{a_1,b_1,a_2,b_2\}$ and
$\SPresS{a_1 \wedge y,a_2 \wedge b_2}$ with $w \in \cB \setminus \{a_1,b_1,a_2,b_2\}$.  These differ
by $Y_{12}$-symmetries, so we only need to deal with the first.  This is Case \ref{case:symys1.5} below.
\item $\SPresS{a_1 \wedge y,a_2 \wedge w}$ with $y,w \in \cB \setminus \{a_1,b_1,a_2,b_2\}$ satisfying
$\omega(y,w) = 0$.  This is Case \ref{case:symys1.6} below.
\end{itemize}
It remains to deal with all these cases.  Let $\equiv$ denote equality modulo $\Span{S}$, so our goal is to 
prove that $Y_{12}(s) \equiv 0$.

\begin{case}{1}
\label{case:symys1.1}
$\SPresS{a_1 \wedge b_1,a_1 \wedge b_1}$.
\end{case}

\noindent
By Lemma \ref{lemma:thetaexpansion1} ($\Theta$-expansion I), the element
$Y_{12}(\SPresS{a_1 \wedge b_1,a_2 \wedge b_1}) = \PresS{(a_1 + b_2) \wedge b_1,(a_1+b_2) \wedge b_1}$ equals
\[\SPresS{a_1 \wedge b_1,a_1 \wedge b_1} + 2 \SThetaS{a_1 \wedge b_1,b_2 \wedge b_1} + \SPresS{b_2 \wedge b_1,b_2 \wedge b_1} \equiv 0.\]

\begin{case}{2}
\label{case:symys1.2}
$\SPresS{a_1 \wedge a_2,a_1 \wedge a_2}$.
\end{case}

\noindent
Lemma \ref{lemma:identifyomega} implies that
$Y_{12}(\SPresS{a_1 \wedge a_2,a_1 \wedge a_2}) = \PresS{(a_1+b_2) \wedge (b_1+a_2),(a_1+b_2) \wedge (b_1+a_2)}$
is equivalent to $-2\SOmegaS{a_1 \wedge a_2,b_1 \wedge b_2} \equiv 0$.

\begin{case}{3}
\label{case:symys1.3}
$\SPresS{a_1 \wedge a_2,a_1 \wedge w}$ with $w \in \cB \setminus \{a_1,b_1,a_2,b_2\}$.
\end{case}

\noindent
Lemma \ref{lemma:thetaexpansion2} ($\Theta$-expansion II) implies that
\[Y_{12}(\SPresS{a_1 \wedge a_2,a_1 \wedge w}) = \PresS{(a_1+b_2) \wedge (b_1+a_2),(a_1+b_2) \wedge w}\]
equals
\[\ThetaS{(a_1+b_2) \wedge b_1,(a_1+b_2) \wedge w} + \ThetaS{(a_1+b_2) \wedge a_2,(a_1+b_2) \wedge w}.\]
Lemma \ref{lemma:lambdaexpansion1} ($\Lambda$-expansion I) implies that
$\ThetaS{(a_1+b_2) \wedge b_1,(a_1+b_2) \wedge w}$ equals
\[\SThetaS{a_1 \wedge b_1,a_1 \wedge w} + \SPresS{a_1 \wedge b_1,b_2 \wedge w} + \SLambdaS{b_2 \wedge b_1,a_1 \wedge w} + \SPresS{b_2 \wedge b_1,b_2 \wedge w} \equiv 0.\]
Similarly, $\ThetaS{(a_1+b_2) \wedge a_2,(a_1+b_2) \wedge w} \equiv 0$.  The case follows.

\begin{case}{4}
\label{case:symys1.4}
$\SPresS{a_1 \wedge b_1,a_2 \wedge b_2}$.
\end{case}

\noindent
In $(\wedge^2 H)/\Q$, we have
\[a_2 \wedge b_2 = -a_1 \wedge b_1 -\sum\nolimits_{i=3}^g a_i \wedge b_i.\]
Plugging this into $\SPresS{a_1 \wedge b_1,a_2 \wedge b_2}$, we see that
\[\SPresS{a_1 \wedge b_1,a_2 \wedge b_2} = -\SPresS{a_1 \wedge b_1,a_1 \wedge b_1} -\sum\nolimits_{i=3}^g \SPresS{a_1 \wedge b_1,a_i \wedge b_i}.\]
It follows that $Y_{12}(\SPresS{a_1 \wedge b_1,a_2 \wedge b_2})$ equals
\[-Y_{12}(\SPresS{a_1 \wedge b_1,a_1 \wedge b_1}) - \sum\nolimits_{i=3}^g Y_{12}(\SPresS{a_1 \wedge b_1,a_i \wedge b_i}).\]
We proved that $Y_{12}(\SPresS{a_1 \wedge b_1,a_1 \wedge b_1}) \equiv 0$ in Case \ref{case:symys1.1}, and for
$3 \leq i \leq g$ we have
\[Y_{12}(\SPresS{a_1 \wedge b_1,a_i \wedge b_i}) = \SPresS{(a_1+b_2) \wedge b_1,a_i \wedge b_i} \equiv 0;\]
see \S \ref{section:obviousbluesym}.  The case follows.

\begin{case}{5}
\label{case:symys1.5}
$\SPresS{a_1 \wedge b_1,a_2 \wedge w}$ with $w \in \cB \setminus \{a_1,b_1,a_2,b_2\}$.
\end{case}

\noindent
To simplify our notation, we will explain how to deal with $w = a_3$.  The other cases are similar.  Lemma \ref{lemma:liftsymr2} ($\Theta$-symplectic basis)
implies that
\[\SPresS{a_1 \wedge b_1,a_2 \wedge a_3} + \SThetaS{a_2 \wedge b_2,a_2 \wedge a_3} + \SThetaS{a_3 \wedge b_3,a_2 \wedge a_3} + \sum\nolimits_{i=4}^g \SPresS{a_i \wedge b_i,a_2 \wedge a_3} = 0.\]
It follows that $Y_{12}(\SPresS{a_1 \wedge b_1,a_2 \wedge a_3})$ equals
\begin{align*}
&-Y_{12}(\SThetaS{a_2 \wedge b_2,a_2 \wedge a_3}) - Y_{12}(\SThetaS{a_3 \wedge b_3,a_2 \wedge a_3}) - \sum\nolimits_{i=4}^g Y_{12}(\SPresS{a_i \wedge b_i,a_2 \wedge a_3}) \\
=&-\ThetaS{(a_2+b_1) \wedge b_2,(a_2+b_1) \wedge a_3} \\
 &- \SThetaS{a_3 \wedge b_3,(a_2+b_1) \wedge a_3} - \sum\nolimits_{i=4}^g \SPresS{a_i \wedge b_i,(a_2+b_1) \wedge a_3} \\
\equiv &-\ThetaS{(a_2+b_1) \wedge b_2,(a_2+b_1) \wedge a_3}.
\end{align*}
The last $\equiv$ uses Lemma \ref{lemma:thetalinear} ($\Theta$-linearity) to show that
\[\SThetaS{a_3 \wedge b_3,(a_2+b_1) \wedge a_3} \in \Span{S_2}.\]
This reduces us to proving
that $\ThetaS{(a_2+b_1) \wedge b_2,(a_2+b_1) \wedge a_3} \equiv 0$.  For this,
Lemma \ref{lemma:lambdaexpansion1} ($\Lambda$-expansion I) implies that
$\ThetaS{(a_2+b_1) \wedge b_2,(a_2+b_1) \wedge a_3}$ equals
\[\SThetaS{a_2 \wedge b_2,a_2 \wedge a_3} + \SPresS{a_2 \wedge b_2,b_1 \wedge a_3} + \SLambdaS{b_1 \wedge b_2,a_2 \wedge a_3} + \SPresS{b_1 \wedge b_2,b_1 \wedge a_3} \equiv 0.\]

\begin{case}{6}
\label{case:symys1.6}
$\SPresS{a_1 \wedge y,a_2 \wedge w}$ with $y,w \in \cB \setminus \{a_1,b_1,a_2,b_2\}$ satisfying $\omega(y,w) = 0$.
\end{case}

\noindent
Lemma \ref{lemma:lambdaexpansion2} ($\Lambda$-expansion II) implies that
$Y_{12}(\SPresS{a_1 \wedge y,a_2 \wedge w}) = \PresS{(a_1+b_2) \wedge y,(b_1+a_2) \wedge w}$ equals
\[\LambdaS{a_1 \wedge y,(b_1+a_2) \wedge w} + \LambdaS{b_2 \wedge y,(b_1+a_2) \wedge w}.\]
Lemma \ref{lemma:lambdabilinear1} ($\Lambda$-bilinearity I) shows that
\[\LambdaS{a_1 \wedge y,(b_1+a_2) \wedge w} = \SLambdaS{a_1 \wedge y,b_1 \wedge w} + \SPresS{a_1 \wedge y,a_2 \wedge w} \equiv 0.\]
Similarly, $\LambdaS{b_2 \wedge y,(b_1+a_2) \wedge w} \equiv 0$.  The case follows.
\end{proof}

\section{Symmetric kernel, symmetric version VII: closure of \texorpdfstring{$S_2$}{S2}}
\label{section:presentationsym7}

We continue using all the notation from \S \ref{section:presentationsym1} -- \S \ref{section:presentationsym5}.
In this section, we continue the proof of Lemma \ref{lemma:presentationsymstep3} by proving that for
$f \in \{X_1,X_1^{-1},Y_{12}\}$ and $s \in S_2$, we have $f(s) \in \Span{S}$.  

\subsection{X-closure}
Recall that $X_1 \in \Sp_{2g}(\Z)$ takes $a_1$ to $a_1+b_1$ and fixes all other generators in $\cB$.  
We start with:

\begin{lemma}
\label{lemma:presentationsymxs2}
Let 
\[s \in S_2 = \Set{$\SThetaS{a_i \wedge b_i,x \wedge b_i}$, $\SThetaS{a_i \wedge b_i,a_i \wedge y}$}{$1 \leq i \leq g$, $x,y \in \cB \setminus \{a_i,b_i\}$}.\]
Then $X_1^{\epsilon}(s) \in \Span{S}$ for $\epsilon \in \{\pm 1\}$.
\end{lemma}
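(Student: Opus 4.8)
The statement to be proved is that for $s \in S_2$ and $\epsilon \in \{\pm 1\}$ we have $X_1^{\epsilon}(s) \in \Span{S}$, where $X_1$ is the elementary symplectic matrix taking $a_1 \mapsto a_1 + b_1$ and fixing all other basis vectors. The plan is to reduce to a short list of cases according to how the index $1$ interacts with the index $i$ of a generator $\SThetaS{a_i \wedge b_i, x \wedge b_i}$ or $\SThetaS{a_i \wedge b_i, a_i \wedge y}$. When $i \neq 1$, the matrix $X_1$ fixes $a_i$ and $b_i$, and acts on $x$ or $y$ either trivially (when $x,y \neq a_1$) or by $a_1 \mapsto a_1+b_1$. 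In the latter case $\Theta$-linearity (Lemma~\ref{lemma:thetalinear}) splits $X_1^{\epsilon}(s)$ into a $\Theta$-part lying in $\Span{S_2}$ plus a term of the form $\SPresS{a_i \wedge b_i, b_1 \wedge b_i}$ or similar, which lies in $\Span{S_1}$ by the ``obvious blue'' criterion of \S\ref{section:obviousbluesym} (since $\{a_i,b_i\}$ and $\{b_1, b_i\}$ need not be orthogonal — wait, $\omega(b_1,b_i)=0$ and $\omega(a_i,b_1)=0$, $\omega(b_i,b_1)=0$, but $\omega(a_i,b_i)=1$, so one must instead expand via the symplectic basis). More carefully, for $i \neq 1$ one should use Lemma~\ref{lemma:thetabilinear1} ($\Theta$-bilinearity I) together with the ``obvious blue'' observation, exactly as in the analogous arguments for $S_1$ in \S\ref{section:presentationsym6}.

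The genuinely new cases are when $i = 1$, so $s$ is $\SThetaS{a_1 \wedge b_1, x \wedge b_1}$ or $\SThetaS{a_1 \wedge b_1, a_1 \wedge y}$ with $x, y \in \cB \setminus \{a_1, b_1\}$. Here $X_1^{\epsilon}$ sends $a_1 \wedge b_1 \mapsto (a_1 + \epsilon b_1) \wedge b_1$, which by $\Theta$-bilinearity II (Lemma~\ref{lemma:thetabilinear2}, the identity $\ThetaS{(a+nb)\wedge b, x \wedge b} = \ThetaS{a \wedge b, x \wedge b}$) equals $a_1 \wedge b_1$ in the relevant sense. Thus for $s = \SThetaS{a_1 \wedge b_1, x \wedge b_1}$ with $x \in \cB \setminus \{a_1,b_1\}$ one gets
\[
X_1^{\epsilon}(\SThetaS{a_1 \wedge b_1, x \wedge b_1}) = \ThetaS{(a_1+\epsilon b_1) \wedge b_1, x \wedge b_1} = \SThetaS{a_1 \wedge b_1, x \wedge b_1} \in \Span{S_2},
\]
using Lemma~\ref{lemma:thetabilinear2}. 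For $s = \SThetaS{a_1 \wedge b_1, a_1 \wedge y}$, the other half of Lemma~\ref{lemma:thetabilinear2} applies: $\ThetaS{(a_1+\epsilon b_1)\wedge b_1, (a_1+\epsilon b_1) \wedge y} = \ThetaS{a_1 \wedge b_1, a_1 \wedge y} + \epsilon \ThetaS{a_1 \wedge b_1, b_1 \wedge y}$, and both terms lie in $\Span{S_2}$ (the second via $\Theta$-symmetry from \S\ref{section:thetasymmetry}). One must also remember that when $x$ or $y$ itself is not in $\cB$ after applying $X_1^{\epsilon}$ — but since $X_1^\epsilon$ fixes every element of $\cB \setminus \{a_1\}$ and $x, y \neq a_1$ here, this does not arise.

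\textbf{Main obstacle.} The bookkeeping obstacle is the subcase $i \neq 1$ with $x = a_1$ (or $y = a_1$): here $X_1^{\epsilon}$ sends $\SThetaS{a_i \wedge b_i, a_1 \wedge b_i}$ to $\ThetaS{a_i \wedge b_i, (a_1 + \epsilon b_1) \wedge b_i}$, which by $\Theta$-linearity equals $\SThetaS{a_i \wedge b_i, a_1 \wedge b_i} + \epsilon \ThetaS{a_i \wedge b_i, b_1 \wedge b_i}$; one must check that $\ThetaS{a_i \wedge b_i, b_1 \wedge b_i}$ lies in $\Span{S}$, which it does — it equals $\SThetaS{a_i \wedge b_i, b_1 \wedge b_i} \in \Span{S_2}$ by $\Theta$-linearity applied to the primitive vector $b_1 \in \Span{a_i,b_i}^\perp$, precisely as in the definition of $\Span{S_2}$. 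The proof is thus entirely a matter of routing each case through the already-established $\Theta$-linearity, $\Theta$-bilinearity~I and~II, and $\Theta$-symmetry lemmas; there is no conceptual difficulty, only the need to enumerate the handful of cases (index $1$ versus index $i$; argument involving $a_1$ versus not) carefully and invoke the right lemma in each. I would present it as a short proof with two or three displayed case computations, mirroring the structure of Lemma~\ref{lemma:presentationsymxs1}.
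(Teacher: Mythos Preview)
Your proposal is correct and follows essentially the same approach as the paper: the three cases ($i=1$ with $x \wedge b_1$, $i=1$ with $a_1 \wedge y$, and $i \geq 2$ with $x$ or $y$ equal to $a_1$) are handled via $\Theta$-bilinearity~II (Lemma~\ref{lemma:thetabilinear2}) for the first two and $\Theta$-linearity (Lemma~\ref{lemma:thetalinear}) for the third, exactly as in the paper. Your ``Main obstacle'' paragraph has the right argument for the $i \neq 1$ case; the earlier detour in your plan through a $\PresS$-term and $\Theta$-bilinearity~I is unnecessary (and the element $\PresS{a_i \wedge b_i, b_1 \wedge b_i}$ you wrote there does not exist, since $\fc(a_i \wedge b_i, b_1 \wedge b_i) = -b_i \Cdot b_1 \neq 0$), but you correctly abandon it.
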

\begin{proof}
The lemma is trivial if $X_1$ fixes $s$.  The remaining cases are as follows.  Let $\equiv$ denote equality modulo
$\Span{S}$, so our goal is to prove that $X_1^{\epsilon}(s) \equiv 0$.

\begin{case}{1}
$s = \SThetaS{a_1 \wedge b_1,x \wedge b_1}$ with $x \in \cB \setminus \{a_1,b_1\}$.
\end{case}

\noindent
Lemma \ref{lemma:thetabilinear2} ($\Theta$-bilinearity II) implies that
\[X_1^{\epsilon}(\SThetaS{a_1 \wedge b_1,x \wedge b_1}) = \ThetaS{(a_1+\epsilon b_1) \wedge b_1,x \wedge b_1} = \SThetaS{a_1 \wedge b_1,x \wedge b_1} \equiv 0.\]

\begin{case}{2}
$s = \SThetaS{a_1 \wedge b_1,a_1 \wedge y}$ with $y \in \cB \setminus \{a_1,b_1\}$.
\end{case}

\noindent
Lemma \ref{lemma:thetabilinear2} ($\Theta$-bilinearity II) implies that
\begin{align*}
X_1^{\epsilon}(\SThetaS{a_1 \wedge b_1,a_1 \wedge y}) &= \ThetaS{(a_1+\epsilon b_1) \wedge b_1,(a_1+\epsilon b_1) \wedge y} \\
                                           &= \SThetaS{a_1 \wedge b_1,a_1 \wedge y} + \epsilon \SThetaS{a_1 \wedge b_1,b_1 \wedge y} \equiv 0.
\end{align*}

\begin{case}{3}
$s = \SThetaS{a_i \wedge b_i,a_1 \wedge b_i}$ or $s = \SThetaS{a_i \wedge b_i,a_i \wedge a_1}$ for some $2 \leq i \leq g$.
\end{case}

\noindent
Both cases are handled identically, so we will give the details for $s = \SThetaS{a_i \wedge b_i,a_1 \wedge b_i}$.
By Lemma \ref{lemma:thetalinear} ($\Theta$-linearity), we have that
$X_1^{\epsilon}(\SThetaS{a_i \wedge b_i,a_1 \wedge b_i}) = \ThetaS{a_i \wedge b_i,(a_1+\epsilon b_1) \wedge b_i}$ equals
\[\SThetaS{a_i \wedge b_i,a_1 \wedge b_i} + \epsilon \SThetaS{a_i \wedge b_i,b_1 \wedge b_i} \equiv 0.\qedhere\]
\end{proof}

\subsection{Y-closure}
Recall that $Y_{12} \in \Sp_{2g}(\Z)$ takes $a_1$ to $a_1+b_2$ and $a_2$ to $a_2+b_1$ and fixes all other generators in $\cB$.
We next prove:

\begin{lemma}
\label{lemma:presentationsymys2}
Let 
\[s \in S_2 = \Set{$\SThetaS{a_i \wedge b_i,x \wedge b_i}$, $\SThetaS{a_i \wedge b_i,a_i \wedge y}$}{$1 \leq i \leq g$, $x,y \in \cB \setminus \{a_i,b_i\}$}.\]
Then $Y_{12}(s) \in \Span{S}$. 
\end{lemma}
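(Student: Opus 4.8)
The plan is to mirror the structure of Lemma \ref{lemma:presentationsymxs2} (the $X_1$-closure of $S_2$): decompose $S_2$ into orbit types under $Y_{12}$ and treat each case using the bilinearity identities for $\Theta$-elements. Recall $Y_{12}$ fixes $a_1 \mapsto a_1+b_2$, $a_2 \mapsto a_2+b_1$, and fixes all other elements of $\cB$; in particular it fixes every $b_i$ and fixes $a_i$ for $i \geq 3$. First I would dispose of the trivial cases: if $s = \SThetaS{a_i \wedge b_i,x \wedge b_i}$ or $\SThetaS{a_i \wedge b_i,a_i \wedge y}$ with $i \geq 3$ and $x,y \in \cB \setminus \{a_1,a_2,a_i,b_i\}$, then $Y_{12}$ fixes all four vectors involved, so $Y_{12}(s)=s \in S_2$. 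The remaining cases are those where $a_1$ or $a_2$ appears somewhere: either $i \in \{1,2\}$, or $x$ or $y$ lies in $\{a_1,a_2\}$.

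Next I would handle the cases where the ``inner'' vector is $a_1$ or $a_2$ but $i \geq 3$, e.g.\ $s = \SThetaS{a_i \wedge b_i, a_1 \wedge b_i}$ with $i \geq 3$: here $Y_{12}(s) = \ThetaS{a_i \wedge b_i,(a_1+b_2) \wedge b_i}$, which by Lemma \ref{lemma:thetalinear} ($\Theta$-linearity) equals $\SThetaS{a_i \wedge b_i,a_1 \wedge b_i} + \SThetaS{a_i \wedge b_i,b_2 \wedge b_i} \in \Span{S_2}$; similarly for $a_i \wedge a_1$, $a_i \wedge a_2$, etc.\ using $\Theta$-linearity in the second slot. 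The genuinely delicate cases are those with $i \in \{1,2\}$. By the $1$-$2$ swap trick (Lemma \ref{lemma:12swapsym} applies verbatim to $\Theta$-elements via their sign rules in \S\ref{section:thetasigns}, since $\sigma$ commutes with $Y_{12}$ and $\SymSp_g$ preserves $\Span{S}$), it suffices to take $i=1$. Then $s$ is one of $\SThetaS{a_1 \wedge b_1, x \wedge b_1}$ or $\SThetaS{a_1 \wedge b_1, a_1 \wedge y}$ with $x,y \in \cB \setminus \{a_1,b_1\}$, and $Y_{12}(s)$ involves $a_1+b_2$ in place of $a_1$. For $\SThetaS{a_1 \wedge b_1, a_1 \wedge y}$ with $y \notin \{a_2,b_2\}$ I would apply Lemma \ref{lemma:lambdaexpansion1} ($\Lambda$-expansion I) to write $\ThetaS{(a_1+b_2) \wedge b_1,(a_1+b_2) \wedge y}$ as a $\Lambda$-element plus $\Theta$- and standard terms, then use Lemma \ref{lemma:lambdabilinear1} ($\Lambda$-bilinearity I) and \S\ref{section:obviousbluesym} to see everything lands in $\Span{S_1,S_2,S_3}$; the subcase $y = a_2$ (giving $(a_1+b_2) \wedge (a_2+b_1)$ type terms) would instead invoke Lemma \ref{lemma:thetaexpansion2} ($\Theta$-expansion II) to split into two $\Theta$-elements, then Lemma \ref{lemma:lambdaexpansion1} again on each. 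The case $\SThetaS{a_1 \wedge b_1, x \wedge b_1}$ with $x = a_2$ expands $Y_{12}(s) = \ThetaS{(a_1+b_2) \wedge b_1,(a_2+b_1) \wedge b_1}$, which by $\Theta$-linearity reduces to $\ThetaS{(a_1+b_2) \wedge b_1, a_2 \wedge b_1}$ plus a blue term, and then $\Theta$-expansion II plus $\Lambda$-bilinearity finishes it; for $x \notin \{a_2,b_2\}$ one gets $\ThetaS{(a_1+b_2) \wedge b_1, x \wedge b_1}$, and Lemma \ref{lemma:lambdaexpansion1} together with $\Lambda$-bilinearity again suffices.

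Throughout, the standard reduction is: since $\Phi$ restricted to $\Span{S_1,S_2,S_3}$ is injective (Lemma \ref{lemma:symspans3}) and $\Phi$ manifestly takes $Y_{12}(s)$ into the image of $\Span{S}$, it is enough to prove each identity modulo $\Span{S}$ (or modulo $\Span{S_1,S_2,S_3}$), which makes the calculations manageable. I expect the main obstacle to be the case $\SThetaS{a_1 \wedge b_1, a_1 \wedge a_2}$ (and its $1$-$2$ swap), where $Y_{12}$ sends both $a_1 \mapsto a_1+b_2$ and $a_2 \mapsto a_2+b_1$, producing $\ThetaS{(a_1+b_2) \wedge b_1, (a_1+b_2) \wedge (a_2+b_1)}$; untangling this requires combining $\Theta$-expansion II, $\Lambda$-expansion I, and $\Lambda$-bilinearity I and II carefully in the right order, exactly the kind of bookkeeping that occupies the bulk of such proofs. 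Once all cases are verified, the lemma follows, completing (together with Lemmas \ref{lemma:presentationsymxs2}, \ref{lemma:presentationsymys2} as used, and the analogous $S_3$, $S_4$ statements) the proof that $\Sp_{2g}(\Z)$ preserves $\Span{S}$.
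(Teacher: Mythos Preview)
Your overall strategy matches the paper's: dispose of $i \geq 3$ via $\Theta$-linearity, use the $1$-$2$ swap to reduce $i=2$ to $i=1$, then split $i=1$ into cases according to whether the inner vector is $a_2$. The easy cases ($x,y \in \cB \setminus \{a_1,b_1,a_2\}$) are handled essentially as you say, though the relevant lemma for $\ThetaS{(a_1+b_2)\wedge b_1,\,x\wedge b_1}$ is $\Theta$-bilinearity~I (Lemma~\ref{lemma:thetabilinear1}), not $\Lambda$-expansion~I.

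The genuine gap is in the two hard cases. For $s=\SThetaS{a_1\wedge b_1,\,a_2\wedge b_1}$ you write that $\Theta$-linearity reduces $\ThetaS{(a_1+b_2)\wedge b_1,\,(a_2+b_1)\wedge b_1}$ to $\ThetaS{(a_1+b_2)\wedge b_1,\,a_2\wedge b_1}$ plus a blue term. But $\Theta$-linearity (Lemma~\ref{lemma:thetalinear}) only lets you split $z_1+z_2$ when both $z_i$ lie in $\Span{a,b}^{\perp}$; here $a=a_1+b_2$, $b=b_1$, and $b_1\notin\Span{a_1+b_2,b_1}^{\perp}$ since $\omega(a_1+b_2,b_1)=1$. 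So this step is illegal (and Remark~\ref{remark:thetaabuse} warns that $(a_2+b_1)\wedge b_1 = a_2\wedge b_1$ in $\wedge^2 H$ does \emph{not} imply equality of the $\Theta$-elements). Similarly, for $s=\SThetaS{a_1\wedge b_1,\,a_1\wedge a_2}$ you propose $\Theta$-expansion~II, but that lemma expresses a $\PresS{\cdot,\cdot}$ as a sum of two $\Theta$-elements, not the other way round; it does not split $\ThetaS{(a_1+b_2)\wedge b_1,\,(a_1+b_2)\wedge(a_2+b_1)}$.

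What the paper does in both hard cases is unwind Definition~\ref{definition:theta} directly. For $x=a_2$, one expands $\ThetaS{(a_1+b_2)\wedge b_1,\,(b_1+a_2)\wedge b_1}$ into three $\PresS{\cdot,\cdot}$ terms; one is blue, another is recognized as $Y_{12}(\SPresS{a_1\wedge b_1,\,a_1\wedge b_1})$ and dispatched via the already-proved Lemma~\ref{lemma:presentationsymys1}, and the remaining term $\PresS{(a_1+b_2+a_2)\wedge b_1,\,(a_1+b_2+a_2)\wedge b_1}$ is handled by $\Theta$-expansion~I followed by $\Lambda$-expansion~III and $\Lambda$-bilinearity~II. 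For $y=a_2$, one first uses $\Theta$-linearity to pass to $\ThetaS{(a_1+b_2)\wedge b_1,\,(a_1+b_2)\wedge(-b_1-a_2)}$ (legal since $-b_1-a_2\in\Span{a_1+b_2,b_1}^{\perp}$), then again unwinds the definition; two of the resulting $\PresS{\cdot,\cdot}$ terms are $Y_{12}$ of $S_1$-elements, and the third is $\PresS{(a_1+b_2)\wedge a_2,\,(a_1+b_2)\wedge a_2}$, which $\Theta$-expansion~I reduces to $\Span{S}$. The key move you are missing is this appeal to the definition of $\Theta$ combined with the already-established $Y_{12}$-closure of $S_1$.
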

\begin{proof}
For $i \geq 3$, Lemma \ref{lemma:thetalinear} ($\Theta$-linearity) implies
that this holds for $s = \SThetaS{a_i \wedge b_i,x \wedge b_i}$ and $s = \SThetaS{a_i \wedge b_i,a_i \wedge y}$ with
$x,y \in \cB \setminus \{a_i,b_i\}$.  For instance,
\begin{align*}
Y_{12}(\SThetaS{a_3 \wedge b_3,a_1 \wedge b_3}) &= \ThetaS{a_3 \wedge b_3,(a_1+b_2) \wedge b_3} \\
                                                &= \SThetaS{a_3 \wedge b_3,a_1 \wedge b_3} + \SThetaS{a_3 \wedge b_3,b_2 \wedge b_3} \in \Span{S_2}.
\end{align*}
The remaining cases are when $i=1$ and $i=2$.  Applying a $1$-$2$ swap as described in Lemma \ref{lemma:12swapsym},
it is enough to deal with the case $i=1$.  We divide this into the following cases.  Let
$\equiv$ denote equality modulo $\Span{S}$, so our goal is to prove that
$Y_{12}(s) \equiv 0$. 

\begin{case}{1}
$s = \SThetaS{a_1 \wedge b_1,x \wedge b_1}$ with $x \in \cB \setminus \{a_1,b_1,a_2\}$.
\end{case}

\noindent
Lemma \ref{lemma:thetabilinear1} ($\Theta$-bilinearity I) implies that $Y_{12}(\SThetaS{a_1 \wedge b_1,x \wedge b_1}) = \ThetaS{(a_1+b_2) \wedge b_1,x \wedge b_1})$
equals
\[\SThetaS{a_1 \wedge b_1,x \wedge b_1} + \SPresS{b_2 \wedge b_1,x \wedge b_1} \equiv 0.\]

\begin{case}{2}
$s = \SThetaS{a_1 \wedge b_1,a_1 \wedge y}$ with $y \in \cB \setminus \{a_1,b_1,a_2\}$.
\end{case}

\noindent
By Lemma \ref{lemma:lambdaexpansion1} ($\Lambda$-expansion I), we have that
\[Y_{12}(\SThetaS{a_1 \wedge b_1,a_1 \wedge y}) = \ThetaS{(a_1+b_2) \wedge b_1,(a_1+b_2) \wedge y}\]
equals
\[\SThetaS{a_1 \wedge b_1,a_1 \wedge y} + \SPresS{a_1 \wedge b_1,b_2 \wedge y} + \SLambdaS{b_2 \wedge b_1,a_1 \wedge y} + \SPresS{b_2 \wedge b_1,b_2 \wedge y} \equiv 0.\]

\begin{case}{3}
$s = \SThetaS{a_1 \wedge b_1,a_2 \wedge b_1}$.
\end{case}

\noindent
By the definition of $\Theta$-elements (Definition \ref{definition:theta}), we have that
\[Y_{12}(\SThetaS{a_1 \wedge b_1,a_2 \wedge b_1}) = \ThetaS{(a_1+b_2) \wedge b_1,(b_1+a_2) \wedge b_1}\]
equals $1/2$ times
\begin{align*}
&\PresS{(a_1+b_2+b_1+a_2) \wedge b_1,(a_1+b_2+b_1+a_2) \wedge b_1} - \PresS{(a_1+b_2) \wedge b_1,(a_1+b_2) \wedge b_1} \\
&- \SPresS{(b_1+a_2) \wedge b_1,(b_1+a_2) \wedge b_1} \\
\equiv&\PresS{(a_1+b_2+a_2) \wedge b_1,(a_1+b_2+a_2) \wedge b_1} - Y_{12}(\SPresS{a_1 \wedge b_1,a_1 \wedge b_1}) \\
\equiv&\PresS{(a_1+b_2+a_2) \wedge b_1,(a_1+b_2+a_2) \wedge b_1}.
\end{align*}
The last $\equiv$ uses the fact that we have already proved that $Y_{12}(t) \equiv 0$ for
$t \in S_1$ (Lemma \ref{lemma:presentationsymys1}).
Lemma \ref{lemma:thetaexpansion1} ($\Theta$-expansion I) along with Lemma \ref{lemma:thetalinear} ($\Theta$-linearity)
implies that this equals
\begin{align*}
&\SPresS{a_1 \wedge b_1,a_1 \wedge b_1} + 2\ThetaS{a_1 \wedge b_1,(b_2+a_2) \wedge b_1} + \PresS{(b_2+a_2) \wedge b_1,(b_2+a_2) \wedge b_1} \\
\equiv& 2\SThetaS{a_1 \wedge b_1,b_2 \wedge b_1} + 2\SThetaS{a_1 \wedge b_1,a_2 \wedge b_1} + \PresS{(b_2+a_2) \wedge b_1,(b_2+a_2) \wedge b_1} \\
\equiv& \PresS{(a_2+b_2) \wedge b_1,(a_2+b_2) \wedge b_1}.
\end{align*}
Applying Lemma \ref{lemma:lambdaexpansion3} ($\Lambda$-expansion III) and then
Lemma \ref{lemma:lambdabilinear2} ($\Lambda$-bilinearity II), this equals
\begin{align*}
&\LambdaS{a_2 \wedge b_1,(a_2+b_2) \wedge b_1} + \LambdaS{b_2 \wedge b_1,(a_2+b_2) \wedge b_1} \\
=&\SPresS{a_2 \wedge b_1,a_2 \wedge b_1} + \SLambdaS{a_2 \wedge b_1,b_2 \wedge b_1} + \SLambdaS{b_2 \wedge b_1,a_2 \wedge b_1} + \SPresS{b_2 \wedge b_1,b_2 \wedge b_1} \equiv 0.
\end{align*}

\begin{case}{4}
$s = \SThetaS{a_1 \wedge b_1,a_1 \wedge a_2}$.
\end{case}

\noindent
We have
\[Y_{12}(\SThetaS{a_1 \wedge b_1,a_1 \wedge a_2}) = \ThetaS{(a_1+b_2) \wedge b_1,(a_1+b_2) \wedge (b_1+a_2)}.\]
By Lemma \ref{lemma:thetalinear} ($\Theta$-linearity), it is enough to prove that
\[\ThetaS{(a_1+b_2) \wedge b_1,(a_1+b_2) \wedge (-b_1-a_2)} \equiv 0.\]
By the definition of $\Theta$-elements (Definition \ref{definition:theta}), this equals
\begin{align*}
&\PresS{(a_1+b_2) \wedge (-a_2),(a_1+b_2) \wedge (-a_2)} - \PresS{(a_1+b_2) \wedge b_1,(a_1+b_2) \wedge b_1} \\
&- \PresS{(a_1+b_2) \wedge (-b_1-a_2),(a_1+b_2) \wedge (-b_1-a_2)} \\
=&\PresS{(a_1+b_2) \wedge a_2,(a_1+b_2) \wedge a_2} - Y_{12}(\SPresS{a_1 \wedge b_1,a_1 \wedge b_1}) - Y_{12}(\SPresS{a_1 \wedge a_2,a_1 \wedge b_2}) \\
\equiv &\PresS{(a_1+b_2) \wedge a_2,(a_1+b_2) \wedge a_2}.
\end{align*}
The last $\equiv$ uses the fact that we have already proved that $Y_{12}(t) \equiv 0$ for
$t \in S_1$ (Lemma \ref{lemma:presentationsymys1}).
Lemma \ref{lemma:thetaexpansion1} ($\Theta$-expansion I) says that this equals
\[\SPresS{a_1 \wedge a_2,a_1 \wedge a_2} + 2\SThetaS{a_1 \wedge a_2,b_2 \wedge a_2} + \SPresS{b_2 \wedge a_2,b_2 \wedge a_2} \equiv 0.\qedhere\]
\end{proof}

\section{Symmetric kernel, symmetric version VIII: closure of \texorpdfstring{$S_3$}{S3}}
\label{section:presentationsym8}

We continue using all the notation from \S \ref{section:presentationsym1} -- \S \ref{section:presentationsym5}.
In this section, we continue the proof of Lemma \ref{lemma:presentationsymstep3} by proving that for
$f \in \{X_1,X_1^{-1},Y_{12}\}$ and $s \in S_3$, we have $f(s) \in \Span{S}$.

\subsection{More general Lambda-elements}

Recall that
\[S_3 = \Set{$\SLambdaS{a_i \wedge y,x \wedge b_i}$}{$1 \leq i \leq g$, $x,y \in \cB \setminus \{a_i,b_i\}$, $\omega(x,y)=0$}.\]
Before we prove our main results, we prove:

\begin{lemma}
\label{lemma:alllambda}
Let $1 \leq i \leq g$ and let $x,y \in \Span{a_i,b_i}^{\perp}$ satisfy $\omega(x,y) = 0$.  Then
$\LambdaS{a_i \wedge y,x \wedge b_i} \in \Span{S}$.
\end{lemma}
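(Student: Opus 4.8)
The goal is to show that $\LambdaS{a_i \wedge y, x \wedge b_i} \in \Span{S}$ for \emph{arbitrary} $x,y \in \Span{a_i,b_i}^{\perp}$ with $\omega(x,y)=0$, not just for $x,y \in \cB \setminus \{a_i,b_i\}$ as in the definition of $S_3$. The natural approach is to reduce the general case to the $S_3$-case via the bilinearity and sign relations for $\Lambda$-elements established in \S\ref{section:presentationsym3}, combined with the action of $\SymSp_g$ (which by the end of \S\ref{section:presentationsym5} is already known to preserve $\Span{S}$). I would first fix $i$; by applying an element of $\SymSp_g$ we may assume $i = 1$, so we are working with $\LambdaS{a_1 \wedge y, x \wedge b_1}$ for $x,y \in \Span{a_1,b_1}^{\perp}$ with $\omega(x,y)=0$.

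\textbf{Key steps.} The main tool is Lemma \ref{lemma:lambdalinear2} (strong $\Lambda$-linearity), which says that $\Phi$ restricts to an isomorphism from $\fK_g^{s,\Lambda}[a_1 \wedge -, - \wedge b_1]$ onto $\cZ(\Span{a_1,b_1}^{\perp}_{\Q})$. Combined with the presentation of $\cZ(\Span{a_1,b_1}^{\perp}_{\Q})$ from Theorem \ref{theorem:spkernel}, this means that $\LambdaS{a_1 \wedge y,x \wedge b_1}$ is \emph{bilinear} in $(y,x)$ in the strongest possible sense: any linear-algebra identity among the tensors $y \otimes x$ in the kernel of the symplectic pairing lifts to an identity among the corresponding $\Lambda$-elements. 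Therefore it suffices to write $y \otimes x$ (thought of as an element of $\cZ(\Span{a_1,b_1}^{\perp}_{\Q})$) as a $\Q$-linear combination of elements $y' \otimes x'$ with $y', x' \in \cB \setminus \{a_1,b_1\}$ and $\omega(x',y')=0$ — precisely the tensors corresponding to elements of $S_3$. Concretely: expand $x = \sum \lambda_j z_j$ and $y = \sum \mu_k w_k$ in the basis $\cB \setminus \{a_1,b_1\}$ of $\Span{a_1,b_1}^{\perp}_{\Q}$, so that $y \otimes x = \sum_{j,k} \mu_k \lambda_j\, w_k \otimes z_j$; the ``off-diagonal'' terms with $\omega(z_j,w_k)=0$ are already of the desired form, and the ``diagonal'' terms (where $w_k \otimes z_j$ pairs nontrivially under $\omega$, e.g.\ $z_j = a_m$, $w_k = b_m$) must be dealt with. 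Since $\omega(x,y)=0$ the diagonal contributions cancel in the pairing, i.e.\ $\sum_m (\text{coefficient of } a_m \otimes b_m) - (\text{coefficient of } b_m \otimes a_m) = 0$, and by Theorem \ref{theorem:spkernel} the relations $a_m \otimes b_m - a_n \otimes b_n \in \cZ$ (and similar) can be rewritten using elements like $(a_m - b_n)\otimes(b_m - a_n)$, which after expanding consist of off-diagonal terms plus $a_m \otimes b_m + b_n \otimes a_n$ — exactly the trick used repeatedly in \S\ref{section:presentationalt} (e.g.\ in the proofs of Lemmas \ref{lemma:s12twoa1weak}, \ref{lemma:basicgen}) and visible in the footnote of Lemma \ref{lemma:s3symmetricalt.1}. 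Applying strong $\Lambda$-linearity to this rewriting of $y \otimes x$ then expresses $\LambdaS{a_1 \wedge y, x \wedge b_1}$ as a linear combination of elements $\SLambdaS{a_1 \wedge w', z' \wedge b_1} \in S_3$, possibly after using the $\Lambda$-sign conventions of \S\ref{section:lambdasigns} to normalize orderings.

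\textbf{Main obstacle.} The delicate point is verifying that every $\Lambda$-element appearing in the expansion is actually a \emph{legal} one, i.e.\ that whenever I write $\LambdaS{a_1 \wedge w', z' \wedge b_1}$ the vectors $z', w'$ genuinely lie in $\Span{a_1,b_1}^{\perp}$ and satisfy $\omega(z',w')=0$, so that the element is defined and lies in $\fK_g^{s,\Lambda}[a_1 \wedge -, - \wedge b_1]$; this is automatic for the single-basis-vector terms but needs a moment's care for the combined terms like $(a_m - b_n)\wedge\text{something}$. A secondary subtlety is that the rewriting of $y \otimes x$ into off-diagonal form relies on $\omega(x,y)=0$ in an essential way (otherwise $y \otimes x \notin \cZ$ and the strong-linearity isomorphism does not even see it), so I must be careful to keep all partial sums inside $\cZ(\Span{a_1,b_1}^{\perp}_{\Q})$ throughout — or equivalently, only ever manipulate differences of tensors that lie in $\cZ$. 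Once the bookkeeping is set up correctly, the argument is a direct application of Theorem \ref{theorem:spkernel} and Lemma \ref{lemma:lambdalinear2}, with no new hard input. I would present it by first stating the reduction to $i=1$, then invoking strong $\Lambda$-linearity to reduce to a statement about $\cZ(\Span{a_1,b_1}^{\perp}_{\Q})$, and finally citing the standard $\Z$-module computation (``$\cZ$ of a symplectic lattice is spanned by the $S_3$-type tensors'') that was already used in Part \ref{part:alt}.
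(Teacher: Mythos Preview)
There is a genuine gap. Your reduction via strong $\Lambda$-linearity (Lemma \ref{lemma:lambdalinear2}) is correct: it does suffice to show that every element of $\cZ(\Span{a_1,b_1}^{\perp}_{\Q})$ is a $\Q$-linear combination of tensors $y'\otimes x'$ whose corresponding $\Lambda$-elements are already known to lie in $\Span{S}$. The problem is your claim that the ``off-diagonal'' basis tensors $\{y'\otimes x' : y',x'\in\cB\setminus\{a_1,b_1\},\ \omega(x',y')=0\}$ span $\cZ(\Span{a_1,b_1}^{\perp}_{\Q})$. They do not: their span has codimension $2(g-1)-1$ in $\cZ$, since none of the elements $a_m\otimes b_m - a_n\otimes b_n$ or $a_m\otimes b_m + b_n\otimes a_n$ (all of which lie in $\cZ$) can be written in terms of off-diagonal basis tensors alone. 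Your proposed fix using $(a_m-b_n)\otimes(b_m-a_n)$ doesn't help: this tensor does satisfy $\omega=0$, so the $\Lambda$-element $\LambdaS{a_1\wedge(a_m-b_n),(b_m-a_n)\wedge b_1}$ exists, but its entries are not single basis vectors, so it is not in $S_3$; and you cannot expand it bilinearly via Lemma \ref{lemma:lambdalinear} because the individual summands (e.g.\ $\LambdaS{a_1\wedge a_m,b_m\wedge b_1}$) are not defined. In fact one can check directly that $\Phi$ applied to this $\Lambda$-element has a nonzero component in $\Span{T_4}$, so by Lemma \ref{lemma:symspans3} it cannot lie in $\Span{S_1,S_2,S_3}$ at all.

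The paper's proof supplies exactly the missing ingredient: it shows that $\fK_g^{s,\Lambda}[a_i\wedge -,-\wedge b_i]$ is spanned by the $S_3$-type $\Lambda$-elements \emph{together with} the elements $\LambdaS{a_i\wedge(a_j+a_k),(b_j-b_k)\wedge b_i}$ and $\LambdaS{a_i\wedge(a_j+b_k),(b_j+a_k)\wedge b_i}$ for distinct $j,k\neq i$, and then invokes Lemma \ref{lemma:omegalambda} ($\Lambda$ to $\Omega$) to express each of these as a combination of $S_3$-elements and $\Omega$-elements in $S_4$. So the result genuinely requires $S_4$; your argument, which tries to stay entirely within $S_3$, cannot succeed.
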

\begin{proof}
Recall that $\fK_g^{s,\Lambda}[a_i \wedge -,- \wedge b_i]$ is the subspace of $\fK_g^s$ spanned by $\Lambda$-elements
as in the statement of the lemma.  It follows from Lemma \ref{lemma:lambdalinear2} (strong $\Lambda$-linearity) that 
$\fK_g^{s,\Lambda}[a_i \wedge -,- \wedge b_i]$ is spanned by three kinds of elements:
\begin{itemize}
\item Elements of the form $\SLambdaS{a_i \wedge y,x \wedge b_i}$ with $x,y \in \cB \setminus \{a_i,b_i\}$ satisfying
$\omega(x,y)=0$.  These are elements of $S_3$.
\item Elements of the form $\LambdaS{a_i \wedge (a_j + a_k),(b_j - b_k) \wedge b_i}$ for distinct $1 \leq j,k \leq g$
with $j,k \neq i$.  By Lemma \ref{lemma:omegalambda} ($\Lambda$ to $\Omega$),
these equal
\begin{align*}
&\SOmegaS{a_i \wedge a_j,b_j \wedge a_i} - \SOmegaS{a_i \wedge a_k,b_k \wedge b_i} + \SLambdaS{a_i \wedge a_k,b_j \wedge b_i} - \SLambdaS{a_i \wedge a_j,b_k \wedge b_k} \in \Span{S}.
\end{align*}
\item Elements of the form $\LambdaS{a_i \wedge (a_j + b_k),(b_j + a_k) \wedge b_i}$ for distinct $1 \leq j,k \leq g$  
with $j,k \neq i$.  Again, Lemma \ref{lemma:omegalambda} ($\Lambda$ to $\Omega$) implies that these lie in $\Span{S}$.\qedhere
\end{itemize}
\end{proof}

\subsection{X-closure}
Recall that $X_1 \in \Sp_{2g}(\Z)$ takes $a_1$ to $a_1+b_1$ and fixes all other generators in $\cB$.  
We now prove:

\begin{lemma}
\label{lemma:presentationsymxs3}
Let 
\[s \in S_3 = \Set{$\SLambdaS{a_i \wedge y,x \wedge b_i}$}{$1 \leq i \leq g$, $x,y \in \cB \setminus \{a_i,b_i\}$, $\omega(x,y)=0$}.\]
Then $X_1^{\epsilon}(s) \in \Span{S}$ for $\epsilon \in \{\pm 1\}$.
\end{lemma}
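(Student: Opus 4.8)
The plan is to treat $s = \SLambdaS{a_i \wedge y, x \wedge b_i}$ according to whether $i = 1$ or $i \neq 1$, recalling that $X_1^{\epsilon}$ sends $a_1 \mapsto a_1 + \epsilon b_1$ and fixes every other element of $\cB$.

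I would first dispose of the case $i \neq 1$ in one stroke. Here $X_1^{\epsilon}$ fixes $a_i$ and $b_i$, and it carries the subspace $\Span{a_i,b_i}_{\Q}^{\perp}$ into itself, since that subspace is the $\Q$-span of $\cB \setminus \{a_i,b_i\}$, a set containing both $a_1$ and $b_1$. Using the $\Sp_{2g}(\Z)$-equivariance of the $\Theta$- and $\Lambda$-constructions (immediate from Definitions \ref{definition:theta} and \ref{definition:lambda}, and used freely elsewhere in this part), I obtain $X_1^{\epsilon}(s) = \LambdaS{a_i \wedge X_1^{\epsilon}(y), X_1^{\epsilon}(x) \wedge b_i}$, where $X_1^{\epsilon}(x), X_1^{\epsilon}(y) \in \Span{a_i,b_i}_{\Q}^{\perp}$ still satisfy $\omega(X_1^{\epsilon}(x), X_1^{\epsilon}(y)) = \omega(x,y) = 0$. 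Lemma \ref{lemma:alllambda} then places $X_1^{\epsilon}(s)$ in $\Span{S}$ directly.

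For the case $i = 1$, the entries $x,y \in \cB \setminus \{a_1,b_1\}$ as well as $b_1$ are all fixed by $X_1^{\epsilon}$, so $X_1^{\epsilon}(s) = \LambdaS{(a_1 + \epsilon b_1) \wedge y, x \wedge b_1}$. Since $x, y \in \Span{a_1,b_1}_{\Q}^{\perp}$ and $\omega(x,y) = 0$, I would apply Lemma \ref{lemma:lambdabilinear2} ($\Lambda$-bilinearity II) with $n = \epsilon$ to rewrite this as $\SLambdaS{a_1 \wedge y, x \wedge b_1} + \epsilon \PresS{b_1 \wedge y, x \wedge b_1}$. The first summand is an element of $S_3$, and the second lies in $\Span{S_1}$ by the criterion of \S \ref{section:obviousbluesym}, taking $\cB_1 = \{b_1, y\}$ and $\cB_2 = \{x, b_1\}$ and noting that $\omega$ vanishes between them (using $x, y \neq a_1$ together with $\omega(x,y) = 0$); thus $X_1^{\epsilon}(s) \in \Span{S}$.

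There is very little hard content here. The one place requiring a moment's thought is the $i = 1$ case, where the naive move of peeling off $\epsilon b_1$ via $\Lambda$-bilinearity I fails because $b_1$ is not orthogonal to $\Span{a_1,b_1}$, so one must instead invoke $\Lambda$-bilinearity II, which is exactly tailored to adding a multiple of $b$ to $a$ in a symplectic pair $a \wedge b$. For $i \neq 1$ the only care needed is the observation that $X_1^{\epsilon}$ preserves $\Span{a_i,b_i}_{\Q}^{\perp}$, after which Lemma \ref{lemma:alllambda} does all the work and there is no need to enumerate which of $x, y$ happens to equal $a_1$.
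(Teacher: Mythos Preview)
Your proof is correct and matches the paper's argument essentially line for line: the paper also splits into the cases $i=1$ (handled via Lemma \ref{lemma:lambdabilinear2}) and $i \geq 2$ (handled via Lemma \ref{lemma:alllambda}), with the same identifications of the resulting terms in $\Span{S_3}$ and $\Span{S_1}$. Your remark about why $\Lambda$-bilinearity II rather than I is needed is a helpful gloss not spelled out in the paper.
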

\begin{proof}
There are two cases.  Let $\equiv$ denote equality modulo $\Span{S}$, so our goal is to prove that $X_1^{\epsilon}(s) \equiv 0$.

\begin{case}{1}
$s = \SLambdaS{a_1 \wedge y,x \wedge b_1}$ with $x,y \in \cB \setminus \{a_1,b_1\}$ satisfying $\omega(x,y) = 0$.
\end{case}

\noindent
Lemma \ref{lemma:lambdabilinear2} ($\Lambda$-bilinearity II) says that $X_1^{\epsilon}(\SLambdaS{a_1 \wedge y,x \wedge b_1}) = \LambdaS{(a_1+\epsilon b_1) \wedge y,x \wedge b_1}$
equals
\[\SLambdaS{a_1 \wedge y,x \wedge b_1} + \epsilon \SPresS{b_1 \wedge y,x \wedge b_1} \equiv 0.\]

\begin{case}{2}
$s = \SLambdaS{a_i \wedge y,x \wedge b_i}$ with $x,y \in \cB \setminus \{a_i,b_i\}$ satisfying $\omega(x,y) = 0$.
\end{case}

\noindent
Lemma \ref{lemma:alllambda} implies that
\[X_1^{\epsilon}(\SLambdaS{a_i \wedge y,x \wedge b_i}) = \LambdaS{a_i \wedge X_1^{\epsilon}(y),X_1^{\epsilon}(x) \wedge b_i} \in \Span{S}.\qedhere\]
\end{proof}

\subsection{Y-closure}
Recall that $Y_{12} \in \Sp_{2g}(\Z)$ takes $a_1$ to $a_1+b_2$ and $a_2$ to $a_2+b_1$ and fixes all other generators in $\cB$.
We next prove:

\begin{lemma}
\label{lemma:presentationsymys3}
Let 
\[s \in S_3 = \Set{$\SLambdaS{a_i \wedge y,x \wedge b_i}$}{$1 \leq i \leq g$, $x,y \in \cB \setminus \{a_i,b_i\}$, $\omega(x,y)=0$}.\]
Then $Y_{12}(s) \in \Span{S}$. 
\end{lemma}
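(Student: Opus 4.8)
The plan is to prove Lemma \ref{lemma:presentationsymys3} following exactly the same pattern as the $X_1$-closure result (Lemma \ref{lemma:presentationsymxs3}) and the $S_1$- and $S_2$-closure results for $Y_{12}$ (Lemmas \ref{lemma:presentationsymys1} and \ref{lemma:presentationsymys2}). Write $s = \SLambdaS{a_i \wedge y, x \wedge b_i}$ with $x,y \in \cB \setminus \{a_i,b_i\}$ satisfying $\omega(x,y) = 0$, and let $\equiv$ denote equality modulo $\Span{S}$; the goal is to show $Y_{12}(s) \equiv 0$. Recall that $Y_{12}$ takes $a_1 \mapsto a_1 + b_2$, $a_2 \mapsto a_2 + b_1$, and fixes every other element of $\cB$; in particular it fixes all $b_j$ and all $a_j,b_j$ with $j \geq 3$. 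As always, I would first dispose of the trivial case: if $i \geq 3$ and $x,y \in \Span{a_i,b_i}^{\perp} \cap \cB \setminus \{a_1,a_2\}$, then $Y_{12}$ fixes $x$, $y$, $a_i$, $b_i$ and hence fixes $s$. The genuine work concerns the indices where $Y_{12}$ acts nontrivially on one of the four slots $a_i$, $y$, $x$, $b_i$. Using the $1$-$2$ swap $\sigma$ from Lemma \ref{lemma:12swapsym} (which commutes with $Y_{12}$ and preserves $\Span{S}$), and the $\Lambda$-symmetry conventions of \S \ref{section:lambdasymmetry}, I would reduce to a manageable list of cases organized by whether $i \in \{1,2\}$ or $i \geq 3$, and, when $i \geq 3$, by which of $x,y$ lies in $\{a_1,a_2\}$.

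The key observation that makes most cases routine is Lemma \ref{lemma:alllambda}: for \emph{any} $i$ and \emph{any} $x,y \in \Span{a_i,b_i}^{\perp}$ with $\omega(x,y)=0$, we have $\LambdaS{a_i \wedge y, x \wedge b_i} \in \Span{S}$. So whenever $Y_{12}$ fixes $a_i$ and $b_i$ (i.e.\ $i \geq 3$), we get $Y_{12}(s) = \LambdaS{a_i \wedge Y_{12}(y), Y_{12}(x) \wedge b_i}$, which still has the form $\LambdaS{a_i \wedge y', x' \wedge b_i}$ with $x',y' \in \Span{a_i,b_i}^{\perp}$ and $\omega(x',y') = \omega(x,y) = 0$ (since $Y_{12} \in \Sp_{2g}(\Z)$), hence lies in $\Span{S}$ by Lemma \ref{lemma:alllambda}. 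This handles all $i \geq 3$ outright. For $i \in \{1,2\}$, applying $\sigma$ reduces to $i = 1$, so $a_i = a_1$, $b_i = b_1$, and we must analyze $Y_{12}(\SLambdaS{a_1 \wedge y, x \wedge b_1}) = \LambdaS{(a_1+b_2) \wedge Y_{12}(y), Y_{12}(x) \wedge b_1}$, where $x,y \in \Span{a_1,b_1}^{\perp}$ so in particular $y,x \neq b_1$ and $y,x$ may or may not be $a_2$ or $b_2$. Since $x$ and $y$ are orthogonal to $a_1$ and $b_1$ and to each other, the relevant subcases are: (a) $x,y \in \cB \setminus \{a_1,b_1,a_2,b_2\}$, so $Y_{12}$ fixes them; (b) one of $x,y$ equals $b_2$ (then $Y_{12}$ still fixes it but the first slot $(a_1+b_2)$ interacts with it); (c) one of $x,y$ equals $a_2$, so $Y_{12}$ sends it to $a_2+b_1$. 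In each, I expand $(a_1+b_2)$ and any shifted slot using Lemma \ref{lemma:lambdabilinear1} ($\Lambda$-bilinearity I) and Lemma \ref{lemma:lambdabilinear2} ($\Lambda$-bilinearity II), writing the result as a $\Lambda$-element attached to $a_1 \wedge b_1$ plus standard generators; then I identify the $\Lambda$-piece as an element of $\Span{S_3}$ (via Lemma \ref{lemma:alllambda} if needed) and the standard-generator pieces as elements of $\Span{S_1}$ using the ``obvious blue'' criterion of \S \ref{section:obviousbluesym}. Where a $\Theta$-term appears (e.g.\ after invoking $\Lambda$-expansion identities) I would clear it with $\Theta$-linearity (Lemma \ref{lemma:thetalinear}) or $\Theta$-bilinearity (Lemmas \ref{lemma:thetabilinear1}, \ref{lemma:thetabilinear2}), exactly as in the $S_2$-closure proof.

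The one place that requires a little more care — and which I expect to be the main obstacle — is the case $i=1$ with $a_2 \in \{x,y\}$, say $y = a_2$ (the case $x = a_2$ being symmetric up to $\Lambda$-symmetry), since then \emph{two} slots get shifted: $Y_{12}(s) = \LambdaS{(a_1+b_2) \wedge (a_2+b_1), Y_{12}(x) \wedge b_1}$. Here I would first apply $\Lambda$-linearity in the $(a_2+b_1)$ slot to split off a $\LambdaS{(a_1+b_2) \wedge b_1, \cdot}$ term, which vanishes (it has $b_1$ in both the $a_1+b_2$-orthogonal-complement slot and the $b_1$-slot, making it blue after expanding $a_1+b_2$ via Lemma \ref{lemma:lambdabilinear1}), leaving $\LambdaS{(a_1+b_2) \wedge a_2, Y_{12}(x) \wedge b_1}$; then apply Lemma \ref{lemma:lambdabilinear1} to expand $(a_1+b_2)$ as $a_1 + b_2$ and reduce to $\LambdaS{a_1 \wedge a_2, Y_{12}(x) \wedge b_1}$ plus a $\PresS{b_2 \wedge a_2, Y_{12}(x) \wedge b_1}$ term, with the latter handled by \S \ref{section:obviousbluesym} (or by appeal to Lemma \ref{lemma:alllambda} if $x$ was itself shifted). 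A subtlety is that $Y_{12}(x)$ need not be a single basis element when $x \in \{a_1, a_2\}$, but $x \in \Span{a_1,b_1}^{\perp}$ already rules out $x = a_1$, and if $x = a_2$ then $y \neq a_2$ forces $y$ into the ``clean'' subcase, so at most one of $x,y$ is $a_2$ and the shifts never compound beyond what the bilinearity lemmas absorb. I would double-check that the orthogonality hypothesis $\omega(x,y)=0$ is preserved under all rewritings (it is, since $Y_{12}$ is symplectic and the residual decompositions respect $\omega$), so that every $\Lambda$-element produced genuinely exists and lies in $\Span{S}$. Assembling these cases completes the proof that $Y_{12}$ takes $S_3$ into $\Span{S}$, which together with Lemmas \ref{lemma:presentationsymxs3}, \ref{lemma:presentationsymys1}, \ref{lemma:presentationsymys2} and the forthcoming $S_4$ case finishes Lemma \ref{lemma:presentationsymstep3} and hence Theorem \ref{maintheorem:presentationsym}.
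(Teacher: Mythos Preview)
Your overall plan matches the paper's: dispose of $i \geq 3$ via Lemma \ref{lemma:alllambda}, reduce $i=2$ to $i=1$ by the $1$-$2$ swap, and for $i=1$ with $x,y \neq a_2$ use $\Lambda$-bilinearity I. These are exactly the paper's Cases 1 and 2 and your treatment of them is correct.

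The gap is in your subcase (c). First, your assertion that ``at most one of $x,y$ is $a_2$'' is false: $\omega(a_2,a_2)=0$, so $\SLambdaS{a_1 \wedge a_2, a_2 \wedge b_1} \in S_3$, and this (the paper's Case 5) is the hardest case, which you have simply omitted. Second, and more seriously, your proposed split of $\LambdaS{(a_1+b_2) \wedge (a_2+b_1), x \wedge b_1}$ via $\Lambda$-linearity in the $(a_2+b_1)$-slot is invalid. The underlying symplectic pair is $(a_1+b_2) \wedge b_1$, and Lemma \ref{lemma:lambdalinear} only permits decomposing the $y$-slot into summands lying in $\Span{a_1+b_2,b_1}^\perp$. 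Since $\omega(a_1+b_2,b_1)=1$, the summand $b_1$ does not lie there, so the putative term $\LambdaS{(a_1+b_2) \wedge b_1, x \wedge b_1}$ is not a $\Lambda$-element at all and the split is meaningless. The same obstruction blocks the analogous move when $x=a_2$.

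The paper's fix for its Cases 3--5 is to abandon the bilinearity lemmas at this point and return to Definition \ref{definition:lambda}. One first negates the shifted slot via $\Lambda$-linearity so that, e.g., $y = -(b_1+a_2)$ and hence $b_1 + y = -a_2$ collapses; then unwinding $\LambdaIS$ (or $\LambdaIIS$ when $x=a_2$) from the definition produces $\Theta$- and $\PresS$-terms. Each such term is then either directly in $\Span{S}$ or can be recognized as $Y_{12}$ applied to an element of $S_1 \cup S_2$, which lies in $\Span{S}$ by the already-proved Lemmas \ref{lemma:presentationsymys1} and \ref{lemma:presentationsymys2}. Case 5 needs an additional step, rewriting a remaining piece as $X_1^2$ applied to an element of $\Span{S}$ and invoking Lemma \ref{lemma:presentationsymxs1}.
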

\begin{proof}
Recall that $Y_{12} \in \Sp_{2g}(\Z)$ takes $a_1$ to $a_1+b_2$ and $a_2$ to $a_2+b_1$ and fixes all other generators in $\cB$.
Write $s = \SLambdaS{a_i \wedge y,x \wedge b_i}$.  The proof is different when $i=1$, when $i=2$, and when $3 \leq i \leq g$.  However,
applying a $1$-$2$ swap as described in Lemma \ref{lemma:12swapsym} we can reduce the proof for $i=2$ to the proof
for $i=1$.  We divide the cases $i=1$ and $3 \leq i \leq g$ into the following cases.  Let $\equiv$ denote equality modulo $\Span{S}$,
so our goal is to prove that $Y_{12}(s) \equiv 0$.

\begin{case}{1}
$s = \SLambdaS{a_i \wedge y,x \wedge b_i}$ with $3 \leq i \leq g$ and $y,x \in \cB \setminus \{a_i,b_i\}$ satisfying $\omega(x,y) = 0$.
\end{case}

\noindent
Lemma \ref{lemma:alllambda} implies that 
\[Y_{12}(\SLambdaS{a_i \wedge y,x \wedge b_i}) = \LambdaS{a_i \wedge Y_{12}(y),Y_{12}(x) \wedge b_i} \in \Span{S}.\]

\begin{case}{2}
$s = \SLambdaS{a_1 \wedge y,x \wedge b_1}$ with $x,y \in \cB \setminus \{a_1,b_1,a_2\}$ such that $\omega(x,y)=0$.
\end{case}

\noindent
Lemma \ref{lemma:lambdabilinear1} ($\Lambda$-bilinearity I) implies that $Y_{12}(\SLambdaS{a_1 \wedge y,x \wedge b_1} = \LambdaS{(a_1+b_2) \wedge y,x \wedge b_1}$
equals
\[\SLambdaS{a_1 \wedge y,x \wedge b_1} + \SPresS{b_2 \wedge y,x \wedge b_1} \equiv 0.\]

\begin{case}{3}
$s = \SLambdaS{a_1 \wedge a_2,x \wedge b_1}$ with $x \in \cB \setminus \{a_1,b_1,a_2,b_2\}$.
\end{case}

\noindent
We have $Y_{12}(\SLambdaS{a_1 \wedge a_2,x \wedge b_1}) = \LambdaS{(a_1+b_2) \wedge (b_1+a_2),x \wedge  b_1}$.
By Lemma \ref{lemma:lambdalinear} ($\Lambda$-linearity), it is enough to prove that
\[\LambdaS{(a_1+b_2) \wedge (-b_1-a_2),x \wedge  b_1} \equiv 0.\]
By the definition of $\Lambda$-elements (Definition \ref{definition:lambda}), the element
$\LambdaS{(a_1+b_2) \wedge (-b_1-a_2),x \wedge  b_1} = \LambdaIS{(a_1+b_2) \wedge (-b_1-a_2),x \wedge  b_1}$ equals
\begin{align*}
&\ThetaS{(a_1+b_2) \wedge (-a_2),x \wedge (-a_2)} - \ThetaS{(a_1+b_2) \wedge b_1,x \wedge b_1} \\
&- \PresS{(a_1+b_2) \wedge (-a_2),x \wedge (-b_1-a_2)}\\
=&\ThetaS{(a_1+b_2) \wedge a_2,x \wedge a_2} - Y_{12}(\SThetaS{a_1 \wedge b_1,x \wedge b_1}) - \PresS{(a_1+b_2) \wedge a_2,x \wedge (b_1+a_2)} \\
\equiv &\ThetaS{(a_1+b_2) \wedge a_2,x \wedge a_2} - \PresS{(a_1+b_2) \wedge a_2,x \wedge (b_1+a_2)}.
\end{align*}
The last $\equiv$ uses the fact that we have already proved that $Y_{12}(t) \equiv 0$ for 
$t \in S_2$ (Lemma \ref{lemma:presentationsymys2}).
We must prove that both of these terms are equivalent to $0$.  Lemma \ref{lemma:thetabilinear1} ($\Theta$-bilinearity I) implies that the first
term $\ThetaS{(a_1+b_2) \wedge a_2,x \wedge a_2}$ equals
\[\SPresS{a_1 \wedge a_2,x \wedge a_2} + \SThetaS{b_2 \wedge a_2,x \wedge a_2} \equiv 0.\]
The second term $\PresS{(a_1+b_2) \wedge a_2,x \wedge (b_1+a_2)}$ equals
\begin{align*}
&\PresS{(a_1+b_2) \wedge (b_1+a_2),x \wedge (b_1+a_2)} - \PresS{(a_1+b_2) \wedge b_1,x \wedge (b_1+a_2)} \\
=&Y_{12}(\SPresS{a_1 \wedge a_2,x \wedge a_2}) - Y_{12}(\SPresS{a_1 \wedge b_1,x \wedge a_2}) \equiv 0.
\end{align*}
The $\equiv$ uses the fact that we have already proved that $Y_{12}(t) \equiv 0$ for all
$t \in S_1$ (Lemma \ref{lemma:presentationsymxs1}).

\begin{case}{4}
$s = \SLambdaS{a_1 \wedge y,a_2 \wedge b_1}$ with $y \in \cB \setminus \{a_1,b_1,a_2,b_2\}$.
\end{case}

\noindent
By the definition of $\Lambda$-elements (Definition \ref{definition:lambda}), 
$Y_{12}(\SLambdaS{a_1 \wedge y,a_2 \wedge b_1}) = \LambdaIIS{(a_1+b_2) \wedge y,(b_1+a_2) \wedge b_1}$ equals
\begin{align}
\label{eqn:symys3.4toprove}
&\ThetaS{(a_1+b_2+b_1+a_2) \wedge b_1,(a_1+b_2+b_1+a_2) \wedge y} \\
&- \ThetaS{(a_1+b_2) \wedge b_1,(a_1+b_2) \wedge y} - \PresS{(a_1+b_2+b_1+a_2) \wedge b_1,(b_1+a_2) \wedge y}.\nonumber
\end{align}
We must prove that each term in \eqref{eqn:symys3.4toprove} is equivalent to $0$.  The first is the most
difficult, so we save it for last.  The second term $\ThetaS{(a_1+b_2) \wedge b_1,(a_1+b_2) \wedge y}$ equals
\[Y_{12}(\SThetaS{a_1 \wedge b_1,a_1 \wedge y}) \equiv 0\]
since we have already proved that $Y_{12}(t) \equiv 0$ for all $t \in S_2$ (Lemma \ref{lemma:presentationsymxs2}).
The third term in \eqref{eqn:symys3.4toprove} is $\PresS{(a_1+b_2+b_1+a_2) \wedge b_1,(b_1+a_2) \wedge y}$,
which equals
\begin{align*}
&\PresS{(a_1+b_2) \wedge b_1,(b_1+a_2) \wedge y} + \SPresS{(b_1+a_2) \wedge b_1,(b_1+a_2) \wedge y} \\
\equiv &Y_{12}(\SPresS{a_1 \wedge b_1,a_2 \wedge y}) \equiv 0.
\end{align*}
Here again we used the fact that $Y_{12}(t) \equiv 0$ for all $t \in S_2$ (Lemma \ref{lemma:presentationsymxs2}).

It remains to deal with the first term $\ThetaS{(a_1+b_2+b_1+a_2) \wedge b_1,(a_1+b_2+b_1+a_2) \wedge y}$
in \eqref{eqn:symys3.4toprove}.  By Lemma \ref{lemma:thetabilinear2} ($\Theta$-bilinearity II), it equals
\begin{equation}
\label{eqn:symys3.4toprove2}
\ThetaS{(a_1+b_2+a_2) \wedge b_1,(a_1+b_2+a_2) \wedge y} + \ThetaS{(a_1+b_2+a_2) \wedge b_1,b_1 \wedge y}.
\end{equation}
We must show that both terms of \eqref{eqn:symys3.4toprove2} are equivalent to $0$.
Using Lemma \ref{lemma:lambdaexpansion1} ($\Lambda$-expansion I) along with
Lemma \ref{lemma:lambdalinear} ($\Lambda$-linearity), the first term of
\eqref{eqn:symys3.4toprove2} equals
\begin{align*}
&\LambdaS{a_1 \wedge y,(b_2+a_2) \wedge b_1}  + \SThetaS{a_1 \wedge b_1,a_1 \wedge y}
+ \SPresS{(a_1+b_2+a_2) \wedge b_1,(b_2+a_2) \wedge y} \\
\equiv &\SLambdaS{a_1 \wedge y,b_2 \wedge b_1} + \SLambdaS{a_1 \wedge y,a_2 \wedge b_1} \equiv 0.
\end{align*}
For the second term $\ThetaS{(a_1+b_2+a_2) \wedge b_1,b_1 \wedge y}$ of \eqref{eqn:symys3.4toprove2},
Lemma \ref{lemma:thetabilinear1} ($\Theta$-bilinearity I) implies that it equals
\[\SThetaS{a_1 \wedge b_1,b_1 \wedge y} + \SPresS{(b_2 + a_2) \wedge b_1,b_1 \wedge y} \equiv 0.\]

\begin{case}{5}
$s = \SLambdaS{a_1 \wedge a_2,a_2 \wedge b_1}$.
\end{case}

\noindent
By Lemma \ref{lemma:lambdalinear} ($\Lambda$-linearity), to prove
that 
\[Y_{12}(\SLambdaS{a_1 \wedge a_2,a_2 \wedge b_1}) = \LambdaS{(a_1+b_2) \wedge (b_1+a_2),(b_1+ a_2) \wedge b_1}\]
is equivalent to $0$ it is enough to prove that
\[\LambdaS{(a_1+b_2) \wedge (-b_1-a_2),(b_1+ a_2) \wedge b_1}\]
is equivalent to $0$.  By the definition of $\Lambda$-elements (Definition \ref{definition:lambda}), this
equals
\begin{align*}
  &\ThetaS{(a_1+b_2) \wedge (-a_2),(b_1+ a_2) \wedge (-a_2)} - \ThetaS{(a_1+b_2) \wedge b_1,(b_1+ a_2) \wedge b_1} \\
  &- \PresS{(a_1+b_2) \wedge (-a_2),(b_1+ a_2) \wedge (-b_1-a_2)}. \\
= &\ThetaS{(a_1+b_2) \wedge a_2,(b_1+ a_2) \wedge a_2} - Y_{12}(\SThetaS{a_1 \wedge b_1,a_2 \wedge b_1}) \\
\equiv &\ThetaS{a_2 \wedge (a_1+b_2),a_2 \wedge (b_1+a_2)}.
\end{align*}
Here we used the fact that $Y_{12}(t) \equiv 0$ for all $t \in S_2$ (Lemma \ref{lemma:presentationsymxs2}).
Expanding this using the definition of $\Theta$-elements (Definition \ref{definition:theta}), we get $1/2$ times
\begin{align*}
&\PresS{a_2 \wedge (a_1+b_2+b_1+a_2),a_2 \wedge (a_1+b_2+b_1+a_2)} - \PresS{a_2 \wedge (a_1+b_2),a_2 \wedge (a_1+b_2)}\\
&- \SPresS{a_2 \wedge (b_1+a_2),a_2 \wedge (b_1+a_2)}\\
\equiv &\PresS{a_2 \wedge (b_2+a_1+b_1),a_2 \wedge (b_2+a_1+b_1)} - \PresS{a_2 \wedge (b_2+a_1),a_2 \wedge (b_2+a_1)}
\end{align*}
We must prove that both of these terms are equivalent to $0$.  For the first term
$\PresS{a_2 \wedge (b_2+a_1+b_1),a_2 \wedge (b_2+a_1+b_1)}$, Lemma \ref{lemma:thetaexpansion1} ($\Theta$-expansion I)
along with
Lemma \ref{lemma:thetalinear} ($\Theta$-linearity) implies that it equals
\begin{align*}
&\SPresS{a_2 \wedge b_2,a_2 \wedge b_2} + 2\ThetaS{a_2 \wedge b_2, a_2 \wedge (a_1+b_1)} + \PresS{a_2 \wedge (a_1+b_1),a_2 \wedge (a_1+b_1)} \\
\equiv &2\SThetaS{a_2 \wedge b_2, a_2 \wedge a_1} + 2\ThetaS{a_2 \wedge b_2, a_2 \wedge b_1} + X_1(\SPresS{a_2 \wedge a_1,a_2 \wedge a_1}) \equiv 0.
\end{align*}
The last $\equiv$ uses the fact that we have already proved that $X_1(t) \equiv 0$ for
$t \in S_1$ (Lemma \ref{lemma:presentationsymxs1}).
For the second term $\PresS{a_2 \wedge (b_2+a_1),a_2 \wedge (b_2+a_1)}$, Lemma \ref{lemma:thetaexpansion1} ($\Theta$-expansion I) 
implies that it equals
\[\SPresS{a_2 \wedge b_2} + 2\SThetaS{a_2 \wedge b_2,a_2 \wedge a_1} + \SPresS{a_2 \wedge a_1,a_2 \wedge a_1} \equiv 0.\qedhere\]
\end{proof}

\section{Symmetric kernel, symmetric version IX: closure of \texorpdfstring{$S_4$}{S4}}
\label{section:presentationsym9}

We continue using all the notation from \S \ref{section:presentationsym1} -- \S \ref{section:presentationsym5}.
In this section, we complete the proof of Lemma \ref{lemma:presentationsymstep3} (and hence also of
Theorem \ref{maintheorem:presentationsym}) by proving that for
$f \in \{X_1,X_1^{-1},Y_{12}\}$ and $s \in S_4$, we have $f(s) \in \Span{S}$.

\subsection{X-closure}
Recall that $X_1 \in \Sp_{2g}(\Z)$ takes $a_1$ to $a_1+b_1$ and fixes all other generators in $\cB$.  
We start with:

\begin{lemma}
\label{lemma:presentationsymxs4}
Let 
\[s \in S_4 = \Set{$\SOmegaS{a_i \wedge a_j,b_i \wedge b_j}$, $\SOmegaS{a_i \wedge b_j,b_i \wedge a_j}$}{$1 \leq i < j \leq g$}\]
Then $X_1^{\epsilon}(s) \in \Span{S}$ for $\epsilon \in \{\pm 1\}$.
\end{lemma}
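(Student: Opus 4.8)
The plan is to proceed exactly as in the preceding closure lemmas (Lemmas \ref{lemma:presentationsymxs1}--\ref{lemma:presentationsymxs3}), reducing to a small number of cases according to which of the four indices appearing in a generator of $S_4$ equal $1$, and then verifying in each case that $X_1^{\epsilon}$ applied to the generator lies in $\Span{S}$ modulo $\Span{S_1,S_2,S_3}$ (which suffices, since $\Span{S_1,\ldots,S_4} \subset \Span{S}$). Recall that $X_1$ sends $a_1 \mapsto a_1+b_1$ and fixes $b_1$ and all $a_k,b_k$ for $k \geq 2$. Write $s = \SOmegaS{a_i \wedge a_j,b_i \wedge b_j}$ or $s = \SOmegaS{a_i \wedge b_j,b_i \wedge a_j}$ with $i<j$; the lemma is trivial unless $1 \in \{i,j\}$, and since $i<j$ this forces $i=1$.

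The essential tool is Lemma \ref{lemma:omegalambda} ($\Lambda$ to $\Omega$), which expresses differences of $\Omega$-elements in terms of $\Lambda$-elements, combined with Lemma \ref{lemma:alllambda}, which says that \emph{every} $\Lambda$-element $\LambdaS{a_i \wedge y,x \wedge b_i}$ with $x,y \in \Span{a_i,b_i}^{\perp}$ and $\omega(x,y)=0$ lies in $\Span{S}$. Concretely: for $s = \SOmegaS{a_1 \wedge a_j,b_1 \wedge b_j}$ with $j \geq 3$ (pick $k$ with $k \neq 1,j$, using $g \geq 4$), I would apply Lemma \ref{lemma:omegaambiguity} to put $\OmegaS{a_1 \wedge a_j,b_1 \wedge b_j}$ and $X_1^{\epsilon}$ of it into a form where the entries lie in the span of a symplectic basis adapted to $a_1 \wedge b_1$, then use Lemma \ref{lemma:omegalambda} twice (for $s$ and for $X_1^{\epsilon}(s)$, each written as a difference involving a common auxiliary $\Omega$-element $\OmegaS{a_1 \wedge a_k,b_1 \wedge b_k}$ that $X_1^{\epsilon}$ fixes up to $\Lambda$-terms) to reduce the claim to an identity among $\Lambda$-elements all of the form covered by Lemma \ref{lemma:alllambda}. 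The case $j=2$, where $X_1^{\epsilon}$ genuinely moves $a_1$, is the most delicate: here I expect to expand $X_1^{\epsilon}(\SOmegaS{a_1 \wedge a_2,b_1 \wedge b_2})$ using one of the $\Omega_i$-formulas from Definition \ref{definition:omega}, then systematically discard $\Theta$-terms that lie in $\Span{S_2}$ (via Lemmas \ref{lemma:thetalinear}, \ref{lemma:thetabilinear1}, \ref{lemma:thetabilinear2}) and generator-terms that lie in $\Span{S_1}$ (via \S\ref{section:obviousbluesym}), leaving a combination of $\Lambda$-elements to be absorbed via Lemma \ref{lemma:alllambda}. I would also use the sign/symmetry conventions of \S\ref{section:omegasigns} freely to normalize notation, and the fact (already established in the proofs of Lemmas \ref{lemma:presentationsymxs1}--\ref{lemma:presentationsymxs3}) that $X_1^{\epsilon}(t) \equiv 0$ for $t \in S_1 \cup S_2 \cup S_3$.

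The main obstacle will be the bookkeeping in the $i=1,j=2$ case: the $\Omega$-elements are built from roughly five $\Theta$- and generator-terms each, and $X_1$ applied to $\SOmegaS{a_1 \wedge a_2,b_1 \wedge b_2}$ or $\SOmegaS{a_1 \wedge b_2,b_1 \wedge a_2}$ produces a long expression, much as in Case \ref{case:symys1.2} of Lemma \ref{lemma:presentationsymys1} where $\PresS{(a_1+b_2)\wedge(b_1+a_2),(a_1+b_2)\wedge(b_1+a_2)}$ was identified with $-2\SOmegaS{a_1 \wedge a_2,b_1 \wedge b_2}$ via Lemma \ref{lemma:identifyomega}. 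In fact I anticipate that Lemma \ref{lemma:identifyomega} will be the key shortcut: for $\epsilon = \pm 1$ one has $X_1^{\epsilon}(a_1 \wedge b_2) = (a_1 + \epsilon b_1) \wedge b_2$ and $X_1^{\epsilon}(b_1 \wedge a_2) = b_1 \wedge a_2$, so $X_1^{\epsilon}$ of the relevant square generators can be re-expanded via Lemmas \ref{lemma:thetaexpansion2} and \ref{lemma:lambdaexpansion3} and then matched against $\Omega$-elements using Lemma \ref{lemma:identifyomega}, reducing everything to $\Lambda$- and $S_1$-, $S_2$-terms. Once the $i=1$ cases are dispatched, the $Y_{12}$-closure for $S_4$ (needed to finish Lemma \ref{lemma:presentationsymstep3}, hence Theorem \ref{maintheorem:presentationsym}) follows the same template, using a $1$-$2$ swap (Lemma \ref{lemma:12swapsym}) to cut the number of cases and Lemma \ref{lemma:omegalambda} plus Lemma \ref{lemma:alllambda} to land in $\Span{S}$; the only new wrinkle there is handling the indices $1$ and $2$ simultaneously, which I would again reduce to identities among $\Omega$- and $\Lambda$-elements via Lemma \ref{lemma:omegalambda} and the already-proved facts that $Y_{12}(t) \equiv 0$ for $t \in S_1 \cup S_2 \cup S_3$ (Lemmas \ref{lemma:presentationsymys1}, \ref{lemma:presentationsymys2}, \ref{lemma:presentationsymys3}).
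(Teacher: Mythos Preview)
Your high-level plan is right—Lemma \ref{lemma:omegalambda} together with Lemma \ref{lemma:alllambda} is exactly the combination the paper uses—but your execution has a circularity that you need to fix.

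You propose, for $s = \SOmegaS{a_1 \wedge a_j,b_1 \wedge b_j}$, to use Lemma \ref{lemma:omegalambda} with base symplectic pair $a_1 \wedge b_1$, i.e.\ to pick an auxiliary $\SOmegaS{a_1 \wedge a_k,b_1 \wedge b_k}$. But that auxiliary element is \emph{not} fixed by $X_1^{\epsilon}$: it is another $\Omega$-element with first index $1$, of exactly the same type you are trying to handle. Your phrase ``fixes up to $\Lambda$-terms'' is precisely the statement of the lemma for index $k$ instead of $j$. What your argument actually proves is only that $X_1^{\epsilon}(s_j) \equiv X_1^{\epsilon}(s_k)$ modulo $\Span{S}$ for all $j,k \geq 2$, which does not show any single $X_1^{\epsilon}(s_j)$ lies in $\Span{S}$.

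The paper's fix is a one-line change of viewpoint: apply Lemma \ref{lemma:omegalambda} with base $a_j \wedge b_j$ rather than $a_1 \wedge b_1$. Writing $\SOmegaS{a_1 \wedge a_j,b_1 \wedge b_j} = -\SOmegaS{a_j \wedge a_1,b_1 \wedge b_j}$ and taking $(a,b,a',b',a'',b'') = (a_j,b_j,a_k,b_k,a_1,b_1)$ with $k \neq 1,j$, the auxiliary term is $\SOmegaS{a_j \wedge a_k,b_k \wedge b_j}$, which is genuinely fixed by $X_1^{\epsilon}$ since $j,k \neq 1$. The $\Lambda$-terms are all of the form $\LambdaS{a_j \wedge y, x \wedge b_j}$, and $X_1^{\epsilon}$ acts only on the $x,y$ slots, so after applying $X_1^{\epsilon}$ they still fall under Lemma \ref{lemma:alllambda}. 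This handles all $2 \leq j \leq g$ uniformly; there is no separate ``delicate'' case $j=2$ for the $X_1$-closure (that distinction only arises for $Y_{12}$, which is the next lemma, not this one). Your proposed expansion via Definition \ref{definition:omega} and Lemma \ref{lemma:identifyomega} for $j=2$ is therefore unnecessary here.
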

\begin{proof}
The lemma is trivial if $X_1$ fixes $s$.  The remaining cases are as follows.  Let $\equiv$ denote equality modulo $\Span{S}$,
so our goal is to prove that $X_1^{\epsilon}(s) \equiv 0$.

\begin{case}{1}
\label{case:presentationsymxs4.1}
$s = \SOmegaS{a_1 \wedge a_j,b_1 \wedge b_j}$ with $2 \leq j \leq g$.
\end{case}

\noindent
Since $g \geq 4$ (Assumption \ref{assumption:genussym}), we can pick $2 \leq k \leq g$ with $k \neq j$.  Lemma \ref{lemma:omegalambda} ($\Lambda$ to $\Omega$) implies that
$\SOmegaS{a_1 \wedge a_j,b_1 \wedge b_j} = -\SOmegaS{a_j \wedge a_1,b_1 \wedge b_j}$ equals
\begin{align*}
&\LambdaS{a_j \wedge (a_k + a_1),(b_k - b_1) \wedge b_j} - \SLambdaS{a_j \wedge a_1,b_k \wedge b_j}\\
&+ \SLambdaS{a_j \wedge a_k,b_1 \wedge b_j}-\SOmegaS{a_j \wedge a_k,b_k \wedge b_j}
\end{align*}
Applying $X_1^{\epsilon}$, since $X_1$ fixes $\{a_j,b_j,a_k,b_k\}$ we get
\begin{align*}
&\LambdaS{a_j \wedge (a_k + a_1+\epsilon b_1),(b_k - b_1) \wedge b_j} - X_1^{\epsilon}(\SLambdaS{a_j \wedge a_1,b_k \wedge b_j}) \\
&+X_1^{\epsilon}(\SLambdaS{a_j \wedge a_k,b_1 \wedge b_j})-\SOmegaS{a_j \wedge a_k,b_k \wedge b_j} \\
\equiv &\LambdaS{a_j \wedge (a_k + a_1+\epsilon b_1),(b_k - b_1) \wedge b_j}.
\end{align*}
Here we are using the fact that $X_1^{\epsilon}(t) \equiv 0$ for all $t \in S_3$ (Lemma \ref{lemma:presentationsymxs3}).
Lemma \ref{lemma:alllambda} implies that $\LambdaS{a_j \wedge (a_k + a_1+\epsilon b_1),(b_k - b_1) \wedge b_j} \equiv 0$,
and we are done. 

\begin{case}{2}
$s =\SOmegaS{a_1 \wedge b_j,b_1 \wedge a_j}$ with $2 \leq j \leq g$.
\end{case}

\noindent
Since $g \geq 4$ (Assumption \ref{assumption:genussym}), we can pick $2 \leq k \leq g$ with $k \neq j$.
Lemma \ref{lemma:omegalambda} ($\Lambda$ to $\Omega$) implies that
$\SOmegaS{a_1 \wedge b_j,b_1 \wedge a_j} = \SOmegaS{(-b_j) \wedge a_1,b_1 \wedge a_j}$ equals
\begin{align*}
 &\LambdaS{(-b_j) \wedge (a_1 + a_k),(b_1 - b_k) \wedge a_j} - \LambdaS{(-b_j) \wedge a_k,b_1 \wedge (a_j)} \\
 &+ \LambdaS{(-b_j) \wedge a_1,b_k \wedge a_j} +\OmegaS{(-b_j) \wedge a_k,b_k \wedge a_j} \\
=&-\LambdaS{b_j \wedge (a_1 + a_k),(b_1 - b_k) \wedge a_j} + \SLambdaS{b_j \wedge a_k,b_1 \wedge (a_j)} \\
 &- \SLambdaS{b_j \wedge a_1,b_k \wedge a_j} +\SOmegaS{a_k \wedge b_j,b_k \wedge a_j}.
\end{align*}
Now proceed as in Case \ref{case:presentationsymxs4.1}.
\end{proof}

\subsection{Y-closure}
Recall that $Y_{12} \in \Sp_{2g}(\Z)$ takes $a_1$ to $a_1+b_2$ and $a_2$ to $a_2+b_1$ and fixes all other generators in $\cB$.
We next prove the following, which completes the proof of Lemma \ref{lemma:presentationsymstep3} and hence
of Theorem \ref{maintheorem:presentationsym}:

\begin{lemma}
\label{lemma:presentationsymys4}
Let 
\[s \in S_4 = \Set{$\SOmegaS{a_i \wedge a_j,b_i \wedge b_j}$, $\SOmegaS{a_i \wedge b_j,b_i \wedge a_j}$}{$1 \leq i < j \leq g$}\]
Then $Y_{12}(s) \in \Span{S}$. 
\end{lemma}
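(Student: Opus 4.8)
The statement to prove is Lemma \ref{lemma:presentationsymys4}: for $s \in S_4$ and $f = Y_{12}$, we have $Y_{12}(s) \in \Span{S}$. The set $S_4$ consists of the $\Omega$-elements $\SOmegaS{a_i \wedge a_j,b_i \wedge b_j}$ and $\SOmegaS{a_i \wedge b_j,b_i \wedge a_j}$ for $1 \leq i < j \leq g$. The plan is to mimic the structure of the $X$-closure argument in Lemma \ref{lemma:presentationsymxs4}. First I would dispose of the trivial cases: if $Y_{12}$ fixes all four of the vectors $a_i,b_i,a_j,b_j$ appearing in $s$ (which happens precisely when $\{i,j\} \cap \{1,2\} = \emptyset$), then $Y_{12}(s) = s \in S_4 \subset \Span{S}$ and there is nothing to prove. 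Using a $1$-$2$ swap $\sigma \in \SymSp_g$ together with Lemma \ref{lemma:12swapsym} (which says $Y_{12}(z) \in \Span{S}$ iff $Y_{12}(\sigma(z)) \in \Span{S}$), I can reduce to the case where one of the indices, say $i$, equals $1$. This leaves two broad families: $s = \SOmegaS{a_1 \wedge a_j,b_1 \wedge b_j}$ and $s = \SOmegaS{a_1 \wedge b_j,b_1 \wedge a_j}$, where either $j = 2$ or $j \geq 3$; a further $1$-$2$ swap handles the sub-case $i=2$ analogously.

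The main tool, exactly as in Lemma \ref{lemma:presentationsymxs4}, will be Lemma \ref{lemma:omegalambda} ($\Lambda$ to $\Omega$), which expresses a difference of two $\Omega$-elements as a combination of $\Lambda$-elements. Since $g \geq 4$ (Assumption \ref{assumption:genussym}), for any $s \in S_4$ indexed by $\{1,j\}$ I can pick an auxiliary index $k \notin \{1,j\}$ and write $\SOmegaS{a_1 \wedge a_j, b_1 \wedge b_j}$ (after possibly reindexing via $\Omega$-symmetry, using \S \ref{section:omegasigns} and Lemma \ref{lemma:omegaambiguity}) as a sum of $\Lambda$-elements plus $\SOmegaS{a_j \wedge a_k, b_k \wedge b_j}$ or similar, where the latter $\Omega$-element and most $\Lambda$-terms are fixed by $Y_{12}$ when $k$ is chosen away from $\{1,2\}$. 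Applying $Y_{12}$ and using that $Y_{12}(t) \equiv 0 \pmod{\Span{S}}$ for all $t \in S_3$ (Lemma \ref{lemma:presentationsymys3}) and for all $t \in S_4$ that are already known to be handled, the problem collapses to showing that a single $\Lambda$-element of the form $\LambdaS{a_j \wedge (a_k + a_1 + b_2), (b_k - b_1) \wedge b_j}$ (or its analogue with various vectors replaced, reflecting $Y_{12}(a_1) = a_1+b_2$, $Y_{12}(a_2) = a_2+b_1$) lies in $\Span{S}$. This last fact follows immediately from Lemma \ref{lemma:alllambda}, which says $\LambdaS{a_i \wedge y, x \wedge b_i} \in \Span{S}$ for \emph{any} $x,y \in \Span{a_i,b_i}^{\perp}$ with $\omega(x,y)=0$ — not just basis vectors. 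When $j = 2$ the vectors get scrambled more (since $Y_{12}$ moves both $a_1$ and $a_2$), so I expect the $j=2$ cases will require writing $s$ using $\Omega$-symmetry so that the auxiliary index $k \geq 3$ is the one doing the work, and then expanding $Y_{12}(a_1 \wedge a_2) = (a_1+b_2) \wedge (b_1+a_2)$ via $\Theta$- and $\Lambda$-linearity exactly as in the $S_1$, $S_2$, $S_3$ closure arguments.

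The main obstacle I anticipate is the bookkeeping in the $j = 2$ cases — and more precisely, verifying that after applying $Y_{12}$ and invoking Lemma \ref{lemma:omegalambda}, the residual $\Lambda$-elements genuinely lie in one of the subspaces $\fK_g^{s,\Lambda}[a_i \wedge -,-\wedge b_i]$ to which Lemma \ref{lemma:alllambda} applies, i.e.\ that the first-coordinate symplectic pair really is (a multiple of, or $\Theta$-bilinearly reducible to) some $a_i \wedge b_i$ from the fixed basis. This is where careful use of $\Lambda$-symmetry (\S \ref{section:lambdasymmetry}), $\Lambda$-signs (\S \ref{section:lambdasigns}), and Lemmas \ref{lemma:lambdabilinear1} and \ref{lemma:lambdabilinear2} ($\Lambda$-bilinearity I and II) is needed to massage expressions like $(a_1+b_2) \wedge b_1$ into the canonical form. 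Once every $\Omega$-element in $S_4$ (up to $\SymSp_g$ and $1$-$2$ swaps) is reduced in this way, the lemma follows, and with it Lemma \ref{lemma:presentationsymstep3} and hence Theorem \ref{maintheorem:presentationsym}. I would organize the writeup as a short sequence of \texttt{Case} environments parallel to those in Lemma \ref{lemma:presentationsymxs4}, citing Lemmas \ref{lemma:presentationsymys1}, \ref{lemma:presentationsymys2}, \ref{lemma:presentationsymys3} freely to discard terms modulo $\Span{S}$.
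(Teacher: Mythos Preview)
Your plan for the cases where $\{i,j\} \cap \{1,2\}$ has at most one element is correct and matches the paper exactly: the paper also disposes of the case $i,j \geq 3$ trivially and handles the cases $(i=1,\,j\geq 3)$ and $(i=2,\,j\geq 3)$ by rewriting via Lemma~\ref{lemma:omegalambda} with an auxiliary index $k \geq 3$, $k\neq j$, and then invoking Lemma~\ref{lemma:alllambda} on the resulting $\Lambda$-term, whose base $a_j \wedge b_j$ is unchanged by $Y_{12}$.

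The gap is in the case $\{i,j\}=\{1,2\}$. Your proposed strategy---apply Lemma~\ref{lemma:omegalambda} with an auxiliary $k\geq 3$ and then massage via $\Lambda$-bilinearity---runs into a real obstruction. Writing $\OmegaS{a_1\wedge a_2,b_2\wedge b_1}$ via Lemma~\ref{lemma:omegalambda} produces the term $\LambdaS{a_1\wedge(a_2+a_k),(b_2-b_k)\wedge b_1}$; after $Y_{12}$ this becomes $\LambdaS{(a_1+b_2)\wedge(a_2+b_1+a_k),(b_2-b_k)\wedge b_1}$, whose underlying symplectic pair is $(a_1+b_2)\wedge b_1$, not any $a_i\wedge b_i$. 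Lemma~\ref{lemma:alllambda} does not apply. Moreover Lemma~\ref{lemma:lambdabilinear1} cannot strip the $b_2$ off, since $b_2$ fails to be orthogonal to the $y$-entry $a_2+b_1+a_k$; and any attempt to expand this $\Lambda$-element back into $S$-generators via Lemma~\ref{lemma:alllambda} reproduces $\OmegaS{a_1\wedge a_2,b_2\wedge b_1}$ itself, so the argument circles. Your fallback suggestion of ``expanding via $\Theta$- and $\Lambda$-linearity'' is not a workable plan: no $\Omega$-bilinearity lemmas exist in the paper, so there is nothing to expand.

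The paper avoids this entirely by a different trick for $\{i,j\}=\{1,2\}$: it invokes Lemma~\ref{lemma:identifyomega} to replace $\SOmegaS{a_1\wedge a_2,b_1\wedge b_2}$ (respectively $\SOmegaS{a_1\wedge b_2,b_1\wedge a_2}$) modulo $\Span{S_1,S_2,S_3}$ by the ordinary generator $s'=\PresS{(a_1-b_2)\wedge(b_1-a_2),(a_1-b_2)\wedge(b_1-a_2)}$ (respectively $\PresS{(a_1+a_2)\wedge(b_1-b_2),(a_1+a_2)\wedge(b_1-b_2)}$). Then $Y_{12}(s')$ is computed directly: in the first case it collapses to $\SPresS{a_1\wedge(-a_2),a_1\wedge(-a_2)}\in\Span{S_1}$, and in the second case one observes $Y_{12}(s')=X_1^2(s')$, which lies in $\Span{S}$ by the already-established $X$-closure. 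Since $Y_{12}$ preserves $\Span{S_1,S_2,S_3}$ by Lemmas~\ref{lemma:presentationsymys1}--\ref{lemma:presentationsymys3}, this suffices.
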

\begin{proof}
Write $S_4 = S_4(1) \cup S_4(2) \cup S_4(3) \cup S_4(4)$ with
\begin{align*}
S_4(1) &= \Set{$\SOmegaS{a_i \wedge a_j,b_i \wedge b_j}$, $\SOmegaS{a_i \wedge b_j,b_i \wedge a_j}$}{$3 \leq i < j \leq g$},\\
S_4(2) &= \Set{$\SOmegaS{a_1 \wedge a_j,b_1 \wedge b_j}$, $\SOmegaS{a_1 \wedge b_j,b_1 \wedge a_j}$}{$3 \leq j \leq g$},\\
S_4(3) &= \Set{$\SOmegaS{a_2 \wedge a_j,b_2 \wedge b_j}$, $\SOmegaS{a_2 \wedge b_j,b_2 \wedge a_j}$}{$3 \leq j \leq g$},\\
S_4(4) &= \{\SOmegaS{a_1 \wedge a_2,b_1 \wedge b_2}, \SOmegaS{a_1 \wedge b_2,b_1 \wedge a_2}\}.
\end{align*}
The lemma is trivial for $s \in S_4(1)$ since in that case $Y_{12}(s) = s$.  For $s \in S_4(2)$, the lemma
can be proved exactly like Case \ref{case:presentationsymxs4.1} of the proof of 
Lemma \ref{lemma:presentationsymxs4}.  The only necessary change is that the $k$ in that proof
should be chosen such that $3 \leq k \leq g$ and $k \neq j$, which is possible
since $g \geq 4$ (Assumption \ref{assumption:genussym}; note that in 
Case \ref{case:presentationsymxs4.1} of the proof of 
Lemma \ref{lemma:presentationsymxs4} we really only used $g \geq 3$).
The same argument works for $s \in S_4(3)$.  

It remains
to deal with $S_4(4)$, which we divide into two cases.  Let $\equiv$ denote equality
modulo $\Span{S}$, so our goal is to prove that $Y_{12}(s) \equiv 0$.

\begin{case}{1}
$s = \SOmegaS{a_1 \wedge a_2,b_1 \wedge b_2}$.
\end{case}

\noindent
Set $s' = \PresS{(a_1-b_2) \wedge (b_1 -a_2),(a_1-b_2) \wedge (b_1 - a_2)}$.
In Lemma \ref{lemma:identifyomega}, we proved that $-2s$ equals $s'$ modulo
$\Span{S_1,S_2,S_3}$.
We have already proved that $Y_{12}(t) \equiv 0$ for $t \in S_1 \cup S_2 \cup S_3$; see
Lemmas \ref{lemma:presentationsymys1} and \ref{lemma:presentationsymys2} and \ref{lemma:presentationsymys3}.
It is therefore enough to prove that $Y_{12}(s') \equiv 0$:
\[Y_{12}(\PresS{(a_1-b_2) \wedge (b_1 -a_2),(a_1-b_2) \wedge (b_1 - a_2)}) = \SPresS{a_1 \wedge (-a_2),a_1 \wedge (-a_2)} \equiv 0.\]

\begin{case}{2}
$s = \SOmegaS{a_1 \wedge b_2,b_1 \wedge a_2}$.
\end{case}

\noindent
Set $s' = \PresS{(a_1+a_2) \wedge (b_1-b_2),(a_1+a_2) \wedge (b_1-b_2)}$.
In Lemma \ref{lemma:identifyomega}, we proved that $s$ equals $s'$ modulo
$\Span{S_1,S_2,S_3}$.  Just like in the previous case, this implies that it
is enough to prove that $Y_{12}(s') \equiv 0$.  We calculate:
\begin{align*}
&Y_{12}(\PresS{(a_1+a_2) \wedge (b_1-b_2),(a_1+a_2) \wedge (b_1-b_2)}) \\
=&\PresS{(a_1+b_2+a_2+b_1) \wedge (b_1-b_2),(a_1+b_2+a_2+b_1) \wedge (b_1-b_2)} \\
=&\PresS{(a_1+a_2+2b_1) \wedge (b_1-b_2),(a_1+a_2+2b_1) \wedge (b_1-b_2)} = X_{1}^2(s').
\end{align*}
In Lemmas \ref{lemma:presentationsymxs1} and \ref{lemma:presentationsymxs2} and \ref{lemma:presentationsymxs3}
and \ref{lemma:presentationsymxs4}, we proved that $X_1^{\epsilon}(t) \in \Span{S}$ for
$\epsilon \in \{\pm 1\}$ and $s \in S_1 \cup \cdots \cup S_4 = S$.  This implies 
that the cyclic group generated by $X_1$ takes $\Span{S}$ to $\Span{S}$.  
Lemma \ref{lemma:identifyomega} implies that $s' \in S$, so $X_{1}^2(s') \equiv 0$, as desired.
\end{proof}

\appendix
\part{Appendices}

\section{A modified presentation}
\label{appendix:presentation}

The goal of this appendix is to transform the presentation for $\fK_g$ from Definition \ref{definition:kg} to the one
needed for our work on the Torelli group in \cite{MinahanPutmanAbelian, MinahanPutmanH2}.
Recall from \S \ref{section:notation} that $H = \Q^{2g}$ and $H_{\Z} = \Z^{2g}$, and that
$\omega\colon H \times H \rightarrow \Q$ is the standard symplectic form on
$H$.

\subsection{Symmetric kernel and contraction}

Recall from the introduction that the symmetric contraction is the alternating bilinear map
\[\fc\colon ((\wedge^2 H)/\Q) \times ((\wedge^2 H)/\Q) \longrightarrow \Sym^2(H)\]
defined by the formula
\[\text{$\fc(x \wedge y,z \wedge w) = \omega(x,z) y \Cdot w - \omega(x,w) y \Cdot z - \omega(y,z) x \Cdot w + \omega(y,w) x \Cdot z$ for $x,y,z,w \in H$}.\]
This induces a map
$((\wedge^2 H)/\Q)^{\otimes 2} \longrightarrow \Sym^2(H)$
whose kernel $\cK_g$ is the symmetric kernel.
Recall that $\kappa_1,\kappa_2 \in (\wedge^2 H)/\Q$ are sym-orthogonal if
\[\fc(\kappa_1,\kappa_2)=-\fc(\kappa_2,\kappa_1) = 0,\]
or equivalently if $\kappa_1 \otimes \kappa_2$ and $\kappa_2 \otimes \kappa_1$ lie in $\cK_g$.

\subsection{Presentation}

We recall the definition of $\fK_g$:

\begin{definition}
Define $\fK_g$ to be the $\Q$-vector space with the following presentation:
\begin{itemize}
\item {\bf Generators}. 
A generator $\Pres{\kappa_1,\kappa_2}$ for all sym-orthogonal
$\kappa_1,\kappa_2 \in (\wedge^2 H)/\Q$ such that either
$\kappa_1$ or $\kappa_2$ (or both) is a symplectic pair in $(\wedge^2 H_{\Z})/\Z$.
\item {\bf Relations}.  For all symplectic pairs $a \wedge b \in (\wedge^2 H_{\Z})/\Z$ and all
$\kappa_1,\kappa_2 \in (\wedge^2 H)/\Q$ that are sym-orthogonal to $a \wedge b$
and all $\lambda_1,\lambda_2 \in \Q$, the relations
\begin{align*}
\Pres{a \wedge b,\lambda_1 \kappa_1 + \lambda_2 \kappa_2} &= \lambda_1 \Pres{a \wedge b,\kappa_1} + \lambda_2 \Pres{a \wedge b,\kappa_2} \quad \text{and} \\
\Pres{\lambda_1 \kappa_1 + \lambda_2 \kappa_2,a \wedge b} &= \lambda_1 \Pres{\kappa_1,a \wedge b} + \lambda_2 \Pres{\kappa_2,a \wedge b}.\qedhere
\end{align*}
\end{itemize}
\end{definition}

There is a linearization map $\Phi\colon \fK_g \rightarrow ((\wedge^2 H)/\Q)^{\otimes 2}$ defined
by $\Phi(\Pres{\kappa_1,\kappa_2}) = \kappa_1 \otimes \kappa_2$.  This takes relations to relations,
and thus gives a well-defined map.  Since $\kappa_1$ and $\kappa_2$ are sym-orthogonal, the image
of $\Phi$ is contained in $\cK_g$.  Theorem \ref{theorem:presentation} says that $\Phi$
is an isomorphism for $g \geq 4$.

\subsection{Symplectic summands}

As we said above, our goal is to modify the presentation of $\fK_g$ to the one
needed for our papers \cite{MinahanPutmanAbelian, MinahanPutmanH2}.  This requires some preliminaries.
A {\em symplectic summand} of $H_{\Z}$ is a subgroup $V < H_{\Z}$ such that
$H_{\Z} = V \oplus V^{\perp}$.
A symplectic summand $V$ of $H_{\Z}$ is isomorphic to $\Z^{2h}$ for some
$h$ called its {\em genus}.  If $V$ is a symplectic summand of $H_{\Z}$, then $V^{\perp}$ is too.

\subsection{Symplectic form}

Let $W$ be a symplectic
summand of $H_{\Z}$.  The symplectic form on $W$ identifies $W$ with its
dual.  This allows us to identify alternating bilinear forms on $W$ with
elements of $\wedge^2 W \subset \wedge^2 H_{\Z}$.  In particular, the symplectic form
on $W$ is an element $\omega_W$ of $\wedge^2 H_{\Z}$.  If $\{a_1,b_1,\ldots,a_h,b_h\}$
is a symplectic basis for $W$, then
$\omega_W = a_1 \wedge b_1 + \cdots + a_h \wedge b_h$.
With this notation, the symplectic form $\omega$ on $H_{\Z}$ is $\omega = \omega_{H_{\Z}}$.

\subsection{Symplectic pairs}

Recall that $(\wedge^2 H)/\Q$ is the quotient of $\wedge^2 H$ by the $\Q$-span of
$\omega \in \wedge^2 H_{\Z}$.  For a symplectic summand $W$ of $H_{\Z}$, let
$\oomega_W$ be the image of $\omega_W \in \wedge^2 H_{\Z}$ in $(\wedge^2 H)/\Q$.
Since $\omega = \omega_W + \omega_{W^{\perp}}$, we have $\oomega_{W^{\perp}} = -\oomega_W$.
The elements $\oomega_W$ with $W$ a genus-$1$ symplectic summand of $H_{\Z}$ are exactly
the symplectic pairs.  

\subsection{Generators and summands}

Recall that the generators for $\fK_g$ are as follows:
\begin{itemize}
\item Let $V$ be a genus-$1$ symplectic summand of $H_{\Z}$ and let $\kappa \in (\wedge^2 H)/\Q$ be
sym-orthogonal to $\oomega_V$.  We then have generators $\Pres{\oomega_V,\kappa}$ and $\Pres{\kappa,\oomega_V}$.
\end{itemize}
The following lemma says that the element $\oomega_V$ is determined by $V$:

\begin{lemma}
\label{lemma:identifyv}
Assume that $g \geq 3$.
Let $V$ and $W$ be genus-$1$ symplectic summands of $H_{\Z}$ such that $\oomega_V = \oomega_W$.  Then $V = W$.
\end{lemma}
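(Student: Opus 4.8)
The statement is that a genus-$1$ symplectic summand $V$ of $H_{\Z}$ is determined by the class $\oomega_V \in (\wedge^2 H)/\Q$, i.e.\ by $\omega_V$ modulo the span of $\omega$. The first thing I would do is unwind what the hypothesis $\oomega_V = \oomega_W$ says in $\wedge^2 H$: it means $\omega_V - \omega_W = \lambda \omega$ for some $\lambda \in \Q$. Since $\omega_V$ and $\omega_W$ both lie in $\wedge^2 H_{\Z}$ and $\omega$ is a primitive element of $\wedge^2 H_{\Z}$, in fact $\lambda \in \Z$; this uses the same kind of reasoning as in Lemma \ref{lemma:strongisotropic}. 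So $\omega_W = \omega_V - \lambda \omega$ for some integer $\lambda$.

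Next I would recover $V$ from $\omega_V$. The key observation is that a symplectic pair $\omega_V = a \wedge b$ (with $\omega(a,b)=1$) is a decomposable element of $\wedge^2 H$, and the $2$-plane $\Span{a,b}_{\Q} \subset H$ it spans is exactly $V_{\Q}$. Concretely, $V_{\Q} = \Set{$v \in H$}{$v \wedge \omega_V = 0$}$ (for a decomposable $2$-vector $a \wedge b$, the vectors killed by wedging are precisely the span of $a,b$). Then $V = V_{\Q} \cap H_{\Z}$, since $V$ is a summand and hence saturated (Lemma \ref{lemma:summand} gives that $V_{\Q} \cap H_{\Z}$ is a summand, and $V \subseteq V_{\Q} \cap H_{\Z}$ with the same rank forces equality). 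So it suffices to show $\omega_V$ and $\omega_W$ span the same $2$-plane in $H$, i.e.\ to rule out $\lambda \neq 0$ in $\omega_W = \omega_V - \lambda\omega$.

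The main obstacle is this last point: I must show that for $\lambda \neq 0$, the element $\omega_V - \lambda \omega$ is \emph{not} a symplectic pair — indeed not even decomposable. Here is where $g \geq 3$ enters. Choose a symplectic basis $\{a_1,b_1,\ldots,a_g,b_g\}$ with $V = \Span{a_1,b_1}$, so $\omega_V = a_1 \wedge b_1$ and $\omega = a_1 \wedge b_1 + \cdots + a_g \wedge b_g$. Then
\[
\omega_V - \lambda \omega = (1-\lambda)\, a_1 \wedge b_1 - \lambda\, a_2 \wedge b_2 - \cdots - \lambda\, a_g \wedge b_g.
\]
For $\lambda \neq 0$ and $g \geq 3$, at least two of the coefficients $(1-\lambda), -\lambda, \ldots, -\lambda$ are nonzero (if $\lambda = 1$ then $-\lambda$ appears $g-1 \geq 2$ times; if $\lambda \neq 1$ then $1-\lambda \neq 0$ and $-\lambda \neq 0$ both occur). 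An element of $\wedge^2 H$ written in a symplectic basis as $\sum c_i\, a_i \wedge b_i$ with at least two $c_i \neq 0$ is not decomposable — its rank as an alternating form is $\geq 4$. Hence $\omega_V - \lambda\omega$ is not decomposable, contradicting $\omega_W = \omega_V - \lambda \omega$ being the decomposable element $a' \wedge b'$ for a symplectic basis pair of $W$. Therefore $\lambda = 0$, so $\omega_V = \omega_W$ in $\wedge^2 H$, whence $V_{\Q} = W_{\Q}$ and finally $V = W$ by saturation. I would present the non-decomposability fact either by the rank computation above or, even more cheaply, by noting that a decomposable $2$-vector $\eta$ satisfies $\eta \wedge \eta = 0$ in $\wedge^4 H$, while $(\sum c_i a_i\wedge b_i)\wedge(\sum c_j a_j \wedge b_j) = 2\sum_{i<j} c_i c_j\, a_i \wedge b_i \wedge a_j \wedge b_j \neq 0$ when two coefficients are nonzero.
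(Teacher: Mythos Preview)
Your proof is correct, but it takes a different route from the paper's. Both arguments start from $\omega_V - \omega_W = \lambda\omega$ and aim to show $\lambda=0$; both then recover $V$ from $\omega_V$ (you via the $2$-plane of a decomposable bivector, the paper via the kernel of the corresponding alternating form --- these are dual descriptions). The difference is in how $\lambda=0$ is forced. You fix a symplectic basis adapted to $V$, write $\omega_V - \lambda\omega$ explicitly, and use the decomposability criterion $\eta\wedge\eta=0$ to rule out $\lambda\neq 0$ when $g\geq 3$. The paper instead observes that $V^{\perp}$ and $W^{\perp}$ each have codimension $2$, so for $g\geq 3$ they intersect nontrivially; picking nonzero $x\in V^{\perp}\cap W^{\perp}$ and any $y$ with $\omega(x,y)=1$, one evaluates $0=\omega_V(x,y)-\omega_W(x,y)=\lambda\,\omega(x,y)=\lambda$. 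The paper's argument is basis-free and slightly shorter; yours is more hands-on and makes the role of $g\geq 3$ very explicit through the coefficient count. Your aside that $\lambda\in\Z$ is correct but unnecessary, since you only use $\lambda=0$.
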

\begin{proof}
Let $\omega \in \wedge^2 H$ be the element corresponding to the symplectic form.
Since $\oomega_V = \oomega_W$, there exists some $\lambda \in \Q$ such that $\omega_V-\omega_W = \lambda \omega$.
The orthogonal complements $V^{\perp}$ and $W^{\perp}$ in $H$ both have codimension $2$.
Since $g \geq 3$, it follows that $H$ has dimension at least $6$
and hence we can find some nonzero $x \in V^{\perp} \cap W^{\perp}$.
View elements of $\wedge^2 H$ as alternating bilinear forms on $H$.
Since $\omega$ is a nondegenerate pairing on $H$, we can find $y \in H$ such that $\omega(x,y) = 1$.
We then have
\[0 = \omega_V(x,y) - \omega_W(x,y) = \lambda \omega(x,y) = \lambda,\]
so $\omega_V = \omega_W$.  The alternating bilinear forms $\omega_V$ and $\omega_W$ determine $V$ and $W$; for instance,
the kernel of the form $\omega_V$ is $V^{\perp}$ and $V = (V^{\perp})^{\perp}$.  We conclude that $V = W$.
\end{proof}

\subsection{Lifting sym-orthogonal subspace}

Recall that for a subspace $U$ of $\wedge^2 H$, we denote by $\oU$ the image of $U$ in
$(\wedge^2 H)/\Q$.  Also, for a subgroup $V$ of $H_{\Z}$ we write $V_{\Q}$ for the subspace
$V \otimes \Q$ of $H$.  For a genus-$1$ symplectic summand $V$, Lemma \ref{lemma:symplecticorthogonal} says
that the elements of $(\wedge^2 H)/\Q$ that are sym-orthogonal to $\oomega_V$ are those
lying in $\overline{\wedge^2 V_{\Q}^{\perp}}$.  The following lets us lift these 
to elements of $\wedge^2 V_{\Q}^{\perp}$:

\begin{lemma}
\label{lemma:identifykappa}
Let $V$ be a genus-$1$ symplectic summand of $H_{\Z}$.  Then the map
$\wedge^2 V_{\Q}^{\perp} \rightarrow \overline{\wedge^2 V_{\Q}^{\perp}}$ obtained
by restricting the projection $\wedge^2 H \rightarrow (\wedge^2 H)/\Q$ is an isomorphism.
\end{lemma}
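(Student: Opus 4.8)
The plan is to note that the map in question is simply the restriction of the quotient map $\pi\colon \wedge^2 H \to (\wedge^2 H)/\Q$ to the subspace $\wedge^2 V_{\Q}^{\perp}$. Its image is, by definition of the overline notation, exactly $\overline{\wedge^2 V_{\Q}^{\perp}}$, so surjectivity is automatic and the only content is injectivity. Since $\ker(\pi) = \Q\omega$, where $\omega \in \wedge^2 H$ is the element corresponding to the symplectic form on $H_{\Z}$, the kernel of the restriction is $\wedge^2 V_{\Q}^{\perp} \cap \Q\omega$. Thus it suffices to show that no nonzero multiple of $\omega$ lies in $\wedge^2 V_{\Q}^{\perp}$.

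For this I would invoke the direct sum decomposition
\[
\wedge^2 H = \left(\wedge^2 V_{\Q}\right) \oplus \left(V_{\Q} \wedge V_{\Q}^{\perp}\right) \oplus \left(\wedge^2 V_{\Q}^{\perp}\right)
\]
arising from $H = V_{\Q} \oplus V_{\Q}^{\perp}$, together with the identity $\omega = \omega_V + \omega_{V^{\perp}}$, where $\omega_V \in \wedge^2 V_{\Q}$ and $\omega_{V^{\perp}} \in \wedge^2 V_{\Q}^{\perp}$ are the elements associated to the symplectic forms on $V$ and $V^{\perp}$. Since $V$ is a genus-$1$ symplectic summand, picking $a,b \in V$ with $\omega(a,b) = 1$ shows $\omega_V = a \wedge b \neq 0$ in $\wedge^2 V_{\Q}$. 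Hence for nonzero $\lambda \in \Q$, the $\wedge^2 V_{\Q}$-component of $\lambda\omega$ is $\lambda\omega_V \neq 0$, so $\lambda\omega \notin \wedge^2 V_{\Q}^{\perp}$. Therefore the kernel is trivial and the map is an isomorphism onto $\overline{\wedge^2 V_{\Q}^{\perp}}$. Alternatively, one may fix a symplectic basis $\{a_1,b_1,\ldots,a_g,b_g\}$ for $H_{\Z}$ with $V = \Span{a_1,b_1}$, so that $\omega = a_1 \wedge b_1 + a_2 \wedge b_2 + \cdots + a_g \wedge b_g$, and observe directly that the summand $a_1 \wedge b_1$ cannot be cancelled by wedges of $a_2,b_2,\ldots,a_g,b_g$.

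I do not expect a genuine obstacle here: the lemma is essentially a linear-algebra bookkeeping statement. The one point that needs care is ruling out \emph{all} nonzero multiples of $\omega$ rather than $\omega$ itself, but this is immediate from the displayed decomposition since the $\wedge^2 V_{\Q}$-component of $\omega$ is nonzero. No hypothesis on $g$ beyond the existence of a genus-$1$ symplectic summand is needed; in the degenerate case $g = 1$ one has $V_{\Q}^{\perp} = 0$ and both sides vanish.
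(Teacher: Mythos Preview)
Your proof is correct and follows essentially the same approach as the paper: both reduce to showing $\omega \notin \wedge^2 V_{\Q}^{\perp}$, and your alternative argument with a symplectic basis is exactly what the paper does (it takes $V = \Span{a_g,b_g}$ rather than $\Span{a_1,b_1}$, but the computation is identical). Your first argument via the direct sum decomposition $\wedge^2 H = \wedge^2 V_{\Q} \oplus (V_{\Q} \wedge V_{\Q}^{\perp}) \oplus \wedge^2 V_{\Q}^{\perp}$ is a slightly cleaner packaging of the same idea.
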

\begin{proof}
Let $\omega \in \wedge^2 H$ be the element corresponding to the symplectic form.
We must prove that $\omega \notin \wedge^2 V_{\Q}^{\perp}$.
Let
$\cB = \{a_1,b_1,\ldots,a_{g},b_{g}\}$ be a symplectic basis for $H_{\Z}$ such that $V = \Span{a_{g},b_{g}}$.
Let $\prec$ be the total order on $\cB$ indicated in the above list.  Then $\wedge^2 H$
has the basis $\Set{$x \wedge y$}{$x,y \in \cB$, $x \prec y$}$, the subspace
$\wedge^2 V_{\Q}^{\perp}$ has for a basis the subset $\Set{$x \wedge y$}{$x,y \in \cB \setminus \{a_{g},b_{g}\}$, $x \prec y$}$,
and
\[\omega = a_1 \wedge b_1 + \cdots + a_{g} \wedge b_{g}.\]
Since $a_{g} \wedge b_{g}$ is a basis element {\em not} included in the basis for
$\wedge^2 V_{\Q}^{\perp}$, the lemma follows.
\end{proof}

\subsection{Symplectic pairs in sym-orthogonal complement}

For a genus-$1$ symplectic summand $V$ of $H_{\Z}$, we have
$\oomega_V = -\oomega_{V^{\perp}} \in \overline{\wedge^2 V_{\Q}^{\perp}}$.
In particular, by Lemma \ref{lemma:symplecticorthogonal} the element
$\oomega_V$ is sym-orthogonal to itself.  The following lemma says that this
is the only non-obvious $\oomega_W$ contained in the sym-orthogonal complement
of $\oomega_V$:

\begin{lemma}
\label{lemma:ambiguity}
Let $V$ and $W$ be genus-$1$ symplectic summand of $H_{\Z}$ such $\oomega_V$ and $\oomega_W$
are sym-orthogonal.  Then either $W \subset V^{\perp}$ or $W = V$.
\end{lemma}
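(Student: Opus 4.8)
The plan is to pass from $(\wedge^2 H)/\Q$ back up to $\wedge^2 H$, work with the canonical lifts $\omega_V,\omega_W\in\wedge^2 H_{\Z}$ of $\oomega_V,\oomega_W$, and exploit the fact that $\omega_W$ is a nonzero \emph{decomposable} ($=$ rank two) element of $\wedge^2 H$. First I would fix notation: let $\omega\in\wedge^2 H$ correspond to the symplectic form on $H_{\Z}$, so that $(\wedge^2 H)/\Q$ is the quotient by $\Q\omega$, and choose a symplectic basis of $H_{\Z}$ adapted to the decomposition $H_{\Z}=V\oplus V^{\perp}$; this exhibits $\omega=\omega_V+\omega_{V^{\perp}}$ with $\omega_V\in\wedge^2 V_{\Q}$ and $\omega_{V^{\perp}}\in\wedge^2 V_{\Q}^{\perp}$. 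By Lemma \ref{lemma:symplecticorthogonal}, the hypothesis that $\oomega_V$ and $\oomega_W$ are sym-orthogonal says precisely that $\oomega_W\in\overline{\wedge^2 V_{\Q}^{\perp}}$. Hence $\omega_W$ and some $\eta\in\wedge^2 V_{\Q}^{\perp}$ have the same image in $(\wedge^2 H)/\Q$, so $\omega_W=\eta+\lambda\omega$ for some $\lambda\in\Q$; absorbing $\lambda\omega_{V^{\perp}}$ into $\eta$ gives a decomposition
\[
\omega_W=\lambda\,\omega_V+\gamma,\qquad \gamma\in\wedge^2 V_{\Q}^{\perp}.
\]

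Next I would split on $\lambda$. If $\lambda=0$, then $\omega_W=\gamma\in\wedge^2 V_{\Q}^{\perp}$; writing $\omega_W=a\wedge b$ for a symplectic basis $\{a,b\}$ of $V$... rather of $W$, this is a nonzero decomposable $2$-vector, so its plane $W_{\Q}=\Span{a,b}$ is contained in $V_{\Q}^{\perp}$, and since $W$ is a direct summand it equals its rational saturation $W_{\Q}\cap H_{\Z}$, whence $W\subset V_{\Q}^{\perp}\cap H_{\Z}=V^{\perp}$. If $\lambda\neq0$, I would use $\omega_W\wedge\omega_W=0$ (decomposability) and expand $(\lambda\omega_V+\gamma)^{\wedge2}$; since $\omega_V$ is decomposable, $\omega_V\wedge\omega_V=0$, leaving $2\lambda\,\omega_V\wedge\gamma+\gamma\wedge\gamma=0$. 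Here $\omega_V\wedge\gamma$ lies in the graded piece $\wedge^2 V_{\Q}\otimes\wedge^2 V_{\Q}^{\perp}$ of $\wedge^4(V_{\Q}\oplus V_{\Q}^{\perp})$ while $\gamma\wedge\gamma$ lies in $\wedge^4 V_{\Q}^{\perp}$, so both must vanish; and because $\wedge^2 V_{\Q}$ is one-dimensional spanned by the nonzero $\omega_V$, the map $\gamma\mapsto\omega_V\wedge\gamma$ is injective on $\wedge^2 V_{\Q}^{\perp}$, forcing $\gamma=0$. Thus $\omega_W=\lambda\omega_V$ with $\lambda\neq0$: two proportional nonzero decomposable $2$-vectors, hence defining the same plane $W_{\Q}=V_{\Q}$, and saturatedness again gives $W=V$. (Alternatively, once $\gamma=0$ and $\lambda\neq0$ one checks $\lambda=1$ by comparing the normalizations $\omega(a,b)=1$ of symplectic pairs, so $\oomega_W=\oomega_V$ and Lemma \ref{lemma:identifyv} applies.)

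The genuinely nontrivial idea is the reduction to a statement about decomposable $2$-vectors, i.e. remembering to lift and invoke $\omega_W\wedge\omega_W=0$; after that the argument is routine. The one place to be careful is the $\wedge^4 H$ bookkeeping in the case $\lambda\neq0$: one must verify that the two surviving terms of $\omega_W\wedge\omega_W$ land in genuinely distinct summands of $\wedge^4(V_{\Q}\oplus V_{\Q}^{\perp})$, which works exactly because $\dim V_{\Q}=2$ annihilates $\wedge^{\geq3}V_{\Q}$, and that wedging with $\omega_V$ is injective on $\wedge^2 V_{\Q}^{\perp}$. Everything else reduces to Lemma \ref{lemma:symplecticorthogonal} and the elementary fact that a direct summand of $H_{\Z}$ coincides with its own rational saturation.
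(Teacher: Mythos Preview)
Your proof is correct and takes a genuinely different route from the paper's. Both arguments start the same way: lift to $\wedge^2 H$ and use Lemma~\ref{lemma:symplecticorthogonal} to see that $\omega_W$ lies in $(\wedge^2 V_{\Q})\oplus(\wedge^2 V_{\Q}^{\perp})$, i.e., $\omega_W=\lambda\omega_V+\gamma$ with $\gamma\in\wedge^2 V_{\Q}^{\perp}$. From there the paper argues by contradiction: assuming $W\neq V$ and $W\not\subset V^{\perp}$, it picks an element $x_1+x_2\in W$ with $0\neq x_1\in V$ and $0\neq x_2\in V^{\perp}$, extends to a basis $\{x_1+x_2,\,y_1+y_2\}$ of $W_{\Q}$, and computes $\omega_W=(x_1+x_2)\wedge(y_1+y_2)$ directly; the vanishing of the $V_{\Q}\wedge V_{\Q}^{\perp}$ cross-term forces $y_i=\lambda_i x_i$, which makes $\omega$ degenerate on $W$, a contradiction. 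You instead invoke the Pl\"ucker relation $\omega_W\wedge\omega_W=0$ and read off the consequences in the bigrading of $\wedge^4(V_{\Q}\oplus V_{\Q}^{\perp})$, which kills $\gamma$ in one stroke when $\lambda\neq0$. Your argument is cleaner and coordinate-free, at the cost of passing through $\wedge^4$; the paper's argument stays entirely in $\wedge^2$ and is a bit more hands-on. Both are short and either would serve.
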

\begin{proof}
Assume for the sake of contradiction
that $W \neq V$ and $W \notsubset V^{\perp}$.  Since $V$ and $W$ are both genus-$1$ symplectic summands
of $H_{\Z}$, the assumption $W \neq V$ implies that $W \notsubset V$.  Since $H_{\Z} = V \oplus V^{\perp}$,
the assumptions that $W \notsubset V^{\perp}$ and $W \notsubset V$ imply that there exist
nonzero $x_1 \in V$ and $x_2 \in V^{\perp}$ such that $x_1 + x_2 \in W$.

Since $H = V_{\Q} \oplus V_{\Q}^{\perp}$, we have
\begin{equation}
\label{eqn:decomposewedge2}
\wedge^2 H = \left(\wedge^2 V_{\Q}\right) \oplus \left(\wedge^2 V_{\Q}^{\perp}\right) \oplus \left(V_{\Q} \wedge V_{\Q}^{\perp}\right).
\end{equation}
Let $\omega \in \wedge^2 H$ be the symplectic form.  Both $\omega_V$ and $\omega = \omega_V + \omega_{V^{\perp}}$ lie
in the subspace
\begin{equation}
\label{eqn:decomposewedge2.part}
\left(\wedge^2 V_{\Q}\right) \oplus \left(\wedge^2 V_{\Q}^{\perp}\right)
\end{equation}
of \eqref{eqn:decomposewedge2}.  Since $\oomega_W$ is sym-orthogonal to $\oomega_V$, Lemma \ref{lemma:symplecticorthogonal}
says that $\oomega_W \in \overline{\wedge^2 V_{\Q}^{\perp}}$.  Equivalently, modulo
$\Q \omega$ the element $\omega_W$ lies in $\wedge^2 V_{\Q}^{\perp}$, so $\omega_W$ lies in
\eqref{eqn:decomposewedge2.part} as well.

Recall that we have nonzero $x_1 \in V$ and $x_2 \in V^{\perp}$ with $x_1+x_2 \in W$.  Regard
$x_1+x_2$ as an element of $W_{\Q}$.  Pick $y_1 \in V$ and $y_2 \in V^{\perp}$ such that
$W_{\Q} = \Span{x_1+x_2,y_1+y_2}$.  We have
\[\omega_W = (x_1+x_2) \wedge (y_1 + y_2) = x_1 \wedge y_1 + x_2 \wedge y_2 + x_1 \wedge y_2 - y_1 \wedge x_2.\]
Since $\omega_W$ lies in \eqref{eqn:decomposewedge2.part}, we must have $x_1 \wedge y_2 = y_1 \wedge x_2$ in
$V_{\Q} \wedge V_{\Q}^{\perp}$.  Since $x_1$ and $x_2$ are nonzero, this implies that there exists
some $\lambda_1,\lambda_2 \in \Q$ such that $y_1 = \lambda_1 x_1$ and $y_2 = \lambda_2 x_2$.  Since $\omega(x_1,x_2) = 0$,
we conclude that
\[\omega(x_1+x_2,y_1+y_2) = \omega(x_1+x_2,\lambda_1 x_1+\lambda_2 x_2) = \lambda_1 \omega(x_1,x_1) + \lambda_2 \omega(x_2,x_2) = 0.\]
This implies that $\omega$ vanishes identically on $W_{\Q} = \Span{x_1+x_2,y_1+y_2}$, contradicting the fact
that it is a symplectic summand.
\end{proof}

\subsection{Modified presentation}

Define the following:

\begin{definition}
Define $\fK'_g$ to be the vector space with the following presentation:
\begin{itemize}
\item {\bf Generators}.  For all genus-$1$ symplectic summands $V$ of $H_{\Z}$ and all
$\kappa \in \wedge^2 V_{\Q}^{\perp}$, generators $\Pres{V,\kappa}$ and $\Pres{\kappa,V}$.
\item {\bf Relations}.  The following families of relations:
\begin{itemize}
\item For all genus-$1$ symplectic summands $V$ of $H_{\Z}$ and all $\kappa_1,\kappa_2 \in \wedge^2 V_{\Q}^{\perp}$
and all $\lambda_1,\lambda_2 \in \Q$, the relations
\begin{align*}
\Pres{V,\lambda_1 \kappa_1 + \lambda_2 \kappa_2} &= \lambda_1 \Pres{V,\kappa_1} + \lambda_2 \Pres{V,\kappa_2} \quad \text{and} \\ 
\Pres{\lambda_1 \kappa_1 + \lambda_2 \kappa_2,V} &= \lambda_1 \Pres{\kappa_1,V} + \lambda_2 \Pres{\kappa_2,V}.
\end{align*}
\item For all orthogonal genus-$1$ symplectic summands $V$ and $W$ of $H_{\Z}$, the relation
\[\Pres{V,\omega_W} = \Pres{\omega_V,W}\]
\item For all genus-$1$ symplectic summands $V$ of $H_{\Z}$, the relation
\[\Pres{V,\omega_{V^{\perp}}} = \Pres{\omega_{V^{\perp}},V}.\qedhere\]
\end{itemize}
\end{itemize}
\end{definition}

The actions of $\Sp_{2g}(\Z)$ on $H_{\Z}$ and $H$ induce an action of $\Sp_{2g}(\Z)$
on $\fK'_g$.  The main result of this appendix is:

\begin{theorem}
\label{theorem:modifiedpresentation}
For $g \geq 4$, there is an $\Sp_{2g}(\Z)$-equivariant isomorphism between $\fK'_g$ and the 
symmetric kernel $\cK_g$.  In particular, $\fK'_g$ is a finite-dimensional algebraic representation
of $\Sp_{2g}(\Z)$.
\end{theorem}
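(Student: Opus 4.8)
The plan is to construct the isomorphism $\fK'_g \cong \cK_g$ by factoring it through the isomorphism $\fK_g \cong \cK_g$ of Theorem \ref{theorem:presentation}. First I would build a map $\Psi\colon \fK'_g \to \fK_g$. On generators, send $\Pres{V,\kappa} \mapsto \Pres{\oomega_V, \okappa}$ and $\Pres{\kappa,V} \mapsto \Pres{\okappa, \oomega_V}$, where $\oomega_V$ is the symplectic pair attached to the genus-$1$ symplectic summand $V$ and $\okappa$ is the image of $\kappa \in \wedge^2 V_\Q^\perp$ in $(\wedge^2 H)/\Q$. This is well-defined on generators because Lemma \ref{lemma:symplecticorthogonal} says $\okappa$ is sym-orthogonal to $\oomega_V$, so $\Pres{\oomega_V,\okappa}$ and $\Pres{\okappa,\oomega_V}$ are genuine generators of $\fK_g$. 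One then checks that each of the three families of relations defining $\fK'_g$ maps to a relation (or an identity) in $\fK_g$: the linearity relations map to the linearity relations of $\fK_g$ (using that $\kappa \mapsto \okappa$ is linear), and the two ``swap'' relations $\Pres{V,\omega_W} = \Pres{\omega_V,W}$ and $\Pres{V,\omega_{V^\perp}} = \Pres{\omega_{V^\perp},V}$ map to tautologies of the form $\Pres{\oomega_V,\oomega_W} = \Pres{\oomega_V,\oomega_W}$ once one notes $\oomega_W = \overline{\omega_W}$ and $\oomega_{V^\perp} = -\oomega_V$. So $\Psi$ is a well-defined $\Sp_{2g}(\Z)$-equivariant linear map.

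Next I would construct an inverse map $\Psi'\colon \fK_g \to \fK'_g$. A generator of $\fK_g$ has the form $\Pres{\oomega_V,\kappa_0}$ (or $\Pres{\kappa_0,\oomega_V}$) where $V$ is some genus-$1$ symplectic summand of $H_\Z$ and $\kappa_0 \in (\wedge^2 H)/\Q$ is sym-orthogonal to $\oomega_V$. By Lemma \ref{lemma:identifyv}, the summand $V$ is uniquely determined by $\oomega_V$ (this uses $g \geq 3$); and by Lemma \ref{lemma:identifykappa}, $\kappa_0$ lifts uniquely to some $\kappa \in \wedge^2 V_\Q^\perp$. So we can unambiguously set $\Psi'(\Pres{\oomega_V,\kappa_0}) = \Pres{V,\kappa}$ and similarly on the other side. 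The content here is checking that this is well-defined on all of $\fK_g$, i.e., that the defining relations of $\fK_g$ (the two families of linearity relations over symplectic pairs) map to relations in $\fK'_g$: the linearity relation for $\oomega_V$ maps to the first family of relations of $\fK'_g$ for that same $V$, using that the lifting $(\wedge^2 V_\Q^\perp) \xrightarrow{\sim} \overline{\wedge^2 V_\Q^\perp}$ of Lemma \ref{lemma:identifykappa} is linear. Then $\Psi$ and $\Psi'$ are visibly mutually inverse on generators, hence isomorphisms, and both are $\Sp_{2g}(\Z)$-equivariant. Composing $\Psi$ with the isomorphism $\Phi\colon \fK_g \xrightarrow{\sim} \cK_g$ of Theorem \ref{theorem:presentation} gives the desired $\Sp_{2g}(\Z)$-equivariant isomorphism $\fK'_g \cong \cK_g$, and finite-dimensionality of $\cK_g$ (a subspace of the finite-dimensional $((\wedge^2 H)/\Q)^{\otimes 2}$) transfers to $\fK'_g$.

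The main obstacle I anticipate is the bookkeeping around the ``swap'' relations and the potential ambiguity they could introduce. Concretely: in $\fK_g$ a generator $\Pres{\oomega_V,\kappa_0}$ does \emph{not} come with a preferred side unless we know whether $\kappa_0$ is itself a symplectic pair. If $\kappa_0 = \oomega_W$ for some genus-$1$ symplectic summand $W$, then $\Pres{\oomega_V,\oomega_W}$ is simultaneously ``type $\Pres{V,\omega_W}$'' and ``type $\Pres{\omega_V,W}$,'' and for $\Psi'$ to be well-defined these two interpretations must land on the same element of $\fK'_g$ — which is exactly what the two swap relations of $\fK'_g$ are engineered to guarantee. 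But one must first confirm that these are the \emph{only} coincidences, i.e., that the only way a sym-orthogonal partner of $\oomega_V$ can itself be a symplectic pair is the ``obvious'' case $W \subset V^\perp$ or the single exceptional case $W = V$ (where $\oomega_V$ is sym-orthogonal to itself since $\oomega_V = -\oomega_{V^\perp} \in \overline{\wedge^2 V_\Q^\perp}$). This is precisely the content of Lemma \ref{lemma:ambiguity}, so the argument reduces to carefully matching each swap relation of $\fK'_g$ against the two cases of that lemma. I would also need to double-check the genus hypothesis bookkeeping: Lemmas \ref{lemma:identifyv}, \ref{lemma:identifykappa}, \ref{lemma:ambiguity} need $g \geq 3$, while invoking Theorem \ref{theorem:presentation} forces $g \geq 4$, which is the stated hypothesis, so there is no gap.
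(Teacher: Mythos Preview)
Your proposal is correct and follows essentially the same approach as the paper: construct mutually inverse maps between $\fK'_g$ and $\fK_g$ on generators using Lemmas \ref{lemma:identifyv}, \ref{lemma:identifykappa}, and \ref{lemma:ambiguity}, then compose with Theorem \ref{theorem:presentation}. Your identification of the ambiguity in defining $\Psi'$ when both entries are symplectic pairs, and its resolution via the two cases of Lemma \ref{lemma:ambiguity} matched against the two swap relations of $\fK'_g$, is exactly the heart of the paper's argument.
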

\begin{proof}
By Theorem \ref{theorem:presentation}, it is enough
to construct an $\Sp_{2g}(\Z)$-equivariant isomorphism from $\fK'_g$ to $\fK_g$.  For $\kappa \in \wedge^2 H$, let $\okappa$ be its
image in $(\wedge^2 H)/\Q$.  Define a map
$f\colon \fK'_g \rightarrow \fK_g$ on generators via the formulas
\[f(\Pres{V,\kappa}) = \Pres{\oomega_V,\okappa} \quad \text{and} \quad f(\Pres{\kappa,V}) = \Pres{\okappa,\oomega_V}.\]
This takes relations to relations; indeed, the linearity relations are obvious, and the other relations
can be checked as follows:
\begin{itemize}
\item Consider orthogonal genus-$1$ symplectic summands $V$ and $W$ of $H_{\Z}$.  We must prove
that
\[f(\Pres{V,\omega_W}) = \Pres{\oomega_V,\oomega_W} \quad \text{and} \quad f(\Pres{\omega_V,W}) = \Pres{\oomega_V,\oomega_W}\]
are equal, which is clear.
\item Consider a genus-$1$ symplectic summand $V$ of $H_{\Z}$.  We must prove that
\[f(\Pres{V,\omega_{V^{\perp}}}) = \Pres{\oomega_V,\oomega_{V^{\perp}}} \quad \text{and} \quad f(\Pres{\omega_{V^{\perp}},V}) 
= \Pres{\oomega_{V^{\perp}},\oomega_V}\]
are equal, which follows from the calculation
\[\Pres{\oomega_V,\oomega_{V^{\perp}}} = \Pres{\oomega_V,-\oomega_V} = -\Pres{\oomega_V,\oomega_V} = \Pres{-\oomega_V,\oomega_V} = \Pres{\oomega_{V^{\perp}},\oomega_V}.\]
\end{itemize}
This implies that $f$ is a well-defined map.

To prove that $f$ is an isomorphism, we must construct an inverse $h\colon \fK_g \rightarrow \fK'_g$.  Let
$V$ be a genus-$1$ symplectic summand of $H_{\Z}$ and let $\kappa \in (\wedge^2 H)/\Q$ be sym-orthogonal to
$\oomega_V$.  We must define $h$ on $\Pres{\oomega_V,\kappa}$ and $\Pres{\kappa,\oomega_V}$.  
Lemma \ref{lemma:symplecticorthogonal} implies that $\kappa \in \overline{\wedge^2 V_{\Q}^{\perp}}$, and
Lemma \ref{lemma:identifykappa} says that $\kappa$ can be uniquely lifted
to $\tkappa \in \wedge^2 V_{\Q}^{\perp}$.  Lemma \ref{lemma:identifyv} says that $\oomega_V$ determines
$V$, so we can define
\[h_1(\Pres{\oomega_V,\kappa}) = \Pres{V,\tkappa} \quad \text{and} \quad h_2(\Pres{\kappa,\oomega_V}) = \Pres{\tkappa,V}.\]
The reason for distinguishing between $h_1$ and $h_2$ is that it is possible for a generator
of $\fK_g$ to be of both of these forms.  To define $h$, we must check that:

\begin{unnumberedclaim}
Let $V$ and $W$ be genus-$1$ symplectic summands of $H_{\Z}$ such that $\oomega_V$ and $\oomega_W$ are
sym-orthogonal.  Then $h_1(\Pres{\oomega_V,\oomega_W}) = h_2(\Pres{\oomega_V,\oomega_W})$.
\end{unnumberedclaim}
\begin{proof}[Proof of claim]
Since $\oomega_V$ and $\oomega_W$ are sym-orthogonal, Lemma \ref{lemma:ambiguity} implies that
one of the following holds:
\begin{itemize}
\item $W \subset V^{\perp}$.  The unique lift of $\oomega_W \in \overline{\wedge^2 V_{\Q}^{\perp}}$ to
$\wedge^2 V_{\Q}^{\perp}$ is $\omega_W$, and the unique lift of $\oomega_V \in \overline{\wedge^2 W_{\Q}^{\perp}}$
is $\omega_V$.  We now calculate as follows, where the orange $\orange{=}$ are applications of relations
in $\fK'_g$:
\end{itemize}
\[h_1(\Pres{\oomega_V,\oomega_W}) = \Pres{V,\omega_W} \orange{=} \Pres{\omega_V,W} = h_2(\Pres{\oomega_V,\oomega_W}).\]
\begin{itemize}
\item $W = V$.  The unique lift of
\[\oomega_V = -\oomega_{V^{\perp}} \in \overline{\wedge^2 V_{\Q}^{\perp}}\]
to $\wedge^2 V_{\Q}^{\perp}$ is $-\omega_{V^{\perp}}$.
We now calculate as follows, where the orange $\orange{=}$ are applications of relations
in $\fK'_g$:
\end{itemize}
\begin{align*}
h_1(\Pres{\oomega_V,\oomega_V}) &= \Pres{V,-\omega_{V^{\perp}}} \orange{=} -\Pres{V,\omega_{V^{\perp}}}   \\
                                &\orange{=} -\Pres{\omega_{V^{\perp}},V} \orange{=} \Pres{-\omega_{V^{\perp}},V} = h_2(\Pres{\oomega_V,\oomega_V}).\qedhere
\end{align*}
\end{proof}

In light of this claim, we can unambiguously
define a map $h\colon \fK_g \rightarrow \fK'_g$ on generators $\Pres{\kappa_1,\kappa_2}$
by letting $h(\Pres{\kappa_1,\kappa_2})$ equal whichever one of
$h_1(\Pres{\kappa_1,\kappa_2})$ or $h_2(\Pres{\kappa_1,\kappa_2})$ is defined.
The map $h$ takes relations to relations, and thus gives a well-defined map.
By construction, $f$ and $h$ are inverses to each other.  The theorem follows.
\end{proof}

\begin{remark}
The isomorphism $\fK'_g \rightarrow \cK_g$ takes
$\Pres{V,\kappa}$ to $\oomega_V \otimes \okappa$ and $\Pres{\kappa,V}$ to $\okappa \otimes \oomega_V$.
\end{remark}


\begin{thebibliography}{99}

\bibitem{FarbMargalitPrimer}
B. Farb \& D. Margalit,  
A primer on mapping class groups,  
Princeton Mathematical Series, 49, Princeton Univ. Press, Princeton, NJ, 2012.

\bibitem{FultonHarris}
W. Fulton \& J. Harris, 
{\it Representation theory},
Graduate Texts in Mathematics Readings in Mathematics, 129, Springer, New York, 1991.

\bibitem{HuaReiner}
L. Hua \& I. Reiner,
On the generators of the symplectic modular group.
Trans. Amer. Math. Soc.65(1949), 415--426.

\bibitem{MinahanPutmanAbelian}
D. Minahan \& A. Putman,
Abelian covers of surfaces and the homology of the Torelli group
preprint 2024.

\bibitem{MinahanPutmanH2}
D. Minahan \& A. Putman,
The second rational homology group of the Torelli group,
preprint 2024.

\end{thebibliography}
\end{document}